\numberwithin{section}{chapter}
\numberwithin{equation}{section}
\newtheorem{Thm}[equation]{Theorem}
\newtheorem{Prop}[equation]{Proposition}
\newtheorem{Lem}[equation]{Lemma}
\newtheorem{Cor}[equation]{Corollary}
\newtheorem{Sch}[equation]{Scholium}
\theoremstyle{remark}
\newtheorem{Rem}[equation]{Remark}
\newtheorem{Def}[equation]{Definition}
\newtheorem{Ter}[equation]{Terminology}
\newtheorem{Not}[equation]{Notation}
\newtheorem{Exa}[equation]{Example}
\newtheorem{Exas}[equation]{Examples}
\newtheorem{Cons}[equation]{Construction}
\newtheorem{Conv}[equation]{Convention}
\newtheorem{Hyp}[equation]{Hypotheses}
\newcommand{\nc}{\newcommand}
\nc{\dmo}{\DeclareMathOperator}
\dmo{\Ab}{Ab}
\dmo{\Aut}{Aut}
\dmo{\bicMack}{\biMack_{\mathsf{ic}}} 
\dmo{\biMack}{\mathsf{Mack}} 
\dmo{\Ch}{Ch}
\dmo{\CoInd}{CoInd}
\dmo{\Der}{D}
\dmo{\End}{End}
\dmo{\Fun}{\mathrm{Fun}} 
\dmo{\Hom}{Hom}
\dmo{\Ho}{Ho}
\dmo{\img}{im}
\dmo{\incl}{incl}
\dmo{\Ind}{Ind}
\dmo{\Inj}{Inj} 
\dmo{\Ker}{Ker}
\dmo{\Mackey}{Mack} 
\dmo{\Map}{Map}%
\dmo{\Mod}{Mod}
\dmo{\Mor}{Mor}%
\dmo{\Obj}{Obj}
\dmo{\Proj}{Proj} 
\dmo{\pr}{pr}
\dmo{\PsFunJJ}{\PsFun_{\JJ_!}^{\JJ^\prime\textrm{\!-}\mathsf{oplax}}}
\dmo{\PsFunJop}{\PsFun_{{{\JJ}_{{}_{*}}}}}
\dmo{\PsFunJ}{\PsFun_{\JJ_!}}
\dmo{\PsFunoplax}{\PsFun^{\mathsf{oplax}}}
\dmo{\PsFun}{\mathsf{PsFun}} 
\dmo{\Res}{Res}
\dmo{\SH}{SH}
\dmo{\Spanname}{{\sf Span}}
\dmo{\Stab}{Stab}
\dmo{\twoFun}{2\mathsf{Fun}}
\nc\noloc{\nobreak\mspace{6mu plus 1mu}{:}\nonscript\mkern-\thinmuskip\mathpunct{}\mspace{2mu}}
\nc{\ababs}{{\sl ab absurdo}}
\nc{\Add}{\mathsf{Add}}
\nc{\ADD}{\mathsf{ADD}}
\nc{\adhoc}{{\sl ad hoc}}
\nc{\adjto}{\rightleftarrows}
\nc{\adj}{\dashv\,}
\nc{\afortiori}{{\sl a fortiori}}
\nc{\aka}{{a.\,k.\,a.}\ }
\nc{\all}{\mathsf{all}}
\nc{\apriori}{{\sl a priori}}
\nc{\ass}{\mathrm{ass}} 
\nc{\bbA}{\mathbb{A}}
\nc{\bbB}{\mathbb{B}}
\nc{\bbC}{\mathbb{C}}
\nc{\bbD}{\mathbb{D}}
\nc{\bbF}{\mathbb{F}}
\nc{\bbI}{\mathbb{I}}
\nc{\bbM}{\mathbb{M}}
\nc{\bbN}{\mathbb{N}}
\nc{\bbP}{\mathbb{P}}
\nc{\bbQ}{\mathbb{Q}}
\nc{\bbR}{\mathbb{R}}
\nc{\bbZ}{\mathbb{Z}}
\nc{\bs}{\backslash}
\nc{\BurnG}{\cat{A}(G)}
\nc{\cat}[1]{\mathcal{#1}}
\nc{\Cat}{\mathsf{Cat}}
\nc{\CAT}{\mathsf{CAT}}
\nc{\cf}{{\sl cf.}\ }
\nc{\Cf}{{\sl Cf.}\ }
\nc{\colim}{\mathop{\mathrm{colim}}}
\nc{\costar}{**}
\nc{\co}{{\mathrm{co}}}
\nc{\DD}{\cat{D}}
\nc{\Displ}{\displaystyle}
\nc{\doublequot}[3]{#1\backslash #2/#3}
\nc{\Ecell}{\rotatebox[origin=c]{90}{$\Downarrow$}} 
\nc{\eg}{{\sl e.g.}\ } 
\nc{\Eg}{{\sl E.g.}\ } 
\nc{\eps}{\varepsilon}
\nc{\equalby}[1]{\overset{\textrm{#1}}{=}}
\nc{\exact}{\mathsf{ex}}
\nc{\faithful}{\mathsf{faithful}}
\nc{\faith}{\mathsf{faithf}}
\nc{\final}{\textrm{\scriptsize{\ding{93}}}} 
\nc{\Funadd}{\Fun_{\amalg}}
\nc{\Funplus}{\Fun_{+}}
\nc{\fun}{\mathrm{fun}} 
\nc{\GG}{\mathbb{G}}
\nc{\gpdG}{{\groupoidf_{\!\smallslash\!G}}} 
\nc{\gpd}{\groupoid}%
\nc{\gps}{\mathsf{groups}} 
\nc{\groconn}{\groupoid_{\mathsf{conn}}}
\nc{\groupoidf}{\groupoid{}^{\smallfaithful}}
\nc{\groupoid}{\mathsf{gpd}}
\nc{\group}{\mathsf{group}} 
\nc{\Gsets}{G\sset}
\nc{\HGfK}{\doublequot{H}{G}{f(K)}}%
\nc{\HGK}{\doublequot HGK}
\nc{\Homcat}[1]{\Hom_{\cat #1}}
\nc{\hooklongleftarrow}{\longleftarrow\joinrel\rhook}
\nc{\hooklongrightarrow}{\lhook\joinrel\longrightarrow}
\nc{\hook}{\hookrightarrow}
\nc{\Hsets}{H\mathsf{-sets}}
\nc{\ICAdd}{\Add_{\mathsf{ic}}}%
\nc{\ICADD}{\ADD_{\mathsf{ic}}}%
\nc{\Idcat}[1]{\Id_{\cat{#1}}}
\nc{\id}{\mathrm{id}}
\nc{\Id}{\mathrm{Id}}
\nc{\ie}{{\sl i.e.}\ }
\nc{\into}{\mathop{\rightarrowtail}}
\nc{\inv}{^{-1}}
\nc{\Iout}[1]{\Ivo{\sout{#1}}}
\nc{\isocell}[1]{\undersett{ #1}{\overset{\sim}{\Ecell}}} 
\nc{\Isocell}[1]{\undersett{ #1}{\overset{\sim}{\Longrightarrow}}}
\nc{\isoEcell}{\overset{\sim}{\Rightarrow}} 
\nc{\isotoo}{\stackrel{\sim}\longrightarrow}
\nc{\isoto}{\buildrel \sim\over\to}
\nc{\Ivo}[1]{{\color{OliveGreen}#1}}
\nc{\JJ}{\mathbb{J}}
\nc{\kk}{\Bbbk}
\nc{\KK}{\mathrm{KK}}
\nc{\leps}{{}^{\ell}\eps}
\nc{\leta}{{}^{\ell}\eta}
\nc{\loccit}{{\sl loc.\ cit.}}
\nc{\lotoo}[1]{\overset{#1}{\,\longleftarrow\,}}
\nc{\loto}[1]{\overset{#1}{\leftarrow}}
\nc{\lto}{\leftarrow}
\nc{\lun}{\mathrm{lun}} 
\nc{\Mackintro}[1]{(Mack\,\ref{Mack-#1-intro})}
\nc{\Mack}[1]{(Mack\,\ref{Mack-#1})}
\nc{\Mid}{\,\big|\,}
\nc{\MMod}{\,\text{-}\Mod}%
\nc{\MM}{\cat{M}}
\nc{\Muniv}{\cat{M}_{\mathsf{univ}}}
\nc{\Ncell}{\rotatebox[origin=c]{0}{$\Uparrow$}} 
\nc{\NEcell}{\rotatebox[origin=c]{135}{$\Downarrow$}} 
\nc{\NN}{\cat{N}}
\nc{\NWcell}{\rotatebox[origin=c]{-135}{$\Downarrow$}} 
\nc{\oEcell}[1]{\overset{\scriptstyle #1}{\Ecell}} 
\nc{\oWcell}[1]{\overset{\scriptstyle #1}{\Wcell}} 
\nc{\ointo}[1]{\overset{#1}{\rightarrowtail}}
\nc{\olto}[1]{\overset{#1}\lto}
\nc{\onto}{\mathop{\twoheadrightarrow}}
\nc{\op}{{\mathrm{op}}}
\nc{\otoo}[1]{\overset{#1}{\,\longrightarrow\,}}
\nc{\oto}[1]{\overset{#1}\to}
\nc{\Paul}[1]{{\color{Red}#1}}
\nc{\pih}[1]{\tau_{1}#1}%
\nc{\Pout}[1]{\Paul{\sout{#1}}}
\nc{\PsFunJindex}{\PsFun_{{\JJ_!}} \ \ {{\JJ}_{!}}\textrm{-strong pseudo-functors}}
\nc{\qquadtext}[1]{\qquad\textrm{#1}\qquad}
\nc{\quadtext}[1]{\quad\textrm{#1}\quad}
\nc{\ra}{\rightarrow}
\nc{\reps}{{}^{r\!}\eps}
\nc{\restr}[1]{{|_{\scriptstyle #1}}}
\nc{\reta}{{}^{r\!}\eta}
\nc{\run}{\mathrm{run}} 
\nc{\Sad}{\mathsf{Sad}}
\nc{\SAD}{\mathsf{SAD}}
\nc{\sbull}{{\scriptscriptstyle\bullet}}
\nc{\Scell}{\rotatebox[origin=c]{0}{$\Downarrow$}} 
\nc{\SEcell}{\rotatebox[origin=c]{45}{$\Downarrow$}} 
\nc{\SET}[2]{\big\{\,#1\Mid#2\,\big\}}
\nc{\set}{\mathsf{set}} 
\nc{\Set}{\mathsf{Set}}
\nc{\smallfaithful}{\mathsf{f}}
\nc{\smallslash}{{}^{\scriptscriptstyle/}}
\nc{\smat}[1]{\left(\begin{smallmatrix} #1 \end{smallmatrix}\right)}
\nc{\spanG}{{\widehat{\mathsf{gp}\,\,}\!\!\mathsf{d}}{}^\smallfaithful_{\!{}^{\scriptscriptstyle/}\!G}}
\nc{\Spanhat}{\textrm{\sf S}\widehat{\textrm{\sf pan}}} %
\nc{\Span}{\Spanname}
\nc{\sset}{\textrm{-}\set}
\nc{\str}{\mathsf{str}}
\nc{\SWcell}{\rotatebox[origin=c]{-45}{$\Downarrow$}} 
\nc{\too}{\mathop{\longrightarrow}\limits}
\nc{\tristars}{\begin{center} $ *\ *\ * $ \end{center}}
\nc{\tSpan}{\pih{\Spanname}}
\nc{\undersett}[1]{\underset{\scriptstyle #1}}
\nc{\un}{\mathrm{un}} 
\nc{\vcorrect}[1]{{\vphantom{\vbox to #1em{}}}}
\nc{\Wcell}{\rotatebox[origin=c]{90}{$\Uparrow$}} 
\nc{\what}[1]{\widehat{\cat{#1}}}
\nc{\xra}{\xrightarrow}
\nc{\xBur}{\mathrm{B^c}} 
\nc{\xBurk}{ \mathrm{B}^{\mathrm{c}}_{\kk} } 
\nc{\Bur}{\mathrm{B}} 
\nc{\Burk}{\Bur_{\kk}} 
\begin{document}
\pagenumbering{roman}


\title{Mackey 2-functors and Mackey 2-motives}
\author{Paul Balmer}
\author{Ivo Dell'Ambrogio}
\date{\today}

\address{\ \vfill\vfill\vfill\vfill
\noindent PB: UCLA Mathematics Department, Los Angeles, CA 90095-1555, USA}
\email{balmer@math.ucla.edu}
\urladdr{http://www.math.ucla.edu/$\sim$balmer}

\address{\break
\noindent ID:  Universit\'e de Lille, CNRS, UMR 8524 - Laboratoire Paul Painlev\'e, F-59000 Lille, France}
\email{ivo.dell-ambrogio@univ-lille.fr}
\urladdr{http://math.univ-lille1.fr/$\sim$dellambr}

\begin{abstract} \normalsize
We study collections of additive categories $\MM(G)$, indexed by finite groups~$G$ and related by induction and restriction in a way that categorifies usual Mackey functors. We call them `Mackey 2-functors'. We provide a large collection of examples in particular thanks to additive derivators. We prove the first properties of Mackey 2-functors, including separable monadicity of restriction to subgroups. We then isolate the initial such structure, leading to what we call `Mackey 2-motives'. We also exhibit a convenient calculus of morphisms in Mackey 2-motives, by means of string diagrams. Finally, we show that the 2-endomorphism ring of the identity of~$G$ in this 2-category of Mackey 2-motives is isomorphic to the so-called crossed Burnside ring of~$G$.
\end{abstract}

\thanks{First-named author partially supported by NSF grant~DMS-1600032.}
\thanks{Second-named author partially supported by Project ANR ChroK (ANR-16-CE40-0003) and Labex CEMPI (ANR-11-LABX-0007-01).}

\subjclass[2010]{20J05, 18B40, 55P91}
\keywords{Mackey functor, groupoid, derivator, 2-category, ambidexterity, rectification, realization, 2-motive.}

\maketitle

\clearpage
\thispagestyle{empty}

\begin{center}
To our wives and daughters

-- Aleksandra, Anne, Chloe, Jeanne, Laura and Sophie --

for their love and support, and for patiently indulging us during months of groupoid-juggling and string-untangling.
\end{center}


\tableofcontents

%
\chapter*{Introduction}
\label{ch:introduction}%
\bigbreak

In order to study a given group~$G$, it is natural to look for mathematical objects on which $G$ acts by automorphisms. For instance, in ordinary representation theory, one considers vector spaces on which $G$ acts linearly. In topology, one might prefer topological spaces and continuous $G$-actions. In functional analysis, it might be operator algebras on which $G$ is expected to act. And so on, and so forth. Those `$G$-equivariant objects' usually assemble into a category, that we shall denote~$\MM(G)$. Constructing such categories $\MM(G)$ of $G$-equivariant objects in order to study the group~$G$ is a simple but powerful idea. It is used in all corners of what we shall loosely call `equivariant mathematics'.

In this work, we focus on \emph{finite groups}~$G$ and \emph{additive categories}~$\MM(G)$, \ie categories in which one can add objects and add morphisms. Although topological or analytical examples may not seem very additive at first sight, they can be included in our discussion by passing to stable categories. Thus, to name a few explicit examples of such categories~$\MM(G)$, let us mention categories of $\kk G$-modules $\MM(G)=\Mod(\kk G)$ or their derived categories $\MM(G)=\Der(\kk G)$ in classical representation theory over a field~$\kk$, homotopy categories of $G$-spectra $\MM(G)=\SH(G)$ in equivariant homotopy theory, and Kasparov categories~$\MM(G)=\KK(G)$ of \mbox{$G$-$C^*$\!-algebras} in noncommutative geometry. As the reader surely realizes at this point, the list of such examples is virtually endless: just let $G$ act wherever it can! In fact, the entire \Cref{ch:Examples} of this book is devoted to a review of examples.

Let us try to isolate the properties that such categories $\MM(G)$ have in common. First of all, it is clear that in all situations we can easily construct a similar category~$\MM(H)$ for any other group~$H$, in particular for subgroups~$H\le G$. The variance of $\MM(G)$ in the group~$G$, through restriction, induction, conjugation, etc, is the bread and butter of equivariant mathematics. It is then a natural question to axiomatize what it means to have a reasonable collection of additive categories~$\MM(G)$ indexed by finite groups~$G$, with all these links between them. In view of the ubiquity of such structures, it is somewhat surprising that such an axiomatic treatment did not appear earlier.

In fact, a lot of attention has been devoted to a similar but simpler structure, involving abelian groups instead of additive categories. These are the so-called Mackey functors. Let us quickly remind the reader of this standard notion, going back to work of Green~\cite{Green71} and Dress~\cite{Dress73} almost half a century ago.

An ordinary \emph{Mackey functor}~$M$ involves the data of abelian groups~$M(G)$ indexed by finite groups~$G$. These $M(G)$ come with restriction homomorphisms $R^G_H\colon M(G)\to M(H)$, induction or transfer homomorphisms $I^G_H\colon M(H)\to M(G)$, and conjugation homomorphisms $c_x\colon M(H)\to M({}^{x\!}H)$, for $H\le G$ and $x\in G$. This data is subject to a certain number of rules, most of them rather intuitive. Among them, the critical rule is the \emph{Mackey double-coset formula}, which says that for all $H,K\le G$ the following two homomorphisms $M(H)\to M(K)$ are equal:
\begin{equation}
\label{eq:old-Mackey-intro}%
\index{double-coset formula}%
R^G_K\circ I_H^G = \sum_{[x]\in K\bs G/H}I_{K\cap \,{}^{x\!}H}^K \circ c_x \circ R^H_{K^{x\,}\cap H}\,.
\end{equation}
These Mackey functors are quite useful in representation theory and equivariant homotopy theory. See Webb's survey~\cite{Webb00} or \Cref{app:old-Mackey}.

Let us return to our categories~$\MM(G)$ of `objects with $G$-actions'. In most examples, these $\MM(G)$ behave very much like ordinary Mackey functors, with the obvious difference that they involve additive categories~$\MM(G)$ instead of abelian groups~$M(G)$, and additive functors between them instead of $\bbZ$-linear homomorphisms. Actually, truth be told, the homomorphisms appearing in ordinary Mackey functors are often mere shadows of additive functors with the same name (restriction, induction, etc) existing at the level of underlying categories.

In other words, to axiomatize our categories~$\MM(G)$ and their variance in~$G$, we are going to \emph{categorify} the notion of ordinary Mackey functor. Our first, very modest, contribution is to propose a name for these categorified Mackey functors~$\MM$. We call them
\begin{center}
\emph{Mackey 2-functors}.
\end{center}
We emphasize that we do not pretend to `invent' Mackey 2-functors out of the blue. Examples of such structures have been around for a long time and are as ubiquitous as equivariant mathematics itself. So far, the only novelty is the snazzy name.

Our first serious task will consist in pinning down the precise definition of Mackey 2-functor. But without confronting the devil in the detail quite yet, the heuristic idea should hopefully be clear from the above discussion. In first approximation, a Mackey 2-functor~$\MM$ consists of the data of an additive category~$\MM(G)$ for each finite group~$G$, together with further structure like restriction and induction functors, and subject to a Mackey formula at the categorical level. An important aspect of our definition is that we shall want $\MM$ to satisfy
\begin{center}
\emph{ambidexterity}.
\index{ambidexterity}
\end{center}
This means that induction is both left and right adjoint to restriction: For each subgroup $H\le G$, the restriction functor $\MM(G)\to \MM(H)$ admits a two-sided adjoint. In pedantic parlance, induction and `co-induction' coincide in~$\MM$.

Once we start considering adjunctions, we inherently enter a 2-categorical world. We not only have categories $\MM(G)$ and functors to take into account (0-layer and 1-layer) but we also have to handle natural transformations of functors (2-layer), at the very least for the units and counits of adjunctions. Similarly, our version of the Mackey formula will not involve an \emph{equality} between homomorphisms as in~\eqref{eq:old-Mackey-intro} but an \emph{isomorphism} between functors. This 2-categorical information is essential, and it distinguishes our Mackey 2-functors from a more naive notion of `Mackey functor with values in the category of additive categories' (which would miss the adjunction between $R^G_H$ and $I_H^G$ for instance). This important 2-layer in the structure of a Mackey 2-functor also explains our choice of the name. Still, the reader who is not versed in the refinements of 2-category theory should not throw the towel in despair. Most of this book can be understood by keeping in mind the usual 2-category $\CAT$ of categories, functors and natural transformations.
\tristars

In a nutshell, the purpose of this work is to
\begin{itemize}
\item
lay the foundations of the theory of Mackey 2-functors
\item
justify this notion by a large catalogue of examples
\item
provide some first applications, and
\item
construct a `motivic' approach.
\end{itemize}

\goodbreak

Let us now say a few words of these four aspects, while simultaneously outlining the structure of the book. After the present gentle introduction, \Cref{ch:expanded-introduction} will provide an expanded introduction with more technical details.

\tristars

The first serious issue is to give a solid definition of Mackey 2-functor that simultaneously can be checked in examples and yet provides enough structure to prove theorems. This balancing act relies here on three components:
\begin{enumerate}[(1)]
\item
A `light' definition of Mackey 2-functor, to be found in \Cref{Def:Mackey-2-functor-intro}. It involves four axioms \Mackintro{1}--\Mackintro{4} that the data $G\mapsto \MM(G)$ should satisfy. These four axioms are reasonably easy to verify in examples. Arguably the most important one, \Mackintro{4}, states that $\MM$ satisfies ambidexterity.
\smallbreak
\item
A `heavier' notion of \emph{rectified} Mackey 2-functor, involving another six axioms \Mack{5}--\Mack{10}. Taken together, those ten axioms make it possible to reliably prove theorems about (rectified) Mackey 2-functors. However, some of these six extra axioms can be unpleasant to verify in examples.
\smallbreak
\item
A Rectification \Cref{Thm:rectification-intro}, which roughly says that there is always a way to modify the 2-layer of any Mackey 2-functor $G\mapsto \MM(G)$ satisfying~\Mackintro{1}--\Mackintro{4} so that the additional axioms \Mack{5}--\Mack{10} are satisfied as well. In particular, one does not have to verify \Mack{5}--\Mack{10} in examples.
\end{enumerate}

An
introduction to the precise definition of Mackey 2-functor is to be found in \Cref{sec:Mackey-2-functors}. The full treatment appears in \Cref{ch:2-Mackey}. The motivation for the idea of rectification is given in \Cref{sec:rectification-intro}, with details in \Cref{ch:Theta}.
%

A first application follows immediately from the Rectification Theorem, namely we prove that for any subgroup $H\le G$, the category $\MM(H)$ is a \emph{separable extension} of~$\MM(G)$. This result provides a unification and a generalization of a string of results brought to light in~\cite{Balmer15} and~\cite{BalmerDellAmbrogioSanders15}, where we proved separability by an \adhoc\ argument in each special case. In the very short \Cref{sec:monadicity}, we give a uniform proof that all (rectified) Mackey 2-functors~$\MM$ automatically satisfy this separability property. Conceptually, the problem is the following. How can we `carve out' the category $\MM(H)$ of $H$-equivariant objects over a subgroup from the category $\MM(G)$ of $G$-equivariant objects over the larger group? The most naive guess would be to do the `carving out' via localization. This basically never works, $\MM(H)$ is almost never a localization of~$\MM(G)$, but separable extensions are the next best thing. Considering separable extensions instead of localizations is formally analogous to considering the \'etale topology instead of the Zariski topology in algebraic geometry. See further commentary on the meaning and relevance of separability in \Cref{sec:monadicity-intro}.

We return to the topic of applications below, when we comment on motives. For now, let us address the related question of examples. We discuss this point at some length because we consider the plethora of examples to be a great positive feature of the theory. Also, the motivic approach that we discuss next is truly justified by this very fact that Mackey 2-functors come in all shapes and forms.

It should already be intuitively clear from our opening paragraphs that Mackey 2-functors pullulate throughout equivariant mathematics. In any case, beyond this gut feeling that they should exist in many settings, a reliable source of rigorous examples of Mackey 2-functors can be found in the theory of \emph{Grothendieck derivators} (see Groth~\cite{Groth13}). Our Ambidexterity \Cref{Thm:ambidex-der} says that the restriction of an \emph{additive} derivator to finite groups automatically satisfies the ambidexterity property making it a Mackey 2-functor. This result explains why it is so common in practice that induction and co-induction coincide in additive settings. It also provides a wealth of examples of Mackey 2-functors in different subjects. Let us emphasize this point: The theory of derivators itself covers a broad variety of backgrounds, in algebra, topology, geometry, etc. Furthermore, derivators can always be stabilized (see \cite{Heller97} and~\cite{Coley19}) and stable derivators are always additive. In other words, via derivators, that is, via general homotopy theory, we gain a massive collection of readily available examples of Mackey 2-functors from algebra, topology, geometry, etc. In particular, any stable Quillen model category~$\mathcal{Q}$ provides a Mackey 2-functor $G\mapsto \MM(G):=\Ho(\mathcal{Q}^G)$, via diagram categories. In the five decades since~\cite{Quillen67}, examples of Quillen model categories have been discovered in all corners of mathematics, see for instance Hovey~\cite{Hovey99} or~\cite{HoveyPalmieriStrickland97}. An expanded introduction to these ideas can be found in \Cref{sec:Mackey-vs-derivators}.

But there is even more! In Sections~\ref{sec:sub-quotients}-\ref{sec:equivobj}, we provide further methods to handle trickier examples of Mackey 2-functors which cannot be obtained directly from a derivator. For instance, stable module categories (\Cref{Prop:Mackeycat_quot}) in modular representation theory or genuine $G$-equivariant stable homotopy categories (\Cref{Exa:SH(G)}) can be shown not to come from the restriction of a derivator to finite groups. Yet they are central examples of Mackey 2-functors and we explain how to prove this in \Cref{ch:Examples}.
\tristars

Let us now say a word of the motivic approach, which is our most ambitious goal. It will occupy the lion's share of this work, namely \Cref{ch:bicat-spans,ch:2-motives,ch:additive-motives}. We now discuss these ideas for readers with limited previous exposure to motives. A more technical introduction can be found in \Cref{sec:Mackey-2-motives-intro}.

In algebraic geometry, Grothendieck's \emph{motives} encapsulate the common themes recurring throughout a broad range of `Weil' cohomology theories. These cohomology theories are defined on algebraic varieties (\eg~on smooth projective varieties), take values in all sorts of different abelian categories, and are described axiomatically.
Instead of algebraic varieties, we consider here finite groups. Instead of Weil cohomology theories, we consider of course Mackey 2-functors.

The motivic program seeks to construct an initial structure through which all other instances of the same sort of structure will factor. These ideas led Grothendieck to the plain 1-category of (pure) motives in algebraic geometry. Because of our added 2-categorical layer, the same philosophy naturally leads us to a
\begin{center}
\emph{2-category of Mackey 2-motives}.
\end{center}

The key feature of this 2-category is that every single Mackey 2-functor out there factors uniquely via Mackey 2-motives. The proof of this non-trivial fact is another application of the Rectification Theorem, together with some new constructions. Since we hammered the point that Mackey 2-functors are not mere figments of our imagination but very common structures, this factorization result applies broadly to many situations pre-dating our theory.
%

Perhaps this is a good place to further comment in non-specialized terms on the virtues of the motivic approach, beginning with algebraic geometry. The fundamental idea is of course the following. Since every Weil cohomology theory factors canonically via the category of motives, each result that can be established motivically will have a realization, an avatar, in every single example. Among the most successful such results are the so-called `motivic decompositions'. In the motivic category, some varieties~$X$ decompose as a direct sum of other simpler motives. As a corollary, every single Weil cohomology theory evaluated at~$X$ will decompose into simpler pieces accordingly. The motivic decomposition happens entirely within the `abstract' motivic world but the application happens wherever the Weil cohomology takes its values. And since Weil cohomology theories come in all shapes and forms, this type of result is truly powerful.

Let us see how this transposes to Mackey 2-motives. The overall pattern is the same. Whenever we find a motivic decomposition of the 2-motive of a given finite group~$G$, we know in advance that \emph{every single} Mackey 2-functor $\MM(G)$ evaluated at that group will decompose into smaller pieces accordingly. Because of the additional 2-layer, things happen `one level up', namely we decompose the identity 1-cell of~$G$, which really amounts to decomposing the 2-motive~$G$ up to an equivalence (see the `block decompositions' of~\ref{sec:additive-bicats}). Again, the range of applications is as broad as the list of examples of Mackey 2-functors.

In order to obtain concrete motivic decompositions, one needs to compute some endomorphism rings in the 2-category of Mackey 2-motives, more precisely the ring of 2-endomorphisms of the identity 1-cell~$\Id_G$ of the Mackey 2-motive of~$G$. Every decomposition of those rings, \ie any splitting of the unit into sum of idempotents, will produce decompositions of the categories $\MM(G)$ into `blocks' corresponding to those idempotents.

In this direction, we prove in \Cref{ch:additive-motives} that the above 2-endomorphism ring of~$\Id_G$ is isomorphic to a ring already known to representation theorists, namely the so-called \emph{crossed Burnside ring} of~$G$ introduced by Yoshida~\cite{Yoshida97}. See also Oda-Yoshida~\cite{OdaYoshida01} or Bouc~\cite{Bouc03}. The blas\'e reader should pause and appreciate the little miracle: A ring that we define through an a priori very abstract motivic construction turns out to be a ring with a relatively simple description, already known to representation theorists. It follows from this computation that every decomposition of the crossed Burnside ring yields a block decomposition of the Mackey 2-motive of~$G$ and therefore of \emph{every} Mackey 2-functor evaluated at~$G$, in every single example known today or to be discovered in the future.

\tristars
\smallbreak

This concludes the informal outline of this book. In addition to the seven main chapters mentioned above, we include two appendices. \Cref{app:categorical-reminders} collects all categorical prerequisites whereas \Cref{app:old-Mackey} is dedicated to ordinary Mackey functors. We also draw the reader's attention to the extensive index at the very end, that will hopefully show useful in navigating the text.

\smallbreak

A comparison with existing literature can be found in \Cref{sec:literature}, after we introduce some relevant terminology in \Cref{ch:expanded-introduction}.

\medbreak
\subsection*{Acknowledgements:}

We thank Serge Bouc, Yonatan Harpaz, Ioannis Lagkas, Akhil Mathew, Hiroyuki Nakaoka, Beren Sanders, Stefan Schwede and Alexis Virelizier, for many motivating discussions and for technical assistance.

For the final version of this work, we are especially thankful to Serge Bouc, who recognized in an earlier draft a ring that was known to specialists as the crossed Burnside ring. Our revised \Cref{ch:additive-motives} owes a lot to Serge's insight and to his generosity.

We are also grateful to an anonymous referee for their careful reading and helpful suggestions.

%
\chapter{Survey of results}
\label{ch:expanded-introduction}%
\pagenumbering{arabic}
\bigbreak
\begin{chapter-one}

This chapter is a more precise introduction to the ideas contained in this book.

\bigbreak
\section{The definition of Mackey 2-functors}
\label{sec:Mackey-2-functors}
\medskip

Our first task is to clarify the notion of \emph{Mackey 2-functor}. As discussed in the Introduction, Mackey 2-functors are supposed to axiomatize the assignment $G\mapsto \MM(G)$ of additive categories to finite groups, in a way that categorifies ordinary Mackey functors and captures the examples arising in Nature.

In fact, not only are 2-categories the natural framework for the output of a Mackey 2-functor $G\mapsto \MM(G)$, the input of~$\MM$ is truly 2-categorical as well. Indeed, the class of finite groups is advantageously replaced by the class of finite \emph{groupoids}. This apparently modest generalization not only harmonizes the input and output of our Mackey 2-functor~$G\mapsto\MM(G)$ but also distinguishes the two roles played by conjugation with respect to an element $x$ of a group~$G$, either as a plain group homomorphism ${}^x(-)\colon H\isoto {}^{x\!} H$ (at the 1-level) or as a relation ${}^{x\!} f_1=f_2$ between parallel group homomorphisms $f_1,f_2\colon H\to G$ (at the 2-level). Furthermore, the 2-categorical approach allows for much cleaner Mackey formulas, in the form of Beck-Chevalley base-change formulas. The classical Mackey formula in the form of the `double-coset formula'~\eqref{eq:old-Mackey-intro} involves non-canonical choices of representatives in double-cosets. Such a non-natural concept cannot hold up very long in 2-categories, where equalities are replaced by isomorphisms which would then also depend on these choices. Just from the authors' personal experience, the reader may want to consult~\cite{DellAmbrogio14} and~\cite{Balmer15} for a glimpse of the difficulties that quickly arise when trying to keep track of such choices. This further motivates us to use the cleaner approach via groupoids.

\begin{Not}
\label{Not:groupoid}%
\index{$gpd$@$\gpd$ \, 2-category of finite groupoids}
\index{groupoid}%
\index{gpd@$\groupoid$}
Of central use in this work is the 2-category
\[
\groupoid=\{\textrm{finite groupoids, functors, natural transformations}\}
\]
of finite groupoids, \ie categories with finitely many objects and morphisms, in which all morphisms are invertible. The 2-category $\groupoid$ is a 1-full and 2-full 2-subcategory of the 2-category of small categories~$\Cat$. Note that every 2-morphism in~$\groupoid$ is invertible, that is, $\groupoid$ is a (2,1)-category (\Cref{Def:2-1-category}).

We denote the objects of~$\groupoid$ by the same letters we typically use for groups, namely~$G$, $H$, etc. The role played by subgroups $H\le G$ in groups is now taken over by \emph{faithful} functors $H\into G$ in groupoids. In view of its importance for our discussion, we fix a notation ($\into$) to indicate faithfulness.
\index{$^^^$@$\into$ \, faithful functor or 1-cell}%
\end{Not}

\begin{Rem}
\label{Rem:group(oid)}%
\index{connected groupoid} \index{groupoid!connected --}%
There is essentially no difference between a group and a groupoid with one object. Therefore \emph{we identify each finite group~$G$ with the associated one-object groupoid with morphism group~$G$} and still denote it by~$G$ in~$\groupoid$. Accordingly, there is no difference between group homomorphisms $f\colon G\to G'$ and the associated 1-morphisms of one-object groupoids, and we denote them by the same symbol $f\colon G\to G'$. In that case, the functor $f\colon G\into G'$ is faithful if and only if the group homomorphism~$f$ is injective. The 2-morphisms $f_1\Rightarrow f_2$ between such functors $f_1,f_2\colon G\to G'$ in~$\groupoid$ are given at the group level by elements $x\in G'$ of the target group which conjugate one homomorphism into the other, ${}^{x\!}f_1=f_2$, that is, $x\,f_1(g)x\inv=f_2(g)$ for all~$g\in G$.

A groupoid is equivalent to a group if and only if it is \emph{connected}, meaning that every two of its objects are isomorphic.
\end{Rem}

\begin{Rem}
\label{Rem:iso-comma-intro}%
\index{iso-comma of groupoids}%
Given two morphisms of groupoids $i\colon H\to G$ and $u\colon K\to G$, with same target, we have the \emph{iso-comma} groupoid~$(i/u)$ whose objects are
\[
\Obj(i/u)=\SET{(x,y,g)}{x\in \Obj(H),\,y\in \Obj(K),\ g\colon i(x)\isoto u(y)\textrm{ in }G}
\]
with component-wise morphisms on the~$x$ and~$y$ parts (in~$H$ and~$K$) compatible with the isomorphisms~$g$ (in~$G$). See \Cref{sec:comma}. This groupoid~$(i/u)$ fits in a 2-cell
\begin{equation}
\label{eq:iso-comma-intro}%
\vcenter{\xymatrix@C=14pt@R=14pt{
& (i/u) \ar[dl]_-{p} \ar[dr]^-{q}
 \ar@{}[dd]|(.5){\isocell{\gamma}}
\\
H \ar[dr]_-{i}
&& K \ar[dl]^-{u}
\\
&G
}}
\end{equation}
where $p\colon (i/u)\to H$ and $q\colon (i/u)\to K$ are the obvious projections and $\gamma\colon i\,p\isoEcell u\,q$ is the isomorphism given at each object~$(x,y,g)$ of~$(i/u)$ by the third component~$g$. It is easy to check that if $i\colon H\into G$ is faithful then so is~$q\colon (i/u)\into K$.

Iso-comma squares like~\eqref{eq:iso-comma-intro}, and those equivalent to them, provide a refined version of pullbacks in the world of groupoids and will play a critical role throughout the work. \Cref{sec:comma} is dedicated to their study. For instance, when $G$ is a group and $H$ and $K$ are subgroups then the groupoid $(i/u)$ is equivalent to a coproduct of groups $K\cap {}^{x\!}H$, as in the double-coset formula. See details in \Cref{Rem:old-Mackey}.
\end{Rem}

We are going to consider 2-functors $\MM\colon \groupoid^{\op}\to \ADD$, contravariant on 1-cells (hence the `op'), defined on finite groupoids and taking values in the 2-category $\ADD$ of \emph{additive} categories and additive functors. Details about additivity are provided in \Cref{sec:additive-sedative} and~\ref{sec:additive-bicats}. For simplicity we apply the following customary rule:
\begin{Conv}
\label{Conv:ADD}%
Unless explicitly stated, every functor between additive categories is assumed to be additive (\Cref{Def:additive_fun}).
\end{Conv}

\begin{Rem}
\label{Rem:pre-2-Mackey}%
A \emph{2-functor} $\MM\colon \groupoid^{\op}\to \ADD$ is here always understood in the strict sense (see \Cref{Ter:pseudofun}) although we will occasionally repeat `\emph{strict}' 2-functor as a reminder to the reader and in contrast to pseudo-functors. So, such an $\MM$ consists of the following data:
\index{functor@2-functor $\groupoid^{\op}\to \ADD$}
\begin{enumerate}[\rm(a)]
\item
for every finite groupoid~$G$, an additive category~$\MM(G)$,
\smallbreak
\item
for every functor $u\colon H\to G$ in~$\groupoid$, a `restriction' functor $u^*\colon\MM(G)\to \MM(H)$,
\smallbreak
\item
for every natural transformation $\alpha\colon u\Rightarrow u'$ between two parallel functors $u,u'\colon H\to G$, a natural transformation\,(\footnote{\,Like in the theory of (pre)derivators, the variance of $\MM$ on 2-morphisms is a matter of convention and could be chosen opposite since $\groupoid^\co\cong \groupoid$ via $G\mapsto G^{\op}$. (The superscript `$\co$' on a 2-category denotes the formal reversal of \emph{2}-cells.)}) $\alpha^*\colon u^*\Rightarrow (u')^*$,
\end{enumerate}
subject to the obvious compatibilities with identities and compositions on the nose (hence the word `strict'). In particular $(uv)^*=v^*u^*$ and $(\alpha\beta)^*=\alpha^*\beta^*$.
\end{Rem}

\smallbreak
With this preparation, we can give our central definition.
\begin{Def}
\label{Def:Mackey-2-functor-intro}%
\index{Mackey 2-functor}%
A \emph{(global) Mackey 2-functor} is a strict 2-functor (\Cref{Rem:pre-2-Mackey})
\[
\MM\colon \groupoid^{\op}\to \ADD
\]
from finite groupoids to additive categories which satisfies the following axioms:
\begin{enumerate}[\rm({Mack}\,1)]
\item
\label{Mack-1-intro}%
\emph{Additivity}: For every finite family $\{G_c\}_{c\in C}$ in~$\groupoid$, the natural functor
\[
\big(\incl_c^*)_{{}_{c\in C}}\colon\MM\big(\,\coprod_{d\in C} G_d\,\big) \ \too\ \prod_{c\in C}\MM(G_c)
\]
is an equivalence, where $\incl_c\colon G_c\into \coprod_{d}G_d$ is the inclusion for all~$c\in C$.
\smallbreak
\item
\label{Mack-2-intro}%
\emph{Induction and coinduction}: For every faithful functor $i\colon H\into G$, the restriction functor $i^*\colon \MM(G)\to \MM(H)$ admits a left adjoint~$i_!$ and a right adjoint~$i_*$:
\[
\vcenter{\xymatrix@R=2em{
\MM(G) \ar[d]|(.48){\,i^*}
\\
\MM(H) \ar@/^1em/@<1em>[u]^-{i_!}_-{\ \adj} \ar@/_1em/@<-1em>[u]_-{i_*}^-{\adj\ }
}}
\]
\smallbreak
\item
\label{Mack-3-intro}%
\index{Mackey formula} \index{base-change} \index{Beck-Chevalley formula}%
\emph{Base-change formulas}: For every iso-comma square of finite groupoids as in~\eqref{eq:iso-comma-intro} in which~$i$ and (therefore) $q$ are faithful
\[
\vcenter{\xymatrix@C=14pt@R=14pt{
& (i/u) \ar[dl]_-{p} \ar@{ >->}[dr]^-{q}
 \ar@{}[dd]|(.5){\isocell{\gamma}}
\\
H \ar@{ >->}[dr]_-{i}
&& K \ar[dl]^-{u}
\\
&G
}}
\]
we have two isomorphisms
\[
q_!\circ p^* \Isocell{\gamma_!} u^*\circ i_!
\qquadtext{and}
u^*\circ i_* \Isocell{(\gamma\inv)_*} q_*\circ p^*
\]
given by the left mate $\gamma_!$ of $\gamma^*\colon p^*i^*\Rightarrow q^*u^*$ and the right mate $(\gamma\inv)_*$ of $(\gamma\inv)^*\colon q^*u^*\Rightarrow p^*i^*$. See \Cref{sec:mates} for details about mates.
\smallbreak
\item
\label{Mack-4-intro}%
\index{ambidexterity}%
\emph{Ambidexterity}: For every faithful~$i$, there exists an isomorphism
\[
i_!\simeq i_*
\]
between some (hence any) left and right adjoints of $i^*$ given in~\Mackintro{2}.
\end{enumerate}
\end{Def}

\begin{Rem} \label{Rem:dual-Mackey}
The axioms are self-dual in the sense that, if $\MM$ is a Mackey 2-functor, then there is a Mackey 2-functor $\MM^\op$ defined by $\MM^\op(G):=\MM(G)^\op$. This has the effect of exchanging the roles of the left and right adjunctions $i_!\dashv i^*$ and $i^*\dashv i_*$.
\end{Rem}

\begin{Exas}
\label{Exa:Mackey-2-functors}%
Mackey 2-functors abound in Nature. They include:
\begin{enumerate}[(a)]
\item
Usual $\kk$-linear representations $\MM(G)=\Mod(\kk G)$. See \Cref{Exa:lin_reps}.
\smallbreak
\item
Their derived categories $\MM(G)=\Der(\kk G)$. See \Cref{Exa:derived-cat}.
\smallbreak
\item
\label{it:Exa-Mackey-Stab}%
Stable module categories $\MM(G)=\Stab(\kk G)$. See \Cref{Exa:stable-module-cat}. (In this case, we shall restrict attention to a sub-2-category of groupoids by allowing only faithful functors as 1-cells.)
\smallbreak
\item
Equivariant stable homotopy categories $\MM(G)=\SH(G)$. See \Cref{Exa:SH(G)}.
\smallbreak
\item
Equivariant Kasparov categories $\MM(G)=\KK(G)$. See \Cref{Exa:KKetc}.
\smallbreak
\item
Abelian categories of ordinary Mackey functors $\MM(G)=\Mackey_\kk(G)$. See \Cref{Cor:Mackey-functors-abelian-example}.
\item
Abelian and derived categories of equivariant sheaves over a locally ringed space with $G$-action. See Examples~\ref{Exa:LRS}-\ref{Exa:Der}. (In this case, we shall restrict attention to a suitable comma 2-category of groupoids faithfully embedded in~$G$.)
\end{enumerate}
\end{Exas}

\begin{Rem}
\label{Rem:Mackey-def}%
Let us comment on \Cref{Def:Mackey-2-functor-intro}.
\begin{enumerate}[(a)]
\item
\label{it:add}%
In \Cref{Def:Mackey-2-functor}, we shall generalize the above definition by allowing the input of~$\MM$ to consist only of a specified 2-subcategory of groupoids. The necessity for this flexibility already appears in \Cref{Exa:Mackey-2-functors}\,\eqref{it:Exa-Mackey-Stab} above. Later we will even consider more abstract 2-categories as input for~$\MM$ (\Cref{Hyp:G_and_I_for_Span}). The above \Cref{Def:Mackey-2-functor-intro} is the `global' version of Mackey 2-functor~$\MM$ where $\MM(G)$ is defined for all groupoids~$G$, and $u^*$ for all functors~$u$.
\smallbreak
\item
\label{it:groups}
The first axiom \Mackintro{1} is straightforward. Every finite groupoid is equivalent to the finite coproduct of its connected components, themselves equivalent to one-object groupoids (\ie groups). Thus \Mackintro{1} allows us to think of Mackey 2-functors~$\MM$ as essentially defined on finite groups. More on this in \Cref{sec:more-examples}.
\smallbreak
\item
\label{it:civilization}%
Just like the other axioms, the second and fourth ones are \emph{properties} of the 2-functor~$\MM$. The adjoints $i_!$ and $i_*$, and later the isomorphism $i_!\simeq i_*$, are \emph{not part of the structure} of a Mackey 2-functor. In particular all the units and counits involved in the adjunctions $i_!\adj i^* \adj i_*$ could be rather wild, at least in the above primeval formulation. All four axioms are stated in a way that is independent of the actual choices of left and right adjoints and associated units and counits: If the axioms hold for one such choice, they will hold for all choices. We shall spend some energy on making better choices than others, in order to establish civilized formulas. This is the topic of `rectification' discussed in \Cref{sec:rectification-intro}.
\smallbreak
\item
\index{BC-property}%
The third axiom is a standard Base-Change condition of Beck-Chevalley type (referred to as `BC-property' in any case). In the iso-comma square~\eqref{eq:iso-comma-intro}
\[
\vcenter{\xymatrix@C=10pt@R=10pt{
& (i/u) \ar[dl]_-{p} \ar@{ >->}[dr]^-{q}
 \ar@{}[dd]|(.5){\isocell{\gamma}}
\\
H \ar@{ >->}[dr]_-{i}
&& K \ar[dl]^-{u}
\\
&G
}}
\]
induction along~$i$ followed by restriction along~$u$ can equivalently be computed as first doing restriction along $p$ followed by induction along~$q$. The latter composition passes via the groupoid~$(i/u)$ which is typically a disjoint union of `smaller' groupoids, as in the double-coset formula (see \Cref{Rem:old-Mackey}). Of course the dual axiom $u^*\, i_*\stackrel{\sim}{\Rightarrow} q_*\,p^*$ should more naturally involve the dual iso-comma $(i\bs u)$. However, in groupoids we have a canonical \emph{isomorphism} $(i\bs u)\cong (i/u)$ when the latter is equipped with the 2-cell~$\gamma\inv$. This explains our simplified formulation with $(\gamma\inv)_*\colon u^*\, i_*\stackrel{\sim}{\Rightarrow} q_*\,p^*$ and no mention of~$(i\bs u)$.
\smallbreak
\item
\label{it:i_!=i^*}%
The fourth axiom is a standard property of many 2-functors from groups to additive categories: induction and co-induction coincide. Any ambidexterity isomorphism $i_!\simeq i_*$ can be used to equip $i_!$ with the units and counits of $i^*\adj i_*$, thus making the left adjoint $i_!$ a \emph{right} adjoint as well. So we can equivalently assume the existence of a single two-sided adjoint $i_!=i_*$ of~$i^*$. This simplification will be useful eventually but at first it can also be confusing. In most examples, Nature provides us with canonical left adjoints~$i_!$ and canonical right adjoints~$i_*$, for instance by means of (derived) Kan extensions. Such adjoints are built differently on the two sides and happen to be isomorphic in the equivariant setting. We shall give in \Cref{ch:Theta} a mathematical explanation of why this phenomenon is so common.
\end{enumerate}
\end{Rem}

\bigbreak
\section{Rectification}
\label{sec:rectification-intro}
\medskip

Following up on Remark~\ref{Rem:Mackey-def}\,\eqref{it:civilization}, we emphasize the slightly naive nature of the ambidexterity axiom~\Mackintro{4}. As stated, this axiom is easy to verify in examples as it only requires some completely \adhoc\ isomorphism $i_!\simeq i_*$ for each faithful~$i\colon H\into G$, with no reference to the fact that $i\mapsto i_!$ and $i\mapsto i_*$ are canonically pseudo-functorial. Standard adjunction theory (\Cref{Rem:pseudo-func-of-adjoints}) tells us that every 2-cell $\alpha\colon i\Rightarrow i'$ will yield $\alpha_!\colon i'_!\Rightarrow i_!$ and $\alpha_*\colon i'_*\Rightarrow i_*$. Furthermore, every composable $j\colon K\into H$, $i\colon H\into G$  will yield isomorphisms $(ij)_!\cong i_!j_!$ and $(ij)_*\cong i_*j_*$. It is then legitimate to ask whether the isomorphism $i_!\simeq i_*$ can be `rectified' so as to be compatible with all of the above.

Similarly, following up on Remark~\ref{Rem:Mackey-def}\,\eqref{it:i_!=i^*}, let us say we choose a single two-sided adjoint $i_!=i_*$ for all faithful~$i\colon H\into G$.
In particular, in an iso-comma~\eqref{eq:iso-comma-intro}
\[
\vcenter{\xymatrix@C=10pt@R=10pt{
& (i/u) \ar[dl]_-{p} \ar@{ >->}[dr]^-{q}
 \ar@{}[dd]|(.5){\isocell{\gamma}}
\\
H \ar@{ >->}[dr]_-{i}
&& K \ar[dl]^-{u}
\\
&G
}}
\]
we not only have $i_!=i_*$ but also $q_!=q_*$. So we can write $u^*i_!$ as $u^*i_*$ and similarly $q_!p^*$ as~$q_*p^*$.
Then the BC-formulas \Mackintro{3} provide \emph{two} ways of comparing the `bottom' composition $u^*i_!=u^*i_*$ with the `top' composition $q_!p^*=q_*p^*$, one via~$\gamma_!$ and one via~$(\gamma\inv)_*$. It is again legitimate to wonder whether they agree.

The solution to these questions appears in \Cref{ch:Theta}, where we reach two goals. First, we show how to prove ambidexterity by induction on the order of the finite groupoids (\Cref{Prop:induction-on-G}); this will be an essential part of the Ambidexterity \Cref{Thm:ambidex-der}. Second, assuming that ambidexterity holds even only in the weak sense of \Cref{Def:Mackey-2-functor-intro}, we show that it must then hold \emph{for a good reason}: There exists a \emph{canonical} isomorphism between induction and coinduction satisfying several extra properties (\eg it is a pseudo-natural transformation as in \Cref{Ter:Hom_bicats}). This Rectification \Cref{Thm:rectification} yields several improvements to the notion of Mackey 2-functor, like a `strict' Mackey formula~\Mack{7}, the agreement of the pseudo-functorialities of induction and coinduction as discussed above, a `special Frobenius' property, etc. We also provide, \textsl{en passant}, some less important but convenient normalization of the values of the units and counits of the adjunctions $i_!\adj i^* \adj i_*$ in connection with additivity, and in `trivial' cases. Here is the full statement:

\begin{Thm}[Rectification Theorem; see \Cref{Thm:rectification}]
\label{Thm:rectification-intro}%
Consider a Mackey 2-functor $\MM\colon \groupoid^\op\to \ADD$ as in \Cref{Def:Mackey-2-functor-intro}. Then there is for each faithful $i\colon H\into G$ in~$\groupoid$ a unique choice (up to unique isomorphism) of a two-sided adjoint
\[
i_!=i_*\colon \MM(H)\to \MM(G)
\]
of restriction~$i^*\colon \MM(G)\to \MM(H)$ and units and counits
\[
\leta\colon \Id \Rightarrow i^*i_!
\qquad
\leps\colon i_! i^* \Rightarrow \Id
\qquadtext{and}
\reta\colon \Id \Rightarrow i_* i^*
\qquad
\reps\colon i^*i_* \Rightarrow \Id
\]
for $i_!\adj i^*$ and $i^* \adj i_*$ respectively, such that all the following properties hold:
\begin{enumerate}[\rm({Mack}\,1)]
\setcounter{enumi}{4}
\item
\label{Mack-5}%
Additivity of adjoints: Whenever $i=i_1\sqcup i_2\colon H_1\sqcup H_2\into G$, under the identification $\MM(H_1\sqcup H_2)\cong \MM(H_1)\oplus \MM(H_2)$ of~\Mackintro{1} we have
\[
(i_1\sqcup i_1)_!=\big((i_1)_! \ (i_2)_!\big)
\qquadtext{and}
(i_1\sqcup i_1)_*=\big((i_1)_* \ (i_2)_*\big)
\]
with the obvious `diagonal' units and counits. (See \Cref{Rem:add-adjoint}.)
\smallbreak
\item
\label{Mack-6}%
Two-sided adjoint equivalences: Whenever $i^*$ is an equivalence, the units and counits are isomorphisms and $(\leta)\inv=\reps$ and $(\leps)\inv=\reta$. Furthermore when $i=\Id$ we have $i_!=i_*=\Id$ with identity units and counits.
\smallbreak
\item
\label{Mack-7}%
\index{strict Mackey formula} \index{Mackey formula!strict --}%
Strict Mackey Formula: For every iso-comma as in~\eqref{eq:iso-comma-intro}, the two isomorphisms $\gamma_!\colon q_! p^* \stackrel{\sim}{\Rightarrow} u^* i_!$ and $(\gamma\inv)_*\colon u^* i_* \stackrel{\sim}{\Rightarrow} q_* p^*$ of~\Mackintro{3} are moreover inverse to one another
\[
\gamma_!\circ (\gamma\inv)_* =\id
\qquadtext{and}
(\gamma\inv)_* \circ \gamma_!=\id
\]
under the equality $u^*i_!=u^*i_*$ and $q_!p^*=q_*p^*$ of their sources and targets.
\smallbreak
\item
\label{Mack-8}%
Agreement of pseudo-functors: The pseudo-functors $i\mapsto i_!$ and $i\mapsto i_*$ coincide, namely: For every 2-cell $\alpha\colon i\Rightarrow i'$ between faithful $i,i'\colon H\into G$ we have $\alpha_!=\alpha_*$ as morphisms between the functors $i_!=i_*$ and $i'_!=i'_*$; and for every composable faithful morphisms $j\colon K\into H$ and $i\colon H\into G$ the isomorphisms $(ij)_!\cong i_! j_!$ and $(ij)_*\cong i_* j_*$ coincide.
\smallbreak
\item
\label{Mack-9}%
Special Frobenius Property: For every faithful $i\colon H\into G$, the composite
\[
\Id_{\MM(H)} \stackrel{\leta\ }{\Longrightarrow} i^*i_!=i^*i_* \stackrel{\reps\ }{\Longrightarrow} \Id_{\MM(H)}
\]
of the left unit and the right counit is the identity.
\smallbreak
\item
\label{Mack-10}%
Off-diagonal vanishing: For every faithful $i\colon H\into G$, if $\incl_C\colon C\hookrightarrow (i/i)$ denotes the inclusion of the complement $C:=(i/i)\smallsetminus \Delta_i(H)$ of the `diagonal component'~$\Delta_i(H)$ in the iso-comma square
\begin{equation*}
\vcenter{
\xymatrix@C=14pt@R=14pt{
& (i/i) \ar[dl]_-{p_1} \ar[dr]^-{p_2}
\\
H \ar[dr]_-{i} \ar@{}[rr]|-{\isocell{\lambda}}
&& H \ar[dl]^-{i}
\\
&G
}}
\end{equation*}
(see \Cref{sec:self-iso} for the definition of the diagonal~$\Delta_i\colon H\into (i/i)$ and first properties) then the whiskered natural transformation
\[
\incl_C^* \left(
p_1^* \stackrel{p_1^*\;\leta\;}{\Longrightarrow} p_1^*i^*i_! \stackrel{\lambda^* \;\id\;}{\Longrightarrow} p_2^* i^*i_* \stackrel{p_2^*\;\reps\;}{\Longrightarrow} p_2^*
\right)
\]
is zero.
\end{enumerate}
\end{Thm}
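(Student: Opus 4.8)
The plan is to take the a-priori independent adjoints supplied by~\Mackintro{2} and to normalize them step by step into a single, canonically determined two-sided adjoint. First I would fix for each faithful $i\colon H\into G$ a left adjoint $(i_!,\leta,\leps)$ and a right adjoint $(i_*,\reta,\reps)$. Using the additivity equivalence of~\Mackintro{1} I would choose these compatibly with coproduct decompositions of the source, which makes~\Mack{5} hold on the nose, and I would take $i_!=i_*=\Id$ with identity units and counits when $i=\Id$, and the canonical (co)inverse adjunction data when $i^*$ is an equivalence, which yields~\Mack{6}. By~\Mackintro{4} there is \emph{some} isomorphism $i_!\simeq i_*$; transporting the right adjunction along it lets me assume from now on that $i_!=i_*$ as a genuine two-sided adjoint. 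All remaining freedom then sits in the right-hand units and counits $(\reta,\reps)$, constrained only by the triangle identities.

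The key idea is to recognize that the whole of~\Mack{9} and~\Mack{10} is a single clean requirement on the self-iso-comma square $(i/i)$ of~\Cref{sec:self-iso}. Consider the 2-cell appearing in~\Mack{10},
\[
\Phi\colon\quad p_1^* \stackrel{p_1^*\;\leta}{\Longrightarrow} p_1^*i^*i_! \stackrel{\lambda^*\;\id}{\Longrightarrow} p_2^*i^*i_* \stackrel{p_2^*\;\reps}{\Longrightarrow} p_2^*\,,
\]
whose middle arrow is induced by the structural 2-cell~$\lambda$ of the self-iso-comma. Since $p_1\Delta_i=p_2\Delta_i=\Id_H$ and $\lambda$ restricts to the identity on the diagonal, applying $\Delta_i^*$ to~$\Phi$ returns exactly the Frobenius composite of~\Mack{9}, while $\incl_C^*\Phi$ is exactly the transformation of~\Mack{10}. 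Using the decomposition $(i/i)\simeq\Delta_i(H)\sqcup C$ and the additivity~\Mackintro{1}, the two axioms~\Mack{9} and~\Mack{10} together assert \emph{precisely} that $\Phi$ is the identity on the diagonal component and zero on its complement~$C$. I would therefore aim to choose the right-hand units and counits so that $\Phi$ has exactly this profile.

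The main obstacle is to prove that such a normalization exists, and I would establish it by induction on~$|G|$, working componentwise over $(i/i)\simeq\Delta_i(H)\sqcup C$. On the diagonal component the requirement is the Frobenius normalization~\Mack{9}; on each component of~$C$ the base-change isomorphisms $\gamma_!$ and $(\gamma\inv)_*$ of~\Mackintro{3} re-express the restriction of~$\Phi$ through induction and restriction for the corresponding groupoid. For components arising from genuine double cosets this groupoid is \emph{strictly smaller} than~$H$, so the inductive hypothesis supplies rectified, ambidextrous adjoints there and forces the vanishing; the delicate components are those arising from the normalizer of~$H$, whose Weyl group $N_G(H)/H$ produces components of the \emph{same} order as~$H$, and these must instead be controlled through the conjugation equivalences already normalized in~\Mack{6}. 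The hardest single point—and the technical heart of the whole theorem—is to check that these componentwise prescriptions are mutually consistent: that they patch into honest adjunction units and counits obeying the triangle identities, and that the resulting comparison $i_!=i_*$ is compatible with whiskering by 2-cells and with horizontal composition, which is precisely the pseudo-naturality asserted by~\Mack{8}. This is a sustained computation with mates organized around the self-iso-comma squares.

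With the data so normalized, the remaining axioms fall out. Properties~\Mack{9} and~\Mack{10} hold by construction, being the diagonal and off-diagonal halves of the profile of~$\Phi$. For the strict Mackey formula~\Mack{7} I would substitute this profile into the mates defining $\gamma_!$ and $(\gamma\inv)_*$ for an arbitrary iso-comma and check that, under the identification $i_!=i_*$, the two become mutually inverse. Property~\Mack{8} is the pseudo-naturality already secured during the inductive construction. Finally, uniqueness up to unique isomorphism is immediate from the componentwise characterization: the values of all units and counits are pinned down on the diagonal by~\Mack{9} and on the complement~$C$ by~\Mack{10}, so any two rectified structures differ by a unique compatible isomorphism.
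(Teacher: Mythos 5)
Your central organizing idea is indeed the paper's: \Mack{9} and \Mack{10} together say that the composite 2-cell over the self-iso-comma $(i/i)$ must be the identity on the diagonal component and zero on its complement, and this profile characterizes the rectified structure; your handling of \Mack{5} and \Mack{6} by up-front choices also matches the paper. But your construction scheme has a genuine gap. The existence \emph{and} uniqueness of a comparison 2-cell $\Theta_i\colon i_!\Rightarrow i_*$ with the prescribed profile is not something to be built componentwise over $(i/i)$ and then patched: it follows at once, with no induction and no patching problem, from the fact that $\theta\mapsto \reps\circ(\lambda^*\,\theta)\circ\leta$ is a \emph{bijection} $[i_!,i_*]\isoto[p_1^*,p_2^*]$, which is a formal consequence of the two BC-isomorphisms $\lambda_!$ and $\lambda_*$ for the self-iso-comma (\Cref{Prop:nabla}, \Cref{Cor:nabla}). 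Your proposal never isolates this bijection; without it, your claim that the units and counits are ``pinned down'' by the profile (your uniqueness argument) is unjustified, and your worry about patching triangle identities is a non-issue, since once one has an \emph{isomorphism} $i_!\cong i_*$ one simply transports the right adjunction along it and the triangle identities come for free.

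The real difficulty, which your plan misplaces, is the \emph{invertibility} of the canonical comparison $\Theta_i=\nabla_i^{-1}(\delta_i)$. Transporting along an arbitrary ambidexterity isomorphism at the outset, as you do, does not make this canonical $\Theta_i$ invertible, and until it is invertible you cannot re-choose $(\reta,\reps)$ to achieve the profile. The paper proves invertibility (\Cref{Thm:Mackey-and-Theta}) by induction on the order of~$G$ (\Cref{Prop:induction-on-G}): reduce to $H$, $G$ connected with $i$ not full, so the \emph{target} $H$ of $p_2\colon (i/i)\into H$ has strictly smaller order and the inductive hypothesis applies to $\Theta_{p_2}$ --- the induction is on the order of the target, so your concern about Weyl-group components of $(i/i)$ having the same order as $H$ is moot; then the compatibility of $\Theta$ with Mackey squares (\Cref{Thm:Theta-properties}\,\eqref{it:Theta-magic}, itself nontrivial and needed \emph{before} invertibility is known) gives that $i^*\Theta_i$ is invertible; finally \Cref{Cor:detect-iso} --- an endomorphism of a right adjoint whose whiskering by the left adjoint is invertible is itself invertible --- concludes, and this is the precise point where ambidexterity \Mackintro{4} enters, by making $i_!$ a right adjoint of~$i^*$. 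Relatedly, your dismissal of \Mack{7} and \Mack{8} as routine substitutions underestimates them: they require the naturality of $\delta_i$ and $\nabla_i$ in~$i$ with respect to the comparison functors between self-iso-commas, partial Mackey squares, and the component trichotomy of $(ij/ij)$ (\Cref{Prop:ij-components}), which constitutes the bulk of the paper's actual proof.
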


\begin{Rem} \label{Rem:expl_inv_BC}
Most notable in the list of properties of \Cref{Thm:rectification-intro} is perhaps the `Strict Mackey Formula'~(Mack\,\ref{Mack-7}). It can be understood as saying that the base-change formula that we give in~\Mackintro{3} is substantially nicer than ordinary BC-formulas encountered in the literature, which usually just say that $\gamma_!$ is an isomorphism without providing an actual inverse. Here an explicit inverse appears as part of the rectified structure:
\begin{equation}
\label{eq:magic-compact}%
(\gamma_!)\inv=(\gamma\inv)_*\,.
\end{equation}
This formula is a purely 2-categorical property which has no counterpart in the world of ordinary Mackey functors. The authors did not anticipate its existence when first embarking on this project.
\end{Rem}

\bigbreak
\section{Separable monadicity}
\label{sec:monadicity-intro}
\medskip

In \Cref{sec:monadicity}, we immediately put the Rectification \Cref{Thm:rectification-intro} to use, and more specifically the Special Frobenius Property~\Mack{9}. Indeed, we prove in \Cref{Thm:monadicity} that for every Mackey 2-functor~$\MM$ and for every subgroup~$H\le G$, restriction and (co)\,induction functors along $i\colon H\into G$
\[
\xymatrix{
\MM(G) \ar@<-.6em>[d]_-{i^*=\Res^G_H}
\\
\MM(H) \ar@<-.6em>[u]_-{i_*=\Ind_H^G}^-{\adj}
}
\]
automatically satisfy \emph{separable monadicity}. Formally, monadicity means that this adjunction induces an equivalence between the bottom category~$\MM(H)$ and the Eilenberg-Moore category of modules (\aka algebras) over the associated monad $\bbA:=\Ind_H^G\Res^G_H$ on the top category~$\MM(G)$.

In simpler terms it means that one can construct $\MM(H)$ out of~$\MM(G)$, as a category of modules with respect to a generalized ring (the monad), in such a way that restriction $\Res^G_H\colon \MM(G)\to \MM(H)$ becomes an extension-of-scalars functor. This realizes the intuition that the category of $H$-equivariant objects should be in some sense `carved out' of the bigger category of $G$-equivariant objects. This intuition can almost never be realized via a more naive construction, like a categorical localization for instance. However, it can be realized via an extension-of-scalar as above. Moreover, this extension is very nice: it is \emph{separable}.

Recall that a monad $\bbA$ is separable \index{separable monad} if its multiplication $\mu\colon \bbA\circ \bbA\to \bbA$ admits an $\bbA,\bbA$-bilinear section $\sigma\colon \bbA\to \bbA\circ \bbA$. For rings (think of monads of the form $\bbA=A\otimes_R-$ for an algebra~$A$ over a commutative ring~$R$), this notion of separability is classical and goes back to Auslander-Goldman~\cite{AuslanderGoldman60}. Over fields, it covers the notion of finite separable extension. The simplest form of separable monad are the idempotent monads, \ie those whose multiplication $\mu\colon\bbA \circ \bbA \isoto \bbA$ is an isomorphism (think of the ring $A=S\inv R$). Idempotent monads are exactly Bousfield localizations. In fact, as we explain for instance in~\cite{BalmerDellAmbrogioSanders15,Balmer16}, separable monads are to idempotent monads what separable extensions are to localizations, or what the \'etale topology is to the Zariski topology in algebraic geometry.

In other words, knowing that the category $\MM(G)$ is part of a Mackey 2-functor~$\MM$ automatically tells us that the collection of restrictions $\MM(G)\to \MM(H)$ for all subgroups~$H\le G$ provides us with a collection of `abstract \'etale extensions' of~$\MM(G)$. These extensions can then be used with an intuition coming from algebraic geometry, for instance in combination with the theory of descent. We refer the interested reader to~\cite{Balmer15,Balmer16} for earlier developments along these lines in special cases, for instance in modular representation theory.

After the first such separable monadicity result was isolated in~\cite{Balmer15} for ordinary representation theory, we undertook in~\cite{BalmerDellAmbrogioSanders15}, together with Sanders, to transpose the idea to other equivariant settings, beyond algebra. Although we gave only a few examples, they came from sufficiently different backgrounds that the existence of a deeper truth was already apparent. Yet, we could not formulate the result axiomatically. With Mackey 2-functors, we now can.

Applying the ideas of~\cite{Balmer16} to Mackey 2-functors taking values in tensor-triangulated categories is then a natural follow-up project of the present book.

\bigbreak
\section{Mackey 2-functors and Grothendieck derivators}
\label{sec:Mackey-vs-derivators}
\medskip

One of our main goals is to support our definition of Mackey 2-functor with a robust catalogue of examples, in common use in `equivariant mathematics'. In order to do so, we prove theorems showing that some standard structures can be used to construct Mackey 2-functors. In particular, we prove that every additive Grothendieck derivator~\cite{Groth13} provides a Mackey 2-functor when its domain is restricted to finite groupoids (\Cref{Thm:ambidex-der}). This result specializes to say that a Mackey 2-functor can be associated to any `stable homotopy theory', in the broad modern sense of `stable homotopy' that includes usual derived categories for instance. Further sources on derivators include~\cite{Franke96pp} and~\cite{Heller88}.

We recall the precise axioms (Der\,\ref{Der-1})--(Der\,\ref{Der-4}) of derivators in \Cref{sec:add-der-Mackey}. The prototype of a derivator is the strict 2-functor defined on all small categories
\[
\DD\colon \Cat^{\op}\to \CAT
\]
by $J\mapsto\Ho(\cat{Q}^J)$, the homotopy category of diagrams associated to a Quillen model category~$\cat{Q}$. In other words, every homotopy theory provides a derivator which encapsulates its 2-categorical information in terms of homotopy categories and homotopy limits and colimits (homotopy Kan extensions). In this way, every \emph{stable} model category~$\cat{Q}$ gives an \emph{additive} derivator, \ie one taking values in the 2-category of additive categories.

The analogies between Grothendieck's notion of derivators and our Mackey 2-functors are apparent. First of all, the axioms \Mackintro{1}, \Mackintro{2} and \Mackintro{3} are strongly inspired by the derivators' axioms (Der\,\ref{Der-1}), (Der\,\ref{Der-3}) and (Der\,\ref{Der-4}): We start from a strict 2-functor and require existence of adjoints and Beck-Chevalley properties for base-change along comma squares. For this very reason, additive derivators are a great source of Mackey 2-functors once we prove the Ambidexterity Theorem~\ref{Thm:ambidex-der}, which gives us the remaining~\Mackintro{4} for free in this case.

On the other hand, there are also important differences between the theory of derivators and that of Mackey 2-functors, beyond the obvious fact that Mackey 2-functors are only defined on finite groupoids and are required to take values in additive categories. Let us say a word about those differences.

The critical point is the lack of (Der\,\ref{Der-2}) for Mackey 2-functors. Indeed, for a derivator~$\DD$, the various values $\DD(J)$ at small categories~$J$ (\eg finite groupoids) are to be thought of as `coherent' versions of diagrams with shape~$J$ in the base~$\DD(1)$ over the final category~$1$ (which is often denoted $e$ in derivator theory, or~$[0]$). In the prototype of $\DD(J)=\Ho(\cat{Q}^J)$, this is the well-known distinction $\Ho(\cat{Q}^J) \neq \Ho(\cat{Q})^J$ between the homotopy category of diagrams and diagrams in the homotopy category. Axiom (Der\,\ref{Der-2}) then says that the canonical functor $\DD(J)\to \DD(1)^J$ is conservative, \ie isomorphisms in~$\DD(J)$ can be detected pointwise, by restricting along the functors $x\colon 1\to J$ for all objects $x$ of~$J$. We do not have such an axiom for Mackey 2-functors~$\MM$. Morphisms in~$\MM(G)$ which are pointwise isomorphisms are \emph{not} necessarily isomorphisms. This is already illustrated with $\MM(G)=\SH(G)$, the stable equivariant homotopy category, in which $\Res^G_1\colon \SH(G)\to \SH$ is not conservative. Hence proving that $\SH(G)$ forms a Mackey 2-functor requires a little more care; see \Cref{Exa:SH(G)}. Yet, the most striking example of a Mackey 2-functor which is not the restriction of a derivator because it fails~(Der\,\ref{Der-2}) is certainly $\MM(G)=\Stab(\kk G)$, the stable module category of $\kk G$-modules modulo projectives. Indeed, in this extreme case the base (or non-equivariant) category $\MM(1)=0$ is trivial and thus cannot detect much of anything.

Another difference between Mackey functors and derivators comes from ambidexterity~\Mackintro{4}, which is clearly a feature specific to Mackey 2-functors. Ambidexterity is also essential to our construction of the bicategory of Mackey 2-motives, discussed in the second part of this work (see \Cref{sec:Mackey-2-motives-intro} below). Of course, it is conceivable that one could construct an analogous 2-motivic version of derivators, resembling what we do here with Mackey 2-motives. In broad strokes, these derivator 2-motives could consist of a span-flavored construction in which two \emph{separate} forward functors $u_!$ and $u_*$ have to be formally introduced, one left adjoint and one right adjoint to the given~$u^*$. Composition of such $u_!$ and $u_*$ is however rather mysterious, and inverses to the BC-maps would have to be introduced artificially (\cf \Cref{Rem:expl_inv_BC}). If feasible, such a construction seems messy. It is a major simplification of the Mackey setting that we only need \emph{one} covariant functor $i_!=i_*$ and thus obtain a relatively simple motivic construction, as we explain next.

\bigbreak
\section{Mackey 2-motives}
\label{sec:Mackey-2-motives-intro}%
\medskip

\Cref{ch:bicat-spans,ch:2-motives,ch:additive-motives} are dedicated to the motivic approach to Mackey 2-functors. They culminate with \Cref{Thm:UP-Spanhat} in which we prove the universal property of (semi-additive) Mackey 2-motives. This part requires a little more of the theory of 2-categories and \emph{bicategories}, the generalizations of 2-categories in which horizontal composition of 1-morphisms works only up to coherent isomorphisms.

The basic tool for our constructions is the concept of `span', \ie short zig-zags of morphisms ${\scriptstyle\bullet}\lto{\scriptstyle\bullet}\to {\scriptstyle\bullet}$ and the unfamiliar reader can review ordinary categories of spans in \Cref{sec:ordinary-spans}.

Instead of producing a possibly mysterious universal construction via generators and relations, we follow a more down-to-earth approach. Our construction of Mackey 2-motives involves two layers of spans, first for 1-cells and then for 2-cells. The price for this explicit construction is paid when proving the universal property.

Although very explicit, there is no denying that these constructions and the proof of their universal properties are computation-heavy. As a counterweight, we establish a calculus of string diagrams in Mackey 2-motives which removes a great deal of the technicalities of this double-span construction and gives to some pages of this book an almost artistic quality. Arguably, we in fact provide \emph{two} explicit descriptions, one by means of spans of spans and one by means of string diagrams. Our 2-smart readers will identify the former as a bicategory and the latter as a biequivalent 2-category, \ie a `strictification'; see \Cref{sec:string-presentation}.

The voluminous \Cref{ch:bicat-spans} is mostly a preparation for the central \Cref{ch:2-motives}, whereas \Cref{ch:additive-motives} provides $\bbZ$-linearizations of the semi-additive results obtained in \Cref{ch:2-motives}. In more details, we construct the bicategory of additive Mackey 2-motives $\bbZ\Spanhat(\groupoid)$ through two layers of `span constructions' and one layer of `block-completion':
\[
\xymatrix@C=4.5em@L=1ex{
\groupoid^{\op} \ar[r]_-{\textrm{\Cref{ch:bicat-spans}}}
& \Span(\groupoid) \ar[r]_-{\textrm{\Cref{ch:2-motives}}}
& \Spanhat(\groupoid) \ar[r]_-{\textrm{\Cref{ch:additive-motives}}}
& \bbZ\Spanhat(\groupoid).}\kern-1em
\]
The first step (\Cref{ch:bicat-spans}) happens mainly at the level of 1-cells and creates left adjoints to every faithful~$i\colon H\into G$ but does not necessarily create right adjoints. The second span construction (\Cref{ch:2-motives}) takes place at the level of 2-cells and creates ambidexterity. The last step (\Cref{ch:additive-motives}) in the pursuit of the universal Mackey 2-functor out of~$\groupoid$ appears for a minor reason: With $\Spanhat(\groupoid)$, we have only achieved \emph{semi}-additivity of the target, not plain additivity. Explicitly, the 2-cells in the bicategory~$\Spanhat(\groupoid)$ can be added but they do not admit opposites. We solve this issue in \Cref{ch:additive-motives} by formally group-completing the 2-cells. While at it, we also locally idempotent-complete our bicategory in order to be able to split 1-cells according to idempotent 2-cells, and we do the same to 0-cells one level down, which is the meaning of `\emph{block-completion}'. The latter construction works as expected but might not be entirely familiar, so it is discussed in some detail in \Cref{sec:additive-bicats}. Such idempotent-completions are hallmarks of every theory of motives and they make sense in our 2-categorical setting at two different levels. The ultimate bicategory~$\bbZ\Spanhat(\groupoid)$ of truly \emph{additive} Mackey 2-motives satisfies a universal property (\Cref{sec:additive-motives}), which is easily deduced from the significantly harder universal properties of~$\Span(\groupoid)$ and~$\Spanhat(\groupoid)$ that we establish first (in \Cref{sec:UP-Span,sec:Mackey-UP} respectively).

We put the additive enrichment of~$\bbZ\Spanhat(\groupoid)$ to task in \Cref{sec:Yoneda-2-motives}, showing that the represented 2-functor $\bbZ\Spanhat(\groupoid)(G_0,-)$ is a Mackey 2-functor (in the variable~``$-$") for every fixed groupoid~$G_0$. For instance even the trivial group~$G_0=1$ produces an interesting Mackey 2-functor in this way (\Cref{Thm:1-Mack-is-2-Mack}). In the very short \Cref{sec:presheaves-2Mack}, we use another Yonedian technique (\Cref{Prop:extend-2Mack-via-Yoneda}) to show that the abelian category of ordinary Mackey functors on~$G$ is the value at~$G$ of some Mackey 2-functor: The Mackey 2-functor of Mackey functors (\Cref{Cor:Mackey-functors-abelian-example}).

We conclude the text with a critical aspect of the motivic construction, namely motivic decompositions. As explained in the Introduction, there are two components to this. First, we need to compute the endomorphism ring of the identity 1-cell $\Id_G$ of the 2-motive of~$G$ in the 2-category of Mackey 2-motives $\bbZ\Spanhat(\gpd)$. Secondly, we need to see how a decomposition of this ring yields block decompositions of~$\MM(G)$ for every Mackey 2-functor~$\MM$. We do the former in the important \Cref{sec:B(G)} and we explain the latter in the more formal \Cref{sec:decomposition}.

As we shall see, this 2-endomorphism ring of~$\Id_G$ turns out, rather miraculously, to be a known commutative ring in representation theory going by the name of \emph{crossed Burnside ring}. The usual Burnside ring~$\Bur(G)$ is perhaps better known, and can be described as the Grothendieck group of the category of finite $G$-sets. It admits a basis consisting of isomorphism classes of $G$-orbits~$G/H$, \ie indexed by conjugacy classes of subgroups~$H\le G$. The crossed Burnside ring~$\xBur(G)$ is similarly defined as a Grothendieck group but is bigger than $\Bur(G)$, which it admits as a retract. There is a basis of~$\xBur(G)$ consisting of conjugacy classes of pairs~$(H,a)$, where $H\le G$ is a subgroup together with a centralizer~$a\in C_G(H)$ of~$H$ in~$G$. In fact, we can identify the ordinary Burnside ring as another 2-endomorphism ring in $\bbZ\Spanhat(\gpd)$, namely that of the particular 1-cell given by the span $1\gets G=G$, see~\eqref{eq:first-B(G)}, whereas the identity 1-cell, $\Id_G$ is given by the span~$G=G=G$. As a consequence of these connections, every ring decomposition of the crossed Burnside ring~$\xBur(G)$, and in particular every ring decomposition of the ordinary Burnside ring~$\Bur(G)$, induces a block-decomposition of the Mackey 2-motive of~$G$ and consequently, by universality, of the category~$\MM(G)$ for every Mackey 2-functor~$\MM$. As said, the latter is explained in the final \Cref{sec:decomposition}, where we show that each additive category~$\MM(G)$ is enriched over $\xBur(G)$-modules.

\bigbreak
\section{Pointers to related works}
\label{sec:literature}%
\medskip

Let us say a word of existing literature.

Bicategories of spans have been considered by many authors in several variants and settings, starting already with~\cite{Benabou67}. We shall in particular rely on~\cite{Hoffnung11pp} to avoid tedious verifications. The interested reader can also consult~\cite{Miller17} and~\cite{BaezHoffnungWalker10} for the relevance of spans of groupoids to topology and physics, respectively. There is no shortage of Mackey-related publications and the use of spans in this context is well-known and widespread.
Some versions of the universal property of spans have been known to category theorists for a long time and have appeared in print, \eg in \cite[Thm.\,A.2]{Hermida00} and \cite{DawsonParePronk04}.

An approach via $(\infty,1)$-categories can be found in the interesting work of Barwick~\cite{Barwick17}. In this context, Harpaz \cite{Harpaz17pp} has proved that the $(\infty,1)$-category of spans of finite $n$-truncated spaces is the universal way of turning $n$-truncated spaces into an \emph{$n$-(semi-)additive} $\infty$-category, in the sense of Hopkins-Lurie \cite{HopkinsLurie13}. Our theory can be seen as an extension or refinement of the $n=1$ case (groupoids being 1-truncated spaces) of his result. Indeed, although our 2-level approach obviously fails to capture higher equivalences, it does allow for \emph{non-invertible} 2-cells and therefore provides a direct grip on adjunctions and their properties, without any need to climb further up the higher-categorical ladder.
Formally, a simultaneous common generalization of the Barwick-Harpaz-Hopkins-Lurie theory and ours would require the framework of $(\infty,2)$-categories, for which we refer to the book of Gaitsgory and Rozenblyum~\cite[App.]{GaitsgoryRozenblyum17}. It was pointed out to us by Harpaz that Hopkins and Lurie do hint at something resembling our construction of Mackey 2-motives in terms of an $(\infty,2)$-category of spans of spans; see \cite[Remark 4.2.5]{HopkinsLurie13}.

\end{chapter-one}
%
\chapter{Mackey 2-functors}
\label{ch:2-Mackey}%
\bigbreak
\begin{chapter-two}

We discuss Mackey 2-functors beyond the survey of \Cref{sec:Mackey-2-functors}, beginning with details on iso-commas and Mackey squares (\Cref{sec:comma,sec:mackey-squares}). In \Cref{sec:Mackey-2-functors-general}, we clarify what a class~$\GG$ of groupoids `of interest' should consist of (\Cref{Hyp:GG}) and we define Mackey 2-functors in that generality. We conclude the chapter by discussing the separability of restriction~$\MM(G)\to \MM(H)$ to subgroupoids (\Cref{sec:monadicity}) and the decategorification of Mackey 2-functors down to ordinary Mackey functors (\Cref{sec:decategorification}).

\bigbreak
\section{Comma and iso-comma squares}
\label{sec:comma}%
\medskip

In any 2-category (or even bicategory), one can define a strict notion of pullback square, which will usually not be invariant under equivalence. The correct notion, at least in the case of groupoids, will consist of those squares \emph{equivalent to iso-comma squares}. We call these \emph{Mackey squares} and discuss them in \Cref{sec:mackey-squares}. We first recall the general notion of \emph{comma square}, which plays a role in the theory of derivators, and we then specialize to the case of groupoids.

\begin{Def} \label{Def:comma}
\index{comma square} \index{iso-comma square}%
Let $\cat{B}$ be a 2-category. A \emph{comma square} over a given cospan $A \stackrel{a}{\to} C \stackrel{b}{\leftarrow} B$ of 1-cells of $\cat{B}$ is a 2-cell
\begin{align} \label{eq:comma-square}
\vcenter{\xymatrix@C=14pt@R=14pt{
& a/b \ar[ld]_-{p} \ar[dr]^-{q}
 \ar@{}[dd]|{\oEcell{\gamma}}
\\
A \ar[dr]_a && B \ar[dl]^b \\
&C &
}}
\end{align}
having the following two properties, jointly expressing the fact that the 2-cell $\gamma$ is \emph{2-universal} among those sitting over the given cospan:
\begin{enumerate}[\rm(a)]
\item
\label{it:comma-a}%
For every pair of 1-cells $f\colon T\to A$ and $g\colon T\to B$ and for every 2-cell $\delta\colon af \Rightarrow bg$, there is a unique 1-cell $h\colon T\to a/b$ such that $p h=f$, $q h=g$ and $\gamma h = \delta$.
\begin{align*}
\xymatrix@C=14pt@R=14pt{
& T \ar[d]^h \ar@/_3ex/[ddl]_f \ar@/^3ex/[ddr]^g &
 &&& T \ar@/_3ex/[ddl]_f \ar@/^3ex/[ddr]^g
 \ar@{}[ddd]|{\oEcell{\delta}}
& \\
& a/b \ar[ld]_-{p} \ar[dr]^-{q}
 \ar@{}[dd]|{\oEcell{\gamma}}
&
 & = &&& \\
A \ar[dr]_a && B \ar[dl]^b
 && A \ar[dr]_a && B \ar[dl]^b \\
&C & &&& C &
}
\end{align*}
\index{$((<$@$\langle\ldots\rangle$ \, induced 1-cell into comma}%
We will write $\langle f, g, \delta \rangle$ for the unique 1-cell~$h$ as above
\[
\langle f, g, \delta \rangle\colon T\to a/b
\]
determined by these three components.
\smallbreak
\item
\label{it:comma-b}%
For every pair of 1-cells $h,h'\colon T\to a/b$ and every pair of 2-cells $\tau_A\colon ph\Rightarrow ph'$ and $\tau_B\colon qh\Rightarrow qh'$ such that $(\gamma h') (a\tau_A)= (b\tau_B)(\gamma h)$
\begin{align*}
\xymatrix@C=14pt@R=14pt{
& T \ar@/^1ex/[d]^-{h'} \ar@/_6ex/[ldd]_-{ph} \ar@{}[dl]|{\oEcell{\tau_A}} &
 &&& T \ar@/_1ex/[d]_h \ar@/^6ex/[rdd]^-{qh'} \ar@{}[dr]|{\oEcell{\tau_B}} & \\
& a/b \ar[ld]_-{p} \ar[dr]^-{q}
 \ar@{}[dd]|{\oEcell{\gamma}}
&& = && a/b \ar[dl]_p \ar[dr]^q
 \ar@{}[dd]|{\oEcell{\gamma}}
& \\
A \ar[dr]_a && B \ar[dl]^b
 && A \ar[dr]_a && B \ar[dl]^b \\
&C & &&& C &
}
\end{align*}
there exists a unique $\tau\colon h\Rightarrow h'$ such that $p \tau =\tau_A$ and $q \tau= \tau_B$.
\end{enumerate}
If the 2-cell $\gamma$ is moreover invertible, and if~\eqref{it:comma-a} holds (only) for those $\delta$ which are invertible, then the comma square is called an \emph{iso-comma} square. Note that if a comma square is such that $\gamma$ is invertible, then it is also an iso-comma square.

It is sometimes convenient to denote the comma object by $A\diagup_{\!\!\!\scriptscriptstyle C}\, B$ rather than~$a/b$.
\end{Def}

\begin{Exa} \label{Exa:comma-in-1cats}
In any (2,1)-category, comma and iso-comma squares coincide. If $\cat{B}$ is furthermore locally discrete, \ie is just a 1-category, then comma squares and iso-comma squares are precisely the same as ordinary pullback squares.
\end{Exa}

\begin{Exa} \label{Exa:inv-comma-square}
Given an \emph{iso}-comma square~\eqref{eq:comma-square}, we can invert its 2-cell to obtain a new square:
\begin{align*}
\vcenter{\xymatrix@C=14pt@R=14pt{
& a/b \ar[ld]_-{q} \ar[dr]^-{p}
 \ar@{}[dd]|{\oEcell{\gamma^{-1}}}
\\
B \ar[dr]_b && A \ar[dl]^a \\
&C &
}}
\end{align*}
It is easy to see that this is an iso-comma square for $b\colon B\to C\gets A \,:\!a$.
\end{Exa}

The following example is the essential prototype:
\begin{Exa} \label{Exa:comma-in-Cat}
If $\cat{B}=\Cat$ is the 2-category of (small) categories, then the comma square over $a\!: A \to C \leftarrow B:\!b$ has a well-known and transparent construction, where the objects of $a/b$ are triples $(x,y,\gamma)$ with $x$ an object of~$ A$, with $y$ an object of~$B$ and with $\gamma\colon a(x)\to b(y)$ an arrow of~$ C$, and where a morphism $(x,y,\gamma)\to (x',y',\gamma')$ is a pair $(\alpha,\beta)$ of an arrow $\alpha\colon x\to x'$ of $A$ and an arrow $\beta\colon y\to y'$ of $B$ such that the evident square commutes in~$C$, namely $\gamma' a(\alpha) = b(\beta) \gamma$. Then $p\colon a/b\to A$ and $q \colon a/b\to B$ are the obvious projections $(x,y,\gamma)\mapsto x$ and $(x,y,\gamma)\mapsto y$, and the two properties of \Cref{Def:comma} are immediately verified. Iso-comma squares are constructed similarly, by only considering triples $(x,y,\gamma)$ with $\gamma$ invertible.

The construction of iso-comma squares in the 2-category of categories provides the iso-comma squares in the sub-2-category of groupoids, see \Cref{Rem:iso-comma-intro}.
\end{Exa}

Example~\ref{Exa:comma-in-Cat} allows us to characterize comma squares in general 2-categories.

\begin{Rem} \label{Rem:commas_translated}
In a 2-category~$\cat{B}$, a 2-cell as in~\eqref{eq:comma-square} is a comma square if and only if composition with~$p$, $q$ and $\gamma$ induces an \emph{isomorphism} of categories
\begin{equation}
\label{eq:compare-to-comma}%
\begin{array}{ccc}
\cat{B}(T, a/b) & \stackrel{\cong}{\longrightarrow} & \cat{B}(T,a) / \cat{B}(T,b)
\\[.5em]
h & \mapsto & (p h, q h, \gamma h)
\end{array}
\end{equation}
for every $T\in \cat{B}_0$, where the category on the right-hand side is the comma category over
$\cat{B}(T,A)\otoo{\cat{B}(T,a)} \cat{B}(T,C) \lotoo{\cat{B}(T,b)} \cat{B}(T,B)$ in~$\Cat$, as described in Example~\ref{Exa:comma-in-Cat}. Indeed, parts~\eqref{it:comma-a} and~\eqref{it:comma-b} of \Cref{Def:comma} are equivalent to this functor inducing a bijection on objects and on arrows, respectively. In particular, it follows that the defining property of a comma square is a universal property, characterizing it up to a unique canonical isomorphism in~$\cat{B}$. For the same reasons, the output comma object $a/b$ is natural in the input cospan $\oto{a}\loto{b}$.
\end{Rem}

\begin{Rem} \label{Rem:assoc}
By the usual arguments, the universal property of comma objects yields unique associativity isomorphisms compatible with the structure 2-cells:
\[
\xymatrix@C=14pt@R=14pt{
&A\diagup_{\!\!\!\scriptscriptstyle U}\, (B\diagup_{\!\!\!\scriptscriptstyle V}\, C) \ar@/_2ex/[dddl] \ar[ddrr] \ar@{}[dd]|{\Ecell} \ar@{-->}[rr] &&
 (A\diagup_{\!\!\!\scriptscriptstyle U}\, B)\diagup_{\!\!\!\scriptscriptstyle V}\, C \ar[ddll]|{\phantom{m}} \ar@/^2ex/[dddr] \ar@<-1ex>@{-->}[ll]_-{\cong} \ar@{}[dd]|{\Ecell} & \\
 &&&& \\
& A\diagup_{\!\!\!\scriptscriptstyle U}\, B \ar[dl] \ar[dr] && B\diagup_{\!\!\!\scriptscriptstyle V}\, C \ar[dl] \ar[dr] & \\
A \ar[dr] \ar@{}[rr]|{\Ecell} && B \ar[dl] \ar[dr] \ar@{}[rr]|{\Ecell} && C \ar[dl] \\
& U && V &
}
\]
Here ``compatible'' means that the above diagram of 2-cells commutes, with the two slanted triangles being identities. All details of this construction will be spelled out in the course of a proof, see~\eqref{Eq:precise-assoc}.
\end{Rem}

\begin{Rem} \label{Rem:units}
Building the comma square on a cospan of the form $A \stackrel{\Id}{\to} A \stackrel{b}{\leftarrow} B$, we obtain the diagram
\begin{equation}
\label{eq:units}%
\vcenter{
\xymatrix@C=14pt@R=14pt{
&B \ar@/_3ex/[ddl]_b \ar@/^3ex/[ddr]^\Id \ar[d]^-{i_b} & \\
& (\Id/b) \ar[dl] \ar[dr]^-{q_b} & \\
A \ar[dr]_\Id \ar@{}[rr]|{\oEcell{\gamma}} && B \ar[dl]^b \\
&A &
}}
\end{equation}
where $i_b\colon B\to \Id/b$ is the 1-cell $\langle b, \Id_B , \id_b\colon b\Rightarrow b \rangle$ in the notation of \Cref{Def:comma}\,\eqref{it:comma-a}.
In particular $i_b$ is a canonical right inverse of $q_b:\Id / b\to B$, the comma base-change of~$\Id_A$ along~$b$. Typically, $q_b$ and $i_b$ are not strictly invertible. But if the comma square is an iso-comma square (\eg if we are working in a (2,1)-category), they will always be mutually quasi-inverse equivalences. Indeed, by the universal property on arrows there is a unique invertible 2-cell $\tau\colon i_bq_b \Rightarrow \Id_{(\Id/b)}$ with components $\tau_A:=\gamma\inv$ and $\tau_B:=\id_{q_b}$.

A similar remark holds for cospans of the form $A \oto{a} B \loto{\Id}B$.
\end{Rem}

In view of Remarks~\ref{Rem:commas_translated} and~\ref{Rem:units}, it is natural to relax the evil property that the functor~\eqref{eq:compare-to-comma} be an isomorphism into the more convenient property that it be an equivalence. Similarly, we would like to accept squares like the outside one in~\eqref{eq:units} when $i_b$ is an equivalence. This relaxing of the definition will yield the correct class of squares for our treatment of Mackey 2-functors.

\begin{Prop}
\label{Prop:weak-comma}%
Consider a 2-cell in a 2-category~$\cat{B}$
\begin{equation}
\label{eq:2-cell-D}%
\vcenter{
\xymatrix@C=14pt@R=12pt{
& D \ar[ld]_-{\tilde p} \ar[dr]^-{\tilde q}
\ar@{}[dd]|{\oEcell{\tilde\gamma}}
\\
A \ar[dr]_a
&& B \ar[dl]^b \\
&C &
}}
\end{equation}
and assume the comma square on $\oto{a}\loto{b}$ exists in~$\cat B$. The following are equivalent:
\begin{enumerate}[\rm(i)]
\item
The induced 1-cell $\langle \tilde p,\tilde q,\tilde\gamma\rangle\colon D\to (a/b)$ of \Cref{Def:comma} is an equivalence.
\smallbreak
\item
\label{it:htpy-comma}%
For every~$T\in\cat{B}_0$, the following induced functor is an equivalence in~$\Cat$:
\[
\begin{array}{ccc}
\cat{B}(T, D) & \too & \cat{B}(T,a) / \cat{B}(T,b)
\\[.5em]
h & \mapsto & (\tilde p h, \tilde q h, \tilde\gamma h)\,.
\end{array}
\]
Compare with~\eqref{eq:compare-to-comma}.
\end{enumerate}
\end{Prop}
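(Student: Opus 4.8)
The plan is to deduce the whole statement from the strict universal property recorded in \Cref{Rem:commas_translated}, which says that for the genuine comma object the functor
\[
\Phi_T^{a/b}\colon \cat{B}(T,a/b)\longrightarrow \cat{B}(T,a)/\cat{B}(T,b),\qquad k\mapsto (pk,qk,\gamma k)
\]
is an \emph{isomorphism} of categories for every $T\in\cat{B}_0$. Write $e:=\langle\tilde p,\tilde q,\tilde\gamma\rangle\colon D\to(a/b)$ for the induced 1-cell of \Cref{Def:comma}, so that by construction $pe=\tilde p$, $qe=\tilde q$ and $\gamma e=\tilde\gamma$. First I would observe that the functor appearing in condition~(ii), namely $\Phi_T^D\colon h\mapsto(\tilde ph,\tilde qh,\tilde\gamma h)$, factors as
\[
\Phi_T^D=\Phi_T^{a/b}\circ\cat{B}(T,e),
\]
where $\cat{B}(T,e)=(e\circ-)$ is post-composition with~$e$; this is a direct check using $pe=\tilde p$, $qe=\tilde q$, $\gamma e=\tilde\gamma$ together with the associativity of whiskering $(\gamma e)h=\gamma(eh)$. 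Since $\Phi_T^{a/b}$ is an isomorphism of categories, it follows at once that $\Phi_T^D$ is an equivalence if and only if $\cat{B}(T,e)$ is an equivalence.

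It therefore remains to match condition~(i) --- that $e$ be an equivalence in~$\cat{B}$ --- with the condition that $\cat{B}(T,e)$ be an equivalence of categories for all~$T$. This is exactly the 2-categorical Yoneda principle that equivalences are detected by the representable 2-functors $\cat{B}(T,-)$. The forward direction is immediate: every 2-functor, and in particular $\cat{B}(T,-)$, preserves equivalences, so if $e$ is an equivalence then each $\cat{B}(T,e)$ is. For the converse I would argue directly: taking $T=a/b$, essential surjectivity of $\cat{B}(a/b,e)$ produces a 1-cell $f$ with $ef\cong\Id_{a/b}$; taking $T=D$ and using that $\cat{B}(D,e)$ is fully faithful, the 1-cells $fe$ and $\Id_D$ have isomorphic images under $\cat{B}(D,e)$, since $\cat{B}(D,e)(fe)=(ef)e\cong e=\cat{B}(D,e)(\Id_D)$. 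As a fully faithful functor reflects isomorphisms, this forces $fe\cong\Id_D$, so $f$ is a quasi-inverse of~$e$ and $e$ is an equivalence.

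Chaining the two biconditionals gives condition~(i)\ $\Leftrightarrow$\ ($\cat{B}(T,e)$ is an equivalence for all $T$)\ $\Leftrightarrow$\ condition~(ii), which is the claim. The only genuinely delicate point is the converse half of the Yoneda principle above; everything else is the bookkeeping of whiskering and the already-established strict universal property of $a/b$. I would take care there to keep the invertible 2-cells honest rather than merely tracking objects up to isomorphism, since the conclusion ``$e$ is an equivalence'' is a statement about 1-cells admitting quasi-inverses, witnessed by invertible 2-cells $ef\cong\Id$ and $fe\cong\Id$.
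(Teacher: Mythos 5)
Your proof is correct and follows the same decomposition as the paper's: both factor the functor of condition~(ii) as the strict isomorphism of \Cref{Rem:commas_translated} precomposed with $\cat{B}(T,e)$, reducing the statement to the principle that equivalences in $\cat{B}$ are detected by the representable 2-functors $\cat{B}(T,-)$. The only divergence is in how that last principle is established: the paper simply cites \Cref{Cor:2cat_Yoneda_equivs}, whose proof goes through the bicategorical Yoneda embedding and a componentwise construction of adjoint quasi-inverses (machinery needed because that corollary is stated for arbitrary bicategories and reused elsewhere), whereas you prove the detection statement inline by the standard elementary specialization --- taking $T=a/b$ to extract $f$ with $ef\cong\Id_{a/b}$ from essential surjectivity, then $T=D$ and full faithfulness to get $fe\cong\Id_D$, using that fully faithful functors reflect isomorphism of objects. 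Your inline argument is sound (strict associativity and unitality hold since $\cat{B}$ is a 2-category here, and even in a bicategory the associators would only insert invertible 2-cells into your isomorphism chain), and it buys self-containedness at no real cost; the paper's route buys a reusable lemma valid for bicategories.
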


\begin{proof}
Let $k:=\langle \tilde p,\tilde q,\tilde\gamma\rangle\colon D\to (a/b)$ denote the 1-cell in~(i). For every object $T\in\cat{B}_0$, we have a commutative diagram in~$\Cat$ as follows:
\[
\xymatrix{
\cat{B}(T,D) \ar[rr]^-{\eqref{it:htpy-comma}} \ar[rd]_-{\cat{B}(T,k)}
&& \cat{B}(T,a)/\cat{B}(T,b)\,.
\\
& \cat{B}(T,a/b) \ar[ru]^-{\cong}_-{\eqref{eq:compare-to-comma}}
}
\]
As the right-hand functor~\eqref{eq:compare-to-comma} is an isomorphism by \Cref{Rem:commas_translated}, the other two functors are simultaneously equivalences. Finally, $\cat{B}(T,k)$ is an equivalence in~$\Cat$ for all~$T\in\cat{B}_0$ if and only if~$k$ is an equivalence in~$\cat{B}$, by \Cref{Cor:2cat_Yoneda_equivs}.
\end{proof}

\begin{Rem}
\label{Rem:weak-comma}%
One can unpack property~\eqref{it:htpy-comma} in the above statement in the spirit of \Cref{Def:comma}\,\eqref{it:comma-a} and~\eqref{it:comma-b}. Indeed, part~\eqref{it:comma-b} remains unchanged (it expresses full-faithfulness) but~\eqref{it:comma-a}, which expressed surjectivity on the nose, is replaced by essential surjectivity, \ie the existence for every 2-cell $\delta\colon a f\Rightarrow b g$ of a (non necessarily unique) 1-cell~$h\colon T\to D$ together with isomorphisms $\varphi\colon f\stackrel{\sim}{\Rightarrow} \tilde p h$ and $\psi\colon \tilde q h\stackrel{\sim}{\Rightarrow} g$ such that $\delta=(b\psi)(\tilde\gamma h)(a\varphi)$. Such an $h$ is then unique up to isomorphism.
\end{Rem}

\bigbreak
\section{Mackey squares}
\label{sec:mackey-squares}%
\medskip

We now specialize our discussion of comma squares to the case of (2,1)-cat\-e\-go\-ries, keeping in mind our main example~$\groupoid$ of finite groupoids. As already mentioned in \Cref{Exa:comma-in-1cats}, since every 2-cell is invertible there is no distinction between comma and iso-comma squares. As is often the case, both the strict and the pseudo-version will be useful: To define spans of groupoids and their composition, we shall rely on the explicit nature of the iso-comma squares. On the other hand, to define our Mackey 2-functors $\MM\colon \groupoid^{\op}\to \ADD$ any square which is equivalent to an iso-comma square can be considered. We give the latter a simple name:

\begin{Def}
\label{Def:Mackey-square}%
\index{Mackey square}%
A 2-cell in a (2,1)-category~$\cat{B}$ (\eg in the 2-category $\groupoid$)
\begin{equation}
\label{eq:Mackey-square}%
\vcenter{
\xymatrix@C=14pt@R=14pt{
& L \ar[ld]_-{v} \ar[dr]^-{j}
 \ar@{}[dd]|-{\isocell{\gamma}}
\\
H \ar[dr]_-{i}
&& K \ar[dl]^-{u} \\
&G &
}}
\end{equation}
is called a \emph{Mackey square} if the induced functor $\langle v,j,\gamma\rangle\colon L\to (i/u)$ is an equivalence. See \Cref{Prop:weak-comma} and \Cref{Rem:weak-comma} for equivalent formulations. (The latter could be used to define Mackey squares directly, bypassing comma squares.)
\end{Def}

\begin{Rem}
Any square equivalent to a Mackey square is a Mackey square.
Here two squares $\tau$ and $\sigma$ are `equivalent' if there exists an equivalence $f$ between their top objects and two invertible 2-cells $\varphi, \psi$ identifying their 2-cells as follows:
\begin{equation*}
\vcenter{
\xymatrix@C=18pt@R=18pt{
& \ar@/_3ex/[ddl] \ar@/^3ex/[ddr] \ar[d]_f^\sim \ar@{}[ddl]|{\oEcell{\varphi}} \ar@{}[ddr]|{\oEcell{\psi}} & \\
&  \ar[dl] \ar[dr] & \\
 \ar[dr] \ar@{}[rr]|{\oEcell{\tau}} &&  \ar[dl] \\
& &
}}
\quad = \quad
\vcenter{
\xymatrix@C=18pt@R=18pt{
& \ar@/_3ex/[ddl] \ar@/^3ex/[ddr]  \ar@{}[ddd]|{\oEcell{\sigma}} & \\
&  
\\
 \ar[dr] 
  &&  \ar[dl] \\
& &
}}
\end{equation*}
\end{Rem}

\begin{Rem} \label{Rem:Mackey_sq_vs_biequivs}
Using \Cref{Rem:weak-comma}, it is straightforward to check that any bi\-equivalence $\cat B\stackrel{\sim}{\to} \cat B'$ (see \ref{Ter:Hom_bicats}) preserves Mackey squares. However there is no reason in general for it to preserve iso-comma squares, even when it happens to be a 2-functor between 2-categories.
\end{Rem}

\begin{Exa} \label{Exa:iso_Mackey_square}
It is an easy exercise to verify that, if in~\eqref{eq:Mackey-square} we choose $i$ and $j$ to be identity 1-cells and $\gamma\colon v\Rightarrow u$ to be any (invertible) 2-cell, then the resulting square is a Mackey square. The special case $\gamma = \id$ yields the (outer) Mackey squares discussed in Remark~\ref{Rem:units}.
\end{Exa}

\begin{Rem}
\label{Rem:htpy-pullback}%
\index{Mackey square!as homotopy cartesian square}%
Alternatively, Mackey squares could be called `homotopy cartesian', following for instance Strickland~\cite[Def.\,6.9]{Strickland00}. Indeed, consider the 1-category $\cat{G}$ of all groupoids, with functors as morphisms, ignoring 2-morphisms. Then $\cat{G}$ admits the structure of a Quillen model category in which weak equivalences are the (categorical) equivalences and in which our Mackey squares coincide with homotopy cartesian ones. See details in~\cite[\S\,6]{Strickland00}. We avoid this terminology for several reasons. First, calling equivalences of groupoids `weak-equivalences' could be judged pedantic in our setting. Second, there are many other forms of homotopy at play in the theory of Mackey 2-functors, typically in connections to the derivators appearing in examples (as the homotopy categories of Quillen model categories). Finally, it is conceptually useful to understand iso-comma squares of groupoids as a special case of comma squares of small categories, for the latter are the ones which yield base-change formulas for derivators; and (non-iso) comma squares are not homotopy pullbacks in any obvious way.
\end{Rem}

Of course, the ancestral example of Mackey square is the one which motivates the whole discussion:
\begin{Rem}
\label{Rem:old-Mackey}%
\index{Mackey formula! as Mackey square}%
Consider a finite group~$G$, two subgroups $H,K\le G$ and the corresponding inclusions $i\colon H\to G$ and $u\colon K\to G$ of one-object groupoids. Even in this case, the groupoid $(i/u)$ usually has more than one connected component. In fact, it has one connected component for each double-coset $[x]=KxH$ in~$K\bs G/H$ and the canonical groupoid~$(i/u)$ becomes non-canonically equivalent to a coproduct of one-object groupoids:
\begin{equation}
\label{eq:conn-comp}%
\coprod_{[x]\in K\bs G/H} K\cap{}^{x\!}H \stackrel{\sim}{\too} (i/u) \,.
\end{equation}
This decomposition depends on the choice of the representatives $x$ in the double-coset $[x]\in K\bs G/H$. When such choices become overwhelming, as they eventually always do, the canonical construction $(i/u)$ is preferable. For instance, compare associativity as in \Cref{Rem:assoc} to the homologous mess with double-cosets. And things only get worse with more involved diagrams.

In any case, replacing $(i/u)$ by $\coprod_{[x]\in K\bs G/H} K\cap{}^{x\!}H$ via the equivalence~\eqref{eq:conn-comp} shows that the following square is a Mackey square:
\begin{equation}
\label{eq:old-Mackey-square}%
\vcenter{\xymatrix@C=14pt@R=14pt{
& {\coprod\limits_{[x]\in K\bs G/H}} K\cap{}^{x\!}H
 \ar[dl]_-{v} \ar[dr]^-{j}
 \ar@{}[dd]|(.5){\isocell{\gamma}}
\\
H \ar[dr]_-{i}
&& K \ar[dl]^-{u}
\\
&G
}}
\end{equation}
where the $x$-component of~$v$ is a conjugation-inclusion $v_x=(-)^x\colon K\cap{}^{x\!}H\into H$ whereas each component of~$j$ is the mere inclusion $j_x=\incl\colon K\cap{}^{x\!}H\into K$; the $x$-component of~$\gamma$ is the \emph{2-cell} $\gamma_x={}^{x}(-)\colon i\,v_x\Rightarrow u\,j_x\colon K\cap{}^{x\!}H \into G$. We see here conjugation playing its two roles, at the 1-cell and at the 2-cell levels.
\end{Rem}

\begin{Rem} \label{Rem:comma_vs_Mackey}
Requiring a strict 2-functor~$\MM\colon \cat{B}\to \cat{B}'$ to satisfy base-change with respect to every iso-comma square is equivalent to the (\emph{a~priori} stronger) condition that $\MM$ satisfies base-change with respect to every Mackey square. Indeed, this comes from a more general fact about mates: Suppose given two 2-cells which are equivalent, then the mate of the first one is an isomorphism if and only if the mate of the other is. See \Cref{Prop:mates-under-top-functor} and \Cref{Rem:mate-natural} if necessary.
\end{Rem}

\bigbreak
\section{General Mackey 2-functors}
\label{sec:Mackey-2-functors-general}%
\medskip

The Mackey 2-functors $G\mapsto \MM(G)$ discussed in \Cref{sec:Mackey-2-functors} were the `global' type, \ie those defined on all finite group(oid)s~$G$ and all functors $u\colon H\to G$. However, we already saw in \Cref{Exa:Mackey-2-functors}\,\eqref{it:Exa-Mackey-Stab} that it is sometimes necessary to restrict to some class of groupoids, or some class of morphisms. We isolate below the conditions such a choice must satisfy.

\begin{Hyp}
\label{Hyp:GG}%
\index{$g$@$\GG$}%
We consider a 2-category
\[
\GG \subseteq \groupoid
\]
of finite groupoids of interest. We assume that $\GG$ is a \emph{2-full} 2-subcategory of the 2-category $\groupoid$ of all finite groupoids, which is closed under finite coproducts, faithful inclusions and iso-commas along faithful morphisms. More precisely, this means that for every $G$ in~$\GG$ and every faithful functor $i\colon H\into G$, the groupoid $H$ and the functor~$i$ belong to the 2-category~$\GG$ and furthermore for any~$u\colon K\to G$ in~$\GG$ the iso-comma~$(i/u)$ and the two functors $p\colon (i/u)\to H$ and $q\colon (i/u)\into K$ as in~\eqref{eq:iso-comma-intro} belong to~$\GG$ as well. (Note that only $p$ comes in question here, given the stability-under-faithful-inclusion assumption; and even this is only a question if $\GG$ is not 1-full in~$\groupoid$.) In other words, if the bottom cospan $H\into G \leftarrow K$ of a Mackey square~\eqref{eq:Mackey-square} belongs to~$\GG$ then so does the top span $H\lto L\into K$.

Finally, in this setting we denote by
\index{$j$@$\JJ$}%
\[
\JJ=\JJ(\GG):=\SET{i\in\GG_1(H,G)}{i\textrm{ is faithful}}
\]
the class of faithful morphisms in~$\GG$.
\end{Hyp}

\begin{Exa}
In a first reading, the reader can safely assume $\GG=\groupoid$ everywhere, unless specifically mentioned otherwise.
\end{Exa}

\begin{Exa}
There is a gain in allowing more general $\GG$ than the main example $\GG=\groupoid$. For instance, the general formalism covers Mackey 2-functors like the stable module category, $G\mapsto \Stab(\kk G)$ in \Cref{Exa:stable-module-cat}, which are only defined on the (2,1)-category $\groupoidf$ of finite groupoids with \emph{faithful} morphisms.
\end{Exa}

\begin{Rem}
It is legitimate to wonder whether~$\GG$ and~$\JJ$ need to consist of groupoids or whether more general 2-categories~$\GG$ and classes~$\JJ$ can be considered. Such an extended formalism is used in \Cref{ch:bicat-spans,ch:2-motives}; see \Cref{Hyp:G_and_I_for_Span}.
\end{Rem}

\begin{Def}
\label{Def:Mackey-2-functor}%
\index{Mackey 2-functor} \index{Mackey 2-functor!-- on a general (2,1)-category}%
Let $\GG$ be a (2,1)-category of finite groupoids of interest (and the class~$\JJ$ of faithful morphisms) as in \Cref{Hyp:GG}. Alternatively, let $(\GG,\JJ)$ be an admissible pair, as in \Cref{Hyp:G_and_I_for_Span}.

A \emph{Mackey 2-functor on~$\GG$} (or in full, an \emph{additive\,\footnote{\,Here in the sense of `$\ADD$-valued'.} Mackey 2-functor on the (2,1)-category~$\GG$, with respect to the class~$\JJ$}) is a strict 2-functor $\MM\colon \GG^{\op}\to \ADD$ satisfying the four axioms \Mack{1}--\Mack{4} below. See details as to what such a 2-functor $\MM\colon \GG^{\op}\too \ADD$ amounts to in \Cref{Rem:pre-2-Mackey}\,(a)-(c).
\begin{enumerate}[(\rm{Mack}~1)]
\item
\label{Mack-1}%
\emph{Additivity}: For every finite family $\{G_\alpha\}_{\alpha\in\aleph}$ in~$\GG$, the natural functor
\[
\MM\big(\,\coprod_{\alpha\in\aleph} G_\alpha\,\big) \too \prod_{\alpha\in\aleph}\MM(G_\alpha)=\bigoplus_{\alpha\in\aleph} \MM(G_\alpha)
\]
is an equivalence (see \Cref{Exa:ADD-dir-sums} for the right-hand side rewriting).
\smallbreak
\item
\label{Mack-2}%
\emph{Induction-coinduction}: For every $i\colon H\into G$ in the class~$\JJ$, restriction $i^*\colon\MM(G)\to \MM(H)$ admits a left adjoint $i_!$ and a right adjoint~$i_*$.
\smallbreak
\item
\label{Mack-3}%
\emph{BC-formulas}: For every Mackey square as in diagram~\eqref{eq:Mackey-square}, the following two mates are isomorphisms:
\[
\gamma_!: j_!\circ v^* \isoEcell u^*\circ i_!
\quadtext{and}
(\gamma\inv)_*:u^*\circ i_* \isoEcell j_*\circ v^*\,.
\]
(\cf \Cref{Rem:comma_vs_Mackey}).
\smallbreak
\item
\label{Mack-4}%
\emph{Ambidexterity}: We have isomorphisms $i_!\simeq i_*$ for every faithful $i\colon H\into G$ in~$\JJ$.
\end{enumerate}
\end{Def}

\begin{Def}
\label{Def:Mackey-2-functor-rectified}%
\index{rectified Mackey 2-functor} \index{Mackey 2-functor!rectified --}%
A \emph{rectified} Mackey 2-functor~$\MM\colon \GG^\op\to \ADD$ is a Mackey 2-functor together with a specified choice of functors $i_!$ for all $i\in \JJ$ and adjunctions $i_!\dashv i^* \dashv i_*:=i_!$ ($i\in \JJ$) which in addition to \Mack{1}--\Mack{4} of \Cref{Def:Mackey-2-functor}, further satisfy (Mack\,\ref{Mack-5})--(Mack\,\ref{Mack-10}) of \Cref{Thm:rectification-intro}.
\end{Def}

\begin{Rem}
The Rectification \Cref{Thm:rectification} guarantees that as soon as we have verified \Mack{1}--\Mack{4}, the units and counits of~$i_!\adj i^*\adj i_*=i_!$ can be arranged to satisfy all the extra properties in (Mack\,\ref{Mack-5})--(Mack\,\ref{Mack-10}): Every Mackey 2-functor can be rectified. Of course, not having to prove the latter six properties greatly simplifies the verification that a specific example of~$\MM$ is indeed a Mackey 2-functor. On the other hand, these additional properties will be extremely precious when \emph{carefully} proving results about Mackey 2-functors.
\end{Rem}

\begin{Rem}
\label{Rem:M()=0}%
It is easy to deduce from Additivity~\Mack{1} that $\MM(\varnothing)\cong 0$ is the zero additive category, by inspecting the image of the equivalence $\nabla=(\Id\ \Id)\colon \varnothing\sqcup \varnothing \isoto\varnothing$ under~$\MM$.
\end{Rem}

\begin{Rem}
\label{Rem:SAD}%
\index{semi-additive} \index{$sad$@$\SAD$ \, 2-category of semi-additive categories}%
Virtually everything we say here about additive categories $\MM(G)$ will make perfect sense with semi-additive categories instead, \ie categories in which we can add objects and morphisms, without requesting additive opposites of morphisms. See \Cref{Ter:additive_cat_etc} or~\cite[VIII.2]{MacLane98}. Furthermore, if $\MM\colon \GG^{\op}\to \CAT$ is any 2-functor satisfying Additivity \Mack{1} and Ambidexterity \Mack{4}, then each category $\MM(G)$ is automatically \emph{semi-additive}. In other words, the only thing $\MM(G)$ is missing to be additive are the additive inverses of maps.

To see why $\MM(G)$ is semi-additive, let $\{G_\alpha\}_{\alpha\in\aleph}$ be a finite set of copies of~$G$. The folding functor $\nabla\colon \coprod_\alpha G_\alpha \to G$, which is the identity on each component, induces the diagonal functor $\mathrm{diag}\colon \MM(G)\to \prod_\alpha \MM(G)$ after an application of Additivity:
\[
\xymatrix@R=1.5em{
{ \prod_{\alpha} \MM(G_\alpha) }
 \ar@/_11ex/[ddd]_-{\coprod_\aleph}
 \ar@/^11ex/[ddd]^-{\prod_\aleph}
\\
{ \MM(\coprod_\alpha G_\alpha) }
 \ar[u]^-{\simeq}
 \ar@/_5ex/[dd]_-{\nabla_!}
 \ar@/^5ex/[dd]^-{\nabla_*}
\\
\\
{ \MM(G) }
 \ar[uu]|-{\nabla^* \vcorrect{.8}_{\vcorrect{.4}}}
}
\]
Now recall that the left and right adjoint of $\mathrm{diag}$ are precisely the functors $\coprod_{\aleph}$ and $\prod_\aleph$ assigning to a family $\{X_\alpha\}_{\alpha\in\aleph}$ its coproduct and product in $\MM(G)$, respectively. These adjoints exist and are isomorphic by the Ambidexterity of~$\MM$.

The interested reader can therefore replace accordingly
\[
\ADD\rightsquigarrow\SAD\,.
\]
However, we are human and so are most of our readers. Discussing at length semi-additive Mackey 2-functors would be somewhat misleading given that almost all examples we use are additive. We therefore require enrichment over abelian groups (not just abelian monoids) out of habit, convenience and social awareness.

\end{Rem}

It follows from \Cref{Rem:SAD} that, given a strict 2-functor $\MM\colon \GG^\op\too\CAT$ taking values in arbitrary categories and satisfying \Mack{1}-\Mack{4} as in \Cref{Def:Mackey-2-functor}, each category $\MM(G)$ must be semi-additive. Similarly, restriction~$i^*$ and (co)\,induction $i_!\cong i_*$ are additive functors for every~$i\in \JJ$ but the same is not necessarily true of~$u^*$ for 1-cells~$u$ not in~$\JJ$. Including the latter gives:

\begin{Def}
\label{Def:semi-additive-Mackey-2-functor}%
A \emph{semi-additive Mackey 2-functor} on~$\GG$ is a strict 2-functor $\MM\colon \GG^\op\too\SAD$, taking values in semi-additive categories and additive functors, and satisfying \Mack{1}-\Mack{4} as in \Cref{Def:Mackey-2-functor}.
\end{Def}

\bigbreak
\section{Separable monadicity}
\label{sec:monadicity}%
\medskip

To give an early simple application of Mackey 2-functors, we illustrate how the knowledge that $\MM(H)$ and $\MM(G)$ are part of the same Mackey 2-functor~$\MM$ for a group(oid)~$G$ and a subgroup(oid)~$H$ bear some consequence on so-called `separable monadicity'. Let us remind the reader.

In \cite{BalmerDellAmbrogioSanders15}, we proved with Sanders that many examples of `equivariant' categories $\MM(G)$ had the property that the category $\MM(H)$ associated to a subgroup~$H\le G$ could be described in terms of modules over a monad defined over the category~$\MM(G)$. In technical terms, this means that the adjunction $\Res^G_H\colon \MM(G)\adjto \MM(H)\noloc \CoInd_H^G$ satisfies monadicity. This property allows us to use descent techniques to analyze the extension of objects of~$\MM(H)$ to~$\MM(G)$, \ie extension of objects from the subgroup to the big group. In the case of tensor-triangulated categories, monadicity also yields a better understanding of the connections between the triangular spectra of~$\MM(G)$ and~$\MM(H)$. See~\cite{Balmer16}.

\begin{Thm}
\label{Thm:monadicity}%
Let $\MM\colon \groupoid^\op\to \ADD$ be any (rectified) Mackey 2-functor (\Cref{Def:Mackey-2-functor}) and $i\colon H\into G$ be a faithful functor in~$\groupoid$. Then the adjunction $i^*\adj i_*$ is \emph{monadic}, \ie the Eilenberg-Moore comparison functor
\[
E\colon\MM(H)\too A^G_H\MMod_{\MM(G)}
\]
between $\MM(H)$ and the Eilenberg-Moore category of modules in~$\MM(G)$ over the monad $A^G_H=i_*i^*\colon \MM(G)\to \MM(G)$ induces an equivalence on idempotent-com\-ple\-tions
(\Cref{Rem:completions})
\[
\MM(H)^\natural\isotoo\big(A^G_H\MMod_{\MM(G)}\big)^\natural=A^G_H\MMod_{\MM(G)^\natural}.
\]
In particular, $E$ is an equivalence if $\MM\colon \groupoid^\op\to \ADD$ takes values in idempotent-complete categories. Moreover, the monad $A^G_H\colon \MM(G)\to \MM(G)$ is separable.
\end{Thm}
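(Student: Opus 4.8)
The plan is to derive the whole statement from the rectified structure, with the Special Frobenius Property~\Mack{9} as the decisive input. Throughout, write $\bbA:=A^G_H=i_*i^*$ for the monad on~$\MM(G)$, with unit $\reta\colon\Id\Rightarrow i_*i^*$ and multiplication $\mu=i_*\,\reps\,i^*\colon \bbA\circ\bbA\Rightarrow\bbA$; using ambidexterity~\Mack{4} to identify $i_!=i_*$, the left unit $\leta\colon\Id_{\MM(H)}\Rightarrow i^*i_!=i^*i_*$ is available, and~\Mack{9} says precisely that $\reps\circ\leta=\id$.

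First I would prove the final (\emph{separability}) assertion, as it also feeds the monadicity argument. Set $\sigma:=i_*\,\leta\,i^*\colon\bbA\Rightarrow\bbA\circ\bbA$. Whiskering~\Mack{9} by $i_*$ on the left and $i^*$ on the right, the interchange law gives $\mu\circ\sigma=i_*(\reps\circ\leta)i^*=\id_{\bbA}$, so $\sigma$ is a section of~$\mu$. It then remains to see that $\sigma$ is $\bbA,\bbA$-bilinear; this is exactly the pair of Frobenius relations tying $\mu$ to~$\sigma$, and they hold because the two-sided adjoint $i_!=i_*$ supplied by the rectified structure turns $\bbA$ into a Frobenius monad, the relations being a formal consequence of the zig-zag identities for $i_!\adj i^*\adj i_*$ together with~\Mack{5}--\Mack{9}. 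Hence $\bbA$ is separable.

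Next I would show that the comparison functor~$E$ is fully faithful on the nose, \ie before any completion. For $Y,Y'\in\MM(H)$ I would define a would-be inverse on hom-sets by sending an $\bbA$-linear map $g\colon i_*Y\to i_*Y'$ to $\bar g:=\reps_{Y'}\circ i^*(g)\circ\leta_Y\colon Y\to Y'$. Faithfulness is immediate from naturality of~$\leta$ and~\Mack{9}: for $h\colon Y\to Y'$ one gets $\overline{Eh}=\reps_{Y'}\circ(i^*i_*h)\circ\leta_Y=\reps_{Y'}\circ\leta_{Y'}\circ h=h$. Fullness uses the $\bbA$-linearity of~$g$, namely $g\circ i_*\reps_Y=i_*\reps_{Y'}\circ i_*i^*g$: applying $i_*$ to $\bar g$ and substituting yields $i_*\bar g=g\circ i_*(\reps_Y\circ\leta_Y)=g$, again by~\Mack{9}. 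Thus $E$, and hence $E^\natural$, is fully faithful.

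Finally I would settle essential surjectivity up to retracts. Since $E\circ i^*$ is the free-module functor $\MM(G)\to A^G_H\MMod_{\MM(G)}$, every free module is of the form $E(i^*M)$; and by the basic theory of separable monads the separability of~$\bbA$ forces every $\bbA$-module to be a direct summand of a free one. Hence every object of $A^G_H\MMod_{\MM(G)}$ is a retract of an object in the image of~$E$. Passing to idempotent completions, where $\MM(H)^\natural$ is idempotent-complete and $E^\natural$ is fully faithful, the essential image of $E^\natural$ is closed under retracts, so $E^\natural$ is essentially surjective and therefore an equivalence $\MM(H)^\natural\isotoo(A^G_H\MMod_{\MM(G)})^\natural=A^G_H\MMod_{\MM(G)^\natural}$ (the last identity by~\Cref{Rem:completions}); when $\MM$ already takes idempotent-complete values the same reasoning applies to~$E$ itself. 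The step I expect to resist is the $\bbA,\bbA$-bilinearity of~$\sigma$: the section identity $\mu\circ\sigma=\id$ is a one-line consequence of~\Mack{9}, but bilinearity genuinely requires the compatibility between the left and right adjunction data encoded in the rectified structure, and is the point where ambidexterity is really used; the remaining bookkeeping (fullness of~$E$ and retract-closure of the essential image) is routine.
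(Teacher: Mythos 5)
Your proof is correct and is essentially the paper's own argument: \Mack{9} yields a natural section of the counit $\reps$, hence a section of the multiplication and the separability of $A^G_H=i_*i^*$, from which every module is a direct summand of a free one and the equivalence on idempotent-completions follows; the paper merely compresses your explicit full-faithfulness and retract-closure steps into the observation that both $\MM(H)$ and $A^G_H\MMod_{\MM(G)}$ receive the Kleisli category of free modules as a dense subcategory, citing \cite[Lemma~2.2]{BalmerDellAmbrogioSanders15} for exactly the details you wrote out. One small correction of emphasis: the $\bbA,\bbA$-bilinearity of $\sigma=i_*\,\leta\,i^*$ is automatic from the interchange law for \emph{any} natural transformation $\leta\colon\Id\Rightarrow i^*i_*$, so ambidexterity is needed only to produce $\leta$ with that target (and \Mack{9} to make $\sigma$ a section of the multiplication), not to verify bilinearity.
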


\begin{proof}
This is a standard consequence of the existence of a natural section of the counit $\reps\colon i^*i_*\Rightarrow \Id$, which follows from~(Mack\,\ref{Mack-9}). Indeed, the multiplication $A^G_H\circ A^G_H\Rightarrow A^G_H$ is induced by~$\reps$ and the section of the latter tells us that $A^G_H$ is separable. It follows that every $A^G_H$-module is a direct summand of a free one, and therefore both $\MM(H)$ and $A^G_H\MMod_{\MM(G)}$ receive the Kleisli category of free $A^G_H$-modules as a `dense' subcategory, in a compatible way. (Here a subcategory of an additive category is called `dense' if every object of the big category is a direct summand of an object of the subcategory.) It follows that both categories $\MM(H)$ and $A^G_H\MMod_{\MM(G)}$ have the same idempotent-completion as the Kleisli category. See details in \cite[Lemma\,2.2]{BalmerDellAmbrogioSanders15}.
\end{proof}

\bigbreak
\section{Decategorification}
\label{sec:decategorification}%
\medskip

It is natural to discuss `decategorification' from Mackey 2-functors down to ordinary Mackey (1-)\,functors at this stage of the exposition, in order to facilitate understanding of our new definition. However, the treatment we present here will become clearer after the reader becomes familiar with the bicategories of (double) spans that will only appear in \Cref{ch:bicat-spans,ch:2-motives}. In particular, we are going to use the universal property of $\Spanhat(\GG;\JJ)$ as a black box (whose proof does not rely on the present section, of course).

In \Cref{app:old-Mackey}, we describe ordinary Mackey 1-functors on a fixed finite group~$G$ as additive functors on a suitable 1-category of spans, $\spanG$, built out of the 2-category~$\gpdG$ of groupoids faithful over~$G$ (\Cref{Def:gpdG}). Inspired by \Cref{Thm:Dress_Mackey_via_gpd}, we replace the 2-category~$\gpdG$ by other (2,1)-categories $\GG$ and consider proper classes~$\JJ$ of 1-cells in~$\GG$. For simplicity, the reader can assume that $(\GG,\JJ)$ satisfies \Cref{Hyp:GG} but this section makes sense in the greater generality of~\Cref{Hyp:G_and_I_for_Span}.

\begin{Def}
\index{$tspan$@${\tSpan}(\GG;\JJ)$}%
The \emph{category of spans over the (2,1)-category~$\GG$ (with respect to the class~$\JJ\subseteq\GG_1$)} is the 1-category
\[
\tSpan(\GG;\JJ)
\]
whose objects are the same as those of~$\GG$ and whose morphisms are equivalence classes of spans $G\loto{a} P\oto{b} H$ with $b\in\JJ$, where two such pairs are declared equivalent if there exists an equivalence between the two middle objects making the triangles commute up to isomorphism:
\[
\xymatrix@R=.5em@C=6em{
& P \ar[ld]_-{a} \ar[dd]_-{\simeq}^-{f} \ar[rd]^-{b}
\\
G \ar@{}[r]|-{\simeq} && H \ar@{}[l]|-{\simeq}
\\
& P' \ar[lu]^-{a'} \ar[ru]_-{b'}
&
}
\]
Composition is done in the usual way: Choose representatives for the fractions, construct the comma squares (compare \Cref{Def:Span-bicat}), and then retake equivalence classes. When $\JJ$ is not mentioned, we mean $\JJ=\all$ as always: $\tSpan(\GG):=\pih{\Span(\GG;\all)}$.
\end{Def}

\begin{Rem}
\label{Rem:alt_descr_Span}
The advanced readers who are already familiar with \Cref{ch:bicat-spans,ch:2-motives} will observe that $\tSpan(\GG;\JJ)$ is precisely the 1-truncation (\Cref{Not:htpy_cat}) of the bicategory of spans $\Span(\GG;\JJ)$ as in \Cref{Def:Span-bicat}, hence the notation. It is also the 1-truncation of the bicategory of spans of spans $\Spanhat(\GG;\JJ)$ as in \Cref{Def:Spanhat-bicat}
\[
\pih{\Span(\GG;\JJ)} = \pih{\big(\Span(\GG;\JJ)\big)} = \pih{\big(\Spanhat(\GG;\JJ)\big)}\,.
\]
This holds simply because $\Span(\GG;\JJ)$ and $\Spanhat(\GG;\JJ)$ have the same 0-cells and the same 1-cells and because the only invertible 2-cells of $\Spanhat(\GG;\JJ)$ are already in~$\Span(\GG;\JJ)$ by \Cref{Lem:isos-in-spans}. As a consequence the notion of Mackey functors for~$(\GG,\JJ)$ that we are about to consider will not see the difference between the two bicategories of spans studied in \Cref{ch:bicat-spans,ch:2-motives}.
\end{Rem}

\begin{Rem}
If the category~$\pih{\GG}$ (\Cref{Not:htpy_cat}) has enough pullbacks, the 1-category of spans $\tSpan(\GG;\JJ)$ has an alternative description where one \emph{first} takes the 1-truncation $\pih{\GG}$ and \emph{then} considers spans in this 1-category (in the spirit of \Cref{Def:ordinary-spans}, except that only morphisms in $\pih{\JJ}$ are allowed on the right). In particular, when $\JJ=\all$, the category $\tSpan(\GG)$ is nothing but $\widehat{\pih{\GG}}$.

If $\pih{\GG}$ does not have pullbacks, it is slightly abusive to view $\pih{\Span(\GG)}$ as the `category $\widehat{\pih{\GG}}$ of spans in $\pih{\GG}$' for composition still requires to chose representatives of spans in~$\GG$ and to work with iso-comma squares in~$\GG$. In other words, the composition of spans in $\pih{\Span(\GG)}$ still really depends on the underlying 2-category~$\GG$. (This issue did not appear with $\GG=\gpdG$ in~\Cref{app:old-Mackey}, since $\pih{(\gpdG)}$ admits pullbacks by \Cref{Cor:Gset_vs_gpdG}.) Alternatively, one should remember the relevant class of squares in $\pih{\GG}$, which might have an intrinsical characterization in~$\pih{\GG}$. These are sometimes called \emph{weak pullbacks}.
\end{Rem}

\begin{Def}
\label{Def:Mackey-functor-for-GI}%
\index{Mackey functor! -- over a (2,1)-category~$\GG$} \index{Mackey functor! -- for $(\GG,\JJ)$}%
A \emph{(generalized) Mackey functor over the (2,1)-category~$\GG$, with respect to the class~$\JJ\subseteq\GG_1$}, is an additive (\ie coproduct-preserving) functor $M\colon \tSpan(\GG;\JJ) \to \Ab$. As always when we do not specify~$\JJ$, a \emph{Mackey functor over~$\GG$} means that we have taken~$\JJ=\all$.

Explicitly, a Mackey functor for $(\GG,\JJ)$ consists of an abelian group $M(G)$ for every object $G\in \GG_0$, a homomorphism $a^*\colon M(G)\to M(H)$ for every 1-cell $a\in\GG_1(H,G)$ and a homomorphism $a_*\colon M(H)\to M(G)$ if furthermore $a\in\JJ_1(H,G)$. This data is subject to a few rules:
\begin{enumerate}[(1)]
\item
Additivity: The canonical morphism $M(G_1\sqcup G_2)\isoto M(G_1)\oplus M(G_2)$ is an isomorphism, for all $G_1,G_2\in\GG$.
\smallbreak
\item If two 1-cells $a\simeq b$ are isomorphic in the category $\GG(H,G)$ then $a^*=b^*$, and furthermore $a_*=b_*$ if $a,b$ belong to~$\JJ$.
\smallbreak
\item For every (iso)comma square (or Mackey square) in~$\GG$ with $i\in \JJ$
\[\vcenter{\xymatrix@C=14pt@R=14pt{
& (i/u) \ar[dl]_-{v} \ar[dr]^-{j}
 \ar@{}[dd]|(.5){\isocell{\gamma}}
\\
H \ar[dr]_-{i}
&& K \ar[dl]^-{u}
\\
&G
}}
\]
we have $u^*i_*=j_*v^*\colon M(H)\to M(K)$.
\end{enumerate}
\end{Def}

Once we establish the universal property of $\Spanhat(\GG;\JJ)$, there are obvious ways to recover ordinary Mackey 1-functors through decategorification of Mackey 2-functors. For instance, one can use the Grothendieck group~$K_0$ as follows. We shall expand these ideas in forthcoming work.

\begin{Prop} \label{Prop:decat_K0}
Let $\MM \colon \GG^{\op} \to \ADD$ be a Mackey 2-functor on $(\GG,\JJ)$ in the sense of \Cref{Def:Mackey-2-functor}. Then the composite $K_0 \circ \MM$ factors uniquely as
\[
\xymatrix{
\GG^{\op} \ar[r]^-{\MM} \ar[d] &
 \ADD \ar[d]^-{K_0} \\
\pih\Span(\GG;\JJ) \ar@{-->}[r]^-{M} & \Ab
}
\]
where $\GG^{\op}\to \pih\Span(\GG;\JJ)$ is the functor sending $u\colon H\to G$ to the equivalence class of the span $G \stackrel{u}{\leftarrow} H \oto{\Id} H$. This functor $M\colon \pih\Span(\GG;\JJ)\to \Ab$ is a generalized Mackey functor over~$\GG$, in the sense of \Cref{Def:Mackey-functor-for-GI}.
\end{Prop}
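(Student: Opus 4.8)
The plan is to obtain $M$ by post-composing the canonical extension of $\MM$ to spans with $K_0$, and then to read off the Mackey-functor axioms from the defining axioms of~$\MM$. Concretely, a 1-cell of $\tSpan(\GG;\JJ)$ from $G$ to $H$ is represented by a span $G\loto{a}P\oto{b}H$ with $b\in\JJ$, and I would set the value of $M$ on this span to be $K_0(b_*\circ a^*)=K_0(b_!\circ a^*)$, the two agreeing by Ambidexterity~\Mack{4}; on objects $M(G):=K_0(\MM(G))$. Equivalently, using the universal property of $\Span(\GG;\JJ)$ (\Cref{sec:UP-Span}) as a black box, $\MM$ extends to a pseudo-functor $\widehat\MM\colon \Span(\GG;\JJ)\to\ADD$ sending such a span to $b_!\,a^*$; then $M$ is the functor induced by $K_0\circ\widehat\MM$ on the 1-truncation $\pih\Span(\GG;\JJ)=\tSpan(\GG;\JJ)$ (\Cref{Rem:alt_descr_Span}).

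First I would check that $M$ is well defined on equivalence classes and collapses the right 2-cells, which is a formal but necessary point. If two spans are equivalent through an equivalence $f$ of their apices with $a\cong a'f$ and $b\cong b'f$, then $b_!\,a^*\cong b'_!\,f_!\,f^*\,(a')^*\cong b'_!\,(a')^*$ using $f_!f^*\cong\Id$ (as $f$ is faithful and an equivalence); since naturally isomorphic additive functors induce equal maps on $K_0$, the two representatives give the same homomorphism. This simultaneously shows that isomorphic 1-cells of $\Span(\GG;\JJ)$ — which by \Cref{Lem:isos-in-spans} are exactly the equivalent spans — are identified by $K_0\circ\widehat\MM$, so the latter descends to $\pih\Span(\GG;\JJ)$.

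The key step is functoriality under composition of spans, and this is where the only genuine use of the Mackey axioms occurs, so I expect it to be the main obstacle. Given $G\loto{a}P\oto{b}H$ and $H\loto{c}Q\oto{d}K$ with $b,d\in\JJ$, their composite is built from the iso-comma $(b/c)$ with projections $p,q$ and structure 2-cell, yielding the span $G\loto{ap}(b/c)\oto{dq}K$ (compare \Cref{Def:Span-bicat}). Its $M$-value is $K_0\big((dq)_!(ap)^*\big)=K_0(d_!\,q_!p^*\,a^*)$, whereas the composite of the two homomorphisms is $K_0(d_!\,c^*)\circ K_0(b_!\,a^*)=K_0(d_!\,c^*b_!\,a^*)$. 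These agree because the Base-Change isomorphism~\Mack{3} for the Mackey square $(b/c)$ gives $q_!p^*\isoEcell c^*b_!$; applying $K_0$ turns this natural isomorphism into an equality of homomorphisms, so the pseudo-functoriality of $\widehat\MM$ becomes strict functoriality of $M$. The identity span $G=G=G$ maps to $\Id$ with $K_0(\Id)=\id$, handling units.

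Finally I would verify the triangle and the Mackey-functor axioms. The functor $\GG^{\op}\to\pih\Span(\GG;\JJ)$ sends $u\colon H\to G$ to $[G\loto{u}H\oto{\Id}H]$, whose $M$-value is $K_0(u^*)=K_0(\MM(u))$ since the right leg is the identity; hence $M\circ(\GG^{\op}\to\pih\Span(\GG;\JJ))=K_0\circ\MM$ on the nose, and uniqueness of the factorization follows from the universal property of $\Span(\GG;\JJ)$. That $M$ is a generalized Mackey functor in the sense of \Cref{Def:Mackey-functor-for-GI} is then immediate: Additivity~(1) is $K_0$ applied to the equivalence of~\Mack{1} together with $K_0(\cat C_1\oplus\cat C_2)\cong K_0(\cat C_1)\oplus K_0(\cat C_2)$; axiom~(2), that isomorphic 1-cells induce the same map, is exactly the well-definedness established above; and axiom~(3), the equality $u^*i_*=j_*v^*$ for a Mackey square with $i\in\JJ$, is $K_0$ applied to the right-hand Base-Change isomorphism $u^*i_*\isoEcell j_*v^*$ of~\Mack{3}, under which $i_*$ and $j_*$ are precisely the transfers assigned to the relevant spans by~$M$.
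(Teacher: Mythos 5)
Your proof is correct, but it follows a genuinely different route from the paper's. The paper's own proof is a two-line black-box argument through the \emph{double}-span bicategory: by \Cref{Thm:UP-Spanhat} the Mackey 2-functor $\MM$ extends to $\Spanhat(\GG;\JJ)$ --- which uses the full rectified structure, namely ambidexterity, base-change on both sides and the strict Mackey formula --- and since $K_0$ turns natural isomorphisms into equalities, the composite descends to the common 1-truncation $\pih\Spanhat=\pih\Span$ (\Cref{Rem:alt_descr_Span}); additivity of $M$ then comes from \Mack{1}. You instead extend $\MM$ only through the one-sided span bicategory $\Span(\GG;\JJ)$ via \Cref{Thm:UP-Span}, which requires nothing beyond the left adjoints $i_!$ and the left BC-formula, and you make the descent to the truncation explicit: well-definedness because $K_0$ kills the natural isomorphisms coming from equivalences of spans, and strict functoriality because $K_0$ kills the base-change isomorphism $q_!p^*\cong c^*b_!$. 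What your route buys is precisely the strengthening recorded in the Remark immediately following the proposition in the paper (attributed there to Nakaoka): ambidexterity is not actually needed for this decategorification, since left adjoints, left base-change and additivity suffice; what the paper's route buys is brevity, at the cost of invoking the much heavier \Cref{Thm:UP-Spanhat} and hence, implicitly, the Rectification Theorem. Two small corrections: the characterization of invertible 2-cells of $\Span(\GG;\JJ)$ that you need for well-definedness is \Cref{Lem:equiv-are-iso}, not \Cref{Lem:isos-in-spans} (the latter concerns spans in ordinary 1-categories); and your uniqueness claim, resting on the essential-uniqueness clause of \Cref{Thm:UP-Span} for the extension $\widehat\MM$ together with the fact that the truncation is bijective on objects and full on 1-cells, is exactly as explicit as the paper's own treatment of uniqueness, so it is acceptable at the paper's level of rigor.
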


\begin{proof}
By the universal property of \Cref{Thm:UP-Spanhat}, the Mackey 2-functor $\MM$ factors through $\GG^{\op} \to \Spanhat$. As $\Ab$ is a 1-category, the composite $K_0 \circ \MM$ must factor through the quotient $\Spanhat \to \pih\Spanhat = \pih\Span$, as claimed. The resulting functor $M\colon \pih\Span \to \Ab$ is additive by the Additivity axiom for~$\MM$, hence is a Mackey functor.
\end{proof}

\begin{Rem} It is clear from the proof of Proposition~\ref{Prop:decat_K0} and the equality $\pih\Span =\pih\Spanhat$ that one does not really need $\MM$ to be a Mackey 2-functor: The left adjoints satisfying the (left) BC-formula would suffice for~$\MM$ to factor through $\Span$, and Additivity for it to yield a Mackey functor~$M$. This was already observed in~\cite{Nakaoka16a}.
\end{Rem}

\begin{Rem}
We shall discuss other `decategorifications' in subsequent work, for instance by considering Mackey 2-functors~$\MM$ whose values $\MM(G)$ are not mere additive categories but richer objects, like exact or triangulated categories, in which case the Grothendieck group~$K_0$ has a finer definition.
\end{Rem}

\end{chapter-two}
%
\chapter{Rectification and ambidexterity}
\label{ch:Theta}%
\bigbreak
\begin{chapter-three}

We want to justify our definition of `Mackey 2-functors' (\Cref{Def:Mackey-2-functor-intro}). There are many examples of categories depending on finite groups for which induction and co-induction coincide. Our main goal is to show that this happens systematically in various additive settings, including `stable homotopy'. This general ambidexterity result (Theorems~\ref{Thm:Theta-properties} and~\ref{Thm:ambidex-der}) explains why we do not treat induction and coinduction separately in the sequel. \Cref{Thm:ambidex-der} will also be the source of a large class of examples, as we shall discuss more extensively in \Cref{sec:add-der-Mackey}.

\bigbreak
\section{Self iso-commas}
\label{sec:self-iso}%
\medskip

The present section prepares for the rectification process of \Cref{sec:rectification}. We assemble here the ingredients which do not depend on a 2-functor $\MM\colon \groupoid^\op\to \ADD$ but only on constructions with iso-commas of groupoids. Specifically, we study the `self-iso-commas' obtained by considering~\eqref{eq:iso-comma-intro} in the case~$u=i$: We identify certain diagonal components of such self-iso-commas and record their functorial behavior with respect to~$i$.

\begin{Not}
\label{Not:self-ic}%
For every faithful~$i\colon H\into G$ in~$\groupoid$, we consider the `self-iso-comma' over the cospan~$\oto{i}\loto{i}$, that is, for~$i$ versus itself:
\begin{equation}
\label{eq:self-ic}%
\vcenter{
\xymatrix@C=14pt@R=14pt{
& (i/i) \ar[dl]_-{p_1} \ar[dr]^-{p_2}
\\
H \ar[dr]_-{i} \ar@{}[rr]|-{\isocell{\lambda}}
&& H \ar[dl]^-{i}
\\
&G
}}
\end{equation}
We start by isolating some special (`diagonal') connected components of~$(i/i)$.
\end{Not}

\begin{Prop}
\label{Prop:Delta}%
There exists a functor $\Delta_i\colon H\to (i/i)$
\[
\xymatrix@C=14pt@R=14pt{
& H \ar[d]^-{\Delta_i} \ar@/_4ex/[ldd]_-{\Id} \ar@/^4ex/[rdd]^-{\Id} & \\
& (i/i) \ar[ld]_-{p_1} \ar[dr]^-{p_2} & \\
H \ar[dr]_i \ar@{}[rr]|{\oEcell{\lambda}} && H \ar[dl]^i \\
&G &
}
\]
characterized by $p_1 \Delta_i=p_2 \Delta_i=\Id_H$ and $\lambda \Delta_i=\id_{i}$, namely $\Delta_i=\langle \Id_H, \Id_H, \id_{i}\rangle$ in the notation of \Cref{Def:comma}. This functor is fully faithful, that is, $\Delta_i$ induces an equivalence between $H$ and its essential image in~$(i/i)$.
\end{Prop}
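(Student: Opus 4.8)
The plan is to read off $\Delta_i$ directly from the 2-universal property of the iso-comma, and then to verify full faithfulness by hand in the explicit groupoid model, isolating the one place where the faithfulness of $i$ is genuinely needed.

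First I would construct $\Delta_i$ and simultaneously obtain its characterization. Since $\groupoid$ is a $(2,1)$-category, the comma square~\eqref{eq:self-ic} is an iso-comma square and the 2-cell $\id_i\colon i\,\Id_H\Rightarrow i\,\Id_H$ is invertible, so part~\eqref{it:comma-a} of \Cref{Def:comma} applies to the data $(\Id_H,\Id_H,\id_i)$ and produces a \emph{unique} 1-cell $\Delta_i:=\langle \Id_H,\Id_H,\id_i\rangle\colon H\to (i/i)$ satisfying $p_1\Delta_i=p_2\Delta_i=\Id_H$ and $\lambda\Delta_i=\id_i$. This settles existence and the stated characterization at once. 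Faithfulness is then immediate: from $p_1\Delta_i=\Id_H$ and the fact that $p_1$ is a one-sided inverse of $\Delta_i$ on morphisms, any equality $\Delta_i(\alpha)=\Delta_i(\alpha')$ yields $\alpha=p_1\Delta_i(\alpha)=p_1\Delta_i(\alpha')=\alpha'$, so $\Delta_i$ is faithful.

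The crux will be fullness, and this is exactly where the faithfulness of $i$ is used. Here I would pass to the explicit model of \Cref{Exa:comma-in-Cat} (\cf~\Cref{Rem:iso-comma-intro}): objects of $(i/i)$ are triples $(x,y,g)$ with $x,y\in\Obj(H)$ and $g\colon i(x)\isoto i(y)$ in~$G$, and a morphism $(x,y,g)\to(x',y',g')$ is a pair $(\alpha,\beta)$ of arrows of~$H$ with $g'\,i(\alpha)=i(\beta)\,g$. Under this description $\Delta_i(x)=(x,x,\id_{i(x)})$ and $\Delta_i(\alpha)=(\alpha,\alpha)$. A morphism $\Delta_i(x)\to\Delta_i(x')$ is thus a pair $(\alpha,\beta)$ satisfying $i(\alpha)=i(\beta)$; faithfulness of~$i$ forces $\alpha=\beta$, so the pair equals $(\alpha,\alpha)=\Delta_i(\alpha)$ and lies in the image of $\Delta_i$. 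Hence $\Delta_i$ is full.

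Finally, a fully faithful functor is automatically an equivalence onto its essential image, namely the full subcategory of $(i/i)$ spanned by the objects isomorphic to some $\Delta_i(x)$; this yields the closing assertion of the statement. The only genuine obstacle is fullness, everything else being formal. I would emphasize that the hypothesis that $i$ be faithful is not a convenience but a necessity at precisely this step: without it, the equation $i(\alpha)=i(\beta)$ could admit solutions with $\alpha\neq\beta$, and the \emph{diagonal} copy of~$H$ inside $(i/i)$ would fail to be full.
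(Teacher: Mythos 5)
Your proposal is correct and follows essentially the same route as the paper: existence and the characterization come for free from the universal property (the paper simply writes $\Delta_i=\langle \Id_H,\Id_H,\id_i\rangle$), and full faithfulness is checked in the explicit model of $(i/i)$, where a morphism $(\alpha,\beta)\colon\Delta_i(x)\to\Delta_i(x')$ satisfies $i(\alpha)=i(\beta)$ and faithfulness of $i$ forces $\alpha=\beta$. The paper's proof is just a more compressed version of this argument, treating fullness and faithfulness in a single stroke rather than splitting them as you do.
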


\begin{proof}
Consider $x,y\in H$ and a morphism $(f,g)\colon (x,x,\id_{i(x)})\to (y,y,\id_{i(y)})$ between their images in~$(i/i)$ under~$\Delta_i$. The pair $f\colon x\to y$ and $g\colon x\to y$ in~$H$ must satisfy $\id_{i(y)}\circ i(f)=i(g)\circ\id_{i(x)}$ in~$G$, hence $f=g$ by the faithfulness of~$i$.
\end{proof}

\begin{Rem}
\label{Rem:roadmap-self-ic}%
We need to understand the behavior of the self-iso-comma~$(i/i)$ and the embedding~$\Delta_i$ in two situations. First we want to relate $(i/i)$ and $(j/j)$ in the presence of a 2-cell $\gamma\colon i v\Rightarrow u j$, typically but not necessarily a Mackey square, as in~\eqref{eq:Mackey-square}. Second, we want to relate $(i/i)$, $(j/j)$ and $(ij/ij)$ when~$i$ and $j$ are composable morphisms $\ointo{j}\ointo{i}$.
\end{Rem}

Here is the first setting:
\begin{Not}
\label{Not:i-j}%
We consider a 2-cell $\gamma\colon i v\isoEcell u j$
\begin{equation}
\label{eq:2-cell-for-i-j}%
\vcenter{\xymatrix@C=14pt@R=14pt{
& L \ar[dl]_-{v} \ar@{ >->}[dr]^-{j}
\\
H \ar@{ >->}[dr]_-{i} \ar@{}[rr]|-{\oEcell{\gamma}}
&& K \ar[dl]^-{u}
\\
&G
}}
\end{equation}
in which both~$i$ and $j$ are assumed faithful. (This square may be an iso-comma but we do not assume this here.) Consider the self-iso-comma~\eqref{eq:self-ic} associated to~$j$
\begin{equation}
\label{eq:self-ic-j}%
\vcenter{\xymatrix@C=14pt@R=14pt{
& (j/j) \ar[dl]_-{q_1} \ar[dr]^-{q_2}
\\
L \ar[dr]_-{j} \ar@{}[rr]|-{\oEcell{\rho}}
&& L \ar[dl]^-{j}
\\
&K
}}
\end{equation}
We compare the two iso-commas~\eqref{eq:self-ic} and~\eqref{eq:self-ic-j}, for~$i$ and for $j$ respectively:
\end{Not}

\begin{Prop}
\label{Prop:w}%
With notation as in~\ref{Not:i-j}, there exists a (unique) functor $w=w_{v,u,\gamma}\colon (j/j)\to (i/i)$ such that $p_1 w=vq_1$, $p_2 w=vq_2$ and such that the following two morphisms $ivq_1\Rightarrow ivq_2$ are equal (see \Cref{Def:comma}):
\begin{equation}
\label{eq:ic-f}%
\vcenter{\xymatrix@C=14pt@R=14pt{
& (j/j) \ar@{->}[d]_-{\exists!}^-{w} \ar@/_3ex/[ddl]_-{v q_1} \ar@/^3ex/[ddr]^-{v q_2}
&&&&& (j/j) \ar[ld]_-{q_1} \ar[rd]^-{q_2} \ar@{}[dd]|-{\oEcell{\rho}}
\\
& (i/i) \ar[dl]_-{p_1} \ar[dr]^-{p_2} \ar@{}[dd]|-{\oEcell{\lambda}}
&& =
&& L \ar[ld]_-{v} \ar[rd]^-{j} \ar@{}[dd]|-{\oEcell{\gamma}}
&& L \ar[rd]^-{v} \ar[ld]_-{j} \ar@{}[dd]|-{\oEcell{\gamma}\inv}
\\
H \ar[dr]_-{i} && H \ar[dl]^-{i}
&& H \ar@/_.5em/[rrd]_-{i}
&& K \ar[d]^-{u}
&& H \ar@/^.5em/[lld]^-{i}
\\
& G
&&&&& G &
}}
\end{equation}
In other words $w=\big\langle vq_1\,,\, vq_2\,,\, (\gamma\inv q_2 )(u\rho)(\gamma q_1 )\big\rangle$ and the following diagram of natural transformations commutes
\begin{equation}
\label{eq:lambda-gamma-rho}%
\vcenter{
\xymatrix@C=2em@R=1em@L=1ex{
& ip_1w \ar@{=>}[r]^-{\lambda}
& ip_2w \ar@{=}[rd]
\\
ivq_1 \ar@{=}[ru] \ar@{=>}[rd]_-{\gamma}
&&& ivq_2 \ar@{=>}[ld]^-{\gamma}
\\
& ujq_1 \ar@{=>}[r]^-{\rho}
& ujq_2
}}
\end{equation}
Moreover, $w\colon (j/j) \to (i/i)$ is compatible with the morphism $\Delta_i$ of \Cref{Prop:Delta}, namely we have a commutative diagram
\begin{equation}
\label{eq:Delta-nat}%
\vcenter{
\xymatrix{
L \ar[d]_-{v} \ar[r]^-{\Delta_j}
& (j/j) \ar[d]^-{w}
\\
H \ar[r]^-{\Delta_i}
& (i/i)\,.\!\!
}}
\end{equation}
\end{Prop}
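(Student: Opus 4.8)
The plan is to construct $w$ directly from the 2-universal property of the iso-comma $(i/i)$, \ie from \Cref{Def:comma}\,\eqref{it:comma-a}, and then to verify both displayed compatibilities by the \emph{uniqueness} clauses of that same universal property. No genuine obstacle is expected: the content is entirely formal, and the only care needed is the bookkeeping of whiskerings together with the invertibility of~$\gamma$ (legitimate since $\groupoid$ is a $(2,1)$-category).

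First I would assemble the candidate 2-cell. Whiskering the given $\gamma\colon iv\isoEcell uj$ and the structure 2-cell $\rho\colon jq_1\Rightarrow jq_2$ of~\eqref{eq:self-ic-j} yields a composable string
\[
ivq_1 \overset{\gamma q_1}{\Longrightarrow} ujq_1 \overset{u\rho}{\Longrightarrow} ujq_2 \overset{(\gamma\inv)q_2}{\Longrightarrow} ivq_2,
\]
whose vertical composite $\delta:=(\gamma\inv q_2)(u\rho)(\gamma q_1)$ is a 2-cell $ivq_1\Rightarrow ivq_2$ (invertible, as all 2-cells in~$\groupoid$ are). Applying \Cref{Def:comma}\,\eqref{it:comma-a} to the cospan $\oto{i}\loto{i}$ with the data $f=vq_1$, $g=vq_2$ and the 2-cell~$\delta$ produces a unique functor $w=\langle vq_1,vq_2,\delta\rangle\colon (j/j)\to (i/i)$ satisfying $p_1w=vq_1$, $p_2w=vq_2$ and $\lambda w=\delta$. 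After substituting $ip_1w=ivq_1$ and $ip_2w=ivq_2$, the equation $\lambda w=\delta$ reads $(\gamma q_2)(\lambda w)=(u\rho)(\gamma q_1)$, which is exactly the commutativity of the pentagon~\eqref{eq:lambda-gamma-rho}. This establishes the first assertion, uniqueness of~$w$ included.

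It remains to prove the square~\eqref{eq:Delta-nat}, \ie $w\Delta_j=\Delta_i v$. Since two 1-cells into $(i/i)$ coincide as soon as they have equal components $(p_1(-),p_2(-),\lambda(-))$ — this is again the bijectivity on objects in \Cref{Def:comma}\,\eqref{it:comma-a}, \cf~\eqref{eq:compare-to-comma} — I would simply compute these three components on both sides. For $\Delta_i v$ the defining relations of $\Delta_i$ in \Cref{Prop:Delta} give $p_1\Delta_i v=p_2\Delta_i v=v$ and $\lambda\Delta_i v=(\id_i)v=\id_{iv}$. For $w\Delta_j$ I would combine $p_1w=vq_1$, $p_2w=vq_2$, $\lambda w=\delta$ with the relations $q_1\Delta_j=q_2\Delta_j=\Id_L$ and $\rho\Delta_j=\id_j$ furnished by \Cref{Prop:Delta} applied to~$j$: this gives $p_1w\Delta_j=p_2w\Delta_j=v$, while whiskering $\delta$ by $\Delta_j$ collapses the middle factor ($u\rho\Delta_j=u\,\id_j=\id_{uj}$) and trivializes the outer whiskerings ($\gamma q_1\Delta_j=\gamma$ and $\gamma\inv q_2\Delta_j=\gamma\inv$), so that
\[
\lambda w\Delta_j=\delta\Delta_j=(\gamma\inv)(\id_{uj})(\gamma)=\gamma\inv\gamma=\id_{iv}.
\]
The three components agree on both sides, whence $w\Delta_j=\Delta_i v$ by uniqueness, completing the proof.

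The hard part, such as it is, is purely clerical: keeping straight the sources and targets of $\gamma q_1$, $u\rho$ and $\gamma\inv q_2$ so that $\delta$ is a well-formed 2-cell $ivq_1\Rightarrow ivq_2$, and checking that $\delta\Delta_j$ telescopes to $\id_{iv}$. Everything rests on the single mechanism of the 2-universal property of iso-commas, used once for existence and the pentagon, and once more for the naturality square.
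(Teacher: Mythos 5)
Your proposal is correct and follows essentially the same route as the paper: construct $w=\langle vq_1, vq_2, (\gamma\inv q_2)(u\rho)(\gamma q_1)\rangle$ via the 2-universal property of $(i/i)$ (which yields existence, uniqueness, and the pentagon~\eqref{eq:lambda-gamma-rho} at once), then verify~\eqref{eq:Delta-nat} by computing the three components of $w\Delta_j$ and $\Delta_i v$ — using $q_1\Delta_j=q_2\Delta_j=\Id_L$ and $\rho\Delta_j=\id_j$ so that $\gamma$ and $\gamma\inv$ cancel — and invoking uniqueness of induced 1-cells into the iso-comma. This matches the paper's argument step for step, with your write-up merely spelling out the existence step in more detail.
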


\begin{proof}
The universal property of~$(i/i)$ as in \Cref{Def:comma}\,(a) gives us the wanted~$w$. We only need to verify~\eqref{eq:Delta-nat}. Let us describe the two functors
\[
w\Delta_j\,,\,\Delta_iv\colon L \to (i/i) \,.
\]
Since $\Delta_i=\langle \Id_H,\Id_H,\id_i\rangle$, we have $\Delta_i\circ v=\langle v, v, \id_{iv}\rangle$. On the other hand, since $w=\langle vq_1\,,\, vq_2\,,\, (\gamma\inv q_2 )(u\rho)(\gamma q_1 )\rangle$, we have that $w\circ \Delta_j$ is equal to the morphism
\[
\langle vq_1\Delta_j\,,\, vq_2\Delta_j\,,\, (\gamma\inv q_2 \Delta_j)(u\rho\Delta_j)(\gamma q_1 \Delta_j)\rangle=\langle v,v, \id_{iv}\rangle\,.
\]
The latter uses $q_1 \Delta_j=\Id$ and $q_2 \Delta_j=\Id$ and the fact that $\rho\Delta_j=\id_{j}$ by construction of~$\Delta_j=\langle j,j,\id_{j}\rangle$, thus allowing $\gamma$ and $\gamma\inv$ to cancel out.
\end{proof}

To state the next result, we need to isolate some class of 2-cells as in~\eqref{eq:2-cell-for-i-j}.
\begin{Def}
\label{Def:partial-Mackey}%
We say that a 2-cell in~$\groupoid$
\[
\vcenter{\xymatrix@C=14pt@R=14pt{
& L \ar[dl]_-{v} \ar[dr]^-{j}
\\
H \ar[dr]_-{i} \ar@{}[rr]|-{\isocell{\gamma}}
&& K \ar[dl]^-{u}
\\
&G
}}
\]
is a \emph{partial Mackey square} if the induced functor $\langle v,j,\gamma\rangle\colon L\to (i/u)$ is fully faithful. In other words, $L$ is equivalent to a union of connected components of the iso-comma~$(i/u)$, although perhaps not all of them. When we assume $j$ faithful (as we did in \Cref{Not:i-j}) then the condition simply means that $\langle v,j,\gamma\rangle\colon L\to (i/u)$ is full. The latter can be stated explicitly by saying that for every objects $x,y\in L$, every $h\colon v(x)\isoto v(y)$ and $k\colon j(x)\isoto j(y)$ such that $u(k) \gamma_x =\gamma_y i(h)$, there exists a morphism $\ell\colon x\isoto y$ such that $v(\ell)=h$ and $j(\ell)=k$.
\end{Def}

\begin{Exa}
\label{Exa:partial-alpha}%
Of course, every Mackey square (\Cref{Def:Mackey-square}) is a partial Mackey square. In particular, for every 2-cell $\alpha\colon i\isoEcell i'$ between faithful $i,i'\colon H\into G$, the following square is a (partial) Mackey square:
\[
\xymatrix@C=14pt@R=14pt{
& H \ar@{=}[ld] \ar@{ >->}[rd]^-{i'} \ar@{}[dd]|-{\isocell{\alpha}}
\\
H \ar@{ >->}[rd]_-{i}
&& G \ar@{=}[ld]
\\
& G}
\]
\end{Exa}

\begin{Exa}
\label{Exa:partial-diag}%
For every faithful $i\colon H\into G$ the following is a partial Mackey square by \Cref{Prop:Delta}:
\[
\xymatrix@C=14pt@R=14pt{
& H \ar@{=}[ld] \ar@{=}[rd] \ar@{}[dd]|-{\isocell{\id}}
\\
H \ar@{ >->}[rd]_-{i}
&& H \ar@{->}[ld]^-{i}
\\
& G}
\]
\end{Exa}

One can easily generalize the above example as follows.
\begin{Exa}
\label{Exa:partial-ij}%
Consider two composable faithful functors $K\ointo{j} H \ointo{i} G$. Then the following is a partial Mackey square
\[
\xymatrix@C=14pt@R=14pt{
& K \ar@{=}[ld] \ar@{ >->}[rd]^-{j} \ar@{}[dd]|-{\isocell{\id}}
\\
K \ar@{ >->}[rd]_-{ij}
&& H \ar@{->}[ld]^-{i}
\\
& G}
\]
as one can directly verify that $\langle \Id_K,j,\id\rangle\colon K\to (ij/i)$ is fully faithful.
\end{Exa}

In \Cref{Prop:w} we saw that $w=w_{v,u,\gamma}\colon (j/j)\to (i/i)$ sends the diagonal component $\Delta_j(H)$ of~$(j/j)$ to the diagonal component of~$(i/i)$ whenever we have a 2-cell $\gamma\colon i v \Rightarrow u j$ as in~\eqref{eq:2-cell-for-i-j}. We need a condition for the non-diagonal components to be sent by~$w$ to non-diagonal components as well, which is not automatic. The partial Mackey squares do the job:

\begin{Prop}
\label{Prop:w-partial-Mackey}%
Consider a 2-cell in~$\groupoid$ as in~\eqref{eq:2-cell-for-i-j}
\[
\vcenter{\xymatrix@C=14pt@R=14pt{
& L \ar[dl]_-{v} \ar@{ >->}[dr]^-{j}
\\
H \ar@{ >->}[dr]_-{i} \ar@{}[rr]|-{\isocell{\gamma}}
&& K \ar[dl]^-{u}
\\
&G
}}
\]
and recall the induced 1-cell $w=w_{v,u,\gamma}\colon (j/j)\to (i/i)$ of \Cref{Prop:w}. Suppose that the above is a partial Mackey square (\Cref{Def:partial-Mackey}). Then for each connected component $D\subset (j/j)$ which does not meet the image of~$\Delta_j$ the connected component of~$w(D)$ in~$(i/i)$ does not meet the image of~$\Delta_i$.
\end{Prop}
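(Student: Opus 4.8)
The plan is to work at the level of objects of the iso-commas, using the explicit description of $w$ from \Cref{Prop:w} together with the explicit fullness condition furnished by the partial Mackey square hypothesis (\Cref{Def:partial-Mackey}). Since $\groupoid$ is a $(2,1)$-category, a connected component is an isomorphism class of objects, and because $\Delta_i$ is fully faithful (\Cref{Prop:Delta}) its essential image $\Delta_i(H)$ is a union of connected components of~$(i/i)$; likewise $\Delta_j(H)\subset(j/j)$. As $w$ carries each connected component of $(j/j)$ into a single connected component of $(i/i)$, it suffices to prove the object-wise statement that $w$ sends every object of $(j/j)$ lying off the diagonal component to an object of $(i/i)$ lying off the diagonal component. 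I would prove the contrapositive.

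First I would record a clean criterion for membership in the diagonal component. Unwinding \Cref{Prop:Delta}, an object $(x,y,g)$ of $(i/i)$ (with $x,y\in\Obj H$ and $g\colon i(x)\isoto i(y)$ in~$G$) lies in $\Delta_i(H)$ if and only if $g=i(c)$ for some morphism $c\colon x\to y$ in~$H$. Indeed, an isomorphism $(x,y,g)\cong(z,z,\id_{i(z)})$ forces $g$ to lie in the image of $i$ on hom-sets, and conversely $g=i(c)$ yields the isomorphism $(\id_x,c\inv)\colon(x,y,g)\isoto(x,x,\id_{i(x)})$. The identical criterion holds for $(j/j)$ with $j$ in place of~$i$.

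Next I would compute $w$ on objects. By \Cref{Prop:w} we have $w=\langle vq_1,vq_2,(\gamma\inv q_2)(u\rho)(\gamma q_1)\rangle$, and the structure 2-cell $\rho$ of $(j/j)$ evaluated at an object $(x,y,g)$ is the isomorphism $g$ itself. Hence $w(x,y,g)=\big(v(x),\,v(y),\,\gamma_y\inv\,u(g)\,\gamma_x\big)$, where the $\gamma_x\colon iv(x)\to uj(x)$ are the components of~$\gamma$. The main bookkeeping obstacle is precisely this third-component computation: one must chase $\gamma q_1$, then $u\rho$, then $\gamma\inv q_2$ through the object $(x,y,g)$ while keeping the composition order straight, so that the resulting isomorphism $iv(x)\to iv(y)$ comes out as $\gamma_y\inv u(g)\gamma_x$.

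Finally I would assemble the contrapositive. Suppose $w(x,y,g)$ lies in the diagonal component of~$(i/i)$; by the criterion there is $h\colon v(x)\to v(y)$ in $H$ with $i(h)=\gamma_y\inv u(g)\gamma_x$, equivalently $u(g)\,\gamma_x=\gamma_y\,i(h)$. This is exactly the hypothesis of the fullness condition in \Cref{Def:partial-Mackey} (taking $k:=g$), so the partial Mackey square assumption produces a morphism $\ell\colon x\isoto y$ in $L$ with $v(\ell)=h$ and $j(\ell)=g$. But $g=j(\ell)$ then shows, by the diagonal criterion for $(j/j)$, that $(x,y,g)$ lies in $\Delta_j(H)$. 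Passing back to connected components completes the argument: any component $D$ of $(j/j)$ off the diagonal consists entirely of off-diagonal objects, so $w(D)$ consists of off-diagonal objects of $(i/i)$, whence the connected component of $w(D)$ is disjoint from~$\Delta_i(H)$.
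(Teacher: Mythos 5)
Your proof is correct and takes essentially the same route as the paper's: both argue the contrapositive via the explicit formula $w(x,y,g)=\big(v(x),v(y),\gamma_y\inv\,u(g)\,\gamma_x\big)$, extract $h$ from an isomorphism to a diagonal object (your ``diagonal membership criterion'' is just a repackaging of the paper's step $h:=h_2\inv h_1$ from an isomorphism $(h_1,h_2)\colon w(x,y,g)\isoto\Delta_i(z)$), and then invoke the fullness condition of \Cref{Def:partial-Mackey} to produce $\ell$ with $v(\ell)=h$ and $j(\ell)=g$, placing $(x,y,g)$ on the diagonal of~$(j/j)$. The only cosmetic slip is notational: since $j$ has source~$L$, the diagonal of $(j/j)$ has essential image $\Delta_j(L)$, not $\Delta_j(H)$.
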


\begin{proof}
This is a lengthy but straightforward exercise. Here is an outline of the contrapositive. Let $(x,y,k)\in (j/j)$ be an object such that $w(x,y,k)$ is isomorphic in~$(i/i)$ to~$\Delta_i(z)$ for some $z\in H$. We want to show that $(x,y,k)$ is isomorphic to~$\Delta_j(x)$ in~$(j/j)$. Let $(h_1,h_2)\colon w(x,y,k)\isoto \Delta_i(z)$ be an isomorphism in~$(i/i)$. Explicitly, $w(x,y,k)=(v(x),v(y),\gamma_y\inv u(k)\gamma_x)$. Consider $h:=h_2\inv h_1\colon v(x)\isoto v(y)$ and the given $k\colon j(x)\isoto j(y)$. They satisfy $u(k) \gamma_x =\gamma_y i(h)$ because $(h_1,h_2)$ is a morphism in~$(i/i)$. By the explicit formulation of the square~\eqref{eq:2-cell-for-i-j} being a partial Mackey square (\Cref{Def:partial-Mackey}) there exists $\ell\colon x\isoto y$ such that $v(\ell)=h$ and $j(\ell)=k$. We can then verify that $(\id_x,\ell)\colon \Delta_j(x)\isoto (x,y,k)$ is an isomorphism in~$(j/j)$, as wanted.
\end{proof}

We now turn to the second behavior of~$(i/i)$ announced in \Cref{Rem:roadmap-self-ic}. Here is the precise setting:

\begin{Not}
\label{Not:ij-comp}%
Let $K \stackrel{j}{\into} H \stackrel{i}{\into} G$ be two composable faithful functors, and consider the three associated iso-comma squares:
\[
\vcenter{
\xymatrix@C=14pt@R=14pt{
& (i/i) \ar[dl]_-{p_1} \ar[dr]^-{p_2}
\\
H \ar[dr]_-{i} \ar@{}[rr]|-{\isocell{\lambda}}
&& H \ar[dl]^-{i}
\\
&G
}}
\qquad
\vcenter{\xymatrix@C=14pt@R=14pt{
& (ij/ij) \ar[dl]_{r_1} \ar[dr]^-{r_2}
\\
K \ar[dr]_-{ij} \ar@{}[rr]|-{\isocell{\sigma}}
&& K \ar[dl]^-{ij}
\\
&G
}}
\qquad
\vcenter{
\xymatrix@C=14pt@R=14pt{
& (j/j) \ar[dl]_-{q_1} \ar[dr]^-{q_2}
\\
K \ar[dr]_-{j} \ar@{}[rr]|-{\isocell{\rho}}
&& K \ar[dl]^-{j}
\\
&H
}}
\]
Consider also the two functors
\[
(i/i) \stackrel{\,w'}{\longleftarrow} (ij/ij) \stackrel{\,\,w''}{\longleftarrow} (j/j)
\]
induced by the universal property of iso-comma squares and defined by their components $w''=\langle q_1, q_2, i\rho \rangle$ and $w'=\langle j r_1, j r_2, \sigma \rangle$, see \Cref{Def:comma}\,\eqref{it:comma-a}. One can readily verify that $w''=w_{\Id_K,i,\id}$ and $w'=w_{j,\Id_G,\id}$ in the notation of \Cref{Prop:w} applied to the following two squares, respectively:
\begin{equation} \label{eq:ij-squares}
\vcenter{\xymatrix@C=14pt@R=14pt{
& K \ar@{=}[dl] \ar[dr]^-{j}
\\
K \ar[dr]_-{ij} \ar@{}[rr]|-{\isocell{\id}}
&& H \ar[dl]^-{i}
\\
&G
}}
\qquadtext{and}
\vcenter{\xymatrix@C=14pt@R=14pt{
& K \ar[dl]_j \ar[dr]^-{ij}
\\
H \ar[dr]_-{i} \ar@{}[rr]|-{\isocell{\id}}
&& G \ar@{=}[dl]
\\
&G
}}
\end{equation}
(The `conjugation' by $\gamma$ in \Cref{Prop:w} disappears here since $\gamma=\id$.)
\end{Not}

Then the connected components of~$(ij/ij)$ are organized as follows:

\begin{Prop}
\label{Prop:ij-components}%
Retain \Cref{Not:ij-comp}. Each connected component $C \subseteq (ij/ij)$ falls in \emph{exactly one} of the following three cases:
\begin{enumerate}[\rm(1)]
\item
\label{it:ij-comp-1}%
$C$ belongs to the (essential) image of the diagonal $\Delta_{ij}(K)$ and the inclusion $C\hookrightarrow (ij/ij)$ lifts uniquely along~$w''$ to an equivalence between $C$ and a connected component $D$ of the diagonal $\Delta_j(K)\subseteq (j/j)$.
\smallbreak
\item
\label{it:ij-comp-2}%
$C$ is disjoint from $\Delta_{ij}(K)$ and $w'(C)$ is disjoint from $\Delta_{i}(H)$ in~$(i/i)$.
\smallbreak
\item
\label{it:ij-comp-3}%
$C$ is disjoint from $\Delta_{ij}(K)$ and the inclusion $C\hookrightarrow (ij/ij)$ lifts uniquely along $w''$ to an equivalence between $C$ and a connected component $D$ of $(j/j)$ which is disjoint from the diagonal $\Delta_j(K)$.
\end{enumerate}
\end{Prop}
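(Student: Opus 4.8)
The plan is to reduce everything to the single observation that the comparison functor $w''\colon (j/j)\to (ij/ij)$ is \emph{fully faithful}. First I would record all the relevant functors explicitly on objects. Using \Cref{Prop:w} one gets $w''(x,y,k)=(x,y,i(k))$ for $(x,y,k)\in(j/j)$ (with $k\colon j(x)\isoto j(y)$ in~$H$), $w'(x,y,g)=(j(x),j(y),g)$ for $(x,y,g)\in(ij/ij)$, and the diagonals $\Delta_{ij}(z)=(z,z,\id_{ij(z)})$, $\Delta_j(z)=(z,z,\id_{j(z)})$, $\Delta_i(a)=(a,a,\id_{i(a)})$. A direct check using the faithfulness of~$i$ shows that a pair $(\alpha,\beta)$ of arrows in~$K$ is a morphism $w''(x,y,k)\to w''(x',y',k')$ in~$(ij/ij)$ if and only if it is a morphism $(x,y,k)\to(x',y',k')$ in~$(j/j)$: the defining equation $i(k')\,ij(\alpha)=ij(\beta)\,i(k)$ reads $i\big(k'j(\alpha)\big)=i\big(j(\beta)k\big)$, whence $k'j(\alpha)=j(\beta)k$. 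Thus $w''$ is full and faithful, and it is visibly injective on objects; hence it is an equivalence onto a union of connected components of~$(ij/ij)$, and every component~$C$ lying in its image admits a \emph{unique strict} lift $s\colon C\to(j/j)$ with $w''s=\incl_C$, realizing an equivalence $C\simeq D$ onto a single component~$D$ of~$(j/j)$.

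Next I would set up three elementary criteria for a component $C\ni(x,y,g)$, each obtained by writing out isomorphisms in the iso-commas: (i) $C$ meets $\Delta_{ij}(K)$ iff $g=ij(n)$ for some isomorphism $n\colon x\isoto y$ in~$K$; (ii) $C$ lies in the (essential) image of $w''$ iff $g=i(m)$ for some isomorphism $m\colon j(x)\isoto j(y)$ in~$H$; (iii) $w'(C)$ meets $\Delta_i(H)$ iff $g=i(m)$ for such an~$m$, i.e.\ exactly under the condition of~(ii). Each of these is a component-level property, since $\Delta_{ij}(K)$, $\Delta_i(H)$ and the image of~$w''$ are replete full subcategories, hence unions of components (by \Cref{Prop:Delta} and the full faithfulness just proved); so the case of~$C$ does not depend on the chosen representative, and moreover every object of a component in the image of~$w''$ is literally of the form $w''(\,\cdot\,)$, which is what makes the strict lift exist.

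With these criteria the trichotomy is just the partition of the isomorphism $g\colon ij(x)\isoto ij(y)$ according to whether it lies in the image of~$ij$, in the image of~$i$ but not of~$ij$, or outside the image of~$i$; since $ij(n)=i(j(n))$ these three alternatives are mutually exclusive and exhaustive. If $g\notin\img i$, then $C$ misses $\Delta_{ij}(K)$ and, by~(iii), $w'(C)$ misses $\Delta_i(H)$: this is case~\eqref{it:ij-comp-2}. If $g\in\img i$, then $C=w''(D)$ for the unique lifted component~$D$; using $w''\Delta_j=\Delta_{ij}$ from~\eqref{eq:Delta-nat} together with the implication $i(k)=ij(n)\Rightarrow k=j(n)$ (faithfulness of~$i$ again), one checks that $C\subseteq\Delta_{ij}(K)$ iff $D\subseteq\Delta_j(K)$. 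Hence $D$ is a diagonal component precisely when $g\in\img(ij)$, giving case~\eqref{it:ij-comp-1}, while $D$ is disjoint from $\Delta_j(K)$ when $g\in\img i\smallsetminus\img(ij)$, giving case~\eqref{it:ij-comp-3}.

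The main obstacle I anticipate is purely bookkeeping: verifying criteria~(i)--(iii) and the reflection statement ``$C$ meets $\Delta_{ij}(K)$ iff $D$ meets $\Delta_j(K)$'' amounts to manipulating isomorphisms inside the iso-comma groupoids, the same kind of ``lengthy but straightforward'' computation as in the proof of \Cref{Prop:w-partial-Mackey}. As a shortcut, the two disjointness halves in cases~\eqref{it:ij-comp-2} and~\eqref{it:ij-comp-3} can instead be deduced from \Cref{Prop:w-partial-Mackey} applied to the partial Mackey squares~\eqref{eq:ij-squares} (which are partial Mackey squares by \Cref{Exa:partial-ij}), trimming the hand computation; in either route the full faithfulness of~$w''$ is the one genuinely new input, and it is what makes the lifting in cases~\eqref{it:ij-comp-1} and~\eqref{it:ij-comp-3} simultaneously possible and unique.
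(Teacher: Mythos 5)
Your main argument is correct and follows essentially the same route as the paper's proof: both rest on (a) the observation that $w''$ is fully faithful and injective on objects, proved by the same direct inspection using faithfulness of~$i$, (b) the computation showing that $w'(C)$ meeting $\Delta_i(H)$ forces the comparison isomorphism $g$ into the image of~$i$ --- your criterion~(iii) is precisely the existence-of-lift computation $g=i(h_2^{-1}h_1)$ in the paper's \Cref{Lem:w'-components} --- and (c) the compatibility $w''\Delta_j=\Delta_{ij}$ from~\eqref{Eq:diagonal-comparisons}. Your repackaging of the trichotomy as a partition of~$g$ into $\img(ij)$, $\img(i)\smallsetminus\img(ij)$, and the complement of~$\img(i)$ is a transparent reorganization of the same content, not a different method.

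One caveat about your closing ``shortcut'': it is not available as stated. \Cref{Exa:partial-ij} makes only the \emph{left} square of~\eqref{eq:ij-squares} a partial Mackey square; the right square (the one governing~$w'$) is \emph{not} partial Mackey in general --- this is exactly the obstacle the paper points out before introducing \Cref{Lem:w'-components}, and indeed the claim ``$C$ disjoint from $\Delta_{ij}(K)$ implies $w'(C)$ disjoint from $\Delta_i(H)$'' is false, as every component of type~\eqref{it:ij-comp-3} witnesses. So \Cref{Prop:w-partial-Mackey} can only yield the implication ``$D$ disjoint from $\Delta_j(K)$ $\Rightarrow$ $C$ disjoint from $\Delta_{ij}(K)$'' relevant to case~\eqref{it:ij-comp-3}; the disjointness $w'(C)\cap\Delta_i(H)=\varnothing$ in case~\eqref{it:ij-comp-2} genuinely requires the direct computation of your criterion~(iii). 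Since your main argument supplies that computation anyway, the proof stands, but the shortcut remark should be deleted or restricted to the left square.
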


\begin{proof}
By~\eqref{eq:Delta-nat} applied to the two squares in~\eqref{eq:ij-squares}, we have a commutative diagram
\begin{equation} \label{Eq:diagonal-comparisons}
\vcenter{\xymatrix@R=1.5em{
K \ar@{=}[d] \ar[r]^-{\Delta_j} & (j/j) \ar[d]^{w''} \\
K \ar[d]_j \ar[r]^-{\Delta_{ij}} & (ij/ij) \ar[d]^{w'} \\
H \ar[r]^{\Delta_i} & (i/i)\,.\!\!
}}
\end{equation}
We may apply \Cref{Prop:w-partial-Mackey} to the left square in~\eqref{eq:ij-squares}, which is a partial Mackey square by \Cref{Exa:partial-ij}, and conclude the following: If a connected component $D$ of $(j/j)$ does not meet the diagonal $\Delta_j$ then $w''D$ does not meet $\Delta_{ij}$.

Now the right square in~\eqref{eq:ij-squares} is not partial Mackey in general, so we cannot apply \Cref{Prop:w-partial-Mackey} to it. Instead, we can prove:
\begin{Lem} \label{Lem:w'-components}
If a component $C\subseteq (ij/ij)$ is such that $w'C$ meets~$\Delta_{i}$ in~$(i/i)$, then the inclusion $C\hookrightarrow (ij/ij)$ lifts uniquely along $w''$ to an equivalence between~$C$ and a component of $(j/j)$.
\end{Lem}
\begin{proof}
The facts that the lifting, if it exists, is unique and induces an equivalence of components, are both easy consequences of $w''$ being fully faithful and injective on objects. The latter are seen by direct inspection: On objects, $w''$ sends $(x,y,h)$ to $(x,y,i(h))$ hence is injective by the faithfulness of~$i$; on maps, $w''$ sends $(k_1,k_2)$ to $(k_1, k_2)$, and the commutativity conditions for a pair $(k_1, k_2)\in (\Mor K)^2$ to define a morphism in $(j/j)$ or $(ij/ij)$ are identical, again by the faithfulness of~$i$.

To prove the existence of the lifting, let $(x,y,g\colon ij(x)\to ij(y))\in D$ be such that $w'(x,y,g)$ is isomorphic to an object in the image of~$\Delta_{i}$, say $(h_1,h_2)\colon w'(x,y,g)\stackrel{\sim}{\to}\Delta_{i}(z)$. Thus we have isomorphisms $h_1\colon j(x)\to z$ and $h_2\colon j(y)\to z$ such that $i(h_2)g=i(h_1)$. As $w''$ is so nice, to construct the lifting of $D\hookrightarrow (ij/ij)$ it suffices to lift the object $(x,y,g)$. Setting $h:=h_2^{-1}h_1\colon j(x)\to j(y)$, we see that $(x,y,h)$ is an object of $(j/j)$ such that $w''(x,y,h)=(x,y,g)$.
\end{proof}
Now the trichotomy claimed in the proposition follows immediately from the commutativity of~\eqref{Eq:diagonal-comparisons} and the above properties of $w''$ and $w'$ with respect to components.
\end{proof}

\bigbreak
\section{Comparing the legs of a self iso-comma}
\label{sec:self-iso-comma-legs}%
\medskip

We proceed with our gentle build up towards rectification (\Cref{sec:rectification}) by assuming the existence of a strict 2-functor $\MM\colon \groupoid^\op\to \ADD$ which satisfies additivity, without requesting any of the further properties of Mackey 2-functors. Using only this and the self-iso-commas~$(i/i)$ of \Cref{sec:self-iso} we can already compare some of the functors. For the sake of generality we replace the 2-category $\groupoid$ by any 2-subcategory~$\GG$ as in \Cref{Hyp:GG}. Note that all constructions performed in \Cref{sec:self-iso} remain inside~$\GG$ since the latter is assume closed under iso-commas (etc). We begin with a very simple natural transformation:

\begin{Prop}
\label{Prop:delta}%
Let $i\colon H\into G$ in~$\GG$ be faithful and let $\MM\colon \GG^\op\to \ADD$ be a strict 2-functor satisfying additivity~\Mack{1}. Consider the self-iso-comma of \Cref{Not:self-ic}
\[
\vcenter{
\xymatrix@C=14pt@R=14pt{
& (i/i) \ar[dl]_-{p_1} \ar[dr]^-{p_2}
\\
H \ar[dr]_-{i} \ar@{}[rr]|-{\isocell{\lambda}}
&& H \ar[dl]^-{i}
\\
&G
}}
\]
and the `diagonal' embedding $\Delta_i\colon H\into (i/i)$ of \Cref{Prop:Delta}. Then there exists a unique natural transformation $\delta_i\colon p_1^*\Rightarrow p_2^*$ of functors $\MM(H)\to \MM(i/i)$ with the following properties:
\begin{enumerate}[\rm(1)]
\item
The natural transformation $\Delta_i^*(\delta_i)$ from $\Delta_i^*p_1^*=\Id_{\MM(H)}$ to $\Delta_i^*p_2^*=\Id_{\MM(H)}$ equals the identity (of the functor $\Id_{\MM(H)}$).
\smallbreak
\item
For every connected component $\incl_C\colon C\into(i/i)$ of~$(i/i)$ disjoint from $\Delta_i(H)$, the natural transformation $\incl_C^*(\delta_i)$ from $\incl_C^*p_1^*$ to $\incl_C^*p_2^*$ equals zero.
\end{enumerate}
\end{Prop}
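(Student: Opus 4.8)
The plan is to reduce the entire statement to the Additivity axiom~\Mack{1}, via the decomposition of the self-iso-comma~$(i/i)$ into its diagonal and off-diagonal parts. Since $\Delta_i\colon H\into (i/i)$ is fully faithful by \Cref{Prop:Delta}, its essential image is a full subgroupoid closed under isomorphism in~$(i/i)$, hence a union of connected components; write $\incl_C\colon C\into (i/i)$ for the inclusion of the (disjoint) union of all \emph{remaining} components, so that
\[
\Delta_i\sqcup \incl_C\colon H\sqcup C\too (i/i)
\]
is an equivalence of finite groupoids. Applying~$\MM$ and Additivity, the pair $(\Delta_i^*,\incl_C^*)$ assembles into an equivalence of additive categories
\[
(\Delta_i^*,\incl_C^*)\colon \MM(i/i)\xrightarrow{\ \sim\ } \MM(H)\times \MM(C)\,.
\]
In particular this functor is faithful and full. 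Recalling from \Cref{Prop:Delta} that $p_1\Delta_i=p_2\Delta_i=\Id_H$, so that $\Delta_i^*p_1^*=\Delta_i^*p_2^*=\Id_{\MM(H)}$, it induces for each $X\in\MM(H)$ a bijection between $\Hom_{\MM(i/i)}(p_1^*X,p_2^*X)$ and $\Hom_{\MM(H)}(X,X)\times \Hom_{\MM(C)}(\incl_C^*p_1^*X,\incl_C^*p_2^*X)$.

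\emph{Uniqueness} is then immediate: a natural transformation $\delta_i\colon p_1^*\Rightarrow p_2^*$ satisfying~(1) and~(2) has, at each object~$X$, prescribed image $\id_X$ under $\Delta_i^*$ and prescribed image $0$ under $\incl_C^*$; since $(\Delta_i^*,\incl_C^*)$ is faithful, these two images determine the component $(\delta_i)_X$, and hence $\delta_i$ itself.

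For \emph{existence}, I would use the above hom-bijection to \emph{define}, for each $X\in\MM(H)$, the morphism $(\delta_i)_X\colon p_1^*X\to p_2^*X$ as the unique one with $\Delta_i^*(\delta_i)_X=\id_X$ and $\incl_C^*(\delta_i)_X=0$. Naturality in~$X$ is the only point requiring an argument, and it is verified by applying the faithful functor $(\Delta_i^*,\incl_C^*)$ to the naturality square: for $f\colon X\to Y$ both composites $p_2^*(f)\circ(\delta_i)_X$ and $(\delta_i)_Y\circ p_1^*(f)$ map to $f$ under $\Delta_i^*$ and to~$0$ under $\incl_C^*$, so they coincide. Finally, property~(2) as literally stated---one component at a time---follows because, by another application of~\Mack{1}, $\incl_C^*$ further factors as the product of the restrictions $\incl_{C_k}^*$ to the individual components $C_k$ of~$C$, so that $\incl_C^*(\delta_i)=0$ if and only if $\incl_{C_k}^*(\delta_i)=0$ for every such~$C_k$.

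There is no serious obstacle here; the only points needing care are that $\Delta_i(H)$ really is a \emph{union} of connected components---which is exactly what full faithfulness of~$\Delta_i$ buys us, so that Additivity applies---and that naturality must be checked rather than assumed, for which the faithfulness of the Additivity equivalence is precisely the right tool.
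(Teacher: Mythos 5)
Your proposal is correct and follows essentially the same route as the paper: both use the full faithfulness of $\Delta_i$ (so that its essential image is a union of components) together with Additivity~\Mack{1} to obtain the equivalence $\MM(i/i)\simeq \MM(H)\oplus\bigoplus_{C}\MM(C)$, and then define and characterize $\delta_i$ uniquely by prescribing its image (identity on the diagonal factor, zero elsewhere) under this equivalence. Your write-up merely spells out the uniqueness and naturality verifications that the paper leaves implicit.
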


\begin{proof}
By additivity and \Cref{Prop:Delta}, we have an equivalence
\[
\xymatrix@C=7em{
\MM(i/i) \ar[r]^-{\big(\Delta_i^*\quad (\incl_C^*)_{C}\big)}
& \MM(H) \oplus \bigoplus_{C\cap \Delta_i(H)\,=\,\varnothing}\MM(C)
}
\]
where $C$ runs among the connected components of~$(i/i)$ disjoint from~$\Delta_i(H)$, since the union of the other components is precisely the essential image of~$\Delta_i$. As in the statement, we denote by $\incl_C\colon C\into (i/i)$ the inclusions. Therefore, we can uniquely describe the morphism $\delta_i\colon p_1^*\Rightarrow p_2^*$ by its image under this equivalence.
\end{proof}

\begin{Rem}
\label{Rem:roadmap-delta}%
By analogy with \Cref{Rem:roadmap-self-ic}, we now want to discuss the behavior of~$\delta_i\colon p_1^* \overset{\sim}{\Rightarrow} p_2^*$ when~$i$ moves as in~\Cref{Not:i-j}.
\end{Rem}

\begin{Prop}
\label{Prop:delta-nat}%
Let $\MM\colon \GG^\op\to \ADD$ be a strict 2-functor satisfying additivity~\Mack{1}. Consider a 2-cell in~$\GG$ with~$i$ and $j$ faithful as in \Cref{Not:i-j}
\begin{equation}
\label{eq:2-cell-for-delta-nat}%
\vcenter{\xymatrix@C=14pt@R=14pt{
& L \ar[dl]_-{v} \ar@{ >->}[dr]^-{j}
\\
H \ar@{ >->}[dr]_-{i} \ar@{}[rr]|-{\isocell{\gamma}}
&& K \ar[dl]^-{u}
\\
&G
}}
\end{equation}
and the self-iso-commas~\eqref{eq:self-ic} and~\eqref{eq:self-ic-j} for~$i$ and~$j$ respectively:
\[
\vcenter{
\xymatrix@C=14pt@R=14pt{
& (i/i) \ar[dl]_-{p_1} \ar[dr]^-{p_2}
\\
H \ar[dr]_-{i} \ar@{}[rr]|-{\isocell{\lambda}}
&& H \ar[dl]^-{i}
\\
&G
}}
\qquadtext{and}
\vcenter{
\xymatrix@C=14pt@R=14pt{
& (j/j) \ar[dl]_-{q_1} \ar[dr]^-{q_2}
\\
L \ar[dr]_-{j} \ar@{}[rr]|-{\isocell{\rho}}
&& L \ar[dl]^-{j}
\\
&K\,.\!\!
}}
\]
The two natural transformations $\delta_i\colon p_1^*\Rightarrow p_2^*$ and $\delta_j\colon q_1 ^*\Rightarrow q_2 ^*$ of \Cref{Prop:delta} and the functor $w=w_{v,u,\gamma}\colon (j/j)\to (i/i)$ of \Cref{Prop:w} satisfy the following relations. The functors $w^*p_1^*=q_1 ^*v^*$ and $w^*p_2^*=q_2 ^*v^*$ relate the following categories: $\MM(H)\to \MM(j/j)$ and we can compare the morphisms $w^*\delta_i$ and $\delta_jv^*$
\[
\xymatrix@R=1em@L=1ex{
w^* p_1^* \ar@{=>}[r]^-{w^*\delta_i} \ar@{=}[d] \ar@{}[rd]|-{?}
& w^* p_2^* \ar@{=}[d]
\\
q_1 ^* v^* \ar@{=>}[r]_-{\delta_j v^*}
& q_2 ^* v^*
}
\]
one connected component of~$(j/j)$ at a time, by additivity.
\begin{enumerate}[\rm(a)]
\item
\label{it:delta-nat-a}%
Let $\Delta_j\colon L\into (j/j)$ be the `diagonal' embedding of \Cref{Prop:Delta}. Then $\Delta_j^*(w^*\delta_i)=\Delta_j^*(\delta_jv^*)$.
\smallbreak
\item
\label{it:delta-nat-b}%
Let $\incl_D\colon D\into (j/j)$ be a connected component of~$(j/j)$ such that $w(D)$ is disconnected from $\Delta_i(H)$, that is, no connected component of~$(i/i)$ meets both $w(D)$ and $\Delta_i(H)$. Then $\incl_D^*(w^*\delta_i)=\incl_D^*(\delta_jv^*)$ as well.
\smallbreak
\item
\label{it:delta-nat-c}%
If the square~\eqref{eq:2-cell-for-delta-nat} is partial Mackey (\Cref{Def:partial-Mackey}) then $w^*\delta_i=\delta_jv^*$.
\end{enumerate}
\end{Prop}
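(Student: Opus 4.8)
The plan is to reduce all three assertions to single connected components of $(j/j)$, exploiting the additive decomposition of $\MM(j/j)$ established in the proof of \Cref{Prop:delta}, namely the equivalence $\MM(j/j) \simeq \MM(L) \oplus \bigoplus_{D} \MM(D)$ given by $\Delta_j^*$ together with the restrictions $\incl_D^*$ along the off-diagonal components $D$ of $(j/j)$.

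For part~\eqref{it:delta-nat-a}, I would whisker both transformations down along $\Delta_j$. On the right, whiskering associativity gives $\Delta_j^*(\delta_j v^*) = (\Delta_j^* \delta_j)\,v^*$, which is $\id_{v^*}$ because $\Delta_j^* \delta_j = \id$ by property~(1) of \Cref{Prop:delta}. On the left, contravariant functoriality of $\MM$ gives $\Delta_j^*(w^* \delta_i) = (w \Delta_j)^* \delta_i$, and the commuting square~\eqref{eq:Delta-nat} of \Cref{Prop:w} identifies $w\Delta_j = \Delta_i v$; hence $(w\Delta_j)^*\delta_i = v^*(\Delta_i^* \delta_i) = \id_{v^*}$, again by property~(1). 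Both sides equal $\id_{v^*}$.

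For part~\eqref{it:delta-nat-b}, the first observation is that $D$ is forced to be off-diagonal: if $D$ contained a point $\Delta_j(x)$, then $w(\Delta_j(x)) = \Delta_i(v(x))$ would lie in both $w(D)$ and $\Delta_i(H)$, contradicting the assumption that $w(D)$ is disconnected from $\Delta_i(H)$. Consequently the right-hand side $(\incl_D^* \delta_j)\,v^*$ vanishes by property~(2) of \Cref{Prop:delta}. For the left-hand side, since $D$ is connected and $w$ is a functor, $w(D)$ lies in a \emph{single} connected component $C$ of $(i/i)$, and $C$ is disjoint from $\Delta_i(H)$ by hypothesis; factoring $w\,\incl_D$ through $\incl_C$ and invoking property~(2) of \Cref{Prop:delta} (this time for $i$) shows $\incl_D^*(w^*\delta_i) = 0$ as well.

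Finally, part~\eqref{it:delta-nat-c} is pure assembly: two natural transformations into $\MM(j/j)$ coincide as soon as they agree after applying $\Delta_j^*$ and each $\incl_D^*$, by the additive equivalence recalled above. The diagonal case is part~\eqref{it:delta-nat-a}. For the off-diagonal components, the partial Mackey hypothesis is exactly what lets me invoke \Cref{Prop:w-partial-Mackey}, which guarantees that every off-diagonal $D$ has $w(D)$ not meeting $\Delta_i(H)$ --- precisely the hypothesis of part~\eqref{it:delta-nat-b}. Thus all components match and $w^*\delta_i = \delta_j v^*$. The only genuine obstacle is conceptual rather than computational: one must verify in~\eqref{it:delta-nat-b} that connectedness of $D$ together with the hypothesis forces $w(D)$ into a single off-diagonal component of $(i/i)$, and recognize in~\eqref{it:delta-nat-c} that \Cref{Prop:w-partial-Mackey} supplies this off-diagonality uniformly across all off-diagonal components, so that additivity closes the argument. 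Once these geometric facts are in hand, the whiskering identities and the vanishing statements follow mechanically from the two defining properties of $\delta_i$ and $\delta_j$.
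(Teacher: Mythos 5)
Your proof is correct and follows essentially the same route as the paper's: part~(a) by reducing both sides to $\id_{v^*}$ via $w\Delta_j=\Delta_i v$ and property~(1) of \Cref{Prop:delta}, part~(b) by the vanishing property~(2) on off-diagonal components, and part~(c) by combining these through \Cref{Prop:w-partial-Mackey} and additivity. The only cosmetic difference is that the paper organizes~(a) as a ladder of commuting squares rather than computing both whiskered transformations to be the identity directly, and it leaves implicit your (correct) observation that connectedness of $D$ forces $w(D)$ into a single component of~$(i/i)$.
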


\begin{proof}
Recall from \Cref{Prop:w} that $p_1 w=vq_1$ and $p_2 w=vq_2$ and therefore the above statement makes sense. The target category, $\MM(j/j)$, decomposes as a product over the connected components of~$(j/j)$. The latter components are of two sorts, those meeting the image of~$\Delta_j\colon L\to (j/j)$, discussed in~\eqref{it:delta-nat-a}, and those $D\subset (j/j)$ which do not meet the image of~$\Delta_j$. For~\eqref{it:delta-nat-a}, applying $\Delta_j^*\colon \MM(j/j)\to \MM(L)$, our two morphisms become the two morphisms in the third square from the top in the following diagram, which we want to show is commutative
\[
\xymatrix@R=1em@L=1ex{
v^* \ar@{=>}[r]^-{\id}
& v^*
\\
v^* \Delta_i^* p_1^* \ar@{=>}[r]^-{v^* \Delta_i^* \delta_i} \ar@{=}[u]
& v^* \Delta_i^* p_2^* \ar@{=}[u]
\\
\Delta_j^*w^* p_1^* \ar@{=>}[r]^-{\Delta_j^*w^*\delta_i} \ar@{=}[d] \ar@{=}[u]
& \Delta_j^*w^* p_2^* \ar@{=}[d] \ar@{=}[u]
\\
\Delta_j^*q_1 ^* v^* \ar@{=>}[r]^-{\Delta_j^*\delta_j v^*} \ar@{=}[d]
& \Delta_j^*q_2 ^* v^* \ar@{=}[d]
\\
v^* \ar@{=>}[r]^-{\id}
& v^*
}%
\]
The `outer' square clearly commutes. The bottom square commutes by $q_1 \Delta_j=\Id=q_2 \Delta_j$ and the characterization of~$\delta_j$ on the components corresponding to~$\Delta_j$ given in \Cref{Prop:delta}\,(1) for the faithful functor~$j$ (instead of~$i$). The second square from the top commutes by $w\Delta_j=\Delta_i v$, see~\eqref{eq:Delta-nat}. The top square commutes by $p_1 \Delta_i=\Id=p_2 \Delta_i$ and the characterization of~$\delta_i$ on the components corresponding to~$\Delta_i$ given in \Cref{Prop:delta}\,(1) for~$i$. So the third square must commute as well, hence~\eqref{it:delta-nat-a}.

A connected components $D\subset (j/j)$ as in~\eqref{it:delta-nat-b} is necessarily disjoint from~$\Delta_j(L)$ for $w\Delta_j=\Delta_i v$ and we assume $w(D)$ disjoint from~$\Delta_i(H)$. So we can apply \Cref{Prop:delta}\,(2) for $\delta_i$ and for $\delta_j$ to see that the restriction of both $w^*\delta_i$ and $\delta_j v^*$ are zero. Hence they agree there as well.

For~\eqref{it:delta-nat-c} the assumption that we start with a partial Mackey square and \Cref{Prop:w-partial-Mackey} tell us that every connected component $D\subset(j/j)$ which is disjoint from~$\Delta_j(L)$ satisfies the hypothesis of~\eqref{it:delta-nat-b}. So combining~\eqref{it:delta-nat-a} and~\eqref{it:delta-nat-b} we see that all components of $w^*\delta_i$ and $\delta_j v^*$ coincide, hence the two morphisms are equal.
\end{proof}

\begin{Rem}
The 2-functors $\MM\colon \GG^\op\to \ADD$ which appeared in this section were only assumed to satisfy additivity. We did not use induction or coinduction, and \afortiori\ we did not invoke base-change formulas. This will change in the next section.
\end{Rem}

\bigbreak
\section{The canonical morphism $\Theta$ from left to right adjoint}
\label{sec:Theta}%
\medskip

In this section, we discuss the fundamental connection between the left and the right adjoint of restriction in the presence of additivity and of BC-formulas on both sides, without assuming ambidexterity. The main result, \Cref{Thm:Theta-properties}, sets the stage for our Rectification \Cref{Thm:rectification} and for \Cref{Thm:ambidex-der} where we show that every additive derivator yields a Mackey 2-functor.

As in the previous section, $\GG\subseteq \groupoid$ is a sub-2-category satisfying \Cref{Hyp:GG} and $\JJ$ denotes the faithful functors in~$\GG$.

Let us begin with some easy restrictions.

\begin{Rem}
\label{Rem:Mack-5-6}%
Let $\MM\colon \GG^\op\to \ADD$ be a strict 2-functor which satisfies additivity~\Mack{1} and existence of induction and coinduction~\Mack{2}.

Additivity allows us to reduce choices of units-counits to the case of connected groupoids. Indeed, suppose we have chosen the units and counits for the adjunctions $i_!\adj i^*\adj i_*$ whenever $i\colon H\into G$ is faithful and $H$ and $G$ are connected, then for $i=i_1\sqcup i_2\colon H_1\sqcup H_2\into G$, under the identification $\MM(H_1\sqcup H_2)\cong \MM(H_1)\oplus \MM(H_2)$, we can set
\[
(i_1\sqcup i_1)_!=\big((i_1)_! \ (i_2)_!\big)
\qquadtext{and}
(i_1\sqcup i_1)_*=\big((i_1)_* \ (i_2)_*\big)
\]
with the obvious `diagonal' units and counits as explained in \Cref{Rem:add-adjoint}. In other words, we can easily arrange Additivity of adjoints as in~(Mack\,\ref{Mack-5}) of \Cref{Thm:rectification-intro} (without assuming necessarily $i_!\simeq i_*$, just one side at a time).

Similarly, for every~$i$ (between connected groupoids) such that $i^*$ is an equivalence, we can choose $i_!=i_*$ to be any inverse equivalence $(i^*)\inv$ and the units and counits to be the chosen isomorphisms $(i^*)\inv i^*\cong \Id$, $\Id\cong (i^*)\inv i^*$ and their inverses. When~$i$ is an identity, we can even pick $i_!=i_*=\Id$ and all units identitities. Combined with~(Mack\,\ref{Mack-5}) for non-connected~$i$, we can assume without loss of generality that (Mack\,\ref{Mack-6}) holds true.

So the first two rectifications of \Cref{Thm:rectification-intro} are easy to obtain.
\end{Rem}

\begin{Conv}
\label{Conv:Mack-5-6}%
When using explicit units and counits for $i_!\adj i^*\adj i_*$ we tacitly assume that they satisfy (Mack\,\ref{Mack-5}) and (Mack\,\ref{Mack-6}).
\end{Conv}

So, let us gather all hypotheses in one place:
\begin{Hyp}
\label{Hyp:Theta}%
Let $\MM\colon \GG^\op\to \ADD$ be a strict 2-functor satisfying additivity~\Mack{1}, existence of adjoints on both sides~\Mack{2} for restriction along faithful morphisms and the BC-formulas on both sides~\Mack{3}. We apply \Cref{Conv:Mack-5-6}.
\end{Hyp}

\begin{Thm}
\label{Thm:Theta}%
Let $\MM\colon \GG^\op\to \ADD$ satisfy all the properties of a Mackey 2-functor, except perhaps ambidexterity, as in \Cref{Hyp:Theta}. Let $i\colon H\into G$ be faithful and consider the following self-iso-comma as in \Cref{Not:self-ic}:
\begin{equation}
\label{eq:Theta}%
\vcenter{\xymatrix@C=14pt@R=14pt{
& (i/i) \ar[ld]_-{p_1} \ar[dr]^-{p_2} & \\
H \ar[dr]_i \ar@{}[rr]|{\isocell{\lambda}} && H \ar[dl]^i \\
&G &
}}
\end{equation}
Then, there exists a unique morphism
\[
\Theta_i\colon i_!\Rightarrow i_*
\]
such that the following morphism $p_1^*\Rightarrow p_2^*$
\begin{equation}
\label{eq:Theta-delta}%
\vcenter{\xymatrix@R=2em@L=1ex{
&&& p_1^* i^* i_* \ar@{=>}[rd]^-{\lambda^* i_*}
\\
\delta_i\colon
& p_1^* \ar@{=>}[r]^-{\Displ\eta}_-{i_!\adj i^*}
& p_1^* i^* i_! \ar@{=>}[rr]^-{\Displ\lambda^* \, \Theta_i} \ar@{=>}[ru]^-{p_1^*i^*\Theta_i} \ar@{=>}[rd]_-{\lambda^* i_!}
&& p_2^* i^* i_* \ar@{=>}[r]^-{\Displ\eps}_-{i^*\adj i_*}
& p_2^*
\\
&&& p_2^* i^* i_! \ar@{=>}[ru]_-{p_2^*i^*\Theta_i}
}}
\end{equation}
is the canonical morphism $\delta_i\colon p_1^*\Rightarrow p_2^*$ of \Cref{Prop:delta}.
\end{Thm}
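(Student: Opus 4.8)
The plan is to recognize the rule $\Theta\mapsto(\text{composite of }\eqref{eq:Theta-delta})$ as a bijection, so that $\Theta_i$ is forced to be the unique preimage of the canonical $\delta_i$ of \Cref{Prop:delta}. First I would note that the two paths through the middle of~\eqref{eq:Theta-delta} agree by the interchange law, since $\lambda^*$ and $i^*\Theta$ are whiskered onto independent factors; thus for \emph{any} candidate 2-cell $\Theta\colon i_!\Rightarrow i_*$ the diagram unambiguously produces a 2-cell
\[
\Psi(\Theta):=(p_2^*\,\reps)\circ(\lambda^* i_*)\circ\big(p_1^*(\,(i^*\Theta)\circ\leta\,)\big)\colon p_1^*\Rightarrow p_2^*.
\]
The theorem is then exactly the assertion that $\Psi$ is a bijection from the 2-cells $i_!\Rightarrow i_*$ to the 2-cells $p_1^*\Rightarrow p_2^*$: granting this, $\Theta_i:=\Psi^{-1}(\delta_i)$ exists and is unique. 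I would emphasize that the explicit description of $\delta_i$ from \Cref{Prop:delta} is \emph{not} needed for this step, only that $\delta_i$ is a well-defined element of the target; its diagonal/off-diagonal form will be exploited afterwards, when deducing the further properties of $\Theta_i$.

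To prove $\Psi$ bijective I would exhibit it as a composite of three canonical bijections. Reading~\eqref{eq:Theta-delta} along its lower path and using interchange to regroup gives
\[
\Psi(\Theta)=p_2^*\big(\reps\circ(i^*\Theta)\big)\circ(\lambda^* i_!)\circ(p_1^*\leta).
\]
Now $\reps\circ(i^*\Theta)\colon i^*i_!\Rightarrow\Id$ is precisely the adjunct of $\Theta$ under $i^*\adj i_*$, so $\Theta\mapsto\reps\circ(i^*\Theta)$ is a bijection onto the 2-cells $i^*i_!\Rightarrow\Id$. Next, the Beck--Chevalley isomorphism of \Mack{3} applied to the self-iso-comma~\eqref{eq:Theta} supplies $\lambda_!\colon(p_2)_!\,p_1^*\isoEcell i^*i_!$, so precomposition with $\lambda_!$ is a bijection from the 2-cells $i^*i_!\Rightarrow\Id$ to the 2-cells $(p_2)_!\,p_1^*\Rightarrow\Id$. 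Finally, $(\lambda^* i_!)\circ(p_1^*\leta)$ is exactly the adjunct of $\lambda_!$ under $(p_2)_!\adj p_2^*$; hence $\Psi(\Theta)$ is the $(p_2)_!\adj p_2^*$-adjunct of $\big(\reps\circ(i^*\Theta)\big)\circ\lambda_!$, and taking such adjuncts is a bijection from the 2-cells $(p_2)_!\,p_1^*\Rightarrow\Id$ to the 2-cells $p_1^*\Rightarrow p_2^*$. Composing these three bijections shows $\Psi$ is a bijection, yielding both existence and uniqueness of $\Theta_i$.

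The one genuine verification, and the main obstacle, is the middle claim that the whiskered composite $(\lambda^* i_!)\circ(p_1^*\leta)\colon p_1^*\Rightarrow p_2^*i^*i_!$ coincides with the $(p_2)_!\adj p_2^*$-adjunct of the mate $\lambda_!$. This is a compatibility between an explicit string of units and counits and the \emph{definition} of the Beck--Chevalley mate, and it is exactly the kind of identity provided by the mate calculus of \Cref{sec:mates} together with the triangle identities for $i_!\adj i^*$ and $(p_2)_!\adj p_2^*$. I would isolate this mate identity as the key lemma and reduce the theorem to it; everything else is formal bookkeeping, namely checking that the regrouping of~\eqref{eq:Theta-delta} is legitimate (interchange) and that the three displayed correspondences are the standard adjunction transposes.
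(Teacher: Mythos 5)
Your proposal is correct and takes essentially the same route as the paper: the paper's \Cref{Prop:nabla} (with \Cref{Cor:nabla}) establishes precisely your bijection $\Psi$, there called $\nabla_i$, by factoring it through the $i^*\adj i_*$ transpose, precomposition with the invertible Beck--Chevalley mate $\lambda_!$, and the ${p_2}_!\adj p_2^*$ transpose, and verifies the agreement with the explicit whiskered formula by the same naturality-plus-triangle-identity computation you isolate as your key lemma. The theorem is then proved, exactly as you propose, by setting $\Theta_i:=\nabla_i^{-1}(\delta_i)$; the only cosmetic difference is that the paper also records the dual factorization through $\lambda_*$ and $p_1^*\adj {p_1}_*$, which is not needed for this statement.
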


The idea of the proof is to show that the construction $\theta\mapsto \eps\circ (\lambda^* \theta) \circ \eta$ of the statement is an isomorphism between suitable sets of natural transformations, so that we can define~$\Theta_i$ by deciding its image under this construction to be whatever we wish, for instance our dear~$\delta_i$. We need some preparation.

\begin{Not}
For the sake of completeness, we recall the formulas for the mates. Given a 2-cell
\begin{equation}
\label{eq:2-cell-for-magic}%
\vcenter{\xymatrix@C=14pt@R=14pt{
& L \ar[dl]_-{v} \ar@{ >->}[dr]^-{j}
\\
H \ar@{ >->}[dr]_-{i} \ar@{}[rr]|-{\isocell{\gamma}}
&& K \ar[dl]^-{u}
\\
&G
}}
\end{equation}
iso-comma or not, but with~$i$ and $j$ faithful, we have
\begin{align*}
\gamma_! & =
\xymatrix@L=1ex{
{\Big(} j_! v^* \ar@{=>}[r]^-{\eta}_-{i_! \adj i^*}
& j_! v^*i^*i_! \ar@{=>}[r]^-{\gamma^*}
& j_! j^*u^*i_! \ar@{=>}[r]^-{\eps}_-{j_! \adj j^*}
& u^*i_! {\Big)}}
\\
(\gamma\inv)_* & = \xymatrix@L=1ex{
{\Big(} u^* i_* \ar@{=>}[r]^-{\eta}_-{j^* \adj j_*}
& j_* j^* u^* i_* \ar@{=>}[r]^-{(\gamma\inv)^*}
& j_* v^* i^* i_* \ar@{=>}[r]^-{\eps}_-{i^* \adj i_*}
& j_* v^* {\Big)}\,.
}
\end{align*}
Base-change \Mack{3} tells us that these are isomorphisms when~\eqref{eq:2-cell-for-magic} is an iso-comma. If $u$ and $v$ are also faithful, as will happen for self-iso-commas, we have by symmetry (\Cref{Exa:inv-comma-square}):
\begin{equation}
\label{eq:gamma!*-special}%
(\gamma\inv)_!\colon v_!j^*\Rightarrow i^*u_!
\qquadtext{and}
\gamma_*\colon i^* u_*\Rightarrow v_*j^*\,.
\end{equation}
In the case of the self-iso-commas~\eqref{eq:self-ic} and~\eqref{eq:self-ic-j}, these base-change isomorphisms specialize to the following isomorphisms:
\begin{eqnarray}
\label{eq:lambda!*}%
\lambda_!\colon {p_2}_!p_1^*\stackrel{\sim}{\Rightarrow} i^*i_!
& \quadtext{and}
& \lambda_*\colon i^* i_*\stackrel{\sim}{\Rightarrow} {p_1}_*p_2^*
\\
\label{eq:rho!*}%
\rho_!\colon {q_2}_!q_1 ^*\stackrel{\sim}{\Rightarrow} j^*j_!
& \quadtext{and}
& \rho_*\colon j^* j_*\stackrel{\sim}{\Rightarrow} {q_1}_*q_2 ^*\,.\!\!
\end{eqnarray}
\end{Not}

The critical point is the following result:
\begin{Prop}
\label{Prop:nabla}%
Consider a faithful $i\colon H\into G$ and the corresponding self-iso-comma square as in~\eqref{eq:self-ic}. Then for any category~$\cat{C}$ and any pair of parallel functors $F,F'\colon \cat{C}\to \MM(H)$, we have a natural isomorphism
\[
\nabla_{i}\colon [\,i_!\,F\,,\,i_*\,F'\,]\overset{\sim}{\longrightarrow} [\,p_1^*\,F\,,\,p_2^*\,F'\,]
\]
where we denote by~$[-,-]$ the sets of natural transformations (in $\MM(G)^{\cat{C}}$ and $\MM(i/i)^{\cat{C}}$ respectively). More precisely, using the isomorphisms $\lambda_!\colon {p_2}_!p_1^*\stackrel{\sim}{\Rightarrow} i^* i_!$ and $\lambda_*\colon i^* i_*\stackrel{\sim}{\Rightarrow} {p_1}_* p_2^*$ of~\eqref{eq:lambda!*}, the following diagram commutes and defines~$\nabla_{i}$:
\begin{equation}
\label{eq:nabla-FF'}%
\vcenter{
\xymatrix{
& [\,i_!F,i_*F'] \ar[rd]^-{\adj}_-{\cong} \ar[ld]^-{\cong}_-{\adj} \ar[ddd]^-{\Displ\nabla_{i}}_{\cong}
\\
[F,i^* i_*F'] \ar[d]_-{[\id,\lambda_*]}^-{\cong}
&& [i^*i_!F,F'] \ar[d]^-{[\lambda_!,\id]}_-{\cong}
\\
[F, {p_1}_*p_2^*F'] \ar[rd]^-{\cong}_-{\adj}
&& [{p_2}_!p_1^*F,F'] \ar[ld]_-{\cong}^-{\adj}
\\
& [\,p_1^*F,p_2^*F']\,.
}}
\end{equation}
Explicitly, the isomorphism $\nabla_{i}$ maps a morphism $\big(i_!F \stackrel{\Displ\theta}{\Rightarrow} i_*F' \big)$ to
\begin{equation}
\label{eq:nabla-expl}%
\vcenter{
\xymatrix{
{\Big(} p_1^*F \ar@{->}[r]^-{\Displ\eta}_-{i_!\adj i^*}
& (p_1^* i^*)(i_!F) \ar@{->}[r]^-{\Displ\lambda^*\theta}
& (p_2^* i^*)(i_*F') \ar@{->}[r]^-{\Displ\eps}_-{i^*\adj i_*}
& p_2^*F' {\Big).}
}}
\end{equation}
\end{Prop}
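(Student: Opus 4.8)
The plan is to read off $\nabla_i$ as a composite of elementary bijections and then match it with the explicit formula~\eqref{eq:nabla-expl}.

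First I would check that every arrow in the diagram~\eqref{eq:nabla-FF'} is invertible. The four slanted arrows are the hom-set bijections of the adjunctions $i_!\adj i^*$, $i^*\adj i_*$, $p_1^*\adj (p_1)_*$ and $(p_2)_!\adj p_2^*$; the last two adjoints exist because $p_1$ and $p_2$ are faithful whenever $i$ is (\Cref{Rem:iso-comma-intro}), so that~\Mack{2} applies. Hom-set bijections are isomorphisms by definition of adjunction, and the two vertical arrows $[\id,\lambda_*]$ and $[\lambda_!,\id]$ are isomorphisms since $\lambda_*$ and $\lambda_!$ from~\eqref{eq:lambda!*} are isomorphisms by the BC-formula~\Mack{3} for the self-iso-comma~\eqref{eq:Theta}. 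Thus both the left-hand and the right-hand composite $[i_!F,i_*F']\to[p_1^*F,p_2^*F']$ are isomorphisms, and it only remains to show that they coincide and to compute them.

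Next I would transport a morphism $\theta\colon i_!F\Rightarrow i_*F'$ along each path by the usual unit--counit description of adjunction transposes. Writing $\eta$ for the unit of $i_!\adj i^*$, $\eps$ for the counit of $i^*\adj i_*$, $\eps'$ for the counit of $p_1^*\adj(p_1)_*$ and $\eta'$ for the unit of $(p_2)_!\adj p_2^*$, the left path yields
\[
(\eps'\,p_2^*F')\,(p_1^*\lambda_*F')\,(p_1^*i^*\theta)\,(p_1^*\eta F)
\]
while the right path yields
\[
(p_2^*\eps F')\,(p_2^*i^*\theta)\,(p_2^*\lambda_!F)\,(\eta'\,p_1^*F).
\]
These expressions are immediate once the two transposes in each path are spelled out.

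Finally, I would identify both with~\eqref{eq:nabla-expl}. This comes down to the two mate identities
\[
(p_2^*\eps)\,(\lambda^*i_*)=(\eps'\,p_2^*)\,(p_1^*\lambda_*)
\qquadtext{and}
(\lambda^*i_!)\,(p_1^*\eta)=(p_2^*\lambda_!)\,(\eta'\,p_1^*),
\]
each of which is exactly the assertion that $\lambda_*$, respectively $\lambda_!$, is the mate of $\lambda^*$ across the indicated pair of adjunctions; these follow from the definition of the mate together with the triangle equations (\Cref{sec:mates}). Substituting them turns the left path into $(p_2^*\eps F')(\lambda^*i_*F')(p_1^*i^*\theta)(p_1^*\eta F)$ and the right path into $(p_2^*\eps F')(p_2^*i^*\theta)(\lambda^*i_!F)(p_1^*\eta F)$, which are precisely the two factorizations of the middle arrow $\lambda^*\theta$ of~\eqref{eq:nabla-expl}; they agree by naturality of $\lambda^*\colon p_1^*i^*\Rightarrow p_2^*i^*$ applied to $\theta$. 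This shows simultaneously that~\eqref{eq:nabla-FF'} commutes, so $\nabla_i$ is well defined, and that it is given by~\eqref{eq:nabla-expl}. Naturality of $\nabla_i$ in $F$, $F'$ and $\cat{C}$ is then inherited from that of the constituent bijections and of $\lambda_!,\lambda_*$. I expect the genuine work to be confined to these two mate identities---unwinding the abstractly defined $\lambda_!,\lambda_*$ back into whiskerings of $\lambda^*$ with units and counits and cancelling via the triangle equations---while everything else is formal.
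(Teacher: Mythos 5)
Your proof is correct and takes essentially the same route as the paper's: both unpack the mates $\lambda_!$ and $\lambda_*$ into whiskerings of $\lambda^*$ with the units and counits, cancel via the triangle identities for $p_1^*\adj (p_1)_*$ and $(p_2)_!\adj p_2^*$, and then identify the two resulting composites with~\eqref{eq:nabla-expl} by naturality (the interchange law applied to $\lambda^*$ and $\theta$). The only difference is presentational: the paper carries out the left composite in string diagrams and dismisses the right one as ``similar,'' whereas you treat both symmetrically by isolating the two mate identities first.
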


\begin{proof}
It suffices to unpack~$\lambda_!$ and $\lambda_*$. The left (respectively right) composite in~\eqref{eq:nabla-FF'} reduces to~\eqref{eq:nabla-expl} thanks to naturality and the unit-counit relation for $p_1^*\adj {p_1}_*$ (respectively for ${p_2}_!\adj p_2^*$). For instance, given a natural transformation $\theta\colon i_!F\Rightarrow i_*F'$, the left composite yields
\[
\vcenter { \hbox{
\xymatrix@L=1pt@C=15pt@R=15pt{
& \ar[ld]_-{F} \ar[rd]^-{F'}
 \ar@{}[ddd]|{\oEcell{\theta}}
\\
\ar@/_12pt/[rdd]_-{i_!}
&& \ar@/^12pt/[ldd]^-{i_*}
&
\\
&&
\\
&&
}}}
\quad \overset{\underset{\phantom{m}}{i_!\dashv \;i^*}}{\longmapsto} \quad\quad
\vcenter { \hbox{
\xymatrix@L=1pt@C=15pt@R=15pt{
& \ar[ld]_-{F} \ar[rd]^-{F'}
 \ar@{}[ddd]|(.4){\oEcell{\theta}}
\\
 \ar[rdd]_(.4){i_!}
 \ar@{=}@/_4ex/[ddd] \ar@{}[ddd]|{\oEcell{\eta}}
&& \ar@/^5pt/[ldd]^-{i_*}
\\
&&
\\
& \ar[ld]_-{i^*}
&
\\
&}
}}
\quad \overset{\underset{\phantom{m}}{[\id, \lambda_*]}}{\longmapsto}
\]
\[
\vcenter { \hbox{
\xymatrix@L=1pt@C=15pt@R=15pt{
& \ar[ld]_-{F} \ar[rd]^-{F'}
 \ar@{}[ddd]|(.4){\oEcell{\theta}}
\\
 \ar[rdd]_(.4){i_!}
 \ar@{=}@/_4ex/[ddd]
&& \ar[ldd]^(.4){i_*}
 \ar@{=}@/^4ex/[ddd]
&
\\
&&
\\
& \ar[ld]_-{i^*}
 \ar[dr]^-{i^*} &
\\
\ar[rd]|-{p_1^*}
 \ar@{=}@/_3ex/[dd]
 \ar@{}[dd]|{\oEcell{\eta}}
 \ar@{}[rr]|{\oEcell{\lambda^*}}
 \ar@{}[uuu]|{\oEcell{\eta}}
&&
 \ar[ld]^-{p_2^*}
 \ar@{}[uuu]|{\oEcell{\varepsilon}}
\\
& \ar[ld]^-{p_{1*}}
 & \\
&&
}
}}
\quad \overset{\underset{\phantom{m}}{p_1^*\dashv\; p_{1*}}}{\longmapsto} \quad\quad
\vcenter { \hbox{
\xymatrix@L=1pt@C=15pt@R=15pt{
& \ar[ld]_-{F} \ar[rd]^-{F'}
 \ar@{}[ddd]|(.4){\oEcell{\theta}}
\\
 \ar[rdd]_(.4){i_!}
 \ar@{=}@/_4ex/[ddd]
&& \ar[ldd]^(.4){i_*}
 \ar@{=}@/^4ex/[ddd]
&
\\
&&
\\
& \ar[ld]_-{i^*}
 \ar[dr]^-{i^*} & \\
\ar[rd]|-{p_1^*}
 \ar@{=}@/_3ex/[dd]
 \ar@{}[dd]|{\oEcell{\eta}}
 \ar@{}[rr]|{\oEcell{\lambda^*}}
 \ar@{}[uuu]|{\oEcell{\eta}}
 &&
 \ar[ld]^-{p_2^*}
 \ar@{}[uuu]|{\oEcell{\varepsilon}}
 \\
& \ar[ld]|-{\;p_{1*}}
 \ar@{=}@/^3ex/[dd]
 \ar@{}[dd]|{\oEcell{\varepsilon}}
 & \\
\ar[rd]_-{p_1^*} && \\
&&
}
}}
\quad = \quad \quad
\vcenter { \hbox{
\xymatrix@L=1pt@C=15pt@R=15pt{
& \ar[ld]_-{F} \ar[rd]^-{F'}
 \ar@{}[ddd]|(.4){\oEcell{\theta}}
\\
 \ar[rdd]_(.4){i_!}
 \ar@{=}@/_4ex/[ddd]
&& \ar[ldd]^(.4){i_*}
 \ar@{=}@/^4ex/[ddd]
&
\\
&&
\\
& \ar[ld]_-{i^*}
 \ar[dr]^-{i^*} &
\\
\ar[rd]_{p_1^*}
 \ar@{}[rr]|{\oEcell{\lambda^*}}
 \ar@{}[uuu]|{\oEcell{\eta}}
 &&
 \ar[ld]^-{p_2^*}
 \ar@{}[uuu]|{\oEcell{\varepsilon}}
 \\
&&
}
}}
\]
which is the claimed formula~\eqref{eq:nabla-expl}. The right composite works similarly.
\end{proof}

\begin{Rem}
\label{Rem:nabla-on-F}%
Technically speaking, we should include $F$ and $F'$ in the notation for the isomorphism $\nabla_i$ but it is clear that neither $F$ nor $F'$ play any serious role in it. We shall nevertheless write ``$\nabla_i$ for $F$ and~$F'$ as in~\eqref{eq:nabla-FF'}'' when we need to make clear what form of $\nabla_i$ we use.
\end{Rem}

\begin{Cor}
\label{Cor:nabla}%
With notation as above, we have a natural isomorphism
\begin{equation}
\label{eq:nabla}%
\left\{\vcenter{
\xymatrix@C=2em@R=1em@L=1ex{
[i_!,i_*] \ar[r]^-{\nabla_i}_-{\cong}
& [p_1^*,p_2^*]
\\
{\big(}i_! \overset{\Displ\theta}\Rightarrow
i_* { \big) }  \ar@{|->}[r]^-{\nabla_i}
& {\Big(} p_1^* \ar@{=>}[r]^-{\Displ\eta}_-{i_!\adj i^*}
& (p_1^* i^*)(i_!) \ar@{=>}[r]^-{\Displ\lambda^* \theta}
& (p_2^* i^*)(i_*) \ar@{=>}[r]^-{\Displ\eps}_-{i^*\adj i_*}
& p_2^* {\Big).}
}}\right.
\end{equation}
\end{Cor}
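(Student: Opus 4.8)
The plan is to observe that this Corollary is nothing more than the special case of \Cref{Prop:nabla} obtained by taking the test category to be $\cat{C}=\MM(H)$ itself and the two parallel functors to be $F=F'=\Id_{\MM(H)}$. No new argument is required beyond recognizing this specialization, so the proof will be a single short paragraph that simply invokes the preceding proposition.

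Concretely, first I would record the effect of the substitution on the four functors appearing in \Cref{Prop:nabla}: with $F=F'=\Id_{\MM(H)}$ one has $i_!\,F=i_!$, $i_*\,F'=i_*$, $p_1^*\,F=p_1^*$ and $p_2^*\,F'=p_2^*$. Hence the source and target sets of natural transformations $[\,i_!\,F\,,\,i_*\,F'\,]$ and $[\,p_1^*\,F\,,\,p_2^*\,F'\,]$ become exactly $[\,i_!\,,\,i_*\,]$ and $[\,p_1^*\,,\,p_2^*\,]$, which are the domains and codomains claimed in the Corollary. Thus \Cref{Prop:nabla} already furnishes the desired natural isomorphism $\nabla_i\colon [\,i_!\,,\,i_*\,]\overset{\sim}{\to}[\,p_1^*\,,\,p_2^*\,]$.

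Next I would check that the explicit description matches: substituting $F=F'=\Id$ into the formula \eqref{eq:nabla-expl} of \Cref{Prop:nabla}, the composite
$p_1^*F\xRightarrow{\eta}(p_1^*i^*)(i_!F)\xRightarrow{\lambda^*\theta}(p_2^*i^*)(i_*F')\xRightarrow{\eps}p_2^*F'$
collapses verbatim to the composite displayed in \eqref{eq:nabla}, sending $\theta\colon i_!\Rightarrow i_*$ to $p_1^*\xRightarrow{\eta}(p_1^*i^*)(i_!)\xRightarrow{\lambda^*\theta}(p_2^*i^*)(i_*)\xRightarrow{\eps}p_2^*$. I would therefore conclude that $\nabla_i$ has the stated formula and, being an instance of the isomorphism of \Cref{Prop:nabla}, is automatically natural; no separate naturality verification is needed.

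There is essentially no obstacle here, since all the genuine work—assembling the commuting square \eqref{eq:nabla-FF'} out of the adjunction bijections for $i_!\adj i^*\adj i_*$ and $p_1^*\adj {p_1}_*$, ${p_2}_!\adj p_2^*$ together with the base-change isomorphisms $\lambda_!,\lambda_*$, and identifying its two legs with the single explicit formula—has already been carried out in the proof of \Cref{Prop:nabla}. The only point deserving a word of care is the bookkeeping of the identification $i_!\Id=i_!$ etc., but this is immediate, so the Corollary follows at once.
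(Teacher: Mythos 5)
Your proposal is correct and is exactly the paper's own proof: the Corollary is obtained by plugging $F=F'=\Id_{\MM(H)}$ into \Cref{Prop:nabla}, with the explicit formula~\eqref{eq:nabla} being the specialization of~\eqref{eq:nabla-expl}. The extra bookkeeping you spell out (identifying $i_!\,\Id=i_!$, etc.) is harmless but not needed beyond the one-line invocation.
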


\begin{proof}
Plug $F=F'=\Id_{\MM(H)}$ in \Cref{Prop:nabla}.
\end{proof}

\begin{proof}[Proof of \Cref{Thm:Theta}]
By \Cref{Cor:nabla} we can uniquely define a morphism $\Theta_i\colon i_! \Rightarrow i_*$ by deciding the corresponding $\nabla_i(\Theta_i)=\eps\circ(\lambda^*\,\Theta_i)\circ\eta$ from $p_1^*$ to $p_2^*$. We define $\Theta_i$ to be the one corresponding to~$\delta_i\colon p_1^*\Rightarrow p_2^*$ as in \Cref{Prop:delta}.
\end{proof}

\begin{Rem}
\label{Rem:Theta}%
Unpacking the above construction, $\Theta_i$ is the unique morphism $i_!\Rightarrow i_*$ such that the following morphism
\[
\xymatrix@C=4em{
{\Big(} p_1^* \ar@{->}[r]^-{\Displ\eta}_-{i_!\adj i^*}
& (p_1^* i^*)(i_!) \ar@{->}[r]^-{\Displ\lambda^* \Theta_i}
& (p_2^* i^*)(i_*) \ar@{->}[r]^-{\Displ\eps}_-{i^*\adj i_*}
& p_2^* {\Big)}
}
\]
of functors $\MM(H)\to \MM(i/i)\cong \MM(H)\times \prod_{C\cap\Delta_i(H)\,=\,\varnothing}\MM(C)$ is the identity on the component $\MM(H)\to \MM(H)$ and zero on the other components $\MM(H)\to \MM(C)$ for~$C\subset(i/i)$ disjoint from the image of~$\Delta_i\colon H\to (i/i)$.

Or, to express this in a slightly different way, let $C:= (i/i)\smallsetminus \Delta_i(H)$ be the complement of the diagonal component (\ie considering $\Delta_i(H)$ as the essential image of~$\Delta_i\colon H\into (i/i)$) and write $\incl_C\colon C\hookrightarrow (i/i)$ for the inclusion functor. Then $\Theta_i$ is the unique natural transformation $i_!\Rightarrow i_*$ such that the following two equations hold:
\begin{equation} \label{eq:proto(f)}
 \vcenter { \hbox{
 \xymatrix@C=15pt@R=15pt{
& \MM(H) & \\
& \MM(i/i)
 \ar[u]^<<<{\Delta^*_i} & \\
\MM(H)
 \ar[ur]_{p_1^*}
  \ar@{}[rr]|{\oEcell{\lambda^*}}
   \ar@/^1ex/@{=}[uur]
     \ar@/_2ex/@{=}[dd]
      \ar@{}[dd]^{\,\;\oEcell{\eta}} &&
\MM(H)
 \ar[ul]^{p_2^*}
  \ar@/_1ex/@{=}[uul]
    \ar@/^2ex/@{=}[dd]
     \ar@{}[dd]_{\oEcell{\varepsilon}\,\;} \\
& \MM(G)
 \ar[ul]^{i^*}
  \ar[ur]_{i^*} & \\
\MM(H)
 \ar[ur]_{i_!}
  \ar@{}[rr]|{\oEcell{\Theta_i}}
   \ar@/_6ex/@{=}[rr] &&
 \MM(H)
  \ar[ul]^{i_*} \\
 &&
 }
 }}
\qquad = \quad \id_{\Id_{\MM(H)}}
\end{equation}
and
\begin{equation} \label{eq:proto(g)}
\kern3em\vcenter { \hbox{
 \xymatrix@C=15pt@R=15pt{
& \MM(C) & \\
& \MM(i/i)
 \ar[u]|(.45){\incl_C^*} & \\
\MM(H)
 \ar@/^1ex/[uur]^-{(p_1\restr{C})^*}
 \ar[ur]_{p_1^*}
  \ar@{}[rr]|{\oEcell{\lambda^*}}
     \ar@/_2ex/@{=}[dd]
      \ar@{}[dd]^{\,\;\oEcell{\eta}} &&
\MM(H)
 \ar@/_1ex/[uul]_-{(p_2\restr{C})^*}
 \ar[ul]^{p_2^*}
    \ar@/^2ex/@{=}[dd]
     \ar@{}[dd]_{\oEcell{\varepsilon}\,\;} \\
& \MM(G)
 \ar[ul]^{i^*}
  \ar[ur]_{i^*} & \\
\MM(H)
 \ar[ur]_{i_!}
  \ar@{}[rr]|{\oEcell{\Theta_i}}
   \ar@/_6ex/@{=}[rr] &&
 \MM(H)
  \ar[ul]^{i_*} \\
 &&
 }
 }}
\qquad = \quad 0 \,\colon (p_1\restr{C})^* \Rightarrow (p_2\restr{C})^*.
\end{equation}
\end{Rem}

We are now ready for the main result of this section, which describes the fundamental properties of the comparison morphism $\Theta_i\colon i_!\Rightarrow i_*$.

\begin{Thm}
\label{Thm:Theta-properties}%
Let $\MM\colon \GG^\op\to \ADD$ satisfy all properties of a Mackey 2-functor, except perhaps ambidexterity, as in \Cref{Hyp:Theta}. Then the morphisms $\Theta_i\colon i_!\Rightarrow i_*$ of \Cref{Thm:Theta} satisfy the following:
\begin{enumerate}[\rm(a)]
\item
\label{it:Theta-add}%
If $H=H_1\sqcup H_2$ is disconnected and $i=(i_1\ i_2)\colon H_1\sqcup H_2\into G$ then, under the canonical identifications $i_!\cong {i_1}_!\oplus {i_2}_!$ and $i_*\cong {i_1}_*\oplus {i_2}_*$ (\Cref{Conv:Mack-5-6}), we have $\Theta_i=\Theta_{i_1}\oplus \Theta_{i_2}$ diagonally.
\smallbreak
\item
\label{it:Theta-eq}%
If $i^*\colon\MM(G)\to \MM(H)$ is an equivalence (for instance if~$i\colon H\isoto G$ is an equivalence in~$\GG$) then $\Theta_i$ is an isomorphism. In the special case of $i=\Id$ then $\Theta_i$ is the identity.
\smallbreak
\item
\label{it:Theta-magic}%
For every partial Mackey square (\Cref{Def:partial-Mackey})
\begin{equation}
\label{eq:partial-Mackey}
\vcenter{\xymatrix@C=14pt@R=14pt{
& L \ar[dl]_-{v} \ar@{ >->}[dr]^-{j}
\\
H \ar@{ >->}[dr]_-{i} \ar@{}[rr]|-{\isocell{\gamma}}
&& K \ar[dl]^-{u}
\\
&G
}}
\end{equation}
with~$i$ and $j$ faithful, we have a commutative diagram
\begin{equation}
\label{eq:magic-partial}%
\vcenter{
\xymatrix@C=4em@L=1ex{
u^* i_! \ar@{=>}[r]^-{u^*{\Theta_i}}
& u^* i_* \ar@{=>}[d]^-{(\gamma\inv)_*}
\\
j_! v^* \ar@{=>}[r]^-{{\Theta_j}\ v^*} \ar@{=>}[u]^-{\gamma_!}
& j_*v^*.\!\!
}}
\end{equation}
In particular, the above holds if~\eqref{eq:partial-Mackey} is a Mackey square (\Cref{Def:Mackey-square}) or, even stronger, an iso-comma square. In those cases, the vertical morphisms $\gamma_!$ and $(\gamma\inv)_*$ in~\eqref{eq:magic-partial} are isomorphisms by base-change~\Mack{3}.
\smallbreak
\item
\label{it:Theta-2-functorial}%
For every 2-cell $\alpha\colon i\isoEcell i'$ between faithful functors $i,i'\colon H\into G$, the following diagram commutes:
\begin{equation}
\label{eq:Theta-2-functorial}%
\vcenter{
\xymatrix@C=4em@L=1ex{
{i}_! \ar@{=>}[r]^-{\Theta_{i}}
& {i}_* \ar@{<=}[d]^-{\alpha_*}_-{\cong}
\\
{i'}_! \ar@{=>}[r]^-{\Theta_{i'}} \ar@{=>}[u]^-{\alpha_!}_-{\cong}
& {i'}_*
}}
\end{equation}
\smallbreak
\item
\label{it:Theta-ij}%
For every composable faithful $K\ointo{j} H \ointo{i} G$ the following diagram commutes
\begin{equation}
\label{eq:Theta-ij}%
\vcenter{\xymatrix@R=1em@L=1ex{
(ij)_! \ar@{=>}[r]^-{\Theta_{ij}} \ar@{=}[d]
& (ij)_* \ar@{=}[d]
\\
i_! j_! \ar@{=>}[r]^-{\Theta_{i}\Theta_{j}}
& i_* j_*
}}
\end{equation}
where the vertical identifications $(ij)_!\cong i_! j_!$ and $(ij)_*\cong i_* j_*$ are the canonical isomorphisms between compositions of adjoints.
\smallbreak
\item
\label{it:Theta-special}%
For every faithful $i\colon H\into G$ the following composite is the identity:
\[
\vcenter{
\xymatrix@C=4em@L=1ex{
\Id \ar@{=>}[r]^-{\eta}_-{i_!\adj i^*}
& i^*{i}_! \ar@{=>}[r]^-{i^* \Theta_{i}}
& i^* {i}_* \ar@{=>}[r]^-{\eps}_-{i^*\adj i_*}
& \Id \,.\!\!
}}
\]
\smallbreak
\item
\label{it:Theta-vanishing}%
For every faithful $i\colon H\into G$ the following natural transformation is zero:
\[
\incl_C^* \left(
\vcenter{
\xymatrix@C=4em@L=1ex{
p_1^* \ar@{=>}[r]^-{p_1^* \eta}_-{i_!\adj i^*}
& p_1^* i^*{i}_! \ar@{=>}[r]^-{\lambda^* \Theta_{i}}
& p_2^* i^* {i}_* \ar@{=>}[r]^-{p_2^* \eps}_-{i^*\adj i_*}
& p_2^*
}} \right)\,,
\]
where $\incl_C\colon C\hookrightarrow (i/i)$ is the inclusion of the complement $C:=(i/i)\smallsetminus \Delta_i(H)$ of the essential image of the diagonal embedding~$\Delta_i\colon H\into (i/i)$.
\end{enumerate}
\end{Thm}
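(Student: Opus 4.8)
The unifying principle is that $\Theta_i$ is pinned down by the single equation $\nabla_i(\Theta_i)=\delta_i$, where $\nabla_i\colon[i_!,i_*]\isoto[p_1^*,p_2^*]$ is the isomorphism of \Cref{Cor:nabla} and $\delta_i$ is the canonical transformation of \Cref{Prop:delta}. Since $\nabla_i$ (and, more generally, its version for arbitrary $F,F'$ in \Cref{Prop:nabla}) is bijective, every identity among the $\Theta$'s can be verified by applying a suitable $\nabla$ and checking the resulting identity among the canonical transformations $\delta$, whose behaviour is entirely governed by the combinatorics of self-iso-commas from \Cref{sec:self-iso,sec:self-iso-comma-legs}. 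The Special Frobenius identity (part~(f)) and the off-diagonal vanishing (part~(g)) then require no work: they are precisely the two defining equations of $\Theta_i$ recorded in \Cref{Rem:Theta}, \ie the statements $\Delta_i^*\delta_i=\id$ and $\incl_C^*\delta_i=0$ from \Cref{Prop:delta}, rewritten using $\lambda\Delta_i=\id_i$ (\Cref{Prop:Delta}). The equivalence case (part~(b)) follows from part~(f): when $i^*$ is an equivalence, $\eta$ and $\eps$ are isomorphisms, so $i^*\Theta_i=\eps\inv\eta\inv$ is invertible, and since whiskering by $i^*$ reflects invertibility of 2-cells, $\Theta_i$ is an isomorphism; for $i=\Id$ the normalization \Mack{6} makes $\eta,\eps$ identities, forcing $\Theta_{\Id}=\id$.

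For the additivity statement (part~(a)), I would decompose the self-iso-comma of $i=(i_1\ i_2)$ into its four blocks $(i_a/i_b)$ with $a,b\in\{1,2\}$, observing that $\Delta_i(H)$ meets only the two diagonal blocks $(i_1/i_1)$ and $(i_2/i_2)$. Applying \Cref{Prop:delta} blockwise shows that $\delta_i$ restricts to $\delta_{i_a}$ on $(i_a/i_a)$ and to $0$ on the off-diagonal blocks $(i_1/i_2)$ and $(i_2/i_1)$. Reading this through the matrix form of $\nabla_i$, which respects the additive decomposition, identifies the diagonal entries of the matrix of $\Theta_i$ with $\Theta_{i_1},\Theta_{i_2}$ and the off-diagonal entries with $0$, which is the claim.

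The crux is the base-change compatibility (part~(c)), which I expect to be the main obstacle. Both sides of \eqref{eq:magic-partial} are 2-cells $j_!v^*\Rightarrow j_*v^*$, so by the version of \Cref{Prop:nabla} with $F=F'=v^*$ it suffices to prove their images under $\nabla_j$ agree in $[q_1^*v^*,q_2^*v^*]$. On one hand, whiskering $\nabla_j(\Theta_j)=\delta_j$ by $v^*$ gives $\nabla_j(\Theta_jv^*)=\delta_jv^*$. On the other hand, unfolding the mates $\gamma_!$ and $(\gamma\inv)_*$ by their defining formulas together with the definition of $\Theta_i$ inside $\nabla_j\big((\gamma\inv)_*\circ(u^*\Theta_i)\circ\gamma_!\big)$, and using $p_1w=vq_1$, $p_2w=vq_2$ and the compatibility \eqref{eq:lambda-gamma-rho} relating $\lambda$, $\gamma$, $\rho$ and $w=w_{v,u,\gamma}$, one rewrites this composite as $w^*\delta_i$. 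The identity $w^*\delta_i=\delta_jv^*$ is then exactly \Cref{Prop:delta-nat}(c), valid because the square is partial Mackey, and injectivity of $\nabla_j$ concludes. This long diagram chase, reconciling the two mate formulas with the defining composite of $\Theta_i$, is the technical heart of the whole theorem. The 2-functoriality (part~(d)) is then immediate: it is the special case $v=\Id_H$, $u=\Id_G$, $j=i'$, $\gamma=\alpha$ of part~(c), the relevant square being partial Mackey by \Cref{Exa:partial-alpha}, after noting that $\alpha_*$ is inverse to $(\alpha\inv)_*$.

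Finally, for the compositional compatibility (part~(e)) I would again pass through $\nabla_{ij}$, reducing to $\nabla_{ij}(\Theta_i\Theta_j)=\delta_{ij}$, and verify this one connected component of $(ij/ij)$ at a time, following the trichotomy of \Cref{Prop:ij-components}. On the diagonal components (case~(1)) the restriction equals $\id$ by part~(f) for $i$ and $j$; on the components of case~(2), where $w'(C)$ misses $\Delta_i(H)$, the $\Theta_i$-contribution vanishes by part~(g) for $i$; and on those of case~(3), which lift along $w''$ to an off-diagonal component of $(j/j)$, the $\Theta_j$-contribution vanishes by part~(g) for $j$. In every case the restriction matches that of $\delta_{ij}$, which is $\id$ on the diagonal and $0$ elsewhere, so injectivity of $\nabla_{ij}$ yields \eqref{eq:Theta-ij}.
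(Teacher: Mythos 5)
Your proposal is correct and follows essentially the same route as the paper's proof: every identity is transported through the isomorphisms $\nabla$ and checked against the canonical transformations $\delta$, with part~(c) resting on the compatibility of $\nabla_i$ and $\nabla_j$ (the paper's \Cref{Prop:nabla-nat}) combined with \Cref{Prop:delta-nat}\,(c), part~(e) on the component-wise analysis via the trichotomy of \Cref{Prop:ij-components}, and parts~(f), (g) read off directly from the defining equations of $\Theta_i$ in \Cref{Rem:Theta}. Your sketches for parts~(a) and~(b), which the paper leaves to the reader, are also sound.
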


We need again some preparation. Specifically we want to show that the isomorphism $\nabla_i\colon [i_!,i_*]\isoto [p_1^*,p_2^*]$ of \Cref{Cor:nabla} behaves nicely in~$i$, very much in the spirit of what we did in \Cref{sec:self-iso}.

\begin{Not}
\label{Not:nabla-nat}%
Consider a 2-cell as usual (not necessarily partial Mackey)
\[
\vcenter{\xymatrix@C=14pt@R=14pt@L=1ex{
& L \ar[dl]_-{v} \ar@{ >->}[dr]^-{j}
\\
H \ar@{ >->}[dr]_-{i} \ar@{}[rr]|-{\isocell{\gamma}}
&& K \ar[dl]^-{u}
\\
&G
}}
\]
and the self-iso-commas~\eqref{eq:self-ic} and~\eqref{eq:self-ic-j} for~$i$ and~$j$, together with the comparison functor~$w=w_{v,u,\gamma}\colon (j/j)\to (i/i)$ of \Cref{Prop:w}:
\[
\xymatrix@L=2pt@C=20pt@R=20pt{
&(j/j)
 \ar@{}[dd]|<<<<<<<<<{\isocell{\rho}}
 \ar[drrr]^-w
 \ar[dr]^>>>>{q_2}
 \ar[dl]_-{q_1} &&&& \\
L
\ar[drrr]^<<<<<<{v}
 \ar[dr]_{j} &&
 L
 \ar@{}[dr]|{\stackrel{\sim}{\Ecell}\,\gamma}
 \ar[dl]|(.53){\phantom{M}}^<<<{j}
 \ar[drrr]|<<<<<<<<<<<<<<<<<<{\phantom{M}}^<<<<<<<<{v} &&
 (i/i)
 \ar@{}[dd]|>>>>>>>>{\isocell{\lambda}}
 \ar[dr]^{p_2}
 \ar[dl]_<<<{p_1} & \\
& K
 \ar[drrr]_-{u} &&
  H
  \ar[dr]_<<<{i}
 \ar@{}[ll]|{\stackrel{\sim}{\Ecell}\,\gamma} &&
H
  \ar[dl]^{i}  \\
&&&& G &
}
\]
\end{Not}

\begin{Prop}
\label{Prop:nabla-nat}%
Under \Cref{Hyp:Theta} and the above \Cref{Not:nabla-nat}, the two isomorphisms~$\nabla_i$ and $\nabla_j$ of~\Cref{Cor:nabla} for~$i$ and~$j$ are compatible; namely for every parallel functors $F,F'\colon \cat{C}\to \MM(H)$ (\eg $F=F'=\Id$) the following diagram of abelian groups commutes:
\begin{equation}
\label{eq:nabla-nat}%
{\vcenter{\xymatrix@C=12em{
[i_!F,i_*F'] \ar[d]_-{u^*-} \ar[r]^-{\nabla_i}_-{\cong\ \textrm{\rm\ (\ref{eq:nabla-FF'}) for $F$ and $F'$}}
& [p_1^*F,p_2^*F'] \ar[d]^-{w^* -}
\\
[u^*i_!F\,,\,u^*i_*F'] \ar[d]_-{[\gamma_!\,,\,(\gamma\inv)_*]}
& [w^*p_1^*F,w^*p_2^*F'] \ar@{=}[d]_-{(p_{1}w\,=\,vq_{1})}^-{(p_{2}w\,=\,vq_{2})}
\\
[j_!v^*F,j_*v^*F'] \ar[r]^-{\nabla_j}_{\cong\ \textrm{\rm\ (\ref{eq:nabla-FF'}) for $v^*F$ and $v^*F'$}}
& [q_1 ^*v^*F,q_2 ^*v^*F']
}}}
\end{equation}
where $[-,-]$ stands everywhere for the suitable set of natural transformations.
\end{Prop}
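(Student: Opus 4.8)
The plan is to unpack both routes around \eqref{eq:nabla-nat} via the explicit formula \eqref{eq:nabla-expl}, and to reduce the resulting identity of pastings to the geometric relation \eqref{eq:lambda-gamma-rho} of \Cref{Prop:w}. Fix $\theta\colon i_!F\Rightarrow i_*F'$. By \eqref{eq:nabla-expl} we have $\nabla_i(\theta)=\eps\circ(\lambda^*\theta)\circ\eta$, so after the identifications $p_1w=vq_1$ and $p_2w=vq_2$ the right-then-down route is
\[
w^*\nabla_i(\theta)=\eps\circ\big((\lambda w)^*\,\theta\big)\circ\eta\colon q_1^*v^*F\Rightarrow q_2^*v^*F',
\]
with $\eta$ the unit of $i_!\adj i^*$ and $\eps$ the counit of $i^*\adj i_*$. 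The down-then-right route applies $\nabla_j$ to $\theta'':=(\gamma\inv)_*F'\circ u^*\theta\circ\gamma_!F\colon j_!v^*F\Rightarrow j_*v^*F'$, giving $\eps\circ(\rho^*\theta'')\circ\eta$ with the units and counits now those of the $j$-adjunctions.

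First I would substitute into $\theta''$ the definitions of the mates $\gamma_!\colon j_!v^*\Rightarrow u^*i_!$ and $(\gamma\inv)_*\colon u^*i_*\Rightarrow j_*v^*$ recalled before \Cref{Prop:nabla}, turning $\rho^*\theta''$ into a long pasting that involves $u^*\theta$, the whiskered structural 2-cells $\gamma^*$ and $(\gamma\inv)^*$, and units/counits for both the $i$- and the $j$-adjunctions. The bulk of the routine work is to collapse this pasting using the two triangle identities for $j_!\adj j^*$ and $j^*\adj j_*$: the counit of $j_!\adj j^*$ produced inside $\gamma_!$ cancels against the unit supplied by the outer $\nabla_j$, and dually on the coinduction side the unit of $j^*\adj j_*$ inside $(\gamma\inv)_*$ cancels against the outer counit. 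After these zig-zag straightenings, all purely $j$-indexed unit/counit data disappears and only $u^*\theta$, the $i$-adjunction cells $\eta,\eps$, and a single pair consisting of $\gamma$ whiskered by $q_1$ and by $q_2$ surrounding $\rho$, survive.

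The decisive step is to invoke \eqref{eq:lambda-gamma-rho}: applying the strict 2-functor $\MM$ (covariant on vertical composition of 2-cells) to the equality $(\gamma q_2)\circ(\lambda w)=(u\rho)\circ(\gamma q_1)$ expresses $(\lambda w)^*$ as the conjugate of $(u\rho)^*$ by the whiskered isomorphisms $(\gamma q_1)^*$ and $((\gamma q_2)^*)\inv$ (these are invertible since every 2-cell of $\groupoid$ is). This is precisely the identity that absorbs the residual $\gamma$-pair and converts $\rho^*$ into $(\lambda w)^*$, matching the surviving $j$-pasting with the right-hand route $\eps\circ((\lambda w)^*\theta)\circ\eta$. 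Naturality of $\theta$ and of the units and counits is used throughout to slide $u^*\theta$ past the structural cells, and additivity plays no role here since the whole argument is uniform in the components of $(j/j)$.

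The main obstacle I anticipate is the bookkeeping of orientations: ensuring that each whiskered unit, counit, $\gamma^*$ and $(\gamma\inv)^*$ points the right way and that the triangle-identity cancellations leave exactly the $\gamma$-conjugation pattern that \eqref{eq:lambda-gamma-rho} is tailored to remove. I would carry out the verification in the string-diagram calculus, where the triangle identities become straightenings of cups and caps and relation \eqref{eq:lambda-gamma-rho} becomes a planar slide, so that the coincidence of the two pastings can be checked by inspection rather than by a symbol-by-symbol computation.
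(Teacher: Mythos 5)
Your proposal is correct and follows essentially the same route as the paper's proof: unpack both composites via \eqref{eq:nabla-expl} and the definitions of the mates $\gamma_!$ and $(\gamma\inv)_*$, cancel the $j$-side units and counits using the triangle identities for $j_!\adj j^*$ and $j^*\adj j_*$, and conclude by applying the 2-functor $\MM$ (covariant on 2-cells) to the relation \eqref{eq:lambda-gamma-rho} to identify the surviving pasting with $\eps\circ((\lambda w)^*\theta)\circ\eta$. The paper carries out the middle computation with explicit pasting diagrams rather than strings, but that is purely presentational.
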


\begin{proof}
For completeness, let us make the morphism $\nabla_j\colon [j_! v^*F,j_* v^*F']\to [q_1 ^* v^*F,q_2 ^* v^*F']$ of~\eqref{eq:nabla-FF'} more explicit. It uses~\eqref{eq:nabla-expl} with $v^*F$ and $v^*F'$ (instead of~$F$ and~$F'$) and with the faithful functor $j$ (instead of~$i$), and therefore with the self-iso-comma~\eqref{eq:self-ic-j} instead of~\eqref{eq:self-ic}. Unpacking~\eqref{eq:nabla-expl} gives us
\[
\xymatrix@C=1.4em@R=1em@L=1ex{
{\Big(}j_! v^*F
\ar@{=>}[r]^-{\Displ\omega} &
j_* v^*F { \Big) } \ar@{|->}[r]^-{\nabla_j}
& {\Big(} q_1 ^* v^*F \ar@{=>}[r]^-{\Displ\eta}_-{j_!\adj j^*}
& (q_1 ^* j^*)(j_! v^*F') \ar@{=>}[r]^-{\Displ\rho^* \omega}
& (q_2 ^* j^*)(j_* v^*F') \ar@{=>}[r]^-{\Displ\eps}_-{j^*\adj j_*}
& q_2 ^* v^*F' {\Big).}}
\]

Now, by starting with a $\theta\in [i_!F,i_*F']$ and successively pasting on 2-cells according to the definitions, we see that the down-then-right route in~\eqref{eq:nabla-nat} produces the cell on the left-hand side below, while the right-then-down route in~\eqref{eq:nabla-nat} yields the one on the right-hand side:
\[
\vcenter { \hbox{
\xymatrix@L=1pt@C=15pt@R=15pt{
&& \ar[ld]_-{F} \ar[rd]^-{F'}
 \ar@{}[dd]|(.4){\oEcell{\theta}}
\\
& \ar[rd]_(.4){i_!}
 \ar@{=}@/_2ex/[ldd]
 \ar@{}[ldd]|(.4){\ \SEcell\eta}
&& \ar[ld]^(.4){i_*}
 \ar@{=}@/^2ex/[rdd]
 \ar@{}[rdd]|(.4){\NEcell\eps\ }
&
\\
&&
 \ar[dd]^-{u^*}
 \ar[lld]_-{i^*}
 \ar[rrd]^-{\;i^*} && \\
\ar[dd]_-{v^*}
 \ar@{}[rrd]|{\SEcell\,\gamma^*} &&&& \ar@{}[dll]|{\NEcell\,\gamma^{*-1}}
 \ar[dd]^-{v^*} \\
&& \ar@{=}[dd]
 \ar[lld]_-{j^*}
 \ar[rrd]^-{\;\,j^*} &&
\\
\ar[rrd]|{j_!}
\ar@{=}[dd]
&&
 \ar@{}[ll]|{\Ecell\,\varepsilon}
 \ar@{}[rr]|{\Ecell\,\eta}
&&
 \ar[lld]|{j_*}
 \ar@{=}[dd]
\\
&& \ar[lld]^-{j^*}
 \ar[rrd]_-{j^*}
 \ar@{}[dd]|{\Ecell\,\rho^*}
 \ar@{}[ll]|>>>{\;\;\;\;\Ecell\,\eta}
 \ar@{}[rr]|>>>{\Ecell\,\varepsilon\;\;\;\;} && \\
\ar[rrd]_-{q_1 ^*}
&&&& \ar[lld]^-{q_2 ^*} \\
&&&&
}
}}
\quadtext{=}
\vcenter { \hbox{
\xymatrix@L=1pt@C=15pt@R=15pt{
&& \ar[ld]_-{F} \ar[rd]^-{F'}
 \ar@{}[dd]|(.4){\oEcell{\theta}}
\\
& \ar[rd]_(.4){i_!}
 \ar@{=}@/_2ex/[ldd]
 \ar@{}[ldd]|(.4){\ \SEcell\eta}
&& \ar[ld]^(.4){i_*}
 \ar@{=}@/^2ex/[rdd]
 \ar@{}[rdd]|(.4){\NEcell\eps\ }
&
\\
&&
 \ar[dd]^-{u^*}
 \ar[lld]_-{i^*}
 \ar[rrd]^-{\;i^*} && \\
\ar[dd]_-{v^*}
 \ar@{}[rrd]|{\SEcell\,\gamma^*} &&&& \ar@{}[dll]|{\NEcell\,\gamma^{*-1}}
 \ar[dd]^-{v^*}
\\
&&
 \ar[lld]_-{j^*}
 \ar[rrd]^-{\;\,j^*} && \\
 \ar[rrd]_-{q_1 ^*}
 \ar@{}[rrrr]|{\Ecell\,\rho^*}
&&&&
 \ar[lld]^-{q_2 ^*}
\\
&&
}
}}
\quadtext{=}
\vcenter { \hbox{
\xymatrix@L=1pt@C=20pt@R=15pt{
& \ar[ld]_-{F} \ar[rd]^-{F'}
 \ar@{}[dd]|(.4){\oEcell{\theta}}
\\
 \ar[rd]^(.4){i_!}
 \ar@{=}@/_2ex/[dd]
 \ar@{}[dd]|(.4){\ \SEcell\eta}
&& \ar[ld]_(.4){i_*}
 \ar@{=}@/^2ex/[dd]
 \ar@{}[dd]|(.4){\NEcell\eps\ }
&&
\\
& \ar[ld]_-{i^*}
 \ar[dr]^-{i^*} &&
\\
\ar[rd]_-{p_1^*}
 \ar@{}[rr]|{\Ecell\,\lambda^*}
&&
 \ar[ld]^-{p_2^*}
&
\\
& \ar[dd]_-{w^*} &
\\
&&
\\
&&
}
}}
\]
We see that they are both equal to the middle one. Indeed, the left-hand equality holds by the unit-counit relations for $j_!\adj j^*$ and for $j^*\adj j_*$. The second equality is direct from~\eqref{eq:lambda-gamma-rho} by applying~$(-)^*=\MM(-)$, which is \emph{co}variant on 2-cells.
\end{proof}

\begin{proof}[Proof of \Cref{Thm:Theta-properties}]
We leave the isomorphism and additivity statements~\eqref{it:Theta-add} and~\eqref{it:Theta-eq} to the reader, using \Cref{Conv:Mack-5-6}. We focus on the critical property~\eqref{it:Theta-magic}, namely we want to show that the following diagram commutes:
\begin{equation}
\label{eq:Theta-gamma}%
\vcenter{
\xymatrix@C=4em@L=1ex{
u^* i_! \ar@{=>}[r]^-{u^*{\Displ\Theta_i}}
& u^* i_* \ar@{=>}[d]^-{(\gamma\inv)_*}
\\
j_! v^* \ar@{=>}[r]^-{{\Displ\Theta_j}\ v^*} \ar@{=>}[u]^-{\gamma_!}
& j_*v^*
}}
\end{equation}
when the initial 2-cell $\gamma\colon i v \Rightarrow u j$ is partial Mackey. For this, let us follow $\Theta_i\in [i_!,i_*]$ and $\Theta_j\in[j_!,j_*]$ in the following commutative diagram:
\[
\xymatrix@C=4.5em{
\Theta_i \ar@{}[r]|-{\in} \ar@{|->}[d]
& [i_!,i_*] \ar[d]_-{u^*-} \ar[rr]^-{\nabla_i}_-{\cong~\eqref{eq:nabla}}
&& [p_1^*,p_2^*] \ar[d]^-{w^* -}
& \delta_i \ar@{}[l]|-{\ni} \ar@{|->}[d]
\\
u^* \Theta_i \ar@{}[r]|-{\in}
& [u^*i_!\,,\,u^*i_*] \ar@{=>}[d]_-{[\gamma_!\,,\,(\gamma\inv)_*]}
&& [w^*p_1^*,w^*p_2^*] \ar@{=}[d]_-{(p_{1}w\,=\,vq_{1})}^-{(p_{2}w\,=\,vq_{2})}
& w^*\,\delta_i \ar@{}[l]|-{\ni}
\\
\Theta_j\,v^* \ar@{}[r]|-{\in}
& [j_!v^*,j_*v^*] \ar[rr]^-{\nabla_j}_{\cong~\eqref{eq:nabla-FF'}\textrm{\rm\ for $F=F'=v^*$}}
&& [q_1 ^*v^*,q_2 ^*v^*]
& \delta_j\,v^* \ar@{}[l]|-{\ni}
\\
\Theta_j \ar@{}[r]|-{\in} \ar@{|->}[u]
& [j_!,j_*] \ar[rr]^-{\nabla_j}_-{\cong~\eqref{eq:nabla}} \ar[u]^-{-\ v^*}
&& [q_1 ^*,q_2 ^*] \ar[u]_-{-\ v^*}
& \delta_j \ar@{}[l]|-{\ni} \ar@{|->}[u]
}
\]
Commutativity of the top square comes from \Cref{Prop:nabla-nat}; that of the bottom is straightforward. The claim of the statement is that $\Theta_i$ and $\Theta_j$ map to one another under the left-hand vertical morphisms, \ie that they have the same image in the middle, say, in~$[j_!v^*,j_*v^*]$, which is still isomorphic under $\nabla_j$ to the right-hand~$[q_1 ^*v^*,q_2 ^*v^*]$. By construction in \Cref{Thm:Theta}, under $\nabla_i$ and $\nabla_j$, our morphisms $\Theta_i$ and $\Theta_j$ correspond, on the right-hand side of the above diagram, to~$\delta_i$ and $\delta_j$ respectively. Therefore, it suffices to observe that $\delta_j\,v^*=w^*\delta_i$ as we proved in \Cref{Prop:delta-nat}\,\eqref{it:delta-nat-c}. This is the place where we use that~\eqref{eq:partial-Mackey} is partial Mackey, not just any square. This finishes the proof of the crucial part~\eqref{it:Theta-magic} of Theorem~\ref{Thm:Theta-properties}.

Part~\eqref{it:Theta-2-functorial} follows from~\eqref{it:Theta-magic} applied to the (partial) Mackey square of \Cref{Exa:partial-alpha}, which gives the commutativity of
\[
\vcenter{
\xymatrix@C=4em@L=1ex{
{i}_! \ar@{=>}[r]^-{\Theta_{i}}
& {i}_* \ar@{=>}[d]^-{(\alpha\inv)_*}
\\
{i'}_! \ar@{=>}[r]^-{\Theta_{i'}} \ar@{=>}[u]^-{\alpha_!}
& {i'}_*.\!\!
}}
\]
In this case, $\alpha_!$ and $(\alpha\inv)_*$ are isomorphisms by base change (since the square in~\Cref{Exa:partial-alpha} is actually a Mackey square). By compatibility of mates with pasting, we have $(\alpha\inv)_*=(\alpha_*)\inv$ which gives the commutative square~\eqref{eq:Theta-2-functorial} of the statement. (See \Cref{Rem:_!_*-for-id} if necessary.)

Similarly, part~\eqref{it:Theta-special} follows from~\eqref{it:Theta-magic} applied to the partial Mackey square of \Cref{Exa:partial-diag} and the fact that $\id_!\colon \Id\Rightarrow i^*i_!$ and $\id_*\colon i^*i_*\Rightarrow \Id$ are nothing but the unit of~$i_!\adj i^*$ and the counit of~$i^*\adj i_*$ respectively. See \Cref{Exa:units-as-mates}.

Alternatively, after remembering that $\lambda \Delta_i =\id_{\Id_{H}}$ (\Cref{Prop:Delta}) we see that~\eqref{it:Theta-special} is nothing but the equality \eqref{eq:proto(f)}, which holds by the construction of~$\Theta_i$. Likewise, and even more directly, part~\eqref{it:Theta-vanishing} is precisely~\eqref{eq:proto(g)}.

Finally, for part~\eqref{it:Theta-ij}, consider two composable morphisms $K\ointo{j} H \ointo{i} G$ in~$\GG$ and recall the discussion in \Cref{sec:self-iso}. In particular, consider as in \Cref{Not:ij-comp} the self-iso-commas
\[
\vcenter{
\xymatrix@C=14pt@R=14pt{
& (i/i) \ar[dl]_-{p_1} \ar[dr]^-{p_2}
\\
H \ar[dr]_-{i} \ar@{}[rr]|-{\isocell{\lambda}}
&& H \ar[dl]^-{i}
\\
&G
}}
\qquad
\vcenter{\xymatrix@C=14pt@R=14pt{
& (ij/ij) \ar[dl]_{r_1} \ar[dr]^-{r_2}
\\
K \ar[dr]_-{ij} \ar@{}[rr]|-{\isocell{\sigma}}
&& K \ar[dl]^-{ij}
\\
&G
}}
\qquad
\vcenter{
\xymatrix@C=14pt@R=14pt{
& (j/j) \ar[dl]_-{q_1} \ar[dr]^-{q_2}
\\
K \ar[dr]_-{j} \ar@{}[rr]|-{\isocell{\rho}}
&& K \ar[dl]^-{j}
\\
&H
}}
\]
and the two comparison functors $w''=w_{\Id_K,i,\id}$ and $w'=w_{j,\Id_G,\id}$
\[
(i/i) \stackrel{\,w'}{\longleftarrow} (ij/ij) \stackrel{\,\,w''}{\longleftarrow} (j/j)
\]
induced by \Cref{Prop:w} applied to the obvious squares, see~\eqref{eq:ij-squares}. To prove $\Theta_{ij}=\Theta_i\Theta_j$ we use the definition of~$\Theta_{ij}$ from \Cref{Thm:Theta}, namely we trace $\Theta_i\Theta_j$ along the morphisms
\[
\xymatrix@C=1.5em{
\Theta_i\Theta_j\in [i_!j_!\,,i_*j_*] \ar@{=}[r]^-{\sim}
& [(ij)_!\,,(ij)_*] \ar[rr]_-{\simeq}^-{\nabla_{ij}}
&& [r_1^* ,r_2^*] \ar[rr]^-{\incl_C^*}
&& [\incl_C^*r_1^*, \incl_C^*r_2^*]
}
\]
associated to each connected component $\incl_C\colon C\hook (ij/ij)$ of~$(ij/ij)$. Recall from \Cref{Prop:ij-components} that these connected components come in three distinct sorts; the proof will be slightly different in each case. We need to show that the image of~$\Theta_i\Theta_j$ under the above map is zero when $C\subset (ij/ij)$ is disjoint from the diagonal~$\Delta_{ij}(K)$, which are cases~\eqref{it:ij-comp-2} and~\eqref{it:ij-comp-3} of \Cref{Prop:ij-components}; and we need to show that the image of~$\Theta_i\Theta_j$ on the diagonal components is the identity, which by \Cref{Prop:Delta} amounts to show that its image under
\begin{equation}
\label{eq:aux-nabla-delta}%
\xymatrix@C=1em{
\Theta_i\Theta_j\in [i_!j_!\,,i_*j_*] \ar@{=}[r]^-{\sim}
& [(ij)_!\,,(ij)_*] \ar[rr]_-{\simeq}^-{\nabla_{ij}}
&& [r_1^* ,r_2^*] \ar[rr]^-{\Delta_{ij}^*}
&& [\Id_{\MM(K)}, \Id_{\MM(K)}]
}\kern-2em
\end{equation}
is the identity (of $\Id_{\MM(K)}$). These are the components of type~\eqref{it:ij-comp-1} in \Cref{Prop:ij-components}.

Let us apply \Cref{Prop:nabla-nat} to the first square of~\eqref{eq:ij-squares}, namely
\begin{equation}
\label{eq:aux-ij-1}%
\vcenter{\xymatrix@C=14pt@R=14pt{
& K \ar@{=}[ld] \ar@{ >->}[rd]^-{j} \ar@{}[dd]|-{\isocell{\id}}
\\
K \ar@{ >->}[rd]_-{ij}
&& H \ar@{->}[ld]^-{i}
\\
& G}}
\end{equation}
whose associated $w_{\Id,i,\id}\colon (j/j)\to (ij/ij)$ is our~$w''$ above. The commutative diagram~\eqref{eq:nabla-nat} for~$F=F'=\Id$ becomes here
\begin{equation}
\label{eq:aux-nabla-nat-ij}%
{\vcenter{\xymatrix@C=12em@R=2em{
[(ij)_!,(ij)_*] \ar[d]_-{i^*} \ar[r]^-{\nabla_{ij}}_-{\cong\ \eqref{eq:nabla}}
& [r_1^*,r_2^*] \ar[d]^-{w''^*}
\\
[i^*(ij)_!\,,\,i^*(ij)_*] \ar[d]_-{[\id_!\,,\,(\id\inv)_*]}
& [w''^*r_1^*,w''^*r_2^*] \ar@{=}[d]_-{(r_{1}w''\,=\,q_{1})}^-{(r_{2}w''\,=\,q_{2})}
\\
[j_!,j_*] \ar[r]^-{\nabla_j}_{\cong\ \eqref{eq:nabla}}
& [q_1 ^*,q_2 ^*]\,.\!\!
}}}
\end{equation}
The mates $\id_!$ and $(\id\inv)_*$ on the left-hand side are the ones associated to~\eqref{eq:aux-ij-1}. We now verify that the image of $\Theta_i\Theta_j$ under the above left-hand vertical composite (after the identifications $i_!j_!\cong (ij)_!$ and $i_*j_*\cong(ij)_*$) is precisely~$\Theta_j$:
\[
{\vcenter{\xymatrix@C=1.5em@R=2em{
\Theta_i\Theta_j \ar@{}[r]|-{\in} \ar@{|->}[rdd]
& [i_!j_!,i_*j_*] \ar@{=}[r]^-{\sim}
& [(ij)_!,(ij)_*] \ar[d]^-{i^*}
\\
&& [i^*(ij)_!\,,\,i^*(ij)_*] \ar[d]^-{[\id_!\,,\,(\id\inv)_*]}
\\
& \Theta_j \ar@{}[r]|-{\in}
& [j_!,j_*]
}}}
\]
Indeed, the image of $\Theta_i \Theta_j$ is defined by the following pasting:
\[
\vcenter { \hbox{
\xymatrix@L=1pt@C=15pt@R=15pt{
&& \ar@/_2ex/[dd]_{j_!}
 \ar@/^2ex/[dd]^{j_*}
 \ar@{}[dd]|{\oEcell{\;\Theta_j}}
 \ar@/_6ex/[dddd]|>>>>>>>>>>{(ij)_!}
 \ar@/^6ex/[dddd]|>>>>>>>>>>{\;\;(ij)_*}
 \ar@{=}@/_6ex/[dddddll]
 \ar@{=}@/^6ex/[dddddrr] && \\
&& && \\
&& \ar@{-}@/_2ex/[dd]_{i_!} \ar@{-}@/^2ex/[dd]^{i_*}
 \ar@{}[dd]|{\oEcell{\;\Theta_i}}
 \ar@{}[l]|>>{\cong}
 \ar@{}[r]|>>{\cong} && \\
&&&& \\
&& \ar[dll]_{(ij)^*}
 \ar[drr]^{\;(ij)^*}
 \ar[dd]_{i^*}
 \ar@{}[llu]^{\eta\;\Ecell}
 \ar@{}[rru]_{\Ecell\;\eps} && \\
\ar@{=}[dd]
 \ar@{}[drr]|{\id \SEcell} &&&&
 \ar@{=}[dd]
 \ar@{}[dll]|{\NEcell \id^{-1}} \\
&& \ar@{=}[dd]
 \ar[dll]_{j^*}
 \ar[drr]^{\;j^*} && \\
\ar[drr]_{j_!}
 \ar@{}[rr]|{\;\;\eps\;\Ecell} &&&&
 \ar[dll]^{j_*}
 \ar@{}[ll]|{\Ecell\;\eta\;\;} \\
&&&&
}
}}
\quad=\quad
\vcenter { \hbox{
\xymatrix@L=1pt@C=15pt@R=15pt{
&& \ar@/_2ex/[dd]_{j_!}
 \ar@/^2ex/[dd]^{j_*}
 \ar@{}[dd]|{\oEcell{\;\Theta_j}}
 \ar@{=}@/_6ex/[dddddddll]
 \ar@{=}@/^6ex/[dddddddrr] && \\
&& && \\
&& \ar@/_2ex/[dd]_{i_!} \ar@/^2ex/[dd]^{i_*}
 \ar@{}[dd]|{\oEcell{\;\Theta_i}}
 \ar@{=}@/_8ex/[dddd]
 \ar@{=}@/^8ex/[dddd] && \\
&&&& \\
&& \ar[dd]_{i^*}
 \ar@{}[rd]|{\;\;\;\Ecell\;\eps}
 \ar@{}[ld]|{\eta\;\Ecell\;\;\;}
 && \\
 &&&& \\
&& \ar@{=}[dd]
 \ar[dll]_{j^*}
 \ar[drr]^{\;j^*}
 \ar@{}[ll]_{\eta\;\Ecell\;\;\;\;}
 \ar@{}[rr]^{\;\;\;\;\Ecell\;\eps} && \\
\ar[drr]_{j_!}
 \ar@{}[rr]|{\;\;\eps\;\Ecell} &&&&
 \ar[dll]^{j_*}
 \ar@{}[ll]|{\Ecell\;\eta\;\;} \\
&&&&
}
}}
\quad=\quad
\vcenter { \hbox{
\xymatrix@L=1pt@C=15pt@R=15pt{
& \ar@/_2ex/[dd]_{j_!}
 \ar@/^2ex/[dd]^{j_*}
 \ar@{}[dd]|{\oEcell{\;\Theta_j}}
 & \\
& & \\
& \ar@/_2ex/[dd]_{i_!} \ar@/^2ex/[dd]^{i_*}
 \ar@{}[dd]|{\oEcell{\;\Theta_i}}
 \ar@{=}@/_8ex/[dddd]
 \ar@{=}@/^8ex/[dddd] & \\
&& \\
& \ar[dd]_{i^*}
 \ar@{}[rd]|{\;\;\;\Ecell\;\eps}
 \ar@{}[ld]|{\eta\;\Ecell\;\;\;}
 & \\
&& \\
&&
}
}}
\quad \stackrel{\textrm{\eqref{it:Theta-special}}}{=} \; \Theta_j
\]
To see why this reduces to $\Theta_j$, we first use that the canonical isomorphisms $(ij)_!\cong i_!j_!$ and $(ij)_*\cong i_*j_*$ identify the units and counits of the adjunctions. Then we apply the triangle identities and conclude with the already proved property~\eqref{it:Theta-special} for~$i$.

The commutative diagram~\eqref{eq:aux-nabla-nat-ij} and the above verification allow us to compute the projections of $\nabla_{ij}(\Theta_i\Theta_j)$ along $\incl_C^*$ for two-thirds of the connected components $C\subset (ij/ij)$, namely those of type~\eqref{it:ij-comp-1} and~\eqref{it:ij-comp-3}. First for the diagonal ones, following~\eqref{eq:aux-nabla-delta}, we obtain the commutative diagram
\[
{\vcenter{\xymatrix@C=3em{
\Theta_i\Theta_j \ar@{}[r]|-{\in} \ar@{|->}[rd]
& [i_!j_!,i_*j_*] \ar@{=}[r]^-{\sim}
& [(ij)_!,(ij)_*] \ar[d]_-{[\id_!\,,\,(\id\inv)_*]\,\circ\, i^*} \ar[r]^-{\nabla_{ij}}_-{\cong}
& [r_1^*,r_2^*] \ar[d]^-{w''^*} \ar[r]^-{\Delta_{ij}^*}
& [\Id_{\MM(K)},\Id_{\MM(K)}]
\\
& \Theta_j \ar@{}[r]|-{\in} & [j_!,j_*] \ar[r]^-{\nabla_j}_{\cong}
& [q_1 ^*,q_2 ^*] \ar[ru]_-{\Delta_j^*}
}}}
\]
The triangle to the right commutes because $w''\Delta_j=\Delta_{ij}$, see~\eqref{Eq:diagonal-comparisons}. We then use that $\Delta_j^*\nabla_j(\Theta_j)$ is the identity by definition (apply \Cref{Cor:nabla}, \Cref{Thm:Theta} and \Cref{Prop:delta} with $j$ instead of~$i$) to conclude in this case.

Consider now a connected component $C\subset (ij/ij)$ of type~\eqref{it:ij-comp-3} in \Cref{Prop:ij-components}, namely such that $C$ is disjoint from~$\Delta_{ij}(K)$ and such that there exists a commutative diagram
\[
\xymatrix@C=5em{
D \ar[d]_-{w''\restr{D}}^-{\simeq} \ar@{ >->}[r]^-{\incl_D}
& (j/j) \ar[d]^-{w''}
\\
C \ar@{ >->}[r]^-{\incl_C}
& (ij/ij)
}
\]
for a connected component~$D\subset (j/j)$ disjoint from~$\Delta_j(K)$. We compute similarly as above
\[
{\vcenter{\xymatrix@C=3em{
\Theta_i\Theta_j \ar@{}[r]|-{\in} \ar@{|->}[rd]
& [i_!j_!,i_*j_*] \ar@{=}[r]^-{\sim}
& [(ij)_!,(ij)_*] \ar[d]_-{[\id_!\,,\,(\id\inv)_*]\,\circ\, i^*-} \ar[r]^-{\nabla_{ij}}_-{\cong}
& [r_1^*,r_2^*] \ar[d]^-{w''^*} \ar[r]^-{\incl_C^*}
& [\ldots,\ldots] \ar[d]_-{\simeq}^-{(w''\restr{D})^*}
\\
& \Theta_j \ar@{}[r]|-{\in} & [j_!,j_*] \ar[r]^-{\nabla_j}_-{\cong}
& [q_1 ^*,q_2 ^*] \ar[r]_-{\incl_D^*}
& [\ldots,\ldots]
}}}
\]
Since $D$ is disjoint from~$\Delta_j(K)$, we have by definition of $\Theta_j$ (\Cref{Thm:Theta} etc.) that $\incl_D^*\nabla_j(\Theta_j)=0$ and therefore $\incl_C^*\nabla_{ij}(\Theta_i\Theta_j)=0$; this finishes case~\eqref{it:ij-comp-3}.

We are left to prove the same relation $\incl_C^*\nabla_{ij}(\Theta_i\Theta_j)=0$ but now for the connected components $C\subset (ij/ij)$ of type~\eqref{it:ij-comp-2}, \ie such that $w'(C)\subset (i/i)$ is disjoint from $\Delta_i(H)$. In that case, we want to use that $\Theta_i$ vanishes on a suitable component of~$(i/i)$. This requires another little preparation. For this, consider the second square of \Cref{eq:ij-squares}, namely
\begin{equation}
\label{eq:aux-ij-2}%
\vcenter{\xymatrix@C=14pt@R=14pt{
& K \ar[dl]_j \ar[dr]^-{ij}
\\
H \ar[dr]_-{i} \ar@{}[rr]|-{\isocell{\id}}
&& G \ar@{=}[dl]
\\
&G
}}
\end{equation}
to which we also apply \Cref{Prop:nabla-nat}. In this case, the commutative diagram~\eqref{eq:nabla-nat} for $F=j_!$ and $F'=j_*$ provides the following commutative diagram:
\[
{\vcenter{\xymatrix@C=12em{
[i_!j_!,i_*j_*] \ar@{=}[d] \ar[r]^-{\nabla_i}_-{\cong\ \eqref{eq:nabla-FF'}}
& [p_1^*j_!,p_2^*j_*] \ar[d]^-{{w'}^*}
\\
[i_!j_!\,,\,i_*j_*] \ar[d]_-{[\id_!\,,\,(\id\inv)_*]}
& [{w'}^*p_1^*j_!,{w'}^*p_2^*j_*] \ar@{=}[d]_-{(p_{1}{w'}\,=\,j r_{1})}^-{(p_{2}{w'}\,=\,j r_{2})}
\\
[(ij)_!j^*j_!,(ij)_*j^*j_*] \ar[r]^-{\nabla_{ij}}_{\cong\ \eqref{eq:nabla-FF'}}
& [r_1^*j^*j_!,r_2^*j^*j_*]
}}}
\]
The mates $\id_!$ and $(\id\inv)_*$ on the left-hand column are the ones associated to~$\id_{ij}$ in~\eqref{eq:aux-ij-2}. Note that $j_!$ and $j_*$ are not touched by the left bottom morphism, since they appear as $F=j_!$ and $F'=j_*$ in~\eqref{eq:nabla-nat}. Similarly, the bottom isomorphism is ``$\nabla_{ij}$ on~$F=j^*j_!$ and $F'=j^*j_*$". Finally, the functor $w_{j,\Id,\id}\colon (ij/ij)\to (i/i)$ is here our~$w'$, hence the $w'$ on the right-hand column. We paste to the above diagram the obvious diagram obtained by collapsing $j^*j_!$ and $j^*j_*$ thanks to the unit $\eta\colon\Id\Rightarrow j^*j_!$ and counit $\eps\colon j^*j_*\Rightarrow \Id$ of $i_!\adj i^*$ and $i^*\adj i_*$ respectively. This provides the following commutative diagram:
\[
{\vcenter{\xymatrix@C=12em{
[i_!j_!,i_*j_*] \ar[d]_-{[\id_!\,,\,(\id\inv)_*]} \ar[r]^-{\nabla_i}_-{\cong\ \eqref{eq:nabla-FF'}}
& [p_1^*j_!,p_2^*j_*] \ar[d]^-{{w'}^*}
\\
[(ij)_!j^*j_!,(ij)_*j^*j_*] \ar[r]^-{\nabla_{ij}}_{\cong\ \eqref{eq:nabla-FF'}}
 \ar[d]_-{[\eta,\eps]}
& [r_1^*j^*j_!,r_2^*j^*j_*]
 \ar[d]^-{[\eta,\eps]}
\\
[(ij)_!,(ij)_*] \ar[r]^-{\nabla_{ij}}_{\cong\ \eqref{eq:nabla}}
& [r_1^*,r_2^*]
}}}
\]
Let us verify that the left vertical composite is equal to the map induced by the canonical isomorphisms $i_!j_!\cong (ij)_!$ and $i_*j_*\cong (ij)_*$. Equivalently, we can check that by \emph{precomposing} the vertical composite with the inverse of said induced map we get the identity; indeed, this sends every $\alpha\in [(ij)_!,(ij)_*]$ to
\[
\vcenter { \hbox{
\xymatrix@L=1pt@C=18pt@R=18pt{
&
\ar@{=}@/_6ex/[dddd]^{\;\;\oEcell{\eta\;}} &&
 \ar[ddl]|{(ij)_!}
 \ar[ddr]|{(ij)_*}
 \ar[dll]_{j_!}
 \ar[drr]^{\;j_*}
 \ar@{=}[ll]
 \ar@{=}[rr] &&
\ar@{=}@/^6ex/[dddd]_{\oEcell{\eps\;}\;\;} & \\
&
 \ar[dr]_{i_!\!\!}
 \ar@{=}@/_2ex/[dd]
 \ar@{}[dd]|{\oEcell{\eta\;}}
 \ar@{}[r]_>{\cong} &&&&
\ar[dl]^{i_*}
 \ar@{=}@/^2ex/[dd]
 \ar@{}[dd]|{\oEcell{\eps\;}}
 \ar@{}[l]^>{\cong} & \\
&&
\ar[dl]_{i^*\!\!}
 \ar@{=}[dr]
 \ar@{}[rr]^{\oEcell{\alpha}} &&
\ar[dr]^{i^*}
 \ar@{=}[dl] && \\
& \ar[dr]_<<<{j^*\!\!} &&
\ar[dl]|{(ij)^*}
 \ar[dr]|{(ij)^*}
 \ar@{=}[dd]
 \ar@{}[ll]|{\oEcell{\id}}
 \ar@{}[rr]|{\oEcell{\id^{-1}}} &&
\ar[dl]^<<<{\!\!j^*} & \\
&&
\ar@{=}[l]
 \ar[dr]_{(ij)_!}
 \ar@{}[r]|{\;\oEcell{\eps\;}} &&
 \ar@{=}[r]
 \ar[dl]^{(ij)_*}
 \ar@{}[l]|{\oEcell{\eta\;}\;} && \\
&&&&&&
}
}}
\quad = \quad
\vcenter { \hbox{
\xymatrix@L=1pt@C=18pt@R=18pt{
&&
\ar@{=}@/_10ex/[ddddl]
 \ar@{=}@/^10ex/[ddddr]
 \ar[ddl]|{(ij)_!}
 \ar[ddr]|{(ij)_*}
 && \\
&&&& \\
&
 \ar@{=}[dr]
 \ar@{}[rr]^{\oEcell{\alpha}}
 \ar@{}[l]_{\oEcell{\eta\;}} &&
 \ar@{=}[dl]
 \ar@{}[r]^{\oEcell{\eps\;}}
 & \\
&&
\ar[dl]_{(ij)^*\!\!}
 \ar[dr]^{(ij)^*}
 \ar@{=}[dd]
 && \\
&
 \ar[dr]_{(ij)_!}
 \ar@{}[r]|{\;\oEcell{\eps\;}} &&
 \ar[dl]^{(ij)_*}
 \ar@{}[l]|{\oEcell{\eta\;}\;} & \\
&&&&
}
}}
\quad = \quad
\alpha \,.
\]
Now we can compute $\incl_C^*\nabla_{ij}(\Theta_i\Theta_j)$ by following $\Theta_i\Theta_j$ from the upper-left corner, down to the lower-right one and then applying $\incl_C^*$. Let us do that. We obtain the following commutative diagram
\[
{\vcenter{\xymatrix@C=5em{
\Theta_i\Theta_j \ar@{}[r]|-{\in}
& [i_!j_!,i_*j_*] \ar@{=}[dd]^-{\cong} \ar[r]^-{\nabla_i}_-{\cong}
& [p_1^*j_!,p_2^*j_*] \ar[d]_-{{w'}^*} \ar[r]^-{\incl_{D'}^*}
& [\ldots,\ldots] \ar[d]_-{(w'\restr{C})^*}
\\
&& [r_1^*j^*j_!,r_2^*j^*j_*]
 \ar[d]_-{[\eta,\eps]} \ar[r]^-{\incl_C^*}
& [\ldots,\ldots] \ar[d]_-{[\eta,\eps]}
\\
& [(ij)_!,(ij)_*] \ar[r]^-{\nabla_{ij}}_-{\cong}
& [r_1^*,r_2^*] \ar[r]^-{\incl_C^*}
& [\ldots,\ldots]
}}}
\]
in which the left-hand half is the fruit of the above discussion whereas the right-hand part is simply obtained from the commutative diagram
\[
\xymatrix@C=5em{
C \ar[d]_-{w'\restr{C}} \ar@{ >->}[r]^-{\incl_C}
& (ij/ij) \ar[d]^-{w'}
\\
D' \ar@{ >->}[r]^-{\incl_{D'}}
& (i/i)
}
\]
coming from the fact that $C\subset (ij/ij)$ is of type~\eqref{it:ij-comp-2}, that is, so that the component $D'$ of the image~$w'(C)$ is disjoint from~$\Delta_i(H)$. This property guarantees that $\incl_{D'}^*\nabla_i(\Theta_i)=0$ by definition of~$\Theta_i$ (\Cref{Thm:Theta}). Following $\Theta_i\Theta_j$ in the above diagram, along the top horizontal arrow, we see that its image in the upper-right corner is simply $(\incl_{D'}^*\nabla_i(\Theta_i)\big)\Theta_j$ which is zero by what we just discussed.

To summarize, the image of $\Theta_i\Theta_j\in[i_!j_!,i_*j_*]$ in the canonically isomorphic $[(ij)_!,(ij)_*]$ has the property that its image under~$\nabla_{ij}$ projects to the identity on the diagonal components of~$(ij/ij)$ and to zero on all other components (if for two separate sets of reasons). So $\nabla_{ij}(\Theta_i\Theta_j)=\delta_{ij}=\nabla_{ij}(\Theta_{ij})$ and therefore $\Theta_i\Theta_j=\Theta_{ij}$.
This concludes the proof of \Cref{Thm:Theta-properties}.
\end{proof}

\begin{Rem}
\label{Rem:non-choosando}%
It is perhaps surprising to some readers that we could prove commutativity of~\eqref{eq:magic-partial} independently of the choice of the units and counits for $i_!\adj i^* \adj i_*$ and $j_!\adj j^* \adj j_*$. However, the \emph{same} units and counits appear in the construction of $\Theta_i$ and $\Theta_j$ and in the construction of $\gamma_!$ and $(\gamma\inv)_*$. Changing one of these units or counits (up to an automorphism of the corresponding adjoint) would have no impact on the commutativity of~\eqref{eq:magic-partial}, as two occurrences of the automorphism would cancel out.
\end{Rem}

Expanding on \Cref{Rem:non-choosando}, we now record the good behavior of $\Theta_i$ under a change of adjoints:

\begin{Prop} \label{Prop:Theta-proto-uniqueness}
For $i\colon H\into G$ faithful, suppose that we have two choices of left adjoints for the restriction functor $\MM(i)=i^*$, denoted $i_!$ and~$\overline{i_!}$; and suppose similarly that we have two right adjoints, written $i_*$ and~$\overline{i_*}$.
Then there is a commutative square
\begin{equation*}
\vcenter{
\xymatrix@C=4em@L=1ex{
i_! \ar@{=>}[r]^-{\Theta_i}
& i_* \ar@{=>}[d]^-{\psi}_-{\cong}
\\
{\overline{i_!}} \ar@{=>}[r]^-{\overline{\Theta_i}} \ar@{<=}[u]^-{\varphi}_-{\cong}
& {\overline{i_*}}
}}
\end{equation*}
where $\Theta_i\colon i_!\Rightarrow i_*$ and $\overline{\Theta_i}\colon \overline{i_!}\Rightarrow \overline{i_*}$ are the comparison morphisms of \Cref{Thm:Theta-properties} constructed, respectively, from the adjunctions $i_!\adj i^* \adj i_*$ and $\overline{i_!}\adj i^* \adj \overline{i_*}$; and where $\varphi \colon i_!\overset{\sim}{\Rightarrow} \overline{i_!}$ denotes the unique isomorphism identifying the two left adjunctions, and $\psi \colon i_*\overset{\sim}{\Rightarrow} \overline{i_*}$ the unique one identifying the two right adjunctions.
\end{Prop}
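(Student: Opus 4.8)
The plan is to reduce everything to the defining property of the comparison maps: by \Cref{Thm:Theta}, $\Theta_i$ and $\overline{\Theta_i}$ are the \emph{unique} 2-cells sent by the respective isomorphisms $\nabla_i$ and $\overline{\nabla_i}$ of \Cref{Cor:nabla} (built from the adjunction data $i_!\adj i^*\adj i_*$ and $\overline{i_!}\adj i^*\adj\overline{i_*}$ respectively) to one and the same morphism $\delta_i\colon p_1^*\Rightarrow p_2^*$ of \Cref{Prop:delta}. The decisive observation is that $\delta_i$ depends only on the 2-functor $\MM$ and the self-iso-comma~\eqref{eq:self-ic}, not on any choice of adjoint, so that both constructions aim at the \emph{same} target. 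Since $\overline{\nabla_i}$ is a bijection, it then suffices to check that the 2-cell $\psi\circ\Theta_i\circ\varphi\inv\colon\overline{i_!}\Rightarrow\overline{i_*}$ is also sent by $\overline{\nabla_i}$ to $\delta_i$; uniqueness will then force $\psi\circ\Theta_i\circ\varphi\inv=\overline{\Theta_i}$, which is exactly the asserted commutativity $\psi\circ\Theta_i=\overline{\Theta_i}\circ\varphi$.

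First I would record the compatibility of the two families of units and counits with the change-of-adjoint isomorphisms. By the standard uniqueness of adjoints (\cf \Cref{Rem:pseudo-func-of-adjoints} and the discussion in \Cref{Rem:non-choosando}), the canonical isomorphism $\varphi\colon i_!\isoEcell\overline{i_!}$ identifying the two left adjunctions is characterized by $\overline{\eta}=(i^*\varphi)\circ\eta$, while $\psi\colon i_*\isoEcell\overline{i_*}$ satisfies $\overline{\eps}=\eps\circ(i^*\psi\inv)$, where $\eta,\overline{\eta}$ are the units of $i_!\adj i^*$ and $\overline{i_!}\adj i^*$ and $\eps,\overline{\eps}$ the counits of $i^*\adj i_*$ and $i^*\adj\overline{i_*}$. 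These are precisely the units and counits entering the explicit formula~\eqref{eq:nabla-expl} for $\nabla_i$ and $\overline{\nabla_i}$.

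Next I would expand $\overline{\nabla_i}(\psi\circ\Theta_i\circ\varphi\inv)$ via~\eqref{eq:nabla-expl}. Whiskering $\lambda^*$ against the vertical composite $\psi\circ\Theta_i\circ\varphi\inv$ and applying the interchange law pulls $\psi$ and $\varphi\inv$ to the outside, turning the expression into
\[
(p_2^*\overline{\eps})\circ(p_2^*i^*\psi)\circ(\lambda^*\Theta_i)\circ(p_1^*i^*\varphi\inv)\circ(p_1^*\overline{\eta}).
\]
Substituting the two relations above, the outer pairs collapse: $(p_2^*\overline{\eps})\circ(p_2^*i^*\psi)=p_2^*\eps$ because $\psi\inv\circ\psi=\id_{i_*}$, and $(p_1^*i^*\varphi\inv)\circ(p_1^*\overline{\eta})=p_1^*\eta$ because $\varphi\inv\circ\varphi=\id_{i_!}$. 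What remains is exactly $(p_2^*\eps)\circ(\lambda^*\Theta_i)\circ(p_1^*\eta)=\nabla_i(\Theta_i)=\delta_i$, which is what we wanted.

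The only step demanding care—and the main obstacle—is the first one: pinning down the precise compatibility of units and counits with $\varphi$ and $\psi$, and then keeping the whiskering bookkeeping straight through the interchange law so that the substitutions cancel cleanly in pairs. Everything after that is formal telescoping. I would close by remarking, in the spirit of \Cref{Rem:non-choosando}, that this cancellation is robust: the very same units and counits occur in $\nabla_i$ and in the change-of-adjoint isomorphisms, so any residual ambiguity in their choice disappears in pairs.
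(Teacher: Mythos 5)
Your proof is correct and follows essentially the same route as the paper's: both arguments rest on the unique characterization of the comparison morphisms via $\nabla_i$ (equivalently, equations \eqref{eq:proto(f)} and \eqref{eq:proto(g)} of \Cref{Rem:Theta}), observing that conjugating by the change-of-adjoint isomorphisms $\varphi,\psi$ preserves the property of being sent to~$\delta_i$. The only difference is cosmetic — the paper transports $\overline{\Theta_i}$ back to a morphism $i_!\Rightarrow i_*$ and invokes uniqueness there, leaving the telescoping computation implicit, whereas you push $\Theta_i$ forward and write that computation out.
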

\begin{proof}
By construction, both $\Theta_i$ and $\Theta_i':=\psi^{-1}\circ \overline{\Theta_i} \circ \phi$ are natural transformations $i_! \Rightarrow i_*$ satisfying the equations \eqref{eq:proto(f)} and \eqref{eq:proto(g)} of \Cref{Rem:Theta}. Hence they must coincide by the uniqueness of this characterization.
\end{proof}

We conclude this section by explaining how $\Theta_i\colon i_!\Rightarrow i_*$ is natural in~$\MM$.

\begin{Prop}
\label{Prop:Theta-nat}%
Let $\MM,\NN\colon \GG^\op\to \ADD$ be two strict 2-functors satisfying all properties of a Mackey 2-functor, except perhaps ambidexterity, as in \Cref{Hyp:Theta}. Let $t\colon \MM\to \NN$ be a pseudo-natural transformation, with components $t_G\colon \MM(G)\to \NN(G)$ for all~$G\in \GG_0$ and $t_u\colon u^* t_G\isoEcell t_H u^*$ for all $u\colon H\to G$ in~$\GG_1$ (see \Cref{Ter:Hom_bicats}).
Let $i\colon H\into G$ be in~$\JJ$ and write $\Theta_i^{\scriptscriptstyle \MM}\colon i_!^{\scriptscriptstyle \MM}\Rightarrow i_*^{\scriptscriptstyle \MM}$ and $\Theta_i^{\scriptscriptstyle \NN}\colon i_!^{\scriptscriptstyle \NN}\Rightarrow i_*^{\scriptscriptstyle \NN}$ for the two natural transformations constructed in \Cref{Thm:Theta}, for~$\MM$ and $\NN$ respectively. Then the following square of natural transformations between functors $\MM(H)\to \NN(G)$ commutes:
\begin{equation}
\label{eq:Theta-nat}%
\vcenter{\xymatrix@C=4em@L=1ex{
t_G i_!^{\scriptscriptstyle \MM} \ar@{=>}[r]^-{\Displ t_G \Theta_i^{\MM}}
& t_G i_*^{\scriptscriptstyle \MM} \ar@{=>}[d]^-{\Displ(t_i)_*}
\\
i_!^{\scriptscriptstyle \NN} t_H \ar@{=>}[r]^-{\Displ\Theta_i^{\NN} t_H} \ar@{=>}[u]^-{\Displ (t_i\inv)_!}
& i_*^{\scriptscriptstyle \NN} t_H
}}
\end{equation}
\end{Prop}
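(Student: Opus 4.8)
The plan is to reduce everything to the uniqueness characterization of $\Theta_i$ given by the equation $\nabla_i(\Theta_i)=\delta_i$ (\Cref{Thm:Theta}, \Cref{Cor:nabla}), together with the characterization of $\delta_i$ by its components in \Cref{Prop:delta}. Write
\[
\Phi:=(t_i)_*\circ(t_G\,\Theta_i^{\MM})\circ(t_i\inv)_!\colon\ i_!^{\NN}t_H\Longrightarrow i_*^{\NN}t_H
\]
for the composite up-across-down in~\eqref{eq:Theta-nat}, so that commutativity of the square is precisely the assertion $\Phi=\Theta_i^{\NN}\,t_H$. Applying \Cref{Prop:nabla} to the 2-functor $\NN$ with $\cat{C}=\MM(H)$ and $F=F'=t_H$ produces a bijection
\[
\nabla_i^{\NN}\colon\ [\,i_!^{\NN}t_H\,,\,i_*^{\NN}t_H\,]\ \overset{\sim}{\longrightarrow}\ [\,p_1^*t_H\,,\,p_2^*t_H\,],
\]
so it suffices to show $\nabla_i^{\NN}(\Phi)=\nabla_i^{\NN}(\Theta_i^{\NN}t_H)$. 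The latter is immediate from the explicit formula~\eqref{eq:nabla-expl}: whiskering the defining identity $\nabla_i^{\NN}(\Theta_i^{\NN})=\delta_i^{\NN}$ by $t_H$ gives $\nabla_i^{\NN}(\Theta_i^{\NN}t_H)=\delta_i^{\NN}\,t_H$.

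Next I would record a naturality statement for $\delta$ under $t$. Using the invertible structure cells $t_{p_1}\colon p_1^*t_H\isoEcell t_{(i/i)}p_1^*$ and $t_{p_2}\colon p_2^*t_H\isoEcell t_{(i/i)}p_2^*$, set
\[
\delta^t:=t_{p_2}\inv\circ\bigl(t_{(i/i)}\,\delta_i^{\MM}\bigr)\circ t_{p_1}\colon\ p_1^*t_H\Longrightarrow p_2^*t_H.
\]
\textbf{Claim:} $\delta^t=\delta_i^{\NN}\,t_H$. By Additivity and \Cref{Prop:Delta} it is enough to compare both sides after $\Delta_i^*$ and after each $\incl_C^*$. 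Since $p_1\Delta_i=p_2\Delta_i=\Id_H$ and $p_1\incl_C,p_2\incl_C$ factor through $C$, the composition and unit coherences of the pseudo-natural transformation $t$ (\Cref{Ter:Hom_bicats}) let the cells $t_{p_k}$ cancel against $t_{\Delta_i}$ (respectively $t_{\incl_C}$); combined with $\Delta_i^*\delta_i^{\MM}=\id$ and $\incl_C^*\delta_i^{\MM}=0$ from \Cref{Prop:delta}, this gives $\Delta_i^*\delta^t=\id$ and $\incl_C^*\delta^t=0$, which are exactly the identities characterizing $\delta_i^{\NN}t_H$.

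The main step, and the principal obstacle, is the identity $\nabla_i^{\NN}(\Phi)=\delta^t$. Expanding $\nabla_i^{\NN}(\Phi)=\eps^{\NN}\circ(\lambda^*\Phi)\circ\eta^{\NN}$ and substituting the definitions of the mates $(t_i\inv)_!$ and $(t_i)_*$ — which are themselves assembled from $t_i$ and the units/counits of $i_!^{\NN}\adj i^*\adj i_*^{\NN}$ — one runs the same pasting calculus as in the proof of \Cref{Prop:nabla-nat}, but with the comparison cell $t_i$ playing the role previously played by the structural 2-cell $\gamma$. The triangle identities for the $\NN$-adjunctions absorb the outer $\NN$-units/counits into the mates and convert them into the $\MM$-units/counits building $\nabla_i^{\MM}$; the inner $t_G\,\Theta_i^{\MM}$ is transported to $t_{(i/i)}\,\Theta_i^{\MM}$; and the compatibility of $t$ with the iso-comma cell $\lambda$, via the composition coherence $t_{ip_k}=(t_{p_k}\,i^*)\circ(p_k^*\,t_i)$, manufactures precisely the conjugating cells $t_{p_1}$ and $t_{p_2}\inv$. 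The result is
\[
\nabla_i^{\NN}(\Phi)=t_{p_2}\inv\circ\bigl(t_{(i/i)}\,\nabla_i^{\MM}(\Theta_i^{\MM})\bigr)\circ t_{p_1}=t_{p_2}\inv\circ\bigl(t_{(i/i)}\,\delta_i^{\MM}\bigr)\circ t_{p_1}=\delta^t.
\]
Combining the three paragraphs yields $\nabla_i^{\NN}(\Phi)=\delta^t=\delta_i^{\NN}t_H=\nabla_i^{\NN}(\Theta_i^{\NN}t_H)$, and injectivity of $\nabla_i^{\NN}$ forces $\Phi=\Theta_i^{\NN}t_H$, i.e.\ the square~\eqref{eq:Theta-nat} commutes. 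As in \Cref{Rem:non-choosando}, the computation is insensitive to the chosen units and counits, each occurrence of adjunction data being matched by a partner inside the mates.
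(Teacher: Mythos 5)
Your proposal is correct and takes essentially the same approach as the paper's own proof: your Claim $\delta^t=\delta_i^{\scriptscriptstyle\NN}t_H$ is exactly the paper's intermediate lemma~\eqref{eq:delta_i-M-N}, and your main pasting step (unpacking the mates, triangle identities, naturality of~$t$ at~$\lambda$, functoriality $t_{ip_k}=(t_{p_k}i^*)(p_k^*t_i)$, and the defining property of~$\Theta_i^{\scriptscriptstyle\MM}$ whiskered by~$t_{(i/i)}$) is precisely the paper's concluding diagram chase. The only difference is organizational: you split that chase into the identity $\nabla_i^{\scriptscriptstyle\NN}(\Phi)=\delta^t$ followed by the lemma, whereas the paper verifies everything in one large diagram before invoking the bijectivity of~$\nabla_i$.
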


\begin{proof}
By construction of~$\Theta_i^{\scriptscriptstyle \NN}$, it suffices to verify that the composition around the top $(t_i)_*\circ (t_G\Theta_i^{\scriptscriptstyle \MM})\circ (t_i\inv)_!$ has the defining property of~$\Theta_i^{\scriptscriptstyle \NN}$, namely that under the isomorphism $\nabla_i\colon [i_! t_H,i_*t_H]\isoto [p_1^* t_H, p_2^*t_H]$ of \Cref{Prop:nabla} (for $F=F'=t_H$), our around-the-top composite should map to~$\delta_i^{\scriptscriptstyle \NN} t_H$. We repeat the notation of \Cref{Thm:Theta}, for the reader's convenience: $p_1$ and $p_2$ are the two projections $(i/i)\to H$ in the self-iso-comma for~$i$, as in~\eqref{eq:self-ic}, and $\delta_i^{\scriptscriptstyle \NN}\colon p_1^*=\NN(p_1)\Rightarrow p_2^*=\NN(p_2)$ is the distinguished morphism of \Cref{Prop:delta} for~$\NN$. Unsurprisingly, we must first establish the compatibility of~$\delta_i$ for~$\MM$ and for~$\NN$, which is expressed as follows:

\begin{Lem} \label{Lem:comm-square-comp}
There is a commutative square
\begin{equation}
\label{eq:delta_i-M-N}%
\vcenter{\xymatrix@C=6em@L=1ex{
p_1^* t_H \ar@{=>}[r]^-{\Displ \delta_i^{\scriptscriptstyle \NN}\,t_H} \ar@{=>}[d]^-{\simeq}_-{\Displ t_{p_1}}
& p_2^* t_H \ar@{=>}[d]_-{\simeq}^-{\Displ t_{p_2}}
\\
t_{(i/i)}p_1^* \ar@{=>}[r]^-{\Displ t_{(i/i)}\delta_i^{\scriptscriptstyle \MM}}
& t_{(i/i)}p_2^*
}}
\end{equation}
of natural transformations between functors $\MM(H)\to \NN(i/i)$.
\end{Lem}

\begin{proof}
Let $C$ be either of the subgroupoids $\Delta_i(H)$ or $C= (i/i)\smallsetminus \Delta_i(H)$ of the comma groupoid~$(i/i)$, and let $\incl_C\colon C\hookrightarrow (i/i)$ be the inclusion functor. Recall that $\delta_i^\MM$ and $\delta_i^\cat N$ are characterized in \Cref{Prop:delta} by the property that if we apply $\incl^*_C\colon \MM(i/i)\to \MM(C)$ (respectively $\incl^*_C\colon \cat N(i/i)\to \cat N(C)$) to it, we obtain the identity or zero according as to whether $C=\Delta_i(H)$ or $C= (i/i)\smallsetminus \Delta_i(H)$.

Let us apply $\incl^*_C$ to~\eqref{eq:delta_i-M-N}. The key to showing that it commutes in both cases is to `conjugate' with the components of $t$ at all the relevant 1-cells:
\[
\xymatrix{
&& \MM (i/i)
 \ar[dd]^{\incl_C^*} \ar[rr]^-{t_{(i/i)}} &&
 \cat N(i/i)
 \ar[dd]^{\incl_C^*}
 \ar@{}[ddll]|{\quad \underset{t_{\incl_C}}{\overset{\simeq}{\SWcell}} } \\
\MM(H)
 \ar@/^3ex/[urr]^-{p_1^*}
 \ar@/_2.6ex/[urr]_-{p_2^*}
 \ar@{}[urr]|{\SEcell\;\delta^\MM_i}
 \ar@/^2.6ex/[drr]
 \ar@/_3ex/[drr]
 \ar@{}[drr]|{\SWcell\, \id \textrm{ or }0} && && \\
&& \MM(C) \ar[rr]_-{t_H} && \cat N(C)
}
\]
This also makes use of other 2-cell components of~$t$ (not depicted above) as well as of its functoriality property (\Cref{Ter:Hom_bicats}). In fact, such an argument is most easily written up in terms of string diagrams, hence we postpone a fully detailed proof until the latter have been introduced in \Cref{sec:string_diagrams}; see \Cref{Lem:full-details-comp}.
\end{proof}

Let us now turn to the announced property of the `around-the-top' composition $(t_i)_*\circ (t_G\Theta_i^{\scriptscriptstyle \MM})\circ (t_i\inv)_!$ in~\eqref{eq:Theta-nat}. Unpacking the definitions of the mates $(t_i\inv)_!$ and $(t_i)_*$, applying $p_1^*$ and $p_2^*$ and using $\lambda^*\colon p_1^* i^*\Rightarrow p_2^* i^*$ for $\lambda$ as in~\eqref{eq:self-ic}, we need to show the commutativity of the central region (marked (?)) of the following diagram:
\[
\xymatrix@C=1.2em@L=1ex{
&& t_{i/i} p_1^*i^*i_! \ar@{=>}[rr]^-{t_{i/i}\lambda^*\Theta^{\scriptscriptstyle \MM}_i} \ar@{=}@/_2em/[llddd] \ar@{<=}[d]_-{t_{ip_1}}^-{\simeq}
&& t_{i/i} p_2^*i^*i_* \ar@{=}@/^2em/[rrddd] \ar@{<=}[d]^-{t_{ip_2}}_-{\simeq}
\\
&& p_1^*i^* t_{G} i_! \ar@{=>}[rr]^-{\lambda^* t_{G}\Theta^{\scriptscriptstyle \MM}_i}
&& p_2^*i^*t_{G} i_*
\\
& p_1^*i^* t_{G} i_! \ar@{=}[ru] \ar@{=>}[d]_-{t_{i}}^-{\simeq} \ar@{=>}[r]_-{\eta}
& p_1^*i^*i_!i^* t_{G} i_! \ar@{<=}[d]_-{t_{i}\inv}^-{\simeq} \ar@{=>}[u]_-{\eps}
&& p_2^*i^*i_*i^* t_{G} i_* \ar@{=>}[d]^-{t_{i}}_-{\simeq} \ar@{<=}[u]_-{\eta} \ar@{=>}[r]_-{\eps}
& p_2^*i^*t_{G} i_* \ar@{=}[lu] \ar@{=>}[d]^-{t_{i}}_-{\simeq}
&
\\
t_{i/i} p_1^*i^*i_!
& p_1^* t_{H} i^* i_! \ar@{=>}[l]_-{t_{p_1}}^-{\simeq} \ar@{=>}[r]_-{\eta}
& p_1^*i^*i_! t_{H} i^*i_!
&& p_2^*i^*i_* t_{H} i^*i_* \ar@{=>}[r]_-{\eps}
& p_2^* t_{H} i^* i_* \ar@{=>}[r]^-{t_{p_2}}_-{\simeq}
& t_{i/i} p_2^*i^*i_*
\\
t_{i/i} p_1^* \ar@{=>}[u]_-{\eta}
 \ar@{=>}@/_4em/[rrrrrr]_-{t_{i/i}\delta_i^{\scriptscriptstyle \MM}}
& p_1^* t_{H} \ar@{=>}[l]_-{t_{p_1}}^-{\simeq} \ar@{=>}[u]_-{\eta} \ar@{=>}[r]^-{\eta}
 \ar@{=>}@/_2em/[rrrr]^-{\delta_i^{\scriptscriptstyle \NN} t_H}_-{\textrm{(\ref{eq:delta_i-M-N})}}
& p_1^*i^*i_! t_{H} \ar@{=>}[u]_-{\eta} \ar@{}[rru]|-{\textrm{(?)}}
&& p_2^*i^*i_* t_{H} \ar@{=>}[r]^-{\eps} \ar@{<=}[u]_-{\eps}
& p_2^* t_{H} \ar@{=>}[r]^-{t_{p_2}}_-{\simeq} \ar@{<=}[u]_-{\eps}
& t_{i/i} p_2^* \ar@{<=}[u]_-{\eps}
}
\]
All seven squares commute by naturality (the top one by `naturality' of~$t$, as in \Cref{Ter:Hom_bicats}, applied to $\lambda\colon i p_1\Rightarrow i p_2\colon (i/i)\to G$). The two triangles commute by the unit-counit relation of the adjunctions~$i_!\adj i^*$ and $i^*\adj i_*$. The bottom curvy area commutes by the compatibility of the~$\delta_i$ as indicated by the reference to~\eqref{eq:delta_i-M-N}. The two `shoulders' (pentagons) commute by the `functoriality' of~$t$ (\Cref{Ter:Hom_bicats}). Finally, the outer diagram (hexagon) commutes by the construction of~$\Theta_i^{\scriptscriptstyle \MM}$ as in \Cref{Thm:Theta}, postwhiskered by~$t_{i/i}$. This proves that the central area marked~(?) does indeed commute, as announced.
\end{proof}

\bigbreak
\section{Rectification of Mackey 2-functors}
\label{sec:rectification}%
\medskip

In this section, we want to show that ambidexterity $i_!\simeq i_*$ can only happen if the preferred $\Theta_i\colon i_!\Rightarrow i_*$ of \Cref{Thm:Theta} is an isomorphism. This then allows us to prove the Rectification \Cref{Thm:rectification}.
We begin by isolating an `induction on the order of~$G$' that reduces the proof of $\Theta_i\colon i_!\Rightarrow i_*$ being an isomorphism to the following test: Does $i^* \Theta_i$ being an isomorphism imply that $\Theta_i$ is an isomorphism?

As in the last two sections, $\JJ\subseteq \GG \subseteq \groupoid$ are as in \Cref{Hyp:GG}.

\begin{Prop}
\label{Prop:induction-on-G}%
Let $\MM\colon \GG^\op\to \ADD$ satisfy all properties of a Mackey 2-functor except perhaps ambidexterity, as in \Cref{Hyp:Theta}. Consider $\Theta_i\colon i_!\Rightarrow i_*$ as in \Cref{Thm:Theta}. Suppose $\MM$ further satisfies the following property:
\begin{enumerate}[\noindent\rm(A)]
\item
\label{it:A}%
If $i\colon H\into G$ is faithful, with $H$ and $G$ connected and non-empty, and if $i^*\Theta_i$ is an isomorphism, then $\Theta_i$ is an isomorphism.
\end{enumerate}
Then $\Theta_i$ is an isomorphism for all faithful~$i\colon H\into G$.
\end{Prop}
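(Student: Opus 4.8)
The plan is to argue by induction on the order of the target groupoid, invoking hypothesis~\eqref{it:A} only at the very last moment of each inductive step. Throughout I would use that $\Theta_i$ is insensitive to composition with equivalences: by \Cref{Thm:Theta-properties}\,\eqref{it:Theta-eq} the morphism $\Theta_e$ attached to any equivalence~$e$ is invertible, and by \Cref{Thm:Theta-properties}\,\eqref{it:Theta-ij} one has $\Theta_{e'ie}=\Theta_{e'}\Theta_i\Theta_e$, so $\Theta_i$ is invertible iff its skeletal avatar is. Replacing each connected groupoid by its automorphism group, I may then measure the ``order'' of a connected groupoid by the order of that group, so that a \emph{proper} faithful functor $H\into G$ between connected groupoids satisfies $|H|<|G|$ by Lagrange. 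I would also first reduce the \emph{source} to a connected groupoid: if $H=\coprod_c H_c$ then \Cref{Thm:Theta-properties}\,\eqref{it:Theta-add} gives $\Theta_i=\bigoplus_c\Theta_{i|_{H_c}}$ diagonally, so $\Theta_i$ is invertible as soon as each summand is (the cases involving $\MM(\varnothing)\cong0$ of \Cref{Rem:M()=0} being trivial).

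The heart of the argument is the claim, proved by strong induction on~$n$: for every faithful $i\colon H\into G$ with $G$ \emph{connected} of order $\le n$ (and $H$ arbitrary), $\Theta_i$ is invertible. Granting this for all orders $<n$, fix such an $i$ with $|G|=n$; by the additivity reduction I may take $H$ connected. If $i$ is an equivalence, $\Theta_i$ is invertible by \Cref{Thm:Theta-properties}\,\eqref{it:Theta-eq}. Otherwise $i$ is proper, so $|H|<n$. I would then apply \Cref{Thm:Theta-properties}\,\eqref{it:Theta-magic} to the self-iso-comma of~$i$ (\Cref{Not:self-ic}), taking $i$ itself as the functor along which one base-changes and $p_2\colon(i/i)\into H$ as the second faithful leg; the resulting instance of~\eqref{eq:magic-partial} is the commutative square
\[
\xymatrix@C=4em@L=1ex{
i^* i_! \ar@{=>}[r]^-{i^*\Theta_i}
& i^* i_* \ar@{=>}[d]^-{(\lambda\inv)_*}_-{\cong}
\\
{(p_2)}_! p_1^* \ar@{=>}[r]^-{\Theta_{p_2}\, p_1^*} \ar@{=>}[u]^-{\lambda_!}_-{\cong}
& {(p_2)}_* p_1^*
}
\]
whose vertical maps are isomorphisms by base-change~\Mack{3}. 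Hence $i^*\Theta_i$ is invertible iff $\Theta_{p_2}\,p_1^*$ is. Now $p_2\colon(i/i)\into H$ is faithful (\Cref{Rem:iso-comma-intro}) with connected target $H$ of order $<n$, so the induction hypothesis makes $\Theta_{p_2}$ invertible, whence the whiskered $\Theta_{p_2}\,p_1^*$ is invertible and therefore so is $i^*\Theta_i$. At this point $H$ and $G$ are connected and non-empty and $i^*\Theta_i$ is an isomorphism, so hypothesis~\eqref{it:A} applies and yields that $\Theta_i$ is invertible. This closes the induction and proves the claim for all~$n$.

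It then remains to drop connectedness of the target. Given a faithful $i\colon H\into G$ with $H$ connected (as arranged), its image lies in a single component $G_c$, so $i=\iota_c\,i'$ with $i'\colon H\into G_c$ and $\iota_c\colon G_c\into G$ the component inclusion. By \Cref{Thm:Theta-properties}\,\eqref{it:Theta-ij} one has $\Theta_i=\Theta_{\iota_c}\Theta_{i'}$, and $\Theta_{i'}$ is invertible by the claim just proved, since $G_c$ is connected. Finally $\Theta_{\iota_c}$ is invertible: for a component inclusion the off-diagonal complement of $(\iota_c/\iota_c)$ is empty, under the additivity equivalence $\MM(G)\simeq\MM(G_c)\oplus\MM(G\smallsetminus G_c)$ both $(\iota_c)_!$ and $(\iota_c)_*$ are extension-by-zero, and the characterization of \Cref{Rem:Theta} together with the normalization of \Cref{Conv:Mack-5-6} identifies $\Theta_{\iota_c}$ with the canonical (invertible) comparison between these two functors. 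Composing, $\Theta_i$ is invertible, which is the assertion.

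The main obstacle I anticipate is bookkeeping the induction so that the self-iso-comma genuinely \emph{lowers} the relevant order: this is what forces the passage to skeleta and the use of equivalence-invariance, turning ``proper faithful'' into a strict drop of group order, and it is also what makes the separate treatment of component inclusions necessary — precisely the situation where $i^*$ fails to be conservative and hypothesis~\eqref{it:A} cannot be invoked directly. The verification that $\Theta_{\iota_c}$ coincides with the canonical identification of the two extension-by-zero functors is routine but is the one place where I would need to unwind the defining equations \eqref{eq:proto(f)}--\eqref{eq:proto(g)} of $\Theta$ carefully.
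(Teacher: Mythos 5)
Your proof is correct and is essentially the paper's own argument: the same induction on the order of~$G$, the same reduction to connected groupoids via additivity, the same use of the self-iso-comma square (with the BC-isomorphisms $\lambda_!$ and $(\lambda\inv)_*$ and the inductive hypothesis applied to~$\Theta_{p_2}$) to show that $i^*\Theta_i$ is invertible, and the same final appeal to hypothesis~(A). The only divergence is your explicit treatment of non-connected targets via component inclusions~$\iota_c$ (where (A) is indeed unavailable, since there $i^*$ fails to be conservative); the paper subsumes this point under its additivity reduction, so your extra care is sound but does not amount to a different method.
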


\begin{proof}
As said, we proceed by induction on the `order' of~$G$, that is, the maximum of the orders of the finite groups~$\Aut_G(x)$ over all~$x\in G$. Note that $i_!=0=i_*$ when $H$ is empty, in which case the result is trivial. By \Cref{Thm:Theta-properties}\,\eqref{it:Theta-eq} we know that $\Theta_i$ is an isomorphism whenever~$i$ is an equivalence. In particular, $\Theta_i$ is an isomorphism when $G$ is trivial, \ie equivalent to the trivial group. So we can assume the result known for every $i'\colon H'\into G'$ with $G'$ of order less than that of~$G$. By additivity, \Cref{Thm:Theta-properties}\,\eqref{it:Theta-add}, we can also assume that $H$ and~$G$ are connected, and we can still assume $H$ non-empty. Then if $i\colon H\into G$ is full it is an equivalence and we are done. So we are reduced to the situation where $i\colon H\into G$ is not full and $H$ and $G$ are connected, with $H$ non-empty. (Thinking `groups', this is $H$ being a proper subgroup of~$G$.) In particular, $H$ has order strictly less than $G$ and we can apply the induction hypothesis to the faithful $p_2\colon (i/i)\into H$ which appears in the self-iso-comma of~\eqref{eq:self-ic}:
\[
\xymatrix@C=10pt@R=10pt{
& (i/i) \ar[dl]_-{p_1} \ar@{ >->}[dr]^-{p_2}
\\
H \ar@{ >->}[dr]_-{i} \ar@{}[rr]|-{\isocell{\lambda}}
&& H \ar[dl]^-{i}
\\
&G
}
\]
So $\Theta_{p_2}$ is an isomorphism by induction hypothesis. We now use \Cref{Thm:Theta-properties}\,\eqref{it:Theta-magic} applied to the above Mackey square to obtain the commutative diagram
\[
\vcenter{
\xymatrix@C=4em@L=1ex{
i^* i_! \ar@{=>}[r]^-{i^*{\Displ\Theta_i}}
& i^* i_* \ar@{=>}[d]^-{(\lambda\inv)_*}_-{\cong}
\\
{p_2}_! {p_1}^* \ar@{=>}[r]^-{{\Displ\Theta_{p_2}}\ {p_1}^*}_-{\cong} \ar@{=>}[u]^-{\lambda_!}_-{\cong}
& {p_2}_*{p_1}^*
}}
\]
in which the vertical morphisms are isomorphism by the BC-property~\Mack{3}. This proves that $i^*\Theta_i$ is an isomorphism and we are reduced to property~\eqref{it:A}.
\end{proof}

Let us now show that if ambidexterity holds then the isomorphism~$i_!\simeq i_*$ can be chosen to be the canonical $\Theta_i\colon i_!\Rightarrow i_*$ of Theorems~\ref{Thm:Theta} and~\ref{Thm:Theta-properties}.

\begin{Thm}
\label{Thm:Mackey-and-Theta}%
Let $\MM\colon \GG^\op\to \ADD$ be a strict 2-functor satisfying all properties of a Mackey 2-functor, except perhaps ambidexterity, as in \Cref{Hyp:Theta}. Then the following are equivalent:
\begin{enumerate}[\rm(i)]
\item
The 2-functor~$\MM$ is a Mackey 2-functor, \ie \Mack{4} holds (\Cref{Def:Mackey-2-functor}).
\smallbreak
\item
For every faithful $i\colon H\into G$ the natural transformation $\Theta_i$ of \Cref{Thm:Theta} is an isomorphism.
\end{enumerate}
\end{Thm}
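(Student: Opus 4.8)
The equivalence has a trivial direction and a substantial one. For (ii)$\Rightarrow$(i): if each $\Theta_i\colon i_!\Rightarrow i_*$ is invertible, then $\Theta_i$ is itself an isomorphism $i_!\simeq i_*$ for every faithful $i$, so \Mack{4} holds and $\MM$ is a Mackey 2-functor in the sense of \Cref{Def:Mackey-2-functor}. All the work lies in (i)$\Rightarrow$(ii), so I would assume ambidexterity throughout the rest. The plan is to reduce, via \Cref{Prop:induction-on-G}, to verifying property~(A): for faithful $i\colon H\into G$ with $H,G$ connected and nonempty, if $i^*\Theta_i$ is an isomorphism then so is $\Theta_i$. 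Indeed, the induction on the order of $G$ in \Cref{Prop:induction-on-G}, together with \Cref{Thm:Theta-properties}\,\eqref{it:Theta-magic} applied to the self-iso-comma, is precisely what converts the hypothesis of~(A) into an available fact; so it remains to prove~(A) under \Mack{4}.

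The key preliminary move is to replace $\Theta_i$ by an \emph{endomorphism}. Using the ambidexterity isomorphism $i_!\simeq i_*$, I would equip $i_!$ with the structure of a right adjoint to $i^*$, i.e.\ take the right adjoint to be $\overline{i_*}:=i_!$. Running the construction of \Cref{Thm:Theta} on the adjunctions $i_!\adj i^*\adj\overline{i_*}=i_!$ produces a comparison $\overline{\Theta}\colon i_!\Rightarrow i_!$, now a natural endomorphism of the two-sided adjoint $i_!$. By \Cref{Prop:Theta-proto-uniqueness} there is a commutative square with invertible vertical legs relating $\Theta_i$ and $\overline{\Theta}$, whose left leg is the identity of $i_!$; hence $\Theta_i$ is invertible iff $\overline{\Theta}$ is, and likewise $i^*\Theta_i$ is invertible iff $i^*\overline{\Theta}$ is. We are thus reduced to the following: given $\overline{\Theta}\colon i_!\Rightarrow i_!$ with $\beta:=i^*\overline{\Theta}\colon i^*i_!\Rightarrow i^*i_!$ invertible, show that $\overline{\Theta}$ is invertible.

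For this core step, write $\leta,\leps$ and $\reta,\reps$ for the units and counits of $i_!\adj i^*$ and $i^*\adj i_*=i_!$, and exhibit $\overline{\Theta}$ as simultaneously a split epimorphism and a split monomorphism. For a right inverse I would set $\Psi:=\leps i_!\circ i_!\beta^{-1}\circ i_!\leta\colon i_!\Rightarrow i_!$; naturality of $\leps$ against $\overline{\Theta}$ gives $\overline{\Theta}\circ\leps i_!=\leps i_!\circ i_!\beta$, and combined with $i^*\overline{\Theta}=\beta$ and the triangle identity $\leps i_!\circ i_!\leta=\id_{i_!}$ this yields $\overline{\Theta}\circ\Psi=\id_{i_!}$. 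Symmetrically, setting $\Psi':=i_!\reps\circ i_!\beta^{-1}\circ\reta i_!$, naturality of $\reta$ and the triangle identity $i_!\reps\circ\reta i_!=\id_{i_!}$ give $\Psi'\circ\overline{\Theta}=\id_{i_!}$. A morphism admitting both a left and a right inverse is invertible (and then $\Psi=\Psi'$), so $\overline{\Theta}$, and hence $\Theta_i$, is an isomorphism.

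The main obstacle is exactly this core lemma, and the crucial point is that ambidexterity is indispensable here: the restriction functor $i^*$ is \emph{not} conservative in general (witness $\Res^G_1$ on $\SH(G)$), so "$i^*\Theta_i$ invertible" cannot on its own force "$\Theta_i$ invertible". What rescues the argument is that \Mack{4} promotes $i_!$ to a genuine two-sided adjoint, making \emph{both} triangle identities available; these are precisely the two ingredients producing the right inverse $\Psi$ and the left inverse $\Psi'$. The only delicate bookkeeping is to make the passage to an endomorphism clean and independent of the chosen adjoints and of the chosen ambidexterity isomorphism, which is what \Cref{Prop:Theta-proto-uniqueness} guarantees.
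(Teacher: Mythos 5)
Your proof is correct, and its skeleton coincides with the paper's: the trivial implication (ii)$\Rightarrow$(i), then the reduction of (i)$\Rightarrow$(ii) via \Cref{Prop:induction-on-G} to property~\eqref{it:A}, then the key observation that \Mack{4} makes $i_!$ a \emph{right} adjoint of~$i^*$. Where you genuinely diverge is in how property~\eqref{it:A} is closed off. The paper does it in one line by citing \Cref{Cor:detect-iso}: $\Theta_i$ is a 2-cell between two right adjoints of $\ell=i^*$ whose whiskering $\ell\,\Theta_i$ is invertible, hence it is invertible; the proof of that corollary (via \Cref{Lem:detect-iso}) uses a \emph{single} adjunction $\ell\adj r$, building a left inverse as a mate and then recovering the right inverse through an injectivity argument (whiskering by~$\ell$ is a split monomorphism on 2-cell sets). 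You instead first normalize $\Theta_i$ to an endomorphism $\overline{\Theta}$ of~$i_!$ --- your appeal to \Cref{Prop:Theta-proto-uniqueness} works, though composing $\Theta_i$ with \emph{any} ambidexterity isomorphism would do, which is in effect what \Cref{Cor:detect-iso} does with its auxiliary isomorphism~$\chi$ --- and then exhibit invertibility directly: your $\Psi'$ is exactly the mate of $\beta\inv$ under $i^*\adj i_!$ (the same 2-cell the paper builds in \Cref{Lem:detect-iso}\,(c)), while your $\Psi$ is the mate of $\beta\inv$ under the \emph{other} adjunction $i_!\adj i^*$, and each composite collapses by the interchange law plus one triangle identity. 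Both computations check out. What the paper's route buys is a reusable detection lemma requiring only one adjunction --- which is precisely why that lemma must be avoided in the proof of \Cref{Thm:ambidex-der}, where $i_!$ is not yet known to be a right adjoint; what your route buys is a self-contained, symmetric argument that bypasses the appendix lemma and its injectivity step, at the (here harmless) cost of using both adjunctions. One small correction to your closing remark: that \emph{both} triangle identities are indispensable is a feature of your argument, not of the statement --- once ambidexterity has promoted $i_!$ to a right adjoint of~$i^*$, the paper's \Cref{Lem:detect-iso} shows that the single adjunction $i^*\adj i_!$ already suffices to detect invertibility of~$\Theta_i$ from that of~$i^*\Theta_i$.
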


\begin{proof}
Of course, if $\Theta_i$ is an isomorphism, then $i_!\simeq i_*$ and we have ambidexterity~\Mack{4} and \Cref{Def:Mackey-2-functor} is complete. So the interesting direction is the converse. Suppose that $i_!\simeq i_*$ and let us prove that the morphism $\Theta_i\colon i_!\Rightarrow i_*$ of \Cref{Thm:Theta} is indeed an isomorphism, by only using the properties of~$\Theta_i$ listed in \Cref{Thm:Theta-properties}. We use \Cref{Prop:induction-on-G} (which itself relies on induction on the `order' of~$G$). We have to verify that $\MM$ satisfies property~\eqref{it:A}. So suppose that $i\colon H\into G$ is such that $i^*\Theta_i$ is an isomorphism. Then $\Theta_i$ is also an isomorphism as well thanks to the general \Cref{Cor:detect-iso}. Indeed, $i_!$ is also a \emph{right} (sic) adjoint of~$i^*$ by ambidexterity.
\end{proof}

\begin{Thm}[Rectification Theorem]
\label{Thm:rectification}%
Let $\MM\colon \GG^\op\to \ADD$ be a Mackey 2-functor (\Cref{Def:Mackey-2-functor}). Then we can choose for each faithful $i\colon H\into G$ in~$\GG$ a single two-sided adjoint
\[
i_!=i_*\colon \MM(H)\to \MM(G)
\]
of restriction~$i^*\colon \MM(G)\to \MM(H)$ and units and counits
\[
\leta\colon \Id \Rightarrow i^*i_!
\qquad
\leps\colon i_! i^* \Rightarrow \Id
\qquadtext{and}
\reta\colon \Id \Rightarrow i_* i^*
\qquad
\reps\colon i^*i_* \Rightarrow \Id
\]
for $i_!\adj i^*$ and $i^* \adj i_*$ respectively, such that the additional properties (Mack\,\ref{it:Mack-5})--(Mack\,\ref{it:Mack-10}) below hold true. Moreover, the choice of such ambidextrous adjunctions is unique in the following strong sense: If $(\overline{i_!}, \overline{\leta}, \overline{\leps} ,\overline{\reta},\overline{\reps})_{i\in \JJ}$ is another such choice, there exist unique isomorphisms $i_! \cong \overline{i_!}$ matching both sets of units and counits.
\begin{enumerate}[\rm({Mack}\,1)]
\setcounter{enumi}{4}
\item
\label{it:Mack-5}%
\emph{Additivity of adjoints:} Whenever $i=i_1\sqcup i_2\colon H_1\sqcup H_2\into G$, under the identification $\MM(H_1\sqcup H_2)\cong \MM(H_1)\oplus \MM(H_2)$, we have
\[
(i_1\sqcup i_1)_!=\big((i_1)_! \ (i_2)_!\big)
\qquadtext{and}
(i_1\sqcup i_1)_*=\big((i_1)_* \ (i_2)_*\big)
\]
with the obvious `diagonal' units and counits. (See \Cref{Rem:add-adjoint}.)
\smallbreak
\item
\label{it:Mack-6}%
\emph{Two-sided adjoint equivalences:} Whenever $i^*$ is an equivalence, the units and counits are isomorphisms and $(\leta)\inv=\reps$ and $(\leps)\inv=\reta$. Furthermore when $i=\Id$ we have $i_!=i_*=\Id$ with identity units and counits.
\smallbreak
\item
\label{it:Mack-7}%
\index{strict Mackey formula}\index{Mackey formula!strict --}%
\emph{Strict Mackey Formula:} For every Mackey square (\Cref{Def:Mackey-square}) with~$i$ and~$j$ faithful
\[
\vcenter{
\xymatrix@C=14pt@R=14pt{
& L \ar[ld]_-{v} \ar@{ >->}[dr]^-{j}
 \ar@{}[dd]|-{\isocell{\gamma}}
\\
H \ar@{ >->}[dr]_-{i}
&& K \ar[dl]^-{u} \\
&G &
}}
\]
the two isomorphisms $\gamma_!\colon j_! v^* \stackrel{\sim}{\Rightarrow} u^* i_!$ and $(\gamma\inv)_*\colon u^* i_* \stackrel{\sim}{\Rightarrow} j_* v^*$ of~\Mack{3} are in fact inverse to one another
\begin{equation}
\label{eq:strict-Mackey}%
\gamma_!\circ (\gamma\inv)_* =\id
\qquadtext{and}
(\gamma\inv)_* \circ \gamma_!=\id
\end{equation}
via the identification of their source and target $u^*i_!=u^*i_*$ and $j_!v^*=j_*v^*$.
(If the above square is only assumed to be a \emph{partial} Mackey square (\Cref{Def:partial-Mackey}) then $(\gamma\inv)_* \circ \gamma_!=\id$ still holds true.)
\smallbreak
\item
\label{it:Mack-8}%
\emph{Agreement of pseudo-functors:} The two pseudo-functors $i\mapsto i_!$ and $i\mapsto i_*$ (\Cref{Rem:pseudo-func-of-adjoints}) coincide: For every 2-cell $\alpha\colon i\Rightarrow i'$ between faithful $i,i'\colon H\into G$ we have $\alpha_!=\alpha_*$ as morphisms between the functors $i_!=i_*$ and $i'_!=i'_*$. For every composable faithful morphisms $j\colon K\into H$ and $i\colon H\into G$ the isomorphisms $(ij)_!\cong i_! j_!$ and $(ij)_*\cong i_* j_*$ coincide.
\smallbreak
\item
\label{it:Mack-9}%
\emph{Special Frobenius:} For every faithful $i\colon H\into G$, the composite of the left unit and right counit $\Id \stackrel{\leta\;\;\,}{\Rightarrow} i^*i_!=i^*i_* \stackrel{\reps\;\;}{\Rightarrow} \Id$ is the identity of~$\Id_H$.
\smallbreak
\item
\label{it:Mack-10}%
\emph{Off-diagonal vanishing:} For every faithful $i\colon H\into G$, the natural transformation
$\incl_C^* \left(
p_1^* \stackrel{p_1^*\;\leta\;}{\Longrightarrow} p_1^*i^*i_! \stackrel{\lambda^* \;\id\;}{\Longrightarrow} p_2^* i^*i_* \stackrel{p_2^*\;\reps\;}{\Longrightarrow} p_2^*
\right)$
is zero, where we denote by $\incl_C\colon C\hookrightarrow (i/i)$ the inclusion functor of the complement $C:=(i/i)\smallsetminus \Delta_i(H)$ of the diagonal component (see \Cref{Prop:Delta}) in the self-isocomma~$(i/i)$.
\end{enumerate}
\end{Thm}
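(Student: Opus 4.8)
The plan is to exploit the canonical comparison morphism $\Theta_i\colon i_!\Rightarrow i_*$ of \Cref{Thm:Theta} together with its properties collected in \Cref{Thm:Theta-properties}. Since $\MM$ is assumed to be a Mackey 2-functor, \Cref{Thm:Mackey-and-Theta} guarantees that $\Theta_i$ is an \emph{isomorphism} for every faithful $i\colon H\into G$. The entire content of rectification is then to \emph{absorb} this isomorphism into a clever choice of adjoint data, arranging matters so that the comparison morphism becomes the identity. Once $\Theta_i=\id$, the seven assertions of \Cref{Thm:Theta-properties} specialize \emph{verbatim} to the properties \Mack{7}--\Mack{10}, and the construction is essentially complete.

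Concretely, I would first fix, for each faithful $i$ with connected source and target, a left adjunction $(i_!,\leta,\leps)$ for $i_!\adj i^*$ and, provisionally, a right adjoint $r$ of $i^*$ with unit $\eta'\colon \Id\Rightarrow r\,i^*$ and counit $\eps'\colon i^*r\Rightarrow \Id$; these are then extended additively to disconnected morphisms and set to the identity in the trivial cases, exactly as permitted by \Cref{Rem:Mack-5-6}, which already secures \Mack{5} and \Mack{6} on each side separately. Next, using that $\Theta_i\colon i_!\isoto r$ is invertible, I transport the right-adjunction structure along $\Theta_i$ onto the functor $i_!$ itself: I set $i_*:=i_!$ as a functor and define the rectified right unit and counit by $\reta:=(\Theta_i\inv i^*)\circ \eta'$ and $\reps:=\eps'\circ(i^*\Theta_i)$. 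The triangle identities for $(r,\eta',\eps')$ transfer to $(i_!,\reta,\reps)$ precisely because $\Theta_i$ is an isomorphism, so we obtain a genuine single two-sided adjoint $i_!=i_*$ carrying both pairs of units and counits.

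The key normalisation is that, with respect to this transported right structure, the new comparison morphism is the identity. Indeed, the provisional $r$ and the new $i_*=i_!$ are two right adjoints of $i^*$ identified by $\psi=\Theta_i\inv$, while the left adjoints literally agree ($\varphi=\id$); feeding this into the change-of-adjoints square of \Cref{Prop:Theta-proto-uniqueness} forces the comparison morphism for the rectified data to equal $\psi\circ\Theta_i\circ\varphi\inv=\id$. (Alternatively one reads $\overline{\Theta_i}=\id$ off directly from the defining equations \eqref{eq:proto(f)}--\eqref{eq:proto(g)} of \Cref{Rem:Theta}.) With $\Theta_i=\id$ in hand, I substitute into \Cref{Thm:Theta-properties}: part \eqref{it:Theta-special} becomes the Special Frobenius identity \Mack{9}; part \eqref{it:Theta-vanishing} becomes the off-diagonal vanishing \Mack{10}; part \eqref{it:Theta-magic} turns the square \eqref{eq:magic-partial} into $(\gamma\inv)_*\circ \gamma_!=\id$, which together with the invertibility of both mates for a genuine Mackey square (by \Mack{3}) yields the Strict Mackey Formula \Mack{7}, the partial case giving only the one-sided identity exactly as stated; finally parts \eqref{it:Theta-2-functorial} and \eqref{it:Theta-ij} give $\alpha_!=\alpha_*$ and the agreement of the two composition isomorphisms, that is \Mack{8}. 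Meanwhile \Mack{5} and \Mack{6} survive the transport, since $\Theta_i$ is additive by \eqref{it:Theta-add} and reduces to an identity on equivalences and on $\Id$ by \eqref{it:Theta-eq}.

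For the uniqueness statement in the strong sense, given a second rectified choice $(\overline{i_!},\overline{\leta},\overline{\leps},\overline{\reta},\overline{\reps})$, both $i_!$ and $\overline{i_!}$ are left adjoints of $i^*$, so there is a unique isomorphism $i_!\cong \overline{i_!}$ matching the left units and counits; since both choices have been arranged to make their comparison morphisms the identity, \Cref{Prop:Theta-proto-uniqueness} shows the very same isomorphism also matches the right units and counits. The step I expect to require the most care is not any individual verification but the \emph{simultaneity}: checking that the transport producing $\Theta_i=\id$ does not disturb the additivity and equivalence normalisations of \Mack{5}--\Mack{6} already imposed, and conversely that those normalisations are compatible with the connected-case construction of $\Theta_i$. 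This is bookkeeping rather than a conceptual hurdle, precisely because the genuine difficulty has been front-loaded into \Cref{Thm:Theta-properties} and \Cref{Prop:Theta-proto-uniqueness}.
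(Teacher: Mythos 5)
Your proposal is correct and follows essentially the same route as the paper's own proof: both fix adjoints as in \Cref{Conv:Mack-5-6}, invoke \Cref{Thm:Mackey-and-Theta} to see that $\Theta_i$ is invertible, transport the right-adjunction data along $\Theta_i$ onto $i_!$ so that the rectified comparison morphism becomes the identity, and then read off (Mack\,5)--(Mack\,10) from \Cref{Thm:Theta-properties} and the uniqueness claim from \Cref{Prop:Theta-proto-uniqueness}. The only (immaterial) difference is that the paper verifies $\Theta^{\mathrm{new}}_i=\id$ directly from~\eqref{eq:Theta-delta}, whereas you deduce it from \Cref{Prop:Theta-proto-uniqueness} with $\varphi=\id$ and $\psi=\Theta_i^{-1}$ -- both arguments are valid.
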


\begin{proof}
Choose left and right adjoints $i_!\adj i^*\adj i_*$ for every faithful~$i\colon H\into G$ following \Cref{Conv:Mack-5-6} as usual. Then \Cref{Thm:Theta} gives us natural transformations $\Theta_i\colon i_!\Rightarrow i_*$ for all faithful~$i$, which satisfy all the nice properties listed in \Cref{Thm:Theta-properties}. By \Cref{Thm:Mackey-and-Theta}, ambidexterity forces this $\Theta_i$ to be an isomorphism and therefore we can carry the units and counits of the right adjunction $i^*\adj i_*$ over to $i_!$ via $\Theta_i$ to make $i_!$ a two-sided adjoint. With this \emph{new} set of adjoints $i_!\adj i^* \adj i_!$ one can verify directly from the construction in \Cref{Thm:Theta} that the \emph{new} $\Theta_i^{\textrm{new}}\colon i_!\Rightarrow i_!$ is nothing but the identity, see~\eqref{eq:Theta-delta} in which $\Theta_i$ is absorbed into $\reps^{\textrm{new}}$. Now the seven properties \eqref{it:Theta-add}--\eqref{it:Theta-vanishing} of \Cref{Thm:Theta-properties} give us the announced properties (Mack\,\ref{it:Mack-5})--(Mack\,\ref{it:Mack-10}). Finally, the claimed uniqueness of the data $(i_!, \leta, \leps, \reta, \reps)_{i\in \JJ}$ is the direct translation of \Cref{Prop:Theta-proto-uniqueness}.
\end{proof}

\begin{Cor}
\label{Cor:pseudo-func-_!_*}%
Let $\MM\colon \GG^\op\to\ADD$ be a Mackey 2-functor. Then there exists a pseudo-functor $(-)_!\colon \JJ^{\co}\to\ADD$ on faithful morphisms, which agrees with~$\MM$ on 0-cells, such that each $i_!$ is a two-sided adjoint of~$i^*$ and such that for every Mackey square~\eqref{eq:Mackey-square} in which~$i$ and~$j$ are faithful, we have
\[
(\gamma_!)\inv=(\gamma\inv)_*
\]
as in~\eqref{eq:magic-compact}.
\qed
\end{Cor}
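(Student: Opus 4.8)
The plan is to read off both the pseudo-functor and the displayed identity directly from the rectified data produced by \Cref{Thm:rectification}. On $0$-cells I would set $G\mapsto\MM(G)$; on a faithful $1$-cell $i\colon H\into G$, I would set $i\mapsto i_!$, the rectified two-sided adjoint of~$i^*$; and on a $2$-cell $\alpha\colon i\Rightarrow i'$ between faithful $i,i'\colon H\into G$, I would set $\alpha\mapsto\alpha_!$, the left mate of $\alpha^*=\MM(\alpha)\colon i^*\Rightarrow(i')^*$. Because $i_!$ is a left adjoint of~$i^*$ the assignment is covariant on $1$-cells, while the mate $\alpha_!\colon (i')_!\Rightarrow i_!$ reverses the direction of $2$-cells; this is exactly what the superscript~$\co$ in $(\groupoidf)^{\co}$ encodes. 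By construction each $i_!$ is a two-sided adjoint of~$i^*$, as demanded.

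To promote this assignment to a pseudo-functor I would install the standard structure isomorphisms recalled in \Cref{Rem:pseudo-func-of-adjoints}: for composable faithful $j\colon K\into H$ and $i\colon H\into G$ the compositor $(ij)_!\isoto i_!\,j_!$ is the canonical comparison between the adjoint of a composite and the composite of the adjoints, and the unitor is the equality $(\Id)_!=\Id$ guaranteed by (Mack\,\ref{it:Mack-6}). The coherence axioms are then the familiar assertion that passing to left adjoints is itself pseudo-functorial, i.e.\ that the mate operation respects vertical composition of $2$-cells and is compatible with horizontal composition through these compositors. At this point I would also invoke (Mack\,\ref{it:Mack-8}), which ensures the compositors of $(-)_!$ agree with those of $(-)_*$, so that no clash arises between the left- and right-adjoint bookkeeping.

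For the displayed identity $(\gamma_!)\inv=(\gamma\inv)_*$ of~\eqref{eq:magic-compact} I would simply quote the Strict Mackey Formula (Mack\,\ref{it:Mack-7}). Given a Mackey square~\eqref{eq:Mackey-square} (\Cref{Def:Mackey-square}) with $i$ and $j$ faithful, that axiom states $\gamma_!\circ(\gamma\inv)_*=\id$ and $(\gamma\inv)_*\circ\gamma_!=\id$ under the identifications $u^*i_!=u^*i_*$ and $j_!v^*=j_*v^*$, which is verbatim the claim.

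The content of the corollary is thus almost entirely a repackaging of (Mack\,\ref{it:Mack-6})--(Mack\,\ref{it:Mack-8}), and I do not expect a serious obstacle once \Cref{Thm:rectification} is available. The only point requiring genuine care is bookkeeping: one must verify that the compositors chosen for $(-)_!$ are literally the canonical adjunction isomorphisms appearing in (Mack\,\ref{it:Mack-8}), so that pseudo-functor coherence is inherited rather than re-proved, and that the variances on $1$- and $2$-cells are tracked correctly so that the domain is $(\groupoidf)^{\co}$ and not, say, $(\groupoidf)^{\op}$.
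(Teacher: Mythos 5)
Your proposal is correct and is essentially the paper's own argument: the corollary is stated with an immediate \qed precisely because, once the Rectification \Cref{Thm:rectification} is in hand, the pseudo-functor $(-)_!$ is the one of \Cref{Rem:pseudo-func-of-adjoints} built from the rectified two-sided adjoints, its coherence and the agreement with $(-)_*$ are (Mack\,\ref{it:Mack-6}) and (Mack\,\ref{it:Mack-8}), and the identity $(\gamma_!)\inv=(\gamma\inv)_*$ is verbatim the Strict Mackey Formula (Mack\,\ref{it:Mack-7}). Your care about the variance bookkeeping (landing on $(\groupoidf)^{\co}$ rather than $(\groupoidf)^{\op}$) is exactly the right point to check, and you resolved it correctly.
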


\end{chapter-three}
%
\chapter{Examples}
\label{ch:Examples}%
\bigbreak
\begin{chapter-four}

We assemble here a number of examples, and some non-examples, of Mackey 2-functors $ G\mapsto \MM(G) $ as in \Cref{Def:Mackey-2-functor}, defined on all finite groupoids, or sometimes only defined on a 2-subcategory $\GG$ of $\groupoid$ as in \Cref{Hyp:GG}. These examples are essentially all well-known.

\bigbreak
\section{Examples from additive derivators}
\label{sec:add-der-Mackey}%
\medskip

As announced, the most straightforward source of Mackey 2-functors comes via stable homotopy theories. We now explain this in some detail. We refer to Groth~\cite{Groth13} for further details on derivators. We consider an additive derivator, that is, a (strict) 2-functor
\[
\DD\colon \Cat^{\op}\too \ADD
\]
from the 2-category $\Cat$ of small categories (or any suitable diagram category~$\mathsf{Dia}\subseteq\Cat$ containing finite groupoids) to the 2-category $\ADD$ of additive categories and additive functors (\Cref{sec:additive-sedative}), satisfying the axioms (Der\,\ref{Der-1})-(Der\,\ref{Der-4}) of derivators; see~\cite[Def.\,1.5]{Groth13}. In telegraphic style, they are:
\begin{enumerate}[\rm(Der\,1)]
\smallbreak
\item
\label{Der-1}%
Additivity: $\DD(\coprod_{\alpha\in\aleph} I_\alpha)\cong \prod_{\alpha\in\aleph}\DD(I_\alpha)$.
\smallbreak
\item
\label{Der-2}%
Pointwise detection of isomorphisms: For each small category~$I$, the functor $\prod_{x\in I_0}x^*\colon \DD(I)\too\prod_{x\in I_0}\DD(1)$ is conservative ($1$ is the final category).
\smallbreak
\item
\label{Der-3}%
For every functor $u\colon I\to J$, the functor $u^*\colon \DD(J)\to \DD(I)$ has a left adjoint $u_!$ and a right adjoint~$u_*$.
\smallbreak
\item
\label{Der-4}%
$\DD$ satisfies base-change (or Beck-Chevalley) with respect to any comma square (\Cref{Def:comma}) of small categories. See~\cite[Prop.\,1.26]{Groth13}.
(The left and right BC are here equivalent because every $u^*$ has adjoints.)
\[
\vcenter{\xymatrix@C=14pt@R=14pt{
& (p/q) \ar[dl]_-{r} \ar[dr]^-{s}
\\
I \ar[dr]_-{p} \ar@{}[rr]|-{\oEcell{\gamma}}
&& J \ar[dl]^-{q}
\\
& K
}}
\qquadtext{$\leadsto$}
\gamma_! \colon s_! r^* \overset{\sim}{\Rightarrow} q^* p_!
\quadtext{and}
\gamma_* \colon p^* q_* \overset{\sim}{\Rightarrow} r_* s^*.\kern-3em
\]
\end{enumerate}

A large class of examples comes from stable derivators. Recall that a derivator $\DD\colon \Cat^\op\to \CAT$ is \emph{stable} if it is pointed (the base category $\DD(1)$ has a zero object) and homotopy pullback squares and homotopy pushout squares coincide, or equivalently its suspension functor is an equivalence. For instance the derivator associated to any stable homotopy theory is stable. From~\cite[Cor.\,4.14]{Groth13}, a stable derivator $\DD$ is automatically additive.

Given a derivator~$\DD$, one can restrict the input diagrams to finite groupoids. For every faithful morphism $i\colon H\into G$, restriction $i^*\colon\DD(G)\to \DD(H)$ therefore admits adjoints on both sides. We claim that doing so for an additive derivator forces those adjoints, induction~$i_!$ and coinduction~$i_*$, to coincide. This is the content of ambidexterity~\Mack{4}. We have prepared the stage for this result in \Cref{sec:Theta}.

\begin{Thm}[Ambidexterity]
\label{Thm:ambidex-der}%
Let $\DD$ be an additive derivator. Then there exists for every faithful morphism $i\colon H\into G$ of finite groupoids an isomorphism
\[
\Theta_i:i_! \overset{\sim}{\Rightarrow} i_*
\]
of functors $\DD(H)\to \DD(G)$. In other words, the restriction of~$\DD$ to finite groupoids $\MM=\DD\restr{\groupoid}:\groupoid^{\op}\too \ADD$ is a Mackey 2-functor.
\end{Thm}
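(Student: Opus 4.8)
The plan is to recognize that the restriction $\MM := \DD\restr{\groupoid}$ already satisfies all the hypotheses collected in \Cref{Hyp:Theta}, so that the canonical comparison $\Theta_i\colon i_!\Rightarrow i_*$ of \Cref{Thm:Theta} is at our disposal, and then to prove that each $\Theta_i$ is invertible by feeding the one remaining derivator axiom (Der\,\ref{Der-2}) into the inductive criterion of \Cref{Prop:induction-on-G}. The isomorphism $\Theta_i$ produced this way is then the one named in the statement.

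First I would verify \Cref{Hyp:Theta} for $\MM$. Since $\groupoid$ is a 2-full sub-2-category of $\Cat$, the restriction $\MM\colon \groupoid^\op\to\ADD$ is again a strict 2-functor; its additivity \Mack{1} is the restriction of (Der\,\ref{Der-1}), the existence of two-sided adjoints \Mack{2} along any faithful $i\colon H\into G$ is the special case of (Der\,\ref{Der-3}) for the functor $i$ of small categories, and the base-change formulas \Mack{3} follow from (Der\,\ref{Der-4}) because an iso-comma square of groupoids is precisely a comma square as computed in $\Cat$ (see \Cref{Exa:comma-in-1cats,Exa:comma-in-Cat}). After fixing adjoints as in \Cref{Conv:Mack-5-6}, \Cref{Thm:Theta} produces the natural transformations $\Theta_i\colon i_!\Rightarrow i_*$ enjoying all the properties of \Cref{Thm:Theta-properties}. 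By \Cref{Thm:Mackey-and-Theta} it then suffices to show that every $\Theta_i$ is an isomorphism, for in that case \Mack{4} holds and $\MM$ is a Mackey 2-functor.

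Next I would invoke \Cref{Prop:induction-on-G}, which reduces the claim to verifying property~(A): for faithful $i\colon H\into G$ with $H$ and $G$ connected and non-empty, if $i^*\Theta_i$ is an isomorphism then so is $\Theta_i$. This is the sole place where the genuinely \emph{derivator}-theoretic ingredient, pointwise conservativity (Der\,\ref{Der-2}), enters, and it is exactly the axiom a general Mackey 2-functor lacks. So assume $i^*\Theta_i$ is invertible. As $H$ is connected and non-empty it has an object $y\colon 1\to H$, and I set $x := i\,y\colon 1\to G$. Strictness of $\DD$ gives $x^* = y^*\,i^*$ on the nose, whence $x^*\Theta_i = y^*(i^*\Theta_i)$ is an isomorphism, since $y^*$ preserves isomorphisms. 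Because $G$ is connected, every object $x'$ of $G$ is isomorphic to $x$, and the resulting 2-cell $x\isoEcell x'$ in $\groupoid$ yields a natural isomorphism $x^*\cong x'^*$; hence $x'^*\Theta_i$ is an isomorphism for \emph{every} object $x'$ of $G$. Finally (Der\,\ref{Der-2}) asserts that the family $\{x'^*\}_{x'\in G_0}$ is jointly conservative on $\DD(G)$, so each component of $\Theta_i$, being inverted by all the $x'^*$, is invertible in $\DD(G)$; that is, $\Theta_i$ is an isomorphism. This establishes property~(A), and \Cref{Prop:induction-on-G} delivers ambidexterity for all faithful~$i$.

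The main obstacle is precisely the verification of property~(A), and within it the careful use of conservativity: one must arrange the test object $x$ to lie in the image of $i$ (using that $H$ is connected and non-empty) so the hypothesis ``$i^*\Theta_i$ is an isomorphism'' transports to ``$x^*\Theta_i$ is an isomorphism'', then propagate this to all objects of $G$ via connectedness before applying (Der\,\ref{Der-2}). Everything else, namely checking \Cref{Hyp:Theta} and assembling the induction, is formal once \Cref{Thm:Theta,Thm:Theta-properties,Prop:induction-on-G,Thm:Mackey-and-Theta} are in hand.
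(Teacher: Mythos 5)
Your proposal is correct and follows essentially the same route as the paper's proof: verify \Cref{Hyp:Theta} from (Der\,\ref{Der-1}), (Der\,\ref{Der-3}), (Der\,\ref{Der-4}), obtain $\Theta_i$ from \Cref{Thm:Theta}, reduce to property~(A) via \Cref{Prop:induction-on-G}, and conclude by pointwise conservativity (Der\,\ref{Der-2}). The only (immaterial) difference is the order of the pointwise argument: the paper takes an arbitrary object $x$ of $G$ and lifts it up to isomorphism along the essentially surjective $i$, whereas you fix one object $y\in H$ and propagate the isomorphism $x^*\Theta_i$ to all objects of $G$ by connectedness — the same ingredients in mirror order.
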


\begin{proof}
Consider $\MM=\DD\restr{\groupoid}$, the restriction of the 2-functor $\DD$ to finite groupoids, and apply Theorem~\ref{Thm:Theta}. Its hypotheses are satisfied, namely $\MM$ has all properties of a Mackey 2-functor except perhaps ambidexterity; indeed~\Mack{1}, \Mack{2} and~\Mack{3} are special cases of (Der\,\ref{Der-1}), (Der\,\ref{Der-3}) and~(Der\,\ref{Der-4}), respectively. So Theorem~\ref{Thm:Theta} gives us~$\Theta_i\colon i_!\Rightarrow i_*$ and it only remains to prove it is invertible. We already know that~$\Theta_i$ satisfies the properties of \Cref{Thm:Theta-properties}, most notably~\eqref{it:Theta-magic}. We use \Cref{Prop:induction-on-G}, or `induction on the order of~$G$', which reduces the problem to proving that~$\MM$ has property~\eqref{it:A}. So, let $i\colon H\into G$ be faithful, with $H$ and~$G$ connected and non-empty, such that $i^*\Theta_i$ is an isomorphism and let us show that $\Theta_i$ is an isomorphism in this case. (Here we cannot use \Cref{Cor:detect-iso} anymore, as we did in the proof of \Cref{Thm:Mackey-and-Theta}, since we do not know that $i_!$ is a \emph{right} adjoint of~$i^*$ -- we are actually proving exactly that.) We want to use~(Der\,\ref{Der-2}). Let $x\colon 1\to G$ be an object of~$G$. Since $G$ is connected and $H$ is non-empty, $i\colon H\into G$ is essentially surjective and thus there exists $y\colon 1\to H$ and an isomorphism $i\circ y\simeq x$. Therefore $x^*\Theta_i\simeq y^*i^*\Theta_i$ is an isomorphism since we assume $i^*\Theta_i$ is. This holds for every object~$x$ of~$G$. In short, $\Theta_i\colon i_!\Rightarrow i_*\colon \MM(H)\to \MM(G)$ is pointwise an isomorphism and therefore an isomorphism by~(Der\,\ref{Der-2}).
\end{proof}

\begin{Rem}
The educated practitioner of derivators will easily extend the above result to any additive derivator $\DD\colon \mathsf{Dia}^\op\to \ADD$ defined on any smaller class of diagrams $\mathsf{Dia}\subset\Cat$, as long as $\mathsf{Dia}$ contains all finite groupoids. This extension can be relevant when dealing with `small' derivators, like the one of compact objects~$\DD^c$ in a `big' derivator~$\DD$.
\end{Rem}

\begin{Exa}[Model categories with additive homotopy category] \label{Exa:model-cats}
Let $\cat{C}$ be a (sufficiently complete and cocomplete) model category with class of weak equivalences~$W$, then by a powerful theorem of Cisinski~\cite{Cisinski03} it gives rise by pointwise localization to a derivator $\DD\colon J\mapsto \cat{C}^J[W_J\inv]$. Here of course we denote by
\[ W_J:=\{f\in \Mor (\cat{C}^J) \mid x^*(f) \textrm{ belongs to } W\subseteq \cat{C}, \; \forall x\in \Obj(J) \} \]
the class of pointwise weak equivalences in the diagram category $\cat{C}^J$.
By~\cite[Prop.\,5.2]{Groth13}, the derivator is additive as soon as the homotopy category $\cat{C}[W\inv]$ of $\cat{C}$ happens to be additive (\eg $\cat{C}$ could be a stable model category, or an exact model category as in~\cite{Stovicek13}).
By \Cref{Thm:ambidex-der}, we obtain in this situation a Mackey 2-functor
\[
G\mapsto \MM(G)=\MM_{(\cat{C},W)}(G):=\cat{C}^G [W_G\inv] \,,
\]
which really only depends on the relative category $(\cat{C},W)$, that is, on the underlying category and the weak equivalences of the model category~$\cat{C}$.
\end{Exa}

Of the many examples arising this way, let us mention three notable ones, starting with the minimal way (with weak equivalences being isomorphisms):

\begin{Exa}[Linear representations] \label{Exa:lin_reps}
\index{modules as Mackey functor}%
Let $\kk$ be any commutative ring. Then the \emph{representable} additive derivator $J\mapsto (\Mod \kk)^J$ restricts to a Mackey 2-functor $\MM\colon G \mapsto (\Mod \kk)^G$; its value at a finite group $G$ is the category $\Mod(\kk G)$ of $\kk$-linear representations of~$G$. As any Grothendieck category, $\cat{A}=\Mod\kk$ represents an additive derivator $J\mapsto \cat{A}^J$, in which the adjoints $u_*$ and $u_!$ are Kan extensions (without need to derive them).
\end{Exa}

\begin{Exa}[Derived categories]
\label{Exa:derived-cat}%
\index{derived categories as Mackey functor}%
Let $\kk$ be any commutative ring. Then the category $\Ch(\kk)$ of chain complexes of $\kk$-modules admits model structures where the weak equivalences $W$ are the quasi-isomorphisms, so it fits Example~\ref{Exa:model-cats}. The value of the resulting Mackey 2-functor $\MM$ at a finite group $G$ is the derived category $\Der(\kk G)$ of the group algebra of $G$ with coefficients in~$\kk$.
\end{Exa}
\begin{Exa}[Spectra] \label{Exa:supernaive-spectra}
\index{spectra as Mackey functor}%
Let $\mathrm{Sp}$ be any of the nice model categories of spectra where the weak equivalences $W$ are the (stable) weak homotopy equivalences. Then Example~\ref{Exa:model-cats} specializes to yield a Mackey 2-functor~$\MM$. Its value $\MM(G)$ at a group~$G$ is the homotopy category of `$G$-shaped diagrams' of spectra. Beware: these are neither the genuine nor the naive $G$-spectra of topologists; see \Cref{Exa:SH(G)}.
\end{Exa}

\begin{Rem}
As already mentioned, there exists an abundance of examples of model categories, besides the above three staples. They all give rise to Mackey 2-functors by \Cref{Thm:ambidex-der}. We shall refrain from making the detailed list of obvious variations on this theme. The eager reader may want to consult, for instance, the families of stable model categories listed in \cite[Examples~2.3.(i)-(vii)]{SchwedeShipley03}. Among them, we highlight the category of modules over a symmetric ring spectrum, which alone is already a huge source of examples. Motivic theory in algebraic geometry (not to be confused with our Mackey 2-motives) is also a major purveyor of model categories, that can be fed into our machine. Schwede-Shipley provide a second list of examples in~\cite[Examples~2.4.(i)-(vi)]{SchwedeShipley03}, of more algebraic nature, in the sense that the underlying category of the model structure is abelian. Furthermore, numerous examples of exact but not necessarily stable models can be found in~\cite{Stovicek13}, such as those built out of categories of quasi-coherent modules on schemes or diagrams of rings.
\end{Rem}

Finally, let us mention a basic non-example.
\begin{Exa}
The category $\Set$ of sets gives rise to the representable derivator $J\mapsto \Set^J$, whose restriction $\MM\colon G\mapsto \Set^G$ to finite groupoids does not define a Mackey 2-functor for the trivial reason that $\Set^G$ is not additive. We may be tempted to drop the additivity requirement in order to save this example (which can be made pointed too) but there is a less trivial failure showing that this will not work: The left and right adjoints to a restriction functor $i^*$ are (almost) never isomorphic, even when working with pointed sets instead. This follows \eg from elementary size considerations on finite sets: Already for a subgroup~$H\le G$ and an $H$-set~$X$, the natural map of $G$-sets $G\times_H X\too \Map_H(G,X)$  is not a bijection in general. The left adjoint $i_!=G\times_H-$ is often referred to as \emph{ordinary} or \emph{additive} induction, the right one $i_*=\Map_H(G,-)$ as \emph{tensor} or \emph{multiplicative} induction.
\end{Exa}
%

\bigbreak
\section{Mackey sub-2-functors and quotients}
 \label{sec:sub-quotients}%
\medskip

%
\begin{Def}
\label{Def:Mackey-sub-2-functor}%
\index{Mackey sub-2-functor}%
Let $\MM\colon \GG^{\op}\too\ADD$ be a Mackey 2-functor.
A \emph{Mackey sub-2-functor} $\NN\subseteq\MM$ consists of a collection of (full and replete) additive subcategories $\NN(G)\subseteq \MM(G)$ for $G\in \GG$ closed under restriction and (co)induction:
\[ u^*(\NN(G))\subseteq \NN(H)
\quad \textrm{ and } \quad
i_*( \NN(H)) \subseteq \NN(G)
\]
for all functors $u\colon H\to G$ and all faithful functors $i\colon H\into G$ in~$\GG$.
\end{Def}

More generally:

\begin{Def}
\label{Def:morphism-of-Mackey-2-functors}%
\index{pre-morphism of Mackey 2-functors}%
\index{morphism of Mackey 2-functors}%
A \emph{pre-morphism} of Mackey 2-functors $F\colon \MM\to \cat N$ is simply a (strong pseudo-natural) transformation, as in \Cref{Ter:Hom_bicats}.
A \emph{morphism} $F\colon \MM\to \cat N$ is a pre-morphism which interacts nicely with the two adjunctions $i_!\dashv i^* \dashv i_*$, in the sense that
the mates $(F_i)_*\colon F i_*\Rightarrow i_* F$ and $ (F_i\inv)_!\colon i_!F\Rightarrow F i_!$ of $F_i\colon i^*F \isoEcell F i^*$ are isomorphisms for all~$i\in \JJ$. Actually it suffices that all the $(F_i)_*$ are invertible, or equivalently that all the $(F_i\inv)_!$ are invertible, because of \Cref{Prop:Theta-nat} and because $\Theta$ is invertible for Mackey 2-functors.
\end{Def}

We will say more on morphisms in \Cref{sec:bicat_2Mack}.

\begin{Exa}
\label{Exa:Ker-sub-2-functor}%
Let $F\colon \MM\to \MM'$ be a morphism of Mackey 2-functors (\Cref{Def:morphism-of-Mackey-2-functors}). Then we have a Mackey sub-2-functor $\Ker F \subseteq\MM$ defined at each~$G$ to be the full subcategory~$(\Ker F)(G)=\SET{X\in \MM(G)}{F_G(X)\cong 0\textrm{ in }\MM'(G)}$.
\end{Exa}

Amusing examples occur when $\MM$ takes values in abelian categories, like for instance the one of \Cref{Exa:lin_reps}.
\begin{Prop}
\label{Prop:abelian}%
Let $\MM\colon\GG^\op\to \ADD$ be a (rectified) Mackey 2-functor such that every $\MM(G)$ is an abelian category. Then for every faithful $i\colon H\into G$ the functors $i^*$ and $i_!=i_*$ are exact and they preserve injective and projective objects. Consequently, the assignments
\[
G\mapsto \Inj(\MM(G))
\qquadtext{and}
G\mapsto \Proj(\MM(G))
\]
define Mackey 2-functors on the sub-2-category $\GG^\faith$ of~$\GG$ with only faithful 1-cells; these are Mackey sub-2-functors of~$\MM$ restricted to~$\GG^\faith$.
\end{Prop}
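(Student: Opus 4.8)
The whole statement is powered by the double adjunction $i_!\dashv i^*\dashv i_*$ of~\Mack{2} together with ambidexterity~\Mack{4}, which identifies the two outer functors into a single covariant functor $i_!=i_*$. The plan is first to extract exactness from the mere existence of these adjoints, then to feed exactness into the standard \emph{adjoint-preserves-injectives} principle, and finally to repackage the conclusion into the language of \Cref{Def:Mackey-sub-2-functor}.

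\textbf{Step 1 (exactness).} First I would observe that $i^*$ is simultaneously a right adjoint (to $i_!$) and a left adjoint (to $i_*$); hence it preserves all limits and all colimits that exist, in particular kernels and cokernels, so it is exact. Dually the single covariant functor $i_!=i_*$ is at once a left adjoint to $i^*$ (so right exact) and a right adjoint to $i^*$ (so left exact), hence exact as well. This is precisely the place where both \Mack{2} and \Mack{4} are used, and it is also why one must restrict to faithful $1$-cells: a general $u^*$ has no adjoints and need not be exact.

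\textbf{Step 2 (preservation of injectives and projectives).} Here I would invoke the elementary fact that in an adjunction $F\dashv G$ between abelian categories, if $F$ preserves monomorphisms then $G$ preserves injective objects, and dually if $G$ preserves epimorphisms then $F$ preserves projective objects. Applying this four times, using Step~1: since $i_!$ is exact, its right adjoint $i^*$ preserves injectives; since $i^*$ is exact, its right adjoint $i_*$ preserves injectives; since $i_*$ is exact, its left adjoint $i^*$ preserves projectives; and since $i^*$ is exact, its left adjoint $i_!$ preserves projectives. As $i_!=i_*$, this covariant functor preserves both injectives and projectives.

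\textbf{Step 3 (assembling the sub-2-functors).} The subcategories $\Inj(\MM(G))$ and $\Proj(\MM(G))$ are full, replete and additive (they contain $0$ and are closed under finite biproducts, since a finite product of injectives is injective and dually). On $\GG^\faith$ every $1$-cell is a faithful $i$, so Step~2 shows these collections are stable under restriction $i^*$ and under (co)induction $i_*=i_!$, which is exactly the closure demanded by \Cref{Def:Mackey-sub-2-functor}. To see that each is itself a Mackey $2$-functor, note that all three functors $i^*,i_!,i_*$ carry the subcategories into themselves, so the adjunctions $i_!\dashv i^*\dashv i_*$ restrict verbatim to the full subcategories (their units and counits have components already lying in the subcategories, by fullness), yielding \Mack{2}; then \Mack{3} and \Mack{4} are inherited with the very same structure $2$-cells. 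The one point requiring a little care --- the main, and essentially only non-formal, obstacle --- is Additivity~\Mack{1} for the sub-2-functors. For this I would use that each inclusion $\incl_c\colon G_c\into\coprod_\alpha G_\alpha$ of a connected component is faithful, so $\incl_c^*$ is exact and the additivity equivalence $\MM(\coprod_\alpha G_\alpha)\simeq\prod_\alpha\MM(G_\alpha)$ is an equivalence of abelian categories. Since any equivalence both preserves and reflects injectivity, and since an object of a finite product is injective exactly when each component is, this equivalence restricts to $\Inj(\MM(\coprod_\alpha G_\alpha))\simeq\prod_\alpha\Inj(\MM(G_\alpha))$, and dually for $\Proj$, giving \Mack{1} and completing the verification.
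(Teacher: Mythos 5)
Your proof is correct and follows essentially the same route as the paper: exactness of $i^*$ and $i_!=i_*$ is extracted from the two-sided adjunction $i_!\dashv i^*\dashv i_*$, and preservation of injectives and projectives then follows from the standard fact that right (resp.\ left) adjoints of exact functors preserve injectives (resp.\ projectives). Your Step~3 merely spells out, carefully and correctly, the verification of \Mack{1}--\Mack{4} on $\GG^\faith$ that the paper's proof leaves implicit.
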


\begin{proof}
Since $i^*$ is a left and a right adjoint it is both right and left exact. Similarly for $i_!=i_*$. Then the last statements follow because left (resp.\ right) adjoints of exact functors preserve projectives (resp.\ injectives).
\end{proof}

Another way to produce examples is by taking additive quotients.
\begin{Prop} \label{Prop:Mackeycat_quot}
\index{additive quotient of Mackey 2-functors}%
Let $\MM\colon \GG^{\op}\too\ADD$ be any Mackey 2-functor and $\NN\subseteq\MM$ a Mackey sub-2-functor. Then the additive quotient categories $\MM (G)/\NN(G)$ inherit a canonical structure of a Mackey 2-functor $\MM/\NN\colon \GG^{\op}\to \ADD$ such that the quotient $\MM\to \MM/\NN$ is a (strict) morphism of Mackey 2-functors.
\end{Prop}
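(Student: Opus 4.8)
The plan is to equip each additive quotient $\cat Q(G):=\MM(G)/\NN(G)$ with a descended structure and then to verify the four axioms by transporting them along the quotient functors. Recall that the additive quotient comes with a canonical additive functor $Q_G\colon \MM(G)\to \cat Q(G)$ which is the identity on objects, is full, and is universal among additive functors out of $\MM(G)$ annihilating the objects of $\NN(G)$. The one technical tool I will use repeatedly is a \emph{descent lemma} for natural transformations: for additive functors $\bar F,\bar G\colon \cat Q(G)\to \cat D$, precomposition with $Q_G$ is a bijection between natural transformations $\bar F\Rightarrow \bar G$ and natural transformations $\bar F Q_G\Rightarrow \bar G Q_G$ (surjectivity uses that $Q_G$ is full and bijective on objects, injectivity that it is bijective on objects). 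This bijection respects vertical and horizontal composition and identities; in particular it reflects isomorphisms and transports adjunctions and mates.

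For the construction of $\MM/\NN$, given a $1$-cell $u\colon H\to G$ the composite $Q_H u^*\colon \MM(G)\to \cat Q(H)$ annihilates $\NN(G)$ since $u^*(\NN(G))\subseteq\NN(H)$, so by the universal property of $Q_G$ it factors uniquely as $\bar u^*Q_G$ for an additive $\bar u^*\colon \cat Q(G)\to \cat Q(H)$. For a $2$-cell $\alpha\colon u\Rightarrow u'$ the descent lemma produces a unique $\bar\alpha^*\colon \bar u^*\Rightarrow \bar{u'}^*$ with $\bar\alpha^*Q_G=Q_H\alpha^*$. The strict identities $(\bar{uv})^*=\bar v^*\bar u^*$, $\overline{\Id}{}^*=\Id$ and $(\bar\alpha\bar\beta)^*=\bar\alpha^*\bar\beta^*$ then follow from the corresponding strict identities for $\MM$ together with these uniqueness clauses, so $\MM/\NN\colon \GG^\op\to\ADD$ is a strict $2$-functor and $Q=(Q_G)_G$ is a strict $2$-natural transformation $\MM\Rightarrow\MM/\NN$ with $\bar u^*Q_G=Q_Hu^*$ on the nose.

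Now I verify the axioms. For \Mack{2} and \Mack{4}, fix $i\in\JJ$ and let $i_!=i_*$ be the two-sided adjoint from the Rectification \Cref{Thm:rectification}. As $\NN$ is replete and closed under coinduction, $i_*(\NN(H))\subseteq\NN(G)$ and also $i_!(\NN(H))\subseteq\NN(G)$ (since $i_!\cong i_*$); hence $Q_Gi_*$ descends to $\bar{i_*}\colon \cat Q(H)\to\cat Q(G)$ with $\bar{i_*}Q_H=Q_Gi_*$ and $\bar{i_!}=\bar{i_*}$. The descent lemma turns the whiskered units and counits $\leta,\leps,\reta,\reps$ into descended units and counits for $\bar{i_!}\dashv\bar i^*$ and $\bar i^*\dashv\bar{i_*}$, and the triangle identities descend because they hold upstairs and the lemma is faithful; thus $\bar i^*$ has the two-sided adjoint $\bar{i_!}=\bar{i_*}$, yielding \Mack{2} and \Mack{4} at once. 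For \Mack{1}, the equivalence $\MM(\coprod_\alpha G_\alpha)\simeq\prod_\alpha\MM(G_\alpha)$ restricts to an equivalence $\NN(\coprod_\alpha G_\alpha)\simeq\prod_\alpha\NN(G_\alpha)$: closure under restriction gives one inclusion, while closure under coinduction together with additivity of adjoints and repleteness gives the other, as $\bigoplus_\alpha(\incl_\alpha)_*Y_\alpha$ lies in $\NN(\coprod_\alpha G_\alpha)$ and restricts to $Y_\beta$ on the $\beta$-th component; since the additive quotient is compatible with finite products and equivalences, \Mack{1} follows. Finally, for \Mack{3}, the descended mate $\overline{\gamma_!}$ equals the mate $(\bar\gamma)_!$ of the descended data, because the mate is a fixed pasting of $\gamma^*$ and the (co)units, all of which descend and the descent lemma respects pasting; as the lemma reflects isomorphisms, $(\bar\gamma)_!$ and $(\bar\gamma^{-1})_*$ are invertible.

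It remains to record that $Q\colon\MM\to\MM/\NN$ is a strict morphism of Mackey $2$-functors in the sense of \Cref{Def:morphism-of-Mackey-2-functors}: it is $2$-natural with $Q_u=\id$, and its mates relating it to the adjunctions are identities by the strict compatibilities $\bar u^*Q_G=Q_Hu^*$ and $\bar{i_*}Q_H=Q_Gi_*$. The main obstacle is purely organizational: one must check that \emph{all} of the adjunction data and base-change isomorphisms descend \emph{coherently}, i.e.\ that the descent lemma is genuinely functorial for whiskering and pasting, so that descended mates agree with mates of descended data. A clean shortcut for ambidexterity becomes available once this is set up: applying \Cref{Prop:Theta-nat} to the strict transformation $Q$ gives $Q_G\,\Theta_i^{\MM}=\Theta_i^{\MM/\NN}\,Q_H$, and since $\Theta_i^{\MM}$ is invertible and the lemma reflects isomorphisms, $\Theta_i^{\MM/\NN}$ is invertible, which is precisely \Mack{4}.
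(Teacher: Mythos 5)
Your proposal is correct and follows essentially the same route as the paper's proof: descend all structure (restriction, the two-sided rectified adjoints, units, counits, and the relations among 2-cells) along the quotient functors $Q_G$, using that these are full, identity-on-objects, and compatible with the closure hypotheses on~$\NN$. The only difference is one of thoroughness, not of method: you make explicit the descent lemma and the verifications of \Mack{1} and \Mack{3} (and offer the $\Theta$-based shortcut for \Mack{4}) that the paper dispatches with ``we verify immediately.''
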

\begin{proof}
Recall that the additive quotient $\MM(G)/\NN(G)$ is, by definition, the category with the same objects as $\MM(G)$ and where morphisms are taken modulo the additive ideal of maps that factor through some object of $\NN(G)$. Thus its Hom groups are given by the following quotient of abelian groups, for all objects $X,Y$:
\[
\big(\MM(G)/\NN(G)\big) (X,Y) := \MM(G)(X,Y) \big/
\{
 f\colon X\to Y \mid \exists Z\in \NN(G) \textrm{ and }
 \vcenter { \hbox{ \xymatrix@C=3pt@R=7pt{X \ar[rr]^-f \ar[dr] && Y \\ & Z \ar[ur] &} }}
\}
\]
There is an evident quotient functor $\MM(G)\to \MM(G)/\NN(G)$ which is the identity on objects and the projection on Hom groups. As a 2-functor on $\GG^{\op}$
\[
\vcenter { \hbox{
\xymatrix{
G \\
H \ar@/^4ex/[u]^-{u} \ar@/_4ex/[u]_-{v} \ar@{}[u]|{\oEcell{\alpha}}
}
}}
\quad\mapsto \quad
\vcenter { \hbox{
\xymatrix{
\MM(G)/\NN(G)
 \ar@/_4ex/[d]_-{u^*}
 \ar@/^4ex/[d]^-{v^*}
 \ar@{}[d]|{\oEcell{\alpha^*}}
 && \MM(G)
 \ar@/_4ex/[d]_-{u^*}
 \ar@/^4ex/[d]^-{v^*}
 \ar@{}[d]|{\oEcell{\alpha^*}}
 \ar[ll] \\
\MM(H)/\NN(H) &&
 \NN(H)
 \ar[ll]
}
}}
\]
$\MM/\NN$ consists simply of the 1-cells and 2-cells induced between the quotient categories, which exist by the hypothesis that $u^*(\NN(G))\subseteq \NN(H)$ for all~$u$. Similarly, we obtain induction functors $i_*$ because $i_*( \NN(H)) \subseteq \NN(G)$. We also verify immediately that the units and counits of the adjunctions $i_*\dashv i^* \dashv i_*$, as well as all the existing relations between 2-cells, also descend to the quotients, so that indeed $\MM/\NN \colon G\mapsto \MM(G)/\NN(G)$ inherits the structure of a Mackey 2-functor.
\end{proof}

We can now consider the case of an important example which is \emph{not} globally defined in the sense that not all morphisms of groupoids can be allowed.
\begin{Exa}[Stable module categories]
\label{Exa:stable-module-cat}%
\index{stable module category}%
Fix a base field~$\kk$, and consider the Mackey 2-functor $\MM\colon G\mapsto (\Mod \kk)^G$ of $\kk$-linear representations as in Example~\ref{Exa:lin_reps}. We define the \emph{stable module category} of a groupoid $G$ to be the additive quotient
\[ \Stab(\kk G) := (\Mod \kk)^G / \NN(G)\]
where $\NN(G):=\Proj ((\Mod \kk)^G)$ is the full subcategory of projective objects.
We would like to show that $\MM$ induces on $G\mapsto \Stab(G)$ the structure of a Mackey 2-functor by applying Proposition~\ref{Prop:Mackeycat_quot}. However, in order for the subcategories~$\NN(G)$ to form a Mackey sub-2-functor we cannot allow all functors $u\colon H\to G$ between groupoids. For instance, if $u\colon G\to 1$ is the projection from a non-trivial group to the trivial one, then $u^*\colon \Mod \kk \to \Mod (\kk G)$ sends $\kk$ to the trivial $G$-representation $\kk^{\textrm{triv}}$. So if $u^*$ descends to an additive functor $u^*\colon 0\cong \Stab(\kk 1) \to \Stab(\kk G)$ we conclude that $\kk^{\textrm{triv}}$ is projective, but this is only possible in the semi-simple case, \ie if $\Stab(\kk G)\cong \{0\}$. Still, if $i\colon H\into G$ is faithful then $i^*(\NN(G))\subseteq \NN(H)$ and $i_*(\NN(H))\subseteq \NN(G)$ by \Cref{Prop:abelian}. Hence we may apply \Cref{Prop:Mackeycat_quot} and conclude that $G\mapsto \Stab(G)$ is a Mackey 2-functor defined on~$\GG=\groupoidf$, the 2-full 2-subcategory of finite groupoids and faithful functors, rather than the whole~$\groupoid$.
\end{Exa}

\begin{Rem}
In the previous example, projective and injective objects coincide in the abelian category $(\Mod \kk)^G$, hence the stable module category is triangulated (see \cite{Happel88} \cite{Heller60}). However, $G\mapsto \Stab(G)$ cannot come from restricting a (stable or otherwise) derivator, since such a derivator would have trivial base category $\Stab(\kk 1)\cong\{0\}$ and thus it would have to be the zero derivator $J\mapsto \{0\}$ by~(Der\,\ref{Der-2}). Therefore we could not have used \Cref{Thm:ambidex-der} directly.
\end{Rem}

\bigbreak
\section{Extending examples from groups to groupoids}
 \label{sec:more-examples}%
\medskip

Following up on \Cref{Rem:Mackey-def}\,\eqref{it:groups}, let us see how one can reduce a Mackey 2-functor $\MM\colon \groupoid^\op\to \ADD$ to the values $\MM(G)$ for $G$ actual groups, \ie one-object groupoids (\Cref{Rem:group(oid)}). For simplicity, we only treat $\GG=\groupoid$ but the reader can adapt to general 2-categories~$\GG$ as in \Cref{Hyp:GG}.
\begin{Not}
\label{Not:Fun}%
\index{$2fun$@$\twoFun$ \, 2-category of 2-functors} \index{$2category$@2-category!-- of 2-functors}%
\index{fun@$\twoFun$}%
\index{$2fun$@$\twoFun_\amalg$ \, additive 2-functors} \index{$2category$@2-category!-- of additive 2-functors}%
\index{Fun@$\twoFun_\amalg$}%
\index{$group$@$\group$ \, 2-category of groups} \index{group@$\group$}%
Let $\cat{B}$ be a full sub-2-category of~$\groupoid$, for instance $\cat{B}=\groupoid$ itself, or $\cat{B}=\groconn\subset \groupoid$ the sub-2-category of connected groupoids, or $\cat{B}=\group\subset \groconn$ the full sub-2-category of one-objects groupoids (\Cref{Rem:group(oid)}). (In this section `full' means `1-full and 2-full'.) We emphasize that here $\group$ does not stand for the ordinary 1-category of finite groups but for its enhancement obtained by adding 2-cells $\gamma_g\colon f_1\Rightarrow f_2$ between homomorphisms $f_1,f_2\colon H\to G$ for every $g\in G$ such that ${}^{g\!}f_1=f_2$. We write
\[
\twoFun(\cat{B}^\op,\ADD)
\]
for the 2-category of 2-functors from $\cat{B}^\op$ to~$\ADD$, with pseudo-natural transformations as 1-cells and modifications as 2-cells (\Cref{Ter:Hom_bicats}). When $\cat{B}$ is closed under coproducts in~$\gpd$, we write
\[
\twoFun_{\amalg}(\cat{B}^\op,\ADD)\;\subseteq\; \twoFun(\cat{B}^\op,\ADD)
\]
for the full 2-category of \emph{additive} 2-functors in the sense of \Mack{1}, \ie sending the coproducts of $\gpd$ to products in $\ADD$.
\end{Not}

\begin{Lem}
\label{Lem:pre-Mackey-on-groups}%
With \Cref{Not:Fun}, the restrictions
\[
\twoFun_{\amalg}(\groupoid^\op,\ADD)\too \twoFun (\groconn^\op,\ADD)\too \twoFun (\group^\op,\ADD)
\]
are biequivalences  (see \Cref{Ter:Hom_bicats}).
\end{Lem}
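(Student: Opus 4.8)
The plan is to prove each of the two restriction 2-functors is a biequivalence by establishing the three standard criteria for a biequivalence of bicategories (in the sense of \Cref{Ter:Hom_bicats}): essential surjectivity on objects, and being locally a biequivalence, i.e.\ inducing an equivalence on each Hom-category of 1-cells and 2-cells. Since composing two biequivalences yields a biequivalence, it suffices to treat the two arrows separately, and in fact the two are formally similar, so I would concentrate on explaining the mechanism once.

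First I would treat the right-hand restriction $\twoFun(\groconn^\op,\ADD)\to\twoFun(\group^\op,\ADD)$. The key input is \Cref{Rem:group(oid)}: every connected groupoid is equivalent (in $\groupoid$) to a one-object groupoid, i.e.\ to a group. Thus the inclusion $\group\hookrightarrow\groconn$ is already a biequivalence of source 2-categories, each connected groupoid $G$ being equivalent to $\Aut_G(x)$ for any chosen object $x$. A 2-functor $\MM$ that is defined and strict on $\group^\op$ extends, up to pseudo-natural isomorphism, to all of $\groconn^\op$ by transporting structure along these equivalences, and a pseudo-functor necessarily sends equivalences to equivalences; this gives essential surjectivity of the restriction. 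For the local statement, restricting along a biequivalence of the domain induces equivalences on the relevant Hom-categories of pseudo-natural transformations and modifications by the general fact that precomposition with a biequivalence is itself a biequivalence on functor bicategories. So the right-hand map is a biequivalence with essentially no computation, once \Cref{Rem:group(oid)} is in hand.

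The left-hand restriction $\twoFun_\amalg(\groupoid^\op,\ADD)\to\twoFun(\groconn^\op,\ADD)$ is where additivity enters, and this is the main obstacle. Here the point is that an arbitrary finite groupoid is equivalent to a \emph{finite coproduct} of connected groupoids (its connected components), so to extend an additive 2-functor from $\groconn$ to all of $\groupoid$ one must use the additivity constraint \Mack{1} to define $\MM(\coprod_c G_c):=\prod_c\MM(G_c)$, and correspondingly define restriction and the structure 2-cells componentwise. The plan is: given $\MM$ on $\groconn^\op$, define an extension $\widetilde\MM$ on $\groupoid^\op$ by choosing a decomposition into connected components and setting the value to be the product of the values on the components; then check this is well-defined up to canonical isomorphism (independent of the chosen decomposition, using that any functor between groupoids is determined on components and that products are functorial), that it is additive in the sense of \Mack{1} by construction, and that restricting it back to $\groconn$ recovers $\MM$ up to pseudo-natural isomorphism. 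Essential surjectivity then follows. For the local equivalence one argues that a pseudo-natural transformation (resp.\ modification) between additive 2-functors on $\groupoid^\op$ is, again by \Mack{1}, determined componentwise by its restriction to connected groupoids, yielding full faithfulness and essential surjectivity on each Hom-category.

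The hard part will be bookkeeping the coherence data of the componentwise extension: verifying that the pseudo-functoriality isomorphisms $(uv)^*\cong v^*u^*$, and the structure 2-cells of pseudo-natural transformations, assemble coherently across components and are genuinely independent of the choices of component decompositions and of representing groups. This is entirely routine in spirit—every step is forced by the universal property of finite products and the additivity equivalence of \Mack{1}—but it is where all the actual verification lives. I would organize it by first fixing, once and for all, a choice of connected-component decomposition for each groupoid and a chosen group for each connected component, so that all constructions become canonical relative to these choices, and then remark that different choices are related by canonical equivalences, giving the well-definedness up to the required coherent isomorphism. With those choices fixed, the coherence conditions reduce to the coherence already assumed for $\MM$ on $\groconn^\op$ together with the symmetric-monoidal coherence of finite products in $\ADD$, which I would invoke rather than spell out.
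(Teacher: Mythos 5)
Your proposal is correct and takes essentially the same route as the paper, whose entire proof is the single sentence ``Additivity gives the first biequivalence and the biequivalence $\group\subset \groconn$ gives the second'': additivity lets one reconstruct an additive 2-functor on $\groupoid^\op$ componentwise from its restriction to connected groupoids, and the inclusion $\group\hookrightarrow\groconn$ is a biequivalence of source 2-categories since every connected groupoid is equivalent to a one-object groupoid. Your write-up spells out the transport-of-structure and coherence bookkeeping that the paper leaves entirely implicit, so it is if anything more detailed than the original.
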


\begin{proof}
Additivity gives the first biequivalence and the biequivalence $\group\subset \groconn$ gives the second.
\end{proof}

\begin{Rem} \label{Rem:not_groups}
Consider a 2-functor $\MM\colon \group^\op\to \ADD$. Explicitly, we have:
\begin{enumerate}[\rm(1)]
\item
\label{it:M-gps-1}%
For every finite group~$G$, an additive category~$\MM(G)\in\ADD$.
\smallbreak
\item
\label{it:M-gps-2}%
For every homomorphism~$f\colon H\to G$, a functor~$f^*\colon\MM(G)\to \MM(H)$, with the obvious (strict) functoriality $\Id_G^*=\Id_{\MM(G)}$ and $(f_1f_2)^*=f_2^*f_1^*$.
\smallbreak
\item
\label{it:M-gps-3}%
For every two homomorphisms $f_1,f_2\colon H\to G$ and every $g\in G$ such that ${}^g f_1=f_2$, an (invertible) natural transformation $\gamma_g^*\colon f_1^*\isoEcell f_2^*\colon\MM(G)\to \MM(H)$, with the obvious functoriality $\gamma_{g_1g_2}^*=\gamma_{g_1}^*\gamma_{g_2}^*$.
\end{enumerate}

As particular cases of~\eqref{it:M-gps-2} we have the functors associated to inclusions $H\le G$:
\begin{equation}
\label{eq:M-gps-4}%
\Res^G_H := (\incl_H^G)^*\colon\MM(G)\to \MM(H)\,.
\end{equation}
As another case, for every subgroup~$L\le G$ and every element $g\in G$ the homomorphism $c_g\colon L\isoto {}^{g\!}L$, $x\mapsto g x g\inv$ induces a functor
\[ c_g^*\colon \MM({}^{g\!}L)\isoto \MM(L)\,.
\]
(This should not be confused with the 2-cell~$\gamma_g^*$ of~\eqref{it:M-gps-3}.) Note that $c_{g_1g_2}=c_{g_1}c_{g_2}$, hence $c_{g_1g_2}^*=c_{g_2}^*c_{g_1}^*$. Composing with the above, we get the `twisted restriction'
\begin{equation}
\label{eq:M-gps-5}%
{}^g\Res^H_L := c_g^*\circ\Res^H_{{}^{g\!}L}\colon\MM(H)\to \MM({}^{g\!}L)\isoto \MM(L)
\end{equation}
for all subgroups~$H,L\le G$ and element $g\in G$ such that ${}^{g\!}L\le H$.

As an instance of~\eqref{it:M-gps-3}, for $f_1=\Id_G$ and $f_2=c_g\colon G\isoto G$, we have an isomorphism of functors
\begin{equation}
\label{it:M-gps-6}%
\gamma_g^*\colon \Id_{\MM(G)}\isoEcell c_g^*\colon\MM(G)\to \MM(G)\,.
\end{equation}
\end{Rem}

We can now explicitly translate along the biequivalence of \Cref{Lem:pre-Mackey-on-groups} what it means  for some additive $\MM\colon \groupoid^{\op}\to \ADD$ to be a Mackey 2-functor,  in terms of its restriction to groups.

\begin{Prop}
\label{Prop:test-on-gps}%
Let $\MM\colon \groupoid^\op\to \ADD$ be a 2-functor satisfying \Mack{1} and consider its restriction $\MM\colon \group^\op\to \ADD$ to the 2-category of groups with conjugations as 2-cells. Retaining \Cref{Not:Fun} and \Cref{Rem:not_groups}, we have:
\begin{enumerate}[\rm(a)]
\item
\label{it:test-on-gps-2}%
The 2-functor~$\MM$ satisfies \Mack{2} if and only if for all subgroup $H\le G$ the restriction $\Res^G_H\colon \MM(G)\to \MM(H)$ has both a left and a right adjoint, say $\Ind_H^G$ and $\CoInd_H^G$ respectively.
\end{enumerate}
For the remaining statements, we assume \Mack{2} for~$\MM$.
\begin{enumerate}[\rm(a)]
\setcounter{enumi}{1}
\item
\label{it:test-on-gps-3}%
The 2-functor~$\MM$ satisfies the left BC-formula \Mack{3} if and only if for every subgroup~$H\le G$, every homomorphism $f\colon K\to G$ and every choice of representatives $g\in [g]\in f(K)\backslash G/H$ the following natural transformation is an isomorphism
\[
\xymatrix{
{\bigoplus_{[g]\in f(K)\backslash G/H}} \Ind_{f\inv({}^{g\!}H)}^K\circ (f^g)^* \ar[r]^-{(\beta_g)_g}
& f^* \circ \Ind_H^G
}
\]
where $f^g\colon f\inv({}^{g\!}H)\to H$ is $c_g^{-1}\circ f$ and $\beta_g\colon \Ind_{f\inv({}^{g\!}H)}^K\circ (f^g)^* \to f^* \circ \Ind_H^G$ is the composite
\[
\xymatrix{
\Ind_{f\inv({}^{g\!}H)}^K(f^g)^* \ar[r]^-{\beta_g} \ar[d]^-{\eta}
& f^* \Ind_H^G
\\
\Ind_{f\inv({}^{g\!}H)}^K(f^g)^*\Res^G_H\Ind_H^G \ar[r]^-{\gamma_g^*}
& \Ind_{f\inv({}^{g\!}H)}^K\Res^{K}_{f\inv({}^{g\!}H)}f^* \Ind_H^G \ar[u]^-{\eps}
}
\]
where $\gamma_g^*\colon (f^g)^*\Res^G_H\stackrel{\sim}{\Rightarrow} \Res^{K}_{f\inv(H^{g})}
f^*\colon \MM(G) \to \MM(f\inv({}^{g\!}H))$ is the natural transformation associated to conjugation~$\gamma_g\colon f^g\stackrel{\sim}{\Rightarrow} f$ between the two 1-cells~$f^g,f\colon f\inv({}^{g\!}H)\to G$ in~$\group$ (\aka homomorphisms) obtained as the composites $f\inv({}^{g\!}H)\oto{f^g}\;H\into G$ and $f\inv({}^{g\!}H)\into K\oto{f}G$. Note that ${}^{g\!}(f^g)=f$.

Dually, the right BC-formula reduces to a dual formula for~$f^*\circ\CoInd_H^G$.
\smallbreak
\item
\label{it:test-on-gps-4}%
The 2-functor~$\MM$ satisfies \Mack{4} if and only if for every subgroup~$H\le G$ there exists some isomorphism $\Ind_H^G\simeq\CoInd_H^G$.
\end{enumerate}
\end{Prop}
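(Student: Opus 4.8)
My plan is to obtain all three equivalences by transporting the groupoid-level axioms across the biequivalences of \Cref{Lem:pre-Mackey-on-groups}, which identify an additive $2$-functor on $\groupoid^{\op}$ with its restriction to $\group^{\op}$. The organizing observation is that, up to pre- and post-composition with equivalences, a faithful $1$-cell between \emph{connected} groupoids is nothing but a subgroup inclusion: after choosing objects and using connectedness of the target, a faithful functor $i\colon H\into G$ between connected groupoids becomes an injective homomorphism of automorphism groups $\bar H\le\bar G$, and under the induced equivalences $\MM(H)\simeq\MM(\bar H)$, $\MM(G)\simeq\MM(\bar G)$ the functor $i^{*}$ is carried to $\Res^{\bar G}_{\bar H}$. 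Since the existence of a one-sided adjoint, the property of being a base-change isomorphism, and the existence of an isomorphism $i_{!}\simeq i_{*}$ are all invariant under such equivalences, every one of \Mack{2}, \Mack{3}, \Mack{4} may be tested on subgroup inclusions alone. Parts~\eqref{it:test-on-gps-2} and~\eqref{it:test-on-gps-4} then follow quickly, while part~\eqref{it:test-on-gps-3} requires one genuine computation.

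For~\eqref{it:test-on-gps-2} I would first invoke additivity~\Mack{1} to reduce the verification of~\Mack{2} to faithful functors between connected groupoids (a general finite groupoid being the coproduct of its components, with $i^{*}$ splitting accordingly), and then apply the reduction above to replace such a functor by a subgroup inclusion $\Res^{G}_{H}$. As a functor has a left (resp.\ right) adjoint if and only if any isomorphic functor does, existence of both adjoints for all faithful $i$ is equivalent to their existence for all inclusions $H\le G$; we then rename $i_{!}=\Ind^{G}_{H}$ and $i_{*}=\CoInd^{G}_{H}$. The very same reduction settles~\eqref{it:test-on-gps-4}: the isomorphism $i_{!}\simeq i_{*}$ demanded by~\Mack{4} is preserved and reflected by the equivalences, so it holds for every faithful $i$ precisely when $\Ind^{G}_{H}\simeq\CoInd^{G}_{H}$ for every subgroup.

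The substance is~\eqref{it:test-on-gps-3}. By \Cref{Rem:comma_vs_Mackey} it suffices to impose base-change on iso-comma squares, and by the reduction above together with additivity in the second leg the only iso-commas that survive are those built on a cospan $H\into G\xleftarrow{f}K$ with $i\colon H\into G$ a subgroup inclusion and $f\colon K\to G$ an arbitrary homomorphism of groups. I would then compute this iso-comma directly from \Cref{Exa:comma-in-Cat}: its objects are the elements $g\in G$ (viewed as isomorphisms $i(\ast)\to f(\ast)$), two of them lie in the same component if and only if they share a double coset in $f(K)\backslash G/H$, and the projection $q$ carries the automorphism group of the object $g$ isomorphically onto the subgroup $f\inv({}^{g}H)\le K$. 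This produces a (non-canonical) equivalence $\coprod_{[g]\in f(K)\backslash G/H}f\inv({}^{g}H)\isotoo(i/f)$ under which, on the $g$-component, the leg $v$ becomes the twisted homomorphism $f^{g}=c_{g}\inv\circ f\colon f\inv({}^{g}H)\to H$, the faithful leg $j$ becomes the inclusion $f\inv({}^{g}H)\into K$, and the structure $2$-cell $\gamma$ becomes conjugation by $g$. This is exactly the group-theoretic shadow of the double-coset decomposition of \Cref{Rem:old-Mackey}, now with one leg allowed to be non-injective.

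With this identification the left mate $\gamma_{!}$ of~\Mack{3} decomposes over the double cosets as a sum $\bigoplus_{[g]}(\gamma_{g})_{!}$, and unwinding the definition of the left mate on the $g$-component — insert the unit of $\Ind^{G}_{H}$, apply the conjugation $2$-cell $\gamma_{g}^{*}$ of \Cref{Rem:not_groups}, and contract by the counit of $\Ind^{K}_{f\inv({}^{g}H)}$ — reproduces verbatim the composite defining $\beta_{g}$ in the statement. Hence~\Mack{3} holds if and only if $(\beta_{g})_{g}$ is an isomorphism; the dual formula for $f^{*}\circ\CoInd^{G}_{H}$ comes from running the identical argument with the right mate $(\gamma\inv)_{*}$. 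The main obstacle I anticipate is purely bookkeeping: keeping the conjugation $c_{g}$, the twisted homomorphism $f^{g}$, and the orientation of $\gamma_{g}$ consistent so that the abstract mate becomes \emph{literally} the $\beta_{g}$ of the statement; everything else is a formal consequence of \Cref{Lem:pre-Mackey-on-groups} and of additivity.
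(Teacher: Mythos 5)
Your proposal is correct and follows essentially the same route as the paper's proof: reduce via additivity~\Mack{1} to connected (hence one-object) groupoids, compute the iso-comma $(i/f)$ explicitly as a coproduct $\coprod_{[g]\in f(K)\backslash G/H} f\inv({}^{g\!}H)$ with legs $f^g$ and the inclusion and structure 2-cell given by conjugation, and then trace the left mate $\gamma_!$ component-wise to recover $\beta_g$ verbatim. Your write-up merely makes explicit the bookkeeping (component structure of the iso-comma, invariance of adjunctions and of base-change under equivalence via \Cref{Rem:comma_vs_Mackey}) that the paper compresses into ``this is a lengthy exercise.''
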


\begin{proof}
This is a lengthy exercise. By additivity, we reduce all problems to the case of connected finite groupoids. In that case, everything like existence of adjoints or ambidexterity reduces up to equivalence to the one-object situation. This pattern can be followed to prove~\eqref{it:test-on-gps-2} and~\eqref{it:test-on-gps-4}. Let us say a few more words about~\eqref{it:test-on-gps-3}. Again, we can reduce everything to one-object groupoids but in that case we also need to understand the resulting iso-comma. Similarly to what happened in~\Cref{Rem:old-Mackey}, if $i\colon H\into G$ is the inclusion, there is an equivalence
\[
\coprod_{[g]\in f(K)\backslash G/H} f\inv({}^{g\!}H)\overset{\sim}{\too} (i/f)
\]
mapping the unique object $\bullet$ of~$f\inv({}^{g\!}H)$ to the object~$(\bullet,\bullet,g)$ of~$(i/f)$, and a morphism~$k\in f\inv({}^{g\!}H)$ to~$(f(k)^g,k)$. In more 2-categorical terms, this equivalence is~$w:=\langle \coprod_{g}f^g\,,\,\coprod_{g}\incl_{f\inv({}^{g\!}H)}^K\,,\,\coprod_{g}\gamma_g\rangle$
\[
\xymatrix{
& {\coprod\limits_{[g]\in f(K)\backslash G/H}} f\inv({}^{g\!}H) \ar[ld]_-{\coprod_g f^g} \ar@{ >->}[rd]^-{\coprod_g \incl} \ar@{}[dd]|-{\oEcell{\coprod_g\gamma_g}}
\\
H \ar@{ >->}[rd]_-{i}
&& K \ar[ld]^-{f}
\\
& G}
\]
with notations as in \Cref{Def:comma} and the statement. It remains to explicitly trace the construction of the mate~$\gamma_!$ which is exactly the one announced in~\eqref{it:test-on-gps-3}.
\end{proof}

\begin{Exa}[Genuine equivariant spectra] \label{Exa:SH(G)}
Let $\MM(G)=\SH(G)$ be the stable homotopy category of genuine $G$-equivariant spectra, in the sense of topology, for every finite group~$G$. By the well-known functoriality properties of equivariant stable homotopy, we obtain a Mackey 2-functor on $\GG=\groupoid$. This can be verified via \Cref{Prop:test-on-gps}, by inspecting one of the available constructions of a model for $\SH(G)$, such as can be found in \cite{LewisMaySteinbergerMcClure86}, \cite{MandellMay02} or~\cite{Schwede16pp}. We leave the details to the interested topologists, limiting ourselves to the observation that the axiom \Mack{4} holds by the so-called \emph{Wirthm\"uller isomorphism} (see \eg \cite[Cor.\,5.25]{Schwede16pp}), which also exists, with an extra twist, when $G$ and $H$ are compact Lie groups.
\end{Exa}

\begin{Exa}[Equivariant Kasparov theory] \label{Exa:KKetc}
Let $\MM(G)=\KK(G)$ be the $G$-equivariant Kasparov category, in the sense of operator algebraists, for every finite group~$G$. By the well-known properties of equivariant Kasparov theory, we obtain a Mackey 2-functor on $\GG=\groupoid$. Again, this can be verified via \Cref{Prop:test-on-gps} by inspecting the available explicit constructions of $\KK(G)$ and its functoriality at the level of algebras, see \eg~\cite{MeyerNest06} or~\cite{Meyer08}.
We leave the details to the interested operator algebraists, and simply note that in this example there is a concrete construction of an `induction' functor $\KK(H)\to \KK(G)$ for closed subgroups of quite general topological groups~$G$, which is known to be \emph{right} adjoint to restriction when $G/H$ is compact and \emph{left} adjoint to it when $G/H$ is discrete -- whence \Mack{4} when the groups are finite.
\end{Exa}

\begin{Rem}
Unlike our ambidexterity results, \eg \Cref{Thm:ambidex-der}, the above \Cref{Prop:test-on-gps} does not really provide any deep reason \emph{why} some data $G\mapsto \MM(G)$ is a Mackey 2-functor. All it does is reduce the verification to the more ordinary data of $\MM(G)$ for actual finite groups~$G$, as the examples are typically presented in the literature in such terms. This is mildly unsatisfying but, at least in cases such as \Cref{Exa:SH(G)} and \Cref{Exa:KKetc}, perhaps unavoidable at this point in time because of the lack of something like a theory of `equivariant derivators', and of results on how to produce them from model-level constructions. Indeed, unlike all the examples of Mackey 2-functors in \Cref{sec:add-der-Mackey}, in the above two cases the variance $G\mapsto \MM(G)$ \emph{cannot} be obtained by a simple-minded restriction of an additive derivator to finite groupoids. This is because of the failure of (Der~\ref{Der-2}) in the variable~$G$: A map $f\colon X\to X'$ between `$G$-objects' $X,X'\in\MM(G)$ is not an isomorphism merely because the underlying (non-equivariant) morphism in the base category is an isomorphism.
\end{Rem}

\begin{Rem}
Most of the examples we have encountered so far carry an additional structure on the additive categories of values~$\MM(G)$, typically that of a triangulated category, of a symmetric monoidal category, or both. We will elaborate on this interesting topic in a sequel.
\end{Rem}

\begin{Rem} \label{Rem:eq_sheaves}
One can use the techniques of this chapter to discuss further examples, for instance equivariant objects in suitable categories with $G$-actions, like equivariant sheaves over a $G$-scheme. Note however that such examples of Mackey 2-functors are typically defined on more sophisticated 2-categories of groupoids~$\GG$, not merely sub-2-categories of~$\groupoid$ as we considered so far (based on \Cref{Hyp:GG}). Those more general 2-categories~$\GG$ will be considered in \Cref{ch:bicat-spans,ch:2-motives}, starting with \Cref{Hyp:G_and_I_for_Span}. For instance, for the Mackey 2-functor of equivariant sheaves over a given `base' $G$-scheme~$X$, one can take as $\GG$ the 2-category of groupoids with a \emph{chosen} embedding into the fixed `ambient' group~$G$ which acts on~$X$. In other words, the natural 2-category of definition~$\GG$ appears to be a comma 2-category in~$\groupoid$ (see \Cref{Def:gpdG}).
\end{Rem}

\bigbreak
\section{Mackey 2-functors of equivariant objects}
\label{sec:equivobj}%
\medskip

Following up on \Cref{Rem:eq_sheaves}, the goal of this section is to prove \Cref{Thm:Mackey2fun-equi-objects} and derive examples of Mackey 2-functors for a fixed group~$G{}$ associated with various categories of equivariant sheaves.

\begin{Not} \label{Not:pseudo-functor}
Let $\cat G$ be a small category and let $\cat S \colon \cat G^\op \to \CAT$ be a pseudo-functor to the 2-category of all categories. This amounts to giving the following data, satisfying the coherence conditions of~\Cref{Ter:pseudofun}:
\begin{enumerate} [\rm(a)]
\item categories $\cat S_p$ for every object $p\in \cat G$,
\item `pullback' functors $g^*\colon \cat S_{p'}\to \cat S_p$ for every morphism $g\colon p\to p'$ of $\cat G$, and
\item natural isomorphisms $\varpi_p\colon \Id_{\cat S_p}\overset{\sim}{\to}(\id_p)^*$ for every object $p\in \cat G$ as well as $\varpi_{g_1,g_2}\colon g_1^*g_2^*\overset{\sim}{\to}(g_2g_1)^*$ for every pair of composable morphisms $g_1,g_2$ in~$\cat G$.
\end{enumerate}
\end{Not}

\begin{Exa} \label{Exa:single-cat}
Our main example occurs when $\cat G=G$ is a group, with one object~$\bullet$. The data of a pseudo-functor~$\cat{S}\colon \cat G^\op\to \CAT$ as above then reduces to a category~$\cat{S}_{\sbull}$ together with a (right, pseudo-) `action' of~$G$.
\end{Exa}

\begin{Rem}
Grothendieck associates to a pseudo-functor $\cat S$ as in \Cref{Not:pseudo-functor} a category $\int \cat S$ fibered over~$\cat G$. The objects of $\int \cat S$ are pairs $(p,s)$ where $p\in \Obj \cat G$ and $s\in \Obj \cat S_p$, and a morphism $(p,s)\to (p',s')$ is a pair $(g,\sigma)$ of morphisms $g\colon p\to p'$ in $\cat G$ and $\sigma \colon s\to g^*(s')$ in~$\cat S_p$. Its composition is simply given by $(g',\sigma')\circ (g,\sigma)= (g'g, \varpi_{g,g'} g^*(\sigma')\sigma)$. This category $\int \cat S$ comes equipped with the obvious functor $\pi\colon \int \cat S\to \cat G$ projecting on the first component, which is a \emph{Grothendieck fibration}.
This construction defines a biequivalence $\int \colon \PsFun(\cat G^\op,\CAT)\xrightarrow{\sim} \mathsf{Fib}(\cat G)$ between the 2-category of pseudo-functors $\cat G^\op\to \CAT$ and Grothendieck fibrations over~$\cat G$; see \eg~\cite{Vistoli05}. In the other direction, to any Grothendieck fibration $\pi\colon \overline {\cat S}\to \cat G$, one can associate a pseudo-functor $\cat S\colon \cat G^\op\to \CAT$ where $\cat S_p = \pi^{-1}(p)$ is the fiber over~$p$, \ie the (typically \emph{not} full) subcategory of $\overline {\cat S}$ of all objects mapping to $p$ and morphisms mapping to $\id_p$.
The hypothesis that $\pi$ is a fibration allows one (after choosing a `cleavage') to construct the functors~$g^*$ and the coherence isomorphisms~$\varpi$.
\end{Rem}

\begin{Def}
\label{Def:Gamma}
Let $\cat S\colon \cat G^\op\to \CAT$ be a pseudo-functor as above. We define its associated \emph{category of global sections}, or \emph{category of $\cat G$-equivariant objects}
\[
\Gamma(\cat G,\cat S)
\]
to be the functor category of sections $\cat G\to \int \cat S$ of the corresponding Grothendieck fibration $\pi\colon \int S\to \cat G$.
Unfolding the definition, objects of $\Gamma(\cat G,\cat S)$ are pairs $(X,\varphi)$ where $X=\{X_p\}_{p\in \Obj \cat G}$ is a family of objects $X_p\in \cat S_p$ and $\varphi=\{\varphi_g\}_{g\in \Mor\cat G}$ a family of morphisms $\varphi_g\colon X_p\to g^*(X_{p'})$ for every arrow $g\colon p\to p'$ of~$\cat G$.
This data must be such that
\begin{equation} \label{eq:equiv-obj}
\varphi_{\id_{p}} =\varpi_{p}
\quad \textrm{ and } \quad
\vcenter{
\xymatrix@R=12pt{
X_{p_1}
 \ar[r]^-{\varphi_{g_1}}
  \ar@/_1ex/[drr]_{\varphi_{g_2g_1}} & g^*_1 X_{p_2} \ar[r]^-{g^*_1(\varphi_{g_2})} & g^*_1g^*_2 X_{p_3} \ar[d]^{\varpi_{g_1,g_2}}_\simeq \\
&& (g_2g_1)^*X_{p_3}
}}
\end{equation}
for all $p\in \Obj\cat G$ and all composable $g_1,g_2\in \Mor\cat G$.
A morphism $\xi\colon (X,\varphi)\to (X',\varphi')$ in $\Gamma(\cat G,\cat S)$ is a family $\{\xi_p\}_{p\in \Obj \cat G}$ of morphisms $\xi_p\colon X_p\to X_p'$ which is equivariant, \ie such that the square
\begin{equation} \label{eq:equiv-map}
\vcenter{\xymatrix{
X_{p_1} \ar[r]^-{\xi_{p_1}} \ar[d]_{\varphi_g} & {X_{p_1}' } \ar[d]^{\varphi'_g} \\
g^* X_{p_2} \ar[r]^-{g^* (\xi_{p_2})} & g^* {X_{p_2}'}
}}
\end{equation}
is commutative (in $\cat S_{p_1}$) for all $g\colon p_1\to p_2$ in~$\cat G$.

\end{Def}

\begin{Not}
In order to unburden our notations, in the following we will suppress the structure isomorphisms~$\varpi$, \ie we will pretend that any $\cat S$ under consideration is a \emph{(strict) 2-functor}. This is justified by the fact that every pseudo-functor as in \Cref{Not:pseudo-functor} can be strictified, see~\cite[\S\,4.2]{Power89} or \Cref{Rem:coh_pseudofun}.
\end{Not}

\begin{Rem} \label{Rem:groupoid-case}
We will soon restrict attention to the case when $\cat G = G{}$ is a finite groupoid, for instance a finite group viewed as a category with a unique object.
In this case the pullback functors $g^*$ are all equivalences and the structure maps $\varphi_g$ of an equivariant object $(X,\varphi)$ are all invertible by~\eqref{eq:equiv-obj}.
\end{Rem}

We now let the category~$\cat G$ vary.

\begin{Cons} \label{Cons:restriction}
Let $f\colon \cat H\to \cat G$ be a functor between small categories.
Given a pseudo-functor $\cat S \colon \cat G^\op\to \CAT$, we can consider its precomposition
\[
\cat S\circ f^\op\colon \cat H^\op\to \CAT
\]
mapping each $q\in \cat H$ to $\cat S_{f(q)}$ and each $h\in \Mor \cat H$ to $h^*:= f(h)^*$.
We thus have two categories of sections $\Gamma(\cat G,\cat S)$ and $\Gamma(\cat H,\cat S):=\Gamma(\cat H,\cat S\circ f^\op)$.
Precomposition with $f$ also defines an obvious \emph{restriction functor} $f^*\colon \Gamma(\cat G,\cat S)\to \Gamma(\cat H,\cat S)$, sending a $\cat G$-equivariant object $(X,\varphi)$ to the $\cat H$-equivariant object $f^*(X,\varphi)=(f^*X,f^*\varphi)$ with $(f^*X)_q:= X_{f(q)}$ and $(f^*\varphi)_h := \varphi_{f(h)}$ (for all $q\in \Obj \cat H$ and $h\in \Mor\cat H$), and similarly on equivariant morphisms: $(f^*\xi)_q := \xi_{f(q)}$.
\end{Cons}

\begin{Rem} \label{Rem:Gamma-refined}
Note that if $\cat S$ takes values in additive categories and additive functors, then $\Gamma(\cat G, \cat S)$ is also an additive category, in the evident `componentwise' way, and restriction along any $f\colon \cat H\to \cat G$ is an additive functor.
\end{Rem}

\begin{Lem} \label{Lem:adjoints}
Assume that $\cat G$ and $\cat H$ are finite groupoids and that the categories $\cat S_p$ admit all finite limits (resp.\ finite colimits). Then the restriction functor $f^*$ of \Cref{Cons:restriction}
admits a right adjoint $f_*$ (resp.\ a left adjont~$f_!$)
\[
\xymatrix{
\Gamma(\cat G,\cat S) \ar[d]|{f^*} \\
\Gamma(\cat H,\cat S) \ar@/^3ex/[u]^{f_!} \ar@/_3ex/[u]_{f_*}
}
\]
which moreover is given by a generalized Kan-extension formula.
\end{Lem}

\begin{proof}
Let us begin by constructing the right adjoint~$f_*$. For each $p\in \Obj \cat G$, let $p\backslash f$ denote the slice category of $f$ under~$p$, whose objects are pairs $(x,\gamma)$ with $x\in \Obj \cat H$ and $\gamma \colon p\to f(x)$ in~$ \cat G$ and a morphism $(x,\gamma)\to (x',\gamma')$ is a morphism $\delta\colon x\to x'$ such that $f(\delta)\gamma = \gamma'$.
Note that $p\backslash f$ is a finite category as $\cat G$ and $\cat H$ are assumed finite.
We must define the $\cat G$-equivariant object $f_*(Y,\psi)= (f_* Y, f_* \psi)$ for every $\cat H$-equivariant object $(Y,\psi)$.
Given the latter and a $p\in \Obj \cat G$, define a functor $\tilde Y_p \colon p\backslash f \too \cat S_p$ by sending $(x,\gamma)$ to $\tilde Y_p(x,\gamma):=\gamma^*Y_x\in \cat S_p$ and $\delta\colon (x,\gamma)\to (x',\gamma')$ to the map
$
\tilde Y_p(\delta) := \gamma^* (\psi_\delta) \colon \gamma^* Y_x \to \gamma^*f(\delta)^* Y_{x'} = {\gamma'}^* Y_{x'}
$.
Define now
\begin{equation}
\label{eq:*obj}%
\vcenter{\xymatrix@C=10pt{
{ (f_*Y)_p := \lim \tilde Y_p } \ar@{=}[r] & {\displaystyle{\lim_{(x,\gamma)\in (p\backslash f)} \gamma^* Y_x} } \ar[rrd]^(.55){\pr_{(x',\gamma')}} \ar[d]_(.6){\pr_{(x,\gamma)}} && \\
& \gamma^* Y_x \ar[r]^-{\gamma^* \psi_\delta} &  \gamma^* (f(\delta)^* Y_{x'}) \ar@{=}[r] & (f(\delta) \gamma)^* Y_{x'}
}}
\end{equation}
as a limit in~$\cat S_p$ (which exists because $p\backslash f$ is finite and $\cat S_p$ finitely complete); by construction, it comes equipped with canonical projection maps $\pr_{(x,\gamma)}$ making the above triangles commute for every map $\delta\colon (x,\gamma)\to (x',\gamma')$ in~$p\backslash f$. We must still define the structure maps $f_*\psi =\{(f_*\psi)_g\}_g$.
Using the universal property of the limit, for each $g\colon p\to p'$ in~$\cat G$ we let $(f_*\psi)_g$ be the unique map in $\cat S_p$ making the following diagram commute:
\begin{equation*} 
\xymatrix@C=10pt{
& && g^*(f_* Y_{p'}) \ar@{=}[r] &
 {\displaystyle{ g^* \left( \lim_{(x',\gamma')\in p'\backslash f} {\gamma'}^* Y_{x'} \right) }}
  \ar[d]_{\cong}
   \ar@/^15ex/[dd]^{g^* \pr_{(x',\gamma')}} \\
(f_*Y)_p \ar@{=}[r]&
 {\displaystyle{\lim_{(x,\gamma) \in p\backslash f} \gamma^*Y_x}}
  \ar[drr]_{\pr_{(x',\gamma' g)}}
  \ar@{..>}[urr]^-{(f_* \psi)_g}
   \ar@{..>}[rrr]_-{\exists \textrm{ by lim}}
   && & {\displaystyle{ \lim_{(x',\gamma')\in p'\backslash f} g^*  {\gamma'}^* Y_{x'}  }}
    \ar[d]_(.6){\pr_{(x',\gamma')}} \\
& && (\gamma' g)^* Y_{x'} \ar@{=}[r] & g^*({\gamma'}^* Y_{x'})
}
\end{equation*}
Note that the functor $g^*$ commutes with the limit because it is an equivalence, and the morphisms $\pr_{(x',\gamma' g)}$ are compatible with the morphisms of $p'\backslash f$ thanks to~\eqref{eq:*obj} and~\eqref{eq:equiv-obj}, whence the map into the limit.
We leave to the reader the easy verifications that the above is a well-defined $G$-equivariant object $f_*(Y,\psi)$, and that $(Y,\psi)\mapsto f_*(Y,\psi)$ extends to a functor $f_*\colon \Gamma(\cat H,\cat S\circ f^\op)\to \Gamma(\cat G,\cat S)$ as claimed.

The unit $\reta\colon \Id_{\Gamma(\cat G,\cat S)} \Rightarrow f_*f^* $ of the adjunction at an object $(X,\varphi)$ is the morphism $\reta_{(X,\varphi)}$ in $\Gamma(\cat G,\cat S)$ with component at $p\in \cat G$ given by the universal property of the limit and the commutativity of the following triangles
\begin{equation*} 
\xymatrix{
X_p
 \ar@{..>}[rr]^-{\reta_{(X,\varphi), p}}
  \ar[drrr]_{\varphi_\gamma} &&
 (f_*f^* X)_p \ar@{=}[r] &
 { \displaystyle{ \lim_{(x,\gamma) \in p\backslash f} \gamma^* X_{f(x)}} }
  \ar[d]^-{\pr_{(x,\gamma)}} \\
&&& \gamma^* X_{f(x)}
}
\end{equation*}
for all $(x,  p\xrightarrow{\gamma} f(x)) \in \Obj( p\backslash f )$.
The naturality of $\reta$ in $(X,\varphi)$ follows by~\eqref{eq:equiv-map}.
The counit $\reps \colon f^*f_* \Rightarrow \Id_{\Gamma(\cat H,\cat S)}$ at an object $(Y,\psi)$ is the morphims $\reps_{(Y,\psi)}$ with components given by the projection maps
\begin{equation*} 
\reps_{(Y,\psi),q} \colon (f^*f_* (Y,\psi))_q = \lim_{(x,\gamma) \in f(q)\backslash f} \gamma^* Y_x \xrightarrow{ \pr_{(q,\id_{f(q)})}}  (\id_{f(q)})^* Y_q = Y_q
\end{equation*}
for every $q\in \cat H$.
Again by using~\eqref{eq:*obj} and~\eqref{eq:equiv-obj} one verifies that $\reps_{(X,\varphi)}$ is $\cat H$-equivariant and natural in~$(Y,\psi)$.
The unit-counit relations are similarly straightforward verifications from the definitions, which we leave to the reader.

The left adjunction $(f_! \dashv f^*, \leta, \leps)$ can be constructed dually, although some care should be taken as inverses appear; we therefore give the explicit formulas for ease of reference. For an object $p\in \cat G$, let $f/p$ denote the comma category of $f$ \emph{over}~$p$, with objects pairs $(x, \gamma\colon f(x) \to p)$ and morphisms $(x,\gamma)\to (x',\gamma')$ given by morphisms $\delta\colon x\to x'$ in $\cat H$ such that $\gamma' f(\delta) =  \gamma$. For every $(Y,\psi)\in \Gamma(\cat H,\cat S\circ f^\op)$, the $\cat G$-equivariant object $f_!(Y,\psi)=(f_!Y,f_!\psi)$ is defined as follows. For $p\in \Obj (\cat G)$, the object $(f_!Y)_p\in \cat S_{p}$ is given by the colimit (in~$\cat S_p$)
\begin{equation*} 
(f_! Y)_p := \colim \widehat Y_p = \colim_{(x,\gamma) \in f/p} (\gamma^{-1})^* (Y_x)
\end{equation*}
where now we use the functor $\widehat Y_p\colon f/p \to \cat S_p$ sending $(x,\gamma)$ to $(\gamma^{-1})^*(Y_x)$ and $\delta\colon (x,\gamma)\to (x',\gamma')$ to $(\gamma^{-1})^*(\psi_{\delta})\colon (\gamma^{-1})^*(Y_x) \to (\gamma^{-1})^*f(\delta)^*(Y_{x'}) = (\gamma'^{-1})^* (Y_{x'})$.
For $(g\colon p\to p')\in \Mor( \cat G)$, the map $(f_!\psi)_g$ is given by the universal property of the colimit and the following commutative diagram:
\begin{equation*} 
\xymatrix@C=10pt{
{\displaystyle{ (g^{-1})^*\left( \colim_{(x,\gamma)\in f/p} (\gamma^{-1})^* Y_x \right) }}
 \ar@{=}[r] &
(g^{-1})^*(f_!Y)_p
 \ar@{..>}[drr]^-{\,\,\,(g^{-1})^*(f_!\psi)_p} && & \\
{\displaystyle{ \colim_{(x,\gamma)\in f/p} (g^{-1})^*(\gamma^{-1})^* Y_x }}
 \ar@{..>}[rrr]_-{\exists \textrm{ by colim}}
  \ar[u]^{\simeq} & &&
 {\displaystyle{\colim_{(x,\gamma)\in f/p'} (\gamma^{-1})^*Y_{x} }}
   \ar@{=}[r] &
  (f_!Y)_{p'}\\
(g^{-1})^*(\gamma^{-1})^* Y_x
 \ar[u]^{\incl_{(x,\gamma)}}
  \ar@{=}[r]
     &
((g\gamma)^{-1})^* Y_x
    \ar[urr]_-{\;\; \incl_{(x, g \gamma)}}
     && &
}
\end{equation*}
The component at $(Y,\psi)$ of the unit $\leta\colon \Id_{\Gamma(\cat H,\cat S)}\Rightarrow f^*f_!$  is given for every $q\in \Obj(\cat H)$  by the canonical injection
\begin{equation*}
\leta_{(Y,\psi), q} \colon Y_q = (\id^{-1}_{f(q)})^* Y_q \xrightarrow{ \incl_{(q, \id_{f(q)})} } \colim_{(x,\gamma) \in f/f(q)} (\gamma^{-1})^*(Y_x) = (f^*f_!(Y,\psi))_q
\end{equation*}
and the component at $(X,\varphi)$ of the counit $\leps\colon f_!f^*\Rightarrow \Id_{\Gamma(\cat G,\cat S)}$ is induced at every $p\in \Obj(\cat G)$ by the colimit:
\begin{equation*}
\xymatrix{
{\displaystyle{ \colim_{(x,\gamma) \in f/p} (\gamma^{-1} )^* X_{f(x)} }} \ar@{=}[r] & (f_!f^* X)_p \ar@{..>}[rr]^-{\leps_{(X,\varphi),p}} &&
 X_p \\
(\gamma^{-1})^* X_{f(x)}
 \ar[u]^{\incl_{(x,\gamma)}}
  \ar[urrr]_{\quad\quad\quad (\varphi_{\gamma^{-1}})^{-1} \,=\, (\gamma^{-1})^*(\varphi_\gamma) } & &&
}
\end{equation*}
We leave the analogous verifications to the reader.
\end{proof}

\begin{Rem} \label{Rem:add-co-lims}
If in \Cref{Lem:adjoints} the categories $\cat S_p$ are additive and the functor $f$ is faithful, then the finite (co)compleness hypothesis is not necessary for the adjoints $f_!$ and $f_*$ to exist, because finite direct sums suffice in this case. Indeed, for each $p\in \cat G$ both comma categories $f/p$ and $p\backslash f$ are finite groupoids as so are $\cat G$ and~$\cat H$, and  by the faithfulness of~$f$ the latter are moreover \emph{thin} groupoids, meaning that they are equivalent to finite discrete sets.

\end{Rem}

\begin{Lem} \label{Lem:can-theta}
Suppose that $\cat S\colon \cat G^\op\to \ADD$ takes values in additive categories and additive functors, and that $f\colon \cat H\to \cat G$ is a faithful functor between finite groupoids. Then there exists a canonical natural isomorphism
\[
\Theta_f \colon f_! \overset{\sim}{\Longrightarrow} f_*
\]
between the left and right adjoints of the restriction functor~$f^*$ of \Cref{Cons:restriction}.
\end{Lem}

\begin{proof}
We define the component $\Theta_{f,(Y,\psi),p}$ of $\Theta_f$ at any objects $(Y,\psi)\in \Gamma(\cat H, \cat S)$ and $p\in \cat G$ by the universal properties of the colimit and limit defining $f_!$ and~$f_*$, as the unique morphism making the following square commute in~$\cat S_p$
\[
\xymatrix@R=8pt{
(f_!Y)_p  \ar@{=}[d] \ar@{..>}[rrrrrr]^-{\Theta_{f,(Y,\psi),p}} &&&&&& (f_*Y)_p \ar@{=}[d] \\
{\displaystyle{ \colim_{(x,\gamma\colon fx\to p)} (\gamma^{-1})^*Y_x }} &&&&&& {\displaystyle{ \lim_{(y,\delta\colon p\to fy)} \delta^*Y_y}} \ar[dd]^(.6){\pr_{(y,\delta)}}  \\
&&&&&& \\
 (\gamma^{-1})^* Y_x \ar[uu]^(.4){\incl_{(x,\gamma)}}
   \ar[rrrrrr]^-{  \displaystyle{\chi_{\delta\gamma} \,:=\, } \small{ \left\{\begin{array}{ll} (\gamma^{-1})^*(\psi_{f^{-1}(\delta\gamma)}) & \textrm{if } \delta\gamma \in f(\cat H), \\ 0 & \textrm{otherwise} \end{array} \right.  } } &&&&&&
  \delta^* Y_y
}
\]
 for all $(x,\gamma)\in f/p$ and $(y,\delta)\in p\backslash f$
(note that the zero map~$0$ makes sense as $\cat S_p$ is an additive category).
Here $f^{-1}(\delta \gamma)$ denotes the unique (by the faithfulness of~$f$) antecedent of $\delta \gamma \in f(\cat H)$.
The necessary compatibilities for $\Theta_{f,(Y,\psi),p}$ to exist follow immediately from~\eqref{eq:equiv-obj}.
It is straightforward but somewhat long to verify that, as $p$ varies, these assemble into a $\cat G$-equivariant map $\Theta_{f,(Y,\psi)}$, and that the latter is natural in~$(Y,\psi)$.

It remains to see that each $\Theta_{f,(Y,\psi),p}$ as above is invertible.
To this end, we rewrite the limit and colimit for $(f_!Y)_p$ and $(f_*Y)_p$ as direct sums as in \Cref{Rem:add-co-lims}, exploiting the equivalences $f/p\simeq \pi_0(f/p)$ and $p\backslash f\simeq \pi_0(p\backslash f)$.
This explains the two vertical isomorphisms in the following commutative diagram of~$\cat S_p$:
\[
\xymatrix{
(\gamma'^{-1})^*Y_{x'}
 \ar[dr]^{\incl_{(x',\gamma')}}
  \ar@/_6ex/[ddr]|(.4){ \underset{\textrm{at } [x,\gamma]}{(\gamma'^{-1})^* \left(\psi_{f^{-1} (\gamma^{-1}\gamma')}\right) } }
   \ar[rrrr]^-{\chi_{\delta'\gamma'}} & && & \delta'^* Y_{y'} \\
& {\displaystyle{\colim_{(x,\gamma)} (\gamma^{-1})^*Y_x} } \ar[rr]^-{\Theta_{f,(Y,\psi),p}} &&
 {\displaystyle{ \lim_{(y,\delta)} \delta^*Y_y} } \ar[ur]^{\pr_{(y',\delta')}} & \\
& {\displaystyle{\bigoplus_{[x,\gamma]\in \pi_0(f/p)} (\gamma^{-1})^*Y_x } }
 \ar[u]_\simeq^{ \left( \incl_{(x,\gamma)} \right) }
  \ar[rr]^-{\Delta}_-{\simeq} &&
  {\displaystyle{\bigoplus_{[y,\delta]\in \pi_0(p\backslash f)} \delta^*Y_y } } \ar@{<-}[u]^\simeq_{\left( \pr_{(y,\delta)} \right)}
   \ar@/_6ex/@{<-}[uur]|(.6){ \underset{\textrm{at } [y,\delta]}{\delta'^* \left( \psi_{ f^{-1}(\delta \delta'^{-1})} \right)} } &
}
\]
Note that the two direct sums involve choices of representative objects $(x,\gamma)$ for the equivalence classes $[x,\gamma] \in \pi_0(f/p)$, and similarly for $p\backslash f$.
Moreover, the isomorphism $f/p \overset{\sim}{\to} p\backslash f$, $(x,\gamma)\mapsto (x,\gamma^{-1})$, tells us that the two sums are isomorphic.
Indeed, for any choices of representative objects the matrix $\Delta:= ( \chi_{\delta\gamma} )_{[x,\gamma],[y,\delta]}$ yields such an isomorphism, where the component $\chi_{\delta \gamma}$ is the same map as above and thus is an isomorphism on the `diagonal' and zero off it.

Now it suffices to verify that the middle square in the last diagram commutes. For this we pre- and post-compose $\Theta$ with the canonical maps for arbitrary $(x',\gamma')\in f/p$ and $(y',\delta')\in p\backslash f$. We must have $(x',\gamma')\in [x,\gamma]$ and $(y',\delta') \in [y,\delta]$ for two of the chosen representatives $(x,\gamma)$ and $(y,\delta)$, that is $\gamma^{-1}\gamma'\colon f(x')\overset{\sim}{\to} f(x)$ and $\delta\delta'^{-1}\colon f(y')\to f(y)$ are in the image of~$f$; hence we may write the curved maps in the diagram, where ``at~$[x,\gamma]$'' indicates that the displayed map is the component into $(\gamma^{-1})^*Y_x$ with all others being zero, and similarly for the right one.
Now note that the upper square commutes by the definition of $\Theta$ and the two triangles by~\eqref{eq:*obj} (and its analogue for the colimit), hence it remains to verify the commutativity of the outermost square.
Because of the commutative diagram of isomorphisms in~$\cat G$
\begin{equation*} 
\xymatrix@R=10pt{
f(x') \ar[rr]^-{\delta'\gamma'} \ar[dr]_{\gamma'} \ar[dd]_{f(\cat H) \, \ni \; \gamma^{-1}\gamma'} & & f(y') \ar[dd]^{\delta\delta'^{-1} \; \in \,f(\cat H)} \\
& p \ar[ur]_{\delta'} \ar[dr]^{\delta} & \\
f(x) \ar[ur]^\gamma \ar[rr]^-{\delta\gamma} & & f(y)
}
\end{equation*}
we see that $\delta\gamma\in f(\cat H)$ iff $\delta'\gamma'\in f(\cat H)$, hence it suffices the check the square commutes when the latter holds (note for later that in this case $\delta\gamma' \in f(\cat H)$ too).
After applying~$\gamma'^*$, we are reduced to checking the commutativity of the square
\[
\xymatrix{
Y_{x'}
 \ar[d]_{\psi_{f^{-1}(\gamma^{-1}\gamma')}}
  \ar[rrr]^-{\psi_{f^{-1} (\delta'\gamma')}}
   \ar@{..>}[drrr]|{\psi_{f^{-1}(\delta\gamma')}} &&&
(\delta'\gamma')^*Y_{y'}
  \ar[d]^{(\delta'\gamma')^*(\psi_{f^{-1}(\delta\delta'^{-1})})} \\
(\gamma^{-1}\gamma')^* Y_x
  \ar[rrr]_-{(\gamma^{-1}\gamma')^*(\psi_{f^{-1}(\delta\gamma)})} &&&
(\delta\gamma')^*Y_y
}
\]
which immediately follows from~\eqref{eq:equiv-obj}.
\end{proof}

\begin{Cons} \label{Cons:restriction-comma}
Fix a finite groupoid $G{}$ and a pseudo-functor $\cat S\colon G{}^\op\to \CAT$. Recall from~\Cref{Def:gpdG} the 2-category $\gpdG$ of finite groupoids faithfully embedded in~$G{}$. We now explain how to extend the previous constructions to define a 2-functor $\Gamma(-,\cat S)\colon (\gpdG)^\op\to \CAT$; this is all straightforward, though notationally heavy when done precisely.
For an object $(H,i_H)$ of $\gpdG$, that is a finite groupoid $H$ equipped with a faithful functor $i_H\colon H\rightarrowtail G{}$, we set
\[
\Gamma((H,i_H), \cat S) := \Gamma (H, \cat S\circ i_H^\op)
\]
to be the category of $H$-equivariant objects as in \Cref{Def:Gamma}.
For every 1-morphism $(i,\theta_i)\colon (K,i_K)\to (H,i_H)$ in~$\gpdG$, that is a (necessarily faithful) functor $i\colon K\to H$ equipped with a natural isomorphism~$\theta_i\colon i_H\circ i\overset{\sim}{\Rightarrow} i_K$
\[
\xymatrix@R=8pt@C=8pt{
H \ar[drr]^{i_H} && && \\
\ar@{}[rr]|(.35){\SEcell  \theta_i} && G{} \ar[rr]^-{\cat S}_-{\mathrm{op}} && \CAT \\
K \ar[urr]_{i_K} \ar[uu]^i && &&
}
\]
we need a `restriction' functor $(i,\theta_i)^*\colon \Gamma((H,i_H), \cat S)\to \Gamma((K,i_K), \cat S)$.
We define it as in \Cref{Cons:restriction}, except that we must `correct' it by~$\theta_i$, as follows: For an object $(X,\varphi)$ and a morphism $\xi\colon (X,\varphi)\to (X',\varphi')$ in $\Gamma(H, \cat S\circ i_H^\op)$, we define
$(i,\theta_i)^*(X,\varphi)$ and $(i,\theta_i)^*(\xi)\colon (i,\theta_i)^*(X,\varphi)\to (i,\theta_i)^*(X',\varphi')$ in $\Gamma(K,\cat S\circ i_K^\op)$ by
\[
((i,\theta_i)^*(X,\varphi))_q := (\theta_{i,q}^{-1})^* (X_{i(q)}) \;\; \in \;\; \cat S_{i_K(q)}
\]
and
\[
((i,\theta_i)^* \xi )_q := (\theta_{i,q}^{-1})^* (\xi_{i(q)})
\]
for every $q\in \Obj(K)$, where $(\theta_{i,q}^{-1})^*\colon \cat S_{i_Hi(q)} \to \cat S_{i_K(q)}$ is the pull-back functor from~$\cat S$ associated with the morphism $\theta_{i,q}^{-1}\colon i_K(q) \to i_Hi(q)$ of~$G{}$.
For every 2-morphism $\alpha \colon (i,\theta_i)\Rightarrow (j,\theta_j)$ of~$\gpdG$, that is a natural isomorphism $\alpha \colon i\Rightarrow j$ such that $\theta_i = \theta_j(i_H \alpha)$, we need a natural isomorphism $\alpha^* \colon (i,\theta_i)^*\Rightarrow (j,\theta_j)^*$; we define $\alpha^*$ at an object $(X,\varphi) \in \Gamma(H,\cat S)$ to have component in~$\cat S_{i_K (q)}$ given by
\[
\xymatrix@R=10pt{
( (i,\theta_i)^*(X,\varphi))_q \ar@{=}[d]_{\textrm{def.}} \ar[rr]^-{\alpha^*_{(X,\varphi),q}} && ((j,\theta_j)^*(X,\varphi))_q \ar@{=}[d]^{\textrm{def.}} \\
(\theta_{i,q}^{-1})^* (X_{i(q)} ) \ar[rr]^-{(\theta_{i,q}^{-1})^* (\varphi_{i_H(\alpha_q)} ) } && (\theta_{j,q}^{-1})^* (X_{j(q)} )
}
\]
for every $q\in \Obj(K)$. (This makes sense because it uses the structural isomorphism $\varphi_{i_H(\alpha_q)}\colon X_{i(q)} \overset{\sim}{\to} (i_H\alpha_q)^* X_{j(q)}$ of $(X,\varphi)$ and $(\theta_{i,q}^{-1})^* (i_H\alpha_q)^* = ((i_H\alpha_q) \theta_{i,q}^{-1})^* = (\theta_{j,q}^{-1})^*$ since $(i_H\alpha_q) \theta_{i,q}^{-1}=\theta_{j,q}^{-1}$ by hypothesis on~$\alpha$.) The naturality of $\alpha^*_{(X,\varphi)}$ for morphisms $\xi \colon (X,\varphi)\to (X',\varphi')$ is easily verified using~\eqref{eq:equiv-map}.
We leave to the reader the similarly straightforward verification that the above data defines a 2-functor $(\gpdG)^\op\to \CAT$ as claimed.
\end{Cons}

\begin{Thm} \label{Thm:Mackey2fun-equi-objects}
Fix a finite groupoid $G{}$ and a pseudo-functor $\cat S\colon G{}^\op\to \ADD$ taking values in additive categories and additive functors.
Then the 2-functor
\[ \MM:= \Gamma(- ,\cat S)\colon (\gpdG)^\op\to \ADD \]
of \Cref{Cons:restriction-comma} is a Mackey 2-functor.
\end{Thm}

\begin{proof}
First note that $\Gamma(-,\cat S)$ takes values in $\ADD$ because $\cat S$ does, by \Cref{Rem:Gamma-refined}.
We must verify the axioms \Mack{1}-\Mack{4} as in \Cref{Def:Mackey-2-functor}. Additivity \Mack{1} is immediate from \Cref{Def:Gamma}.
The existence of the right and left adjoints $(i,\theta_i)_!$ and $(i,\theta_i)_*$ to every restriction $(i,\theta_i)^*$ is easily deduced from~\Cref{Lem:adjoints}; indeed, as we did when defining~$(i,\theta_i)^*$, one can simply `adjust' the explicit Kan formulas for the adjoints and their units and counits, as given in the lemma, by pulling them back via~$\theta_i$ as appropriate so they land in the right category. (More precisely, \eg for $(i,\theta_i)_*$: Given $(Y,\psi) \in \Gamma(K,\cat S\circ i_K^\op)$ and $p\in H$, define $((i,\theta_i)_*Y)_p:= \lim_{(x,\gamma)\in (p\backslash i)} \gamma^* \theta_{i,x}^* Y_x$ in $\cat S_{i_H(p)}$.)
Similarly, ambidexterity \Mack{4} is obtained by pulling back the isomorphism $\Theta_i$ of \Cref{Lem:can-theta}.
Only the base-change formulas \Mack{3} for the adjunctions $(i,\theta_i)_!\dashv (i,\theta_i)^*\dashv (i,\theta_i)_*$ remain to be proved.
These can be verified directly using the explicit (adjusted) adjunctions, \eg by rewriting the (co)limits over the slice categories as direct sums and computing the resulting matrix, as in the proof of \Cref{Lem:can-theta}; we leave details to the reader.
\end{proof}
\tristars

We now mention some examples where, for the sake of familiarity, we describe what happens only in terms of groups. Fix a finite group~$G{}$.

\begin{Exa} \label{Exa:LRS}
Let $X=(X,\mathcal O_X)$ be a (locally) ringed space (\eg a scheme) on which $G{}$ acts by morphisms of locally ringed spaces, and consider the abelian category $\cat{S}_{\sbull}=\Mod(X)$ of sheaves of $\mathcal O_X$-modules. Then $G{}$ acts on~$\cat{S}_{\sbull}$, in the pseudo-sense of~\Cref{Exa:single-cat}, by the pullback functors $g^*\colon \Mod(X) \overset{\sim}{\to} \Mod(X)$ ($g\in G{}$).
Associated to this pseudo-functor $\cat S\colon G^{\op}\to \End(\cat{S}_{\sbull})$, we obtain by \Cref{Thm:Mackey2fun-equi-objects} a Mackey 2-functor $\MM\colon (\gpdG)^\op\to \ADD$ whose value $\cat M(H)$ at a subgroup $H\leq G{}$ is the category $\Mod (X/\!\!/H) := \Gamma(H,\cat S)$ of $H$-equivariant sheaves of $\mathcal O_X$-modules. As $\cat{S}_{\sbull}$ is an abelian category, so is~$\Mod (X/\!\!/H)$.
\end{Exa}

\begin{Exa} \label{Exa:Qcoh}
If the locally ringed space $X$ of \Cref{Exa:LRS} is a scheme, we can also consider instead of $\Mod(X)$ its abelian full subcategory $\mathrm{Qcoh}(X)$ of quasi-coherent sheaves of $\mathcal O_X$-modules.
We obtain this way a Mackey 2-functor whose value at $H\leq G{}$ is the (Grothendieck) abelian category $\mathrm{Qcoh}(X/\!\!/H) \subset \Mod(X/\!\!/H) $ of $H$-equivariant quasi-coherent sheaves on~$X$.
\end{Exa}

\begin{Exa} \label{Exa:Der}
The previous examples can also be derived, by replacing the abelian category $\cat A \in \{\Mod(X), \mathrm{Qcoh}(X) \}$ with the category $\Ch(\cat A)$ of chain complexes in it and by levelwise extending to it the $G{}$-action. \Cref{Thm:Mackey2fun-equi-objects} yields a Mackey 2-functor $\cat M$ with values $\MM(H) = \Ch(\Gamma(H,\cat A)) = \Gamma(H, \Ch(\cat A))$ ($H\leq G{}$). By ambidexterity \Mack{4}, the restriction and induction functors between categories of complexes are exact, hence localization on the nose (\ie without deriving the adjoints) gives us Mackey 2-functors whose values are the derived categories $\Der(\Gamma(H,\cat A))$. Further variations on the theme, \eg with bounded complexes or with finiteness conditions, are left to the interested reader.
\end{Exa}


\end{chapter-four}
%
\chapter{Bicategories of spans}
\label{ch:bicat-spans}%
\bigbreak
\begin{chapter-five}

Our next goal is to construct the universal Mackey 2-functor: a Mackey 2-functor through which all the others factor. By definition, its target will be the 2-category (or rather, at first, the bicategory) of \emph{Mackey 2-motives}. This goal will only be achieved in \Cref{ch:2-motives}.
In the present chapter we provide a detailed study of an auxiliary construction, the bicategory of spans (\Cref{Def:Span-bicat}), which in some sense only captures `half' the properties of a universal Mackey 2-functor (see \Cref{Thm:UP-Span}).

Of course spans -- also known as `correspondences' -- have been studied for a long time; they are after all one of the basic categorical tools of symmetrization. More specifically, the bicategory of spans $\Span(\cat E)$ on a given category with pullbacks $\cat E$ already appeared in B\'enabou~\cite{Benabou67}, alongside the axioms of a bicategory. What we need to do here, however, is to apply the span construction to our 2-subcategory $\GG$ of finite groupoids of interest with distinguished class of faithful functors~$\JJ$, so we must generalize B\'enabou's construction in several directions. First, we must allow nontrivial (yet invertible) 2-cells in the input $\cat E$. Second, we must replace strict pull-backs by iso-comma squares. And finally, we must allow some asymmetry in the picture, by only allowing wrong-way 1-cells to go in one of the two directions (only the faithful $i\in \JJ$ give rise to induction functors).
Each of these generalizations has been explored before in some contexts, but we could not find any reference for all the basic properties that we will need, hence the present chapter. For future reference, we prove all results in somewhat greater generality by allowing $\GG$ to be any suitable (2,1)-category, not necessarily of groupoids, as we explain next.

\bigbreak
\section{Spans in a (2,1)-category}
\label{sec:Span}%
\medskip

%
\begin{Hyp} \label{Hyp:G_and_I_for_Span}
\index{$gandj$@$\GG$ and $\JJ$}%
\index{admissible class~$\JJ$ of 1-cells}%
We fix an essentially small (2,1)-category~$\GG$, that is, a (strict) 2-category where every 2-cell is invertible and where the Hom categories are small and the equivalence classes of objects form a set. We also fix a class $\JJ \subseteq \GG_1$ of 1-cells of~$\GG$. We say that the pair $(\GG,\JJ)$ is \emph{admissible} if it has the following rather mild properties:
\begin{enumerate}[\rm(a)]
\smallbreak
\item
\label{Hyp:G-I-a}%
The class $\JJ$ contains all equivalences and is closed under horizontal composition and isomorphism of 1-cells. Moreover, if $ij \in \JJ$ then $j\in \JJ$.
\smallbreak
\item
\label{Hyp:G-I-b}%
For every cospan $H \stackrel{i}{\to} G \stackrel{v}{\leftarrow} K$ of 1-cells with $i\in \JJ$, the comma square
\begin{align*}
\xymatrix@C14pt@R14pt{
& (i/v) \ar[ld]_-{\tilde v} \ar[dr]^-{\tilde i} & \\
H \ar[dr]_i \ar@{}[rr]|{\oEcell{\gamma}} && K \ar[dl]^v \\
&G &
}
\end{align*}
exists in~$\GG$ (\Cref{Def:comma}) and moreover $\tilde i$ still belongs to~$\JJ$.
\smallbreak
\item
\label{Hyp:G-I-c}%
Every 1-cell $i\colon H\to G$ in~$\JJ$ is \emph{faithful}, see \Cref{Ter:internal}\eqref{it:faithful}.
\smallbreak
\item
\label{Hyp:G-I-d}%
The 2-category~$\GG$ admits (strict) coproducts and $\JJ$ is closed under coproducts.
\end{enumerate}
We will occasionally use $\JJ$ to denote also the corresponding 2-full subcategory of~$\GG$.
\end{Hyp}

\begin{Rem} \label{Rem:less-hyps}
The implication ``$ij \in \JJ \Rightarrow j\in \JJ$'' in~\eqref{Hyp:G-I-a} and the faithfulness assumption~\eqref{Hyp:G-I-c} are only needed from \Cref{sec:pullback_2cells} onwards. In particular, they are not required in order to construct the span bicategory $\Span(\GG;\JJ)$ nor to prove its universal property in \Cref{sec:UP-Span}. It is possible that even the later construction of the bicategory $\Spanhat(\GG;\JJ)$ could also be carried out without those hypotheses, but because they are so convenient and still nicely cover all our examples, we renounce generalizing in this direction.
\end{Rem}

\begin{Rem}
Similarly, Hypothesis~\eqref{Hyp:G-I-d} is not needed for the construction of $\Span(\GG;\JJ)$ or its universal property. However when we return to Mackey 2-functors, additivity is a basic property and, to express it, we need Hypothesis~\eqref{Hyp:G-I-d}. See \Cref{sec:bicat_2Mack}.
\end{Rem}

\begin{Exa}
The example to keep in mind is of course $\GG=\groupoid$, the (2,1)-category of all finite groupoids, functors between them and natural transformations. In this case we may let $\JJ$ be the collection of all faithful functors. More generally, any 2-category~$\GG$ of finite groupoids as we used in previous chapters will do (see \Cref{Hyp:GG}).
\end{Exa}

\begin{Rem}
We are relaxing some of the assumptions made about~$\GG$ in \Cref{ch:2-Mackey}. Indeed, we do not assume anymore that $\GG$ is a sub-2-category of~$\groupoid$.
Furthermore, even for~$\GG$ a sub-2-category of~$\groupoid$, we do not require that it be closed under faithful $H\into G$ (\ie $G\in \GG$ and $H\into G$ faithful does not necessarily imply $H\in\GG$).
There is a gain in using a general (2,1)-category~$\GG$ instead of our main example $\groupoid$ and its sub-2-categories. First, it will be convenient at one point below to use~$\GG^\co$, the (2,1)-category with the same 0-cells and 1-cells but with reversed 2-cells. Secondly, and more importantly, we can use comma 2-categories of groupoids.
Although related to groupoids, the objects of such 2-categories contain additional information. For instance, we would like to consider for a fixed~$G$ the (2,1)-category $\gpdG$ of groupoids~$H$ together with a \emph{chosen} faithful functor $H\into G$ and such information is lost when one only focuses on~$H$.
\end{Rem}

Given a pair $(\GG,\JJ)$ as in \Cref{Hyp:G_and_I_for_Span}, we now give the first fundamental construction of this chapter in a formal way. For a more gentle approach, the reader is invited to consult the heuristic \Cref{sec:Span-co}.
\begin{Def}
\label{Def:Span-bicat}%
\index{bicategory!-- of spans $\Span(\GG;\JJ)$}%
\index{SpanG@$\Span(\GG;\JJ)$}%
\index{$spang$@$\Span$ \, bicategory of spans} %
The \emph{bicategory of spans} in the (2,1)-category~$\GG$ with respect to the class of 1-cells~$\JJ$, denoted
\[
\Span := \Span(\GG;\JJ)
\]
is given by the following data:
\begin{enumerate}[{$\bullet$}]
\item The objects are the same as those of $\GG$, that is, $\Span_0= \GG_0$.
\smallbreak
\item A 1-cell from $G$ to $H$ is a span
\[
\xymatrix{
G & P \ar[l]_-{u} \ar[r]^-{i} & H
}
\]
of 1-cells of $\GG$, with the second leg~$i$ belonging to~$\JJ$.
\item A 2-cell with domain $G \stackrel{u}{\leftarrow} P \stackrel{i}{\to} H$ and codomain $G \stackrel{v}{\leftarrow} Q \stackrel{j}{\to} H$ is an isomorphism class $[a,\alpha_1,\alpha_2]$ of diagrams
\begin{equation} \label{eq:2-cell-of-Span}
\vcenter{\xymatrix{
G \ar@{=}[d] \ar@{}[rrd]|{\SEcell\,\alpha_1}
&& P \ar[ll]_-{u} \ar[rr]^-{i} \ar[d]^a \ar@{}[rrd]|{\NEcell\,\alpha_2}
&& H \ar@{=}[d]
\\
G
&& Q \ar[ll]^-{v} \ar[rr]_-{j}
&& H\,.
}}
\end{equation}
\index{components of morphism in~$\Span(\GG;\JJ)$}%
in~$\GG$ with $a$ in~$\JJ$; we call $a$ the \emph{1-cell component} and $\alpha_1$ and $\alpha_2$ the \emph{2-cell components} (in~$\GG$) of the 2-cell~$[a,\alpha_1,\alpha_2]$ (in~$\Span$); two such diagrams $(a,\alpha_1,\alpha_2)$ and $(b,\beta_1,\beta_2)$ (with the same domain and codomain) are \emph{isomorphic}, \ie define the same 2-cell $[a,\alpha_1,\alpha_2]=[b,\beta_1,\beta_2]$, if there exists a 2-cell $\varphi\colon a \isoEcell b$ of~$\GG$
such that $(v \varphi)\alpha_1=\beta_1$ and $\alpha_2 = \beta_2(j \varphi)$:
\[
\vcenter{\xymatrix@C=14pt{
G \ar@{=}[d] \ar@{}[drr]|{\SEcell\alpha_1\;\;\;\;\;\;}  &&
 P \ar@/_2ex/[d]_a \ar@{}[d]|{\oEcell{\varphi}} \ar@/^2ex/[d]^b \ar[ll]_-{u} \\
 G && Q \ar[ll]^-{v}
}}
\;=\;
\vcenter{\xymatrix@C=14pt{
G \ar@{=}[d] \ar@{}[drr]|{\SEcell\beta_1}  &&
 P \ar[d]^b \ar[ll]_-u \\
 G && Q \ar[ll]^-{v}
}}
\quad\quad\quad
\vcenter{\xymatrix@C=14pt{
P \ar[rr]^-i \ar[d]_a \ar@{}[drr]|{\NEcell\alpha_2} && H \ar@{=}[d] \\
Q \ar[rr]_-j && H
}}
\;=\;
\vcenter{\xymatrix@C=14pt{
P \ar[rr]^-i \ar@{}[drr]|{\;\;\;\;\;\; \NEcell\beta_2} \ar@/_2ex/[d]_a \ar@{}[d]|{\oEcell{\varphi}} \ar@/^2ex/[d]^b &&
 H \ar@{=}[d] \\
Q \ar[rr]_-j && H
}}
\]
 Vertical composition of 2-cells is induced by pasting in~$\GG$ in the evident way. The identity 2-cell of $G \stackrel{u}{\leftarrow} P \stackrel{i}{\to} H$ is $[\Id_P,\id_u,\id_i]$.
\smallbreak
\item
The horizontal composition functors
\[ \circ = \circ_{G,H,K} \colon \Span(H,K) \times \Span(G,H) \longrightarrow \Span(G,K)
\]
are induced by iso-comma squares in~$\GG$. Explicitly, on objects this sends the pair
\[
(\xymatrix{
H & Q \ar[l]_-v \ar[r]^-j & K\,, \,
G & P \ar[l]_-{u} \ar[r]^-{i} & H
} )
\]
to the span $G \stackrel{u\tilde v}{\longleftarrow} (i/v) \stackrel{j\tilde i}{\longrightarrow} K$ obtained from the following comma square:
\begin{equation}
\label{eq:horiz-comp-Span}%
\vcenter{\xymatrix@R=10pt{
&& (i/v) \ar[dl]_-{\tilde v} \ar[dr]^-{\tilde i} \ar@/_4ex/[ddll]_-{u\tilde v} \ar@/^4ex/[ddrr]^-{j\tilde i} && \\
& P \ar[dl]^-u \ar[dr]_-{i} \ar@{}[rr]|{\stackrel{\sim}{\Ecell}} && Q \ar[dr]_-{j} \ar[dl]^-{v} & \\
G && H && K\,.
}}
\end{equation}
This is a well-defined 1-cell by the closure properties of~$\JJ$ detailed in \Cref{Hyp:G_and_I_for_Span}\,\eqref{Hyp:G-I-a} and~\eqref{Hyp:G-I-b}.
On arrows, the functor $\circ_{G,H,K}$ maps $([b,\beta_1,\beta_2],[a,\alpha_1,\alpha_2])$ to the 2-cell $[c, \alpha_1 \tilde v , \beta_2 \tilde i]$ defined by the following diagram:
\begin{align*}
\xymatrix@C-10pt{
&& && &
 (i/v) \ar[dd]^>>>>>>>>c \ar@/_2ex/[dlll]_-{\tilde v} \ar@/^1ex/[dr]^-{\tilde i} \ar@{}[dl]|{\Ecell \, \gamma\;\;\;} & && \\
 G \ar@{=}[dd] \ar@{}[ddrr]|{\SEcell \alpha_1} &&
 P \ar[ll]_-{u} \ar[rr]^-{i} \ar[dd]^a \ar@{}[ddrr]|{\NEcell \alpha_2} &&
 H \ar@{=}[dd]|>>>>>>>>>{{\phantom{\stackrel{m}{M}}}} \ar@{}[dr]|{\SEcell\beta_1} &&
 Q \ar[dd]^b \ar[ll]_-<<<<<<v|{{\phantom{M}}} \ar[rr]^-j \ar@{}[ddrr]|{\NEcell \beta_2} &&
 K \ar@{=}[dd] \\
&& && & (i'/v') \ar@/_2ex/[dlll]^-<<<{\tilde v'} \ar@/^1ex/[dr]^-{\tilde i'} \ar@{}[d]|{\Ecell \, \gamma'} & && \\
G && P' \ar[ll]^-{u'} \ar[rr]_-{i'} && H && Q' \ar[ll]^-{v'} \ar[rr]_-{j'} && K
}
\end{align*}
Here $c$ is the unique 1-cell $(i/v) \to (i'/v')$ such that $\tilde v' c = a \tilde v$, $\tilde i' c =b \tilde i$ and $\gamma' c= (\beta_1 \tilde i)\gamma (\alpha_2 \tilde v)$, given by the universal property of~$(i'/v')$. (Note that the two slanted squares are strictly commutative). In the notation of \Cref{Def:comma}, we have $c= \langle a\tilde v , b \tilde i , (\beta_1 \tilde i)\gamma (\alpha_2 \tilde v) \rangle$.
\smallbreak
\item
The associators are induced by the associativity isomorphisms of Remark~\ref{Rem:assoc}.
\smallbreak
\item
The left and right unitors are provided by the equivalences of Remark~\ref{Rem:units}. These will be invertible 2-cells, as required, because of Lemma~\ref{Lem:equiv-are-iso} below.
\end{enumerate}
\end{Def}

\begin{Conv}
When drawing diagrams involving 2-cells $[a,\alpha_1,\alpha_2]$ in~$\Span$ (and later in $\Spanhat$), we shall not display the 2-cell components $\alpha_1$ or $\alpha_2$ when they are equal to the identity, \ie when they appear in a square which commutes strictly. Conversely, when not displayed, we do mean that they are equal to the relevant identity.
\end{Conv}

\begin{Rem} \label{Rem:KEKSEKSA}
Apart from the variation involving the class~$\JJ$, the construction in \Cref{Def:Span-bicat} has been studied in details by~\cite{Hoffnung11pp}. In particular, the verification that the above data forms a bicategory can be deduced from~\cite[Theorem~3.0.3]{Hoffnung11pp}. To be precise, here we are only considering the bicategorical truncation of a natural tricategory structure, in the sense that what we call a 2-cell is actually an isomorphism class of a 2-cell with respect to certain naturally occurring 3-cells. In the present work, we do not need to consider this higher information.
\end{Rem}

\begin{Rem} \label{Rem:2-cats-no-go}
It is legitimate to wonder whether one could also define a bicategory of spans in a 2-category with non-invertible 2-cell (other than by forgetting them to get down to a (2,1)-category). Indeed, in that setting the Beck-Chevalley condition should involve comma squares rather than iso-comma squares, like in the (Der\,\ref{Der-4}) axiom for derivators. One would therefore want to compose spans via comma squares. This creates a new problem: we would not have any identity 1-cells in such a bicategory of spans, for the analogue of Remark~\ref{Rem:units} fails for comma squares. See \cite[Rem.\,3.1.5]{Hoffnung11pp}.

An alternative approach is suggested in the proof of \cite[Prop.\,4.9]{Hoermann18} (under some `multicategorical' layers). One can define a bicategory of spans using only spans in which the two legs are already Grothendieck (op)fibrations. In that case, composition can be defined via iso-comma squares for one expects them to satisfy Beck-Chevalley anyway -- this is where having (op)fibrations helps. The price to pay is that the `motivic' embedding, say contravariantly from $\GG$ to spans, would not send a 1-cell~$u$ to the obvious span $\loto{u}=$ anymore. Instead, $u$ would be sent to the span $\loto{p}\oto{q}$ where $p$ and $q$ are the Grothendieck (op)fibrations which appear in the following \emph{comma} square, associated to~$u$:
\[
\vcenter{\xymatrix@C=14pt@R=14pt{
& (\Id/u) \ar[dl]_-{p} \ar[dr]^-{q}
 \ar@{}[dd]|(.5){\Ecell}
\\
\ar@{=}[dr]_-{\Id}
&& \ar[dl]^-{u}
\\
&
}}
\]
We leave such generalizations to the interested reader.
\end{Rem}

\begin{Lem} \label{Lem:equiv-are-iso}
A 2-cell $[a,\alpha_1,\alpha_2]$ of $\Span$ as in~\eqref{eq:2-cell-of-Span} is invertible if and only if the 1-cell $a\colon P\to Q$ of~$\GG$ is an equivalence.
\end{Lem}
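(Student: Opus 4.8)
The plan is to prove both directions of the equivalence, using the explicit description of 2-cells in $\Span$ given in \Cref{Def:Span-bicat} together with the fact that a 2-cell $[a,\alpha_1,\alpha_2]=[b,\beta_1,\beta_2]$ precisely when there is an isomorphism $\varphi\colon a\isoEcell b$ in $\GG$ intertwining the 2-cell components. The key observation throughout is that vertical composition of 2-cells in $\Span$ is induced by \emph{vertical pasting and horizontal composition of the underlying 1-cell components} in $\GG$; so invertibility of a 2-cell in $\Span$ should correspond to invertibility (up to isomorphism, \ie being an equivalence) of its 1-cell component in $\GG$.

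For the easy direction, suppose $a\colon P\to Q$ is an equivalence in $\GG$. First I would choose a quasi-inverse $a'\colon Q\to P$ together with isomorphisms $a'a\cong \Id_P$ and $aa'\cong \Id_Q$. Then I would form the candidate inverse 2-cell $[a',\alpha_1',\alpha_2']\colon (G\loto{v}Q\oto{j}H)\Rightarrow (G\loto{u}P\oto{i}H)$, where $\alpha_1'$ and $\alpha_2'$ are obtained from $\alpha_1,\alpha_2$ by whiskering with $a'$ and using the structural isomorphisms $u\cong v a$ (equivalently $\alpha_1\colon u\isoEcell v a$) and $i\cong$ the relevant composite: concretely $\alpha_1'$ is the composite $v \isoEcell v a a' \overset{\alpha_1 a'}{\Longleftarrow}$ unwound appropriately, and similarly for $\alpha_2'$. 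I would then verify that the vertical composite $[a',\dots]\circ[a,\dots]$ equals $[\Id_P,\id_u,\id_i]$ and that $[a,\dots]\circ[a',\dots]$ equals $[\Id_Q,\id_v,\id_j]$; this amounts to exhibiting the isomorphism $\varphi=a'a\cong\Id_P$ (resp.\ $aa'\cong\Id_Q$) as the 2-cell of $\GG$ witnessing equality of the relevant classes, and checking that it intertwines the pasted 2-cell components. This is the routine bookkeeping part.

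For the converse, suppose $[a,\alpha_1,\alpha_2]$ is invertible in $\Span$, with inverse $[b,\beta_1,\beta_2]\colon (G\loto{v}Q\oto{j}H)\Rightarrow (G\loto{u}P\oto{i}H)$, so $b\colon Q\to P$ is in $\JJ$. The two composition relations $[b,\dots]\circ[a,\dots]=[\Id_P,\id_u,\id_i]$ and $[a,\dots]\circ[b,\dots]=[\Id_Q,\id_v,\id_j]$ unpack, by the definition of equality of 2-cells in $\Span$, into the existence of isomorphisms $\varphi\colon ba\isoEcell \Id_P$ and $\psi\colon ab\isoEcell \Id_Q$ in $\GG$ (intertwining the appropriate 2-cell components, though only the bare existence of $\varphi,\psi$ is needed here). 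These exhibit $b$ as a quasi-inverse of $a$, so $a$ is an equivalence in $\GG$. I would take care to note that vertical composition in $\Span$ is literally horizontal composition $ba$ of 1-cell components in $\GG$ up to the identification, so the identity 2-cell's 1-cell component being $\Id_P$ forces $ba\cong\Id_P$ and $ab\cong\Id_Q$.

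The main obstacle I anticipate is purely notational rather than conceptual: correctly tracking the two 2-cell components $\alpha_1,\alpha_2$ and their whiskered versions through vertical composition, and confirming that the witnessing isomorphism $\varphi$ really does satisfy the intertwining conditions $(v\varphi)\alpha_1=\beta_1$-type relations demanded by the equivalence relation on 2-cells. For the converse direction this is not a problem, since one only needs the \emph{existence} of the isomorphisms $ba\cong\Id_P$ and $ab\cong\Id_Q$, which is immediate once the two composites are identified with identity 2-cells. For the forward direction the bookkeeping is heavier, but it is standard "mate/whiskering" manipulation, and the self-dual form of the construction means the two checks are symmetric, so I would only write one out in detail and invoke symmetry for the other.
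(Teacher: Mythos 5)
Your proposal is correct and follows essentially the same route as the paper: the converse direction by unpacking the two identity 2-cells into isomorphisms $ba\cong\Id_P$ and $ab\cong\Id_Q$ (which the paper dismisses as ``clear''), and the forward direction by building an explicit candidate inverse 2-cell out of a quasi-inverse of~$a$. The one place where the paper is slicker is exactly the step you wave at with ``symmetry'': rather than verifying both vertical composites, the paper checks only that the candidate is a \emph{right} inverse, observes that the candidate's own 1-cell component is again an equivalence so it too admits a right inverse by the same argument, and concludes formally that the two 2-cells are mutually inverse---thereby avoiding the (true, but not literally symmetric, and mildly delicate) second verification.
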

\begin{proof}
Clearly if $[a,\alpha_1,\alpha_2]$ is an invertible arrow of the category $\Span (G,H)$ then the 1-cell $a$ is invertible in~$\GG$ up to isomorphism, \ie is an equivalence. Conversely, assume that $a$ is an equivalence in~$\GG$, so that we find a 1-cell $b\colon Q\to P$ and isomorphisms $\varepsilon\colon ab \isoEcell \Id_{Q}$ and $\eta\colon\Id_{P} \isoEcell ba$.
Define a 2-cell as follows:
\begin{equation} \label{eq:inverse-2cell}
\vcenter{\xymatrix{
G\ar@{=}[dd] \ar@{}[ddr]|{\alpha_1\inv\Scell} &
 Q \ar[l]_-v \ar@{}[ddr]|{\varepsilon\inv\SEcell} &
 Q \ar[l]_-\Id \ar[r]^-\Id \ar[dd]^<<<<<b &
 Q \ar[r]^-j \ar@{}[ddl]|{\NEcell \,\varepsilon} &
 H \ar@{=}[dd] \ar@{}[ddl]|{\Ncell\alpha_2^{-1}} \\
&& && \\
G && P \ar@/^4ex/[uul]^-{a} \ar@/_4ex/[uur]_-{a} \ar[ll]^-u \ar[rr]_-{i} &&
 H
}}
\end{equation}
Composing vertically, we get
\begin{align*}
\xymatrix{
G\ar@{=}[dd] \ar@{}[ddr]|{\alpha_1\inv\Scell} &
 Q \ar[l]_-v \ar@{}[ddr]|{\varepsilon\inv\SEcell} &
 Q \ar[l]_-\Id \ar[r]^-\Id \ar[dd]^<<<<<b &
 Q \ar[r]^-j \ar@{}[ddl]|{\NEcell \,\varepsilon} &
 H \ar@{=}[dd] \ar@{}[ddl]|{\Ncell\alpha_2^{-1}} \\
&& && \\
G \ar@{=}[d] \ar@{}[rrd]|{\SEcell\,\alpha_1} &&
 P \ar[d]^a \ar@/^4ex/[uul]^-{a} \ar@/_4ex/[uur]_-{a} \ar[ll]^-u \ar[rr]_-{i} \ar@{}[rrd]|{\NEcell\,\alpha_2} &&
 H \ar@{=}[d] \\
G &&
 Q \ar[ll]^-{v} \ar[rr]_-{j} &&
 H\,.
}
\end{align*}
which is isomorphic to $(\Id_Q,\id_v,\id_j)$ via $\varepsilon\colon ab \isoEcell \Id_Q$. Therefore~\eqref{eq:inverse-2cell} is a right inverse of $[a,\alpha_1,\alpha_2]$. But~\eqref{eq:inverse-2cell} is also a 2-cell whose middle 1-cell component is an equivalence, therefore by the same argument it admits itself a right inverse. It follows formally that $[a,\alpha_1,\alpha_1]$ and~\eqref{eq:inverse-2cell} are mutually inverse 2-cells.
\end{proof}
\begin{Not} \label{Not:Bicat-1-cells}
Given 1-cells $u,i\in \GG_1$ with $i\in \JJ$, we will use the notation
\[ i_!u^* := (
\xymatrix@C=2em{
 G & P \ar[l]_-{u} \ar[r]^-{i} & H
} )
\quad \in \quad
\Span(G,H) \]
for the associated 1-cell of~$\Span$, as well as the short-hand
\[
i_! := i_!\Id^* = (
\xymatrix@C=2em{
 P & P \ar[l]_-\Id \ar[r]^-{i} & H
} )
\quadtext{and}
u^* := \Id_!u^* = (
\xymatrix@C=2em{
 G & P \ar[l]_-{u} \ar[r]^-\Id & P
} ) \,. \]
\end{Not}
\begin{Rem} \label{Rem:notation_assoc}
With this notation, for an arbitrary 1-cell $i_!u^*$ of $\Span$ there is a canonical isomorphism $i_!u^* \isoEcell i_! \circ u^*$ given by the following diagram
\begin{equation*}
\xymatrix{
G \ar@{=}[d] \ar@{-->}@/^4ex/[rrrr]^-{i_!u^*} &&
 P \ar[d]^-{\Delta_P} \ar[ll]_-{u} \ar[rr]^-{i} &&
 H\ar@{=}[d] \\
G \ar@/_2ex/@{-->}[rrd]_-{u^*} &
 P \ar[l]_-{u} \ar[dr]_-\Id &
 \Id/ \Id \ar[l]_-{\pr_1} \ar[r]^-{\pr_2} \ar@{}[d]|{\oEcell{\gamma}} &
 P \ar[r]^-{i} \ar[dl]^-{\Id} &
 H \\
&& P \ar@/_2ex/@{-->}[rru]_-{i_!} &&
}
\end{equation*}
where $\Delta_P=\langle\Id_P, \Id_P, \id_{\Id_P}\rangle$. Indeed $\Delta_P$ is an equivalence by Remark~\ref{Rem:units}, hence by Lemma~\ref{Lem:equiv-are-iso} the above 2-cell $[\Delta_P,\id,\id]$ is invertible. Explicitly, both
\begin{equation}
\label{eq:inverse-DeltaP}%
\vcenter{
\xymatrix@C=12pt{
G \ar@{=}[d] &
 \ar[l]_-{u} P
& (\Id/\Id) \ar[d]_-{\pr_1} \ar[l]_-{\pr_1} \ar[r]^-{\pr_2}
 \ar@{}[dll]|{\id\;\SEcell} \ar@{}[rrd]|{\NEcell\;i\gamma}
& P \ar[r]^-{i}
& H \ar@{=}[d] \ar@{}[rrd]|*{\textrm{and}} &&
G \ar@{=}[d]
& P \ar[l]_-{u}
& (\Id/\Id) \ar[d]^-{\pr_2} \ar[l]_-{\pr_1} \ar[r]^-{\pr_2}
 \ar@{}[dll]|{\SEcell\; u \gamma} \ar@{}[rrd]|{\NEcell\;\id}
& P \ar[r]^-{i} &
 H \ar@{=}[d] \\
G&& P \ar[ll]^-u \ar[rr]_-{i} &&H &&
G && P \ar[ll]^-u \ar[rr]_-{i} && H
}}\kern-2em
\end{equation}
provide diagrams representing its inverse.
\end{Rem}

\begin{Rem}
\label{Rem:Span-comp-up-to-iso}%
We have defined horizontal composition in~$\Span(\GG;\JJ)$ by means of iso-commas; see~\eqref{eq:horiz-comp-Span}. In view of our discussion of \emph{Mackey squares} in~\Cref{sec:mackey-squares}, the reader might wonder whether one can get away with filling-in any Mackey square instead of the iso-comma in~\eqref{eq:horiz-comp-Span}. As long as we want a well-defined horizontal composition, it is necessary to choose an actual composite.\,(\footnote{\,This is of course a place where a post-modernist reader would be equally satisfied with a contractible space of choices. But we follow a more patriarchal definition of bicategory.}) However, in practice, a Mackey square often comes to mind more easily than the precise iso-comma. A good example is of course the one discussed in~\Cref{Rem:units}. Thanks to the above \Cref{Lem:equiv-are-iso}, we know that horizontal composition with the iso-comma is isomorphic to the result of `composing' by means of a Mackey square. We shall return to this idea in the next chapter, see specifically~\Cref{Rem:vertical-comp-Spanhat}.
\end{Rem}

\begin{Cons}
\label{Cons:first-embeddings}%
Following \Cref{Not:Bicat-1-cells}, we define two pseudo-functors
\[
(-)^*\colon \GG^{\op}\hook \Span(\GG;\JJ)
\qquadtext{and}
(-)_!\colon \JJ^{\co} \hook \Span(\GG;\JJ)
\]
where $\JJ$ is seen as a 2-full subcategory of~$\GG$ for the latter. Both constructions are the identity on 0-cells. The first pseudo-functor $(-)^*$ sends a 1-cell $u\colon H\to G$ to~$u^*\colon G\to H$ and sends a 2-cell $\alpha\colon u\Rightarrow v$ to the 2-cell $\alpha^*\colon u^*\Rightarrow v^*$ represented by
\begin{align*}
\xymatrix{
G \ar@{=}[d]
&& H \ar[ll]_-{u} \ar[rr]^-\Id \ar[d]^\Id
 \ar@{}[dll]|{\SEcell\,\alpha} \ar@{}[rrd]|{\NEcell\,\id}
&& H \ar@{=}[d]
\\
G
&& H \ar[ll]^-{v} \ar[rr]_-\Id
&& H\,.\!
}
\end{align*}
The second pseudo-functor $(-)_!$ sends a 1-cell $i\colon H\into G$ (in~$\JJ$) to $i_!\colon H\to G$ and sends a 2-cell $\alpha\colon i\Rightarrow j$ to the 2-cell $\alpha_!\colon j_!\Rightarrow i_!$ represented by
\begin{align*}
\xymatrix{
G \ar@{=}[d]
&& G \ar[ll]_-{\Id} \ar[rr]^-{j} \ar[d]^\Id
 \ar@{}[dll]|{\SEcell\,\id} \ar@{}[rrd]|{\NEcell\,\alpha}
&& H \ar@{=}[d]
\\
G
&& G \ar[ll]^-{\Id} \ar[rr]_-{i}
&& H\,.\!
}
\end{align*}
\end{Cons}

\begin{Rem}
\label{Rem:pseudo-func-embeddings}%
Both $(-)^*$ and $(-)_!$ in \Cref{Cons:first-embeddings} are pseudo-functors, not strict 2-functors. Indeed, for composable 1-cells $K\oto{v}H\oto{u}G$ the canonical equivalence $\langle v, \Id_K, \id_v\rangle\colon K\isoto (\Id_H/v)$
\begin{equation*}
\xymatrix{
G \ar@{=}[d] \ar@{-->}@/^4ex/[rrrr]^-{(uv)^*} &&
 K \ar[d]^-{\simeq} \ar[ll]_-{uv} \ar[rr]^-{\Id} &&
 K\ar@{=}[d] \\
G \ar@/_2ex/@{-->}[rrd]_-{u^*} &
 H \ar[l]_-{u} \ar[dr]_-\Id
& (\Id/v) \ar[l]_-{\pr_1} \ar[r]^-{\pr_2} \ar@{}[d]|{\Ecell } &
 K \ar[r]^-{\Id} \ar[dl]^-{v} &
 K \\
&& H \ar@/_2ex/@{-->}[rru]_-{v^*} &&
}
\end{equation*}
constructed as in Remark~\ref{Rem:units} only yields canonical isomorphisms $(u v)^*\isoto v^* \circ u^*$ rather than equalities. Similarly, for $K\oto{j}H\oto{i}G$ in~$\JJ$, the canonical equivalence $\langle \Id_{K},j,\id_j\rangle\colon K\isoto (j/\Id_H)$ yields the canonical isomorphisms $(i j)_!\isoto i_!\circ j_!$.
One verifies immediately that these pseudo-functors are 2-fully faithful, \ie are bijective on each set of parallel 2-cells. We will thus consider them as embeddings of our 2-categories~$\GG$ and~$\JJ$ into the bicategory $\Span(\GG;\JJ)$.
\end{Rem}

\begin{Rem}
\label{Rem:co-on-alpha_!}%
The 2-contravariance of~$(-)_!$ in the above \Cref{Cons:first-embeddings} is opposite to the 2-covariance of~$(-)^*$. We are going to prove in \Cref{Prop:first-adjoints} that $i_!$ is left adjoint to $i^*$ in the bicategory $\Span(\GG;\JJ)$. Therefore, given a 2-cell $\alpha\colon i\Rightarrow j$ in~$\JJ$, the 2-cell $\alpha^*\colon i^*\Rightarrow j^*$ of $\Span$ admits a mate $\alpha_!\colon j_!\Rightarrow i_!$ (compare \Cref{Rem:pseudo-func-of-adjoints}) which will be seen to agree with the homonymous 2-cell $\alpha_!$ constructed above (\Cref{Rem:special-mate}).
\end{Rem}

Recall the internal definition of adjunctions in a bicategory from \Cref{Ter:internal}\,\eqref{it:adjunction}. The essential advantage that $\Span(\GG;\JJ)$ has over $\GG$ is that 1-cells in~$\JJ$ get a left adjoint in~$\Span(\GG;\JJ)$ under the embedding discussed above.
\begin{Prop} \label{Prop:first-adjoints}
Let $i\colon H\to G$ be any element of the distinguished class~$\JJ\subset \GG_1$.
Then there exists in~$\Span(\GG;\JJ)$ a canonical adjunction $i_! \dashv i^*$, with explicit units and counits respectively given in~\eqref{eq:can_unit} and~\eqref{eq:can_counit} below.
\end{Prop}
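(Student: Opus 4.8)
The plan is to exhibit the unit and counit explicitly and then verify the two triangle (zigzag) identities internal to the bicategory, in the sense of \Cref{Ter:internal}\,\eqref{it:adjunction}. First I would identify the two composite 1-cells involved. By the horizontal composition rule~\eqref{eq:horiz-comp-Span}, the composite $i^*\circ i_!\colon H\to H$ is built from the iso-comma of $H\oto{i}G\loto{i}H$, that is, from the self-iso-comma $(i/i)$ of~\eqref{eq:self-ic}, and equals the span $H\loto{p_1}(i/i)\oto{p_2}H$. Dually, $i_!\circ i^*\colon G\to G$ is computed from the iso-comma of $H\oto{\Id}H\loto{\Id}H$, namely $(\Id_H/\Id_H)$, yielding the span $G\loto{i\,\pr_1}(\Id/\Id)\oto{i\,\pr_2}G$, where $\pr_1,\pr_2$ are the two projections and $\gamma\colon\pr_1\isoEcell\pr_2$ is the structure 2-cell of that iso-comma (\Cref{Rem:units}).

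I would then define the unit and counit by
\begin{equation}
\label{eq:can_unit}
\eta := [\,\Delta_i,\ \id,\ \id\,]\colon\ \Id_H \Longrightarrow i^*\circ i_! = \big(\,H \loto{p_1} (i/i) \oto{p_2} H\,\big),
\end{equation}
\begin{equation}
\label{eq:can_counit}
\eps := [\,i\,\pr_1,\ \id,\ i\gamma\,]\colon\ i_!\circ i^* = \big(\,G \loto{i\,\pr_1} (\Id/\Id) \oto{i\,\pr_2} G\,\big) \Longrightarrow \Id_G,
\end{equation}
where $\Delta_i=\langle\Id_H,\Id_H,\id_i\rangle\colon H\to(i/i)$ is the diagonal of \Cref{Prop:Delta} (which makes sense here since $i\in\JJ$ is faithful by \Cref{Hyp:G_and_I_for_Span}\,\eqref{Hyp:G-I-c}). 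The equations of~\eqref{eq:can_unit} use $p_1\Delta_i=p_2\Delta_i=\Id_H$, so that the displayed diagram commutes strictly with identity 2-cell components. The first point to check is that these are legitimate 2-cells of $\Span$ (\Cref{Def:Span-bicat}), \ie that their 1-cell components lie in~$\JJ$. For the unit this is the key use of the closure hypotheses: from $p_2\Delta_i=\Id_H\in\JJ$ and the cancellation property ``if $ij\in\JJ$ then $j\in\JJ$'' of \Cref{Hyp:G_and_I_for_Span}\,\eqref{Hyp:G-I-a}, we get $\Delta_i\in\JJ$. For the counit, $i\,\pr_1\in\JJ$ because $\pr_1$ is an equivalence (in~$\JJ$) and $\JJ$ is closed under horizontal composition.

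The heart of the argument is the verification of the two triangle identities $(i^*\ast\eps)\circ(\eta\ast i^*)=\id_{i^*}$ and $(\eps\ast i_!)\circ(i_!\ast\eta)=\id_{i_!}$, read modulo the associators and unitors of $\Span$. I would carry out each whiskering explicitly: whiskering a 2-cell such as $\eta$ or $\eps$ by a 1-cell re-introduces an iso-comma square (again via~\eqref{eq:horiz-comp-Span}), and the resulting composite 1-cell component is then pinned down uniquely by the universal property of that iso-comma (\Cref{Def:comma}). In each case the triangle identity reduces to an equality of induced 1-cells into an iso-comma, which in turn follows from on-the-nose identities in~$\GG$: the defining relations $p_1\Delta_i=p_2\Delta_i=\Id_H$ and $\lambda\Delta_i=\id_i$ of \Cref{Prop:Delta}, together with the section/retraction data for $(\Id_H/\Id_H)$ supplied by \Cref{Rem:units}. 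Along the way, \Cref{Lem:equiv-are-iso} is used to recognize that the comparison cells appearing in the associators and unitors (whose 1-cell components are the canonical equivalences $\langle\cdots\rangle$) are invertible, so they cancel as required.

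The main obstacle is precisely this bookkeeping: every whiskered horizontal composite, and every associator or unitor inserted to make the two sides of a triangle identity composable, spawns a fresh iso-comma square, so the computation involves comparing several nested iso-comma objects and their structure 2-cells. The delicate part is to organize these comparisons so that each equality of 2-cells in $\Span$ is reduced, by the relevant universal property, to a strict identity in~$\GG$ that holds by construction of $\Delta_i$, $\pr_1$, $\pr_2$ and $\gamma$; once the reduction is set up, no genuine computation remains, only the invocation of uniqueness in \Cref{Def:comma} and the invertibility criterion of \Cref{Lem:equiv-are-iso}.
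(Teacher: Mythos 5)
Your proposal is correct and takes essentially the same approach as the paper: you produce the same unit $[\Delta_i,\id,\id]$ and the same counit $[i\,\pr_1,\id,i\gamma_H]$ (which the paper writes as the composite $[i,\id,\id]\circ[\Delta_H,\id,\id]^{-1}$ before identifying it with this explicit form), and you verify the triangle identities by the same reduction, via the universal property of the iso-commas, to on-the-nose identities of induced 1-cells in $\GG$ --- the paper carries this out by testing the middle-column composite against arbitrary 1-cells $t\colon T\to H$ and concluding with \Cref{Cor:2cat_Yoneda_equivs}. Your explicit check that the 1-cell components $\Delta_i$ and $i\,\pr_1$ lie in $\JJ$ is a detail the paper leaves implicit, and is a worthwhile addition.
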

\begin{proof}
The unit $\eta\colon \Id_H\Rightarrow i^*\circ i_!$ of the adjunction is given by the 2-cell $[\Delta_i,\id,\id]$ where $\Delta_i\colon H\to (i/i)$ is given by $\Delta_i=\langle \Id_H,\Id_H,\id_i\rangle$:
\begin{equation} \label{eq:can_unit}
\vcenter{\xymatrix{
H \ar@{=}[d] &&
 H \ar[d]^-{\Delta_i} \ar[ll]_-{\Id} \ar[rr]^-{\Id} &&
 H\ar@{=}[d] \\
H \ar@/_2ex/@{-->}[rrd]_-{i_!} &
 H \ar[l]_-\Id \ar[dr]_-{i} &
 (i/i) \ar[l]_-{\pr_1} \ar[r]^-{\pr_2} \ar@{}[d]|{\oEcell{\gamma_i}} &
 H \ar[r]^-\Id \ar[dl]^-{i} &
 H \\
&& G \ar@/_2ex/@{-->}[rru]_-{i^*} &&
}}
\end{equation}
The counit $\varepsilon\colon i_!\circ i^* \Rightarrow \Id_G$ is the following composite $[i,\id,\id]\circ [\Delta_H,\id,\id]\inv$, where $\Delta_H=\langle\Id_H, \Id_H, \id_{\Id_H}\rangle\colon H\isoto (\Id_H/\Id_H)$:
\begin{equation} \label{eq:can_counit}
\vcenter{\xymatrix{
&& H \ar@/^2ex/@{-->}[rrd]^-{i_!} && \\
G\ar@{=}[d] \ar@/^2ex/@{-->}[rru]^-{i^*} &
 H \ar[l]_-{i} \ar[ur]^-{\Id} &
 (\Id/\Id) \ar@{}[u]|{\oEcell{\gamma_H}} \ar[l]^-{\pr_1} \ar[r]_-{\pr_2} &
 H \ar[r]^-{i} \ar[ul]_-{\Id} &
 G \ar@{=}[d] \\
G \ar@{=}[d] && H \ar[ll]_-{i} \ar[rr]^-{i} \ar[u]_-{\Delta_H}^\simeq \ar[d]^i &&
 G \ar@{=}[d] \\
G && G \ar[ll]_-{\Id} \ar[rr]^-{\Id} && G
}}
\end{equation}
(In the above two diagrams, we sloppily denote by the same symbols $\pr_1$ and $\pr_2$ the two projections of each comma square, although we take care to distinguish their 2-cells.) Note that $[\Delta_H,\id,\id]$ is an instance of the isomorphism of Remark~\ref{Rem:notation_assoc}, namely $i_!\circ i^*\cong i_!i^*$. Thus there are two canonical diagrams representing $[\Delta_{\Id},\id,\id]\inv$, and therefore $\varepsilon = [i,\id,\id]\circ [\Delta_H,\id,\id]\inv = [i \pr_1 , \id , i \gamma_H] = [ i \pr_2 , i \gamma_H , \id]$.

We must verify the two triangle identities
$ (\varepsilon i_! )(i_! \eta) = \id_{i_!}$
and
$(i^*\varepsilon)(\eta i^*)=\id_{i^*}$, which involve the associators and unitors of the bicategory~$\Span$.
For the first identity, we compute the whiskerings $i_! \eta$ and $\varepsilon i_!$ as follows (recall the notation $(A\diagup_{\!\!\!\scriptscriptstyle C}\, B)$ from the end of \Cref{Def:comma}):
\begin{align*}
\xymatrix{
&& && & (H\diagup_{\!\!\!\scriptscriptstyle H}\, H) \ar[dd]|(.7){\langle\Delta_i \pr_1, \pr_2,\gamma_H \rangle}
 \ar@/_1em/[dlll]_-{\pr_1} \ar@/^1em/[dr]^-{\pr_2} \ar@{}[dl]|{\oEcell{\gamma_H} \;\;\;}
& &&
\\
H \ar@{=}[dd]
&& H \ar[ll]_-{\Id} \ar[rr]^(.6){\Id} \ar[dd]^-{\Delta_i}
&& H \ar@{=}[dd]|(.59){\hole}
&& H \ar@{=}[dd] \ar[ll]|(.5){\hole}_(.3){\Id} \ar[rr]^-{i}
&& G \ar@{=}[dd]
\\
&& && & ((H\diagup_{\!\!\!\scriptscriptstyle G}\, H)\diagup_{\!\!\!\scriptscriptstyle H}\, H) \ar@/_1em/[dlll]_(.6){\pr_1} \ar@/^1em/[dr]^(.55){\pr_2} \ar@{}[d]|{\Ecell }
& &&
\\
H
&& (H\diagup_{\!\!\!\scriptscriptstyle G}\, H) \ar[ll]^-{\pr_1} \ar[rr]_-{\pr_2}
&& H
&& H \ar[ll]^-{\Id} \ar[rr]_-{i}
&& G
}
\end{align*}
\begin{align*}
\xymatrix{
&& && & (H\diagup_{\!\!\!\scriptscriptstyle G}\, (H\diagup_{\!\!\!\scriptscriptstyle H}\, H))
 \ar[dd]|(.7){\langle\pr_1, i \pr_2\pr_2,(i\gamma_H \pr_2)\delta\rangle \;\;} \ar@/_1em/[dlll]_-{\pr_1} \ar@/^1em/[dr]^-{\pr_2} \ar@{}[dl]|{\oEcell{\delta} \;\;\;} & &&
\\
H \ar@{=}[dd]
&& H \ar[ll]_-{\Id} \ar[rr]^(.6){i} \ar@{=}[dd]
&& G \ar@{=}[dd]|(.62){\hole}
&& (H\diagup_{\!\!\!\scriptscriptstyle H}\, H) \ar[dd]^-{i \pr_2} \ar[ll]|(.55){\hole}_(.35){i \pr_1} \ar[rr]^-{i \pr_2}
 \ar@{}[ld]^(.3){\SEcell\,i\gamma_H}
&& G \ar@{=}[dd]
\\
&& && & (H\diagup_{\!\!\!\scriptscriptstyle G}\, G) \ar@/_1em/[dlll]_(.6){\pr_1} \ar@/^1em/[dr]^(.5){\pr_2} \ar@{}[d]|{\oEcell{\lambda}\quad } & &&
\\
H
&& H \ar[ll]^-{\Id} \ar[rr]_-{i} && G && G \ar[ll]^-{\Id} \ar[rr]_-{\Id} && G
}
\end{align*}
Now we combine them with the appropriate associator and unitors, and we claim that the resulting composite 2-cell
\[
\xymatrix@L=1ex{
i_! \ar@{=>}[d]_-{\run\inv}
&& H \ar@{=}[d]
&& H \ar[d]^-{\Delta_H}_{\simeq} \ar[ll]_-{\Id} \ar[rr]^-{i}
&& G \ar@{=}[d]
\\
i_! \circ \Id_H \ar@{=>}[d]_-{i_! \eta}
&& H \ar@{=}[d] & H \ar[l]_-{\Id} \ar[d]_-{\Delta_i}
& (H\diagup_{\!\!\!\scriptscriptstyle H}\, H) \ar[d]|{\langle\Delta_i \pr_1,\pr_2,\gamma_H\rangle} \ar[r]^-{\pr_2} \ar[l]_-{\pr_1} & H \ar[r]^-{i} \ar@{=}[d] & G \ar@{=}[d]
\\
i_! \circ (i^* \circ i_!) \ar@{=>}[d]_-{\ass\inv}
&& H \ar@{=}[d]
& (H\diagup_{\!\!\!\scriptscriptstyle G}\, H) \ar[l]_-{\pr_1}
& ((H\diagup_{\!\!\!\scriptscriptstyle G}\, H)\diagup_{\!\!\!\scriptscriptstyle H}\, H) \ar[d]^-{\cong} \ar[l]_-{\pr_1} \ar[r]^-{\pr_2} & H \ar[r]^-{i} & G \ar@{=}[d]
\\
(i_! \circ i^*) \circ i_! \ar@{=>}[d]_-{\varepsilon i_!}
&& H \ar@{=}[d]
&& (H\diagup_{\!\!\!\scriptscriptstyle G}\, (H\diagup_{\!\!\!\scriptscriptstyle H}\, H)) \ar[d]|{\langle\pr_1, i\pr_2\pr_2,(i \gamma_H \pr_2)\delta\rangle\;\;} \ar[ll]_-{\pr_1} \ar[rr]^-{i \pr_2 \pr_2}
&& G \ar@{=}[d]
\\
\Id_G \circ i_! \ar@{=>}[d]_-{\lun}
&& H \ar@{=}[d]
&& (H\diagup_{\!\!\!\scriptscriptstyle G}\, G) \ar[d]^-{\pr_1} \ar[ll]_-{\pr_1} \ar[rr]^-{\pr_2}
&& G \ar@{=}[d] \ar@{}[dll]|{\NEcell\; \lambda}
\\
i_! &&
H && H \ar[ll]_-{\Id} \ar[rr]_-{i} && G
}
\]
is the identity of~$i_!$. To prove this, it suffices to verify that by composing down the middle column except for the last step~$\pr_1$, we obtain the 1-cell $\langle \Id_H, i, \id_i\rangle\colon H\to (H\diagup_{\!\!\!\scriptscriptstyle G}\, G)$, because this would imply that the corresponding 2-cell of $\Span$ is the inverse of the left unitor~$\lun\colon \Id_G\circ i_!\Rightarrow i_!$.

The only non-evident part of this is to correctly identify the 2-cell components. 
One way to compute it efficiently is to precompose the above composite $H\to (H\diagup_{\!\!\!\scriptscriptstyle G}\, G)$ with an arbitrary 1-cell $t\colon T\to H$ and then exploit the element-wise description of comma squares in the 2-category of categories as detailed in Remark~\ref{Rem:commas_translated}. In this way it is easy to see that by composing any $t\colon T\to H$ down to $(H\diagup_{\!\!\!\scriptscriptstyle G}\, G)$ we obtain the 1-cell $\langle t,it,\id_{it}\rangle$, which is the required result, and then we conclude with Corollary~\ref{Cor:2cat_Yoneda_equivs}.

The verification of the second triangle identity is completely similar and is left to the reader.
\end{proof}

\begin{Rem}
\label{Rem:special-mate}%
As announced in \Cref{Rem:co-on-alpha_!}, for a 2-cell $\alpha\colon i\Rightarrow j$ in~$\JJ$, the corresponding 2-cell $\alpha_!$ in~$\Span(\GG;\JJ)$ given in \Cref{Cons:first-embeddings} is in fact the mate of the 2-cell $\alpha^*\colon i^*\Rightarrow j^*$ under the adjunctions $i_!\adj i^*$ and $j_!\adj j^*$ of \Cref{Lem:BC-for-Span}. This verification is left to the reader but it justifies the chosen 2-\emph{contra}variance of the pseudo-functor $(-)_!\colon \JJ^{\co}\to \Span(\GG;\JJ)$.
\end{Rem}

By \Cref{Cons:first-embeddings} and Proposition~\ref{Prop:first-adjoints}, we see that $\Span(\GG;\JJ)$ is an enlargement of $\GG$ which accommodates a left adjoint $i_!\adj i^*$ for every 1-cell $i\in \JJ$. These adjoints are not freely added to~$\GG$ though, but instead automatically satisfy the following Beck-Chevalley or base-change condition:

\begin{Lem} \label{Lem:BC-for-Span}
The canonical pseudo-functor $\GG^{\op}\to \Span(\GG; \JJ)$ of \Cref{Cons:first-embeddings} has the following property: For any comma square in~$\GG$
\begin{align*}
\xymatrix@C=14pt@R=14pt{
& (i/v) \ar[ld]_-{\tilde v} \ar[dr]^-{\tilde i} & \\
P \ar[dr]_i \ar@{}[rr]|{\oEcell{\gamma}} && Q \ar[dl]^v \\
&H &
}
\end{align*}
with $i\in \JJ$, the mate $\gamma_!$ of the image $\gamma^*$ of $\gamma$ in $\Span(\GG; \JJ)$, with respect to the adjunctions $i_!\dashv i^*$ and $\tilde{i}_!\dashv \tilde{i}^*$ of Proposition~\ref{Prop:first-adjoints}
\begin{align*}
\xymatrix@C=14pt@R=14pt{
& (i/v) \ar[dr]^-{\tilde i_!} & \\
P \ar[ur]^-{\tilde v^*} \ar[rd]_-{i_!} \ar@{}[rr]|{\Scell\, \gamma_!} && Q \\
&H \ar[ru]_-{v^*} &
}
\end{align*}
is invertible in $\Span(\GG;\JJ)$.
\textup(\footnote{\,The meaning of the notation $\gamma_!$ appearing here is \emph{not} the meaning of $(-)_!$ in \Cref{Cons:first-embeddings}. Note that the latter does not necessarily make sense on the 2-cell~$\gamma\colon i\tilde{v}\Rightarrow v\tilde{i}$ since neither $i\tilde{v}$ nor $v\tilde{i}$ are assumed to belong to~$\JJ$. Compare \Cref{Rem:special-mate}.}\textup)
\end{Lem}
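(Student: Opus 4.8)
The plan is to compute both the mate $\gamma_!$ explicitly as a $2$-cell of $\Span(\GG;\JJ)$ and then apply \Cref{Lem:equiv-are-iso}, which tells us that a $2$-cell is invertible exactly when its middle $1$-cell component is an equivalence in~$\GG$. So the real content is just to identify the $1$-cell component of $\gamma_!$ and check it is an equivalence.

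First I would unwind the definition of the mate. By definition $\gamma_!$ is the pasting
\[
\tilde v^* \circ i_! \; \xRightarrow{\;\eta\;} \; \tilde v^* \circ i_! \circ i^* \circ \tilde i_! \; \xRightarrow{\;\gamma^*\;} \; \tilde v^* \circ \tilde v_! \circ \tilde i^* \circ \tilde i_! \; \xRightarrow{\;\eps\;} \; \tilde i_! \circ v^*,
\]
using the unit of $i_!\dashv i^*$ (\Cref{Prop:first-adjoints}) applied to $\tilde i$, the image $\gamma^*$ of the structure $2$-cell, and the counit of $\tilde v_! \dashv \tilde v^*$ — wait, more precisely the counit of the adjunction $\tilde i_!\dashv\tilde i^*$ together with the relation $i\tilde v = v\tilde i$ coming from the comma square. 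Since both $\tilde v^*\circ i_!$ and $\tilde i_!\circ v^*$ are $1$-cells of the form $b_! a^*$ for explicit legs, I would compute their composites via iso-commas as in~\eqref{eq:horiz-comp-Span} and read off, using the explicit units~\eqref{eq:can_unit} and counits~\eqref{eq:can_counit} and the behaviour of $\gamma^*$ under $(-)^*$ in \Cref{Cons:first-embeddings}, the resulting diagram~\eqref{eq:2-cell-of-Span} representing $\gamma_!$. The middle $1$-cell component $a$ that emerges will be the canonical comparison functor between the two iso-commas built from the cospan legs, and I expect it to be precisely an identity or a canonical equivalence induced by the universal property of iso-commas (\Cref{Rem:commas_translated}, \Cref{Rem:assoc}).

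The main obstacle will be the bookkeeping: correctly tracking the $2$-cell components and the associators/unitors of the bicategory through the pasting, so as to present $\gamma_!$ in the normal form $[a,\alpha_1,\alpha_2]$ of~\eqref{eq:2-cell-of-Span}. This is the kind of verification that is conceptually transparent but notationally heavy, exactly the sort of computation \Cref{Rem:notation_assoc} and the projection-functor description of iso-commas are designed to streamline. The cleanest route, I expect, is to precompose everything with an arbitrary test $1$-cell $t\colon T\to P$ and use the element-wise description of comma squares in $\Cat$ (\Cref{Rem:commas_translated}) — exactly the device used at the end of the proof of \Cref{Prop:first-adjoints} — to verify directly that $a$ sends a generic object to an object of the target iso-comma isomorphically, hence is an equivalence by \Cref{Cor:2cat_Yoneda_equivs}.

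Once $a$ is identified as an equivalence, \Cref{Lem:equiv-are-iso} immediately yields that $[a,\alpha_1,\alpha_2]=\gamma_!$ is an invertible $2$-cell of $\Span(\GG;\JJ)$, which is the assertion. I note that faithfulness of $i$ (\Cref{Hyp:G_and_I_for_Span}\eqref{Hyp:G-I-c}) is not needed here, consistent with the remark that it only enters from \Cref{sec:pullback_2cells} onwards; all that is used is the closure of $\JJ$ under iso-commas \eqref{Hyp:G-I-b} guaranteeing the relevant spans are legitimate $1$-cells, and the existence of the adjunctions from \Cref{Prop:first-adjoints}.
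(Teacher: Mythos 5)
Your strategy is correct, and it diverges from the paper's proof in its final step in a way worth spelling out. The paper also unwinds the mate as the explicit pasting (unit of $i_!\dashv i^*$, whiskered $\gamma^*$, counit of $\tilde i_!\dashv \tilde i^*$, with unitors and associators inserted) and also tracks the induced $\langle -,-,-\rangle$-maps through iterated iso-commas using exactly the test-object device of \Cref{Rem:commas_translated} and \Cref{Cor:2cat_Yoneda_equivs} that you propose; but instead of stopping at the $1$-cell component, it proves the stronger statement that $\gamma_!$ is \emph{precisely the inverse} of the canonical isomorphism $\tilde i_!\tilde v^*\isoEcell \tilde i_!\circ\tilde v^*$ of \Cref{Rem:notation_assoc}, by showing that the composite of that canonical isomorphism with $\gamma_!$ is the identity $2$-cell of $\tilde i_!\tilde v^*$. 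This forces the paper to verify that the left and right $2$-cell components of the composite are trivial, which is the most delicate portion of its proof (the final diagrams involving the comparison $2$-cells $\zeta$, $\xi$, $\theta$). Your route --- compute only the middle $1$-cell component of $\gamma_!$, observe that its composite with the canonical equivalence $\Delta\colon (i/v)\to (\Id/\Id)$ is the identity of $(i/v)$, so that it is a retraction of an equivalence and hence itself an equivalence, and conclude by \Cref{Lem:equiv-are-iso} --- legitimately skips that verification, since \Cref{Lem:equiv-are-iso} is insensitive to the $2$-cell components. What you lose is the explicit identification of an inverse; what you gain is a shorter proof of invertibility as literally asserted. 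Note, however, that you cannot wholly ignore $2$-cell data even on your route: the $1$-cell component of each whiskered factor (unit, $\gamma^*$, counit) is a map $\langle -,-,-\rangle$ induced by the universal property of an iso-comma whose third entry is built from the $2$-cell components of the units and counits \eqref{eq:can_unit} and~\eqref{eq:can_counit}, so most of the bookkeeping survives; only the terminal check disappears.

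Two slips to correct when writing this up. The displayed pasting is garbled: the mate runs $\tilde i_!\circ\tilde v^*\Rightarrow v^*\circ i_!$, not between ``$\tilde v^*\circ i_!$'' and ``$\tilde i_!\circ v^*$'' (which do not even compose), and your intermediate term involves a phantom left adjoint $\tilde v_!$, which need not exist since $\tilde v$ is not assumed to lie in~$\JJ$ --- exactly the pitfall flagged in the Lemma's own footnote. Your in-line self-correction (unit of $i_!\dashv i^*$, then $\gamma^*$, then counit of $\tilde i_!\dashv\tilde i^*$) is the right pasting, so this is presentational rather than a gap. Also, the comma square provides an invertible $2$-cell $\gamma\colon i\tilde v\Rightarrow v\tilde i$, not an equality $i\tilde v=v\tilde i$. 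Finally, your observation that faithfulness of~$i$ is never used is correct and consistent with the paper's remark that Hypothesis~\ref{Hyp:G_and_I_for_Span}\,\eqref{Hyp:G-I-c} only enters from \Cref{sec:pullback_2cells} onwards.
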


\begin{proof}
By definition, the mate $\gamma_!$ in question is the 2-cell of $\Span(\GG;\JJ)$ defined by the following pasting, where $\eta$ and $\varepsilon$ are the unit and counit of the adjunction:
\begin{equation}\label{eq:gamma_!}
\vcenter{\xymatrix@C=14pt@R=14pt{
&&& Q \ar@{}[dd]|(.4){\oEcell{\varepsilon}} & \\
&& i/v \ar[ur]^-{\tilde i_!} && \\
& P \ar[ur]^-{\tilde v^*} \ar@{}[rr]|{\oEcell{\gamma^*}} && Q \ar[ul]_-{\tilde i^*} \ar@/_4ex/[uu]_-{\Id} & \\
 && H \ar[ru]_-{v^*} \ar[ul]^-{i^*} && \\
& P \ar@/^4ex/[uu]^-{\Id} \ar[ru]_-{i_!} \ar@{}[uu]|(.4){\oEcell{\eta}} &&&
}}
\end{equation}
Making the unitors and associators explicit, $\gamma_!$ is the following composite:
\[
\xymatrix@C=14pt@R=8pt@L=1ex{
\tilde i_! \circ \tilde v^* \ar@{=>}[r]^-{\sim} &
(\tilde i_! \circ \tilde v^*) \circ \Id_P \ar@{=>}[r]^-{\eta} &
(\tilde i_! \circ \tilde v^*) \circ (i^* \circ i_!) \ar@{=>}[r]^-{\sim} &
(\tilde i_! \circ (\tilde v^* \circ i^*)) \circ i_! \ar@{=}[r]^-{\gamma^*} & \cdots \\
\cdots \ar@{=>}[r]^-{\gamma^*} &
(\tilde i_! \circ (\tilde i^* \circ v^*)) \circ i_! \ar@{=>}[r]^-{\sim} &
(\tilde i_! \circ \tilde i^* ) \circ ( v^* \circ i_! ) \ar@{=>}[r]^-{\varepsilon} &
\Id_Q \circ ( v^* \circ i_! ) \ar@{=>}[r]^-{\sim} &
v^* \circ i_!
}
\]
Note that by the definition of horizontal composition the target span is $v^* \circ i_! = \tilde i_! \tilde v^*$.

We must show that $\gamma_!$ is invertible. We claim that $\gamma_!$ is in fact the inverse of the canonical isomorphism $\tilde i_!\tilde v^* \isoEcell \tilde i_! \circ \tilde v^*$ of Remark~\ref{Rem:notation_assoc}. In order to prove our claim, we explicitly compute the composite morphism of spans
\[
\xymatrix@L1ex{
\tilde i_!\tilde v^*
 \ar@{=>}[r]^-{\sim} &
\tilde i_! \circ \tilde v^*
 \ar@{=>}[r]^-{\gamma_!} &
\tilde i_!\tilde v^*
}
\]
and show it is equal to~$\id_{\tilde i_! \tilde v^*}$. This can be done as follows.

As a first step, we compute the composite
$\tilde i_! \tilde v^* \Rightarrow (\tilde i_! \circ (\tilde v^* \circ i^*)) \circ i_!$
of the first four maps. This is straightforward, as the 2-cell components of these maps are all identities, so this amounts to determining the composite 1-cell component. The target span is
\[
\xymatrix@!C=16pt@R=12pt{
&&&& (i/ (P/\tilde v))/(i/v) \ar[dl] \ar[dddrrr] &&&& \\
&&& i/(P/\tilde v) \ar[ddll] \ar[dr] \ar@{}[ddrrrr]|{\oEcell{\rho}} &&&&& \\
&&&& P/\tilde v \ar[dl] \ar[dr] &&&& \\
& P \ar[dl] \ar[dr]^i \ar@{}[urrr]|{\Ecell} &&
 P \ar[dl]_i \ar[dr] \ar@{}[rr]|{\oEcell{\delta}} &&
 i/v \ar[ld]_-{\tilde v} \ar[dr] &&
 i/v \ar[dl] \ar[dr]^-{\tilde i} & \\
P \ar@{..>}[rr]_-{i_!}&&
 H \ar@{..>}[rr]_-{i^*} &&
 P \ar@{..>}[rr]_-{\tilde v^*} &&
 i/v \ar@{..>}[rr]_-{\tilde i_!} &&
 Q
}
\]
(where we have labeled $\delta$ and $\rho$ for later use) and we easily compute that the resulting 1-cell is
\begin{align} \label{eq:first_four_maps}
\big\langle \;
\big\langle \tilde v , \langle \tilde v , \Id_{i/v} , \id_{\tilde v} \rangle , \id_{i \tilde v} \big\rangle \;,\; \Id_{i/v} \;,\;
\id_{\Id_{i/v}}
\; \big\rangle
\colon (i/v) \longrightarrow (i/ (P/\tilde v))/(i/v) \;.
\end{align}

For the second step we take a closer look at the next map,~$\gamma^*$. Note that this $\gamma^*$ is actually defined by the commutative square
\[
\xymatrix@C=14pt@L=1ex{
(\tilde i_! \circ ( \tilde v^* \circ i^*) ) \circ i_!
 \ar@{==>}[rr]^-{\gamma^*}
 \ar@{=>}[d]_-{\simeq} &&
(\tilde i_! \circ ( \tilde i^* \circ v^*) ) \circ i_!
 \ar@{=>}[d]^-{\simeq} \\
(\tilde i_! \circ ( i \tilde v)^* ) \circ i_! \ar@{=>}[rr]^-{\gamma^*} &&
(\tilde i_! \circ (v \tilde i)^* ) \circ i_!
}
\]
where the bottom $\gamma^*$ is, properly speaking, the image of $\gamma$ under the pseudo-functor $\GG^{\op}\to \Span$ and the vertical maps are induced by the structure isomorphisms of the latter (see \Cref{Cons:first-embeddings}). By computing this composite, the top $\gamma^*$ turns out to be the morphism of spans with identity 2-cell components and the 1-cell component
\begin{align} \label{eq:top_gamma*}
& \big\langle \;\;
 \big\langle
 \pr_1 , \langle \tilde i \pr_2 , \pr_2 , \id \rangle \pr_2 , ((\gamma \pr_2) (i \delta)) \pr_2
 \big\rangle \pr_1 \;\;,\;\;
 \pr_2 \;\;,\;\;
 \rho
\;\; \big\rangle \\ \nonumber
& \colon (i/ (P/\tilde v))/(i/v) \longrightarrow ( i / (Q/\tilde i)) / (i/v) \;,
\end{align}
where $\pr_1$ and $\pr_2$ denote, respectively, the left and right projections of the uniquely relevant comma squares, and the target span looks as follows:
\[
{
\xymatrix@!C=16pt@R=12pt{
&&&& ( i / (Q/\tilde i)) / (i/v) \ar[dl] \ar[dddrrr] \ar@{}[rrdddd]|{\Ecell}
&&&& \\
&&& i/ (Q/ \tilde i) \ar[ddll] \ar[dr] \ar@{}[lddd]|{\Ecell}
 &&&&& \\
&&&& Q/\tilde i \ar[dl] \ar[dr] &&&& \\
& P \ar[dl] \ar[dr]^i &&
 Q \ar[dl]_v \ar[dr] \ar@{}[rr]|{\Ecell} &&
 i/v \ar[ld]_-{\tilde i} \ar[dr] &&
 i/v \ar[dl] \ar[dr]^-{\tilde i} & \\
P \ar@{..>}[rr]_-{i_!} &&
 H \ar@{..>}[rr]_-{v^*} &&
 Q \ar@{..>}[rr]_-{\tilde i^*} &&
 i/v \ar@{..>}[rr]_-{\tilde i_!} && Q
}}
\]
It is now straightforward to compose~\eqref{eq:first_four_maps} and~\eqref{eq:top_gamma*} and to see that the resulting 1-cell component is
\begin{align*}
\big\langle \;
 \big\langle \tilde v , \langle \tilde i , \Id_{i/v} , \id_{\tilde i} \rangle , \gamma \big\rangle \;,\;
 \Id_{i/v} \;,\;
 \id_{\Id_{i/v}}
\; \big\rangle
\colon (i/v) \longrightarrow ( i / (Q/\tilde i)) / (i/v)
\end{align*}
(the 2-cell components are still identities, of course).

Next, we further compose with the associator
$(\tilde i_! \circ (\tilde i^* \circ v^*)) \circ i_! \Rightarrow
(\tilde i_! \circ \tilde i^* ) \circ ( v^* \circ i_! )$.
The target span is
\[
\xymatrix@!C=16pt@R=12pt{
&&&& (i/v)/((i/v)/(i/v)) \ar[ddll] \ar[ddrr] &&&& \\
&&&&&&&& \\
&& i/v \ar[ld]_-{\tilde v} \ar[dr]^-{\tilde i} \ar@{}[rrrr]|{\oEcell{\xi}} &&&&
 (i/v)/(i/v) \ar[dl] \ar[dr] && \\
& P \ar[dl] \ar[dr]^i \ar@{}[rr]|{\oEcell{\gamma}} &&
 Q \ar[dl]_v \ar[dr] &&
 i/v \ar[ld]_-{\tilde i} \ar[dr] \ar@{}[rr]|{\oEcell{\theta}} &&
 i/v \ar[dl] \ar[dr]^-{\tilde i} & \\
P \ar@{..>}[rr]_-{i_!} &&
H \ar@{..>}[rr]_-{v^*} &&
Q \ar@{..>}[rr]_-{\tilde i^*} &&
i/v \ar@{..>}[rr]_-{\tilde i_!} && Q
}
\]
(where we have named $\xi$ and $\theta$ for later) and the resulting total composite into it is easily seen to have 1-cell component
\begin{align} \label{eq:comp_first_six}
\big\langle \;
\Id_{i/v} \;,\;
\langle \Id_{i/v} , \Id_{i/v} , \id_{\Id_{i/v}} \rangle \;,\;
\id_{\tilde i}
\;\big\rangle
\colon (i/v) \longrightarrow (i/v) / ((i/v)/(i/v))
\end{align}
and to still have trivial 2-cell components. (Note that~$\gamma$ has now vanished. This is because $\langle \tilde v , \tilde i , \gamma \rangle = \Id_{i/v}$ by the definition of the original comma square.)

For the final step, consider the last two maps.
We have a whiskered counit followed by a left unitor:
\begin{align} \label{eq:last_two_maps}
\xymatrix@L=1ex{
(\tilde i_! \circ \tilde i^* ) \circ ( v^* \circ i_! ) \ar@{=>}[r]^-{\varepsilon (v^* \circ i_!)} &
\Id_Q \circ ( v^* \circ i_! ) \ar@{=>}[r]^-{\lun} &
v^* \circ i_!
}
\end{align}
If we choose the appropriate representative for~$\varepsilon$ of the two available ones, both are given by morphisms of spans with trivial left 2-cell component but \emph{non}trivial right 2-cell component. To wit, their composite~\eqref{eq:last_two_maps} computes as follows (writing $X:= (i/v)/((i/v)/(i/v))$ to save space):
\begin{align*}
\xymatrix{
P
\ar@{=}[dd] && i/v
\ar[ll]_-{\tilde v} &&
 X
 \ar[ll]_-{\pr_1}
 \ar[rr]^-{\pr_2}
 \ar[dd]_-{f \,:=\, \langle \pr_1, \tilde i \pr_1 \pr_2 , \xi \rangle} &&
 (i/v)/(i/v) \ar[r]^-{\pr_2} \ar[d]_-{\pr_1} &
 i/v \ar[r]^-{\tilde i} &
 Q \ar@{=}[dd] \ar@{}[ddll]|{\NEcell \; \tilde i \,\theta} \\
&&&&&& i/v \ar[d]_-{\tilde i} && \\
P \ar@{=}[d] &&
i/v \ar[ll]_-{\tilde v} &&
 (i/v)/Q
 \ar[rr]^-{\pr_2}
 \ar[ll]_-{\pr_1}
 \ar[d]_-{\pr_1} &&
 Q \ar[rr] &&
 Q \ar@{=}[d] \ar@{}[dllll]|{\NEcell \; \tau} \\
P &&&& i/v \ar[llll]^-{\tilde v} \ar[rrrr]_-{\tilde i} &&&& Q
}
\end{align*}
Here $\tau$ denotes the following comma square:
\begin{align*}
\xymatrix@C=14pt@R=14pt{
& (i/v)/Q \ar[ld]_-{\pr_1} \ar[dr]^-{\pr_2} & \\
i/v \ar[rd]_-{\tilde i} \ar@{}[rr]|{\oEcell{\tau}} && Q \ar[dl] \\
&Q &
}
\end{align*}

Now we hit $X$ with~\eqref{eq:comp_first_six}, which we denote~$g\colon i/v\to X$ for short. First, it is clear that the composite 1-cell
$\pr_1 f g $
down the middle is equal to $\Id_{i/v}$. The left 2-cell of the resulting morphism of spans is obviously trivial. It remains to show that the right 2-cell is also trivial. It is as follows:
\begin{align*}
\xymatrix{
i/v
\ar[rrrr]^-{\tilde i}
\ar[d]_-{g \,=\, \langle \Id, \langle \Id, \Id, \id_{\Id} \rangle , \id_{\tilde i} \rangle }
\ar[rrd]^-{\quad\; \langle \Id, \Id, \id_{\Id} \rangle \,=:\, \Delta_{i/v}} &&&&
 Q \ar@{=}[d] \\
 X
 \ar[rr]^-{\pr_2}
 \ar[dd]_-{f \,=\, \langle \pr_1, \tilde i \pr_1 \pr_2 , \xi \rangle} &&
 (i/v)/(i/v) \ar[r]^-{\pr_2} \ar[d]_-{\pr_1} &
 i/v \ar[r]^-{\tilde i} \ar@/^5ex/[dl]
 \ar@{}[dl]|{\NEcell\;\theta} &
 Q \ar@{=}[dd] \\
&& i/v \ar[d]_-{\tilde i} && \\
 (i/v)/Q
 \ar[rr]^-{\pr_2}
 \ar[d]_-{\pr_1} &&
 Q \ar[rr] &&
 Q \ar@{=}[d] \ar@{}[dllll]|{\NEcell \; \tau} \\
i/v \ar[rrrr]_-{\tilde i} &&&& Q
}
\end{align*}
By the very definitions of~$\Delta_{i/v}$, $f$ and~$g$, we have in~$\GG$ whiskerings
\[
\theta \Delta_{i/v} = \id_{\Id_{i/v}}
\quad \quad \quad
\tau f = \xi
\quad \quad \quad
\xi g = \id_{\tilde i}
\]
from which it immediately follows that the above 2-cell is indeed the identity of~${\tilde i}$, as claimed.
\end{proof}

After having checked as in \Cref{Rem:special-mate} that, for 2-cells $\alpha\colon i\Rightarrow j$ with $i,j\in \JJ$, the two meanings of the notation $\alpha_!$ coincide, \Cref{Lem:BC-for-Span} has the following immediate corollary:

\begin{Cor} \label{Cor:update-can-pseudofun}
The pseudo-functors $(-)^*\colon \GG^{\op} \to \Span(\GG;\JJ) \leftarrow \JJ^{\co}:\!\! (-)_!$ of \Cref{Cons:first-embeddings} have the property that for every $i\in \JJ$ there is a canonical adjunction $i_!\dashv i^*$ in $\Span(\GG;\JJ)$.
\qed
\end{Cor}

The following result explains how the 2-cells of~$\Span$ are generated by the 2-cells coming from~$\GG\hook \Span$, by the counits of the adjunctions $a_!\adj a^*$ for all $a\in\JJ$ and by the structural isomorphisms.
\begin{Prop}
\label{Prop:2-cells-of-Span}%
Let $\alpha=[a,\alpha_1,\alpha_2]\colon i_! u^* \Rightarrow j_! v^*$ be an arbitrary 2-cell in~$\Span$ as displayed in~\eqref{eq:2-cell-of-Span}. Then, modulo the canonical isomorphisms $i_!u^*\stackrel{\sim}{\to}i_!\circ u^*$ and $j_!v^*\stackrel{\sim}{\to}j_!\circ v^*$ of \Cref{Rem:notation_assoc}, the following pasting in~$\Span$ is equal to~$\alpha$:
\[
\xymatrix{
G \ar@{=}[d] \ar[rr]^-{u^*} \ar@{}[rrd]|{\simeq \; \Scell\; \alpha_1^*}
&& P \ar@{=}[d] \ar[rr]^-{i_!} \ar@{}[rrd]|{\simeq \; \Scell\; (\alpha_2)_!}
&& H \ar@{=}[d]
\\
G \ar@{}[rrd]|{\Scell\;\cong} \ar@{=}[d] \ar[rr]^-{(v a)^*}
&& P \ar@{}[rrd]|{\Scell\;\cong} \ar@{=}[d] \ar[rr]^-{(j a)_!}
&& H \ar@{=}[d]
\\
G \ar[r]^-{v^*} \ar@{=}[d]
& Q \ar@{}[rrd]|{\Scell\; \varepsilon} \ar@{=}[d] \ar[r]^-{a^*}
& P \ar[r]^-{a_!}
& Q \ar@{=}[d] \ar[r]^-{j_!}
& H \ar@{=}[d]
\\
G \ar[r]_-{v^*}
& Q \ar[rr]_-{\Id}
&& Q \ar[r]_-{j_!}
& H
}
\]
in which we use the 2-functoriality of $(-)^*$ and of $(-)_!$ as in \Cref{Cons:first-embeddings}.
\end{Prop}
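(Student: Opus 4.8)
The plan is to verify the identity by a direct computation inside the bicategory $\Span=\Span(\GG;\JJ)$, composing explicit representatives of the constituent $2$-cells of the right-hand pasting and comparing the outcome with the representative $[a,\alpha_1,\alpha_2]$ of $\alpha$. After inserting the canonical isomorphisms $i_!u^*\isoEcell i_!\circ u^*$ and $j_!v^*\isoEcell j_!\circ v^*$ of \Cref{Rem:notation_assoc}, both the left-hand side $\alpha$ and the right-hand pasting become parallel $2$-cells $i_!\circ u^* \Rightarrow j_!\circ v^*$, so it suffices to produce one representing diagram for the pasting and to match its $1$-cell and $2$-cell components with those of $[a,\alpha_1,\alpha_2]$.

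First I would write down the pieces explicitly. By \Cref{Cons:first-embeddings}, for the $\GG$-$2$-cell $\alpha_1\colon u\Rightarrow va$ the image $\alpha_1^*\colon u^*\Rightarrow (va)^*$ is represented by $[\Id_P,\alpha_1,\id]$, and for $\alpha_2\colon ja\Rightarrow i$ the image $(\alpha_2)_!\colon i_!\Rightarrow (ja)_!$ is represented by $[\Id_P,\id,\alpha_2]$ (recall that $(-)_!$ is $2$-contravariant). The two structural isomorphisms $(va)^*\isoEcell a^*\circ v^*$ and $(ja)_!\isoEcell j_!\circ a_!$ are the canonical ones of \Cref{Rem:pseudo-func-embeddings}, and the counit $\varepsilon\colon a_!\circ a^*\Rightarrow \Id_Q$ is that of \Cref{Prop:first-adjoints}, for which \eqref{eq:can_counit} provides the two explicit representatives $[a\pr_1,\id,a\gamma_P]$ and $[a\pr_2,a\gamma_P,\id]$ (applied to $a$ in place of $i$). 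Note that every iso-comma entering these composites — those computing $i_!\circ u^*$, $(ja)_!\circ (va)^*$ and $a_!\circ a^*$ — is of the trivial shapes treated in \Cref{Rem:units} and \Cref{Rem:notation_assoc}, so its projections are equivalences and the corresponding span $2$-cells are invertible by \Cref{Lem:equiv-are-iso}; this keeps the isomorphism classes under control.

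Next I would carry out the composition in two stages. The horizontal composite $(\alpha_2)_!*\alpha_1^*$ is computed from the horizontal-composition formula of \Cref{Def:Span-bicat}: since both middle $1$-cell components equal $\Id_P$, its $1$-cell component is an equivalence and its surviving $2$-cell components are $\alpha_1$ and $\alpha_2$, pre- and post-whiskered by the comma projections. Transporting this along the pseudo-functoriality isomorphisms rewrites the target as $j_!\circ(a_!\circ a^*)\circ v^*$, at which point whiskering by $\varepsilon$ in the middle collapses $a_!\circ a^*$ to $\Id_Q$. The decisive point is that the counit \eqref{eq:can_counit} contributes precisely the $1$-cell $a\colon P\to Q$ as the $1$-cell component of the result, while the $\GG$-$2$-cells $\alpha_1$ and $\alpha_2$ are carried through and reappear, whiskered by $v$ and by $j$ respectively, as the two $2$-cell components. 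Assembling everything with the appropriate associators and unitors then yields a diagram isomorphic to $[a,\alpha_1,\alpha_2]$, which is the claim.

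The main obstacle is the coherence bookkeeping: because $(-)^*$ and $(-)_!$ are only pseudo-functors (\Cref{Rem:pseudo-func-embeddings}) and horizontal composition is associative only up to the associators, one must track the associators, the unitors and the iso-comma projections carefully, and in particular identify the surviving $2$-cell components after whiskering. As in the proof of \Cref{Lem:BC-for-Span}, the cleanest way to pin down the composite $1$-cell component and to confirm that the surviving $2$-cells are exactly $\alpha_1$ and $\alpha_2$ is to precompose with an arbitrary test $1$-cell $t\colon T\to P$ and to use the element-wise description of comma objects from \Cref{Rem:commas_translated}, concluding with \Cref{Cor:2cat_Yoneda_equivs}. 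This reduces the verification to an unambiguous computation with triples $(x,y,\gamma)$ and sidesteps the need to choose non-canonical representatives by hand.
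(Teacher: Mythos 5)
The paper's own ``proof'' of this proposition consists of the single word \emph{Exercise}, so there is no argument to diverge from; your proposal is a correct solution of that exercise and follows exactly the computational route the paper intends. You identify the right representatives $[\Id_P,\alpha_1,\id]$ for $\alpha_1^*$, $[\Id_P,\id,\alpha_2]$ for $(\alpha_2)_!$, the structural isomorphisms of \Cref{Rem:pseudo-func-embeddings} and the counit from~\eqref{eq:can_counit}, and your plan of composing in two stages and then settling the iso-comma bookkeeping by testing against arbitrary 1-cells via \Cref{Rem:commas_translated} and \Cref{Cor:2cat_Yoneda_equivs} is precisely the technique the paper itself uses for the neighbouring \Cref{Prop:first-adjoints} and \Cref{Lem:BC-for-Span} (your passing remark that $\alpha_1,\alpha_2$ reappear ``whiskered by $v$ and by $j$'' is loose --- they survive as the 2-cell components themselves --- but this does not affect the argument).
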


\begin{proof}
Exercise.
\end{proof}

\bigbreak
\section{The universal property of spans}
\label{sec:UP-Span}%
\medskip

We are now going to prove a universal property of the canonical 2-functor $(-)^*\colon \GG^{\op}\to \Span(\GG;\JJ)$.
Our result is a significant generalization and strengthening of~\cite[Prop.\,1.10]{DawsonParePronk04}. A similar result had appeared without proof in~\cite[Theorem~A2]{Hermida00}.
The first version of our universal property (\Cref{Thm:UP-Span} below) only considers pseudo-functors, and is formulated as `strictly' as possible. The second version (see \Cref{Thm:UP-PsFun-Span}) will also consider transformations between them.

\begin{Thm}
\label{Thm:UP-Span}%
Let $\GG$ and $\JJ$ be as in Hypotheses~\ref{Hyp:G_and_I_for_Span} (see also \Cref{Rem:less-hyps}). Let $\cat{C}$ be any 2-category, and let $\cat{F}\colon \GG^{\op}\to \cat{C}$ be a pseudo-functor such that
\begin{enumerate}[\rm(a)]
\item
\label{it:UP-Span-a}%
for every $i\in \JJ$, there exists in $\cat{C}$ a left adjoint $(\cat{F}i)_!$ to~$\cat{F}i$, and
\smallbreak
\item
\label{it:UP-Span-b}%
the adjunctions $(\cat{F}i)_! \dashv \cat{F}i$ satisfy base-change with respect to all Mackey squares with two parallel sides in~$\JJ$.
\end{enumerate}
Then there exists a pseudo-functor $\cat{G}\colon \Span(\GG; \JJ)\to \cat{C} $ such that the diagram
\[
\xymatrix{
 \GG^{\op} \ar[d]_-{(-)^*} \ar[r]^-{\cat{F}} & \cat{C} \\
 \Span(\GG;\JJ) \ar@{-->}[ru]_-{\cat{G}} &
 }
\]
is strictly commutative. This extension $\cat{G}$ is unique up to a unique isomorphism restricting to the identity of $\cat F$, and is entirely determined by the choice of the left adjoints, with units and counits, for all~$\cat{F}i$ (see details in \Cref{Rem:rectif-UP-Span} below). Conversely, any pseudo-functor $\cat{F}\colon \GG^{\op}\to \cat{C}$ factoring as above must enjoy the above two properties \eqref{it:UP-Span-a} and~\eqref{it:UP-Span-b}.
\end{Thm}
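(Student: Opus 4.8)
The statement asserts the existence and essential uniqueness of a pseudo-functor $\cat G\colon \Span(\GG;\JJ)\to \cat C$ extending a given $\cat F\colon \GG^\op\to \cat C$ that admits left adjoints satisfying base-change. My overall strategy is to define $\cat G$ by hand on each layer of the bicategory $\Span(\GG;\JJ)$ using the structure of $\cat F$ together with the chosen adjoints, guided by the decomposition of a general 1-cell $i_!u^*$ as $i_!\circ u^*$ (\Cref{Rem:notation_assoc}) and the decomposition of a general 2-cell in terms of the embedded 2-cells, the counits $\varepsilon$, and structural isomorphisms (\Cref{Prop:2-cells-of-Span}). This latter result is what makes the construction essentially forced, hence both the existence and the uniqueness tractable.

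\textbf{First I would fix the data and define $\cat G$ on cells.} On 0-cells set $\cat G(G):=\cat F(G)$. On a 1-cell $G\xleftarrow{u}P\xrightarrow{i}H$ of $\Span$, set $\cat G(i_!u^*):=(\cat Fi)_!\circ \cat Fu$, using the chosen left adjoint from hypothesis~\eqref{it:UP-Span-a}; the short-hands $i_!\mapsto (\cat Fi)_!$ and $u^*\mapsto \cat Fu$ then hold by construction, so the square commutes strictly on 1-cells coming from $\GG^\op$ (where $i=\Id$). On 2-cells I would use \Cref{Prop:2-cells-of-Span}: any $[a,\alpha_1,\alpha_2]\colon i_!u^*\Rightarrow j_!v^*$ is a canonical pasting built from $\alpha_1^*$, $(\alpha_2)_!$, a counit $\varepsilon$ of $a_!\dashv a^*$, and structural isomorphisms. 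I would \emph{define} $\cat G[a,\alpha_1,\alpha_2]$ to be the image of exactly this pasting, replacing each ingredient by its $\cat F$-counterpart: the 2-cells $\alpha_1,\alpha_2$ by $\cat F\alpha_1,\cat F\alpha_2$, the counit of $a_!\dashv a^*$ by the counit of the chosen adjunction $(\cat Fa)_!\dashv \cat Fa$, and the structural isos of $\Span$ by those of $\cat C$. One must check this is well defined on isomorphism classes of representatives $(a,\alpha_1,\alpha_2)$; the identification $\varphi\colon a\isoEcell b$ relating two representatives induces, via naturality of the adjunction data, the required equality of images.

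\textbf{Then I would verify the pseudo-functor axioms.} I must exhibit the compositors $\cat G(\beta)\circ\cat G(\alpha)\cong \cat G(\beta\circ\alpha)$ and unitors, and check coherence. The key computations reduce to: (i) compatibility of $\cat G$ with vertical composition of 2-cells, which follows from functoriality of $\cat F$ on 2-cells together with the triangle identities for the chosen adjunctions; (ii) compatibility with horizontal composition of 1-cells, where a composite span is formed from an iso-comma square~\eqref{eq:horiz-comp-Span}, so I need precisely that the mate $\gamma_!$ of the embedded $\gamma^*$ is invertible and behaves correctly. Here hypothesis~\eqref{it:UP-Span-b} enters decisively: base-change for $\cat F$ along Mackey squares supplies the comparison isomorphism $(\cat Fj)_!\,\cat F\tilde i\cong \cat Fv\,(\cat Fi)_!$ that the compositor requires, and \Cref{Lem:BC-for-Span} is the template showing these mates fit together. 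The associativity coherence then follows from \Cref{Rem:assoc} together with the uniqueness clause of the comma universal property, matching the associators of $\Span$ to those of $\cat C$.

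\textbf{Finally, uniqueness and the converse.} For uniqueness, any extension $\cat G'$ agreeing with $\cat F$ on $\GG^\op$ must send $i_!$ to a left adjoint of $\cat Fi$ and must send the unit/counit of $i_!\dashv i^*$ (given explicitly in \Cref{Prop:first-adjoints}) to units/counits of $(\cat Fi)_!\dashv \cat Fi$; since left adjoints with their units and counits are unique up to unique isomorphism, \Cref{Prop:2-cells-of-Span} forces $\cat G'$ to agree with $\cat G$ on all 1-cells and 2-cells up to a unique coherent isomorphism restricting to $\id_{\cat F}$. The converse is immediate: if $\cat F$ factors as $\cat G\circ(-)^*$, then $\cat G(i_!)$ is left adjoint to $\cat G(i^*)=\cat Fi$ by \Cref{Cor:update-can-pseudofun}, and base-change~\eqref{it:UP-Span-b} is inherited from \Cref{Lem:BC-for-Span} since pseudo-functors preserve mates and invertibility of 2-cells. \textbf{The main obstacle} I anticipate is the bookkeeping in step~(ii): verifying that the compositor built from base-change isomorphisms satisfies the pentagon/coherence axioms, since this requires carefully tracking iso-comma associativity against the associators of $\cat C$ through several layers of structural isomorphisms, much as in the lengthy verification of \Cref{Lem:BC-for-Span}.
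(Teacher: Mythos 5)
Your proposal is correct and follows essentially the same route as the paper's proof: the same definition of $\cat G$ on 0-cells, 1-cells ($(\cat Fi)_!\circ\cat Fu$) and 2-cells (the canonical pasting from \Cref{Prop:2-cells-of-Span} with ingredients replaced by their $\cat F$-counterparts), the same compositor built from the base-change isomorphisms of hypothesis~\eqref{it:UP-Span-b}, the same uniqueness argument via uniqueness of adjoints together with \Cref{Prop:2-cells-of-Span}, and the same immediate converse. The bookkeeping you flag as the main obstacle is exactly what the paper discharges in its lengthy string-diagram verification (\Cref{Lem:Gwelldef}).
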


The extension $\cat{G}$ is given in \Cref{Cons:UP-Span} below.

\begin{Rem} \label{Rem:rectif-UP-Span}
In order to obtain \Cref{Thm:UP-Span} precisely as formulated, we must make certain choices of adjoints, in the spirit of \Mack{6}. Whenever $i\in \JJ$ is a \emph{strictly} invertible 1-cell, we assume that the left adjoint of $\cat Fi$ is $\cat F(i^{-1})$, with counit given by the composite $\cat F(i^{-1})\circ \cat F(i)\cong \cat F(ii^{-1})=\cat F(\Id) \cong \Id$ of structure morphisms of~$\cat F$, and similarly for the unit. Similarly, whenever $i=\Id_H$ is an identity we take care to choose $(\cat F\Id_H)_!:=\Id_{\cat FH}$ as the left adjoint of $\cat F(\Id_H)$, with the coherent isomorphism $\un_H\colon \Id_{\cat FH} \Rightarrow \cat F(\Id_H)$ and its inverse as our choice for the unit and counit of adjunction. This is so as to obtain \emph{strict} commutativity $\cat G \circ (-)^* = \cat F$ in the theorem, rather than just up to an isomorphism $\cat G \circ (-)^* \simeq \cat F$, and in order to simplify formulas a little. (Compare the Rectification \Cref{Thm:rectification-intro}.)
\end{Rem}

\begin{proof}[Proof of \Cref{Thm:UP-Span}.]
The last claim is clear: Any pseudo-functor of the form $\cat{F}= \cat{G}\circ (-)^*\colon \cat{G}^{\op}\to \cat{C}$ must satisfy properties \eqref{it:UP-Span-a} and~\eqref{it:UP-Span-b}, since the required adjunctions and base-change properties take place in $\Span(\GG;\JJ)$, by Proposition~\ref{Prop:first-adjoints} and Lemma~\ref{Lem:BC-for-Span}, and are preserved by the pseudo-functor~$\cat{G}$.

Uniqueness of~$\cat{G}$ is forced by its agreement with~$\cat{F}$ on 0-cells, by the property that every 1-cell in~$\Span$ is of the form $i_!\circ u^*$ (\Cref{Rem:notation_assoc}) and that every 2-cell in~$\Span$ is also controlled by data coming via the embeddings of $\GG$ and $\JJ$ into~$\Span$ (\Cref{Prop:2-cells-of-Span}).
To be more precise, assume we have two pseudo-functors $\cat G,\cat G'\colon \Span \to \cat C$ such that $\cat G\circ (-)^* = \cat F = \cat G'\circ (-)^*$. They obviously agree on objects. Moreover, we can construct a strictly invertible pseudo-natural transformation $t\colon \cat G\stackrel{\sim}{\to}\cat G'$ (see \Cref{Ter:Hom_bicats}) as follows. The component $t_G\colon \cat GG\to \cat G'G$ at an object $G$ is just the identity $\Id_{\cat FG}$.
For $i\in \JJ$, we obtain two adjunctions $\cat G (i_!)\dashv \cat G(i^*)=\cat Fi$ and $\cat G' (i_!)\dashv \cat G'(i^*)=\cat Fi$ and therefore a unique invertible 2-cell $t_i \colon \cat G' (i_!) \Rightarrow \cat G (i_!)$ identifying their units and counits.
Then we define the component $t_{i_!u^*}$ of $t$ at each 1-cell
$i_!u^* = (G \overset{u}{\leftarrow}P \overset{i}{\rightarrow} H)$ 
of $\Span$ by the pasting
\[
\vcenter { \hbox{
\xymatrix{
{\cat GG} \ar[r]^-{t_G} \ar[d]_{\cat G(i_!u^*)} &
 {\cat G' G} \ar[d]^{\cat G'(i_!u^*)} \\
{\cat G H} \ar[r]_-{t_H} \ar@{}[ur]|{\SWcell \; t_{i_!u^*}} &
 {\cat G' H}
}
}}
\quad:=\quad
\vcenter { \hbox{
\xymatrix{
& {\cat FG} \ar@{=}[r]
 \ar@/_8ex/[dd]_{\cat G(i_!u^*)}
 \ar[d]_{\cat Fu} &
 {\cat F G}
 \ar@/^8ex/[dd]^{\cat G'(i_!u^*)}
 \ar[d]^{\cat F u} & \\
\ar@{}[r]|{\cong} &
 {\cat F P}
 \ar@{=}[r]
 \ar[d]_{\cat G i_!} &
 {\cat F P}
 \ar[d]^{\cat G' i_!} &
 \ar@{}[l]|{\cong} \\
& {\cat G H}
 \ar@{=}[r]
 \ar@{}[ur]|{\SWcell \; t_i} &
 {\cat G' H} &
}
}}
\]
which makes use of the coherent structural isomorphisms of the pseudo-functors $\cat G$ and~$\cat G'$. (Here we have omitted the canonical isomorphisms $i_!u^*\cong i_!\circ u^*$.) The verification that $t=(t_G,t_{i_!u^*})$ is a pseudo-natural transformation is left as an exercise; it follows from \Cref{Prop:2-cells-of-Span} together with the uniqueness of the~$t_i$ and their compatibility with units and counits. The uniqueness of the $t_i$ also implies the claimed uniqueness of such an isomorphism $t\colon\cat G\stackrel{\sim}{\to} \cat G'$. (More details on such extended transformations will be given in the proof of \Cref{Thm:UP-PsFun-Span}.)

We now turn to the existence of~$\cat G$, given a pseudo-functor $\cat{F}\colon \GG^{\op}\to \cat{C}$ satisfying \eqref{it:UP-Span-a} and~\eqref{it:UP-Span-b}. As observed in Remark~\ref{Rem:pseudo-func-of-adjoints}, the adjunctions $(\cat{F}i)_! \dashv \cat{F}i$ provided in~\eqref{it:UP-Span-a} assemble into a pseudo-functor $\cat{F}_!\colon \JJ^\co\to \cat{C}$ which has the same values as $\cat{F}$ on 0-cells, is given by the chosen adjoints $(\cat{F}i)_!$ on 1-cells~$i$ and by the mates $\alpha_!$ on 2-cells~$\alpha$ (which involve the chosen units and counits).

It remains to show that the base-change property~\eqref{it:UP-Span-b} allows us to `glue' $\cat{F}$ and $\cat{F}_!$ into a pseudo-functor $\cat{G}\colon \Span(\GG;\JJ)\to \cat{C}$ defined as follows.

\begin{Cons}
\label{Cons:UP-Span}%
On objects, $\cat{G}$ must be the same as~$\cat{F}$. On 1-cells, we set
\[
\cat{G}(i_!u^*):= \cat{F}_!(i) \circ \cat{F}(u) = (\cat{F} i)_! \circ (\cat{F}u)\,,
\]
with the choices of \Cref{Rem:rectif-UP-Span} when~$i$ happens to be invertible.

For a 2-cell $[a,\alpha_1,\alpha_2]$ of $\Span(\GG;\JJ)$ represented by a diagram
\begin{align*}
\xymatrix{
G \ar@{=}[d] \ar@{}[rrd]|{\SEcell\,\alpha_1} &&
 P \ar[ll]_-{u} \ar[rr]^-{i} \ar[d]^a \ar@{}[rrd]|{\NEcell\,\alpha_2} &&
 H \ar@{=}[d] \\
G &&
 Q \ar[ll]^-{v} \ar[rr]_-{j} &&
 H
}
\end{align*}
we define its image to be the following pasting in~$\cat{C}$ (compare \Cref{Prop:2-cells-of-Span}):
\begin{equation}
\label{eq:pasting-in-UP-Span}%
\quad \cat{G}([a,\alpha_1,\alpha_2]) \; :=\quad
\vcenter{\xymatrix{
\cat{F} G \ar@{=}[d] \ar[rr]^-{\cat{F} u} \ar@{}[rrd]|{\Scell\; \cat{F} \alpha_1}
&& \cat{F} P \ar@{=}[d] \ar[rr]^-{(\cat{F} i)_!} \ar@{}[rrd]|{\Scell\; (\cat{F} \alpha_2)_!}
&& \cat{F} H \ar@{=}[d]
\\
\cat{F} G \ar@{}[rrd]|{\Scell\;\simeq} \ar@{=}[d] \ar[rr]^-{\cat{F} (v a)}
&& \cat{F} P \ar@{}[rrd]|{\Scell\;\simeq} \ar@{=}[d] \ar[rr]^-{(\cat{F} j a)_!}
&& \cat{F} H \ar@{=}[d]
\\
\cat{F} G \ar[r]^-{\cat{F} v} \ar@{=}[d]
& \cat{F} Q \ar@{}[rrd]|{\Scell\; \varepsilon} \ar@{=}[d] \ar[r]^-{\cat{F} a}
& \cat{F} P \ar[r]^-{(\cat{F} a)_!}
& \cat{F} Q \ar@{=}[d] \ar[r]^-{(\cat{F} j)_!}
& \cat{F} H \ar@{=}[d]
\\
\cat{F} G \ar[r]_-{\cat{F} v}
& \cat{F} Q \ar[rr]_-{\Id}
&& \cat{F} Q \ar[r]_-{(\cat{F} j)_!}
& \cat{F} H
}}
\end{equation}
The latter uses the counit~$\eps$ of the adjunction $(\cat{F} a)_!\dashv \cat{F} a$ as well as the coherent isomorphisms $\fun\inv\colon \cat{F}(v a)\Rightarrow (\cat{F}a)(\cat{F}v)$ and $\fun\inv\colon\cat{F} (j a)_!\Rightarrow \cat{F}(j)_! \cat{F}(a)_!$ --- the latter obtained from those of $\cat{F}$ by taking mates.
Given a composite of two spans
\begin{align} \label{Eq:hor_composite}
\vcenter{\xymatrix{
G
\ar@/^6ex/@{..>}[rrrr]^-{j_!v^*\,\circ\,i_!u^* \,=\, (j\tilde i)_!(u\tilde v)^*}
\ar@/_6ex/@{..>}[ddrr]_-{i_!u^*} &&
 i/v
 \ar[dl]_-{\tilde v}
 \ar[dr]^-{\tilde i}
 \ar[ll]_-{u\tilde v}
 \ar[rr]^-{j\tilde i} && K
\\
& P
\ar[ul]^-u \ar[dr]_-{i}
\ar@{}[rr]|{\oEcell{\gamma}} &&
 Q
 \ar[ur]_-{j}
 \ar[dl]^-{v} &
\\
 && H
 \ar@/_6ex/@{..>}[rruu]_-{\ j_!v^*} &&
}}
\end{align}
we define the structure isomorphism $\fun_\cat G\colon \cat{G}(j_!v^*)\circ \cat{G} (i_!u^*) \stackrel{\sim}{\to} \cat{G}(j_!v^* \circ i_!u^*)$ by
\begin{align} \label{Eq:fun_for_G}
\vcenter{\xymatrix{
&&&& \\
\cat{F} G
\ar@{..>}@/_7ex/[rrdd]_-{\cat{G}(i_!u^*)}
\ar[rd]_-{\cat{F} u}
\ar[rr]^-{\cat{F} u\tilde v}
\ar@{..>}@/^7ex/[rrrr]^-{\cat{G}( (j\tilde i)_!(u\tilde v)^*)} &&
 \cat{F} i/v
 \ar[rr]^-{(\cat{F} j\tilde i)_! }
 \ar[dr]^-{(\cat{F} \tilde i)_!} &&
 \cat{F} K \\
& \cat{F} P
\ar@{}[rr]|{\Ncell\;((\cat{F}\gamma)_!)\inv}
\ar@{}[u]|{\Ncell\,\simeq}
\ar[ur]^-{\cat{F} \tilde v}
 \ar[rd]_-{(\cat{F} i)_!} &&
 \cat{F} Q
 \ar@{}[u]|{\simeq \; \Ncell}
 \ar[ru]_(.4){(\cat{F} j)_!} & \\
 && \cat{F} H
 \ar[ru]_-{\cat{F} v}
 \ar@{..>}@/_7ex/[uurr]_-{\cat{G}(j_!v^*)} &&
}}
\end{align}
which is well-defined because $\cat{F}$ satisfies base-change with respect to the square~$\gamma$.
As for the unitors $\un_\cat G\colon \Id_{\cat GG}\stackrel{\sim}{\to}\cat G(\Id_G)$, since $\cat G(\Id_G) = \cat F(\Id_G)_! \cat F (\Id_G)= \cat F (\Id_G)$ we may simply take them to be identical to the unitors of~$\cat F$.
\end{Cons}

In order to prove the theorem it remains to verify that $\cat{G}$ is a well-defined pseudo-functor and that it extends~$\cat F$.
The former is precisely Lemma~\ref{Lem:Gwelldef} below.
For the latter, it is immediate from the construction that $\cat G\circ (-)^*$ agrees with $\cat F$ on 0-cells and 1-cells, and that for a 2-cell $\alpha \colon u\Rightarrow v\colon H\to G$ the image $\cat G(\alpha^*)= \cat G([\Id_H,\alpha,\id_{\Id_H}])$ is equal to the composite
\[
\xymatrix@L=1ex{
{\cat Fu} \ar@{=>}[r]^-{\cat F \alpha} &
 {\cat Fv = \cat F(v \circ \Id)} \ar@{=>}[r]^-{\fun^{-1}} &
 {\cat F (\Id_H) \circ \cat F v} \ar@{=>}[rr]^-{\un^{-1} \,\circ\, \cat Fv} &&
 {\Id_{\cat FH} \circ \cat Fv = \cat F v}
}
\]
(just read it off~\eqref{eq:pasting-in-UP-Span}, using our careful choice of adjunctions for the identity 1-cells).
But this composite is equal to $\cat F\alpha$, as one sees by a direct application of one of the three pseudo-functor axioms. Hence $\cat G\circ (-)^*=\cat F$.
\end{proof}

\begin{Lem} \label{Lem:Gwelldef}
\Cref{Cons:UP-Span} yields a pseudo-functor $\cat G\colon \Span(\GG;\JJ) \to \cat C$.
\end{Lem}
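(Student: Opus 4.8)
The plan is to verify the three defining axioms of a pseudo-functor for the data $(\cat G, \fun_{\cat G}, \un_{\cat G})$ laid out in \Cref{Cons:UP-Span}: (i) that $\cat G$ respects vertical composition and identities of 2-cells (functoriality of each hom-functor $\cat G\colon \Span(G,H)\to \cat C(\cat GG,\cat GH)$); (ii) the naturality of the structure isomorphisms $\fun_{\cat G}$ in both 1-cell arguments, together with the analogous naturality of $\un_{\cat G}$; and (iii) the coherence axioms, namely the associativity pentagon relating $\fun_{\cat G}$ for triple composites and the two unit triangles relating $\fun_{\cat G}$ to $\un_{\cat G}$. Throughout, I would lean on the fact that every 1-cell of $\Span$ has the canonical form $i_!\circ u^*$ (\Cref{Rem:notation_assoc}) and every 2-cell decomposes as in \Cref{Prop:2-cells-of-Span}, so that it suffices to check everything on the generating 2-cells $\alpha^*$, $\alpha_!$, the counits $\eps$, and the structural isomorphisms; since $\cat F$ and $\cat F_!$ are already pseudo-functors (the latter by \Cref{Rem:pseudo-func-of-adjoints}), most relations will reduce to the corresponding relations for $\cat F$ and for the mate calculus.

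First I would treat functoriality of $\cat G$ on 2-cells. Given two vertically composable 2-cells $[a,\alpha_1,\alpha_2]$ and $[b,\beta_1,\beta_2]$ in $\Span(G,H)$, their vertical composite is induced by pasting in $\GG$, and I would paste the two instances of diagram~\eqref{eq:pasting-in-UP-Span} on top of each other. The key simplification is that the two `middle' counit strips can be merged: using that $\cat F_!$ is a pseudo-functor and that the counit of $(\cat F(ba))_!\dashv \cat F(ba)$ factors through the counits of $(\cat Fa)_!\dashv \cat Fa$ and $(\cat Fb)_!\dashv \cat Fb$ via the coherence isomorphism $\fun\inv\colon \cat F(ba)_!\Rightarrow \cat F(b)_!\cat F(a)_!$, the stacked counits collapse to the single counit for the composite $1$-cell component. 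The 2-cell components $\alpha_i$, $\beta_i$ then combine by functoriality of $\cat F$ and of taking mates. Well-definedness on isomorphism classes $[a,\alpha_1,\alpha_2]$ (independence of the representative) I would dispatch by the same bookkeeping: an isomorphism $\varphi\colon a\isoEcell b$ in $\GG$ induces, under $\cat F$ and $\cat F_!$, matching 2-cells that cancel against the counit by the defining naturality of $\eps$. Preservation of identity 2-cells is immediate from the construction, since $\cat F\id = \id$, $(\cat F\id)_! = \id$, and the counit of $(\cat F\Id)_!\dashv \cat F\Id$ is the coherence isomorphism by the normalization in \Cref{Rem:rectif-UP-Span}.

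Next I would establish naturality of $\fun_{\cat G}$ as defined in~\eqref{Eq:fun_for_G}. This asks that for 2-cells between composable spans, the square relating $\cat G(j_!v^*)\circ\cat G(i_!u^*)$ to $\cat G(j_!v^*\circ i_!u^*)$ commutes. Here the base-change hypothesis~(b) is exactly what makes the definition legitimate, and naturality should follow from the 2-naturality of the Beck--Chevalley mates (the coherence of $(\cat F\gamma)_!$ with respect to the 2-cell components of the spans being composed), combined once more with functoriality of $\cat F$. The unit structure isomorphisms cause no trouble because $\un_{\cat G}$ was taken to equal $\un_{\cat F}$.

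The hard part will be verifying the associativity pentagon for $\fun_{\cat G}$, since it involves three composable spans $G\lto P\into H\lto Q\into K\lto R\into L$ and the associators of $\Span$, which are themselves induced by the iso-comma associativity isomorphisms of \Cref{Rem:assoc}. The obstacle is purely organizational: one must show that the two ways of reassociating a triple composite, each expressed through two applications of the Beck--Chevalley mate $\fun_{\cat G}$ from~\eqref{Eq:fun_for_G} plus an associator, agree. My strategy would be to reduce the pentagon to a compatibility between the associativity isomorphism of the double iso-comma (\Cref{Rem:assoc}, whose precise form is promised in~\eqref{Eq:precise-assoc}) and the coherence of the mates $(\cat F\gamma)_!$; concretely, I would whisker everything by arbitrary test 1-cells and invoke the element-wise description of iso-commas (\Cref{Rem:commas_translated}) together with \Cref{Cor:2cat_Yoneda_equivs}, exactly as in the proof of \Cref{Lem:BC-for-Span}, so that the equality of two reassociation 1-cells in $\GG$ can be checked on objects and morphisms and then transported through $\cat F$. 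The two unit-triangle coherences are comparatively routine, following from the normalization of \Cref{Rem:rectif-UP-Span} and the triangle identities of the adjunctions $(\cat Fi)_!\dashv \cat Fi$. I would relegate the longer diagram chases to `left to the reader' where they merely repeat the pattern already exhibited, and spell out only the pentagon in detail.
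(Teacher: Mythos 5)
Your proposal is correct and follows essentially the same route as the paper's proof: both verify well-definedness on representatives, the unit identities, and the associativity coherence by reducing everything to the pseudo-functoriality of $\cat F$ and $\cat F_!$, the triangle identities of the adjunctions $(\cat Fi)_!\dashv \cat Fi$, and the defining relations of the comparison 1-cells supplied by the universal property of iso-commas (the paper writes these relations down explicitly, as in~\eqref{Eq:def-f} and~\eqref{Eq:def-f-1}, and then computes with string diagrams, which is just an implementation of your ``check the reassociation 1-cells via the universal property, then transport through $\cat F$'' step). If anything you are slightly more systematic, since you also spell out functoriality on vertical composition and naturality of $\fun_{\cat G}$, which the paper treats as routine.
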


\begin{proof}
The proof is rather straightforward but lengthy and not so easy to write down explicitly, hence it is included here for completeness. It will also be a good occasion for introducing string diagrams. See \Cref{sec:string_diagrams} and~\ref{sec:strings-here} for a quick tutorial, and let us recall here that in our string diagrams 1-cells are always oriented left-to-right whereas 2-cells are always oriented top-to-bottom:
\[
\xymatrix{
\ar@{-->}[r] \ar@{==>}[d] & \\
&
}
\]

To begin, we need to show that $\cat G$ is well-defined on 2-cells. Suppose we have two diagrams in the 2-category~$\cat G$
\begin{equation*}
\vcenter{\xymatrix{
G \ar@{=}[d] \ar@{}[rrd]|{\SEcell\,\alpha_1}
&& P \ar[ll]_-{u} \ar[rr]^-{i} \ar[d]^a \ar@{}[rrd]|{\NEcell\,\alpha_2}
&& H \ar@{=}[d]
\\
G
&& Q \ar[ll]^-{v} \ar[rr]_-{j}
&& H
}}
\quad \overset{\underset{}{\varphi}}{\Rightarrow} \quad
\vcenter{\xymatrix{
G \ar@{=}[d] \ar@{}[rrd]|{\SEcell\,\beta_1}
&& P \ar[ll]_-{u} \ar[rr]^-{i} \ar[d]^b \ar@{}[rrd]|{\NEcell\,\beta_2}
&& H \ar@{=}[d]
\\
G
&& Q \ar[ll]^-{v} \ar[rr]_-{j}
&& H
}}
\end{equation*}
representing the same 2-cell of $\Span$, as testified by the existence of an isomorphism $\varphi\colon a \Rightarrow b$ satisfying $(v \varphi) \alpha_1 = \beta_1$ and $\alpha_2 (j \varphi^{-1})=\beta_2$. In string diagrams, the latter yield the following equations in~$\cat C$ after applying the (1-contravariant and 2-covariant!)~$\cat F$:
\begin{equation} \label{eq:pseudofun-setup}
\vcenter { \hbox{
\psfrag{A}[Bc][Bc]{\scalebox{1}{\scriptsize{$\cat Fv$}}}
\psfrag{B}[Bc][Bc]{\scalebox{1}{\scriptsize{$\cat Fb$}}}
\psfrag{C}[Bc][Bc]{\scalebox{1}{\scriptsize{$\;\;\;\;\;\cat F (vb)$}}}
\psfrag{D}[Bc][Bc]{\scalebox{1}{\scriptsize{$\cat F u$}}}
\psfrag{F}[Bc][Bc]{\scalebox{1}{\scriptsize{$\fun^{-1}$\;\;\;\;}}}
\psfrag{G}[Bc][Bc]{\scalebox{1}{\scriptsize{$\cat F \beta_1$}}}
\includegraphics[scale=.4]{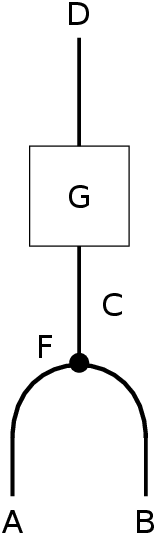}
}}
\;\;\;=\;\;\;
\vcenter { \hbox{
\psfrag{A}[Bc][Bc]{\scalebox{1}{\scriptsize{$\cat Fv$}}}
\psfrag{B}[Bc][Bc]{\scalebox{1}{\scriptsize{$\cat Fb$}}}
\psfrag{C}[Bc][Bc]{\scalebox{1}{\scriptsize{$\;\;\cat Fa$}}}
\psfrag{D}[Bc][Bc]{\scalebox{1}{\scriptsize{$\cat F u$}}}
\psfrag{F}[Bc][Bc]{\scalebox{1}{\scriptsize{$\cat F \varphi$}}}
\psfrag{G}[Bc][Bc]{\scalebox{1}{\scriptsize{$\cat F \alpha_1$}}}
\psfrag{H}[Bc][Bc]{\scalebox{1}{\scriptsize{$\fun^{-1}\;\;\;\;$}}}
\includegraphics[scale=.4]{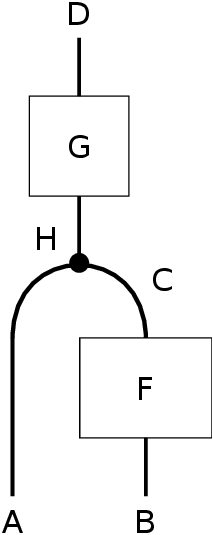}
}}
\quad\quad\quad\quad
\vcenter { \hbox{
\psfrag{A}[Bc][Bc]{\scalebox{1}{\scriptsize{$\cat Fj$}}}
\psfrag{B}[Bc][Bc]{\scalebox{1}{\scriptsize{$\cat Fb$}}}
\psfrag{C}[Bc][Bc]{\scalebox{1}{\scriptsize{$\;\;\;\;\;\cat F (jb)$}}}
\psfrag{D}[Bc][Bc]{\scalebox{1}{\scriptsize{$\cat F i$}}}
\psfrag{F}[Bc][Bc]{\scalebox{1}{\scriptsize{$\fun$\;\;}}}
\psfrag{G}[Bc][Bc]{\scalebox{1}{\scriptsize{$\cat F \beta_2$}}}
\includegraphics[scale=.4]{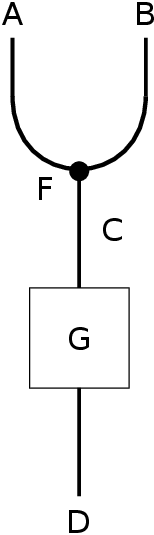}
}}
\;\;\;=\;
\vcenter { \hbox{
\psfrag{A}[Bc][Bc]{\scalebox{1}{\scriptsize{$\cat Fj$}}}
\psfrag{B}[Bc][Bc]{\scalebox{1}{\scriptsize{$\cat Fb$}}}
\psfrag{C}[Bc][Bc]{\scalebox{1}{\scriptsize{$\cat F a$}}}
\psfrag{D}[Bc][Bc]{\scalebox{1}{\scriptsize{$\cat F i$}}}
\psfrag{F}[Bc][Bc]{\scalebox{1}{\scriptsize{$\cat F \varphi^{-1}$}}}
\psfrag{G}[Bc][Bc]{\scalebox{1}{\scriptsize{$\cat F \alpha_2$}}}
\psfrag{H}[Bc][Bc]{\scalebox{1}{\scriptsize{$\fun\;\;$}}}
\includegraphics[scale=.4]{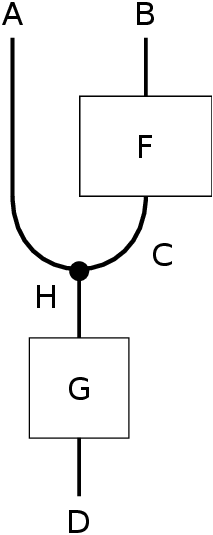}
}}
\end{equation}
Here we have also used the naturality of~$\fun$, see \Cref{Exa:strings-for-natural-fun}.

We must show that $\cat G([a,\alpha_1,\alpha_2])$ and $\cat G([b,\beta_1,\beta_2])$ are equal. In strings, the construction~\eqref{eq:pasting-in-UP-Span} of these 2-cells of $\cat C$ takes the following form:
\[
\cat G([a,\alpha_1,\alpha_2])
\;=\quad
\vcenter { \hbox{
\psfrag{A}[Bc][Bc]{\scalebox{1}{\scriptsize{$\cat Fu$}}}
\psfrag{B}[Bc][Bc]{\scalebox{1}{\scriptsize{$(\cat F i)_!$}}}
\psfrag{C}[Bc][Bc]{\scalebox{1}{\scriptsize{$\cat F (va)$\;\;}}}
\psfrag{D}[Bc][Bc]{\scalebox{1}{\scriptsize{\;\;\;$\cat F (ja)_!$}}}
\psfrag{U}[Bc][Bc]{\scalebox{1}{\scriptsize{$\cat F v$}}}
\psfrag{V}[Bc][Bc]{\scalebox{1}{\scriptsize{$(\cat F j)_!$}}}
\psfrag{R}[Bc][Bc]{\scalebox{1}{\scriptsize{$\cat F a$}}}
\psfrag{T}[Bc][Bc]{\scalebox{1}{\scriptsize{$\;\;(\!\cat F a)_!$}}}
\psfrag{F}[Bc][Bc]{\scalebox{1}{\scriptsize{$\cat F \alpha_1$}}}
\psfrag{G}[Bc][Bc]{\scalebox{1}{\scriptsize{$(\cat F \alpha_2)_!$}}}
\psfrag{H}[Bc][Bc]{\scalebox{1}{\scriptsize{$\;\;\fun^{\!\!\!-\!1}$}}}
\psfrag{L}[Bc][Bc]{\scalebox{1}{\scriptsize{$\;\fun^{\!\!\!-\!1}$}}}
\psfrag{E}[Bc][Bc]{\scalebox{1}{\scriptsize{$\varepsilon$}}}
\includegraphics[scale=.4]{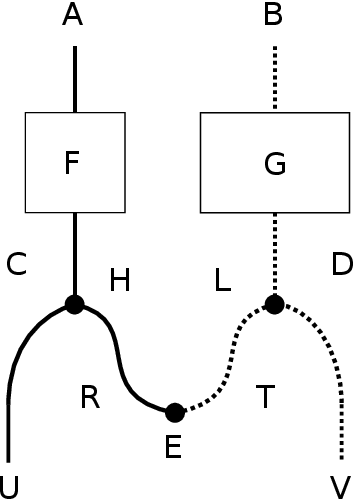}
}}
\quad\; \stackrel{\textrm{?}}{=} \quad\;
\vcenter { \hbox{
\psfrag{A}[Bc][Bc]{\scalebox{1}{\scriptsize{$\cat Fu$}}}
\psfrag{B}[Bc][Bc]{\scalebox{1}{\scriptsize{$(\cat F i)_!$}}}
\psfrag{C}[Bc][Bc]{\scalebox{1}{\scriptsize{$\cat F (vb)$\;\;}}}
\psfrag{D}[Bc][Bc]{\scalebox{1}{\scriptsize{\;\;$\cat F (jb)$}}}
\psfrag{U}[Bc][Bc]{\scalebox{1}{\scriptsize{$\cat F v$}}}
\psfrag{V}[Bc][Bc]{\scalebox{1}{\scriptsize{$(\cat F j)_!$}}}
\psfrag{R}[Bc][Bc]{\scalebox{1}{\scriptsize{$\cat F b$}}}
\psfrag{T}[Bc][Bc]{\scalebox{1}{\scriptsize{$\;\;(\!\cat F b)_!$}}}
\psfrag{F}[Bc][Bc]{\scalebox{1}{\scriptsize{$\cat F \beta_1$}}}
\psfrag{G}[Bc][Bc]{\scalebox{1}{\scriptsize{$(\cat F \beta_2)_!$}}}
\psfrag{H}[Bc][Bc]{\scalebox{1}{\scriptsize{$\;\;\fun^{\!\!\!-\!1}$}}}
\psfrag{L}[Bc][Bc]{\scalebox{1}{\scriptsize{$\;\fun^{\!\!\!-\!1}$}}}
\psfrag{E}[Bc][Bc]{\scalebox{1}{\scriptsize{$\varepsilon$}}}
\includegraphics[scale=.4]{anc/halfpseudofun-firstrepr.eps}
}}
\quad=\;
\cat G([b,\beta_1,\beta_2])
\]
For clarity, in this proof the strings corresponding to the chosen left adjoints $(\cat F i)_!$ of all $i\in \JJ$ appear as dotted. Here $\varepsilon$ denotes the counit of one of these adjunctions. Zooming in, we see the other units and counits hiding within the mates $(\cat F\alpha_2)_!$ and $(\fun_{\cat F_!})^{-1} = (\fun_{\cat F})_!$, which are defined as
\[
\vcenter { \hbox{
\psfrag{A}[Bc][Bc]{\scalebox{1}{\scriptsize{$(\cat F i)_!$}}}
\psfrag{B}[Bc][Bc]{\scalebox{1}{\scriptsize{$(\cat F ja)_!$}}}
\psfrag{F}[Bc][Bc]{\scalebox{1}{\scriptsize{$(\cat F \alpha_2)_!$}}}
\includegraphics[scale=.4]{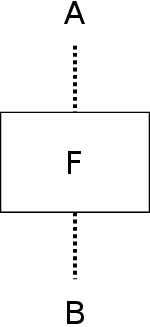}
}}
\;=\;
\vcenter { \hbox{
\psfrag{A}[Bc][Bc]{\scalebox{1}{\scriptsize{$\cat F i$}}}
\psfrag{B}[Bc][Bc]{\scalebox{1}{\scriptsize{\;\;\;$\cat F ja$}}}
\psfrag{F}[Bc][Bc]{\scalebox{1}{\scriptsize{$\cat F \alpha_2$}}}
\psfrag{N}[Bc][Bc]{\scalebox{1}{\scriptsize{$\eta$}}}
\psfrag{E}[Bc][Bc]{\scalebox{1}{\scriptsize{$\varepsilon$}}}
\includegraphics[scale=.4]{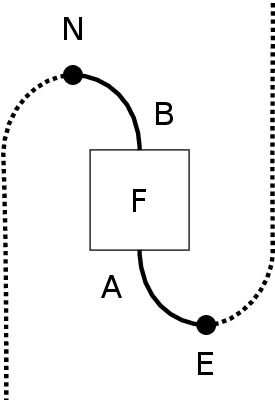}
}}
\quad\quad\quad\quad\quad\quad
\vcenter { \hbox{
\psfrag{A}[Bc][Bc]{\scalebox{1}{\scriptsize{$(\cat F a)_!\;\;\;$}}}
\psfrag{B}[Bc][Bc]{\scalebox{1}{\scriptsize{$\;\;\;(\cat F j)_!$}}}
\psfrag{C}[Bc][Bc]{\scalebox{1}{\scriptsize{$(\cat F ja)_!$}}}
\psfrag{F}[Bc][Bc]{\scalebox{1}{\scriptsize{$\fun_{\cat F_!}^{-1}$\;\;\;\;\;\;\;}}}
\includegraphics[scale=.4]{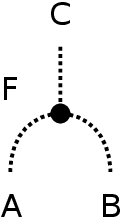}
}}
\;=\;
\vcenter { \hbox{
\psfrag{A}[Bc][Bc]{\scalebox{1}{\scriptsize{$\;\;\cat F j$}}}
\psfrag{B}[Bc][Bc]{\scalebox{1}{\scriptsize{$\;\;\cat F a$}}}
\psfrag{C}[Bc][Bc]{\scalebox{1}{\scriptsize{\;\;\;\;$\cat F ja$}}}
\psfrag{F}[Bc][Bc]{\scalebox{1}{\scriptsize{$\fun_{\cat F}$\;\;}}}
\psfrag{N}[Bc][Bc]{\scalebox{1}{\scriptsize{$\eta$}}}
\psfrag{E}[Bc][Bc]{\scalebox{1}{\scriptsize{$\varepsilon$}}}
\includegraphics[scale=.4]{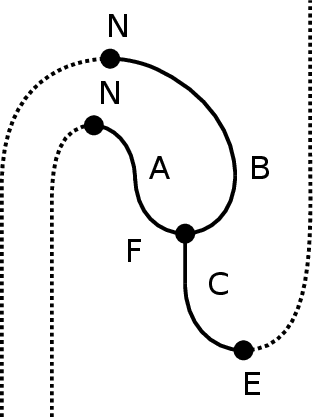}
}}
\]
and similarly for the other ones.

We can now calculate as follows, using the definitions and the adjunctions (and, tacitly, the exchange law to slide blocks up and down -- see~\eqref{Exa:strings-for-exchange}):
\begin{align} \label{eq:def-G-2-cell-strings}
\cat G([a,\alpha_1,\alpha_2])
\;=\;\quad
\vcenter { \hbox{
\psfrag{A}[Bc][Bc]{\scalebox{1}{\scriptsize{$\cat F u$}}}
\psfrag{B}[Bc][Bc]{\scalebox{1}{\scriptsize{$(\cat F i)_!$}}}
\psfrag{C}[Bc][Bc]{\scalebox{1}{\scriptsize{\;\;\;$\cat F ja$}}}
\psfrag{C'}[Bc][Bc]{\scalebox{1}{\scriptsize{$(\cat F ja)_!$}}}
\psfrag{D}[Bc][Bc]{\scalebox{1}{\scriptsize{$\cat F j$}}}
\psfrag{E}[Bc][Bc]{\scalebox{1}{\scriptsize{$\cat F a$}}}
\psfrag{R}[Bc][Bc]{\scalebox{1}{\scriptsize{$\cat F va$\;\;}}}
\psfrag{S}[Bc][Bc]{\scalebox{1}{\scriptsize{$\cat F a$\;}}}
\psfrag{S'}[Bc][Bc]{\scalebox{1}{\scriptsize{$(\cat F a)_!$\;}}}
\psfrag{T}[Bc][Bc]{\scalebox{1}{\scriptsize{$\cat F v$}}}
\psfrag{U}[Bc][Bc]{\scalebox{1}{\scriptsize{\;\;\;\;$(\cat F j)_!$}}}
\psfrag{F}[Bc][Bc]{\scalebox{1}{\scriptsize{$\cat F \alpha_1$}}}
\psfrag{G}[Bc][Bc]{\scalebox{1}{\scriptsize{$\cat F \alpha_2$}}}
\includegraphics[scale=.4]{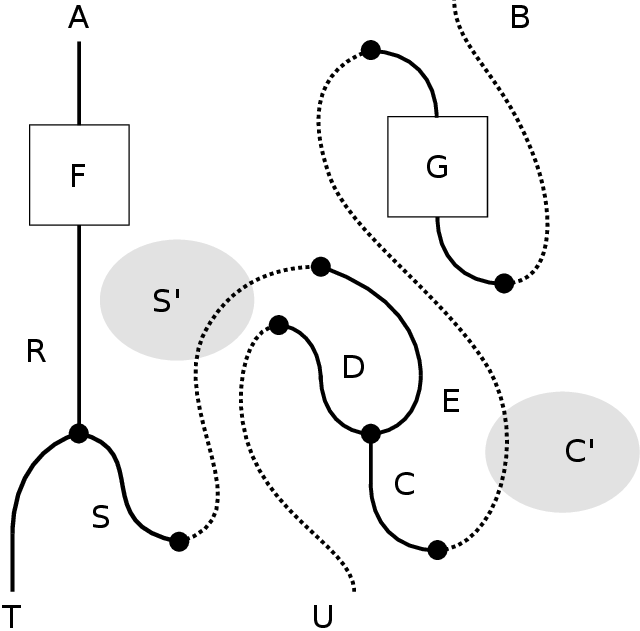}
}}
\;\stackrel{2\times\textrm{(\ref{Exa:strings-for-adjoints})}}{=}\;
\vcenter { \hbox{
\psfrag{A}[Bc][Bc]{\scalebox{1}{\scriptsize{$\cat F u$}}}
\psfrag{B}[Bc][Bc]{\scalebox{1}{\scriptsize{$(\cat F i)_!$}}}
\psfrag{C}[Bc][Bc]{\scalebox{1}{\scriptsize{\;\;\;$\cat F ja$}}}
\psfrag{D}[Bc][Bc]{\scalebox{1}{\scriptsize{$\cat F j$}}}
\psfrag{E}[Bc][Bc]{\scalebox{1}{\scriptsize{\;\;$\cat F a$}}}
\psfrag{R}[Bc][Bc]{\scalebox{1}{\scriptsize{$\cat F va$\;\;}}}
\psfrag{S}[Bc][Bc]{\scalebox{1}{\scriptsize{$\cat F a$\;}}}
\psfrag{T}[Bc][Bc]{\scalebox{1}{\scriptsize{$\cat F v$}}}
\psfrag{U}[Bc][Bc]{\scalebox{1}{\scriptsize{\;\;\;\;$(\cat F j)_!$}}}
\psfrag{F}[Bc][Bc]{\scalebox{1}{\scriptsize{$\cat F \alpha_1$}}}
\psfrag{G}[Bc][Bc]{\scalebox{1}{\scriptsize{$\cat F \alpha_2$}}}
\includegraphics[scale=.4]{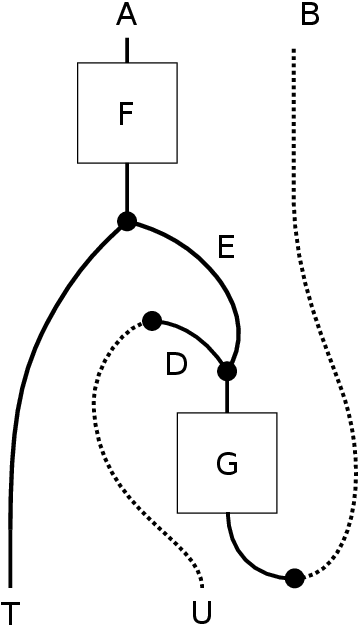}
}}
\end{align}
\[
\;=\;
\vcenter { \hbox{
\psfrag{A}[Bc][Bc]{\scalebox{1}{\scriptsize{$\cat F u$}}}
\psfrag{B}[Bc][Bc]{\scalebox{1}{\scriptsize{$(\cat F i)_!$}}}
\psfrag{C}[Bc][Bc]{\scalebox{1}{\scriptsize{$\cat F b$}}}
\psfrag{E}[Bc][Bc]{\scalebox{1}{\scriptsize{\;\;$\cat F a$}}}
\psfrag{T}[Bc][Bc]{\scalebox{1}{\scriptsize{$\cat F v$}}}
\psfrag{U}[Bc][Bc]{\scalebox{1}{\scriptsize{\;\;\;\;$(\cat F j)_!$}}}
\psfrag{F}[Bc][Bc]{\scalebox{1}{\scriptsize{$\cat F \alpha_1$}}}
\psfrag{F1}[Bc][Bc]{\scalebox{1}{\scriptsize{$\cat F \varphi$}}}
\psfrag{F2}[Bc][Bc]{\scalebox{1}{\scriptsize{$\cat F \varphi^{-1}$}}}
\psfrag{G}[Bc][Bc]{\scalebox{1}{\scriptsize{$\cat F \alpha_2$}}}
\includegraphics[scale=.4]{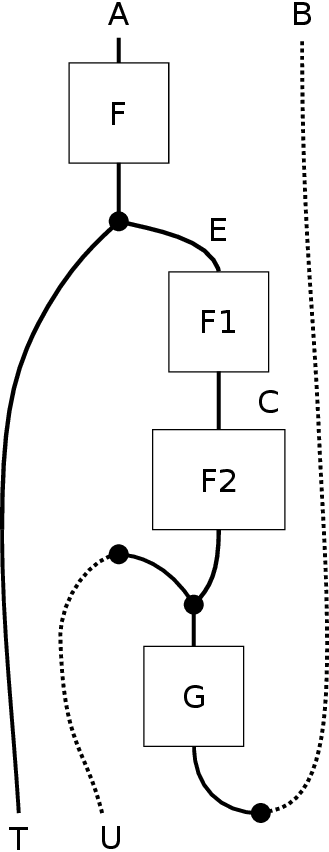}
}}
\stackrel{\textrm{\eqref{eq:pseudofun-setup}}}{=}
\vcenter { \hbox{
\psfrag{A}[Bc][Bc]{\scalebox{1}{\scriptsize{$\cat F u$}}}
\psfrag{B}[Bc][Bc]{\scalebox{1}{\scriptsize{$(\cat F i)_!$}}}
\psfrag{C}[Bc][Bc]{\scalebox{1}{\scriptsize{\;\;\;$\cat F jb$}}}
\psfrag{D}[Bc][Bc]{\scalebox{1}{\scriptsize{$\cat F j$}}}
\psfrag{E}[Bc][Bc]{\scalebox{1}{\scriptsize{\;\;$\cat F b$}}}
\psfrag{R}[Bc][Bc]{\scalebox{1}{\scriptsize{$\cat F vb$\;\;}}}
\psfrag{S}[Bc][Bc]{\scalebox{1}{\scriptsize{$\cat F b$\;}}}
\psfrag{T}[Bc][Bc]{\scalebox{1}{\scriptsize{$\cat F v$}}}
\psfrag{U}[Bc][Bc]{\scalebox{1}{\scriptsize{\;\;\;\;$(\cat F j)_!$}}}
\psfrag{F}[Bc][Bc]{\scalebox{1}{\scriptsize{$\cat F \beta_1$}}}
\psfrag{G}[Bc][Bc]{\scalebox{1}{\scriptsize{$\cat F \beta_2$}}}
\includegraphics[scale=.4]{anc/halfpseudofun-welldef-2.eps}
}}
\stackrel{2\times\textrm{\eqref{Exa:strings-for-adjoints}}}{=}
\vcenter { \hbox{
\psfrag{A}[Bc][Bc]{\scalebox{1}{\scriptsize{$\cat F u$}}}
\psfrag{B}[Bc][Bc]{\scalebox{1}{\scriptsize{$(\cat F i)_!$}}}
\psfrag{C}[Bc][Bc]{\scalebox{1}{\scriptsize{\;\;\;$\cat F jb$}}}
\psfrag{C'}[Bc][Bc]{\scalebox{1}{\scriptsize{$(\cat F jb)_!$}}}
\psfrag{D}[Bc][Bc]{\scalebox{1}{\scriptsize{$\cat F j$}}}
\psfrag{E}[Bc][Bc]{\scalebox{1}{\scriptsize{$\cat F b$}}}
\psfrag{R}[Bc][Bc]{\scalebox{1}{\scriptsize{$\cat F vb$\;\;}}}
\psfrag{S}[Bc][Bc]{\scalebox{1}{\scriptsize{$\cat F b$\;}}}
\psfrag{S'}[Bc][Bc]{\scalebox{1}{\scriptsize{$(\cat F b)_!$\;}}}
\psfrag{T}[Bc][Bc]{\scalebox{1}{\scriptsize{$\cat F v$}}}
\psfrag{U}[Bc][Bc]{\scalebox{1}{\scriptsize{\;\;\;\;$(\cat F j)_!$}}}
\psfrag{F}[Bc][Bc]{\scalebox{1}{\scriptsize{$\cat F \beta_1$}}}
\psfrag{G}[Bc][Bc]{\scalebox{1}{\scriptsize{$\cat F \beta_2$}}}
\includegraphics[scale=.4]{anc/halfpseudofun-welldef-1.eps}
}}
\]
\[
=\; \cat G([b,\beta_1,\beta_2])\,.
\]
Thus $\cat G$ is well-defined on 2-cells. Here, we began highlighting parts of the strings to help the reader focus on the parts that are being changed at the given step in the series of equalities. This visual aide has no further mathematical meaning.

Now it remains to verify that the structure maps $\un_\cat G$ and $\fun_\cat G$ satisfy the axioms of a pseudo-functor, whose string form is recalled in \Cref{Exa:strings-for-fun}. Recall that while the target $\cat C$ is a strict 2-category the source $\Span$ is not, hence to be precise we may want to write its unitors and associators explicitly.

Given two composable spans $G\loto{u}P\oto{i}H$ and $H\loto{v}Q\oto{j}K$ and their composite $G\loto{u\tilde{v}}(i/v)\oto{j\tilde{i}}K$ as in~\eqref{Eq:hor_composite}, the map $\fun_\cat G$ of~\eqref{Eq:fun_for_G} takes the following stringy form. Actually, it is more convenient to write $\fun_{\cat G}^{-1}$:
\begin{align} \label{Eq:strings_fun_for_G}
\vcenter { \hbox{
\psfrag{A}[Bc][Bc]{\scalebox{1}{\scriptsize{$\cat G (i_!u^*)\;\;\;\;\;\;$}}}
\psfrag{B}[Bc][Bc]{\scalebox{1}{\scriptsize{$\;\;\;\;\;\;\cat G( j_!v^*)$}}}
\psfrag{C}[Bc][Bc]{\scalebox{1}{\scriptsize{$\cat G((j_!v^*)\,(i_!u^*))$}}}
\psfrag{F}[Bc][Bc]{\scalebox{1}{\scriptsize{$\fun^{-1}_{\cat G}$\;\;\;\;}}}
\includegraphics[scale=.4]{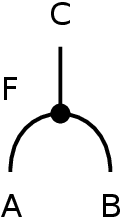}
}}
\quad=\quad
\vcenter { \hbox{
\psfrag{A}[Bc][Bc]{\scalebox{1}{\scriptsize{$\cat Fu$}}}
\psfrag{B}[Bc][Bc]{\scalebox{1}{\scriptsize{$\cat Fv$}}}
\psfrag{C}[Bc][Bc]{\scalebox{1}{\scriptsize{$\cat F(u\tilde v)$}}}
\psfrag{A1}[Bc][Bc]{\scalebox{1}{\scriptsize{$(\cat Fi)_!$}}}
\psfrag{B1}[Bc][Bc]{\scalebox{1}{\scriptsize{$(\cat F j)_!$}}}
\psfrag{C1}[Bc][Bc]{\scalebox{1}{\scriptsize{$(\cat Fj \tilde i)_!$}}}
\psfrag{F}[Bc][Bc]{\scalebox{1}{\scriptsize{$\fun^{-1}_{\cat F}$}}}
\psfrag{F1}[Bc][Bc]{\scalebox{1}{\scriptsize{$\fun^{-1}_{\cat F_!}$}}}
\psfrag{G}[Bc][Bc]{\scalebox{1}{\scriptsize{$(\cat F\gamma)_!$}}}
\includegraphics[scale=.4]{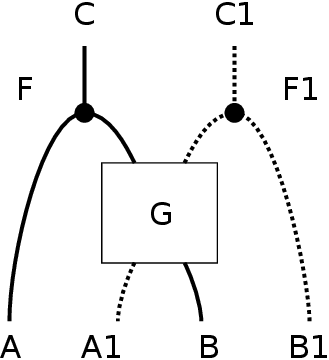}
}}
\;=\;
\vcenter { \hbox{
\psfrag{A}[Bc][Bc]{\scalebox{1}{\scriptsize{$\cat Fu$}}}
\psfrag{B}[Bc][Bc]{\scalebox{1}{\scriptsize{$\cat Fv$}}}
\psfrag{C}[Bc][Bc]{\scalebox{1}{\scriptsize{$\cat F(u\tilde v)$}}}
\psfrag{A1}[Bc][Bc]{\scalebox{1}{\scriptsize{$(\cat Fi)_!$}}}
\psfrag{B1}[Bc][Bc]{\scalebox{1}{\scriptsize{$(\cat F j)_!$}}}
\psfrag{C1}[Bc][Bc]{\scalebox{1}{\scriptsize{$(\cat Fj \tilde i)_!$}}}
\psfrag{U}[Bc][Bc]{\scalebox{1}{\scriptsize{$\;\;\cat Fj$}}}
\psfrag{V}[Bc][Bc]{\scalebox{1}{\scriptsize{$\cat F\tilde i$}}}
\psfrag{W}[Bc][Bc]{\scalebox{1}{\scriptsize{$\cat Fj\tilde i$}}}
\psfrag{R}[Bc][Bc]{\scalebox{1}{\scriptsize{$\cat F i$\;\;}}}
\psfrag{S}[Bc][Bc]{\scalebox{1}{\scriptsize{$\cat F\tilde v$}}}
\psfrag{T}[Bc][Bc]{\scalebox{1}{\scriptsize{$\cat F\tilde i$}}}
\psfrag{G}[Bc][Bc]{\scalebox{1}{\scriptsize{$\cat F\gamma$}}}
\includegraphics[scale=.4]{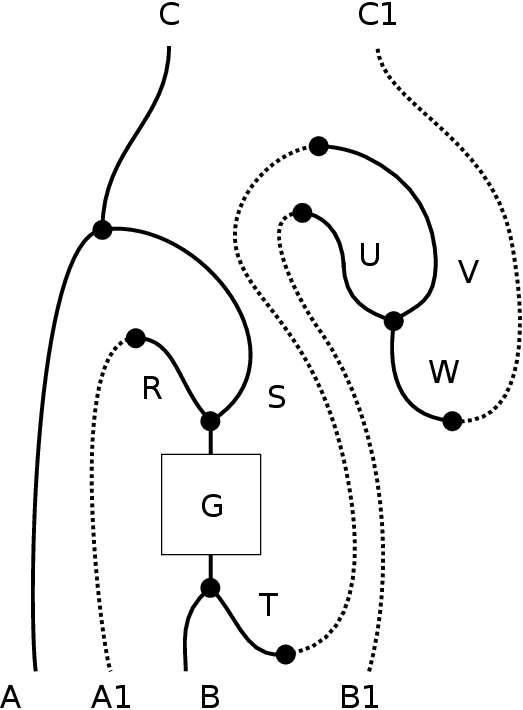}
}}
\end{align}
Let us begin with checking the left unit identity, namely:
\begin{equation}
\label{Eq:left-unit-for-G}%
\vcenter {\hbox{
\psfrag{F}[Bc][Bc]{\scalebox{1}{\scriptsize{$\;\;\un^{-1}$}}}
\psfrag{G}[Bc][Bc]{\scalebox{1}{\scriptsize{$\;\;\;\;\;\fun_{\cat G}\inv$}}}
\psfrag{H}[Bc][Bc]{\scalebox{1}{\scriptsize{$\;\;\;\;\;\;\;\;\;\cat G(\lun^{-1})$}}}
\psfrag{A}[Bc][Bc]{\scalebox{1}{\scriptsize{$\cat G i_!u^*$}}}
\psfrag{B}[Bc][Bc]{\scalebox{1}{\scriptsize{$\Id_{H}$}}}
\psfrag{C}[Bc][Bc]{\scalebox{1}{\scriptsize{$\;\;\;\;\;\;\;\cat G (\Id_H)$}}}
\psfrag{D}[Bc][Bc]{\scalebox{1}{\scriptsize{$\cat G (\Id_H \!\circ\, i_!u^*)$\;\;\;\;\;\;\;\;\;\quad}}}
\psfrag{E}[Bc][Bc]{\scalebox{1}{\scriptsize{$\cat G i_!u^*$}}}
\includegraphics[scale=.4]{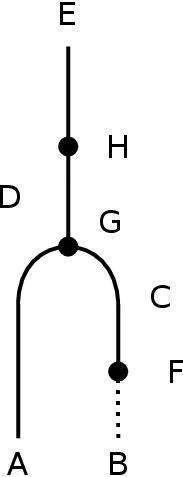}
}}
\quad\qquad \overset{?}{=}\quad\quad
\vcenter { \hbox{
\psfrag{A}[Bc][Bc]{\scalebox{1}{\scriptsize{$\cat G(i_!u^*)$}}}
\psfrag{C}[Bc][Bc]{\scalebox{1}{\scriptsize{$\cat G(i_!u^*)$}}}
\includegraphics[scale=.4]{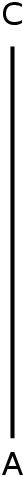}
}}
\end{equation}
To compute this, we need to specialize~\eqref{Eq:strings_fun_for_G} to the case $v=j=\Id_H$ in order to describe $\fun_{\cat G}\inv\colon \cat{G}(\Id_H \circ i_!u^*)\Rightarrow \cat{G}(\Id_H)\circ\cat{G}(i_!u^*)$. We also need the (inverted) left unitor $\lun^{-1} \colon i_!u^* \Rightarrow \Id_H \circ (i_!u^*)=\tilde i_!(u\,\tilde \Id)^*$ in $\Span$, which is constructed as follows, where $\ell := \langle \Id_P, i, \id_i \rangle$:
\[
\xymatrix{
G \ar@{=}[d] && P
 \ar[d]^\ell_{\simeq}
 \ar[ll]_u
 \ar[rr]^i
 \ar[dl]_{\Id}
 && H \ar@{=}[d] \\
G
\ar@/_6ex/@{..>}[ddrr]_-{i_!u^*} &
 P \ar[l]^-{u} &
 (i/\Id)
 \ar[dl]_-{\widetilde{\Id}}
 \ar[dr]^-{\tilde i}
 \ar[l]_-{\widetilde{\Id}}
 \ar[rr]^-{\tilde i} && H
\\
& P
\ar[ul]^-u \ar[dr]_-{i}
\ar@{}[rr]|{\oEcell{\gamma\;}} &&
 H
 \ar[ur]_-{\Id}
 \ar[dl]^-{\Id} &
\\
 && H
 \ar@/_6ex/@{..>}[rruu]_-{\Id_H} &&
}
\]
Note for later use that, by definition of~$\ell=\langle\cdots,\cdots,\id_i\rangle$, we have $\gamma\ell=\id_i$. Applying~$\cat G$ to this $\lun\inv$ we obtain
\[
\cat G(\lun^{-1})
\; = \;
\cat G([\ell, \id_u, \id_i])
\; = \;
\vcenter { \hbox{
\psfrag{A}[Bc][Bc]{\scalebox{1}{\scriptsize{$\cat F u$}}}
\psfrag{B}[Bc][Bc]{\scalebox{1}{\scriptsize{$(\cat F i)_!$}}}
\psfrag{C}[Bc][Bc]{\scalebox{1}{\scriptsize{\;\;\;$\cat F i$}}}
\psfrag{D}[Bc][Bc]{\scalebox{1}{\scriptsize{$\cat F \tilde i$}}}
\psfrag{E}[Bc][Bc]{\scalebox{1}{\scriptsize{$\cat F \ell$}}}
\psfrag{S}[Bc][Bc]{\scalebox{1}{\scriptsize{$\cat F \ell$\;}}}
\psfrag{T}[Bc][Bc]{\scalebox{1}{\scriptsize{$\cat F (u \,\widetilde{\Id})$}}}
\psfrag{U}[Bc][Bc]{\scalebox{1}{\scriptsize{\;\;\;\;$(\cat F \tilde i)_!$}}}
\includegraphics[scale=.4]{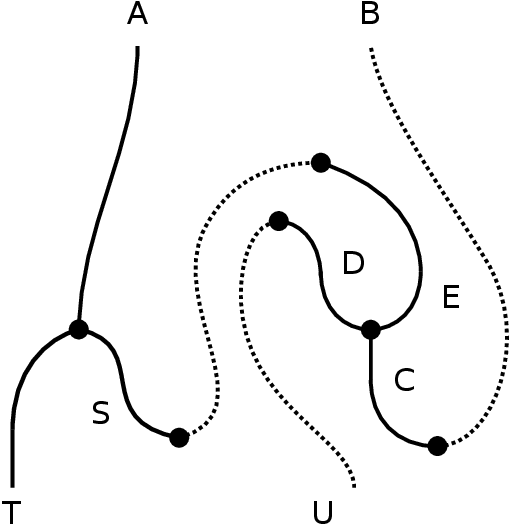}
}}
\]
We can now verify the left unit identity~\eqref{Eq:left-unit-for-G} as follows, first plugging the above partial computations:
\[
\vcenter {\hbox{
\psfrag{F}[Bc][Bc]{\scalebox{1}{\scriptsize{$\;\;\un^{-1}$}}}
\psfrag{G}[Bc][Bc]{\scalebox{1}{\scriptsize{$\;\;\;\fun$}}}
\psfrag{H}[Bc][Bc]{\scalebox{1}{\scriptsize{$\;\;\;\;\;\;\;\;\;\cat G(\lun^{-1})$}}}
\psfrag{A}[Bc][Bc]{\scalebox{1}{\scriptsize{$\cat G i_!u^*$}}}
\psfrag{B}[Bc][Bc]{\scalebox{1}{\scriptsize{$\Id_{H}$}}}
\psfrag{C}[Bc][Bc]{\scalebox{1}{\scriptsize{$\;\;\;\;\;\;\;\cat G (\Id_H)$}}}
\psfrag{D}[Bc][Bc]{\scalebox{1}{\scriptsize{$\cat G (\Id_H \!\circ\, i_!u^*)$\;\;\;\;\;\;\;\;\;\quad}}}
\psfrag{E}[Bc][Bc]{\scalebox{1}{\scriptsize{$\cat G i_!u^*$}}}
\includegraphics[scale=.4]{anc/unit-fun-lun-dot-inverted.eps}
}}
\quad =\quad
\vcenter { \hbox{
\psfrag{A}[Bc][Bc]{\scalebox{1}{\scriptsize{$\cat Fu$}}}
\psfrag{B}[Bc][Bc]{\scalebox{1}{\scriptsize{$\Id_H$}}}
\psfrag{C}[Bc][Bc]{\scalebox{1}{\scriptsize{$\cat Fu$}}}
\psfrag{A1}[Bc][Bc]{\scalebox{1}{\scriptsize{$(\cat Fi)_!$}}}
\psfrag{B1}[Bc][Bc]{\scalebox{1}{\scriptsize{$\Id_H$}}}
\psfrag{C1}[Bc][Bc]{\scalebox{1}{\scriptsize{$(\cat Fi )_!$}}}
\psfrag{U}[Bc][Bc]{\scalebox{1}{\scriptsize{$\;\cat F\Id$}}}
\psfrag{V}[Bc][Bc]{\scalebox{1}{\scriptsize{$\cat F\tilde i$}}}
\psfrag{W}[Bc][Bc]{\scalebox{1}{\scriptsize{$\cat F \tilde i$}}}
\psfrag{R}[Bc][Bc]{\scalebox{1}{\scriptsize{$\cat F \widetilde{\Id}$\;\;}}}
\psfrag{T}[Bc][Bc]{\scalebox{1}{\scriptsize{$\cat Fu \, \widetilde{\Id}$\;\;\;\;}}}
\psfrag{I}[Bc][Bc]{\scalebox{1}{\scriptsize{$\;\;\cat F \tilde i$}}}
\psfrag{J}[Bc][Bc]{\scalebox{1}{\scriptsize{$\;\cat F\ell$}}}
\psfrag{K}[Bc][Bc]{\scalebox{1}{\scriptsize{$\cat F i$}}}
\psfrag{S}[Bc][Bc]{\scalebox{1}{\scriptsize{$\cat F\Id$}}}
\psfrag{G}[Bc][Bc]{\scalebox{1}{\scriptsize{$\cat F\gamma$}}}
\psfrag{H}[Bc][Bc]{\scalebox{1}{\scriptsize{$\;\;\;\un^{-1}_\cat F$}}}
\includegraphics[scale=.4]{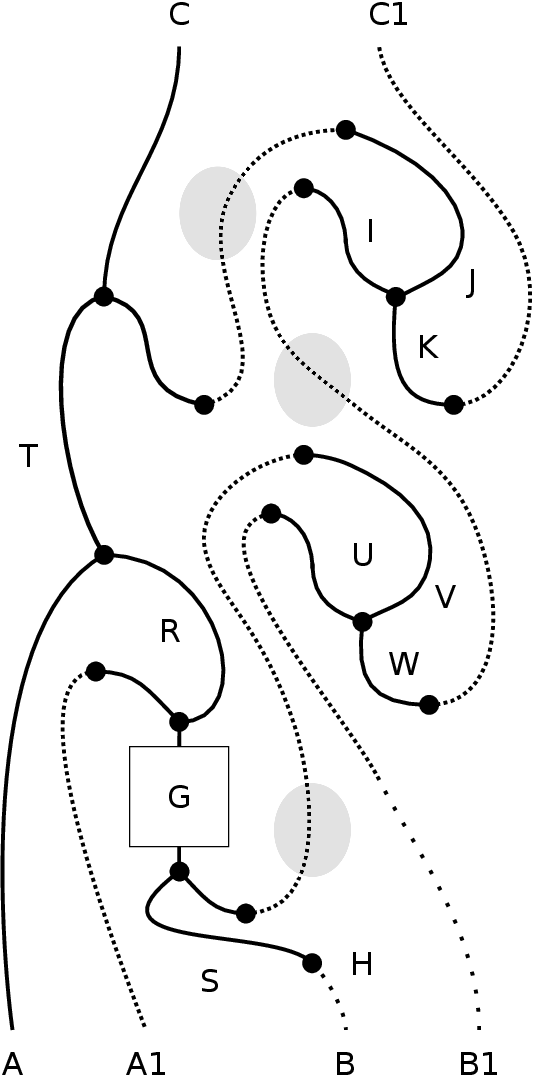}
}}
\stackrel{3 \times \textrm{\eqref{Exa:strings-for-adjoints}}}{=}
\vcenter { \hbox{
\psfrag{A}[Bc][Bc]{\scalebox{1}{\scriptsize{$\cat Fu$}}}
\psfrag{B}[Bc][Bc]{\scalebox{1}{\scriptsize{$\Id_H$}}}
\psfrag{C}[Bc][Bc]{\scalebox{1}{\scriptsize{$\cat Fu$}}}
\psfrag{A1}[Bc][Bc]{\scalebox{1}{\scriptsize{$(\cat Fi)_!$}}}
\psfrag{B1}[Bc][Bc]{\scalebox{1}{\scriptsize{$\Id_H$}}}
\psfrag{C1}[Bc][Bc]{\scalebox{1}{\scriptsize{$(\cat Fi )_!$}}}
\psfrag{U}[Bc][Bc]{\scalebox{1}{\scriptsize{$\cat F\Id$}}}
\psfrag{V}[Bc][Bc]{\scalebox{1}{\scriptsize{$\cat F\tilde i$}}}
\psfrag{W}[Bc][Bc]{\scalebox{1}{\scriptsize{$\cat F\tilde i$}}}
\psfrag{R}[Bc][Bc]{\scalebox{1}{\scriptsize{$\cat F \widetilde{\Id}$\;\;}}}
\psfrag{S}[Bc][Bc]{\scalebox{1}{\scriptsize{$\cat F \Id$}}}
\psfrag{T}[Bc][Bc]{\scalebox{1}{\scriptsize{$\cat Fu \, \widetilde{\Id}$\;\;\;\;}}}
\psfrag{I}[Bc][Bc]{\scalebox{1}{\scriptsize{$\;\;\cat F\tilde i$}}}
\psfrag{J}[Bc][Bc]{\scalebox{1}{\scriptsize{$\;\cat F\ell$}}}
\psfrag{K}[Bc][Bc]{\scalebox{1}{\scriptsize{$\cat F i$}}}
\psfrag{G}[Bc][Bc]{\scalebox{1}{\scriptsize{$\cat F\gamma$}}}
\psfrag{H}[Bc][Bc]{\scalebox{1}{\scriptsize{$\;\;\;\un^{-1}_\cat F$}}}
\includegraphics[scale=.4]{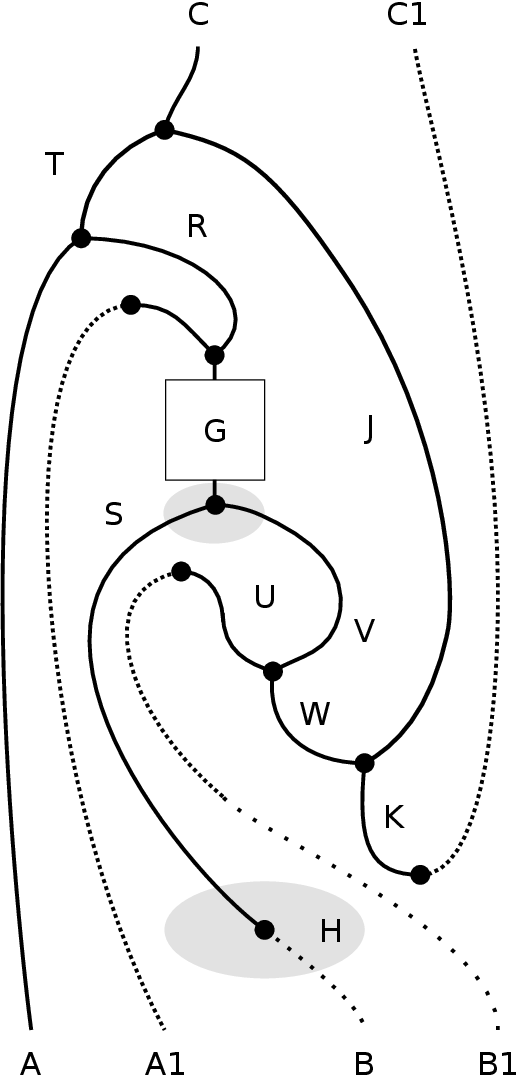}
}}
\]
\[
\stackrel{\textrm{\eqref{eq:fun-unit-simpler}}}{=}
\vcenter { \hbox{
\psfrag{A}[Bc][Bc]{\scalebox{1}{\scriptsize{$\cat Fu$}}}
\psfrag{B}[Bc][Bc]{\scalebox{1}{\scriptsize{$\Id_H$}}}
\psfrag{C}[Bc][Bc]{\scalebox{1}{\scriptsize{$\cat Fu$}}}
\psfrag{A1}[Bc][Bc]{\scalebox{1}{\scriptsize{$(\cat Fi)_!$}}}
\psfrag{B1}[Bc][Bc]{\scalebox{1}{\scriptsize{$\Id_H$}}}
\psfrag{C1}[Bc][Bc]{\scalebox{1}{\scriptsize{$(\cat Fi )_!$}}}
\psfrag{U}[Bc][Bc]{\scalebox{1}{\scriptsize{$\cat F\Id$}}}
\psfrag{V}[Bc][Bc]{\scalebox{1}{\scriptsize{$\cat F\tilde i$}}}
\psfrag{W}[Bc][Bc]{\scalebox{1}{\scriptsize{$\cat F\tilde i$}}}
\psfrag{R}[Bc][Bc]{\scalebox{1}{\scriptsize{$\cat F \widetilde{\Id}$\;\;}}}
\psfrag{S}[Bc][Bc]{\scalebox{1}{\scriptsize{$\cat F \Id$}}}
\psfrag{T}[Bc][Bc]{\scalebox{1}{\scriptsize{$\cat Fu \, \widetilde{\Id}$\;\;\;\;}}}
\psfrag{I}[Bc][Bc]{\scalebox{1}{\scriptsize{$\;\;\cat F\tilde i$}}}
\psfrag{J}[Bc][Bc]{\scalebox{1}{\scriptsize{$\;\cat F\ell$}}}
\psfrag{K}[Bc][Bc]{\scalebox{1}{\scriptsize{$\cat F i$}}}
\psfrag{G}[Bc][Bc]{\scalebox{1}{\scriptsize{$\cat F\gamma$}}}
\includegraphics[scale=.4]{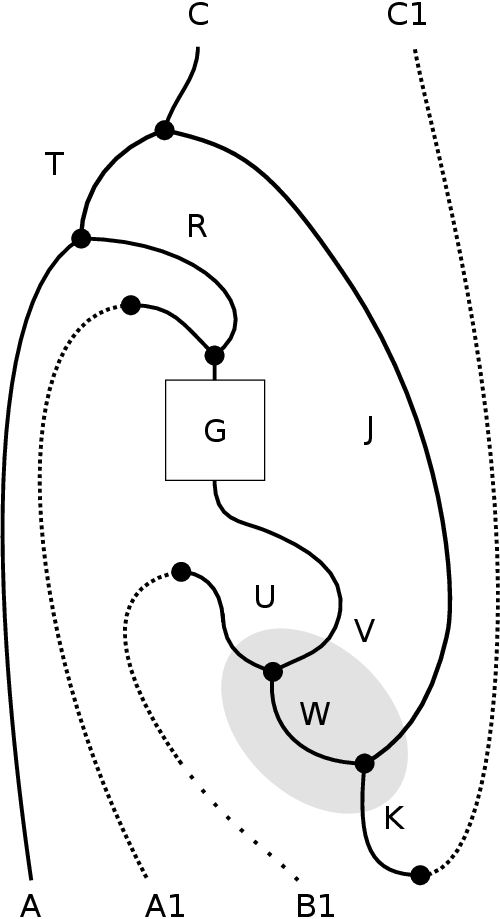}
}}
\stackrel{\textrm{\eqref{eq:fun-ass-simpler}}}{=}
\vcenter { \hbox{
\psfrag{A}[Bc][Bc]{\scalebox{1}{\scriptsize{$\cat Fu$}}}
\psfrag{B}[Bc][Bc]{\scalebox{1}{\scriptsize{$\Id_H$}}}
\psfrag{C}[Bc][Bc]{\scalebox{1}{\scriptsize{$\cat Fu$}}}
\psfrag{A1}[Bc][Bc]{\scalebox{1}{\scriptsize{$(\cat Fi)_!$}}}
\psfrag{B1}[Bc][Bc]{\scalebox{1}{\scriptsize{$\Id_H$}}}
\psfrag{C1}[Bc][Bc]{\scalebox{1}{\scriptsize{$(\cat Fi )_!$}}}
\psfrag{U}[Bc][Bc]{\scalebox{1}{\scriptsize{$\cat F\Id$}}}
\psfrag{V}[Bc][Bc]{\scalebox{1}{\scriptsize{$\cat F\tilde i$}}}
\psfrag{R}[Bc][Bc]{\scalebox{1}{\scriptsize{$\cat F \widetilde{\Id}$\;\;}}}
\psfrag{S}[Bc][Bc]{\scalebox{1}{\scriptsize{$\cat F \Id$}}}
\psfrag{T}[Bc][Bc]{\scalebox{1}{\scriptsize{$\cat Fu \, \widetilde{\Id}$\;\;\;\;}}}
\psfrag{I}[Bc][Bc]{\scalebox{1}{\scriptsize{$\;\;\cat F\tilde i$}}}
\psfrag{J}[Bc][Bc]{\scalebox{1}{\scriptsize{$\;\cat F\ell$}}}
\psfrag{K}[Bc][Bc]{\scalebox{1}{\scriptsize{$\cat F i$}}}
\psfrag{G}[Bc][Bc]{\scalebox{1}{\scriptsize{$\cat F\gamma$}}}
\includegraphics[scale=.4]{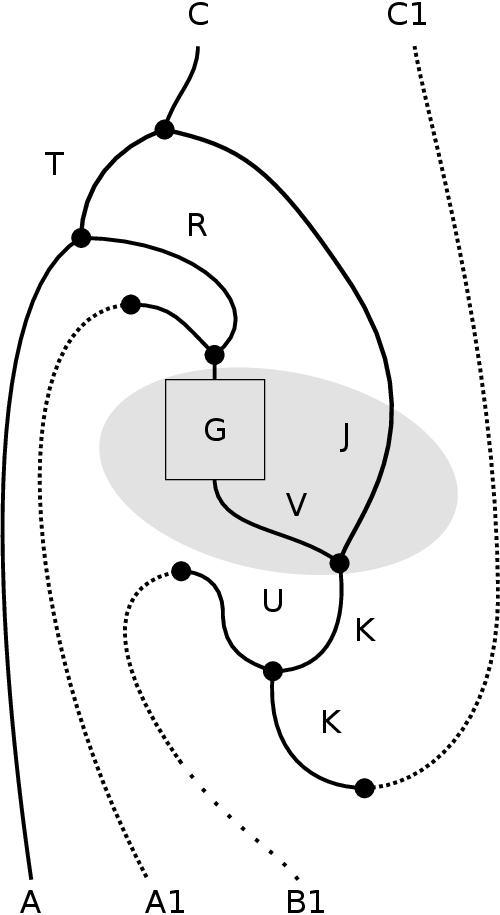}
}}
\stackrel{\textrm{\eqref{Exa:strings-for-natural-fun}}}{=}
\vcenter { \hbox{
\psfrag{A}[Bc][Bc]{\scalebox{1}{\scriptsize{$\cat Fu$}}}
\psfrag{B}[Bc][Bc]{\scalebox{1}{\scriptsize{$\Id_H$}}}
\psfrag{C}[Bc][Bc]{\scalebox{1}{\scriptsize{$\cat Fu$}}}
\psfrag{A1}[Bc][Bc]{\scalebox{1}{\scriptsize{$(\cat Fi)_!$}}}
\psfrag{B1}[Bc][Bc]{\scalebox{1}{\scriptsize{$\Id_H$}}}
\psfrag{C1}[Bc][Bc]{\scalebox{1}{\scriptsize{$(\cat Fi )_!$}}}
\psfrag{U}[Bc][Bc]{\scalebox{1}{\scriptsize{$\cat F\Id$}}}
\psfrag{V}[Bc][Bc]{\scalebox{1}{\scriptsize{$\cat F\tilde i$}}}
\psfrag{W}[Bc][Bc]{\scalebox{1}{\scriptsize{$\cat F\tilde i$}}}
\psfrag{R}[Bc][Bc]{\scalebox{1}{\scriptsize{$\cat F \widetilde{\Id}$\;\;}}}
\psfrag{S}[Bc][Bc]{\scalebox{1}{\scriptsize{$\cat F \Id$}}}
\psfrag{T}[Bc][Bc]{\scalebox{1}{\scriptsize{$\cat Fu \, \widetilde{\Id}$\;\;\;\;}}}
\psfrag{I}[Bc][Bc]{\scalebox{1}{\scriptsize{$\;\;\cat F\tilde i$}}}
\psfrag{J}[Bc][Bc]{\scalebox{1}{\scriptsize{$\;\cat F\ell$}}}
\psfrag{K}[Bc][Bc]{\scalebox{1}{\scriptsize{$\cat F i$}}}
\psfrag{L}[Bc][Bc]{\scalebox{1}{\scriptsize{$\cat F i\widetilde{\Id}\;\;$}}}
\psfrag{G}[Bc][Bc]{\scalebox{1}{\scriptsize{$\cat F(\gamma \, \ell)$}}}
\includegraphics[scale=.4]{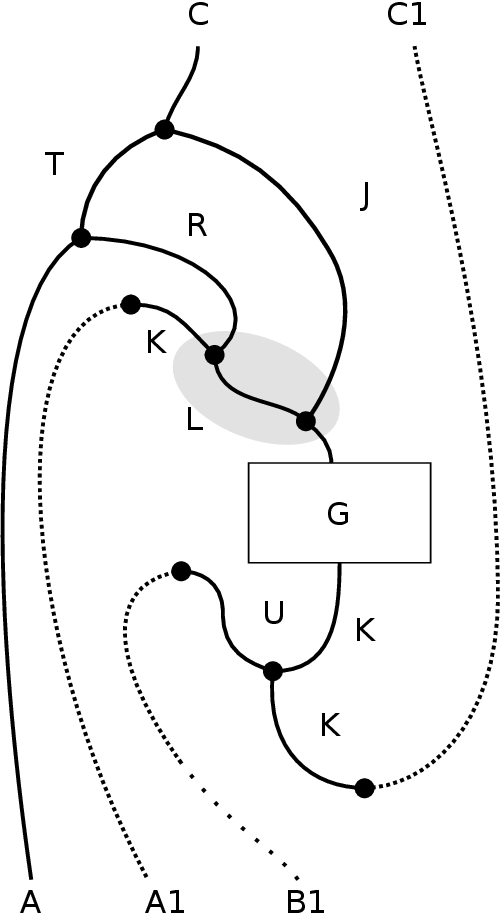}
}}
\]
The next passage uses the equation $\gamma \ell=\id_i$ as well as the associativity of $\fun_{\cat F}$:
\[
\stackrel{\eqref{eq:fun-ass-simpler}}{=}
\vcenter { \hbox{
\psfrag{A}[Bc][Bc]{\scalebox{1}{\scriptsize{$\cat Fu$}}}
\psfrag{B}[Bc][Bc]{\scalebox{1}{\scriptsize{$\Id_H$}}}
\psfrag{C}[Bc][Bc]{\scalebox{1}{\scriptsize{$\cat Fu$}}}
\psfrag{A1}[Bc][Bc]{\scalebox{1}{\scriptsize{$(\cat Fi)_!$}}}
\psfrag{B1}[Bc][Bc]{\scalebox{1}{\scriptsize{$\Id_H$}}}
\psfrag{C1}[Bc][Bc]{\scalebox{1}{\scriptsize{$(\cat Fi )_!$}}}
\psfrag{U}[Bc][Bc]{\scalebox{1}{\scriptsize{$\cat F\Id$}}}
\psfrag{V}[Bc][Bc]{\scalebox{1}{\scriptsize{$\cat F\tilde i$}}}
\psfrag{W}[Bc][Bc]{\scalebox{1}{\scriptsize{$\cat F\tilde i$}}}
\psfrag{R}[Bc][Bc]{\scalebox{1}{\scriptsize{$\cat F \widetilde{\Id}$\;\;}}}
\psfrag{S}[Bc][Bc]{\scalebox{1}{\scriptsize{$\cat F \Id$}}}
\psfrag{T}[Bc][Bc]{\scalebox{1}{\scriptsize{$\cat Fu \, \widetilde{\Id}$\;\;\;\;}}}
\psfrag{I}[Bc][Bc]{\scalebox{1}{\scriptsize{$\;\;\cat F\tilde i$}}}
\psfrag{J}[Bc][Bc]{\scalebox{1}{\scriptsize{$\;\cat F\ell$}}}
\psfrag{K}[Bc][Bc]{\scalebox{1}{\scriptsize{$\cat F i$}}}
\psfrag{L}[Bc][Bc]{\scalebox{1}{\scriptsize{$\cat F \Id$}}}
\psfrag{G}[Bc][Bc]{\scalebox{1}{\scriptsize{$\cat F(\gamma \circ \ell)$}}}
\includegraphics[scale=.4]{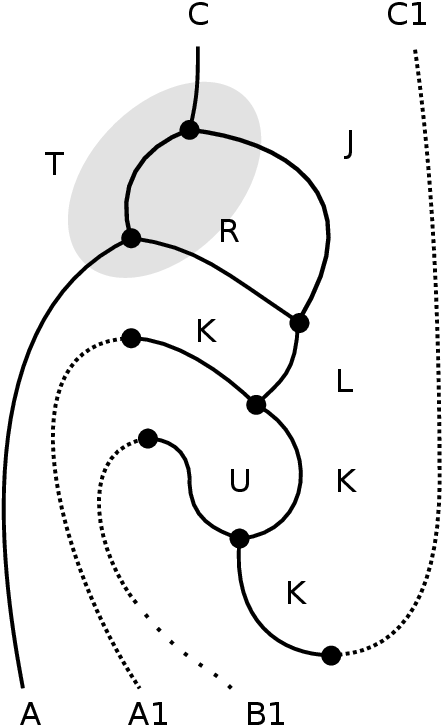}
}}
\stackrel{\eqref{eq:fun-ass-simpler}}{=}
\vcenter { \hbox{
\psfrag{A}[Bc][Bc]{\scalebox{1}{\scriptsize{$\cat Fu$}}}
\psfrag{B}[Bc][Bc]{\scalebox{1}{\scriptsize{$\Id_H$}}}
\psfrag{C}[Bc][Bc]{\scalebox{1}{\scriptsize{$\cat Fu$}}}
\psfrag{A1}[Bc][Bc]{\scalebox{1}{\scriptsize{$(\cat Fi)_!$}}}
\psfrag{B1}[Bc][Bc]{\scalebox{1}{\scriptsize{$\Id_H$}}}
\psfrag{C1}[Bc][Bc]{\scalebox{1}{\scriptsize{$(\cat Fi )_!$}}}
\psfrag{U}[Bc][Bc]{\scalebox{1}{\scriptsize{$\cat F\Id$}}}
\psfrag{V}[Bc][Bc]{\scalebox{1}{\scriptsize{$\cat F\tilde i$}}}
\psfrag{W}[Bc][Bc]{\scalebox{1}{\scriptsize{$\cat F\tilde i$}}}
\psfrag{R}[Bc][Bc]{\scalebox{1}{\scriptsize{$\cat F \widetilde{\Id}$\;\;}}}
\psfrag{S}[Bc][Bc]{\scalebox{1}{\scriptsize{$\cat F \Id$}}}
\psfrag{T}[Bc][Bc]{\scalebox{1}{\scriptsize{$\cat Fu \, \widetilde{\Id}$\;\;\;\;}}}
\psfrag{I}[Bc][Bc]{\scalebox{1}{\scriptsize{$\;\;\cat F\tilde i$}}}
\psfrag{J}[Bc][Bc]{\scalebox{1}{\scriptsize{$\;\cat F\ell$}}}
\psfrag{K}[Bc][Bc]{\scalebox{1}{\scriptsize{$\cat F i$}}}
\psfrag{L}[Bc][Bc]{\scalebox{1}{\scriptsize{$\cat F \Id$}}}
\psfrag{G}[Bc][Bc]{\scalebox{1}{\scriptsize{$\cat F(\gamma \circ \ell)$}}}
\includegraphics[scale=.4]{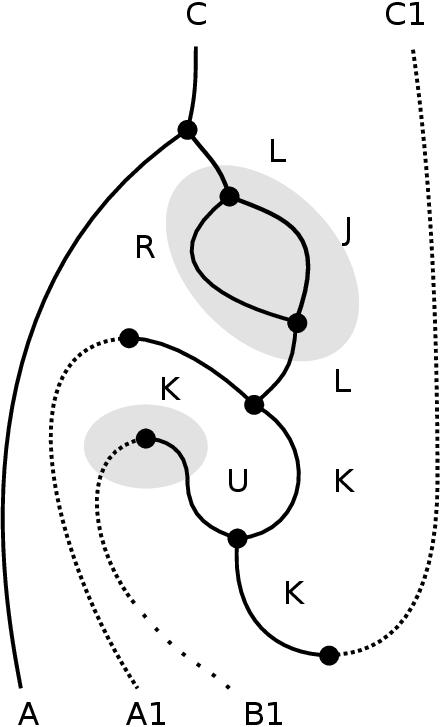}
}}
\stackrel{(\cat F \Id)_!=\Id}{=}
\vcenter { \hbox{
\psfrag{A}[Bc][Bc]{\scalebox{1}{\scriptsize{$\cat Fu$}}}
\psfrag{A1}[Bc][Bc]{\scalebox{1}{\scriptsize{$(\cat Fi)_!$}}}
\psfrag{C}[Bc][Bc]{\scalebox{1}{\scriptsize{$\cat Fu$}}}
\psfrag{C1}[Bc][Bc]{\scalebox{1}{\scriptsize{$(\cat Fi )_!$}}}
\psfrag{V}[Bc][Bc]{\scalebox{1}{\scriptsize{$\cat F i$}}}
\psfrag{R}[Bc][Bc]{\scalebox{1}{\scriptsize{$\cat F \Id$}}}
\psfrag{F}[Bc][Bc]{\scalebox{1}{\scriptsize{$\un_\cat F$}}}
\includegraphics[scale=.4]{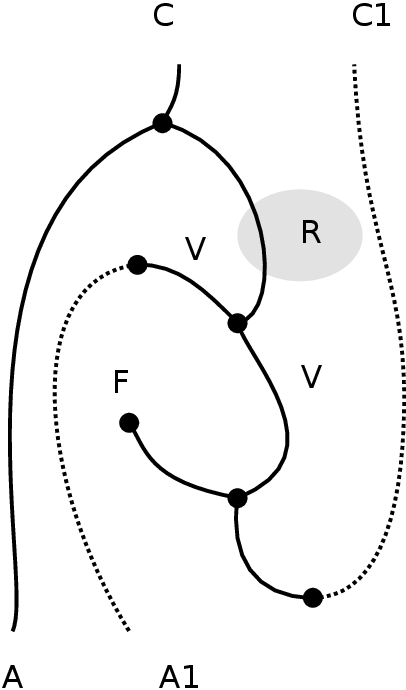}
}}
\]
\[
=
\vcenter { \hbox{
\psfrag{A}[Bc][Bc]{\scalebox{1}{\scriptsize{$\cat Fu$}}}
\psfrag{A1}[Bc][Bc]{\scalebox{1}{\scriptsize{$(\cat Fi)_!$}}}
\psfrag{C}[Bc][Bc]{\scalebox{1}{\scriptsize{$\cat Fu$}}}
\psfrag{C1}[Bc][Bc]{\scalebox{1}{\scriptsize{$(\cat Fi )_!$}}}
\psfrag{V}[Bc][Bc]{\scalebox{1}{\scriptsize{$\cat F i$}}}
\psfrag{F}[Bc][Bc]{\scalebox{1}{\scriptsize{$\un_\cat F$}}}
\psfrag{G}[Bc][Bc]{\scalebox{1}{\scriptsize{$\fun_{\cat F}$}}}
\psfrag{G1}[Bc][Bc]{\scalebox{1}{\scriptsize{$\fun_{\cat F}^{-1}$}}}
\psfrag{U1}[Bc][Bc]{\scalebox{1}{\scriptsize{$\un_{\cat F}^{-1}$}}}
\psfrag{U2}[Bc][Bc]{\scalebox{1}{\scriptsize{$\un_{\cat F}$}}}
\psfrag{R}[Bc][Bc]{\scalebox{1}{\scriptsize{$\cat F \Id$}}}
\psfrag{S}[Bc][Bc]{\scalebox{1}{\scriptsize{$\Id$}}}
\includegraphics[scale=.4]{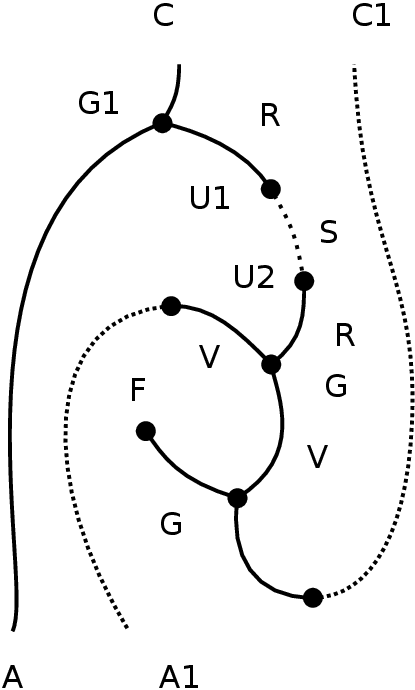}
}}
\stackrel{3\times\textrm{\eqref{eq:fun-unit-simpler}}}{=}
\vcenter { \hbox{
\psfrag{A}[Bc][Bc]{\scalebox{1}{\scriptsize{$\cat Fu$}}}
\psfrag{A1}[Bc][Bc]{\scalebox{1}{\scriptsize{$(\cat Fi)_!$}}}
\psfrag{C}[Bc][Bc]{\scalebox{1}{\scriptsize{$\cat Fu$}}}
\psfrag{C1}[Bc][Bc]{\scalebox{1}{\scriptsize{$(\cat Fi )_!$}}}
\psfrag{U}[Bc][Bc]{\scalebox{1}{\scriptsize{$\eta$}}}
\psfrag{V}[Bc][Bc]{\scalebox{1}{\scriptsize{$\cat F i$}}}
\psfrag{E}[Bc][Bc]{\scalebox{1}{\scriptsize{$\varepsilon$}}}
\includegraphics[scale=.4]{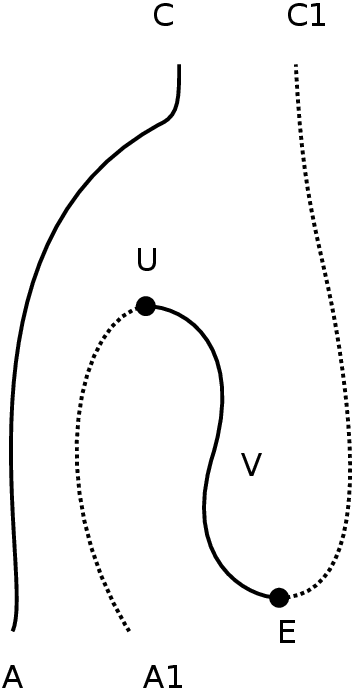}
}}
\stackrel{\textrm{\eqref{Exa:strings-for-adjoints}}}{=}
\vcenter { \hbox{
\psfrag{A}[Bc][Bc]{\scalebox{1}{\scriptsize{$\cat Fu\;$}}}
\psfrag{A1}[Bc][Bc]{\scalebox{1}{\scriptsize{$\;(\cat Fi)_!$}}}
\psfrag{C}[Bc][Bc]{\scalebox{1}{\scriptsize{$\cat Fu\;$}}}
\psfrag{C1}[Bc][Bc]{\scalebox{1}{\scriptsize{$\;(\cat Fi )_!$}}}
\includegraphics[scale=.4]{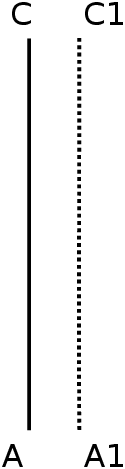}
}}
= \quad
\vcenter { \hbox{
\psfrag{A}[Bc][Bc]{\scalebox{1}{\scriptsize{$\cat G(i_!u^*)$}}}
\psfrag{C}[Bc][Bc]{\scalebox{1}{\scriptsize{$\cat G(i_!u^*)$}}}
\includegraphics[scale=.4]{anc/halfpseudofun-unital12.eps}
}}
\]
The proof of the other unit identity is similar and left to the reader.

In order to check the associativity axiom, we must first make explicit the associator in $\Span$. Given three composable spans, one first constructs the following four iso-commas:
\begin{equation} \label{Eq:precise-assoc}
\vcenter{\xymatrix@C=8pt@R=10pt{
&&
 &&
 &&
 (i/v)/w
 \ar[dll]_x
 \ar[ddrrrr]^s
 \ar@{}[dddrr]|{\oEcell{\delta}} &&
 &&
 &&
 \\
&&
 &&
 (i/v)
 \ar[dll]_{\tilde v}
 \ar[drr]^{\tilde i}
 \ar@{}[dd]|{\oEcell{\gamma}} && && && && \\
&&
 P
 \ar[dll]_{u}
 \ar[drr]^{i}
 &&
 &&
 Q
 \ar[dll]_{v}
 \ar[drr]^{j}
 &&
 &&
 R
 \ar[dll]_{w}
 \ar[drr]^{k} && \\
G
 \ar@{..>}[rrrr]|{\;i_!u^*\;} &&
 &&
 H
 \ar@{..>}[rrrr]|{\;j_!v^*\;}
 &&
 &&
 K
 \ar@{..>}[rrrr]|{\;k_!w^*\;}
 &&
 &&
 L \\
&&
 P
 \ar[ull]^{u}
 \ar[urr]_i
 &&
 &&
 Q
 \ar[ull]^{v}
 \ar[urr]_{j}
 &&
 &&
 R
 \ar[ull]^{w}
 \ar[urr]_{k}
 && \\
&& && && &&
 (j/w)
 \ar[ull]^{\tilde w}
 \ar[urr]_{\tilde j}
 \ar@{}[uu]|{\oEcell{\alpha}}
 &&
 && \\
&& && &&
 i/(j/w)
 \ar[uullll]^{y}
 \ar[urr]_t
 \ar@{}[lluuu]|{\oEcell{\beta}}
 && && &&
}}
\end{equation}
Then the universal property of iso-comma squares yields strictly invertible 1-cells $f\colon i/(j/w)\stackrel{\sim}{\leftrightarrow}(i/v)/w \,:\! f^{-1}$ of $\GG$, uniquely determined by the following relations:
\begin{align} \label{Eq:def-f}
f &:
\left\{\begin{array}{l}
xf = \langle y, \tilde wt, \beta\rangle, \quad\textrm{\ie:} \quad \tilde vxf=y, \;\; \tilde i xf =\tilde wt, \;\; \gamma x f=\beta \\
sf = \tilde j t \\
\delta f = \alpha t
\end{array}\right.
\\
\label{Eq:def-f-1}
f^{-1} &:
\left\{\begin{array}{l}
y f^{-1} = \tilde v x \\
tf^{-1} = \langle \tilde ix, s, \delta \rangle, \quad\textrm{\ie:} \quad \tilde w t f^{-1}=\tilde ix, \;\; \tilde jt f^{-1}=s, \;\; \alpha tf^{-1} = \delta \\
\beta f^{-1} = \gamma x
\end{array}\right.
\end{align}
The associator and its inverse are then simply the 2-cells $[f^{-1},\id,\id]$ and $[f,\id,\id]$.

Now, the associativity axiom for $\cat G$ looks as follows (where once again we write the inverse 2-cells out of convenience):
\begin{equation} \label{Eq:assoc-for-G}
\vcenter { \hbox{
\psfrag{A}[Bc][Bc]{\scalebox{1}{\scriptsize{$\cat F u$}}}
\psfrag{A1}[Bc][Bc]{\scalebox{1}{\scriptsize{$(\cat Fi)_!$}}}
\psfrag{B}[Bc][Bc]{\scalebox{1}{\scriptsize{$\cat Fv$}}}
\psfrag{B1}[Bc][Bc]{\scalebox{1}{\scriptsize{$(\cat Fj)_!$}}}
\psfrag{C}[Bc][Bc]{\scalebox{1}{\scriptsize{$\cat Fw$}}}
\psfrag{C1}[Bc][Bc]{\scalebox{1}{\scriptsize{$(\cat Fk)_!$}}}
\psfrag{D}[Bc][Bc]{\scalebox{1}{\scriptsize{$\cat Fuy$}}}
\psfrag{D1}[Bc][Bc]{\scalebox{1}{\scriptsize{$(\cat Fk\tilde j t)_!$}}}
\psfrag{E}[Bc][Bc]{\scalebox{1}{\scriptsize{$\cat Fy$}}}
\psfrag{H}[Bc][Bc]{\scalebox{1}{\scriptsize{$(\cat Ft)_!\;\;$}}}
\psfrag{I}[Bc][Bc]{\scalebox{1}{\scriptsize{$(\cat Fk \tilde j)_!$}}}
\psfrag{J}[Bc][Bc]{\scalebox{1}{\scriptsize{\;$\cat Fv\tilde w$}}}
\psfrag{K}[Bc][Bc]{\scalebox{1}{\scriptsize{$\cat F\tilde w$}}}
\psfrag{L}[Bc][Bc]{\scalebox{1}{\scriptsize{$(\cat F\tilde j)_!$}}}
\psfrag{F}[Bc][Bc]{\scalebox{1}{\scriptsize{$(\cat F\beta)_!$}}}
\psfrag{G}[Bc][Bc]{\scalebox{1}{\scriptsize{$(\cat F\alpha)_!$}}}
\includegraphics[scale=.4]{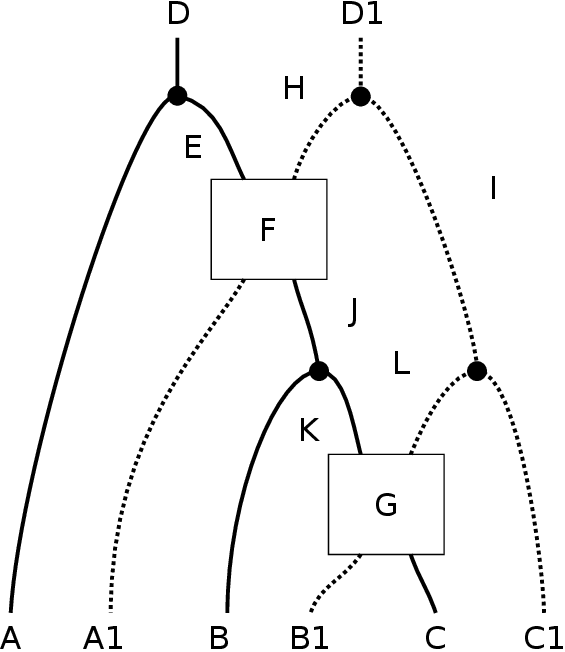}
}}
\quad \stackrel{\textrm{?}}{=} \quad
\vcenter { \hbox{
\psfrag{A}[Bc][Bc]{\scalebox{1}{\scriptsize{$\cat F u$}}}
\psfrag{A1}[Bc][Bc]{\scalebox{1}{\scriptsize{$(\cat Fi)_!$}}}
\psfrag{B}[Bc][Bc]{\scalebox{1}{\scriptsize{$\cat Fv$}}}
\psfrag{B1}[Bc][Bc]{\scalebox{1}{\scriptsize{$(\cat Fj)_!$}}}
\psfrag{C}[Bc][Bc]{\scalebox{1}{\scriptsize{$\cat Fw$}}}
\psfrag{C1}[Bc][Bc]{\scalebox{1}{\scriptsize{$(\cat Fk)_!$}}}
\psfrag{D}[Bc][Bc]{\scalebox{1}{\scriptsize{$\cat Fu\tilde vx$}}}
\psfrag{D1}[Bc][Bc]{\scalebox{1}{\scriptsize{$(\cat Fks)_!$}}}
\psfrag{E}[Bc][Bc]{\scalebox{1}{\scriptsize{$\cat Fu\tilde vxf$}}}
\psfrag{E1}[Bc][Bc]{\scalebox{1}{\scriptsize{$(\cat Fksf)_!$}}}
\psfrag{M}[Bc][Bc]{\scalebox{1}{\scriptsize{$(\cat Fs)_!$}}}
\psfrag{H}[Bc][Bc]{\scalebox{1}{\scriptsize{$\cat Fx\;\;$}}}
\psfrag{I}[Bc][Bc]{\scalebox{1}{\scriptsize{$\cat Fu \tilde v\;$}}}
\psfrag{J}[Bc][Bc]{\scalebox{1}{\scriptsize{$(\cat Fj \tilde i)_!$}}}
\psfrag{K}[Bc][Bc]{\scalebox{1}{\scriptsize{$(\cat F\tilde i)_!$}}}
\psfrag{L}[Bc][Bc]{\scalebox{1}{\scriptsize{$\cat F\tilde v$\;\;}}}
\psfrag{R}[Bc][Bc]{\scalebox{1}{\scriptsize{$\cat Ff$}}}
\psfrag{S}[Bc][Bc]{\scalebox{1}{\scriptsize{$(\cat Ff)_!$}}}
\psfrag{F}[Bc][Bc]{\scalebox{1}{\scriptsize{$(\cat F\delta)_!$}}}
\psfrag{G}[Bc][Bc]{\scalebox{1}{\scriptsize{$(\cat F\gamma)_!$}}}
\includegraphics[scale=.4]{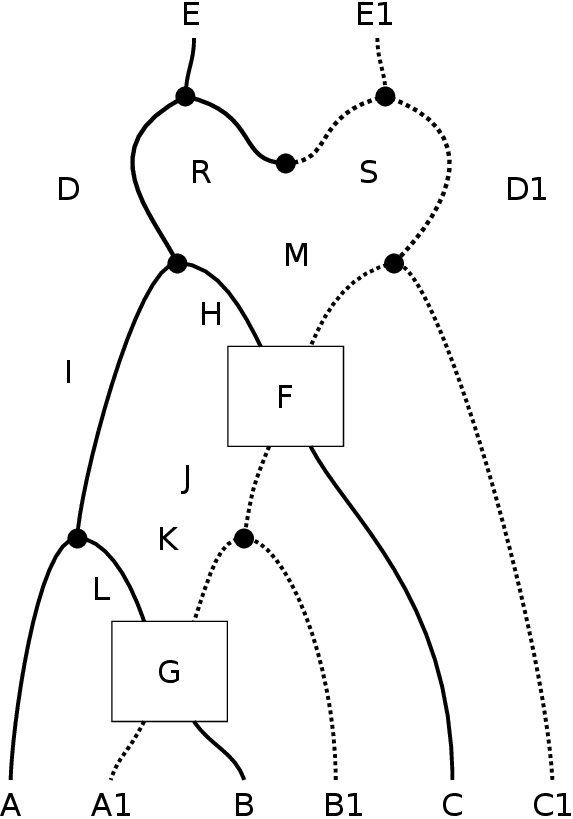}
}}
\end{equation}
Unfurling all mates and introducing $\cat F(ff^{-1})=\cat F(\Id)\cong \Id$, the left-hand-side expands as follows:
\[
\vcenter { \hbox{
\psfrag{A}[Bc][Bc]{\scalebox{1}{\scriptsize{$\cat Fu$}}}
\psfrag{A1}[Bc][Bc]{\scalebox{1}{\scriptsize{$(\cat Fi)_!$}}}
\psfrag{B}[Bc][Bc]{\scalebox{1}{\scriptsize{$\cat Fv$}}}
\psfrag{B1}[Bc][Bc]{\scalebox{1}{\scriptsize{$(\cat Fj)_!$}}}
\psfrag{C}[Bc][Bc]{\scalebox{1}{\scriptsize{$\cat Fw$}}}
\psfrag{C1}[Bc][Bc]{\scalebox{1}{\scriptsize{$(\cat Fk)_!$}}}
\psfrag{D}[Bc][Bc]{\scalebox{1}{\scriptsize{$\cat Fuy$}}}
\psfrag{D1}[Bc][Bc]{\scalebox{1}{\scriptsize{$(\cat Fk \tilde j t)_!$}}}
\psfrag{U}[Bc][Bc]{\scalebox{1}{\scriptsize{$\;\;\;\cat Fk\tilde j$}}}
\psfrag{V}[Bc][Bc]{\scalebox{1}{\scriptsize{$\cat Ft$}}}
\psfrag{W}[Bc][Bc]{\scalebox{1}{\scriptsize{$\cat F\!k\tilde jt$}}}
\psfrag{U'}[Bc][Bc]{\scalebox{1}{\scriptsize{$\;\cat Fk$}}}
\psfrag{V'}[Bc][Bc]{\scalebox{1}{\scriptsize{$\cat F\tilde j$}}}
\psfrag{W'}[Bc][Bc]{\scalebox{1}{\scriptsize{$\cat F\!k\tilde j$}}}
\psfrag{U1}[Bc][Bc]{\scalebox{1}{\scriptsize{$(\cat Fk\tilde j)_!$}}}
\psfrag{V1}[Bc][Bc]{\scalebox{1}{\scriptsize{$(\cat Ft)_!$}}}
\psfrag{V'1}[Bc][Bc]{\scalebox{1}{\scriptsize{$(\cat F\tilde j)_!$}}}
\psfrag{R}[Bc][Bc]{\scalebox{1}{\scriptsize{$\cat F y$}}}
\psfrag{S}[Bc][Bc]{\scalebox{1}{\scriptsize{$\cat Fi$}}}
\psfrag{T}[Bc][Bc]{\scalebox{1}{\scriptsize{$\cat Fv\tilde w$}}}
\psfrag{X}[Bc][Bc]{\scalebox{1}{\scriptsize{$\cat Fj$}}}
\psfrag{Y}[Bc][Bc]{\scalebox{1}{\scriptsize{$\cat F\tilde w$}}}
\psfrag{F}[Bc][Bc]{\scalebox{1}{\scriptsize{$\cat F\beta$}}}
\psfrag{G}[Bc][Bc]{\scalebox{1}{\scriptsize{$\cat F\alpha$}}}
\includegraphics[scale=.4]{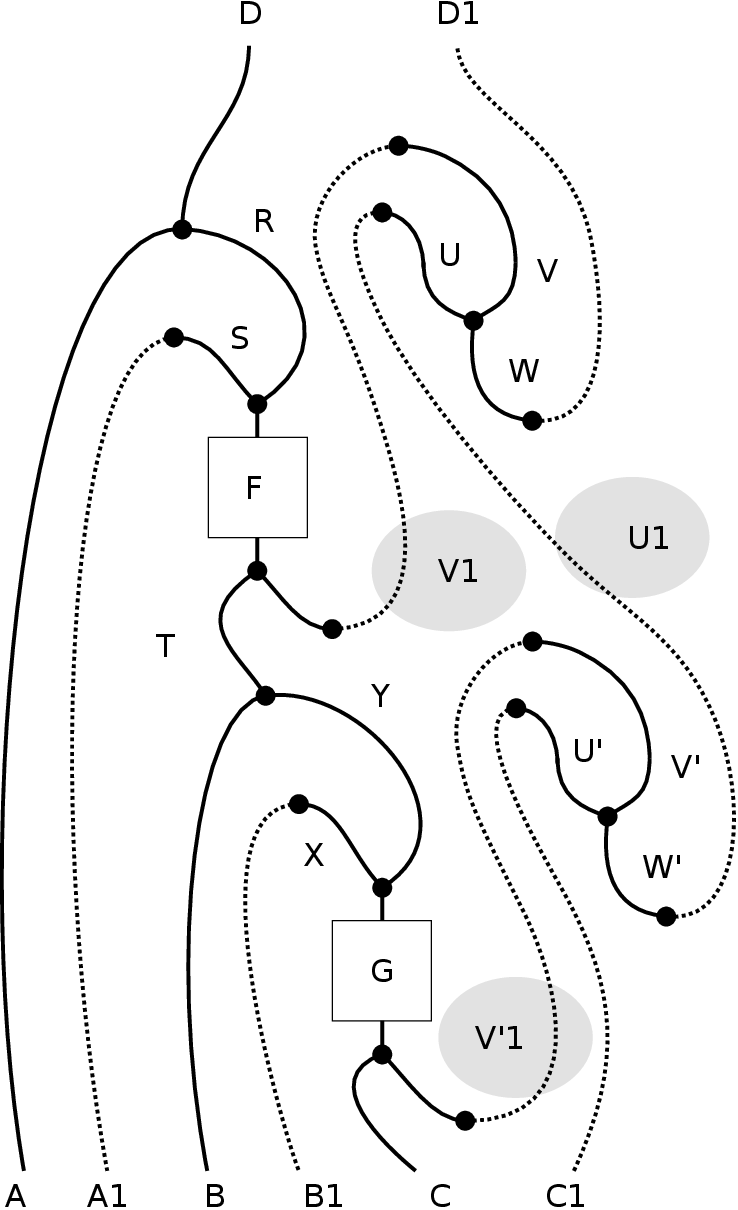}
}}
\stackrel{3 \times \textrm{\eqref{Exa:strings-for-adjoints}}}{=}
\vcenter { \hbox{
\psfrag{A}[Bc][Bc]{\scalebox{1}{\scriptsize{$\cat Fu$}}}
\psfrag{A1}[Bc][Bc]{\scalebox{1}{\scriptsize{$(\cat Fi)_!$}}}
\psfrag{B}[Bc][Bc]{\scalebox{1}{\scriptsize{$\cat Fv$}}}
\psfrag{B1}[Bc][Bc]{\scalebox{1}{\scriptsize{$(\cat Fj)_!$}}}
\psfrag{C}[Bc][Bc]{\scalebox{1}{\scriptsize{$\cat Fw$}}}
\psfrag{C1}[Bc][Bc]{\scalebox{1}{\scriptsize{$(\cat Fk)_!$}}}
\psfrag{D}[Bc][Bc]{\scalebox{1}{\scriptsize{$\cat Fuy$}}}
\psfrag{D1}[Bc][Bc]{\scalebox{1}{\scriptsize{$(\cat Fk\tilde jt)_!$}}}
\psfrag{U}[Bc][Bc]{\scalebox{1}{\scriptsize{$\;\;\;\cat Fk\tilde j$}}}
\psfrag{V}[Bc][Bc]{\scalebox{1}{\scriptsize{$\cat Ft$}}}
\psfrag{W}[Bc][Bc]{\scalebox{1}{\scriptsize{$\cat F\!k\tilde jt$}}}
\psfrag{U'}[Bc][Bc]{\scalebox{1}{\scriptsize{$\;\cat Fk$}}}
\psfrag{V'}[Bc][Bc]{\scalebox{1}{\scriptsize{$\cat F\tilde j$}}}
\psfrag{W'}[Bc][Bc]{\scalebox{1}{\scriptsize{$\cat F\!k\tilde j$}}}
\psfrag{U1}[Bc][Bc]{\scalebox{1}{\scriptsize{$(\cat Fk\tilde j)_!$}}}
\psfrag{V1}[Bc][Bc]{\scalebox{1}{\scriptsize{$(\cat Ft)_!$}}}
\psfrag{V'1}[Bc][Bc]{\scalebox{1}{\scriptsize{$(\cat F\tilde j)_!$}}}
\psfrag{R}[Bc][Bc]{\scalebox{1}{\scriptsize{$\cat F y$}}}
\psfrag{S}[Bc][Bc]{\scalebox{1}{\scriptsize{$\cat F i$}}}
\psfrag{T}[Bc][Bc]{\scalebox{1}{\scriptsize{$\cat Fv \tilde w$}}}
\psfrag{X}[Bc][Bc]{\scalebox{1}{\scriptsize{$\cat Fj$}}}
\psfrag{Y}[Bc][Bc]{\scalebox{1}{\scriptsize{$\;\;\cat F\tilde w$}}}
\psfrag{Z}[Bc][Bc]{\scalebox{1}{\scriptsize{$\cat Ff$}}}
\psfrag{Z'}[Bc][Bc]{\scalebox{1}{\scriptsize{$\cat F\!f^{-\!1}$}}}
\psfrag{F}[Bc][Bc]{\scalebox{1}{\scriptsize{$\cat F\beta$}}}
\psfrag{G}[Bc][Bc]{\scalebox{1}{\scriptsize{$\cat F\alpha$}}}
\psfrag{N}[Bc][Bc]{\scalebox{1}{\scriptsize{$\un$}}}
\psfrag{N'}[Bc][Bc]{\scalebox{1}{\scriptsize{$\un^{\!-\!1}$}}}
\includegraphics[scale=.4]{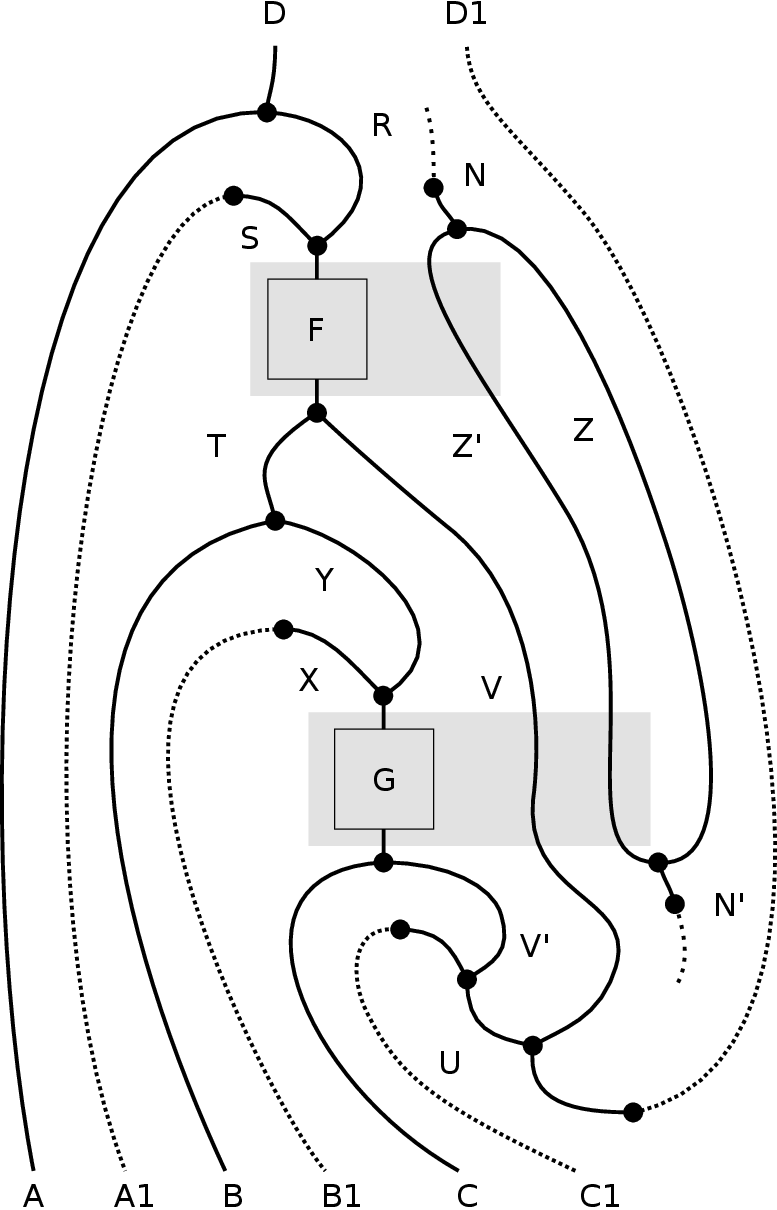}
}}
\]
For the next step we use the equations~\eqref{Eq:def-f} and~\eqref{Eq:def-f-1} to replace the 2-cells in the two shaded areas. We repeatedly use the associativity and unitality of $\fun_\cat F$, leaving some straightforward details to the reader:
\[
=
\vcenter { \hbox{
\psfrag{A}[Bc][Bc]{\scalebox{1}{\scriptsize{$\cat Fu$}}}
\psfrag{A1}[Bc][Bc]{\scalebox{1}{\scriptsize{$(\cat Fi)_!$}}}
\psfrag{B}[Bc][Bc]{\scalebox{1}{\scriptsize{$\cat Fv$}}}
\psfrag{B1}[Bc][Bc]{\scalebox{1}{\scriptsize{$(\cat Fj)_!$}}}
\psfrag{C}[Bc][Bc]{\scalebox{1}{\scriptsize{$\cat Fw$}}}
\psfrag{C1}[Bc][Bc]{\scalebox{1}{\scriptsize{$(\cat Fk)_!$}}}
\psfrag{D}[Bc][Bc]{\scalebox{1}{\scriptsize{$\cat Fuy$}}}
\psfrag{D1}[Bc][Bc]{\scalebox{1}{\scriptsize{$(\cat Fk\tilde jt)_!$}}}
\psfrag{U}[Bc][Bc]{\scalebox{1}{\scriptsize{$\;\;\;\cat Fk\tilde j$}}}
\psfrag{V}[Bc][Bc]{\scalebox{1}{\scriptsize{$\cat Ft$}}}
\psfrag{W}[Bc][Bc]{\scalebox{1}{\scriptsize{$\cat F\!k\tilde jt$}}}
\psfrag{U'}[Bc][Bc]{\scalebox{1}{\scriptsize{$\;\cat Fk$}}}
\psfrag{V'}[Bc][Bc]{\scalebox{1}{\scriptsize{$\cat F\tilde j$}}}
\psfrag{W'}[Bc][Bc]{\scalebox{1}{\scriptsize{$\cat F\!k\tilde j$}}}
\psfrag{U1}[Bc][Bc]{\scalebox{1}{\scriptsize{$(\cat Fk\tilde j)_!$}}}
\psfrag{V1}[Bc][Bc]{\scalebox{1}{\scriptsize{$(\cat Ft)_!$}}}
\psfrag{V'1}[Bc][Bc]{\scalebox{1}{\scriptsize{$(\cat F\tilde j)_!$}}}
\psfrag{R}[Bc][Bc]{\scalebox{1}{\scriptsize{$\cat F y$}}}
\psfrag{S}[Bc][Bc]{\scalebox{1}{\scriptsize{$\cat Fi$}}}
\psfrag{T}[Bc][Bc]{\scalebox{1}{\scriptsize{$\cat Fv\tilde w$}}}
\psfrag{X}[Bc][Bc]{\scalebox{1}{\scriptsize{$\cat Fj$}}}
\psfrag{Y}[Bc][Bc]{\scalebox{1}{\scriptsize{$\;\;\cat F\tilde w$}}}
\psfrag{Z}[Bc][Bc]{\scalebox{1}{\scriptsize{$\cat Ff$}}}
\psfrag{Z'}[Bc][Bc]{\scalebox{1}{\scriptsize{$\cat F\!f^{-\!1}$}}}
\psfrag{F}[Bc][Bc]{\scalebox{1}{\scriptsize{$\cat F(\gamma x)$}}}
\psfrag{G}[Bc][Bc]{\scalebox{1}{\scriptsize{$\cat F\delta$}}}
\includegraphics[scale=.4]{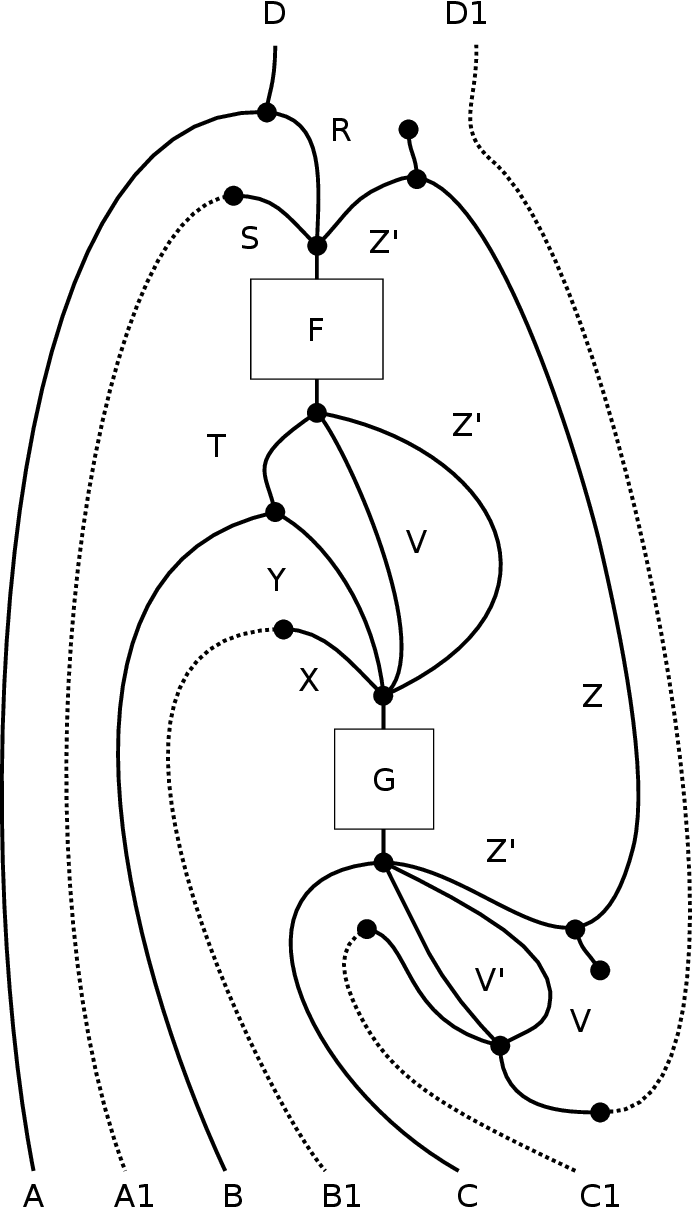}
}}
\quad = \quad
\vcenter { \hbox{
\psfrag{A}[Bc][Bc]{\scalebox{1}{\scriptsize{$\cat Fu$}}}
\psfrag{A1}[Bc][Bc]{\scalebox{1}{\scriptsize{$(\cat Fi)_!$}}}
\psfrag{B}[Bc][Bc]{\scalebox{1}{\scriptsize{$\cat Fv$}}}
\psfrag{B1}[Bc][Bc]{\scalebox{1}{\scriptsize{$(\cat Fj)_!$}}}
\psfrag{C}[Bc][Bc]{\scalebox{1}{\scriptsize{$\cat Fw$}}}
\psfrag{C1}[Bc][Bc]{\scalebox{1}{\scriptsize{$(\cat Fk)_!$}}}
\psfrag{D}[Bc][Bc]{\scalebox{1}{\scriptsize{$\cat Fu\tilde vxf$}}}
\psfrag{D1}[Bc][Bc]{\scalebox{1}{\scriptsize{$(\cat Fk sf )_!$}}}
\psfrag{U}[Bc][Bc]{\scalebox{1}{\scriptsize{$\;\;\cat Fks$}}}
\psfrag{V}[Bc][Bc]{\scalebox{1}{\scriptsize{$\cat Fs$}}}
\psfrag{W}[Bc][Bc]{\scalebox{1}{\scriptsize{$\;\;\cat Fks\!f$}}}
\psfrag{W'}[Bc][Bc]{\scalebox{1}{\scriptsize{$\cat Fks$}}}
\psfrag{R}[Bc][Bc]{\scalebox{1}{\scriptsize{$\cat F x$}}}
\psfrag{S}[Bc][Bc]{\scalebox{1}{\scriptsize{$\cat F\tilde v$}}}
\psfrag{T}[Bc][Bc]{\scalebox{1}{\scriptsize{$\cat F\tilde i$}}}
\psfrag{X}[Bc][Bc]{\scalebox{1}{\scriptsize{$\cat Fj$}}}
\psfrag{Z}[Bc][Bc]{\scalebox{1}{\scriptsize{$\cat Ff$}}}
\psfrag{F}[Bc][Bc]{\scalebox{1}{\scriptsize{$\cat F\gamma$}}}
\psfrag{G}[Bc][Bc]{\scalebox{1}{\scriptsize{$\cat F\delta$}}}
\includegraphics[scale=.4]{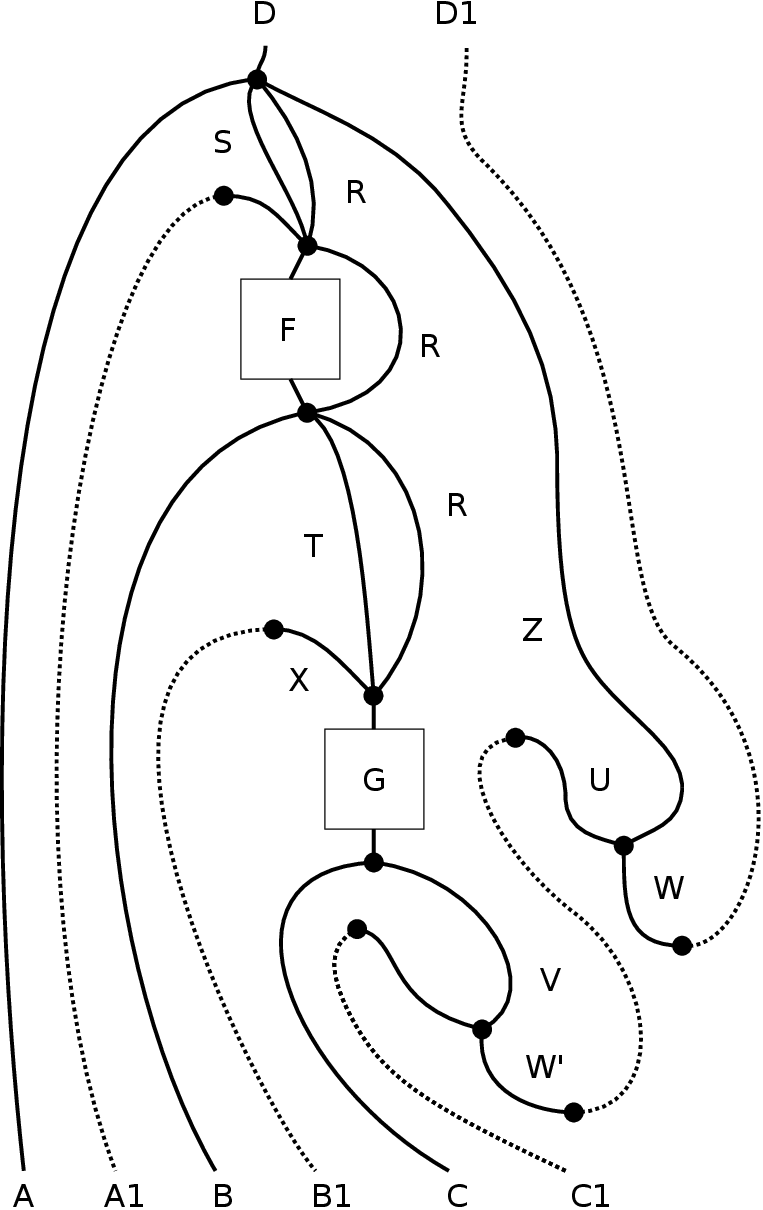}
}}
\]
For the last passage, we used~\eqref{Eq:def-f} and~\eqref{Eq:def-f-1} again to rewrite several 1-cells, and we used our choice of adjunction $\cat F(f^{-1})\dashv \cat F(f)$ for the strictly invertible~$f\in \JJ$. The next step uses the naturality of $\fun_\cat F$ to merge the three strands of~$\cat F x$:
\[
\quad = \quad
\vcenter { \hbox{
\psfrag{A}[Bc][Bc]{\scalebox{1}{\scriptsize{$\cat Fu$}}}
\psfrag{A1}[Bc][Bc]{\scalebox{1}{\scriptsize{$(\cat Fi)_!$}}}
\psfrag{B}[Bc][Bc]{\scalebox{1}{\scriptsize{$\cat Fv$}}}
\psfrag{B1}[Bc][Bc]{\scalebox{1}{\scriptsize{$(\cat Fj)_!$}}}
\psfrag{C}[Bc][Bc]{\scalebox{1}{\scriptsize{$\cat Fw$}}}
\psfrag{C1}[Bc][Bc]{\scalebox{1}{\scriptsize{$(\cat Fk)_!$}}}
\psfrag{D}[Bc][Bc]{\scalebox{1}{\scriptsize{$\cat Fu\tilde vxf$}}}
\psfrag{D1}[Bc][Bc]{\scalebox{1}{\scriptsize{$(\cat Fk sf )_!$}}}
\psfrag{V}[Bc][Bc]{\scalebox{1}{\scriptsize{$\cat Fs$}}}
\psfrag{R}[Bc][Bc]{\scalebox{1}{\scriptsize{$\cat F x$}}}
\psfrag{X}[Bc][Bc]{\scalebox{1}{\scriptsize{$\cat F\tilde i$}}}
\psfrag{Y}[Bc][Bc]{\scalebox{1}{\scriptsize{$\;\;\cat F\tilde v$}}}
\psfrag{Z}[Bc][Bc]{\scalebox{1}{\scriptsize{$\cat Ff$}}}
\psfrag{F}[Bc][Bc]{\scalebox{1}{\scriptsize{$\cat F\gamma$}}}
\psfrag{G}[Bc][Bc]{\scalebox{1}{\scriptsize{$\cat F\delta$}}}
\includegraphics[scale=.4]{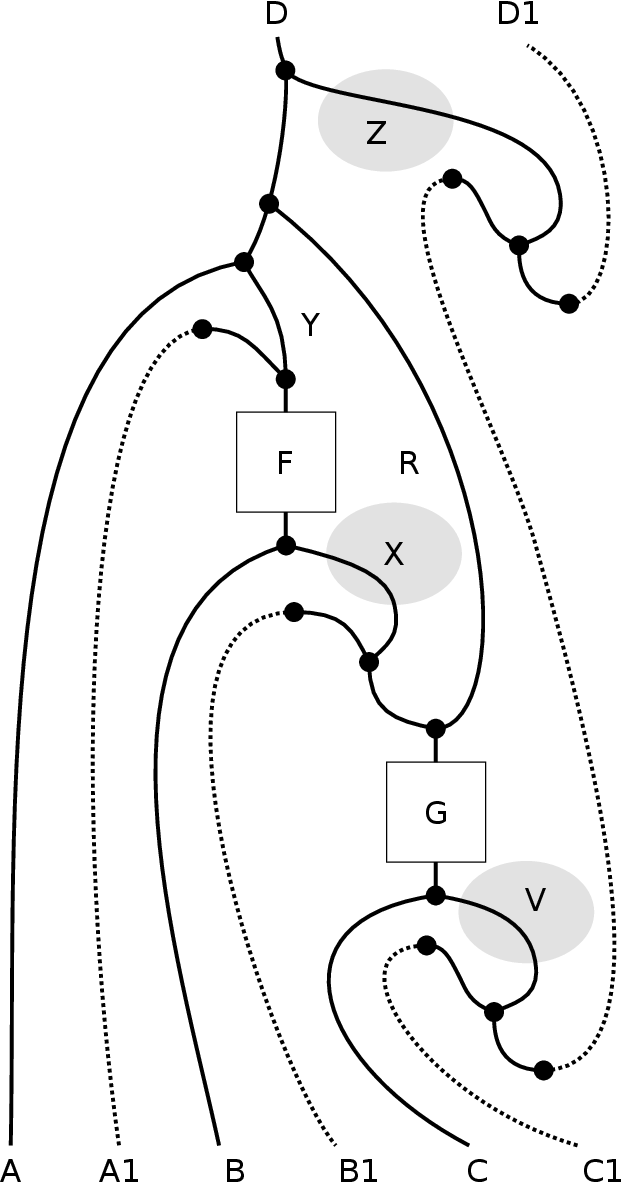}
}}
\quad = \quad
\vcenter { \hbox{
\psfrag{A}[Bc][Bc]{\scalebox{1}{\scriptsize{$\cat Fu$}}}
\psfrag{A1}[Bc][Bc]{\scalebox{1}{\scriptsize{$(\cat Fi)_!$}}}
\psfrag{B}[Bc][Bc]{\scalebox{1}{\scriptsize{$\cat Fv$}}}
\psfrag{B1}[Bc][Bc]{\scalebox{1}{\scriptsize{$(\cat Fj)_!$}}}
\psfrag{C}[Bc][Bc]{\scalebox{1}{\scriptsize{$\cat Fw$}}}
\psfrag{C1}[Bc][Bc]{\scalebox{1}{\scriptsize{$(\cat Fk)_!$}}}
\psfrag{D}[Bc][Bc]{\scalebox{1}{\scriptsize{$\cat Fu\tilde vxf$}}}
\psfrag{D1}[Bc][Bc]{\scalebox{1}{\scriptsize{$(\cat Fk sf )_!$}}}
\psfrag{X}[Bc][Bc]{\scalebox{1}{\scriptsize{$\cat Fu\tilde vx$}}}
\psfrag{R}[Bc][Bc]{\scalebox{1}{\scriptsize{$\cat F x$}}}
\psfrag{S}[Bc][Bc]{\scalebox{1}{\scriptsize{$\cat Fu\tilde v$}}}
\psfrag{Y}[Bc][Bc]{\scalebox{1}{\scriptsize{$\;\;\cat F\tilde v$}}}
\psfrag{Z}[Bc][Bc]{\scalebox{1}{\scriptsize{$\cat F\!f^{-\!1}\;\;$}}}
\psfrag{V}[Bc][Bc]{\scalebox{1}{\scriptsize{$(\cat Fj\tilde i)_!$}}}
\psfrag{U}[Bc][Bc]{\scalebox{1}{\scriptsize{$\;\;(\!\cat F\!s)_!$}}}
\psfrag{W}[Bc][Bc]{\scalebox{1}{\scriptsize{$\;\;(\cat Fks)_!$}}}
\psfrag{K}[Bc][Bc]{\scalebox{1}{\scriptsize{$\;\;(\cat F\tilde i)_!$}}}
\psfrag{F}[Bc][Bc]{\scalebox{1}{\scriptsize{$\cat F\gamma$}}}
\psfrag{G}[Bc][Bc]{\scalebox{1}{\scriptsize{$\cat F\delta$}}}
\includegraphics[scale=.4]{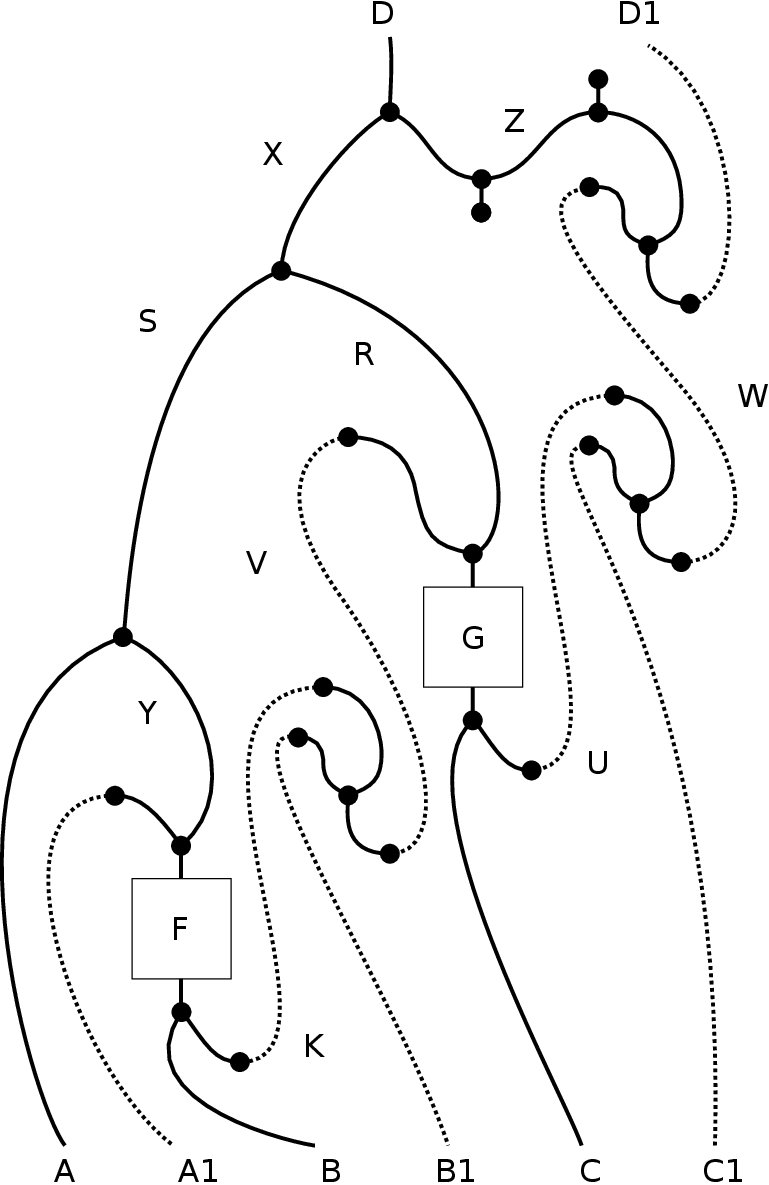}
}}
\]
Finally, we conclude by introducing three adjunction zig-zags; the result is precisely the right-hand side of~\eqref{Eq:assoc-for-G}; for this, we use at the top layer that $(\cat Ff)_!=\cat Ff^{-1}$ by construction.

This completes the verification that $\cat G$ is a pseudo-functor as claimed.
\end{proof}

\bigbreak
\section{Bicategorical upgrades of the universal property}
\label{sec:UP-Span-bicat}%
\medskip

Let $\GG$ and~$\JJ$ be as in \Cref{Hyp:G_and_I_for_Span}  (see also \Cref{Rem:less-hyps}) and let~$\cat{B}$ be any bicategory. The universal property given in \Cref{Thm:UP-Span} (together with the strictification theorem for bicategories; see \Cref{Rem:strictification}) determines when a pseudo-functor $\cat{F}\colon \GG^\op\to \cat{B}$ factors via the bicategory~$\Span=\Span(\GG;\JJ)$. We now want to understand what happens with 1-cells (pseudo-natural transformations) and 2-cells (modifications); see \Cref{Ter:Hom_bicats}. To facilitate the discussion we baptize the pseudo-functors in question.

\begin{Def}
\label{Def:J_!-pseudo-functor}%
A pseudo-functor $\cat{F}\colon \GG^\op\too \cat{B}$ is called a \emph{$\JJ_!$-pseudo-functor} if it satisfies conditions~\eqref{it:UP-Span-a} and~\eqref{it:UP-Span-b} of \Cref{Thm:UP-Span}, namely:
\begin{enumerate}[\rm(a)]
\smallbreak
\item
For every 1-cell $i\in \JJ$, the 1-cell $\cat{F}i$ admits a left adjoint $(\cat{F}i)_!$ in~$\cat{B}$.\,(\footnote{\,Adapting~\cite{DawsonParePronk04}, we could say that such~$\cat{F}$ are \emph{$\JJ$-dexter}.})
\smallbreak
\item
For any comma square $\gamma$ along an $i\in \JJ$, its mate $\gamma_!$ is invertible:
\begin{align*}
\xymatrix@C=14pt@R=14pt{
& i/v \ar[ld]_-{\tilde v} \ar[dr]^-{\tilde i} & \\
X \ar[dr]_i \ar@{}[rr]|{\oEcell{\gamma}} && Y \ar[dl]^v \\
&Z &
}
\quad \quad \quad \quad
\xymatrix@C=14pt@R=14pt{
& \cat{F} (i/v)
\ar[dr]^-{(\cat{F}\tilde i)_!} & \\
\cat{F} X
\ar[rd]_-{(\cat{F} i)_!}
 \ar[ur]^-{\cat{F}\tilde v}
 \ar@{}[rr]|{\Scell\, \gamma_!} &&
 \cat{F} Y. \\
& \cat{F} Z
\ar[ru]_-{\cat{F} v} &
}
\end{align*}
\end{enumerate}
The short name ``$\JJ_!$-pseudo-functor" certainly conveys the idea of~(a). We stress that the BC-property~(b) is an integral part of the definition.
\end{Def}

Consider two pseudo-functors $\cat G_1,\cat G_2\colon \Span\to \cat{B}$, or equivalently by \Cref{Thm:UP-Span} the associated $\JJ_!$-pseudo-functors $\cat G_1\circ (-)^*$ and $\cat G_2 \circ (-)^*\colon \GG^\op\to \cat{B}$. Starting with a pseudo-natural transformation $\cat G_1\Rightarrow \cat G_2$, we can wonder what kind of transformation $t\colon \cat G_1\circ (-)^*\Rightarrow \cat G_2 \circ (-)^*$ we get by restriction. As we shall prove, the answer is the following class of transformations.

\begin{Def}
\label{Def:J_!-strong}%
Let $t\colon \cat{F}_1 \Rightarrow \cat{F}_2$ be a (strong) pseudo-natural transformation between two $\JJ_!$-pseudo-functors $\GG^{\op}\to \cat{B}$ (\Cref{Def:J_!-pseudo-functor}). We say that $t$ is \emph{$\JJ_!$-strong} if for every $i\in \JJ$ the following mate~$(t_i^{-1})_!$
\begin{equation}
\label{eq:t_i_!}%
\vcenter { \hbox{
\xymatrix{
\cat{F}_1 X
 \ar[r]^-{t_X}
 \ar[d]_-{(\cat{F}_1 i)_!} &
 \cat{F}_2 X
 \ar[d]^-{(\cat{F}_2 i)_!} \\
\cat{F}_1 Y
\ar@{}[ur]|{\SWcell \; (t_i^{-1})_!}
 \ar[r]_-{t_Y} &
 \cat{F}_2 Y
}
}}
\quad \stackrel{\textrm{def.}}{=}
\vcenter { \hbox{
\xymatrix{
\ar@{}[dr]|{\SEcell} &
 \cat{F}_1 X
\ar@{}[dr]|{\SEcell\; t_i^{-1}}
 \ar[r]^-{t_X} &
 \cat{F}_2 X
 \ar@{}[dr]|{\SEcell}
 \ar[r]^-{(\cat{F}_2 i)_!} &
 \cat{F}_2 Y
\\
\cat{F}_1X
 \ar[r]_-{(\cat{F}_1 i)_!}
\ar@/^3ex/@{=}[ur] &
 \cat{F}_1 Y
 \ar[r]_-{t_Y}
 \ar[u]^-{\cat{F}_1 i} &
 \cat{F}_2 Y
 \ar[u]_-{\cat{F}_2 i}
 \ar@/_3ex/@{=}[ru] &
}
}}
\end{equation}
is invertible: $(t_i\inv)_!\colon (\cat{F}_2 i)_!\,t_X\isoEcell t_Y\,(\cat{F}_1 i)_!$.
\end{Def}

\begin{Rem}
\label{Rem:pseudo-nat-on-adjoints}%
In the spirit of \Cref{Rem:pseudo-func-of-adjoints}, suppose that $\cat{F}_1\,,\,\cat{F}_2\colon \GG^\op\to \cat{B}$ are two $\JJ_!$-pseudo-functors and assemble the left adjoints $(\cat{F}_1 i)_!$ and $(\cat{F}_2 i)_!$ into pseudo-functors $(\cat{F}_1)_!\colon \JJ^{\co}\to \cat{B}$ and $(\cat{F}_2)_!\colon \JJ^{\co}\to \cat{B}$. Consider now a transformation $t\colon \cat{F}_1\Rightarrow \cat{F}_2$. One hopes that $t$ induces a transformation $t_!\colon (\cat{F}_1)_!\Rightarrow (\cat{F}_2)_!$. Since $(\cat{F}_1)_!$ and $(\cat{F}_2)_!$ coincide with $\cat{F}_1$ and $\cat{F}_2$ on 0-cells, we can use $t_X$ to define $(t_!)_X$ on 0-cells. Now for every $i\in\JJ$, we define the 1-cell component $(t_!)_i:=(t_i\inv)_!$ of the transformation~$t_!$ via the above recipe~\eqref{eq:t_i_!}. Note that \emph{a~priori} there is no possibility of forming a mate with~$t_i$ itself. It is therefore important that $t$ is an actual \emph{strong} pseudo-natural transformation not merely an (op)lax one, at least on the 1-cells in~$\JJ$.

Now the punchline is that the above definition of $t_!\colon (\cat{F}_1)_!\Rightarrow(\cat{F}_2)_!$ does not yield a \emph{strong} pseudo-natural transformation but only an \emph{oplax} one (\Cref{Ter:Hom_bicats}\,\eqref{it:lax-transformation}), \ie its value $(t_!)_i$ on 1-cells~$i$ is not invertible \apriori. The transformation~$t\colon \cat{F}_1\Rightarrow \cat{F}_2$ is $\JJ_!$-strong in the sense of \Cref{Def:J_!-strong} precisely when $t_!$ is a strong transformation $(\cat{F}_1)_!\Rightarrow (\cat{F}_2)_!\colon \JJ^\co\to \cat{B}$. This characterization should clarify the terminology ``$\JJ_!$-strong''. We emphasize that all $\JJ_!$-strong transformations are assumed strong in the usual sense to begin with (for $t$ itself on~$\GG$), and that the $\JJ_!$-strength pertains to~$t_!$ on~$\JJ$.
\end{Rem}

\begin{Rem}
We leave to the reader the verification that the vertical and horizontal compositions of $\JJ_!$-strong transformations remain $\JJ_!$-strong.
\end{Rem}

\begin{Not}
\label{Not:I_!-PsFun}%
\index{PsFunJ@$\PsFunJ$}%
\index{$psfunj$@$\PsFunJindex$}%
We denote by
\[
\PsFunJ(\GG^\op,\cat{B})
\]
the sub-bicategory of $\PsFun(\GG^\op,\cat{B})$ with $\JJ_!$-pseudo-functors (\Cref{Def:J_!-pseudo-functor}) as 0-cells, with $\JJ_!$-strong (\Cref{Def:J_!-strong}) pseudo-natural transformations as 1-cells and all modifications between them as 2-cells (\Cref{Ter:Hom_bicats}).
\end{Not}

\begin{Thm}
\label{Thm:UP-PsFun-Span}%
Precomposition with the pseudo-functor $(-)^*\colon \GG^{\op}\to \Span(\GG;\JJ)$ induces a biequivalence of bicategories
\begin{equation}
\label{eq:UP-PsFun-Span}%
\PsFun\big(\Span(\GG;\JJ), \cat{B}\,\big) \stackrel{\sim}{\longrightarrow} \PsFunJ^{}( \GG^{\op}, \cat{B})\,.
\end{equation}
If $\cat{B}$ happens to be a 2-category, then the above is a strict 2-functor between two 2-categories, and said 2-functor is furthermore \emph{locally strict}, \ie it gives isomorphisms (rather than just equivalences) between all Hom categories.
\end{Thm}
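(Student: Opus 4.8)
The plan is to upgrade the pseudo-functor-level universal property of \Cref{Thm:UP-Span} to a bicategorical statement by constructing an explicit pseudo-inverse to the precomposition functor and checking that it is well-behaved on 1-cells and 2-cells. Precomposition with $(-)^*$ is clearly a pseudo-functor $\PsFun(\Span(\GG;\JJ),\cat B)\to \PsFun(\GG^\op,\cat B)$; the first task is to see that its essential image lands in $\PsFunJ(\GG^\op,\cat B)$. This is immediate on 0-cells from \Cref{Thm:UP-Span} (the restriction $\cat G\circ(-)^*$ of any pseudo-functor is a $\JJ_!$-pseudo-functor, since the adjunctions $i_!\adj i^*$ and their base-change property live in $\Span$ by \Cref{Prop:first-adjoints} and \Cref{Lem:BC-for-Span} and are preserved by $\cat G$). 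On 1-cells I would show that if $\sigma\colon \cat G_1\Rightarrow \cat G_2$ is a pseudo-natural transformation, then $t:=\sigma*(-)^*$ is $\JJ_!$-strong in the sense of \Cref{Def:J_!-strong}: the point is that $\sigma$ restricted along $i_!$ gives a component $t_{i_!}$ which, because $i_!\adj i^*$ in $\Span$ and $\sigma$ is strong, is precisely the mate realizing $(t_i\inv)_!$, hence invertible. This is the same mate-of-a-strong-transformation mechanism as in \Cref{Rem:pseudo-nat-on-adjoints}.

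For essential surjectivity and full faithfulness, I would construct the inverse pseudo-functor by hand, following the blueprint of \Cref{Cons:UP-Span}. On 0-cells this is exactly the extension $\cat G$ of a given $\JJ_!$-pseudo-functor $\cat F$ produced in \Cref{Thm:UP-Span}; it only remains to extend the assignment to 1-cells and 2-cells of $\PsFunJ(\GG^\op,\cat B)$. Given a $\JJ_!$-strong transformation $t\colon \cat F_1\Rightarrow \cat F_2$, I would define a pseudo-natural transformation $\tilde t\colon \cat G_1\Rightarrow \cat G_2$ between the extensions by setting $\tilde t_G:=t_G$ on 0-cells and defining the component at a 1-cell $i_!u^*$ of $\Span$ by pasting the strong component $t_u$ of $t$ with the mate $(t_i\inv)_!$ from \Cref{Def:J_!-strong}. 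The recipe is forced by the fact that every 1-cell of $\Span$ factors canonically as $i_!\circ u^*$ (\Cref{Rem:notation_assoc}), exactly as in the uniqueness argument already given inside the proof of \Cref{Thm:UP-Span}, where a strictly invertible $t\colon \cat G\to\cat G'$ was built from the $t_i$. Here the only new input is that the $(t_i\inv)_!$ need not be identities, but they are invertible 2-cells by hypothesis, which is what guarantees that $\tilde t$ is a genuine \emph{strong} pseudo-natural transformation and not merely oplax. On 2-cells (modifications) the assignment is the identity on components, which works because every 2-cell of $\Span$ is generated by data coming from $\GG$ and $\JJ$ together with the adjunction counits and structural isomorphisms (\Cref{Prop:2-cells-of-Span}).

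The verifications that $\tilde t$ satisfies the pseudo-naturality axioms, and that a modification $m\colon t\Rrightarrow t'$ extends to a modification $\tilde m\colon \tilde t\Rrightarrow \tilde t'$, reduce via \Cref{Prop:2-cells-of-Span} to compatibility on the generating 1-cells $u^*$ and $i_!$; on the $u^*$ part this is just strength of $t$ in the ordinary sense, and on the $i_!$ part it is the defining mate relation~\eqref{eq:t_i_!} together with the triangle identities for $i_!\adj i^*$. String diagrams (as in \Cref{sec:string_diagrams}) are the natural bookkeeping tool here, much as in \Cref{Lem:Gwelldef}. Finally, to get the two natural (bi)equivalences between the composite pseudo-functors and the identities, I would observe that restricting the constructed $\tilde t$ back along $(-)^*$ recovers $t$ on the nose on 0-cells and up to the canonical structural isomorphisms on 1-cells, and conversely that extending $\sigma*(-)^*$ reproduces $\sigma$ up to a unique invertible modification pinned down by the uniqueness clause in \Cref{Thm:UP-Span}. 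The sharper ``locally strict'' claim when $\cat B$ is a 2-category follows because in that setting all the structural isomorphisms of $\cat G$ that appear in the construction are honest identities wherever we chose them to be (the careful normalizations of \Cref{Rem:rectif-UP-Span}), so the comparison of Hom categories becomes an isomorphism rather than merely an equivalence.

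The main obstacle I anticipate is not conceptual but the coherence bookkeeping: verifying that the pasted component $\tilde t_{i_!u^*}$ is independent of the chosen factorization and respects the associators, unitors and the composition isomorphisms $\fun_{\cat G}$ of \Cref{Cons:UP-Span}, i.e.\ that $\tilde t$ really is pseudo-natural. This is where one must interlock the mate $(t_i\inv)_!$ with the base-change isomorphisms $\gamma_!$ defining horizontal composition in $\Span$, and it is the exact bicategorical analogue of the lengthy string-diagram computation carried out for $\cat G$ itself in \Cref{Lem:Gwelldef}. The $\JJ_!$-strength hypothesis is precisely what makes these interlocking squares commute, so the proof will consist largely in transporting that one hypothesis through the standard mate manipulations.
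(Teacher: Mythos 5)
Your proposal follows essentially the same route as the paper's proof: restriction lands in $\JJ_!$-strong transformations because the components at the $i_!$ are forced to be mates of the components at the $i^*$ (this is exactly \Cref{Lem:tr_inv_mate}), the extension of a $\JJ_!$-strong transformation is defined by the same forced pasting of $t_u$ with $(t_i\inv)_!$ along the factorization $i_!\circ u^*$ (the paper's formula~\eqref{eq:decomp_comp}), modifications extend because they carry literally the same data, and the remaining work is the string-diagram verification of pseudo-naturality (the paper's \Cref{Lem:t-welldef}). The only structural difference is that the paper first reduces to a strict 2-category target $\cat{C}$ and then deduces the general bicategorical case by transporting along a strictification $\cat{B}\stackrel{\sim}{\to}\cat{C}$, whereas you work with $\cat{B}$ directly; this is legitimate but costs extra coherence bookkeeping, and the locally strict claim only lives in the strict setting anyway.

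The one genuine weak point is your treatment of the \emph{locally strict} claim. It does not follow from the normalizations of \Cref{Rem:rectif-UP-Span} (those only serve to make the triangle $\cat{G}\circ(-)^*=\cat{F}$ commute on the nose, i.e.\ they concern 0-cells), and your hedge that extending $\sigma\circ(-)^*$ ``reproduces $\sigma$ up to a unique invertible modification'' is too weak: that would only give an \emph{equivalence} of Hom categories, not an isomorphism, and moreover the uniqueness clause of \Cref{Thm:UP-Span} you invoke is about pseudo-functors, not transformations. The correct argument --- which your own construction already contains --- is that when $\cat{B}$ is a 2-category the extension is unique \emph{on the nose}: for any transformation $\sigma\colon\cat{G}_1\Rightarrow\cat{G}_2$, its components at $i_!$ \emph{must} equal the mates $(\sigma(i^*)\inv)_!$ by \Cref{Lem:tr_inv_mate}, and its components at arbitrary $i_!u^*$ are then determined by the functoriality and naturality axioms applied to $i_!\circ u^*$ and to the canonical isomorphism $i_!u^*\cong i_!\circ u^*$; hence restriction is a bijection on transformations, and likewise on modifications (same data, and the axiom for general spans reduces to the axiom for the $u^*$, applied with $u=i$, via the parallel computation of mates). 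This on-the-nose bijectivity, not any choice of normalization, is what yields isomorphisms of Hom categories.
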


\begin{proof}
Let us begin by assuming that the target $\cat{C} :=\cat B$ is a 2-category, so both ends of~\eqref{eq:UP-PsFun-Span} are 2-categories. Let us also forget modifications for the moment, and consider these two pseudo-functor 2-categories simply as 1-categories.

By \Cref{Thm:UP-Span}, we know that precomposition with~$(-)^*\colon \GG^\op\to \Span(\GG;\JJ)$ induces a well-defined functor
\begin{equation} \label{eq:first_functor_welldef}
\PsFun(\Span(\GG;\JJ), \cat{C}) \longrightarrow \PsFunJ(\GG^{\op}, \cat{C})
\end{equation}
that is surjective on objects. Let us now discuss morphisms.

For clarity, to avoid entire discussions happening `in indices', we write $t(u)$ instead of $t_u$ everywhere in this proof for the 1-cell component of a transformation~$t$.

Let us explain how, for every $\cat{G}_1,\cat{G}_2\in\PsFun(\Span,\cat{C})$, each $\JJ_!$-strong transformation $s\colon \cat{G}_1\circ (-)^*\Rightarrow\cat{G}_2\circ (-)^*$ extends uniquely to a transformation $t\colon \cat{G}_1\Rightarrow \cat{G}_2$.
Clearly such a $t$ is determined on objects and on 1-cells of the form~$u^*$ by $s=t|_{\GG}$.

By Lemma~\ref{Lem:tr_inv_mate}, the component $t(i_!)$ has to be the left mate $(s(i)^{-1})_!$ of $t(i^*)\inv=s(i)^{-1}$ for each $i\in \JJ$.
Furthermore, the functoriality axiom of transformations (with respect to the composite $i_!\circ u^*$) and the naturality axiom (with respect to the isomorphism $\zeta\colon i_!u^*\stackrel{\sim}{\to}i_!\circ u^*$ of \Cref{Rem:notation_assoc}) uniquely determine the component $t(i_!u^*)$ at every 1-cell $i_!u^*$ of $\Span$ by the following pasting:
\begin{align} \label{eq:decomp_comp}
t(i_! u^*) \; :=
\vcenter{\xymatrix@L=.4ex{
& \ar[rrrrr]^-t
 \ar[dddd]|<<<<<<<<<<{\cat G_1 (i_! \circ u^*)}
 \ar[ddrr]^{\cat G_1 u^*} \ar@/_8ex/[dddd]_<<<<<<<<<<<<<{\cat G_1(i_!u^*)}
 \ar@{}[ddddl]|{ \underset{\scriptstyle \cat G_1 \zeta^{-1}}{\overset{\scriptstyle \simeq}{\SWcell}} }
 \ar@{}[ddddrr]|{ \underset{\scriptstyle \fun_{\cat G_1}}{\overset{\scriptstyle \simeq}{\SWcell}} }
 \ar@{}[rrrrrdd]|{ \underset{\scriptstyle s(u)}{\SWcell} } &&&&&
 \ar[ddll]_{\cat G_2 u^*}
 \ar[dddd]|<<<<<<<<<<{\cat G_2(i_!\circ u^*)}
 \ar@/^8ex/[dddd]^<<<<<<<<<<<<<{\cat G_2(i_!u^*)}
 \ar@{}[ddddll]|{ \underset{\scriptstyle \fun^{-1}_{\cat G_2}}{\overset{\scriptstyle \simeq}{\SWcell}} }
 \ar@{}[ddddr]|{ \underset{\scriptstyle \cat G_2 \zeta}{\overset{\scriptstyle \simeq}{\SWcell}} } & \\
&&&&&&& \\
&&&
 \ar[r]^-{t}
 \ar[ddll]^{\cat G_1 i_!}
 \ar@{}[ddr]|{ \overset{\scriptstyle t(i_!) \,=\, (s(i)^{-1})_!}{\SWcell} } &
 \ar[ddrr]_{\cat G_2 i_!} &&& \\
&&&&&&& \\
& \ar[rrrrr]_-{t} &&&&&&
}}
\end{align}
Hence the extension $t\colon \cat G_1\Rightarrow \cat G_2$ is uniquely determined by $\cat G_1$, $\cat G_2$ and~$s$. Straightforward but lengthy calculations (which we defer until Lemma~\ref{Lem:t-welldef}) show that $t$ is indeed a transformation, as claimed. Clearly $t$ is strong precisely because $s=t|_{\GG}$ is $\JJ_!$-strong. Direct verification shows that the above construction $s\mapsto t$ is the inverse of $t\mapsto t\circ(-)^*$. So~\eqref{eq:first_functor_welldef} is indeed an isomorphism of (1-)categories.

Let us now consider modifications. Every modification $M\colon t\Rrightarrow t'$ of transformations $t,t'\colon \cat{G}_1\Rightarrow\cat{G}_2$ of pseudo-functors $\cat{G}_1,\cat{G}_2\colon \Span(\GG;\JJ)\to \cat{C}$ restricts to a modification of the restricted transformations $t,t'\colon \cat{G}_1\circ (-)^* \Rightarrow\cat{G}_2\circ (-)^*$. In fact, the \emph{data} for $M$ or for its restriction is the same: a family
$\{M_G \colon t_G\Rightarrow t'_G\}_{G\in \GG_0}$ of 2-cells of~$\cat{C}$ indexed by $\GG_0=\Span(\GG;\JJ)_0$.
Suppose we are given such a family~$M$. We now claim that if $M$ satisfies the modification axiom
\begin{align} \label{eq:modif_axiom_special}
\vcenter {\hbox{
\xymatrix{
\cat{G}_1 G \ar[r]^-{t_G}
\ar[d]_-{\cat{G}_1 u^*} &
 \cat{G}_2 G
 \ar[d]^-{\cat{G}_2 u^*}
 \ar@{}[dl]|{\SWcell\;tu^*} \\
\cat{G}_1 P
 \ar[r]^-{t_P}
 \ar@/_8ex/[r]_-{t'_P} &
 \cat{G}_2 P
 \ar@{}[dl]|{\SWcell M_P} \\
 &
}
}}
\quad\quad = \quad\quad
\vcenter {\hbox{
\xymatrix{
 &
 \ar@{}[dl]|{\SWcell M_G} \\
 \cat{G}_1 G
 \ar@/^8ex/[r]^-{t_G}
 \ar[r]_-{t'_G}
 \ar[d]_-{\cat{G}_1 u^*} &
 \cat{G}_2 G
 \ar@{}[dl]|{\SWcell\;t'u^*}
 \ar[d]^-{\cat{G}_2 u^*} \\
\cat{G}_1 P \ar[r]_-{t'_P} & \cat{G}_2P
}
}}
\end{align}
for all spans of the form $u^*\colon G\to P$ (\ie if it defines a modification of the restricted transformations), then it also satisfies
\begin{align*}
\vcenter {\hbox{
\xymatrix{
\cat{G}_1 G \ar[r]^-{t_G}
\ar[d]_-{\cat{G}_1 (i_!u^*)} &
 \cat{G}_2 G
 \ar[d]^-{\cat{G}_2 (i_!u^*)}
 \ar@{}[dl]|{\SWcell \;t(i_!u^*)} \\
\cat{G}_1 H
 \ar[r]^-{t_H}
 \ar@/_8ex/[r]_-{t' H} &
 \cat{G}_2 H
 \ar@{}[dl]|{\SWcell M_H} \\
 &
}
}}
\quad\quad = \quad\quad
\vcenter {\hbox{
\xymatrix{
 &
 \ar@{}[dl]|{\SWcell M_G} \\
 \cat{G}_1 G
 \ar@/^8ex/[r]^-{t_G}
 \ar[r]_-{t'_G}
 \ar[d]_-{\cat{G}_1 (i_!u^*)} &
 \cat{G}_2 G
 \ar@{}[dl]|{\SWcell\;t' (i_!u^*)}
 \ar[d]^-{\cat{G}_2 (i_!u^*)} \\
\cat{G}_1 H \ar[r]_-{t' H} & \cat{G}_2H
}
}}
\end{align*}
for general spans $i_!u^*\colon G \stackrel{u}{\leftarrow} P \stackrel{i}{\to} H$ (\ie $M$ automatically defines a modification of the original transformations of pseudo-functors on~$\Span(\GG;\JJ)$). Indeed, by~\eqref{eq:decomp_comp} we may reduce the verification to spans of the form~$i_!$.
Now recall that the components $t(i_!), t'(i_!)$ at $i_!$ can be recovered as the left mates of~$t(i^*)\inv, t'(i^*)\inv$. The diagram
\[
\xymatrix{
\cat{G}_1 P
 \ar[r]^-{\cat{G}_1 i_!}
 \ar@/_3ex/[ddr]_-{\Id} &
\cat{G}_1 H
 \ar@/^2ex/[rr]^-{t_H}
 \ar@{}[rr]|{\SWcell\;M_H}
 \ar@/_2ex/[rr]_-{t'_H}
 \ar[dd]_-{\cat{G}_1 i^*} &&
\cat{G}_2 H
 \ar@/^3ex/[rdd]^-{\Id}
 \ar[dd]^-{\cat{G}_2 i^*} & \\
\ar@{}[ur]|{\NEcell} & &&& \\
 & \cat{G}_1 P
 \ar@{}[uur]|{ \, (t'i^*)\inv\!\NEcell}
 \ar@{}[uurr]_-{(ti^*)\inv\!\NEcell}
 \ar@/_2ex/[rr]_-{t'_P}
 \ar@{}[rr]|{\SWcell\;M_P}
 \ar@{..>}@/^2ex/[rr]^-{t_P} &&
\cat{G}_2 P
 \ar@{}[ur]|{\NEcell}
 \ar[r]_-{\cat{G}_2 i_!} &
\cat{G}_2 H
}
\]
computes both mates in parallel and shows that in order to derive the axiom for $i_!$ it suffices to apply~\eqref{eq:modif_axiom_special} with $u=i$.
This shows that~\eqref{eq:first_functor_welldef} upgrades to a 2-functor which, as we have seen, must be a locally strict biequivalence. This proves the theorem in the case the target is a strict 2-category.

Finally if $\cat B$ is any bicategory, choose a biequivalence $\cat B\stackrel{\sim}{\to} \cat{C}$ to some 2-category~$\cat{C}$ (\Cref{Rem:strictification}). It induces a commutative square of pseudo-functors
\[
\xymatrix{
\PsFun\big(\Span(\GG;\JJ), \cat{B}\big) \ar[r] \ar[d]_\simeq &
 \PsFunJ(\GG^{\op}, \cat{B}) \ar[d]^\simeq \\
\PsFun\big(\Span(\GG;\JJ), \cat{C}\big) \ar[r]^-\simeq & \PsFunJ(\GG^{\op}, \cat{C})
}
\]
where the bottom one is a biequivalence by the special case of the theorem we have already proved, and the vertical ones are biequivalences by Remark~\ref{Rem:PsFun_bieq}. We deduce that the top pseudo-functor is also a biequivalence (note however there is no reason for it to be locally strict).

The proof of the theorem is almost complete. We still owe the reader the following:

\begin{Lem} \label{Lem:t-welldef}
With notation as above, the 2-cells in~\eqref{eq:decomp_comp} define an oplax transformation $t\colon \cat G_1\Rightarrow\cat G_2$.
\end{Lem}

\begin{proof}
Let $\cat F_1:=\cat G_1\circ (-)^*$ and $\cat F_2:=\cat G_2\circ (-)^*\colon \GG^\op\to \cat C$ be the restrictions to $\GG^\op$ of our two pseudo-functors $\cat G_1,\cat G_2\colon \Span\to \cat C$. We are also given a transformation $t=(t_G,t(u))_{G\in \GG_0,u\in \GG_1}\colon \cat F_1\Rightarrow \cat F_2$ (or actually just an oplax transformation~$t$ with the property that $t(i)$ is invertible for all~$i\in \JJ$) and we must show that the 2-cells in~\eqref{eq:decomp_comp} extend $t$ to a transformation $t=(t_G,t(i_!u^*))\colon \cat G_1\Rightarrow \cat G_2$. For this, we may as well assume that $\cat G_1$ and $\cat G_2$ are \emph{equal} to the extensions on $\Span$ of $\cat F_1$ and $\cat F_2$ as in \Cref{Cons:UP-Span}, since they are isomorphic as pseudo-functors. We will also omit the canonical isomorphisms $i_!u^*\cong i_!\circ u^*$ from the computations, as they immediately cancel out anyway.

We compute with string diagrams, as before using dotted strings to denote the images in~$\cat C$ under $\cat G_1,\cat G_2$ of the 1-cells in $\Span(\GG;\JJ)$ of the form $i_!$ for $i\in \JJ$ (\ie the adjoint 1-cells $(\cat F_1 i)_!$ and $(\cat F_2 i)_!$).
The string forms of the transformation axioms can be found in \Cref{Exa:trans}.

Thus~\eqref{eq:decomp_comp} becomes
\[
t(i_! u^*)
\quad = \quad\quad
\vcenter { \hbox{
\psfrag{A}[Bc][Bc]{\scalebox{1}{\scriptsize{$t$}}}
\psfrag{B}[Bc][Bc]{\scalebox{1}{\scriptsize{$\cat G_2(i_! u^*)$}}}
\psfrag{C}[Bc][Bc]{\scalebox{1}{\scriptsize{$\cat G_1(i_! u^*)$}}}
\psfrag{D}[Bc][Bc]{\scalebox{1}{\scriptsize{$t$}}}
\psfrag{I}[Bc][Bc]{\scalebox{1}{\scriptsize{$t$}}}
\psfrag{B1}[Bc][Bc]{\scalebox{1}{\scriptsize{$\cat F_2u$\;\;}}}
\psfrag{B2}[Bc][Bc]{\scalebox{1}{\scriptsize{\;\;$\cat F_2i_!$}}}
\psfrag{C1}[Bc][Bc]{\scalebox{1}{\scriptsize{$\cat F_1u$\;\;}}}
\psfrag{C2}[Bc][Bc]{\scalebox{1}{\scriptsize{\;\;$\cat F_1 i_!$}}}
\psfrag{F}[Bc][Bc]{\scalebox{1}{\scriptsize{$t(u^*)$}}}
\psfrag{H}[Bc][Bc]{\scalebox{1}{\scriptsize{$t(i_!)$}}}
\psfrag{G1}[Bc][Bc]{\scalebox{1}{\scriptsize{\;\;\;\;\;\;\;$\fun^{-1}\!=\id$}}}
\psfrag{G2}[Bc][Bc]{\scalebox{1}{\scriptsize{$\id=\fun\;\;\;\;$}}}
\includegraphics[scale=.4]{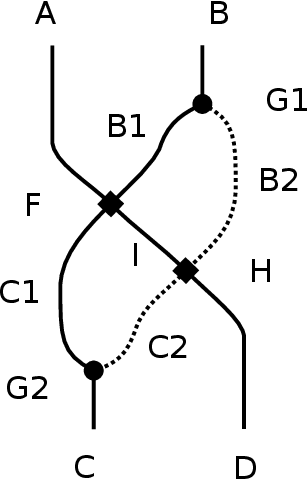}
}}
\quad\quad\quad = \quad\quad
\vcenter { \hbox{
\psfrag{A}[Bc][Bc]{\scalebox{1}{\scriptsize{$t$}}}
\psfrag{B}[Bc][Bc]{\scalebox{1}{\scriptsize{$\cat G_2(i_!\circ u^*)$}}}
\psfrag{C}[Bc][Bc]{\scalebox{1}{\scriptsize{$\cat G_1(i_!\circ u^*)$}}}
\psfrag{D}[Bc][Bc]{\scalebox{1}{\scriptsize{$t$}}}
\psfrag{I}[Bc][Bc]{\scalebox{1}{\scriptsize{$t$}}}
\psfrag{u}[Bc][Bc]{\scalebox{1}{\scriptsize{$\eta$}}}
\psfrag{n}[Bc][Bc]{\scalebox{1}{\scriptsize{$\varepsilon$}}}
\psfrag{B1}[Bc][Bc]{\scalebox{1}{\scriptsize{$\cat F_2u$\;}}}
\psfrag{B2}[Bc][Bc]{\scalebox{1}{\scriptsize{\;\;$\cat F_2i_!$}}}
\psfrag{C1}[Bc][Bc]{\scalebox{1}{\scriptsize{$\cat F_1u$\;\;}}}
\psfrag{C2}[Bc][Bc]{\scalebox{1}{\scriptsize{\;\;$\cat F_1 i_!$}}}
\psfrag{F}[Bc][Bc]{\scalebox{1}{\scriptsize{$t(u)$}}}
\psfrag{H}[Bc][Bc]{\scalebox{1}{\scriptsize{$\;\;\;t(i)^{-1}$}}}
\psfrag{G1}[Bc][Bc]{\scalebox{1}{\scriptsize{\;\;\;\;\;\;\;$\fun^{-1}\!=\id$}}}
\psfrag{G2}[Bc][Bc]{\scalebox{1}{\scriptsize{$\id=\fun\;\;\;\;$}}}
\includegraphics[scale=.4]{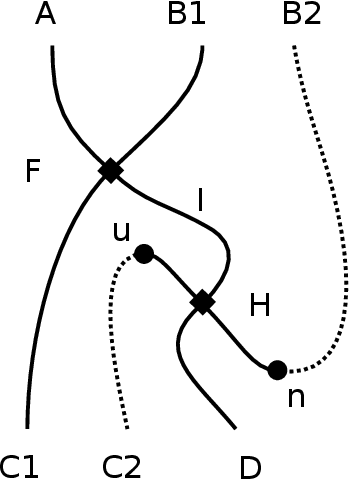}
}}
\]
where necessarily
$t(i_!)=(t(i^*)^{-1})_!=(t(i)^{-1})_!$ is the left mate of the inverse of~$t(i)$, as we have seen. (\Cf \Cref{Lem:tr_inv_mate}.)

Let $\alpha=[a,\alpha_1,\alpha_2]\colon i_!u^*\Rightarrow j_!v^*$ be any 2-cell of~$\Span$ as in~\eqref{eq:2-cell-of-Span}. The naturality axiom is verified by the following computation:
\[
\vcenter {\hbox{
\psfrag{F}[Bc][Bc]{\scalebox{1}{\scriptsize{$t(j_!v^*)$}}}
\psfrag{G}[Bc][Bc]{\scalebox{1}{\scriptsize{$\cat G_2 \alpha$}}}
\psfrag{A}[Bc][Bc]{\scalebox{1}{\scriptsize{$t$}}}
\psfrag{B}[Bc][Bc]{\scalebox{1}{\scriptsize{$\cat G_2 (i_!u^*)$}}}
\psfrag{C}[Bc][Bc]{\scalebox{1}{\scriptsize{$\cat G_1(j_!v^*)$}}}
\psfrag{D}[Bc][Bc]{\scalebox{1}{\scriptsize{$t$}}}
\psfrag{E}[Bc][Bc]{\scalebox{1}{\scriptsize{$t$}}}
\psfrag{H}[Bc][Bc]{\scalebox{1}{\scriptsize{$\cat G_2 (j_!v^*)$}}}
\includegraphics[scale=.4]{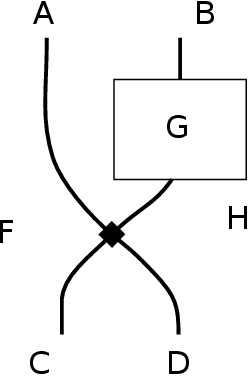}
}}
\quad\quad = \quad
\vcenter { \hbox{
\psfrag{I}[Bc][Bc]{\scalebox{1}{\scriptsize{$t$}}}
\psfrag{A1}[Bc][Bc]{\scalebox{1}{\scriptsize{$\cat F_2u$}}}
\psfrag{A2}[Bc][Bc]{\scalebox{1}{\scriptsize{$\cat F_2i_!$}}}
\psfrag{C1}[Bc][Bc]{\scalebox{1}{\scriptsize{$\cat F_1v$\;\;}}}
\psfrag{C2}[Bc][Bc]{\scalebox{1}{\scriptsize{\;\;$\cat F_1 j_!$}}}
\psfrag{F}[Bc][Bc]{\scalebox{1}{\scriptsize{$\cat F_2 \alpha_1$}}}
\psfrag{G}[Bc][Bc]{\scalebox{1}{\scriptsize{$\cat F_2 \alpha_2$}}}
\psfrag{L}[Bc][Bc]{\scalebox{1}{\scriptsize{$t(v)$}}}
\psfrag{H}[Bc][Bc]{\scalebox{1}{\scriptsize{$\;\;\;t(j)^{-1}$}}}
\psfrag{S'}[Bc][Bc]{\scalebox{1}{\scriptsize{$\cat F_2a_!$}}}
\psfrag{C'}[Bc][Bc]{\scalebox{1}{\scriptsize{$(\cat F_2ja)_!$}}}
\psfrag{U'}[Bc][Bc]{\scalebox{1}{\scriptsize{$\cat F_2j_!$}}}
\psfrag{R}[Bc][Bc]{\scalebox{1}{\scriptsize{$\cat F_2va\;\;\;$}}}
\psfrag{S}[Bc][Bc]{\scalebox{1}{\scriptsize{$\cat F_2a$}}}
\psfrag{T}[Bc][Bc]{\scalebox{1}{\scriptsize{$\cat F_2v$}}}
\psfrag{D}[Bc][Bc]{\scalebox{1}{\scriptsize{$\;\cat F_2j$}}}
\psfrag{C}[Bc][Bc]{\scalebox{1}{\scriptsize{$\;\;\;\cat F_2ja$}}}
\includegraphics[scale=.4]{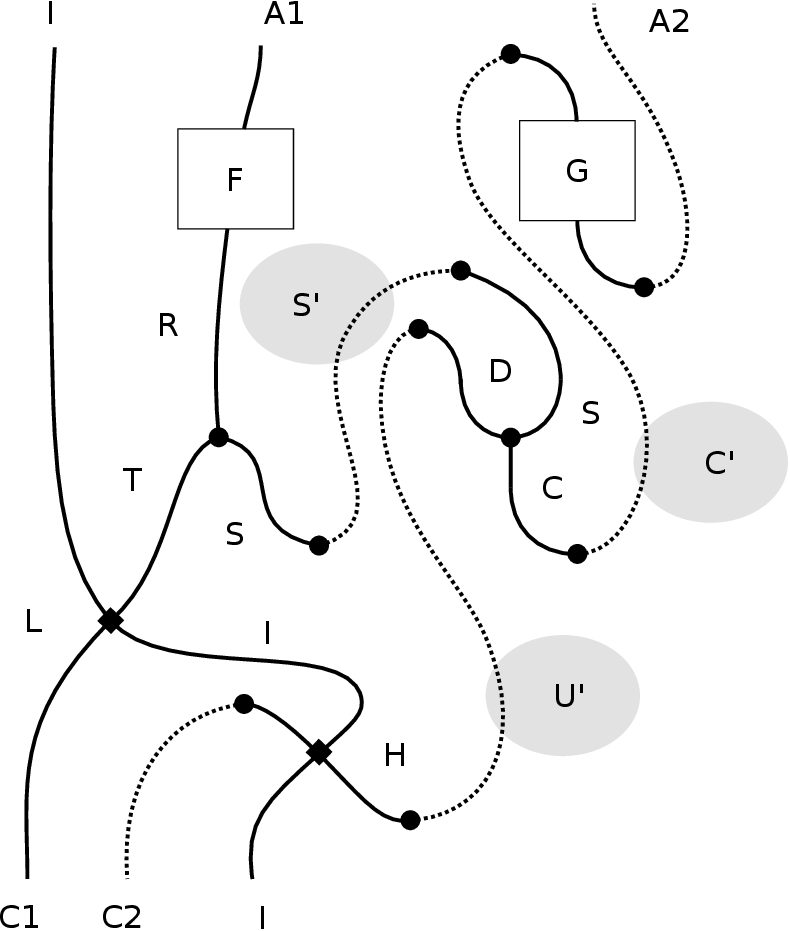}
}}
\quad\stackrel{3 \times \textrm{\eqref{Exa:strings-for-adjoints}}}{=}
\]
\[
\vcenter { \hbox{
\psfrag{I}[Bc][Bc]{\scalebox{1}{\scriptsize{$t$}}}
\psfrag{A1}[Bc][Bc]{\scalebox{1}{\scriptsize{$\cat F_2u$}}}
\psfrag{A2}[Bc][Bc]{\scalebox{1}{\scriptsize{$\cat F_2i_!$}}}
\psfrag{C1}[Bc][Bc]{\scalebox{1}{\scriptsize{$\cat F_1v$\;\;}}}
\psfrag{C2}[Bc][Bc]{\scalebox{1}{\scriptsize{\;\;$\cat F_1 j_!$}}}
\psfrag{F}[Bc][Bc]{\scalebox{1}{\scriptsize{$\cat F_2 \alpha_1$}}}
\psfrag{G}[Bc][Bc]{\scalebox{1}{\scriptsize{$\cat F_2 \alpha_2$}}}
\psfrag{S}[Bc][Bc]{\scalebox{1}{\scriptsize{$\fun_{\cat F_2}$}}}
\psfrag{S'}[Bc][Bc]{\scalebox{1}{\scriptsize{$\fun^{-1}_{\cat F_2}$}}}
\psfrag{L}[Bc][Bc]{\scalebox{1}{\scriptsize{$t(v)$}}}
\psfrag{H}[Bc][Bc]{\scalebox{1}{\scriptsize{$t(j)^{-1}$}}}
\psfrag{T}[Bc][Bc]{\scalebox{1}{\scriptsize{$\cat F_2a$}}}
\includegraphics[scale=.4]{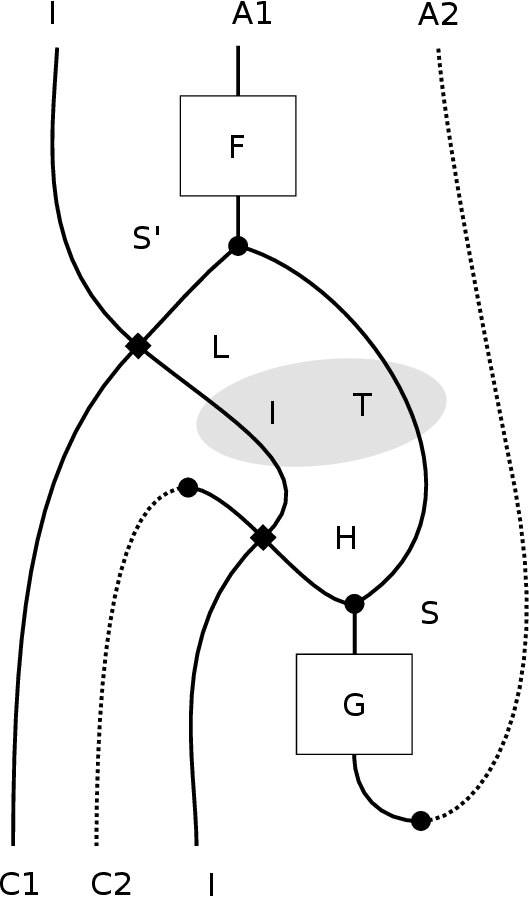}
}}
=
\vcenter { \hbox{
\psfrag{I}[Bc][Bc]{\scalebox{1}{\scriptsize{$t$}}}
\psfrag{A1}[Bc][Bc]{\scalebox{1}{\scriptsize{$\cat F_2u$}}}
\psfrag{A2}[Bc][Bc]{\scalebox{1}{\scriptsize{$\cat F_2i_!$}}}
\psfrag{C1}[Bc][Bc]{\scalebox{1}{\scriptsize{$\cat F_1v$\;\;}}}
\psfrag{C2}[Bc][Bc]{\scalebox{1}{\scriptsize{\;\;$\cat F_1 j_!$}}}
\psfrag{F}[Bc][Bc]{\scalebox{1}{\scriptsize{$\cat F_2 \alpha_1$}}}
\psfrag{G}[Bc][Bc]{\scalebox{1}{\scriptsize{$\cat F_2 \alpha_2$}}}
\psfrag{S}[Bc][Bc]{\scalebox{1}{\scriptsize{$\fun_{\cat F_2}$}}}
\psfrag{S'}[Bc][Bc]{\scalebox{1}{\scriptsize{$\fun^{-1}_{\cat F_2}$}}}
\psfrag{L}[Bc][Bc]{\scalebox{1}{\scriptsize{$t(v)$}}}
\psfrag{L1}[Bc][Bc]{\scalebox{1}{\scriptsize{$t(a)$}}}
\psfrag{L2}[Bc][Bc]{\scalebox{1}{\scriptsize{$t(a)^{-1}$}}}
\psfrag{H}[Bc][Bc]{\scalebox{1}{\scriptsize{$t(j)^{-1}$}}}
\psfrag{T}[Bc][Bc]{\scalebox{1}{\scriptsize{$\cat F_1a$}}}
\includegraphics[scale=.4]{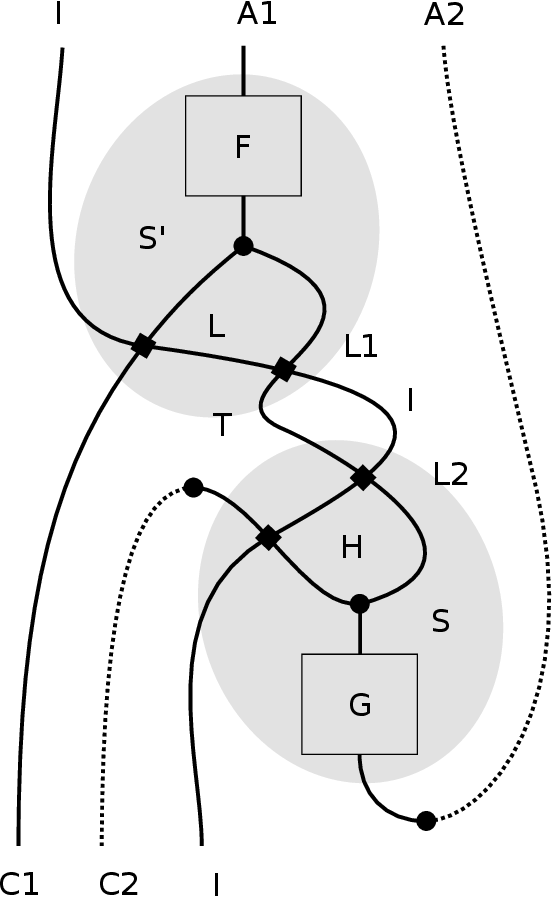}
}}
\underset{\textrm{for }t|_\GG}{\overset{\textrm{\eqref{Exa:trans}}}{=}}
\vcenter { \hbox{
\psfrag{I}[Bc][Bc]{\scalebox{1}{\scriptsize{$t$}}}
\psfrag{A1}[Bc][Bc]{\scalebox{1}{\scriptsize{$\cat F_2u$}}}
\psfrag{A2}[Bc][Bc]{\scalebox{1}{\scriptsize{$\cat F_2i_!$}}}
\psfrag{C1}[Bc][Bc]{\scalebox{1}{\scriptsize{$\cat F_1v$\;\;}}}
\psfrag{C2}[Bc][Bc]{\scalebox{1}{\scriptsize{\;\;$\cat F_1 j_!$}}}
\psfrag{F}[Bc][Bc]{\scalebox{1}{\scriptsize{$\cat F_1 \alpha_1$}}}
\psfrag{G}[Bc][Bc]{\scalebox{1}{\scriptsize{$\cat F_1 \alpha_2$}}}
\psfrag{S}[Bc][Bc]{\scalebox{1}{\scriptsize{$\fun_{\cat F_1}$}}}
\psfrag{S'}[Bc][Bc]{\scalebox{1}{\scriptsize{$\fun^{-1}_{\cat F_1}$}}}
\psfrag{L}[Bc][Bc]{\scalebox{1}{\scriptsize{$t(u)$}}}
\psfrag{H}[Bc][Bc]{\scalebox{1}{\scriptsize{$t(i)^{-1}$}}}
\psfrag{T}[Bc][Bc]{\scalebox{1}{\scriptsize{$\cat F_1 a$}}}
\psfrag{B}[Bc][Bc]{\scalebox{1}{\scriptsize{$\cat F_1 ja$}}}
\psfrag{D}[Bc][Bc]{\scalebox{1}{\scriptsize{$\cat F_1 i$}}}
\includegraphics[scale=.4]{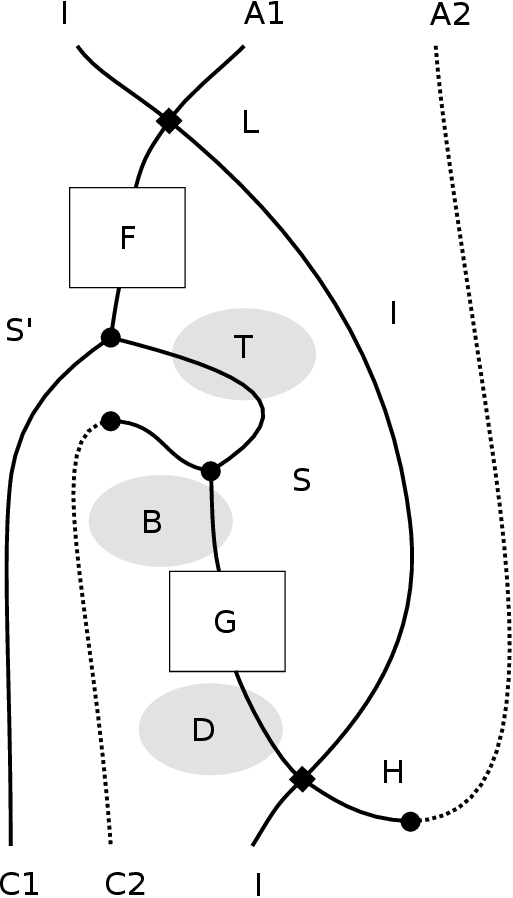}
}}
\]
\[
\stackrel{3 \times \textrm{\eqref{Exa:strings-for-adjoints}}}{=}\quad
\vcenter { \hbox{
\psfrag{I}[Bc][Bc]{\scalebox{1}{\scriptsize{$t$}}}
\psfrag{A1}[Bc][Bc]{\scalebox{1}{\scriptsize{$\cat F_2u$}}}
\psfrag{A2}[Bc][Bc]{\scalebox{1}{\scriptsize{$\cat F_2i_!$}}}
\psfrag{A3}[Bc][Bc]{\scalebox{1}{\scriptsize{$\cat F_1u$}}}
\psfrag{C1}[Bc][Bc]{\scalebox{1}{\scriptsize{$\cat F_1v$\;\;}}}
\psfrag{C2}[Bc][Bc]{\scalebox{1}{\scriptsize{\;\;$\cat F_1 j_!$}}}
\psfrag{F}[Bc][Bc]{\scalebox{1}{\scriptsize{$\cat F_1 \alpha_1$}}}
\psfrag{G}[Bc][Bc]{\scalebox{1}{\scriptsize{$\cat F_1 \alpha_2$}}}
\psfrag{L}[Bc][Bc]{\scalebox{1}{\scriptsize{$t(u)$}}}
\psfrag{H}[Bc][Bc]{\scalebox{1}{\scriptsize{$\;\;\;t(i)^{-1}$}}}
\psfrag{S'}[Bc][Bc]{\scalebox{1}{\scriptsize{$\cat F_1a_!$}}}
\psfrag{C'}[Bc][Bc]{\scalebox{1}{\scriptsize{$(\cat F_1ja)_!$}}}
\psfrag{U'}[Bc][Bc]{\scalebox{1}{\scriptsize{$\cat F_1i_!$}}}
\psfrag{R}[Bc][Bc]{\scalebox{1}{\scriptsize{$\cat F_1va\;\;$}}}
\psfrag{S}[Bc][Bc]{\scalebox{1}{\scriptsize{$\cat F_1a$}}}
\psfrag{T}[Bc][Bc]{\scalebox{1}{\scriptsize{$\cat F_1v\;$}}}
\psfrag{D}[Bc][Bc]{\scalebox{1}{\scriptsize{$\;\cat F_1j$}}}
\psfrag{C}[Bc][Bc]{\scalebox{1}{\scriptsize{$\;\;\;\cat F_1ja$}}}
\includegraphics[scale=.4]{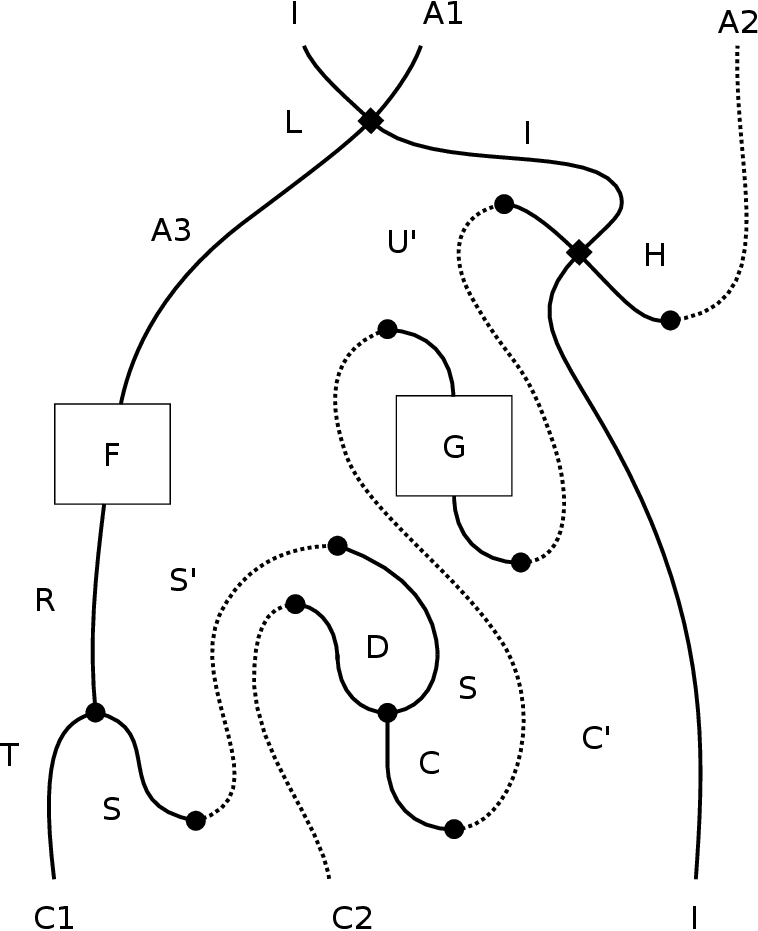}
}}
\quad\quad = \quad\quad
\vcenter {\hbox{
\psfrag{F}[Bc][Bc]{\scalebox{1}{\scriptsize{$t(i_!u^*)$}}}
\psfrag{G}[Bc][Bc]{\scalebox{1}{\scriptsize{$\cat G_1 \alpha$}}}
\psfrag{A}[Bc][Bc]{\scalebox{1}{\scriptsize{$t$}}}
\psfrag{B}[Bc][Bc]{\scalebox{1}{\scriptsize{$\cat G_2 (i_!u^*)$}}}
\psfrag{C}[Bc][Bc]{\scalebox{1}{\scriptsize{$\cat G_1(j_!v^*)$}}}
\psfrag{D}[Bc][Bc]{\scalebox{1}{\scriptsize{$t$}}}
\psfrag{E}[Bc][Bc]{\scalebox{1}{\scriptsize{$t$}}}
\psfrag{H}[Bc][Bc]{\scalebox{1}{\scriptsize{$\cat G_1 (j_!v^*)$}}}
\includegraphics[scale=.4]{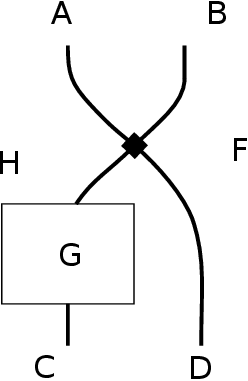}
}}
\]
The proofs of the compatibility with $\fun$ and $\un$ are quite similar and are left as an exercise for the reader.
\end{proof}

This completes the proof of \Cref{Thm:UP-PsFun-Span}.
\end{proof}

\begin{Rem}
The above proof yields a more general biequivalence on bicategories of pseudo-functors and \emph{oplax} transformations, which actually restricts to the one of the statement.

To explain this, let $\cat{G}_1\,,\,\cat{G}_2\colon \Span(\GG;\JJ)\to \cat{B}$ be two pseudo-functors. Let $\cat{F}_1=\cat{G}_1\circ(-)^*$ and $\cat{F}_2=\cat{G}_2\circ(-)^*$ be the corresponding $\JJ_!$-pseudo-functors $\GG^\op\to \cat{B}$. As we saw in \Cref{Rem:pseudo-nat-on-adjoints}, our focus on $\JJ_!$-strong transformations $t\colon \cat{F}_1\Rightarrow \cat{F}_2$ is related to the desire of $t_!\colon (\cat{F}_1)_!\Rightarrow (\cat{F}_2)_!$ to be an actual strong transformation, not only an op-lax one, and the proof of \Cref{Thm:UP-PsFun-Span} used this property to extend~$t$ to a strong transformation $t\colon \cat{G}_1\Rightarrow \cat{G}_2$; see~\eqref{eq:decomp_comp}.

Relaxing both sides, if we consider an oplax transformation $\cat{G}_1\Rightarrow \cat{G}_2$ and restrict it along $(-)^*$ we evidently obtain an oplax transformation $t\colon \cat{F}_1\Rightarrow \cat{F}_2$ but \emph{not any} oplax transformation! Because of \Cref{Lem:tr_inv_mate}, $t\colon \cat{F}_1\Rightarrow\cat{F}_2$ must have the property that the component $t_i$ is invertible for every~$i\in \JJ$, because $\cat{F}_1$ and $\cat{F}_2$ factor as $\GG^\op\oto{(-)^*} \Span\to \cat{B}$ and $i^*$ has a left adjoint in the intermediate bicategory~$\Span(\GG;\JJ)$. Conversely, if we start with an oplax transformation $t\colon \cat{F}_1\Rightarrow \cat{F}_2$ which has the property that its component $t_i$ is invertible, then we can consider its left mate to define $t(i_!)\colon \cat{G}_1(i_!)\Rightarrow\cat{G}_2(i_!)$ as in the proof of \Cref{Thm:UP-PsFun-Span}. The very same proof then gives the following oplax version.
\end{Rem}

\begin{Sch}
Precomposition with $(-)^*\colon \GG^\op\to \Span(\GG;\JJ)$ induces a biequivalence
\[
\PsFunoplax( \Span(\GG;\JJ), \cat{B}) \stackrel{\sim}{\longrightarrow} \PsFunJJ( \GG^{\op}, \cat{B})
\]
where the left-hand side is the bicategory of pseudo-functors, oplax transformations and modifications, and the right-hand side is the bicategory of $\JJ_!$-pseudo-functors (\Cref{Def:J_!-pseudo-functor}), oplax transformations~$t$ with the property that $t_i$ is invertible for all~$i\in \JJ$ (\,\footnote{\,One could call these `$\JJ$-strong', not to be confused with the $\JJ_!$-strong of \Cref{Def:J_!-strong}.}), and modifications between them.
\end{Sch}

\begin{Rem}
We can recover~\cite[Prop.\,1.10]{DawsonParePronk04} as a very special case of our \Cref{Thm:UP-PsFun-Span}. First note that if we set $\JJ=\GG_1$ and take $\GG$ to be a 1-category (\ie a (2,1)-category where the only 2-arrows are the identities), then the situation is completely symmetric with respect to span transposition. In this case, there is a dual statement, with a dual proof, for the \emph{co}variant embedding $\GG\to \Span(\GG;\JJ)$ (\Cref{Cons:first-embeddings}). By forgetting modifications in this dual statement, it becomes precisely the result of \emph{loc.\,cit}.
\end{Rem}

\bigbreak
\section{Pullback of 2-cells in the bicategory of spans}
\label{sec:pullback_2cells}%
\medskip

For the whole section, we abbreviate
\[
\Span:=\Span(\GG;\JJ)\,.
\]
So for instance $\Span(G,H)$ means $\Span(\GG;\JJ)(G,H)$ for every $G,H\in\GG_0$.
Recall~\Cref{Hyp:G_and_I_for_Span}, including part~\eqref{Hyp:G-I-c}, \ie the faithfulness of every $i\in\JJ$, and the implication $ij\in \JJ \Rightarrow j\in \JJ$ in part~\eqref{Hyp:G-I-a}. One reason for requiring these extra properties is the following very convenient consequence.
\begin{Prop} \label{Prop:pullbacks}
For all $G,H\in \GG_0$, the category $\Span(G,H)$ admits arbitrary pullbacks (in the usual strict sense). They are induced by the comma squares in~$\GG$.
\end{Prop}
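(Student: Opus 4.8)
The plan is to build the pullback directly from the comma object of the $1$-cell components of the two given maps, relying on the closure properties of~$\JJ$ in \Cref{Hyp:G_and_I_for_Span} to remain inside $\Span$, and on the faithfulness~\eqref{Hyp:G-I-c} to upgrade the bicategorical universal property of comma objects to a genuine strict pullback. Fix two morphisms $f\colon X\to Z$ and $g\colon Y\to Z$ of $\Span(G,H)$, represented as in~\eqref{eq:2-cell-of-Span} by diagrams $[a,\alpha_1,\alpha_2]$ and $[b,\beta_1,\beta_2]$ whose $1$-cell components $a\colon P_X\to P_Z$ and $b\colon P_Y\to P_Z$ lie in~$\JJ$. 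As candidate pullback I would take the span $W$ whose middle object is the comma object $P_W:=(a/b)$ of the cospan $P_X\overset{a}{\to}P_Z\overset{b}{\leftarrow}P_Y$, which exists by~\eqref{Hyp:G-I-b}, equipped with its projections $p\colon(a/b)\to P_X$, $q\colon(a/b)\to P_Y$ and structure $2$-cell $\gamma\colon ap\Rightarrow bq$ (\Cref{Def:comma}). Its legs are the composites $u_W:=u_X\,p$ to~$G$ and $i_W:=i_X\,p$ to~$H$. Here $q\in\JJ$ by~\eqref{Hyp:G-I-b} applied to $a\in\JJ$, and $p\in\JJ$ by the same hypothesis applied to $b\in\JJ$ after transposing the comma square as in \Cref{Exa:inv-comma-square}; hence $i_W=i_X p\in\JJ$ by closure of~$\JJ$ under composition~\eqref{Hyp:G-I-a}, so that $W\in\Span(G,H)$.

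Next I would define the two projections of the pullback cone. The map $\pi_X\colon W\to X$ is simply $[p,\id,\id]$ (its legs are literally composites through~$p$, so both $2$-cell components are identities), which is a legitimate $2$-cell of $\Span$ because $p\in\JJ$. The map $\pi_Y\colon W\to Y$ is $[q,\sigma_1,\sigma_2]$, where $\sigma_1,\sigma_2$ are assembled from $\gamma$ together with the $2$-cell components of $f$ and $g$: on the $H$-side one composes $i_X p\Rightarrow i_Z ap$ (via $\alpha_2\,p$), $i_Z ap\Rightarrow i_Z bq$ (via $i_Z\gamma$) and the inverse of $i_Y q\Rightarrow i_Z bq$ (via $\beta_2\,q$), everything being invertible, and similarly on the $G$-side. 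One then checks $f\pi_X=g\pi_Y$ in $\Span(G,H)$: vertical composition being pasting in~$\GG$, the $1$-cell components of the two composites are $ap$ and $bq$, and the isomorphism $\gamma\colon ap\Rightarrow bq$ is precisely the $2$-cell of~$\GG$ witnessing the identification of representatives in~\eqref{eq:2-cell-of-Span}, once one verifies it matches the pasted $2$-cell components --- a direct diagram chase.

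For universality, I would start from an arbitrary cone $(W',h,k)$ with $h=[c,\dots]\colon W'\to X$ and $k=[d,\dots]\colon W'\to Y$, whose $1$-cell components $c\colon P_{W'}\to P_X$ and $d\colon P_{W'}\to P_Y$ lie in~$\JJ$, satisfying $fh=gk$. Unpacking $fh=gk$ yields an isomorphism $\psi\colon ac\Rightarrow bd$ compatible with all leg data. The universal property of the comma object (\Cref{Def:comma}\,\eqref{it:comma-a}) then produces a unique $1$-cell $\ell:=\langle c,d,\psi\rangle\colon P_{W'}\to(a/b)$ with $p\ell=c$, $q\ell=d$ and $\gamma\ell=\psi$. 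Crucially, since $p\ell=c\in\JJ$, the cancellation property~\eqref{Hyp:G-I-a} forces $\ell\in\JJ$, so $\ell$ represents a genuine morphism $W'\to W$ of $\Span(G,H)$; the equalities $p\ell=c$, $q\ell=d$ then give $\pi_X\circ[\ell,\dots]=h$ and $\pi_Y\circ[\ell,\dots]=k$.

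The main obstacle, and the place where faithfulness~\eqref{Hyp:G-I-c} is essential, is the \emph{uniqueness} of this factorization as a morphism of $\Span(G,H)$ (recall morphisms are isomorphism classes). Given any $[\ell',\mu_1,\mu_2]\colon W'\to W$ lying over the cone, the equalities $\pi_X[\ell',\dots]=h$ and $\pi_Y[\ell',\dots]=k$ supply isomorphisms $p\ell'\Rightarrow c=p\ell$ and $q\ell'\Rightarrow d=q\ell$ satisfying the compatibility of \Cref{Def:comma}\,\eqref{it:comma-b}, which therefore yields a unique $2$-cell $\ell'\Rightarrow\ell$ of~$\GG$; equivalently, one invokes the isomorphism of categories $\GG(T,a/b)\cong\GG(T,a)/\GG(T,b)$ of \Cref{Rem:commas_translated}. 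Faithfulness of the legs in~$\JJ$ guarantees that the $2$-cell identifying two representatives of a single morphism of $\Span$ is itself unique, so this comparison $2$-cell matches the components $\mu_i$ and shows $[\ell',\mu_1,\mu_2]=[\ell,\dots]$. This rigidity is exactly what promotes the comma object from a mere bilimit to a strict pullback, establishing the claim.
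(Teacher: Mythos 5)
Your construction coincides with the paper's: the pullback of a cospan $[a,\alpha_1,\alpha_2]$, $[b,\beta_1,\beta_2]$ in $\Span(G,H)$ is built on the iso-comma $(a/b)$ of the middle 1-cell components, the factorizing 1-cell is $\langle c,d,\psi\rangle$ supplied by \Cref{Def:comma}\,\eqref{it:comma-a}, and uniqueness is to come from \Cref{Def:comma}\,\eqref{it:comma-b}. Your existence half is sound in outline, and you even make explicit a point the paper leaves tacit, namely that $p\ell=c\in\JJ$ forces $\ell\in\JJ$ by \Cref{Hyp:G_and_I_for_Span}\,\eqref{Hyp:G-I-a}; your different choice of legs for the pullback span ($u_Xp$ and $i_Xp$, rather than the paper's $wa\tilde b$ and $ka\tilde b$ in~\eqref{eq:pullback_in_Bicat}) only changes the representative up to isomorphism and is immaterial.

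The genuine gap is in the uniqueness step. Before you may invoke \Cref{Def:comma}\,\eqref{it:comma-b} you must verify its hypothesis: the isomorphisms $\tau_P\colon p\ell'\Rightarrow p\ell$ and $\tau_Q\colon q\ell'\Rightarrow q\ell$ extracted from $\pi_X\circ[\ell',\mu_1,\mu_2]=h$ and $\pi_Y\circ[\ell',\mu_1,\mu_2]=k$ must satisfy the joint compatibility $(\gamma\ell)(a\,\tau_P)=(b\,\tau_Q)(\gamma\ell')$. You assert this as if it came for free; it does not, and it is precisely here — not where you place it — that the faithfulness assumption \eqref{Hyp:G-I-c} enters. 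The cone equalities only yield identities of pasted diagrams involving the outer legs of the spans, so what one can actually extract from them is the compatibility equation \emph{after whiskering with the faithful 1-cell} $i_Z\colon P_Z\into H$ (the $\JJ$-leg of the codomain span, the paper's~$k$): this is the content of the paper's long pasting computation built on \eqref{eq:zeta-identity-right}, \eqref{eq:xi-identity-right} and \eqref{eq:blobby1}, after which $i_Z$ is cancelled by faithfulness. Your own appeal to faithfulness — that the 2-cell identifying two representatives of a single morphism of $\Span$ is unique — is a correct observation, but it only pins down $\tau_P$ and $\tau_Q$ individually; uniqueness of each in isolation cannot produce the relation tying them together through~$\gamma$, so it does not substitute for that computation. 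Finally, even once $\theta\colon\ell'\Rightarrow\ell$ is produced, you still owe the verification that $\theta$ satisfies the two leg equations making it an isomorphism of diagrams $[\ell',\mu_1,\mu_2]=[\ell,\eta_1,\eta_2]$; in the paper this is again a short explicit computation using \eqref{eq:theta-def-properties} together with \eqref{eq:zeta-identity-left}--\eqref{eq:xi-identity-right}, not a formal consequence of uniqueness of identifying 2-cells.
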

\begin{proof}
It is well-known that iso-comma squares provide the homotopy pullbacks in the homotopy category of small groupoids, where the ordinary equivalences are inverted (\cf Remark~\ref{Rem:htpy-pullback}). Here we need something stronger, because we claim extra compatibility and uniqueness properties for the map induced by the usual weak pullback property of homotopy pullbacks.

Consider the following (left-hand) cospan of maps of $\Span(G,H)$, represented in~$\GG$ by the diagram on the right-hand side:
\begin{equation}
\vcenter{\label{eq:pullback-Bicat-data}%
\xymatrix@L=1ex{
i_!u^* \ar@{=>}[d]^-{[a,\alpha_1,\alpha_2]}_{[a] \, :=} \\
k_!w^* \\
j_!v^* \ar@{=>}[u]_-{[b,\beta_1,\beta_2]}^{[b] \, :=}
}}
\quad\quad\quad
\vcenter{\xymatrix{
G \ar@{=}[d] \ar@{}[rrd]|{\SEcell\,\alpha_1} &&
 P \ar[d]^a \ar[ll]_-{u} \ar[rr]^-{i} \ar@{}[rrd]|{\NEcell\,\alpha_2} &&
 H \ar@{=}[d] \\
G \ar@{}[rrd]|{\NEcell\,\beta_1} \ar@{=}[d] &&
 R \ar[ll]_-{w} \ar[rr]^-{k} \ar@{}[rrd]|{\SEcell\,\beta_2} &&
 H \ar@{=}[d] \\
G && Q \ar[u]_b \ar[ll]^-{v} \ar[rr]_-{j} &&
 H
}}
\end{equation}
Construct in~$\GG$ an iso-comma square over the cospan appearing in the middle column (this is possible because now $a$ and $b$ are in $\JJ$ by \Cref{Hyp:G_and_I_for_Span}\,\eqref{Hyp:G-I-a}):
\begin{align*}
\xymatrix@C=14pt@R=14pt{
& (a/b) \ar[ld]_-{\tilde b} \ar[dr]^-{\tilde a} & \\
P \ar[dr]_a \ar@{}[rr]|{\isocell{\gamma}} && Q \ar[dl]^b \\
&R &
}
\end{align*}
We can now complete the span $P \stackrel{\tilde b}{\leftarrow} (a/b) \stackrel{\tilde a}{\to}Q$ to represent a pair of 2-cells in~$\Span$:
\begin{align} \label{eq:pullback_in_Bicat}
\xymatrix@L=1ex{
i_!u^* &
G \ar@{=}[dd] \ar@{}[ddrr]|{\NEcell \alpha_1\inv } &&&
 P \ar[lll]_-{u} \ar[rrr]^-{i} &&&
 H \ar@{=}[dd] \ar@{}[ddll]|{\SEcell \alpha_2\inv} \\
& &&& &&& \\
(ka\tilde b)_!(wa \tilde b)^* \ar@{=>}[uu]|-{[\tilde b] \, := \, [\tilde b,\ldots,\ldots] \, :=\;} \ar@{=>}[dd]|-{[\tilde a] \, := \, [\tilde a,\ldots,\ldots] \, :=\;}
&
G \ar@{=}[dd] \ar@{}[ddrr]|{\SEcell \beta_1\inv} &
 R \ar[l]_-{w} &
 P \ar[l]_-{a} \ar[uur]^\Id \ar@{}[rd]_-{\SEcell \gamma} &
 (a/b) \ar[l]_-{\tilde b} \ar[uu]_-{\tilde b} \ar[dd]^-{\tilde a} \ar[r]^-{\tilde b} &
 P \ar[r]^-{a} \ar[uul]_\Id \ar@{}[ld]^-{\NEcell \gamma\inv\;} &
 R \ar[r]^-{k} & H \ar@{=}[dd] \ar@{}[ddll]|{\NEcell \beta_2\inv} \\
& &&& &&& \\
j_!v^* & G &&& Q \ar[lll]^-{v} \ar[rrr]_-{j} \ar[uull]^-{b} \ar[uurr]_b &&& H
}
\end{align}
(note that there is also another possible, isomorphic, choice for such a diagram, where the $\gamma$'s appear in the top half, \ie as part of the 2-cell~$[\tilde b]$).
We are going to test the universal property of a pullback in $\Span(G,H)$ for the resulting square
\[
\xymatrix@L=1ex{
& (ka\tilde b)_!(wa \tilde b)^* \ar@{=>}[ld]_-{[\tilde b]} \ar@{=>}[rd]^-{[\tilde a]}
\\
i_!u^* \ar@{=>}[rd]_-{[a]}
&& j_!v^* \ar@{=>}[ld]^-{[b]}
\\
& k_!w^*
}
\]
Suppose we are given a pair of 2-cells
\begin{align*}
\xymatrix{
G \ar@{=}[d] \ar@{}[rrd]|{\SEcell\,\rho_1} &&
 T \ar[ll]_-x \ar[rr]^-\ell \ar[d]^r \ar@{}[rrd]|{\NEcell\,\rho_2} &&
 H \ar@{=}[d] \\
G &&
 P \ar[ll]^-u \ar[rr]_-{i} &&
 H
}
\quad\quad
\xymatrix{
G \ar@{=}[d] \ar@{}[rrd]|{\SEcell\,\sigma_1} &&
 T \ar[ll]_-x \ar[rr]^-\ell \ar[d]^s \ar@{}[rrd]|{\NEcell\,\sigma_2} &&
 H \ar@{=}[d] \\
G &&
 Q \ar[ll]^-{v} \ar[rr]_-{j} &&
 H
}
\end{align*}
such that $[a,\alpha_1,\alpha_2][r,\rho_1,\rho_2] =[b,\beta_1,\beta_2][s,\sigma_1,\sigma_2]$ as 2-cells from $\ell_!x^*$ to~$k_!w^*$. The latter equation means that there exists an isomorphisms $\varphi\colon ar \isoEcell bs$ identifying its left and right 2-cell components as in~\Cref{Def:Span-bicat}:
\begin{equation} \label{eq:properties-of-phi}
(w \varphi) (\alpha_1 r) \rho_1 = (\beta_1 s) \sigma_1
\quad \textrm{ and } \quad
\rho_2 (\alpha_2 r) = \sigma_2 (\beta_2 s) (k \varphi)\,.
\end{equation}
In particular, by the universal property of the comma object~$a/b$ there is a unique 1-cell $t=\langle r,s,\varphi\rangle\colon T\to (a/b)$ of~$\GG$, \ie such that
\begin{equation} \label{eq:def-of-t}
\tilde b t =r , \quad \tilde a t =s \quad \textrm{ and } \quad \gamma t = \varphi \,.
\end{equation}
Complete $t$ to a 2-cell $[t]:=[t,\ldots,\ldots]\in \Span_2$ as follows:
\begin{equation}
\label{eq:def-of-[t]}%
\vcenter{\xymatrix{
G \ar@{=}[dd] \ar@{}[ddr]|{\SEcell \alpha_1} && \ar@{}[dd]|{\SEcell \rho_1} &
 T \ar[dd]^t \ar[lll]_-x \ar[rrr]^-\ell \ar[ddl]^r \ar[ddr]_r & \ar@{}[dd]|{\NEcell \rho_2} &&
 H \ar@{=}[dd]\\
&&& &&& \\
G & R \ar[l]^-w & P \ar[l]^-{a} \ar[uull]_u &
 a/b \ar[l]^-{\tilde b} \ar[r]_-{\tilde b} &
 P \ar[r]_-{a} \ar[uurr]^i &
 R \ar[r]_-k \ar@{}[uur]|{\NEcell \alpha_2} &
 H \,.\!\!
}}
\end{equation}
We must verify that $[\tilde b][t] = [r]$ and $[\tilde a][t] =[s]$ as 2-cells of~$\Span$.
In fact, these are even \emph{strict} equalities of diagrams in~$\GG$, because we evidently have
\[
\xymatrix{
G \ar@{=}[dd] \ar@{}[ddr]|{\SEcell \alpha_1} && \ar@{}[dd]|{\SEcell \rho_1} &
 T \ar[dd]^t \ar[lll]_-x \ar[rrr]^-{\ell} \ar[ddl]^r \ar[ddr]_-r & \ar@{}[dd]|{\NEcell \rho_2} &&
 H \ar@{=}[dd] &\\
&&& &&& & \\
G \ar@{=}[dd] \ar@{}[ddrr]|{\SEcell \alpha_1\inv} &
 R \ar[l]^-w &
 P \ar[l]^-{a} \ar[uull]_u &
 a/b \ar[l]^-{\tilde b} \ar[dd]^-{\tilde b} \ar[r]_-{\tilde b} &
 P \ar[r]_-{a} \ar[uurr]^i &
 R \ar[r]_-k \ar@{}[uur]|{\NEcell \alpha_2} &
 H \ar@{=}[dd] \ar@{}[ddll]|{\NEcell \alpha_2\inv} & = (r, \rho_1,\rho_2) \\
&&& &&& && \\
G &&& P \ar[lll]^-u \ar[rrr]_i \ar@{<-}[uul]^\Id \ar@{<-}[uur]_\Id &&& H &
}
\]
and also, slightly less obviously,
\[
\xymatrix{
G \ar@{=}[dd] \ar@{}[ddr]|{\SEcell \alpha_1} && \ar@{}[dd]|{\SEcell \rho_1} &
 T \ar[dd]^t \ar[lll]_-x \ar[rrr]^-{\ell} \ar[ddl]^r \ar[ddr]_-r & \ar@{}[dd]|{\NEcell \rho_2} &&
 H \ar@{=}[dd] &&\\
&&& &&& && \\
G \ar@{=}[dd] \ar@{}[ddrr]|{\SEcell \beta_1\inv} &
 R \ar[l]^-w &
 P \ar[l]^-{a} \ar[uull]_u \ar@{}[ddr] \ar@{}[rd]_-{\SEcell \gamma} &
 a/b \ar[l]^-{\tilde b} \ar[dd]^-{\tilde a} \ar[r]_-{\tilde b} &
 P \ar[r]_-{a} \ar[uurr]^i \ar@{}[ld]^-{\NEcell \gamma\inv} &
 R \ar[r]_-k \ar@{}[uur]|{\NEcell \alpha_2} &
 H \ar@{=}[dd] \ar@{}[ddll]|{\NEcell \beta_2\inv} & \stackrel{\textrm{\eqref{eq:def-of-t}}}{=} & \\
&&& &&& && \\
G &&& Q \ar[lll]^-{v} \ar[rrr]_j \ar[uull]^b \ar[uurr]_b &&& H &&
}
\]
\[
\xymatrix{
& G \ar@{=}[dd] \ar@{}[ddr]|{\SEcell \alpha_1} && \ar@{}[dd]|{\SEcell \rho_1} &
 T \ar[dddd]^s \ar[lll]_-x \ar[rrr]^-{\ell} \ar[ddl]^r \ar[ddr]_-r & \ar@{}[dd]|{\NEcell \rho_2} &&
 H \ar@{=}[dd] &&\\
& &&& &&& && \\
= & G \ar@{=}[dd] \ar@{}[ddrr]|{\SEcell \beta_1\inv} &
 R \ar[l]^-w &
 P \ar[l]^-{a} \ar[uull]_u \ar@{}[dr]|{\SEcell \varphi} &&
 P \ar[r]_-{a} \ar[uurr]^i \ar@{}[dl]|{\NEcell \varphi\inv} &
 R \ar[r]_-k \ar@{}[uur]|{\NEcell \alpha_2} &
 H \ar@{=}[dd] \ar@{}[ddll]|{\NEcell \beta_2\inv} & \stackrel{\textrm{\eqref{eq:properties-of-phi}}}{=} & \\
& &&& &&& && \\
& G &&& Q \ar[lll]^-{v} \ar[rrr]_j \ar[uull]^b \ar[uurr]_b &&& H &&
}
\]
\[
\xymatrix{
& G \ar@{=}[dd] \ar@{}[rdd]^-{\SEcell \sigma_1} && &
 T \ar[dddd]^s \ar[lll]_-x \ar[rrr]^-{\ell} \ar[dl]^s \ar[dr]_s & &&
 H \ar@{=}[dd] \ar@{}[ldd]_-{\NEcell \sigma_2} &\\
& && Q \ar[dll]^v \ar@{}[dd]|{\SEcell \, \beta_1\,s} && Q \ar[rrd]_j \ar@{}[dd]|{\NEcell\, \beta_2\,s} && & \\
= & G \ar@{=}[dd] \ar@{}[ddrr]|{\SEcell \beta_1\inv} &
 R \ar[l]^-w &
 &&&
 R \ar[r]_-k &
 H \ar@{=}[dd] \ar@{}[ddll]|{\NEcell \beta_2\inv} & = (s,\sigma_1,\sigma_2) \,. \\
& &&& &&& & \\
& G &&& Q \ar[lll]^-{v} \ar[rrr]_j \ar[uull]^b \ar[uurr]_b &&& H &
}
\]
Let us also note, for future reference, that the right-hand side of the latter identification amounts to the following identity of 2-cells in~$\GG$:
\begin{equation} \label{eq:blobby1}
\vcenter{\xymatrix@L=2pt{
\ar@/_4ex/[rdd]_\ell \ar@/^3ex/[rrd]^{\;\, s\,=\,\tilde a t}
&&&& \ar@/_7ex/[dddr]_\ell \ar@/^1ex/[r]^-t & \ar[d]^-{\tilde b} \ar@/^3ex/[rrd]^-{\tilde a}
& &
\\
\ar@{}[rr]|{\oWcell{\sigma_2}}
&& \ar@/^2ex/[dl]^j & = \ar@{}[rr]_-{\quad\quad\oWcell{\rho_2}}
&& \ar[dr]^-{a} \ar[dd]_i \ar@{}[rr]^-{\oWcell{\gamma\inv}}
&& \ar[ld]_b \ar@/^8ex/[ddll]^j
\\
&
&&&& \ar@{}[r]|{\oWcell{\alpha_2}\;\;} & \ar[dl]^k \ar@{}[dr]|{\oWcell{\beta_2\inv}\kern1em}
&
\\
& &&&& &&
}}
\end{equation}

Now it only remains for us to verify the uniqueness of the induced 2-cell~$[t]$ in~$\Span$. Suppose we are given another 2-cell~$[t']$ such that
\begin{align} \label{eq:up-test-for-t}
[\tilde b][t']=[r] \quad \textrm{ and } \quad [\tilde a][t']=[s] \,.
\end{align}
We must show that $[t']=[t]$, and for this we will use the \emph{morphism} part of the universal property of $a/b$ to construct a suitable isomorphism $\theta\colon t'\isoEcell t$.

The 2-cell $[t']$ is represented by a diagram
\begin{align*}
\xymatrix{
G \ar@{=}[d] \ar@{}[drrr]|{\SEcell\,\tau_1} &&&
 T \ar[lll]_-x \ar[rrr]^-{\ell} \ar[d]^-{t'} \ar@{}[drrr]|{\NEcell\,\tau_2} &&&
 H \ar@{=}[d] \\
G & R \ar[l]^-w & P \ar[l]^-{a} &
 a/b \ar[l]^-{\tilde b} \ar[r]_-{\tilde b} & P \ar[r]_-{a} & R \ar[r]_k &
 H
}
\end{align*}
and the two equations~\eqref{eq:up-test-for-t} are realized by isomorphisms $\zeta\colon \tilde bt' \isoEcell r$ and $\xi\colon \tilde a t' \isoEcell s$ as follows:
\[
\vcenter{\xymatrix@C=12pt{
G \ar@{=}[d] \ar@{}[drrr]|{\SEcell\,\tau_1} &&&
 T \ar[lll]_-x \ar[rrr]^-{\ell} \ar[d]^-{t'} \ar@{}[drrr]|{\NEcell\,\tau_2} &&&
 H \ar@{=}[d] &&
 G \ar@{=}[dd] \ar@{}[ddrr]|{\SEcell \rho_1} &&
 T \ar[dd]^r \ar[ll]_-x \ar[rr]^-\ell \ar@{}[ddrr]|{\NEcell \rho_2} &&
 H \ar@{=}[dd]
\\
G \ar@{=}[d] \ar@{}[rrd]|{\SEcell \alpha_1\inv} &
 R \ar[l]_-{w} &
 P \ar[l]_-{a} &
 (a/b) \ar[l]_-{\tilde b} \ar[d]^-{\tilde b} \ar[r]^-{\tilde b} &
 P \ar[r]^-{a} &
 R \ar[r]^{k} &
 H \ar@{=}[d] \ar@{}[dll]|{\NEcell \alpha_2\inv} &
 \stackrel{\zeta\,\,}{\Rightarrow} &&&&& \\
G &&&
 P \ar[lll]^-u \ar[rrr]_-{i} \ar@{<-}[ul]^\Id \ar@{<-}[ur]_\Id &&&
 H &&
 G &&
 P \ar[ll]^-u \ar[rr]_-{i} &&
 H
}}
\]
\[
\vcenter{\xymatrix@C=12pt{
G \ar@{=}[d] \ar@{}[drrr]|{\SEcell\,\tau_1} &&&
 T \ar[lll]_-x \ar[rrr]^-{\ell} \ar[d]^-{t'} \ar@{}[drrr]|{\NEcell\,\tau_2} &&&
 H \ar@{=}[d] &&
 G \ar@{=}[dd] \ar@{}[ddrr]|{\SEcell \sigma_1} &&
 T \ar[dd]^s \ar[ll]_-x \ar[rr]^-\ell \ar@{}[ddrr]|{\NEcell \sigma_2} &&
 H \ar@{=}[dd] \\
G \ar@{=}[d] \ar@{}[rrd]|{\SEcell \beta_1\inv} &
 R \ar[l]_-{w} &
 P \ar[l]_-{a}
& (a/b) \ar[l]_-{\tilde b} \ar[d]_-{\tilde a} \ar[r]^-{\tilde b} \ar@{}[ld]|(.3){\SEcell \gamma\kern2em} \ar@{}[rd]|(.3){\kern1.5em\NEcell \gamma\inv}
& P \ar[r]^-{a}
& R \ar[r]^-{k} &
 H \ar@{=}[d] \ar@{}[dll]|{\NEcell \beta_2\inv} &
 \stackrel{\xi\,\,}{\Rightarrow}
 &&&&& \\
G &&&
 Q \ar[lll]^-{v} \ar[rrr]_-{j} \ar[llu]^b \ar[rru]_b &&&
 H &&
 G &&
 Q \ar[ll]^-{v} \ar[rr]_-{j} &&
 H
}}
\]
which read
\begin{align}
\label{eq:zeta-identity-left}%
(u\,\zeta)(\alpha_1\inv\tilde{b}t')\tau_1 & =\rho_1
\\
\label{eq:zeta-identity-right}%
\tau_2(\alpha_2\inv\,\tilde{b}t') & =\rho_2(i\,\zeta)
\quadtext{hence}
\rho_2\inv \tau_2 =(i\,\zeta)(\alpha_2\,\tilde{b}t')
\\
\label{eq:xi-identity-left}%
(\beta_1\inv\tilde{a}t')(w\,\gamma\,t')\tau_1 &= (v \,\xi^{-1})\sigma_1
\\
\label{eq:xi-identity-right}%
\tau_2(k\,\gamma\inv\,t')(\beta_2\inv\,\tilde{a}t') & =\sigma_2(j\,\xi)
\quadtext{hence}
\sigma_2\inv \tau_2 =(j\,\xi)(\beta_2\,\tilde{a}t')(k\,\gamma\,t')
\end{align}

Thus we have in~$\GG$ two parallel 1-cells $t',t\colon T\to a/b$ and two 2-cells
$\zeta \colon \tilde b t' \Rightarrow r = \tilde b t$ and $\xi\colon \tilde a t' \Rightarrow s = \tilde a t$.
From the following computation
\begin{align*}
\vcenter{\xymatrix@L=2pt@C=16pt@R=16pt{
& \ar@/_1ex/[d]_-{t'} \ar@/^3ex/[rrd]^-{t}
\\
& \ar[ld]_-{\tilde b} \ar[dr]^-{\tilde a} \ar@{}[rr]^-{\oEcell{\xi}}
&& \ar[ld]^-{\tilde a}
\\
 \ar[dr]_a \ar@{}[rr]|{\oEcell{\gamma}}
&& \ar[dl]^b
\\
& \ar[d]^k
\\
&
}}
\qquad=\qquad
\vcenter{\xymatrix@L=2pt@C=16pt@R=16pt{
&& \ar@/_1ex/[ld]_-{t'} \ar@/^3ex/[rrd]^-{t} \ar[rdd]^-{s}
\\
& \ar[ld]_-{\tilde b} \ar[rrd]^-{\tilde a} \ar@{}[rr]^(.4){\oEcell{\xi}}
&&& \ar[ld]_-{\tilde a}
\\
 \ar[dr]_a \ar@{}[rr]|{\oEcell{\gamma}}
&&& \ar[lld]_-{b} \ar[ldd]_-{j} \ar@/^3ex/[rd]^-{b}
\\
& \ar[rd]_-{k} \ar@{}[r]|(.7){\;\oEcell{\;\;\beta_2}}
& \ar@{}[rr]|(.6){\oEcell{\beta_2\inv}}
&& \ar@/^3ex/[lld]^-{k}
\\
&&
}}
\qquad\stackrel{\textrm{\eqref{eq:xi-identity-right}}}{=}
\end{align*}
\begin{align*}
=\;\;
\vcenter{\xymatrix@L=2pt@C=16pt@R=16pt{
&&& \ar@/_1ex/[dll]_-{t'} \ar@/^3ex/[rrd]^t \ar[ddr]^s \ar@/_5ex/[dddd]_\ell && \\
& \ar[ld]_-{\tilde b} &&&& \ar@/^2ex/[ld]_-{\tilde a} \\
 \ar[dr]_a \ar@{}[rr]|{\oEcell{\tau_2}} && \ar@{}[rr]|{\oEcell{\sigma_2\inv}} && \ar@/^2ex/[dr]^-{b} \ar[ddl]_j & \\
& \ar@/_1ex/[rrd]_k &&&& \ar@/^3ex/[dll]^k \\
&&& \ar@{}[urr]|{\oEcell{\beta_2\inv}}
 &&
}}
\quad \stackrel{\textrm{\eqref{eq:blobby1}}}{=} \quad
\vcenter{\xymatrix@L=2pt@C=16pt@R=16pt{
&&& \ar@/_1ex/[lld]_-{t'} \ar[r]^t \ar@/_5ex/[dddd]_\ell &
 \ar[d]^-{\;\tilde b} \ar@/^2ex/[rrd]^-{\tilde a} && \\
& \ar[ld]_-{\tilde b} & \ar@{}[rr]_-{\oEcell{\rho_2\inv}} &&
 \ar[dr]_a \ar@/_2ex/[dddl]_i \ar@{}[rr]^-{\oEcell{\gamma}} &&
 \ar[ld]_-{b} \ar@/^7ex/[dddlll]^j \ar@/^5ex/[ddd]^b \\
 \ar[dr]_a \ar@{}[rr]|{\oEcell{\tau_2}}
 &&&
 \ar@{}[rr]_-{\oEcell{\alpha_2\inv}\;\; }
 &&
 \ar[ddll]^k \ar@{}[ld]^-{\oEcell{\beta_2}}
 & \ar@{}[dd]|{\;\;\oEcell{\;\;\;\;\beta_2\inv}}
 \\
& \ar@/_1ex/[rrd]_k &&&&& \\
&&& &&& \ar@/^2ex/[lll]^k
}}
\end{align*}
\begin{align*}
\stackrel{\textrm{\eqref{eq:zeta-identity-right}}}{=}
\qquad
\vcenter{\xymatrix@L=2pt@C=14pt@R=14pt{
&& \ar[dd]^r \ar@/_2ex/[dll]_-{t'} \ar@/^2ex/[rrd]^t &&& \\
 \ar@/_1ex/[rrd]_<<<{\tilde b} \ar@{}[rr]|{\;\;\;\oEcell{\zeta}}
 &&&&
 \ar[dll]_-{\tilde b} \ar[dr]^-{\tilde a} & \\
&& \ar@/_1ex/[dll]_>>>{a} \ar[rrd]^a \ar[dd]_i \ar@{}[rrr]|{\;\;\;\oEcell{\gamma}} &&&
 \ar[dl]^b \\
 \ar@/_2ex/[rrd]_-{k} \ar@{}[rr]|{\;\;\oEcell{\alpha_2}}
 && \ar@{}[rr]|{\oEcell{\alpha_2\inv\;\;}\;\;}
 &&
 \ar@/^2ex/[dll]^k & \\
&& &&&
}}
\qquad=\qquad
\vcenter{\xymatrix@L=2pt@C=16pt@R=16pt{
&& \ar@/^1ex/[d]^t \ar@/_3ex/[dll]_-{t'} & \\
 \ar[rd]_-{\tilde b} \ar@{}[rr]^-{\oEcell{\zeta}}
 &&
 \ar[ld]_-{\tilde b} \ar[dr]^-{\tilde a} & \\
& \ar[rd]_-{a} \ar@{}[rr]|{\oEcell{\gamma}} &&
 \ar[dl]^b \\
&& \ar[d]^k & \\
&& &
}}
\end{align*}
and the faithfulness of~$k\in\JJ$ of \Cref{Hyp:G_and_I_for_Span}~\eqref{Hyp:G-I-c}, we deduce:
\begin{align*}
\xymatrix@C=14pt@R=14pt{
& T \ar@/^1ex/[d]^-{t} \ar@/_6ex/[ldd]_-{\tilde b t'} \ar@{}[dl]|{\oEcell{\zeta}} &
 &&& T \ar@/_1ex/[d]_-{t'} \ar@/^6ex/[rdd]^-{\tilde a t} \ar@{}[dr]|{\oEcell{\xi}}
 &
 \\
& a/b \ar[ld]_-{\tilde b} \ar[dr]^-{\tilde a} &
 & = && a/b \ar[ld]_-{\tilde b} \ar[dr]^-{\tilde a} & \\
A \ar[dr]_a \ar@{}[rr]|{\oEcell{\gamma}} && B \ar[dl]^b
 && A \ar[dr]_a \ar@{}[rr]|{\oEcell{\gamma}} && B \ar[dl]^b \\
&C & &&& C \,.&
}
\end{align*}
Hence by the universal property of $a/b$ in \Cref{Def:comma}\,\eqref{it:comma-b} there exists a (unique) 2-cell $\theta\colon t'\isoEcell t$ in~$\GG$ such that
\begin{equation}
\label{eq:theta-def-properties}%
\tilde b \theta = \zeta
\quad \textrm{ and } \quad
\tilde a \theta = \xi \,.
\end{equation}
Finally, we claim that $\theta$ provides an isomorphism $[t']=[t]$ (see~\eqref{eq:def-of-[t]} for~$[t]$):
\[
\xymatrix@C=11pt{
G\ar@{=}[dd] \ar@{}[ddrrr]|{\SEcell\,\tau_1} &&&
 T \ar[dd]^-{t'} \ar[lll]_-x \ar[rrr]^-\ell &&&
 H \ar@{=}[dd] \ar@{}[ddlll]|{\NEcell\,\tau_2} &&
G \ar@{=}[dd] && \ar@{}[dd]|{\SEcell \rho_1} &
 T \ar[dd]^t \ar[lll]_-x \ar[rrr]^-\ell \ar[ddl]_<<<<<<r \ar[ddr]^<<<<<<r & \ar@{}[dd]|{\NEcell \rho_2} &&
 H \ar@{=}[dd] \\
 && &&& &&
 \stackrel{\theta}{\Rightarrow} & \ar@{}[dr]|{\;\;\;\;\SEcell \alpha_1}
&&& &&& \ar@{}[dl]|{\NEcell \alpha_2\;\;\;\;} \\
G & \bullet \ar[l]^-w & \bullet \ar[l]^-{a} & a/b \ar[l]^-{\tilde b} \ar[r]_-{\tilde b} & \bullet \ar[r]_-{a} & \bullet \ar[r]_-k & H &&
G & \bullet \ar[l]^-w & \bullet \ar[l]^-{a} \ar[uull]_>>>>>>>>>u &
 a/b \ar[l]^-{\tilde b} \ar[r]_-{\tilde b} &
 \bullet \ar[r]_-{a} \ar[uurr]^>>>>>>>>>i &
 \bullet \ar[r]_-k &
 H \,.
}
\]
Indeed, the two required identities hold as follows:
\[
\vcenter{
\xymatrix@L=2pt@R=2em{
 \ar@<-.5em>@/_3em/[dddd]_(.3){x} \ar@/_.7em/[d]_-{t'} \ar@/^.7em/[d]^-{t} \ar@{}[dddd]_-{\oEcell{\tau_1}\kern1em} \ar@{}[d]|(.4){\oEcell{\theta}}
\\
 \ar[d]^-{\tilde{b}}
\\
 \ar[d]^-{a}
\\
 \ar[d]^-{w}
\\
\textrm{\phantom{m}}
}}
\quad
\equalby{(\ref{eq:theta-def-properties})}
\quad
\vcenter{
\xymatrix@L=2pt@R=2em{
& \ar@<-1em>@/_6em/[ddd]_(.3){x} \ar@/_1em/[d]_-{\tilde{b}t'} \ar@/^.7em/[d]^-{\tilde{b}t\,=\,r} \ar@{}[lddd]_-{\oEcell{\tau_1}\kern2em} \ar@{}[d]|(.4){\oEcell{\zeta}}
\\
& \ar[ld]_-{a} \ar[rd]^-{a} \ar[dd]^(.3){u} \ar@{}[ldd]|-{\kern1em\oEcell{\alpha_1\inv}} \ar@{}[rdd]|-{\oEcell{\alpha_1}\kern1em}
\\
 \ar[rd]_(.3){w}
&& \ar[ld]^-{w}
\\
&
&
}}
\quad
\equalby{(\ref{eq:zeta-identity-left})}
\quad
\vcenter{
\xymatrix@L=2pt@R=2em{
& \ar@<-.5em>@/_3em/[ddd]_(.3){x} \ar[d]^-{r} \ar@{}[lddd]_-{\oEcell{\rho_1}\kern-.5em}
\\
& \ar[rd]^-{a} \ar[dd]_-{u} \ar@{}[rdd]|-{\oEcell{\alpha_1}\kern1em}
\\
&& \ar[ld]^-{w}
\\
&
&
}}
\]
for the left-hand components. The right-hand ones agree as follows:
\[
\vcenter{
\xymatrix@L=2pt@R=2em{
& \ar@/_.7em/[ld]_-{t'} \ar@/^.7em/[ld]^-{t} \ar@{}[ld]|-{\oEcell{\;\theta}\;} \ar[dd]^-{r} \ar@/^4em/[dddd]^-{\ell} \ar@{}[dddd]^-{\kern1em\oEcell{\rho_2}}
\\
 \ar[rd]_-{\tilde{b}}
\\
& \ar[ld]_-{a} \ar[dd]^-{i} \ar@{}[ldd]|-{\kern1em\oEcell{\alpha_2}}
\\
 \ar[rd]_-{k}
\\
&
}}
\quad
\equalby{(\ref{eq:theta-def-properties})}
\quad
\vcenter{
\xymatrix@L=2pt@R=2em{
& \ar[ld]_-{t'} \ar@{}[ldd]|-{\kern1em\oEcell{\zeta}} \ar[dd]^-{r\,=\,\tilde{b}t} \ar@/^4em/[dddd]^-{\ell} \ar@{}[dddd]^-{\kern1em\oEcell{\rho_2}}
\\
 \ar[rd]_-{\tilde{b}}
&
\\
& \ar[ld]_-{a} \ar[dd]^-{i} \ar@{}[ldd]|-{\kern1em\oEcell{\alpha_2}}
\\
 \ar[rd]_-{k}
\\
&
}}
\quad
\equalby{(\ref{eq:zeta-identity-right})}
\quad
\tau_2\,.
\]
This proves our last remaining claim, namely $[t']=[t]$ via the isomorphism~$\theta$.
\end{proof}
\begin{Lem} \label{Lem:pres_pullbacks}
The horizontal composition functors $- \circ -$ of $\Span$ preserve the pullbacks of Proposition~\ref{Prop:pullbacks} in both variables.
\end{Lem}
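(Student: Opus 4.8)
The plan is to reduce the statement to a single variable and then to match up two iso-comma constructions in $\GG$. Since a functor of two variables preserves pullbacks in both arguments exactly when, for each fixed value of one argument, the induced one-variable functor preserves pullbacks, it suffices to fix a 1-cell of $\Span$ and verify that horizontal composition with it on one side is pullback-preserving. I would treat composition on the right by a fixed span $n_!m^*\in\Span(G,H)$ (so that one gets a functor $\Span(H,K)\to\Span(G,K)$, $\ell\mapsto \ell\circ n_!m^*$); composition on the left is entirely analogous, the asymmetry introduced by $\JJ$ affecting only which legs carry the membership condition and not the comma-square bookkeeping below. Throughout, the closure properties of $\JJ$ recorded in \Cref{Hyp:G_and_I_for_Span} guarantee that every comma object we form, and every leg we produce, stays within the allowed class.

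Next I would unwind both constructions in terms of comma squares in $\GG$. Fix a cospan $i_!u^*\overset{[a]}{\Rightarrow} k_!w^* \overset{[b]}{\Leftarrow} j_!v^*$ in $\Span(H,K)$ and choose representatives as in \Cref{Prop:pullbacks}, so that its pullback is governed by the iso-comma $(a/b)$ over the 1-cell components $P\oto{a}R\loto{b}Q$. Right composition with $n_!m^*$ (the span $G\loto{m}S\oto{n}H$) replaces each 1-cell $\ell_!x^*$ by a composite built from the iso-comma $(n/x)$ over its left leg $x$, and sends the 2-cells $[a],[b]$ to the 2-cells $[c_a],[c_b]$ whose 1-cell components $c_a\colon (n/u)\to (n/w)$ and $c_b\colon (n/v)\to (n/w)$ are the maps induced by the universal property as in the arrow-part of \Cref{Def:Span-bicat}. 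Applying the composition functor $\Span(H,K)\to\Span(G,K)$ to the pullback square of the original cospan thus produces a square lying over the composed cospan $\overset{[c_a]}{\Rightarrow}\,\overset{[c_b]}{\Leftarrow}$, whose genuine pullback (again by \Cref{Prop:pullbacks}) is governed by the iso-comma $(c_a/c_b)$.

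The crux is then a purely $\GG$-level identification: the comma object obtained by applying right-composition to $(a/b)$ must be shown to be canonically equivalent to $(c_a/c_b)$. I would construct the comparison 1-cell between the two using the universal property of iso-commas (\Cref{Def:comma}\,(a)) and prove it is an equivalence; this is a pasting lemma for comma squares extending the associativity isomorphism of \Cref{Rem:assoc}, expressing that forming $(n/-)$ (base-change along $n$) carries the comma of a cospan to the comma of the base-changed cospan. The efficient route is \Cref{Rem:commas_translated}: applying $\GG(T,-)$ turns both sides into iterated comma categories in $\Cat$, where the comparison is a manifest isomorphism of categories via the explicit pointwise description, and \Cref{Cor:2cat_Yoneda_equivs} upgrades this to an equivalence in $\GG$. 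Finally, \Cref{Lem:equiv-are-iso} converts an equivalence of middle objects into an invertible 2-cell of $\Span$, so the image square is canonically isomorphic to the pullback square of the composed cospan and is therefore itself a pullback.

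I expect the main obstacle to be exactly the pasting identification of the previous paragraph: keeping precise track of all the induced 1-cells (the arrow-assignment $c$ of \Cref{Def:Span-bicat} together with the various $\langle -,-,-\rangle$ arising from the universal property) and their structural 2-cells, and checking that the comparison respects not merely the underlying middle objects but also the decorating 2-cell components $\alpha_1,\alpha_2$ of the spans. This is where the bookkeeping is heaviest; the saving grace is that \Cref{Rem:commas_translated} reduces each individual verification to a transparent element-wise computation in $\Cat$.
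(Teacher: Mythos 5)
Your proposal is correct and takes essentially the same route as the paper's proof: fix one span and compose on one side, reduce the claim to showing that the canonical comparison 1-cell between the two iterated iso-comma constructions in $\GG$ (the image of $(a/b)$ versus the iso-comma computing the pullback of the composed cospan) is an equivalence, and verify this by applying $\GG(T,-)$, computing with comma categories in $\Cat$ as in \Cref{Rem:commas_translated}, and concluding via \Cref{Cor:2cat_Yoneda_equivs}. The only cosmetic difference is that the paper exhibits the comparison after applying $\GG(T,-)$ as an explicit equivalence $\Phi,\Psi$ with $\Psi\Phi=\Id$ and $\Phi\Psi\cong\Id$, rather than the ``manifest isomorphism'' you claim, but an equivalence is all that is needed.
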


\begin{proof}
This is essentially just a commutativity property for iso-comma squares, but we still sketch an explicit proof. Let $\circ=\circ_{G,H,K}\colon \Span(H,K)\times \Span(G,H)\to \Span(G,K)$ be one of the composition functors, and let $\ell_!x^*=(G \stackrel{x}{\leftarrow}S \stackrel{\ell}{\to} H)\in \Span(G,H)$. It suffices to check that $- \circ (\ell_!x^*)$ preserves pullbacks, as the proof for the other variable is the same.

Consider a cospan $\bullet \stackrel{[a]}{\Rightarrow} \bullet \stackrel{[b]}{\Leftarrow} \bullet $ of maps in $\Span(H,K)$
\begin{align*}
\xymatrix{
H \ar@{=}[d] \ar@{}[rrd]|{\SEcell\,\alpha_1} &&
 P \ar[d]^a \ar[ll]_-{u} \ar[rr]^-{i} \ar@{}[rrd]|{\NEcell\,\alpha_2} &&
 K \ar@{=}[d] \\
H \ar@{}[rrd]|{\NEcell\,\beta_1} \ar@{=}[d] &&
 R \ar[ll]_-{w} \ar[rr]^-{k} \ar@{}[rrd]|{\SEcell\,\beta_2} &&
 K \ar@{=}[d] \\
H && Q \ar[u]_b \ar[ll]^-{v} \ar[rr]_-{j} &&
 K
}
\end{align*}
and construct its pullback by the following diagram, as in~\eqref{eq:pullback_in_Bicat}:
\begin{align*}
\xymatrix{
H \ar@{=}[dd] \ar@{}[ddrr]|{\NEcell \alpha_1\inv } &&&
 P \ar[lll]_-{u} \ar[rrr]^-{i} &&&
 K \ar@{=}[dd] \ar@{}[ddll]|{\SEcell \alpha_2\inv} \\
&&& &&& \\
H \ar@{=}[dd] \ar@{}[ddrr]|{\SEcell \beta_1\inv} &
 R \ar[l]_-{w} &
 P \ar[l]_-{a} \ar[uur]^\Id \ar@{}[rd]_-{\SEcell \gamma} &
 P\diagup_{\!\!\! R\,}Q \ar[l]_-{\pr_P} \ar[uu]_-{\pr_P} \ar[dd]^-{\pr_Q} \ar[r]^-{\pr_P} &
 P \ar[r]^-{a} \ar[uul]_\Id \ar@{}[ld]^-{\NEcell \gamma\inv} &
 R \ar[r]^-{k} &
 K \ar@{=}[dd] \ar@{}[ddll]|{\NEcell \beta_2\inv} \\
&&& &&& \\
H &&& Q \ar[lll]^-{v} \ar[rrr]_-{j} \ar[uull]^-{b} \ar[uurr]_b &&& K
}
\end{align*}
The verification that $-\circ (\ell_!x^*)$ preserves this pullback diagram boils down to checking that the induced 1-cell~$f$ in the diagram below is an equivalence:
\begin{align*}
\xymatrix@R=8pt{
(S\diagup_{\!\!\! H\,}P)\diagup_{\!\!\!{\scriptscriptstyle S\diagup_{\!\!\! H\,}R}\,}(S\diagup_{\!\!\! H\,}Q) \ar[dr] \ar[dddddr] &
 &&&& \\
& S\diagup_{\!\!\! H\,}P \ar[dr] \ar[dd] &&&& S\diagup_{\!\!\! H\,}(P\diagup_{\!\!\! R\,}Q) \ar[dd] \ar@{-->}[ulllll]_-f \\
&& S \ar@{=}[dd] \ar[r]^-\ell & H \ar@{=}[dd] & P \ar[dd]^<<<<<{a}|>>>>{\phantom{m}} \ar[l]_-{u} & \\
& S\diagup_{\!\!\! H\,}R \ar[dr] &&&& P\diagup_{\!\!\! R\,}Q \ar[dddl] \ar[ul] \ar@/_2ex/[dll]_->>>>{wa \pr_P} \\
&& S \ar@{=}[dd] \ar[r]_-\ell & H \ar@{=}[dd] & R \ar[l]^-w & \\
& S\diagup_{\!\!\! H\,}Q \ar[uu] \ar[dr] &&&& \\
&& S \ar[r]_-\ell & H & Q \ar[uu]_b \ar[l]^v &
}
\end{align*}
(the unlabeled arrows are the canonical projections). This can be checked in $\CAT$ after applying $\Span(T,-)$ and identifying
$\Span(T, \textrm{?}/\textrm{??}) $ with $ \Span(T,\textrm{?}) / \Span(T,\textrm{??})$, as in \Cref{Rem:commas_translated} (compare the proof of Proposition~\ref{Prop:first-adjoints}). In $\CAT$, the functor $\Phi:= \Span(T,f)$ sends
\[
\big( \; s,(p,q, \xymatrix@1@L=1ex{ ap \ar@{=>}[r]^-{\varphi} & bq } ),
 \xymatrix@1@L=1ex{ \ell s \ar@{=>}[r]^-{\delta} & wap } \;\big)
\quad \quad \in \quad \Span (T, S\diagup_{\!\!\! H\,}(P\diagup_{\!\!\! R\,}Q))
\]
to
\[
\left(
\;
(s,p, \xymatrix@1@L=1ex{ \ell s\ar@{=>}[r]^-{(\alpha_1\inv p)\delta} & up })
\;
,
\;
(s , q , \xymatrix@1@C=20pt@L=1ex{ \ell s \ar@{=>}[rr]^-{(\beta_1\inv q) (w\varphi) \delta} && vq } )
\;
,
\;
\left(
\vcenter{\vbox{ \xymatrix@R=12pt@C=4pt@L=1ex{ s \ar@{=>}[d]_-{\id} & a p \ar@{=>}[d]^{\varphi} \\ s & bq } }}
\right)
\right)
\]
in $\Span (T, (S\diagup_{\!\!\! H\,}P)\diagup_{\!\!\! {\scriptscriptstyle (S\diagup_{\!\!\! H\,}R)}\,}(S\diagup_{\!\!\! H\,}Q))$. We can define a functor in the opposite direction
\[
\Psi\colon \Span (T, (S\diagup_{\!\!\! H\,}P)\diagup_{\!\!\! {\scriptscriptstyle (S\diagup_{\!\!\! H\,}R)}\,}(S\diagup_{\!\!\! H\,}Q)) \too \Span(T,S\diagup_{\!\!\! H\,}(P\diagup_{\!\!\! R\,}Q))
\]
by mapping
\[
\left(
\;
(s_1,p, \xymatrix@1@C=16pt@L=1ex{ \ell s_1\ar@{=>}[r]^-{\eps_1} & up })
\;
,
\;
(s_2 , q , \xymatrix@1@C=16pt@L=1ex{ \ell s_2 \ar@{=>}[r]^-{\eps_2} & vq } )
\;
,
\;
\left(
\vcenter{\vbox{ \xymatrix@R=12pt@C=4pt@L=1ex{ s_1 \ar@{=>}[d]_-{\zeta} & a p \ar@{=>}[d]^\xi \\ s_2 & bq } }}
\colon
\vcenter{\vbox{
\xymatrix@R=12pt@C=14pt@L=1ex{
\ell s_1 \ar@{=>}[d]_-{\ell \zeta} \ar@{=>}[r]^-{\eps_1} & up \ar@{=>}[r]^-{\alpha_1} & wap \ar@{=>}[d]^-{w \xi} \\ \ell s_2 \ar@{=>}[r]^-{\eps_2} & vq \ar@{=>}[r]^-{\beta_1} & wbq
}
}}
\right)
\right)
\]
to
\[
\big( \;
s_1 , (p,q, \xymatrix@1@L=1ex{ ap \ar@{=>}[r]^-{\xi} & bq } ) , \xymatrix@1{ \ell s_1 \ar@{=>}[r]^-{(\alpha_1\inv p)\eps_1} & wap } )
\; \big)
\,,
\]
and one checks immediately that $\Psi \Phi = \Id$ and $\Phi \Psi \cong \Id$.
\end{proof}

\bigbreak
\section{Heuristic account and the 2-dual version}
\label{sec:Span-co}%
\medskip

Of course, instead of creating \emph{left} adjoints in the construction of $\Span(\GG;\JJ)$ in \Cref{sec:Span}, we could equally well create a universal bicategory with \emph{right} adjoints. In this short section, we explain the dual story by staying as close as possible to the `left' $\Span(\GG;\JJ)$ studied so far. This approach will help us understand the ambidextrous bicategory of 2-motives that comes in \Cref{sec:Mackey-UP}. But first, we review the original construction of $\Span(\GG;\JJ)$ from a slightly less formal perspective.

Suppose we know nothing and want to construct a universal left-adjoint-creating bicategory $\Span(\GG;\JJ)$, that receives $\GG$ contravariantly on 1-cells. This bicategory has to contain a 1-cell $v^*\colon G\to H$ for every $v\colon H\to G$ in~$\GG$ and a 1-cell $i_!\colon G\to H$ left adjoint to~$i^*$ for every $i\colon G\to H$ in~$\JJ$. Just from these basic 1-cells, and before even invoking 2-cells, we must also already expect zig-zags of 1-cells of the type $u^*$ and~$i_!$. The Mackey formula allows us to reduce such arbitrary zig-zags to a single short zig-zag, or span, of the form $i_! u^*$. The `horizontal' composition of two such 1-cells $G\loto{u} P \oto{i} H \loto{v} Q \oto{j} K$ is circumvented by flipping the middle cospan
\[
\vcenter{\xymatrix@C=14pt@R=14pt{
&& (i/v) \ar[dl]_-{v'} \ar[dr]^-{i'}
 \ar@{}[dd]|(.5){\isocell{\gamma}}
\\
& P \ar[dr]_-{i} \ar[ld]_-{u}
&& Q \ar[dl]^-{v} \ar[rd]^-{j}
\\
G
&& H
&& K
}}
\]
into $j_! \circ (v^* \circ i_! ) \circ u^*\cong j_! \circ (i'_! \circ v'^*) \circ u^*\cong (ji')_!(uv')^*$; the result is again a single span with the $(-)_!$-part in~$\JJ$, as wanted. Then, with $i_!=[\loto{\Id} \oto{i}]$ and $u^*=[\loto{u}\oto{\Id}]$, we proved (using suitable 2-cells) that $i_!\circ u^*\cong i_! u^*$ in the reassuring \Cref{Rem:notation_assoc}. Thus we have a fair understanding of what 1-cells in $\Span(\GG;\JJ)$ ought to be.

Of course, ignoring 2-cells is not possible, and not only because of the many canonical isomorphisms~$\cong$ of 1-cells involved in horizontal composition, as in the previous paragraph. More importantly, $i_!$ cannot be a left adjoint to~$i^*$ unless we have adequate 2-cells to serve as unit $\Id\Rightarrow i^* i_!$ and counit $i_! i^*\Rightarrow\Id$. Note that the counit 2-cell $i_! i^*\Rightarrow\Id$ only involves our chosen short zig-zags but the unit $\Id\Rightarrow i^* i_!$ involves a zig-zag $i^*\circ i_!=[\loto{\Id}\oto{i}\loto{i}\oto{\Id}]$ that needs to be circumvented via Mackey. Leaving the latter aside, let us focus on 2-cells which are reasonably straightforward to express in terms of short zig-zags $(-)_!(-)^*$. So here is a list of such 2-cells that we definitely need in our universal bicategory $\Span(\GG;\JJ)$:
\begin{enumerate}[(1)]
\smallbreak
\item
\label{it:wish-1}%
The 2-cells $\alpha^*\colon u^*\Rightarrow v^*$ associated to every 2-cell $\alpha\colon u\Rightarrow v$ in~$\GG$.
\smallbreak
\item
\label{it:wish-2}%
The mates of the above when left adjoints exist, \ie for each $\alpha\colon i\Rightarrow j$ in~$\JJ$ there should be a 2-cell $\alpha_!\colon j_!\Rightarrow i_!$ in our bicategory, mate of $\alpha^*\colon i^*\Rightarrow j^*$.
\smallbreak
\item
\label{it:wish-3}%
The counit $\eps\colon a_! a^* \Rightarrow \Id$ for every 1-cell $a\in \JJ$. These can also be `squeezed' in the middle of a 2-cell $j_!v^*$ (when it make sense) to give new 2-cells
\[
(j a)_!(v a)^* \cong j_! a_! a^* v^* \oEcell{\eps} j_! v^*.
\]
\end{enumerate}
Assembling those three building blocks, we can see how a 2-cell from $i_!u^*$ to $j_!v^*$ in~$\Span(\GG;\JJ)$ should indeed contain those three types of information, as in \Cref{Def:Span-bicat}, or pictorially:
\begin{equation}
\label{eq:2-cell-of-Span-repeated}%
[a,\alpha_1,\alpha_2]=
\vcenter{\xymatrix@C=5em{
G \ar@{=}[d] \ar@{}[rrd]|{\alpha_1\SEcell\textrm{ see~(\ref{it:wish-1})}}
&& P \ar[ll]_-{u} \ar[rr]^-{i} \ar[d]_-{a}^-{\textrm{see~(\ref{it:wish-3})}} \ar@{}[rrd]|{\quad\alpha_2\,\NEcell\textrm{ see~(\ref{it:wish-2})}}
&& H \ar@{=}[d]
\\
G
&& Q \ar[ll]^-{v} \ar[rr]_-{j}
&& H\,.
}}
\end{equation}
The $\alpha_1$ goes ``from~$u$ towards~$v$\," as in~\eqref{it:wish-1} but the $\alpha_2$ goes backwards, ``from~$j$ towards~$i$\," as in~\eqref{it:wish-2}. More precisely, using~$a=\Id$, we have $\alpha^*=[\Id,\alpha,\id]\colon u^*\Rightarrow v^*$ for any $\alpha\colon u\Rightarrow v$ in~$\GG$ and $\alpha_!=[\Id,\id,\alpha]\colon i_!\Rightarrow j_!$ for $\alpha\colon j\Rightarrow i$ in~$\JJ$. These are the definitions of $\alpha^*$ and $\alpha_!$ that appear in \Cref{Cons:first-embeddings}. Finally, the 1-cell $a\in\JJ$ goes ``from $i_!u^*\cong (j a)_!(v a)^*$ towards $j_!v^*$\," as in~\eqref{it:wish-3}. Specifically, $\eps=[a,\id,\id]\colon a_! a^* \Rightarrow \id$ is the counit of $a_!\adj a^*$ as we saw in~\eqref{eq:can_counit}.

Similarly to what happened with 1-cells (namely $i_!\circ u^*\cong i_!u^*$), we saw in \Cref{Prop:2-cells-of-Span} that all 2-cells of~$\Span(\GG;\JJ)$ are generated by these three basic 2-cells~\eqref{it:wish-1}-\eqref{it:wish-2}-\eqref{it:wish-3}, together with the structure isomorphisms for the pseudo-functors $(-)^*\colon \GG^{\op} \hook \Span(\GG;\JJ)$ and $(-)_!\colon \JJ^{\co}\hook \Span(\GG;\JJ)$ given in \Cref{Rem:pseudo-func-embeddings}.

At the level of 2-cells of $\Span(\GG;\JJ)$, let us return to one peculiarity of the \emph{left} construction. In the list~\eqref{it:wish-1}-\eqref{it:wish-2}-\eqref{it:wish-3}, the third one features the counit $a_!a^*\Rightarrow \Id$ but not the unit. (The emotive reader should not worry about the fate of this unit. It still survives via the mates of~\eqref{it:wish-2}.) Therefore we should expect the right-adjoint-creating construction to involve a `dual' version of~\eqref{it:wish-3}, with only the \emph{unit} of the $i^*\adj i_*$ adjunction playing a role, not the counit anymore.

The universal property of~$\Span(\GG;\JJ)$ is proven by verifying that the now `obvious' construction works. If $\cat{F}:\GG^{\op}\to \cat{C}$ satisfies the `left Mackey conditions'~\eqref{it:UP-Span-a} and~\eqref{it:UP-Span-b} of \Cref{Thm:UP-Span} then its extension $\cat{G}\colon \Span(\GG;\JJ)\to \cat{C}$ is $\cat{F}$ on 0-cells, maps a 1-cell $i_!u^*$ to $\cat{F}(i)_!\circ\cat{F}(u)$ and maps a 2-cells $[a,\alpha_1,\alpha_2]$ to the pasting~\eqref{eq:pasting-in-UP-Span} in~$\cat{C}$, corresponding under~$\cat{F}$ to the description of $\alpha$ as a pasting in $\Span(\GG;\JJ)$, in the already mentioned \Cref{Prop:2-cells-of-Span}.

This concludes our heuristic review of $\Span(\GG;\JJ)$; see details in \Cref{sec:Span}.

\tristars

In view of the above discussion, we can see how to modify our bicategory in order to have the universal construction of \emph{right} adjoints. We again keep the same 0-cells as~$\GG$. On top of the `old' 1-cells $u^*$ from~$\GG$, we introduce for each $i\in \JJ$ a new `forward' 1-cell $i_*$ meant to become right adjoint to~$i^*$. Using the Mackey formula carefully, we reduce every zig-zag of $u^*$'s and $i_*$'s to just one $i_* u^*=[\loto{u}\oto{i}]$, which strongly resembles what we had before. And at the 2-cell level, adapting~\eqref{it:wish-1}-\eqref{it:wish-2}-\eqref{it:wish-3} above, we expect a similar story, namely:
\begin{enumerate}[(1$^{\co}$)]
\item
\label{it:wish-co-1}%
A 2-cell $\beta^*\colon u^*\Rightarrow v^*$ for every 2-cell $\beta\colon u\Rightarrow v$ in~$\GG$.
\smallbreak
\item
\label{it:wish-co-2}%
A 2-cell $\beta_*\colon j_*\Rightarrow i_*$ for each $\beta\colon i\Rightarrow j$ in~$\JJ$.
\smallbreak
\item
\label{it:wish-co-3}%
A unit $\Id \Rightarrow b^* b_*$ for every 1-cell $b\in \JJ$, which can also be `squeezed' in the middle of suitable 2-cells $j_*v^*$, as follows:
$
j_* v^* \Rightarrow j_* b_* b^* v^* \cong (j b)_*(v b)^* .
$
\end{enumerate}
Assembling them pictorially, we get the following 2-cells from $i_* u^*$ to $j_*v^*$:
\begin{equation}
\label{eq:2-cell-of-Span-co}%
[b,\beta_1,\beta_2]=
\vcenter{\xymatrix@C=5em{
G \ar@{=}[d] \ar@{}[rrd]|{\beta_1\SWcell\textrm{ see~(\ref{it:wish-co-1}}^{\co})}
&& P \ar[ll]_-{u} \ar[rr]^-{i} \ar@{<-}[d]_-{b}^-{\textrm{see~(\ref{it:wish-co-3}}^{\co})} \ar@{}[rrd]|{\qquad\beta_2\,\NWcell\textrm{ see~(\ref{it:wish-co-2}}^{\co})}
&& H \ar@{=}[d]
\\
G
&& Q \ar[ll]^-{v} \ar[rr]_-{j}
&& H\,.
}}
\end{equation}
Note in particular the \emph{reversal} of~$b$ in the middle. This leads to the following auxiliary definition, 2-dual to \Cref{Def:Span-bicat}:
\begin{Not}
We denote by $\Span'(\GG;\JJ)$ the following bicategory:
\begin{enumerate}[{$\bullet$}]
\smallbreak
\item
The 0-cells of $\Span'(\GG;\JJ)$ are those of~$\GG$, \ie the same as those of $\Span(\GG;\JJ)$.
\item
The 1-cells of $\Span'(\GG;\JJ)$ are spans $i_*\, u^*:=[\loto{u}\oto{i}]$ with $i\in \JJ$, \ie again the same as those of $\Span(\GG;\JJ)$, up to renaming $i_!u^*$ into~$i_*u^*$.
\item
The 2-cells of $\Span'(\GG;\JJ)$, between $i_*u^*$ and $j_*v^*$ are isomorphism classes $[b,\beta_1,\beta_2]$ of double 2-cells as in~\eqref{eq:2-cell-of-Span-co}, under the essentially obvious notion of isomorphism, completely analogous to that of \Cref{Def:Span-bicat}.
\item
Horizontal composition uses iso-commas as in \Cref{Def:Span-bicat}.
\item
Vertical composition of 2-cells is the obvious pasting.
\end{enumerate}
This bicategory $\Span'(\GG;\JJ)$ receives $\GG$ and~$\JJ$ via
\begin{equation}
\label{eq:embedding-co-prime}%
(-)^*\colon \GG^{\op} \hook \Span'(\GG;\JJ)
\qquadtext{and}
(-)_*\colon \JJ^{\co} \hook \Span'(\GG;\JJ)
\end{equation}
defined naturally by the identity on 0-cells, by $u^*=[\loto{u}=]$ and $i_*=[=\oto{i}]$ on 1-cells and by $\beta^*=[\Id,\beta,\id]$ and $\beta_!=[\Id,\id,\beta]$ on 2-cells, as in \Cref{Cons:first-embeddings}.
\end{Not}

A hurried reader might think that $\Span'(\GG;\JJ)$ is nothing but $\Span(\GG;\JJ)^{\co}$. This is essentially true up to a small subtlety in~\eqref{eq:2-cell-of-Span-co}. The direction of~$b$ is indeed opposite to that of~$a$ in~\eqref{eq:2-cell-of-Span-repeated} but the $\beta$'s go in the \emph{same} direction as the $\alpha$'s. In other words, if one simply reverted the $b$'s to try to identify $\Span'(\GG;\JJ)$ with $\Span(\GG;\JJ)^{\co}$, that is, if we flip~\eqref{eq:2-cell-of-Span-co} upside down, we get
\[
\vcenter{\xymatrix@C=5em{
G \ar@{=}[d] \ar@{}[rrd]|-{\beta_1\NWcell}
&& Q \ar[ll]_-{v} \ar[rr]^-{j} \ar[d]_-{b} \ar@{}[rrd]|-{\qquad\beta_2\,\SWcell}
&& H \ar@{=}[d]
\\
G
&& P \ar[ll]^-{u} \ar[rr]_-{i}
&& H
}}
\]
with the $\beta$'s in the wrong direction. In the (2,1)-category~$\GG$, this can be fixed very easily by inverting the 2-cells of~$\GG$. In other words, the canonical isomorphism
\[
\Span'(\GG;\JJ) \overset{\cong}{\too} \Span(\GG;\JJ)^{\co}
\]
is the identity on 0-cells, is the renaming (identity) on 1-cells $i_*u^* \mapsto i_!u^*$ but on 2-cells it involves inverting the 2-cells of~$\GG$:
\begin{equation}
\label{eq:Span-co-Span}%
\begin{array}{ccc}
[b,\beta_1,\beta_2]\colon i_* u^*\Rightarrow j_* v^*
& \mapsto
& [b,\beta_1\inv,\beta_2\inv]\colon j_! v^* \Rightarrow i_! u^*
\\[.8em]
\textrm{in }\Span'(\GG;\JJ)
&& \textrm{in }\Span(\GG;\JJ)
\\[.2em]
\xymatrix@C=2em{
G \ar@{=}[d] \ar@{}[rrd]|{\beta_1\SWcell}
&& P \ar[ll]_-{u} \ar[rr]^-{i} \ar@{<-}[d]_-{b} \ar@{}[rrd]|{\beta_2\,\NWcell}
&& H \ar@{=}[d]
\\
G
&& Q \ar[ll]^-{v} \ar[rr]_-{j}
&& H
}
&&
\xymatrix@C=2em{
G \ar@{=}[d] \ar@{}[rrd]|{\beta_1\inv\SEcell}
&& Q \ar[ll]_-{v} \ar[rr]^-{j} \ar[d]_-{b} \ar@{}[rrd]|{\beta_2\inv\,\NEcell}
&& H \ar@{=}[d]
\\
G
&& P \ar[ll]^-{u} \ar[rr]_-{i}
&& H
}
\end{array}
\end{equation}
\begin{Cons}
\label{Cons:first-embeddings-co}%
Under the isomorphism of bicategories~\eqref{eq:Span-co-Span}, the canonical embeddings of~\eqref{eq:embedding-co-prime} become the following
\[
(-)^{\costar}\colon \GG^{\op}\to \Span(\GG;\JJ)^{\co}
\qquadtext{and}
(-)_{\costar}\colon \JJ^{\co}\to \Span(\GG;\JJ)^{\co}
\]
where the first one maps $u\colon H\to G$ to $u^*=[G\loto{u}H=H]$ and $\beta\colon u\Rightarrow v$ to $[\Id,\beta\inv,\id]$, and the second one maps $i\colon H\to G$ to $i_*=[H=H\oto{i}G]$ and $\beta\colon i\Rightarrow j$ to $[\Id,\id,\beta\inv]$. We draw the reader's attention to the inverses appearing on 2-cells.
\end{Cons}

A last devilish detail is hidden in the left-vs-right Mackey formulas. A priori, for a pseudo-functor $\cat{F}\colon \GG^{\op}\to\cat{C}$ to a bicategory in which each $\cat{F}i$ admits a \emph{right} adjoint~$(\cat{F}i)_*$, the Mackey condition~\eqref{it:UP-Span-b} in \Cref{Thm:UP-Span} should say the following: For every (co) iso-comma with $i\in \JJ$
\begin{equation}
\label{eq:iso-comma-dual}%
\vcenter{\xymatrix@C=14pt@R=14pt{
& (i\bs u) \ar[dl]_-{v} \ar[dr]^-{j}
 \ar@{}[dd]|(.5){\oWcell{\delta}}
\\
H \ar[dr]_-{i}
&& K \ar[dl]^-{u}
\\
&G
}}
\end{equation}
the mate $\delta_*\colon u^* i_* \Rightarrow j_* v^* $ of $\delta^*\colon j^* u^* \Rightarrow v^* i^*$ is an isomorphism. Note the co-comma $(i\bs u)$ instead of $(i/u)$ and the direction of the 2-cell, which is not anodyne since the position of~$i$ is imposed upon us in order to make sense in composition of spans (reduction of zig-zags). However, since we work with \emph{iso}-commas, the above trick of inverting 2-cells yields a 1-to-1 correspondence between iso-commas and co-iso-commas. Whenever we state the \emph{right}-adjoint versions by using `\emph{left}' iso-commas $(i/u)$ we should expect an inverse $\gamma\inv$ to enter the game.

\medbreak

In summary, we have the following dual to \Cref{Thm:UP-Span}:
\begin{Thm}
\label{Thm:UP-Span-co}%
Let $\GG$ and $\JJ$ be as in Hypotheses~\ref{Hyp:G_and_I_for_Span}. Let $\cat{C}$ be any 2-category, and let $\cat{F}\colon \GG^{\op}\to \cat{C}$ be a pseudo-functor such that
\begin{enumerate}[\rm(a)]
\item
\label{it:UP-Span-co-a}%
for every $i\in \JJ$, there exists in $\cat{C}$ a \emph{right} adjoint $(\cat{F}i)_*$ to~$\cat{F}i$;
\smallbreak
\item
\label{it:UP-Span-co-b}%
the adjunctions $\cat{F}i\adj (\cat{F}i)_*$ satisfy base-change with respect to all Mackey squares with two parallel sides in~$\JJ$, in the following sense: Given an iso-comma square in~$\GG$ with $i\in \JJ$
\[
\vcenter{\xymatrix@C=14pt@R=14pt{
& (i/u) \ar[dl]_-{v} \ar[dr]^-{j}
 \ar@{}[dd]|(.5){\isocell{\gamma}}
\\
H \ar[dr]_-{i}
&& K \ar[dl]^-{u}
\\
&G
}}
\]
the mate $(\gamma\inv)_*\colon \cat{F}u (\cat{F}i)_* \Rightarrow (\cat{F}j)_*\cat{F}v$
of $(\gamma\inv)^*=\cat F(\gamma\inv)\colon \cat{F}j \cat{F}u \Rightarrow \cat{F}v \cat{F}i$ with respect to the adjunctions $\cat{F}i\adj (\cat{F}i)_*$ and $\cat{F}j\adj (\cat{F}j)_*$ is an isomorphism in~$\cat{C}(\cat{F}H,\cat{F}K)$.
\end{enumerate}
Then there exists a unique pseudo-functor $\cat{G}\colon \Span(\GG; \JJ)^{\co}\to \cat{C} $ such that
\[
\xymatrix{
\GG^{\op} \ar[d]_-{(-)^{\costar}} \ar[r]^-{\cat{F}} & \cat{C} \\
\Span(\GG;\JJ)^{\co} \ar@{-->}[ru]_-{\cat{G}} &
}
\]
is commutative, where $(-)^{\costar}\colon \GG\to \Span(\GG;\JJ)^{\co}$ is the pseudo-functor of \Cref{Cons:first-embeddings-co}. This extension $\cat{G}$ is unique up to a unique isomorphism restricting to the identity of~$\cat{F}$, and is entirely determined by the choice of the right adjoints, with units and counits, for all~$\cat{F}i$. Conversely, any pseudo-functor $\cat{F}\colon \GG^{\op}\to \cat{C}$ factoring as above must enjoy the above two properties \eqref{it:UP-Span-co-a} and~\eqref{it:UP-Span-co-b}.
\end{Thm}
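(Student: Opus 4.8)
The plan is to deduce \Cref{Thm:UP-Span-co} from the already-proven \Cref{Thm:UP-Span} by exploiting the isomorphism of bicategories~\eqref{eq:Span-co-Span} and the systematic ``inversion of 2-cells'' dictionary built in the heuristic \Cref{sec:Span-co}. The key observation is that hypotheses~\eqref{it:UP-Span-co-a} and~\eqref{it:UP-Span-co-b} for right adjoints are formally the same as hypotheses~\eqref{it:UP-Span-a} and~\eqref{it:UP-Span-b} for left adjoints, once one passes to the 2-dual target 2-category $\cat{C}^{\co}$. Indeed, a right adjoint $(\cat Fi)_*$ to $\cat Fi$ in $\cat{C}$ is precisely a \emph{left} adjoint to $\cat Fi$ when that same 1-cell is regarded in $\cat{C}^{\co}$ (reversing 2-cells swaps units and counits, hence left and right adjunctions). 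So I would first package the hypotheses correctly for reuse.

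First I would form the pseudo-functor $\cat F^{\co}\colon \GG^{\op}\to \cat{C}^{\co}$, which has the same 0-cells and 1-cells as $\cat F$ but whose structural 2-isomorphisms $\fun,\un$ are inverted. Then~\eqref{it:UP-Span-co-a} says exactly that each $\cat F^{\co}i=(\cat F i)$ admits a left adjoint in $\cat{C}^{\co}$, namely $(\cat Fi)_*$. For the base-change condition, I would check that~\eqref{it:UP-Span-co-b}, which is stated using the \emph{left} iso-comma $(i/u)$ and the mate $(\gamma\inv)_*$, translates under 2-cell reversal into the ordinary left-BC condition~\eqref{it:UP-Span-b} for $\cat F^{\co}$. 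This is the crux of the argument and the one place where the ``devilish detail'' flagged in \Cref{sec:Span-co} must be handled: in $\cat C$ the natural BC square for right adjoints is the co-iso-comma $(i\bs u)$ of~\eqref{eq:iso-comma-dual}, but because we work in a $(2,1)$-category the canonical isomorphism $(i\bs u)\cong(i/u)$ (equipped with $\gamma\inv$; see \Cref{Rem:Mackey-def}\,(d)) identifies the two, at the cost of the inverse $\gamma\inv$. I would verify, using the compatibility of mates with 2-cell reversal and with pasting (\Cref{Prop:mates-under-top-functor}, \Cref{Rem:mate-natural}), that the mate $(\gamma\inv)_*$ in $\cat C$ is invertible if and only if the left mate $\gamma_!$ computed for $\cat F^{\co}$ in $\cat C^{\co}$ is invertible. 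Thus $\cat F^{\co}$ is a genuine $\JJ_!$-satisfying pseudo-functor in the sense of \Cref{Thm:UP-Span}.

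Next I would apply \Cref{Thm:UP-Span} to $\cat F^{\co}\colon \GG^{\op}\to \cat{C}^{\co}$, obtaining a unique (up to unique isomorphism) pseudo-functor $\cat G^{\co}\colon \Span(\GG;\JJ)\to \cat{C}^{\co}$ with $\cat G^{\co}\circ(-)^*=\cat F^{\co}$, entirely determined by the chosen adjoints, units and counits. Applying the 2-dualization functor $(-)^{\co}$ once more turns $\cat G^{\co}$ into a pseudo-functor $\cat G\colon \Span(\GG;\JJ)^{\co}\to \cat{C}$. The only remaining task is bookkeeping: I must confirm that precomposition of $\cat G$ with $(-)^{\costar}\colon \GG^{\op}\to \Span(\GG;\JJ)^{\co}$ recovers $\cat F$ on the nose. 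Here the inverses appearing on 2-cells in the definition of $(-)^{\costar}$ (\Cref{Cons:first-embeddings-co}, where $\beta\colon u\Rightarrow v$ is sent to $[\Id,\beta\inv,\id]$ and $\beta\colon i\Rightarrow j$ to $[\Id,\id,\beta\inv]$) exactly cancel the 2-cell inversions introduced when passing to $\cat C^{\co}$ and back, so that $\cat G\circ(-)^{\costar}=\cat F$ strictly, using the same careful choices of \Cref{Rem:rectif-UP-Span} for invertible and identity 1-cells.

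The uniqueness statement and the converse both transport directly through the two dualizations: uniqueness of $\cat G$ up to unique isomorphism restricting to $\id_{\cat F}$ is the image under $(-)^{\co}$ of the corresponding uniqueness for $\cat G^{\co}$ in \Cref{Thm:UP-Span}, and the converse direction (that any $\cat F$ factoring through $\Span(\GG;\JJ)^{\co}$ must satisfy~\eqref{it:UP-Span-co-a} and~\eqref{it:UP-Span-co-b}) follows because $i^*$ acquires a right adjoint $i_*$ in $\Span(\GG;\JJ)^{\co}$ — the 2-dual of the left adjunction $i_!\adj i^*$ of \Cref{Prop:first-adjoints} — together with the BC-property of \Cref{Lem:BC-for-Span} read in $\Span(\GG;\JJ)^{\co}$, both preserved by the pseudo-functor $\cat G$. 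The main obstacle I anticipate is purely notational rather than conceptual: keeping straight the several layers of 2-cell inversion (in $\cat F\rightsquigarrow\cat F^{\co}$, in the mate-versus-co-mate identification forced by $\gamma$ versus $\gamma\inv$, and in the definition of $(-)^{\costar}$) so that they compose to the identity and the final square genuinely commutes strictly. I would write out the mate-reversal compatibility explicitly once, as a small lemma, to make this transparent and avoid sign-type errors.
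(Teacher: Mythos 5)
Your proposal is correct and is essentially the paper's own argument: the paper likewise deduces the right-adjoint universal property from \Cref{Thm:UP-Span} by 2-dualization, using the isomorphism $\PsFun(\cat A,\cat B)^\co\cong\PsFun(\cat A^\co,\cat B^\co)$ together with precomposition by the inversion $\alpha\mapsto\alpha\inv$ on the $(2,1)$-category $\GG$, so that the inverses built into $(-)^{\costar}$ (\Cref{Cons:first-embeddings-co}) and into the mate $(\gamma\inv)_*$ cancel the dualization, exactly as you describe. Your plan to isolate the mate-reversal compatibility as a small lemma corresponds to the bookkeeping the paper performs when it notes that this procedure replaces $(-)^*$ by $(-)^{\costar}$, left mates by right mates, and \Cref{Cons:UP-Span} by \Cref{Cons:UP-Span-co}.
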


\begin{Cons}
\label{Cons:UP-Span-co}%
(Compare \Cref{Cons:UP-Span}.) Explicitly, $\cat{G}$ is defined as follows. Assemble as in \Cref{Rem:pseudo-func-of-adjoints} all $(\cat{F}i)_*$ into a pseudo-functor $\cat{F}_*\colon \JJ^{\co}\to \cat{C}$. On 0-cells, the pseudo-functor~$\cat{G}\colon \Span(\GG;\JJ)^{\co}\to \cat{C}$ agrees with $\cat{F}$. On 1-cells, set
\[
\cat{G}(i_*u^*):= \cat{F}_*(i) \circ \cat{F}(u) = (\cat{F} i)_* \circ (\cat{F}u).
\]
For a 2-cell $[b,\beta_1,\beta_2]\colon i_* u^* \Rightarrow j_* v^*$ of $\Span(\GG;\JJ)$ represented by a diagram
\begin{align*}
\xymatrix{
G \ar@{=}[d] \ar@{}[rrd]|{\SEcell\,\beta_1} &&
 P \ar[ll]_-{u} \ar[rr]^-{i} \ar[d]^b \ar@{}[rrd]|{\NEcell\,\beta_2} &&
 H \ar@{=}[d] \\
G &&
 Q \ar[ll]^-{v} \ar[rr]_-{j} &&
 H
}
\end{align*}
define its image $\cat{G}([b,\beta_1,\beta_2])\colon \cat{G}(j_* v^*)\Rightarrow \cat{G}(i_* u^*)$ to be the following pasting in~$\cat{C}$:
\begin{equation}
\label{eq:pasting-in-UP-Span-co}%
\cat{G}([b,\beta_1,\beta_2])=\qquad
\vcenter{\xymatrix{
\cat{F} G \ar[r]^-{\cat{F} v}
& \cat{F} Q \ar[rr]^-{\id}
&& \cat{F} Q \ar[r]^-{(\cat{F} j)_*}
& \cat{F} H
\\
\cat{F} G \ar[r]^-{\cat{F} v} \ar@{=}[u]
& \cat{F} Q \ar@{}[urr]|{\Scell\; \eta} \ar@{=}[u] \ar[r]^-{\cat{F} b}
& \cat{F} P \ar[r]^-{(\cat{F} b)_*}
& \cat{F} Q \ar@{=}[u] \ar[r]^-{(\cat{F} j)_*}
& \cat{F} H \ar@{=}[u]
\\
\cat{F} G \ar@{}[urr]|{\Scell\;\simeq} \ar@{=}[u] \ar[rr]^-{\cat{F} (v b)}
&& \cat{F} P \ar@{}[urr]|{\Scell\;\simeq} \ar@{=}[u] \ar[rr]^-{(\cat{F} j b)_*}
&& \cat{F} H \ar@{=}[u]
\\
\cat{F} G \ar@{=}[u] \ar[rr]_-{\cat{F} u} \ar@{}[urr]|{\Scell\; \cat{F} \beta_1\inv}
&& \cat{F} P \ar@{=}[u] \ar[rr]_-{(\cat{F} i)_*} \ar@{}[urr]|{\Scell\; (\cat{F} \beta_2\inv)_*}
&& \cat{F} H\,.\!\! \ar@{=}[u]
}}
\end{equation}
Note the presence of the inverses, which will compensate for the inverses involved in $(-)^{\costar}\colon \GG^{\op}\hook\Span(\GG;\JJ)^{\co}$ of \Cref{Cons:first-embeddings-co}, so that $\cat{G}\circ(-)^{\costar}=\cat{F}$.
\end{Cons}

Let us finish with the `bicategorical upgrade' (see \Cref{sec:UP-Span-bicat}) of the universal property of~$\Span(\GG;\JJ)^\co$, dual to the bicategorical upgrade for $\Span(\GG;\JJ)$ discussed in \Cref{Thm:UP-PsFun-Span}. In other words, let us add pseudo-natural transformations and modifications to \Cref{Thm:UP-Span-co}. As in \Cref{Def:J_!-pseudo-functor} and \Cref{Def:J_!-strong}, we give a name to the relevant properties. We fix a target bicategory~$\cat{B}$.
\begin{Def}
\label{Def:J_*-stuff}%
A pseudo-functor $\cat{F}\colon \GG^\op\too \cat{B}$ is called a \emph{$\JJ_*$-pseudo-functor} if it satisfies conditions~\eqref{it:UP-Span-co-a} and~\eqref{it:UP-Span-co-b} of \Cref{Thm:UP-Span-co}, namely:
\begin{enumerate}[\rm(a)]
\smallbreak
\item
For every 1-cell $i\in \JJ$, the 1-cell $\cat{F}i$ admits a right adjoint $(\cat{F}i)_*$ in~$\cat{B}$.\,
\smallbreak
\item
For every comma square $\gamma$ along an $i\in \JJ$, its mate $(\gamma\inv)_*$ is invertible:
\begin{align*}
\xymatrix@C=14pt@R=14pt{
& i/v \ar[ld]_-{\tilde v} \ar[dr]^-{\tilde i} & \\
X \ar[dr]_i \ar@{}[rr]|{\oEcell{\gamma}} && Y \ar[dl]^v \\
&Z &
}
\quad \quad \quad \quad
\xymatrix@C=14pt@R=14pt{
& \cat{F} (i/v)
\ar[dr]^-{(\cat{F}\tilde i)_*} & \\
\cat{F} X
\ar[rd]_-{(\cat{F} i)_*}
 \ar[ur]^-{\cat{F}\tilde v}
 \ar@{}[rr]|{\Ncell\, (\gamma\inv)_*} &&
 \cat{F} Y. \\
& \cat{F} Z
\ar[ru]_-{\cat{F} v} &
}
\end{align*}
\end{enumerate}
A (strong) pseudo-natural transformation $t\colon \cat{F}_1\to \cat{F}_2$ between two $\JJ_*$-strong pseudo-functors $\GG^{\op}\to \cat{B}$ is \emph{$\JJ_*$-strong} if for every $i\in \JJ$ the following mate~$(t_i)_*$
\begin{equation}
\label{eq:t_i_*}%
\vcenter { \hbox{
\xymatrix{
\cat{F}_1 X
 \ar[r]^-{t_X}
 \ar[d]_-{(\cat{F}_1 i)_*} &
 \cat{F}_2 X
 \ar[d]^-{(\cat{F}_2 i)_*} \\
\cat{F}_1 Y
\ar@{}[ur]|{\NEcell \; (t_i)_*}
 \ar[r]_-{t_Y} &
 \cat{F}_2 Y
}
}}
\quad \stackrel{\textrm{def.}}{=}\!
\vcenter { \hbox{
\xymatrix{
\ar@{}[dr]|{\NWcell} &
 \cat{F}_1 X
\ar@{}[dr]|{\NWcell\; t_i}
 \ar[r]^-{t_X} &
 \cat{F}_2 X
 \ar@{}[dr]|{\NWcell}
 \ar[r]^-{(\cat{F}_2 i)_*} &
 \cat{F}_2 Y
\\
\cat{F}_1X
 \ar[r]_-{(\cat{F}_1 i)_*}
\ar@/^3ex/@{=}[ur] &
 \cat{F}_1 Y
 \ar[r]_-{t_Y}
 \ar[u]^-{\cat{F}_1 i} &
 \cat{F}_2 Y
 \ar[u]_-{\cat{F}_2 i}
 \ar@/_3ex/@{=}[ru] &
}\kern-1em
}}
\end{equation}
is invertible: $(t_i)_*\colon (\cat{F}_2 i)_*\,t_X\isoEcell t_Y\,(\cat{F}_1 i)_*$. (Note that, this time, it is $(t_i\inv)_*$ which would not make sense!)
\end{Def}

\begin{Thm}
\label{Thm:UP-PsFun-Span-co}%
Let $\cat{B}$ be a bicategory. Precomposition with the pseudo-functor $(-)^{**}\colon \GG^{\op}\to \Span(\GG;\JJ)^\co$ of \Cref{Cons:first-embeddings-co} induces a biequivalence
\begin{equation}
\label{eq:UP-PsFun-Span-co}%
\PsFun\!\big(\Span(\GG;\JJ)^\co, \cat{B}\big) \stackrel{\sim}{\longrightarrow} \PsFunJop^{}( \GG^{\op}, \cat{B})
\end{equation}
where the right-hand bicategory has $\JJ_*$-pseudo-functors as 0-cells, $\JJ_*$-strong pseudo-natural transformations as 1-cells and all modifications as 2-cells.

If $\cat{B}$ happens to be a 2-category, then the above is a strict 2-functor between two 2-categories, and said 2-functor is furthermore locally strict.
\end{Thm}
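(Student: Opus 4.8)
The plan is to deduce this \textsf{co}-statement directly from its left-handed counterpart \Cref{Thm:UP-PsFun-Span}, by transporting that biequivalence along the $2$-duality $(-)^{\co}$, rather than repeating the lengthy string-diagram verifications of the previous sections. The starting point is the elementary and standard fact that the involution $(-)^{\co}$ on bicategories (which reverses $2$-cells while fixing $0$- and $1$-cells) induces, for any bicategories $\cat{A}$ and $\cat{B}$, a canonical isomorphism
\[
\PsFun(\cat{A}^{\co}, \cat{B}) \;\cong\; \PsFun(\cat{A}, \cat{B}^{\co})^{\co}.
\]
Indeed, a pseudo-functor $\cat{A}^{\co}\to \cat{B}$ is literally the same datum as a pseudo-functor $\cat{A}\to \cat{B}^{\co}$, and the identical bookkeeping on the $2$-cell components of transformations and of modifications turns the reversal of $2$-cells into a reversal of the direction of $1$-cells in the functor bicategory, which is exactly the outer $(-)^{\co}$. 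I would record this as a routine observation.

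Applying this identity with $\cat{A}=\Span(\GG;\JJ)$ and then feeding the bicategory $\cat{B}^{\co}$ into \Cref{Thm:UP-PsFun-Span}, I obtain a chain of biequivalences
\[
\PsFun\!\big(\Span(\GG;\JJ)^{\co}, \cat{B}\big)
\;\cong\;
\PsFun\!\big(\Span(\GG;\JJ), \cat{B}^{\co}\big)^{\co}
\;\xrightarrow{\;\sim\;}\;
\PsFunJ(\GG^{\op}, \cat{B}^{\co})^{\co},
\]
where the second arrow is the image under $(-)^{\co}$ of the biequivalence of \Cref{Thm:UP-PsFun-Span} for the target $\cat{B}^{\co}$ (a biequivalence remains one after applying $(-)^{\co}$). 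It then remains to identify the right-hand side $\PsFunJ(\GG^{\op}, \cat{B}^{\co})^{\co}$ with the bicategory $\PsFunJop(\GG^{\op}, \cat{B})$ of the statement, and to check that under all these identifications the composite precomposition functor is precomposition with $(-)^{\costar}$ of \Cref{Cons:first-embeddings-co}. The object-level identification is immediate: a $1$-cell $h$ of $\cat{B}$ has a \emph{left} adjoint in $\cat{B}^{\co}$ if and only if it has a \emph{right} adjoint in $\cat{B}$ (the unit and counit simply swap roles under $(-)^{\co}$), so the $\JJ_!$-pseudo-functors $\GG^{\op}\to \cat{B}^{\co}$ are precisely the $\JJ_*$-pseudo-functors $\GG^{\op}\to\cat{B}$ of \Cref{Def:J_*-stuff}\,(a); likewise the outer $(-)^{\co}$ reverses the direction of transformations and matches $\JJ_!$-strong transformations into $\cat{B}^{\co}$ with $\JJ_*$-strong transformations into $\cat{B}$, while modifications match on the nose since their data is merely a family of $2$-cells.

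The one point that demands genuine attention — and which I expect to be the main obstacle — is the compatibility of the Beck–Chevalley \emph{conditions} and of the strength conditions through the duality, because forming mates does not commute with $(-)^{\co}$ on the nose. Reversing $2$-cells turns the mate $\gamma_!$ of a comma square (computed in $\cat{B}^{\co}$) into the mate $(\gamma^{-1})_*$ (computed in $\cat{B}$), with exactly the inversion $\gamma\rightsquigarrow\gamma^{-1}$ already foreshadowed in the heuristic discussion of \Cref{sec:Span-co} around \eqref{eq:Span-co-Span} and the co-iso-comma \eqref{eq:iso-comma-dual}. I would settle this by writing the two mate pastings side by side and using that a $2$-cell is invertible precisely when its $\co$-reverse is: the invertibility of $\gamma_!$ in $\cat{B}^{\co}$ is by definition equivalent to that of its reverse in $\cat{B}$, and a short diagram chase identifies that reverse with $(\gamma^{-1})_*$, so condition~(b) for $\JJ_!$-pseudo-functors over $\cat{B}^{\co}$ becomes condition~(b) of \Cref{Def:J_*-stuff}. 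The same computation applied to a naturality $2$-cell $t_i$ turns invertibility of $(t_i^{-1})_!$ in $\cat{B}^{\co}$ into invertibility of $(t_i)_*$ in $\cat{B}$, yielding the required $\JJ_*$-strength. Finally, chasing the embedding $(-)^*$ through the isomorphism \eqref{eq:Span-co-Span} shows that it becomes $(-)^{\costar}$, the inverses on $2$-cells recorded in \Cref{Cons:first-embeddings-co} appearing for precisely the same reason; this aligns the precomposition functors and reproduces the strictness and local-strictness clauses verbatim from \Cref{Thm:UP-PsFun-Span} whenever $\cat{B}$ is a $2$-category, matching also the pointwise extension already described in \Cref{Thm:UP-Span-co} and \Cref{Cons:UP-Span-co}.
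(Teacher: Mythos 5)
Your proposal is correct and follows essentially the same route as the paper's own proof: both transport the biequivalence of \Cref{Thm:UP-PsFun-Span} (applied with target $\cat{B}^{\co}$) along the canonical isomorphism $\PsFun(\cat{A},\cat{B})^{\co}\cong\PsFun(\cat{A}^{\co},\cat{B}^{\co})$, absorbing the resulting inversions --- of the pseudo-functor structure constraints, of the 2-cells of~$\GG$, and in the passage from left mates $(-)_!$ over $\cat{B}^{\co}$ to right mates $(-)_*$ over $\cat{B}$ --- exactly as you describe, and then remove the outer $(-)^{\co}$ by 2-dualizing. The only cosmetic difference is that the paper routes explicitly through the intermediate bicategory of $\JJ_*$-pseudo-functors on $\GG^{\op,\co}$ together with the inversion isomorphism $\alpha\mapsto\alpha^{-1}$, whereas you fold those two steps into a single identification of $\PsFunJ(\GG^{\op},\cat{B}^{\co})^{\co}$ with $\PsFunJop(\GG^{\op},\cat{B})$.
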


\begin{proof}
The proof of this result is dual to the one of \Cref{Thm:UP-PsFun-Span}. Concretely, in terms of string diagrams the two proofs are upside-down mirrors of each other.

More precisely, and more generally for two bicategories $\cat A$ and~$\cat B$, there is an isomorphism of pseudo-functor bicategories
\begin{equation} \label{eq:co-formula}
(-)^\co\colon \PsFun(\cat{A},\cat{B})^\co \overset{\sim}{\too} \PsFun(\cat{A}^\co ,\cat{B}^\co)
\end{equation}
which is defined as follows (see also \Cref{Not:co-op}). A pseudo-functor $\cat F\colon \cat A\to \cat B$ is sent to the pseudo-functor $\cat F^\co\colon \cat{A}^\co \to\cat{B}^\co$ defined by $\cat F^\co=\cat F$ on 0- and 1-cells and by $\cat F^\co(\alpha^\co):=(\cat F\alpha)^\co$ on 2-cells; the structural isomorphisms of $\cat F^\co$ require taking inverses: $\fun_{\cat F^\co}:=(\fun_{\cat F}^{-1})^\co$ and $\un_{\cat F^\co}:=(\un_{\cat F}^{-1})^\co$.
A (strong, oplax-oriented) pseudo-natural transformation $t=\{t_X,t_u\}\colon \cat F_1\to \cat F_2$ maps to the (strong, oplax-oriented) transformation $t^\co$ with the same 0-cell components $t^\co_X=t_X$ and with 1-cell components $(t^{\co})_u:=(t_u^{-1})^\co$ defined, again, by taking inverses. Finally, a modification $M=\{M_X\colon t_X\Rightarrow s_X\}$ gives rise to a modification $M^\co$ with components $(M^\co)_X:=(M_X)^\co\colon s_X^\co\to t_X^\co$ going in the opposite direction.

Now we can combine \Cref{Thm:UP-PsFun-Span} and the above formula with $\Span(\GG;\JJ)^\co$ and $\GG^{\op,\co}$ instead of $\cat A$ and with $\cat B^\co$ instead of~$\cat B$. If we moreover pre-compose with the isomorphism $\alpha \mapsto \alpha^{-1}$ (so that pseudo-functors again start at $\GG^\op$ rather than~$\GG^{\op,\co}$), this has the total effect of replacing $(-)^*$ with~$(-)^{**}$, left mates $(-)_!$ in~$\cat B$ with right mates~$(-)_*$,
\Cref{Cons:UP-Span} with \Cref{Cons:UP-Span-co}, and $\JJ_!$-strength with $\JJ_*$-strength, so that we obtain the claims of \Cref{Thm:UP-PsFun-Span-co}.
\[
\xymatrix{
\PsFun(\Span, \cat B^\co)
 \ar[rr]^-{\overset{\textrm{Thm.\,\ref{Thm:UP-PsFun-Span}}}{\simeq}}_-{(-)^*} &&
 \PsFunJ(\GG^\op, \cat B^\co) \\
\PsFun(\Span^\co, \cat B)^\co
 \ar[rr]_-{(-)^*}
 \ar[u]^{\overset{\eqref{eq:co-formula}}{\cong}}_{(-)^\co}
 \ar@/_2ex/[drr]_{(-)^{**}} &&
 \PsFunJop(\GG^{\op,\co}, \cat B)^\co
 \ar[u]_{\overset{\eqref{eq:co-formula}}{\cong}}^{(-)^\co} \\
&&
 \PsFunJop(\GG^\op, \cat B)^\co
 \ar[u]_{\cong}^{(-)^{-1}}
}
\]
There is still an extra `$\co$' on the outside of the bottom $\PsFun$'s, meaning that modifications go in the other direction, but of course the result follows by 2-dualizing the biequivalence obtained at the bottom.
\end{proof}

\begin{Rem}
Note that in the definition of~\eqref{eq:co-formula} we could completely avoid taking inverses, thus obtaining a more natural-looking isomorphism~$(-)^\co$. But the new isomorphism would then map an \emph{oplax}-oriented transformation (the convention we have fixed throughout) to a \emph{lax}-oriented one, and similarly it would require reverting the direction of the structural isomorphisms $\fun$ and~$\un$ of pseudo-functors; \cf \ref{Ter:pseudofun} and \ref{Ter:Hom_bicats}. Thus the new formula would require the simultaneous use of both sets of conventions, both for pseudo-functors and for transformations.
\end{Rem}

\begin{Rem}
\label{Rem:inverse}%
The various $\co$'s and $\op$'s and inverses of the proof may be confusing, so we should spell out in detail how to perform the extension of transformations implicit in \Cref{Thm:UP-PsFun-Span-co}.
Suppose for simplicity that $\cat{B}=\cat{C}$ is an actual 2-category and let $\cat{G}_1\,,\,\cat{G}_2\colon \Span(\GG;\JJ)^\co\to \cat{C}$ be two pseudo-functors, with the corresponding $\JJ_*$-pseudo-functors $\cat{F}_1=\cat{G}_1\circ(-)^{**}$ and $\cat{F}_2=\cat{G}_2\circ(-)^{**}\,\colon \GG^\op\to \cat{C}$. Let $t\colon \cat{F}_1\Rightarrow \cat{F}_2$ be a transformation and let us analyze its unique extension $t\colon \cat{G}_1\Rightarrow \cat{G}_2$ given by the above theorem. In the proof of \Cref{Thm:UP-PsFun-Span}, we gave an explicit formula~\eqref{eq:decomp_comp} for the extension in the case of~$\Span$. The analogous formula for the present case of $\Span^\co$ differs slightly. Namely, we still keep $t_X$ on 0-cells $X\in \GG_0=\Span^\co_0$ and we still have $t(u^*)=t(u)$ for every $u\in\GG_1=\Span^\co_1$. However, for $i\in\JJ$, we now set
\[
t(i_*):=(t(i)_*)^{-1}
\]
where we take the inverse \emph{after} forming the mate rather than before. Note that there is no other choice. The $\JJ_*$-strength of~$t\colon \cat{F}_1\Rightarrow\cat{F}_2$ precisely guarantees that the above inverse makes sense and consequently that the extended transformation~$t\colon \cat{G}_1\Rightarrow \cat{G}_2$ remains strong. In other words, here the correct analogue of~\eqref{eq:decomp_comp} is given by the formula
\begin{equation}
\label{eq:decomp_comp-co}%
t(i_*u^*)\;:=\qquad\vcenter{\xymatrix{
&  \ar[rr]^-t \ar[d]_-{\cat{G}_1 u^*}
 \ar@/_9ex/[dd]_-{\cat{G}_1 (i_* \circ u^*)}
 \ar@{}[ddl]|{\SWcell\;\simeq} &&
  \ar[d]^-{\cat{G}_2 u^*}
 \ar@/^9ex/[dd]^-{\cat{G}_2(i_*\circ u^*)}
 \ar@{}[dll]|{\SWcell\, t(u)} &
\ar@{}[ddl]|{\SWcell\; \simeq} \\
&  \ar[rr]^-t \ar[d]_-{\cat{G}_1 i_*} &&
  \ar[d]^-{\cat{G}_2 i_*}
 \ar@{}[dll]|{\SWcell\, t (i)_*^{-1}} & \\
&  \ar[rr]_-t &&  &
}}
\end{equation}
(modulo the canonical isomorphisms $i_*u^*\cong i_*\circ u^*$, of course).
\end{Rem}

\end{chapter-five}
%
\chapter{Mackey 2-motives}
\label{ch:2-motives}%
\bigbreak
\begin{chapter-six}
\bigbreak
\section{Mackey 2-motives and their universal property}
\label{sec:Mackey-UP}%
\medskip

We are now ready to construct the universal Mackey 2-functor in its \emph{semi-additive} incarnation (see \Cref{sec:additive-sedative}).
As in \Cref{ch:bicat-spans}, we allow ourselves to work with a slightly more general setting.
Thus let $\GG$ be an essentially small strict (2,1)-category equipped with an admissible class $\JJ$ of faithful 1-cells, as in Hypotheses~\ref{Hyp:G_and_I_for_Span}, and let $\Span=\Span(\GG;\JJ)$ denote the bicategory of spans of \Cref{Def:Span-bicat}.

\begin{Def}
\label{Def:Spanhat-bicat}%
\index{bicategory!-- of Mackey 2-motives $\Spanhat(\GG;\JJ)$} \index{SpanhatG@$\Spanhat(\GG;\JJ)$}%
\index{Mackey 2-motives}%
\index{$spanhat$@$\Spanhat$ \, (semi-additive) Mackey 2-motives}%
The \emph{bicategory of (semi-additive) Mackey 2-motives}
\[
\Spanhat = \Spanhat(\GG;\JJ)
\]
is the bicategory obtained from $\Span$ by forming the ordinary category of spans on each Hom category $\Span(G,H)$. Thus, in more details, $\Spanhat$ consists of the following:
\begin{enumerate}[{$\bullet$}]
\item The same objects $\Spanhat_0=\Span_0 = \GG_0$.
\item The same 1-cells $\Spanhat_1=\Span_1$, \ie spans whose right leg belongs to~$\JJ$.
\item For two objects $G,H\in \Spanhat_0$, the Hom category $\Spanhat(G,H)$ is defined to be the 1-category of isomorphism classes of spans in $\Span(G,H)$.
In the notation of \Cref{Def:ordinary-spans}:
\[
\Spanhat(G,H) := \widehat{\Span(G,H)} \,.
\]
This makes sense, because by Proposition~\ref{Prop:pullbacks} the category $\Span(G,H)$ admits pullbacks. Thus now both the vertical and horizontal compositions are computed by taking iso-comma squares in~$\GG$.
\item The horizontal composition functors are the unique extensions
\[
\xymatrix{
\Span(H,K) \times \Span (G,H) \ar[d]_-{{(-)_\star} \times {(-)_\star}} \ar[rr]^-{\circ} && \Span (G,K) \ar[d]^-{(-)_\star} \\
\Spanhat(H,K) \times \Spanhat (G,H) \ar@{-->}[rr]^-{\hat\circ} && \Spanhat (G,K) \\
\Span(H,K)^{\op} \times \Span (G,H)^{\op} \ar[u]^-{(-)^\star\times (-)^\star} \ar[rr]^-{\circ^{\op}} && \Span (G,K)^{\op} \ar[u]_-{(-)^\star}
}
\]
of the composition functors of $\Span$ along the canonical covariant and contravariant embeddings of $\Span(A,B)$ in $\Spanhat (A,B)$. Since horizontal composition of $\Span$ preserves pullbacks (\Cref{Lem:pres_pullbacks}), these extensions exist by the functoriality and product-compatibility of ordinary spans (\Cref{Lem:functoriality_ordinary-spans}).
\item The unitor and associator 2-cells of $\Spanhat$ are simply the canonical covariant images under $(-)_\star$ of those of~$\Span$.
\end{enumerate}
It is immediate to verify that this data forms a well-defined bicategory.
\end{Def}

\begin{Rem}
\label{Rem:2-cells-in-Spanhat}%
We shall of course try to avoid expanding the multiple layers of construction as much as possible. Still, the reader might want to take a quick look down the barrel at least once. Objects of~$\Spanhat(\GG;\JJ)$ are very easy, just $G\in\GG_0$. The 1-cells $G\to H$ are given by spans $(G\loto{u}P\oto{i}H)$ with $i\in \JJ$. These 1-cells will be denoted $i_!u^*$ (by choice) but they are also $i_*u^*$. In particular, in $\Spanhat$ we have
\[
i_!=[\loto{\Id}\oto{i}]=i_*\,.
\]
This agreement is in line with the discussion of \Cref{sec:Span-co}. Indeed, in \Cref{Thm:UP-Span-co}, we saw that the right-adjoint-creating bicategory $\Span(\GG;\JJ)^{\co}$ only differs from the left one, $\Span(\GG;\JJ)$, at the level of 2-cells. Our ambidexter $\Spanhat$ will contain both sorts of 2-cells, those of $\Span$ and those of $\Span^{\co}$. Explicitly, a 2-cell from $i_!u^*$ to $j_!v^*$ consists of a diagram
\begin{equation} \label{eq:2-cell-of-Spanhat}
\vcenter { \hbox{
\xymatrix@L=1ex{
i_!u^* \\
k_!w^* \ar@{=>}[u]_-{[b,\beta_1,\beta_2]}
 \ar@{=>}[d]^-{[a,\alpha_1,\alpha_2]} \\
j_!v^*
}
}}
\quad\quad
\vcenter { \hbox{
\xymatrix@C=4em{
G \ar@{=}[d] \ar@{}[dr]|{\NEcell\,\beta_1}
& P \ar[l]_-{u} \ar[r]^-{i} \ar@{}[dr]|{\SEcell\,\beta_2}
& H \ar@{=}[d]
\\
G \ar@{}[dr]|{\SEcell\,\alpha_1} \ar@{=}[d]
& R \ar[u]_-{b} \ar[d]^-{a} \ar[l]_-{w} \ar[r]^-{k} \ar@{}[dr]|{\NEcell\,\alpha_2}
& H \ar@{=}[d]
\\
G
& Q \ar[l]^-{v} \ar[r]_-{j}
& H
}
}}
\end{equation}
up to isomorphism. A priori, there are two layers of `isomorphism' here, namely both $[a,\alpha_1,\alpha_2]$ and $[b,\beta_1,\beta_2]$ are already isomorphism classes but, moreover, the above (vertical) span $\loto{[b,...]}\oto{[a,...]}$ is only considered up to isomorphism in the category $\Span(G,H)$. In other words, an isomorphism between two such spans of spans
\[
\vcenter{
\xymatrix@C=4em{
\bullet \ar@{=}[d] \ar@{}[dr]|{\NEcell\,\beta_1} &
 \bullet \ar[l]_-{u} \ar[r]^-{i} \ar@{}[dr]|{\SEcell\,\beta_2} &
 \bullet \ar@{=}[d] \\
\bullet \ar@{}[dr]|{\SEcell\,\alpha_1} \ar@{=}[d] &
 \bullet \ar[u]_b \ar[d]^a \ar[l]_-{w} \ar[r]^-{k} \ar@{}[dr]|{\NEcell\,\alpha_2} &
 \bullet \ar@{=}[d] \\
\bullet & \bullet \ar[l]^-{v} \ar[r]_-{j} &
 \bullet
}}
\qquad
\simeq
\qquad
\vcenter{
\xymatrix@C=4em{
\bullet \ar@{=}[d] \ar@{}[dr]|{\NEcell\,\overline{\beta}_1} &
 \bullet \ar[l]_-{u} \ar[r]^-{i} \ar@{}[dr]|{\SEcell\,\overline{\beta}_2} &
 \bullet \ar@{=}[d] \\
\bullet \ar@{}[dr]|{\SEcell\,\overline{\alpha}_1} \ar@{=}[d] &
 \bullet \ar[u]_-{\overline{b}} \ar[d]^-{\overline{a}} \ar[l]_-{\overline{w}} \ar[r]^-{\overline{k}} \ar@{}[dr]|{\NEcell\,\overline{\alpha}_2} &
 \bullet \ar@{=}[d] \\
\bullet & \bullet \ar[l]^-{v} \ar[r]_-{j} &
 \bullet
}}
\]
consists of commutative diagrams of 2-cells in~$\Span$
\[
\vcenter { \hbox{
\xymatrix@C=10pt@L=1ex{
& i_!u^* & \\
k_!w^* \ar@{=>}[rr]^-{[f]} \ar@{=>}[ur]^-{[b]} \ar@{=>}[dr]_-{[a]} &&
 {\overline{k}_!\overline{w}^*} \ar@{=>}[ul]_-{[\overline b]} \ar@{=>}[dl]^-{[\overline a]} \\
& j_!v^* &
}
}}
\quad\quad
\textrm{ and }
\quad\quad
\vcenter { \hbox{
\xymatrix@C=10pt@L=1ex{
& i_!u^* & \\
k_!w^* \ar@{=>}[ur]^-{[b]} \ar@{=>}[dr]_-{[a]} &&
 {\overline{k}_!\overline{w}^*} \ar@{=>}[ll]_-{[g]} \ar@{=>}[ul]_-{[\overline b]} \ar@{=>}[dl]^-{[\overline a]} \\
& j_!v^* &
}
}}
\]
such that $[f]\circ [g] = \Id_{\overline{k}_! \overline{w}^*}$ and $[g]\circ [f]= \Id_{k_! w^*}$.
In particular, $f$ and $g$ are equivalences of~$\GG$ (Lemma~\ref{Lem:equiv-are-iso}). Expanding further and writing $[f]=[f,\varphi_1,\varphi_2]$, the commutativity of the two triangles on the left means that there are 2-cells $\tau\colon \overline b f \Rightarrow b$ and $\sigma\colon \overline a f \Rightarrow a$ of~$\GG$ providing two isomorphisms of maps of spans:
\[
\vcenter { \hbox{
\xymatrix@C=4em{
\bullet \ar@{=}[d] \ar@{}[dr]|{\NEcell\,\overline{\beta}_1} &
 \bullet \ar[l]_-{u} \ar[r]^-{i} \ar@{}[dr]|{\SEcell\,\overline{\beta}_2} &
 \bullet \ar@{=}[d] \\
\bullet \ar@{}[dr]|{\NEcell\,\varphi_1} \ar@{=}[d] &
 \bullet \ar[u]_-{\overline{b}} \ar@{<-}[d]^f \ar[l]_-{\overline w} \ar[r]^-{\overline k} \ar@{}[dr]|{\SEcell\,\varphi_2} &
 \bullet \ar@{=}[d] \\
\bullet & \bullet \ar[l]^-w \ar[r]_-k &
 \bullet
}
}}
\quad
\overset{\underset{}{\tau}}{\Rightarrow}
\quad
\vcenter { \hbox{
\xymatrix@C=4em{
\bullet \ar@{=}[dd] \ar@{}[ddr]|{\NEcell\,\beta_1} &
 \bullet \ar[l]_-{u} \ar[r]^-{i} \ar@{}[ddr]|{\SEcell\,\beta_2} &
 \bullet \ar@{=}[dd] \\
&& \\
\bullet & \bullet \ar[uu]_-<<<<<<{b} \ar[l]^-w \ar[r]_-k &
 \bullet
}
}}
\]
and
\[
\vcenter { \hbox{
\xymatrix@C=4em{
\bullet \ar@{=}[d] \ar@{}[dr]|{\SEcell\,\varphi_1} &
 \bullet \ar[l]_-{w} \ar[r]^-{k} \ar@{}[dr]|{\NEcell\,\varphi_2} &
 \bullet \ar@{=}[d] \\
\bullet \ar@{}[dr]|{\SEcell\,\overline{\alpha}_1} \ar@{=}[d] &
 \bullet \ar@{<-}[u]_f \ar[d]^-{\overline a} \ar[l]_-{\overline w} \ar[r]^-{\overline k} \ar@{}[dr]|{\NEcell\,\overline{\alpha}_2} &
 \bullet \ar@{=}[d] \\
\bullet & \bullet \ar[l]^-{v} \ar[r]_-{j} &
 \bullet
}
}}
\quad
\overset{\underset{}{\sigma}}{\Rightarrow}
\quad
\vcenter { \hbox{
\xymatrix@C=4em{
\bullet \ar@{=}[dd] \ar@{}[ddr]|{\SEcell\,\alpha_1} &
 \bullet \ar[l]_-{w} \ar[r]^-{k} \ar@{}[ddr]|{\NEcell\,\alpha_2} &
 \bullet \ar@{=}[dd] \\
&& \\
\bullet & \bullet \ar@{<-}[uu]_->>>>>>{a} \ar[l]^-{v} \ar[r]_-{j} &
 \bullet
}
}}
\]
There are similar diagrams for~$g$.
\end{Rem}

\begin{Rem}
\label{Rem:vertical-comp-Spanhat}%
We discussed in \Cref{Rem:Span-comp-up-to-iso} the effect of replacing iso-commas by arbitrary Mackey squares in the \emph{horizontal} composition in~$\Span(\GG;\JJ)$. Essentially the same remark holds for~$\Spanhat(\GG;\JJ)$. However there is now another direction where the same concern can be expressed. Indeed, the \emph{vertical} composition, which was a mere juxtaposition in~$\Span(\GG;\JJ)$ now involves taking pull-backs (\Cref{Prop:pullbacks}). Since the 2-cells of $\Spanhat$ are isomorphism classes of vertical spans, we can replace the middle object up to equivalence without changing a 2-cell. This flexibility allows us to compose vertically by taking any Mackey square.
\end{Rem}

Let us investigate a little further the notion of isomorphism involved in the 2-cells of~$\Spanhat$.
\begin{Prop}
\label{Prop:2-cells-of-Spanhat}%
Consider a 2-cell in~$\Spanhat$ between $i_!u^*$ and $j_!v^*$ as in~\eqref{eq:2-cell-of-Spanhat}
\[
\vcenter{
\xymatrix@L=1ex{
i_!u^* \\
k_!w^* \ar@{=>}[u]_-{[b,\beta_1,\beta_2]}
 \ar@{=>}[d]^-{[a,\alpha_1,\alpha_2]} \\
j_!v^*
}}
\quad\quad
\vcenter{
\xymatrix@C=4em{
G \ar@{=}[d] \ar@{}[dr]|{\NEcell\,\beta_1}
& P \ar[l]_-{u} \ar[r]^-{i} \ar@{}[dr]|{\SEcell\,\beta_2}
& H \ar@{=}[d]
\\
G \ar@{}[dr]|{\SEcell\,\alpha_1} \ar@{=}[d]
& R \ar[u]_-{b} \ar[d]^-{a} \ar[l]_-{w} \ar[r]^-{k} \ar@{}[dr]|{\NEcell\,\alpha_2}
& H \ar@{=}[d]
\\
G
& Q \ar[l]^-{v} \ar[r]_-{j}
& H.\!
}}
\]
Then we have the following basic isomorphisms of 2-cells:
\begin{enumerate}[\rm(1)]
\item
\label{it:iso-2-cells-1}%
\emph{Whiskering by an equivalence}: For every equivalence $g\colon \bar R\to R$ of the middle 0-cell in~$\GG$, the above 2-cell is equal to the 2-cell
\[
\vcenter{
\xymatrix@C=4em{
G \ar@{=}[d] \ar@{}[dr]|{\NEcell\,\beta_1\,g}
& P \ar[l]_-{u} \ar[r]^-{i} \ar@{}[dr]|{\ \SEcell\,\beta_2\,g}
& H \ar@{=}[d]
\\
G \ar@{}[dr]|{\SEcell\,\alpha_1\,g} \ar@{=}[d]
& \bar R \ar[u]_-{b g} \ar[d]^-{a g} \ar[l]_-{w g} \ar[r]^-{k g} \ar@{}[dr]|{\ \NEcell\,\alpha_2\,g}
& H \ar@{=}[d]
\\
G
& Q \ar[l]^-{v} \ar[r]_-{j}
& H.\!
}}
\]
\smallbreak
\item
\label{it:iso-2-cells-2}%
\emph{Isomorphism on branches}: For any invertible 2-cell $\sigma\colon a\isoEcell \bar a\colon R\to Q$, we can replace only the `$a$-branch' and the neighboring 2-cells of our representative to obtain another representative of the same 2-cell in~$\Spanhat$
\[
\vcenter {
\xymatrix@C=4em{
G \ar@{=}[d] \ar@{}[dr]|{\NEcell\,\beta_1}
& P \ar[l]_-{u} \ar[r]^-{i} \ar@{}[dr]|{\SEcell\,\beta_2}
& H \ar@{=}[d]
\\
G \ar@{}[dr]|{\SEcell\,\bar\alpha_1} \ar@{=}[d]
& R \ar[u]_-{b} \ar[d]^-{\bar a} \ar[l]_-{w} \ar[r]^-{k} \ar@{}[dr]|{\NEcell\,\bar\alpha_2}
& H \ar@{=}[d]
\\
G
& Q \ar[l]^-{v} \ar[r]_-{j}
& H
}}
\]
where $\bar\alpha_1=(v\sigma)\alpha_1$ and $\bar\alpha_2=\alpha_2(j\sigma)\inv$ are simply~$\alpha_1$ and $\alpha_2$ suitably transported by the isomorphism~$\sigma$. Similarly of course, if $\tau\colon b\isoEcell \bar b$, or $\psi_1\colon w\isoEcell \tilde w$, or $\psi_2\colon k\isoEcell \tilde k$, are invertible 2-cells in~$\GG$, then our 2-cell in $\Spanhat$ is equal to the class of any of the following representatives:
\[
\quad
\vcenter{
\xymatrix@C=2em@R=2em{
G \ar@{=}[d] \ar@{}[dr]|{\ \NEcell\bar\beta_1}
& P \ar[l]_-{u} \ar[r]^-{i} \ar@{}[dr]|{\ \SEcell\bar\beta_2}
& H \ar@{=}[d]
\\
G \ar@{}[dr]|{\ \SEcell\alpha_1} \ar@{=}[d]
& R \ar[u]_-{\bar b} \ar[d]^-{a} \ar[l]_-{w} \ar[r]^-{k} \ar@{}[dr]|{\ \NEcell\alpha_2}
& H \ar@{=}[d]
\\
G
& Q \ar[l]^-{v} \ar[r]_-{j}
& H
}}
\qquad\quad
\vcenter{
\xymatrix@C=2em@R=2em{
G \ar@{=}[d] \ar@{}[dr]|{\ \NEcell\tilde\beta_1}
& P \ar[l]_-{u} \ar[r]^-{i} \ar@{}[dr]|{\ \SEcell\beta_2}
& H \ar@{=}[d]
\\
G \ar@{}[dr]|{\ \SEcell\tilde\alpha_1} \ar@{=}[d]
& R \ar[u]_-{b} \ar[d]^-{a} \ar[l]_-{\tilde w} \ar[r]^-{k} \ar@{}[dr]|{\ \NEcell\alpha_2}
& H \ar@{=}[d]
\\
G
& Q \ar[l]^-{v} \ar[r]_-{j}
& H
}}
\qquad\quad
\vcenter{
\xymatrix@C=2em@R=2em{
G \ar@{=}[d] \ar@{}[dr]|{\ \NEcell\beta_1}
& P \ar[l]_-{u} \ar[r]^-{i} \ar@{}[dr]|{\ \SEcell\tilde\beta_2}
& H \ar@{=}[d]
\\
G \ar@{}[dr]|{\ \SEcell\alpha_1} \ar@{=}[d]
& R \ar[u]_-{b} \ar[d]^-{a} \ar[l]_-{w} \ar[r]^-{\tilde k} \ar@{}[dr]|{\ \NEcell\tilde\alpha_2}
& H \ar@{=}[d]
\\
G
& Q \ar[l]^-{v} \ar[r]_-{j}
& H
}}
\]
where $\bar \beta_1=(u\tau)\beta_1$ and $\bar\beta_2=\beta_2(i\tau)\inv$, where $\tilde \alpha_1=\alpha_1\psi_1\inv$ and $\tilde \beta_1=\beta_1\psi_1\inv$, and where $\tilde \alpha_2=\psi_2\alpha_2$ and $\tilde \beta_2=\psi_2\beta_2$.
\end{enumerate}
Conversely, every diagram representing the same 2-cell can be reached by a series of operations~\eqref{it:iso-2-cells-1} and~\eqref{it:iso-2-cells-2}.
\end{Prop}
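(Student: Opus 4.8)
The plan is to verify each claimed isomorphism of 2-cells in $\Spanhat$ by exhibiting an explicit invertible comparison 2-cell $[f,\varphi_1,\varphi_2]$ in $\Span$ between the two vertical spans of spans, in the precise sense unpacked in \Cref{Rem:2-cells-in-Spanhat}. Recall that a 2-cell of $\Spanhat$ is an isomorphism class of spans $\loto{[b]}\oto{[a]}$ in the Hom category $\Span(G,H)$, so to prove two representatives equal it suffices to produce an equivalence $f$ of their middle 0-cells together with 2-cells $\tau\colon \bar b f \Rightarrow b$ and $\sigma\colon \bar a f \Rightarrow a$ in $\GG$ realizing the two triangle commutativities, and by \Cref{Lem:equiv-are-iso} this automatically yields an invertible 2-cell in $\Span$.

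For part~\eqref{it:iso-2-cells-1}, I would take $f=g\colon \bar R\to R$ itself, which is an equivalence by hypothesis. The new representative has middle 0-cell $\bar R$ with legs $wg, kg$ and branches $ag, bg$, and all four 2-cell components whiskered by $g$. The comparison map of spans in $\Span(G,H)$ is then represented by $g$ with identity 2-cell components, and the required triangles commute strictly: $\bar b\cdot g = bg$ and $\bar a\cdot g = ag$ on the nose, so one may take $\tau=\id$ and $\sigma=\id$. This is essentially the whiskering invariance built into the very definition of the Hom category $\widehat{\Span(G,H)}$ as a category of ordinary spans, where composing the middle object with an equivalence does not change the isomorphism class.

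For part~\eqref{it:iso-2-cells-2}, the middle 0-cell $R$ is unchanged, so I would take $f=\Id_R$ and build the comparison entirely from the given invertible 2-cell of $\GG$. In the case of $\sigma\colon a\isoEcell \bar a$, the legs $w,k$ and the branch $b$ are untouched, while only the $a$-branch and its neighboring 2-cells change; setting $\bar\alpha_1=(v\sigma)\alpha_1$ and $\bar\alpha_2=\alpha_2(j\sigma)\inv$ exactly transports $\alpha_1,\alpha_2$ along $\sigma$, which is precisely the condition for $[\,\Id_R, \id, \id\,]$ (together with $\sigma$ itself serving as the identification on the $a$-branch) to define an isomorphism in $\Span(G,H)$. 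The analogous cases for $\tau\colon b\isoEcell\bar b$, for $\psi_1\colon w\isoEcell\tilde w$, and for $\psi_2\colon k\isoEcell\tilde k$ are handled identically, each time transporting the two adjacent 2-cell components by the given isomorphism and checking that the relevant triangle equations of \Cref{Def:Span-bicat} hold by naturality.

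I do not expect any genuine obstacle here: every claim reduces to the defining equivalence relation on 2-cells of $\Span$ (\Cref{Def:Span-bicat}) combined with the fact, already recorded in \Cref{Rem:2-cells-in-Spanhat}, that the spans-of-spans are taken up to isomorphism in $\Span(G,H)$. The only mildly delicate point is bookkeeping: one must check that the prescribed modifications of the 2-cell components $\alpha_i,\beta_i$ are exactly the ones forced by the transport conditions $(v\varphi)\alpha_1=\beta_1$ and $\alpha_2=\beta_2(j\varphi)$ of \Cref{Def:Span-bicat}, with $\varphi$ taken to be the relevant structural isomorphism ($\sigma$, $\tau$, or a whiskering of $\psi_1,\psi_2$). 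Since all these verifications are a matter of pasting invertible 2-cells in the $(2,1)$-category $\GG$ and invoking naturality, each statement follows by a direct, if slightly tedious, diagram check, which I would leave as a routine exercise.
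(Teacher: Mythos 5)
Your proof is correct and follows essentially the same route as the paper's, which simply declares both parts ``clear'' and leaves to the reader exactly the verification you spell out: exhibiting the invertible comparison 2-cell $[g,\id,\id]$ (invertible by \Cref{Lem:equiv-are-iso}) for whiskering, and using the defining equivalence relation of \Cref{Def:Span-bicat} (with $\varphi=\sigma,\tau$, resp.\ the comparison $[\Id_R,\psi_1\inv,\id]$ or $[\Id_R,\id,\psi_2]$) for the branch moves. The only thing the paper's proof adds is the converse observation---that every isomorphism of spans of spans as unpacked in \Cref{Rem:2-cells-in-Spanhat} is a composite of moves \eqref{it:iso-2-cells-1} and \eqref{it:iso-2-cells-2}---which is not part of the statement itself.
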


\begin{proof}
It is clear that~\eqref{it:iso-2-cells-1} and~\eqref{it:iso-2-cells-2} define isomorphisms of spans of spans. Conversely, the general isomorphism of spans of spans expanded in \Cref{Rem:2-cells-in-Spanhat} is composed of~\eqref{it:iso-2-cells-1} and~\eqref{it:iso-2-cells-2}, as the reader can easily verify.
\end{proof}

\begin{Rem} \label{Rem:second-embedding}
There is an obvious (1-contravariant) pseudo-functor
\[
\xymatrix@C=4em{
\GG^{\op} \ \ar@{^(->}[r]^-{(-)^{*}}
& \Span(\GG;\JJ) \ \ar@{^(->}[r]^-{(-)_{\star}}
& \Spanhat(\GG;\JJ)
}
\]
composed of the embedding $(-)^*\colon \GG^{\op}\hook \Span(\GG;\JJ)$ of \Cref{Cons:first-embeddings} followed by the canonical embedding still denoted $(-)_\star\colon \Span\hook \Spanhat$. The latter consists of the identity on 0-cells (and 1-cells) and the standard functor $(-)_\star$ on Hom categories, as in \Cref{Def:ordinary-spans}. (We make it more explicit below.)

In view of \Cref{sec:Span-co}, there is \apriori\ another embedding of~$\GG$ into~$\Spanhat$ which composes the two 2-contravariant ones
\[
\xymatrix@C=4em{
\GG^{\op} \ \ar@{^(->}[r]^-{(-)^{\costar}}
& \Span(\GG;\JJ)^{\co} \ \ar@{^(->}[r]^-{(-)^{\star}}
& \Spanhat(\GG;\JJ)\,.
}
\]
Here the embedding $\alpha\mapsto [\Id,\alpha\inv,\id]$ of \Cref{Thm:UP-Span-co} is followed by the canonical embedding of $(-)^\star\colon \Span^{\co}\hook \Spanhat$ which is the identity on 0-cells (and 1-cells) and the contravariant functor $(-)^\star$ on Hom categories, as in \Cref{Def:ordinary-spans}.

Luckily the above two embeddings agree, \ie the following commutes:
\begin{equation}
\label{eq:two-embeddings}%
\vcenter{
\xymatrix@C=2em{
& \Span(\GG;\JJ) \ar[d]^-{(-)_\star}
\\
\GG^{\op} \ar@/^3ex/[ru]^-{(-)^*\ } \ar@/_3ex/[rd]_-{(-)^{\costar}}
& \Spanhat(\GG;\JJ)
\\
& \Span(\GG;\JJ)^{\co} \ar[u]_-{(-)^\star} &&
}}
\end{equation}
The reason is that, for every $u,v\colon H\to G$ and every $\alpha\colon u\Rightarrow v$ in~$\GG$, the classes of the following two (vertical) spans are equal in the category $\Spanhat(\GG;\JJ)(G,H)$, \ie the following spans are isomorphic
\begin{equation}
\label{Eq:two-alpha^*}%
(\alpha^*)_\star = \quad
\vcenter{\xymatrix{
\ar@{=}[d] \ar@{}[rd]|-{\NEcell\,\id}
& \ar[l]_-{u} \ar@{=}[d] \ar@{=}[r]
& \ar@{=}[d] \ar@{}[ld]|-{\SEcell\,\id}
\\
& \ar[l]_-{u} \ar@{=}[d] \ar@{=}[r]
&
\\
\ar@{=}[u] \ar@{}[ru]|-{\SEcell\,\alpha}
& \ar[l]_-{v} \ar@{=}[r]
& \ar@{}[lu]|-{\NEcell\,\id} \ar@{=}[u]
}}
\quad \cong \quad
\vcenter{\xymatrix{
\ar@{=}[d] \ar@{}[rd]|-{\NEcell\alpha\inv\,}
& \ar[l]_-{u} \ar@{=}[d] \ar@{=}[r]
& \ar@{=}[d] \ar@{}[ld]|-{\SEcell\,\id}
\\
& \ar[l]_-{v} \ar@{=}[d] \ar@{=}[r]
&
\\
\ar@{=}[u] \ar@{}[ru]|-{\SEcell\,\id}
& \ar[l]_-{v} \ar@{=}[r]
& \ar@{}[lu]|-{\NEcell\,\id} \ar@{=}[u]
}}
\quad = (\alpha^{\costar})^\star
\end{equation}
by \Cref{Prop:2-cells-of-Spanhat}\,\eqref{it:iso-2-cells-2}.
\end{Rem}

We have therefore reached the 2-motivic construction we were aiming for:
\begin{Not}
\label{Not:motive}%
We denote the 1-contravariant embedding~\eqref{eq:two-embeddings} by
\[
(-)^*\colon \GG^{\op}\hook \Spanhat(\GG;\JJ)\,.
\]
It is the identity on 0-cells, maps a 1-cell $u\colon H\to G$ to $u^*=[\loto{u}=]\colon G\to H$ and maps $\alpha\colon u\Rightarrow v$ to the 2-cell $\alpha^*\colon u^*\Rightarrow v^*$ described in~\eqref{Eq:two-alpha^*} above.
\end{Not}

\begin{Rem}
\label{Rem:second-embedding-II}%
Similarly on the 2-category~$\JJ$, the two ways to (2-contravariantly) embed $\JJ$ into $\Spanhat(\GG;\JJ)$ coincide. The resulting embedding
\[
(-)_!\colon\JJ^{\co}\hook \Spanhat(\GG;\JJ)
\]
is the identity on 0-cells, as usual. It maps a 1-cell $i\colon H\into G$ to $i_!=[=\oto{i}]\colon H\to G$ and a 2-cell $\beta\colon i\Rightarrow j$ to the 2-cell $\beta_!\colon j_!\Rightarrow i_!$ represented by any of the following isomorphic spans (by \Cref{Prop:2-cells-of-Spanhat}\,\eqref{it:iso-2-cells-2} again):
\[
\beta_! = \quad
\vcenter{\xymatrix{
\ar@{=}[d] \ar@{}[rd]|-{\NEcell\,\id}
& \ar@{=}[l]_-{} \ar@{=}[d] \ar[r]^-{j}
& \ar@{=}[d] \ar@{}[ld]|-{\SEcell\,\id}
\\
& \ar@{=}[l]_-{} \ar@{=}[d] \ar[r]^-{j}
&
\\
\ar@{=}[u] \ar@{}[ru]|-{\SEcell\,\id}
& \ar@{=}[l] \ar[r]^-{i}
& \ar@{}[lu]|-{\NEcell\,\beta} \ar@{=}[u]
}}
\quad \cong \quad
\vcenter{\xymatrix{
\ar@{=}[d] \ar@{}[rd]|-{\NEcell\,\id}
& \ar@{=}[l]_-{} \ar@{=}[d] \ar[r]^-{j}
& \ar@{=}[d] \ar@{}[ld]|-{\,\SEcell\beta\inv}
\\
& \ar@{=}[l]_-{} \ar@{=}[d] \ar[r]^-{i}
&
\\
\ar@{=}[u] \ar@{}[ru]|-{\SEcell\,\id}
& \ar@{=}[l] \ar[r]^-{i}
& \ar@{}[lu]|-{\NEcell\,\id} \ar@{=}[u]
}}
\]
\end{Rem}

Let us prove the first properties of the bicategory of Mackey 2-motives. The alert reader will surely recognize in~\eqref{eq:gluing-formula} the \emph{strict Mackey formula}
of the Rectification \Cref{Thm:rectification}\,\eqref{it:Mack-7}.

\begin{Prop}\label{Prop:Bhat-ambidextre}
In the bicategory $\Spanhat=\Spanhat(\GG;\JJ)$, the image $i^*$ of every 1-cell $i\in\JJ$ has a two-sided adjoint $i_!=i_*$. They satisfy the following properties:
\begin{enumerate}[\rm(a)]
\item
\label{it:ambidextrous}%
\index{transposition!-- in $\Spanhat$}%
Transposition of 2-cells (\ie ordinary transposition applied to the Hom categories $\cat C=\Span(G,H)$, as in \Cref{Rem:ordinary-spans}) defines an involution
\[
\Spanhat^{\co}\isoto \Spanhat
\]
which fixes 0-cells and 1-cells and maps the unit (resp.\ the counit) of the adjunction $i_!\adj i^*$, as described in Proposition~\ref{Prop:first-adjoints}, into the counit (resp.\ the unit) of the new adjunction $i^*\adj i_*$, and vice-versa.
\item
\label{it:special-Frob}
For every $i\colon H\to G$ in~$\JJ$, the ambidextrous adjunction $i_! \adj i^* \adj i_*=i_!$ is \emph{special Frobenius}, meaning that the composite $\Id_{H} \overset{\eta\;}\Rightarrow i^*i_! = i^* i_* \overset{\eps\;} \Rightarrow \Id_{H}$ of the counit and unit for the two adjunctions is the identity.
\item
\label{it:gluing-formula}
For any comma square in~$\GG$
\begin{align*}
\xymatrix@C=14pt@R=14pt{
& (i/v) \ar[ld]_-{\tilde v} \ar[dr]^-{\tilde i} & \\
P \ar[dr]_i \ar@{}[rr]|{\oEcell{\gamma}} && Q \ar[dl]^v \\
&H &
}
\end{align*}
with $i\in \JJ$, the mate $(\gamma\inv)_*\colon v^* i_*\Rightarrow \tilde i_* \tilde v^*$ of $(\gamma\inv)^*\colon \tilde i^* v^*\Rightarrow \tilde v^* i^*$ with respect to the new adjunction $i^* \adj i_*$ is the inverse of the mate $\gamma_!$ of Lemma~\ref{Lem:BC-for-Span}:
\begin{equation} \label{eq:gluing-formula}
(\gamma\inv)_*=(\gamma_!)\inv\,.
\end{equation}
\end{enumerate}
\end{Prop}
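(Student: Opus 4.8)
The plan is to base everything on the single structural fact underlying \Cref{Def:Spanhat-bicat}: each Hom category $\Spanhat(G,H)$ is by construction the ordinary category of spans $\widehat{\Span(G,H)}$, and such categories carry a strict transposition involution reversing the direction of morphisms. First I would record the existence of the two-sided adjoint. The adjunction $i_!\adj i^*$ of \Cref{Prop:first-adjoints} lives already in $\Span$, and is transported to $\Spanhat$ by the pseudo-functor $(-)_\star\colon\Span\hook\Spanhat$, so $i_!\adj i^*$ holds in $\Spanhat$. For part~\eqref{it:ambidextrous}, I would assemble the transpositions $\widehat{\Span(G,H)}^{\op}\cong\widehat{\Span(G,H)}$ (as in \Cref{Rem:ordinary-spans}) into one map $\Spanhat^{\co}\to\Spanhat$ that fixes $0$- and $1$-cells and reverses $2$-cells; the only point requiring checking is compatibility with horizontal composition, which holds because $\hat{\circ}$ is defined symmetrically through both embeddings $(-)_\star$ and $(-)^\star$ and preserves pullbacks by \Cref{Lem:pres_pullbacks}. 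This yields an involutive isomorphism of bicategories $T\colon\Spanhat^{\co}\isoto\Spanhat$. Since $T$ is a (locally strict) isomorphism, applying it to $i_!\adj i^*$ produces an adjunction with $i^*$ left adjoint and $i_!=i_*$ right adjoint, i.e. $i^*\adj i_*$; and by inspection $T$ carries the unit $\eta$ of $i_!\adj i^*$ — the span with legs $\mathrm{id}$ and the $\Span$-cell $\eta$ — to the span with legs $\eta$ and $\mathrm{id}$, which is exactly the counit of $i^*\adj i_*$, and dually for the counit. This gives the two-sided adjoint together with all the identifications of~\eqref{it:ambidextrous}.

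For the special Frobenius property~\eqref{it:special-Frob}, by~\eqref{it:ambidextrous} the counit $\eps$ of $i^*\adj i_*$ is the transpose of the unit $\eta$ of $i_!\adj i^*$. Thus $\eps\circ\eta$ is the vertical composite in $\Spanhat(H,H)$ of the span underlying $\eta$ with its transpose, computed by pulling back the $\Span$-cell $\eta$ along itself. By \Cref{Prop:pullbacks} this pullback is the comma object in $\GG$ of the $1$-cell component of $\eta$, namely the diagonal $\Delta_i\colon H\to(i/i)$. Since $\Delta_i$ is fully faithful by \Cref{Prop:Delta}, the comma object $(\Delta_i/\Delta_i)$ is equivalent to $H$ with both projections the identity, so the composite span reduces to the identity $2$-cell of $\Id_H$; this is precisely~\eqref{it:special-Frob}.

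For the gluing formula~\eqref{it:gluing-formula}, I would again exploit $T$. As an isomorphism of bicategories it preserves the mate construction; it carries $i_!\adj i^*$ to $i^*\adj i_*$ by~\eqref{it:ambidextrous}, and it carries $\gamma^*$ to $(\gamma\inv)^*$ by the agreement of the two embeddings recorded in~\eqref{Eq:two-alpha^*}. Reversing the vertical composite $\gamma_!=\eps\circ\gamma^*\circ\eta$ of \Cref{Lem:BC-for-Span} therefore produces exactly the defining composite of the right mate $(\gamma\inv)_*$ (with units and counits swapped), so $T(\gamma_!)=(\gamma\inv)_*$. Finally, since $\gamma_!$ is an \emph{invertible} $2$-cell of $\Span$ by \Cref{Lem:BC-for-Span} (equivalently, its $1$-cell component is an equivalence, \Cref{Lem:equiv-are-iso}), its image in $\Spanhat$ — a span of the form $(\mathrm{id},\theta)$ with $\theta$ an isomorphism — has the feature that its transpose $(\theta,\mathrm{id})$ is its inverse, because composing the two pulls back $\theta$ along an isomorphism and collapses to an identity. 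Hence $T(\gamma_!)=(\gamma_!)\inv$, and combining the two identifications gives $(\gamma\inv)_*=(\gamma_!)\inv$, which is~\eqref{eq:gluing-formula}.

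The hard part will be part~\eqref{it:gluing-formula}, specifically making rigorous that $T$ sends the mate $\gamma_!$ to the mate $(\gamma\inv)_*$. This demands tracking how transposition interacts with whiskering and with the units and counits across the two adjunctions — i.e. checking that reversing vertical composition, swapping $\eta\leftrightarrow\eps$ via~\eqref{it:ambidextrous}, and sending $\gamma^*\mapsto(\gamma\inv)^*$ via~\eqref{Eq:two-alpha^*} assemble correctly into the defining composite of $(\gamma\inv)_*$. Once this compatibility of $T$ with mates is in place (a formal but attention-demanding consequence of $T$ being an involutive isomorphism of bicategories together with~\eqref{it:ambidextrous}), the remaining ``transpose equals inverse'' step is the same elementary pullback computation already used for~\eqref{it:special-Frob}.
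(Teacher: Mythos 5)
Your proof is correct and follows essentially the same route as the paper's: part (a) by assembling the Hom-wise transpositions into an involution, part (b) by using \Cref{Prop:pullbacks} to reduce the composite $\eps\circ\eta$ to the fact that $(\Delta_i/\Delta_i)\simeq H$ with both projections identified with identities, and part (c) by combining the compatibility of transposition with mates, units and counits with the fact that the transpose of an invertible 2-cell of $\Spanhat$ equals its inverse (which is exactly the paper's \Cref{Lem:isos-in-spans}, here re-proved by hand). The only point to polish is your appeal to \Cref{Prop:Delta}, which is stated for finite groupoids, whereas the proposition concerns a general admissible pair $(\GG,\JJ)$; the paper instead establishes $(\Delta_i/\Delta_i)\simeq H$ in that generality by applying the representable functors $\GG(T,-)$, computing in $\Cat$, and concluding with \Cref{Cor:2cat_Yoneda_equivs} --- which is how your fully-faithfulness argument should be phrased there as well.
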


\begin{proof}
Part~\eqref{it:ambidextrous} is clear since transposition on each Hom category
\[
\Spanhat^{\co}(G,H)=\Spanhat(G,H)^{\op}=\widehat{(\Span(G,H))}{}^{\op}\otoo{(-)^t} \widehat{\Span(G,H)}=\Spanhat(G,H)
\]
is a contravariant isomorphism.
Part~\eqref{it:special-Frob} is a direct computation based on the following fact: If $i\colon H\to G$ is a faithful 1-cell then the iso-comma $\Delta_i/\Delta_i$ for the 1-morphism $\Delta_i\colon H\to (i/i)$ (against itself) is equivalent to~$H$ itself as follows:
\begin{align*}
\xymatrix@C=14pt@R=14pt{
& H \ar[d]^\simeq \ar@/_4ex/@{=}[ddl] \ar@/^4ex/@{=}[ddr] &
 &&& H \ar@/_3ex/@{=}[ddl] \ar@/^3ex/@{=}[ddr]
 \ar@{}[ddd]|{\undersett{\id_{\Delta_i}}\Ecell}
& \\
& \Delta_i/\Delta_i \ar[ld]_-{\pr_1} \ar[dr]^-{\pr_2}
 \ar@{}[dd]|{\undersett{\gamma}\Ecell}
&
 & = &&& \\
H \ar[rd]_-{\Delta_i} && H \ar[ld]^-{\Delta_i}
 && H \ar[rd]_-{\Delta_i} && H \ar[ld]^-{\Delta_i} \\
&(i/i) & &&& (i/i) &
}
\end{align*}
A more detailed proof will be provided in Example~\ref{Exa:special_Frobenius_strings} using string diagrams.
To see why the comparison 1-cell $\langle \Id_H,\Id_H, \id_{\Delta_i}\rangle\colon H\to \Delta_i/\Delta_i$ is an equivalence, as claimed, we may apply $\GG(T,-)$ to easily compute in $\Cat$ and conclude with Corollary~\ref{Cor:2cat_Yoneda_equivs}.

Part~\eqref{it:gluing-formula} is direct from part~\eqref{it:ambidextrous} and Lemma~\ref{Lem:isos-in-spans}.
Indeed, if we apply transposition~$(-)^t$ to the isomorphism~$\gamma_!$ as in~\eqref{eq:gamma_!} we obtain precisely~$(\gamma\inv)_*$.
\end{proof}

The next theorem says that the canonical embedding $\GG^{\op}\hook\Spanhat(\GG;\JJ)$ of \Cref{Rem:second-embedding} is the universal pseudo-functor defined on $\GG^{\op}$ which sends the 1-cells of~$\JJ$ to ambidextrous adjunctions satisfying base change and the strict Mackey formula, as in \Cref{Prop:Bhat-ambidextre}. More precisely:

\begin{Thm}[Universal property of Mackey 2-motives]
\label{Thm:UP-Spanhat}%
Let $\GG$ be our chosen (2,1)-category with a distinguished class~$\JJ$ of faithful 1-cells, as in Hypotheses~\ref{Hyp:G_and_I_for_Span}.
Let $\cat{C}$ be any 2-category, and consider a pseudo-functor $\cat{F}\colon \GG^{\op}\to \cat{C}$ such that
\begin{enumerate}[\rm(a)]
\item
\label{it:UP-Spanhat-a}%
for every $i\in \JJ$, there exists in $\cat{C}$ a right-and-left adjoint to~$i^*=\cat{F}i$;
\item
\label{it:UP-Spanhat-b}%
both the left and the right adjunctions satisfy base-change with respect to all comma squares with two parallel sides in~$\JJ$; and
\item
\label{it:UP-Spanhat-c}%
for every such comma square, the left and right adjunctions satisfy the strict Mackey formula~\eqref{eq:gluing-formula}.
\end{enumerate}
Then there exists a pseudo-functor $\widehat{\cat{F}}\colon \Span(\GG; \JJ)\to \cat{C} $ such that
\[
\xymatrix{
 \GG^{\op} \ar[d]_-{(-)^*} \ar[r]^-{\cat{F}} & \cat{C} \\
 \Spanhat(\GG;\JJ) \ar@{-->}[ru]_-{\widehat{\cat{F}}} &
 }
\]
is commutative. This extension $\widehat{\cat{F}}$ is unique up to a unique isomorphism whose restriction to $\cat F$ is the identity, and is entirely determined by the choice of the adjoints of all~$i^*$, together with units and counits. Conversely, any pseudo-functor $\cat{F}$ factoring as above must enjoy the above three properties \eqref{it:UP-Spanhat-a}, \eqref{it:UP-Spanhat-b} and~\eqref{it:UP-Spanhat-c}.
\end{Thm}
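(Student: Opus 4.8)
The plan is to deduce the universal property of $\Spanhat(\GG;\JJ)$ from the two one-sided universal properties already established, namely \Cref{Thm:UP-Span} for $\Span(\GG;\JJ)$ and \Cref{Thm:UP-Span-co} for $\Span(\GG;\JJ)^{\co}$. Given a pseudo-functor $\cat{F}\colon \GG^{\op}\to \cat{C}$ satisfying \eqref{it:UP-Spanhat-a}--\eqref{it:UP-Spanhat-c}, I would first observe that, by choosing a two-sided adjoint $i_!=i_*$ for each $i\in\JJ$ together with units and counits, hypothesis \eqref{it:UP-Spanhat-b} gives me simultaneously the data needed to invoke both earlier theorems. Applying \Cref{Thm:UP-Span} to the left adjoints yields a pseudo-functor $\cat G_!\colon \Span(\GG;\JJ)\to\cat C$ with $\cat G_!\circ(-)^*=\cat F$, and applying \Cref{Thm:UP-Span-co} to the right adjoints yields $\cat G_*\colon \Span(\GG;\JJ)^{\co}\to\cat C$ with $\cat G_*\circ(-)^{\costar}=\cat F$. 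The key structural observation is that $\Spanhat$ is built from $\Span$ by forming ordinary spans on each Hom category (\Cref{Def:Spanhat-bicat}), so a 1-cell of $\Spanhat$ is a 1-cell of $\Span$, and a 2-cell is an equivalence class of a span of 2-cells, one leg contravariant ($(-)^\star$) and one covariant ($(-)_\star$). Thus $\widehat{\cat F}$ must agree with $\cat F$ on 0-cells, send $i_!u^*$ to $(\cat Fi)_!\circ(\cat Fu)$ on 1-cells, and on a 2-cell presented as in~\eqref{eq:2-cell-of-Spanhat} be defined by composing the image under $\cat G_*$ of the upward (contravariant) leg $[b,\beta_1,\beta_2]$ with the image under $\cat G_!$ of the downward (covariant) leg $[a,\alpha_1,\alpha_2]$.

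The substance of the proof is then to check that this assignment is well defined on equivalence classes of spans of 2-cells and that it respects vertical composition, horizontal composition, and the structural isomorphisms. For well-definedness I would reduce, via \Cref{Prop:2-cells-of-Spanhat}, to the two basic moves: whiskering the middle object by an equivalence (part~\eqref{it:iso-2-cells-1}) and transporting a branch along an invertible $2$-cell of $\GG$ (part~\eqref{it:iso-2-cells-2}). Under $\cat G_!$ and $\cat G_*$ an equivalence of $\GG$ becomes an equivalence of $\cat C$ whose effect cancels because of the zig-zag/counit structure built into the span construction (\Cref{Prop:2-cells-of-Span} and its dual), and transport along an invertible $2$-cell is absorbed by the pseudo-functoriality of $\cat G_!$ and $\cat G_*$; a string-diagram computation makes both cancellations transparent. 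Respect for vertical composition is where the hypotheses~\eqref{it:UP-Spanhat-b} and, crucially,~\eqref{it:UP-Spanhat-c} enter: composing two spans of $2$-cells vertically involves a pullback (\Cref{Prop:pullbacks}), whose image must be computed by sliding the contravariant leg of the upper span past the covariant leg of the lower span. This sliding is exactly a base-change move, and the identification of the two resulting $2$-cells of $\cat C$ requires precisely the strict Mackey formula $(\gamma\inv)_*=(\gamma_!)\inv$ of \eqref{eq:gluing-formula}, which is what makes the $\cat G_!$-leg and the $\cat G_*$-leg fit together coherently rather than merely up to an uncontrolled isomorphism.

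For the uniqueness and the agreement $\widehat{\cat F}\circ(-)^*=\cat F$, I would argue exactly as in the proof of \Cref{Thm:UP-Span}: every $1$-cell of $\Spanhat$ is of the form $i_!\circ u^*$ and every $2$-cell is generated by the images of $2$-cells of $\GG$, the counits/units of the ambidextrous adjunctions, and the structural isomorphisms (the $\Spanhat$-analogue of \Cref{Prop:2-cells-of-Span}, using \Cref{Rem:second-embedding} and \Cref{Rem:second-embedding-II}); since $\widehat{\cat F}$ is forced on all of these by $\cat F$ and by the chosen adjunction data, it is determined up to a unique isomorphism restricting to the identity on $\cat F$. The converse direction---that any $\cat F$ factoring through $(-)^*$ enjoys \eqref{it:UP-Spanhat-a}--\eqref{it:UP-Spanhat-c}---is immediate from \Cref{Prop:Bhat-ambidextre}, since those three properties hold in $\Spanhat(\GG;\JJ)$ itself and are preserved by any pseudo-functor.

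The main obstacle I anticipate is the verification of compatibility with horizontal composition, more precisely the construction of the structure isomorphism $\fun_{\widehat{\cat F}}$ and the checking of the pseudo-functor coherence axioms. Horizontal composition in $\Spanhat$ interleaves iso-commas in the $\Span$-direction (for composing spans of $1$-cells) with the span-of-$2$-cells structure in the fibre direction, so one must commute an iso-comma construction past a pullback of $2$-cells. Making this fully explicit threatens a combinatorial explosion of iso-comma and pullback squares; I expect the cleanest route is to suppress the bookkeeping by working entirely with string diagrams, reusing the machinery and the computations of \Cref{Lem:Gwelldef} (for the $\cat G_!$-side) together with its $\co$-dual (for the $\cat G_*$-side), and then invoking \eqref{eq:gluing-formula} at each point where a contravariant and a covariant strand must be exchanged. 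Once this single exchange move is handled coherently, the associativity and unit axioms for $\widehat{\cat F}$ follow from those already verified for $\cat G_!$ and $\cat G_*$.
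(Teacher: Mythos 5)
Your proposal is correct and follows the same skeleton as the paper's proof: both glue the two one-sided extensions provided by \Cref{Thm:UP-Span} and \Cref{Thm:UP-Span-co}, both define $\widehat{\cat{F}}$ on a 2-cell of $\Spanhat(\GG;\JJ)$ by composing the image of the contravariant leg with the image of the covariant leg, and both invoke the strict Mackey formula~\eqref{eq:gluing-formula} exactly where the two legs must be slid past each other. The difference is organizational, and it is worth noting because it changes how much work remains. The paper does not verify well-definedness, vertical composition and coherence by hand; instead it observes that $\Spanhat(\GG;\JJ)(G,H)=\widehat{\Span(\GG;\JJ)(G,H)}$ is the \emph{ordinary} 1-categorical span category of each Hom category, so the gluing is a direct instance of \Cref{Prop:UP-ordinary-spans}: one only checks that the two restricted functors $\cat{F}_\star$ and $\cat{F}^\star$ agree on objects (immediate from the choice of two-sided adjoints) and satisfy the commutation condition~\eqref{it:UP-ordinary-spans-b} on pullback squares in the Hom categories. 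That condition is precisely your ``vertical composition'' step, and it is indeed the only place where string diagrams, the agreement $\cat F_!=\cat F_*$ of pseudo-functors (\Cref{Lem:strict-Mackey-pseudo-funs}, itself a consequence of~\eqref{eq:gluing-formula}) and the strict Mackey formula enter. In particular, your anticipated ``main obstacle''---compatibility with horizontal composition and the coherence axioms for $\fun_{\widehat{\cat{F}}}$---dissolves in the paper's setup: the structure isomorphisms of $\widehat{\cat{F}}$ are simply those of~$\cat{F}$, and they extend automatically to the span categories because \emph{invertible} natural transformations always do, by \Cref{Lem:functoriality_ordinary-spans}\,\eqref{it:funct-ordinary-spans-b}; no dual of \Cref{Lem:Gwelldef} and no new exchange computation is needed. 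Likewise, well-definedness on equivalence classes and uniqueness come for free from \Cref{Prop:UP-ordinary-spans} rather than from a hand-made reduction via \Cref{Prop:2-cells-of-Spanhat} or a generation argument in the style of \Cref{Prop:2-cells-of-Span}. So your plan would go through, but redoing the string-diagram verifications on both legs duplicates labor that the 1-categorical universal property of spans, applied Hom-wise, already encapsulates.
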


For identities and invertible 1-cells, we make here the same choices of adjoints as in \Cref{Rem:rectif-UP-Span}. This makes the triangle in the theorem strictly commute, and overall simplifies proofs a little.

We begin with the following observation.

\begin{Lem} \label{Lem:strict-Mackey-pseudo-funs}
Denote by
\[
\cat F_! \colon \JJ^{\co}\to \cat C
\quad \textrm{and} \quad
\cat F_*\colon \JJ^{\co}\to \cat C
\]
the pseudo-functors induced by $\cat F$, as explained in \Cref{Rem:pseudo-func-of-adjoints}, by taking mates with respect to a choice of adjunctions $(\cat Fi)_!\dashv \cat Fi $ and $\cat Fi \dashv (\cat Fi)_*=(\cat Fi)_!$ (including units and counits) satisfying all the hypotheses of \Cref{Thm:UP-Spanhat}.
Then $\cat F_! = \cat F_*$ as pseudo-functors.
\end{Lem}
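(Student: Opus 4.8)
The plan is to verify that $\cat F_!$ and $\cat F_*$ agree as pseudo-functors $\JJ^{\co}\to\cat C$ cell by cell. By their construction (\Cref{Rem:pseudo-func-of-adjoints}) and the choice of a single two-sided adjoint $(\cat Fi)_!=(\cat Fi)_*$ for each $i\in\JJ$, the two pseudo-functors already coincide on $0$-cells and on $1$-cells, while their unitors agree thanks to the normalization of adjunctions for identity $1$-cells fixed in \Cref{Rem:rectif-UP-Span}. I would thus reduce the statement to two claims: (i) for every $2$-cell $\alpha\colon i\Rightarrow i'$ between parallel arrows of~$\JJ$, the left mate $\alpha_!$ equals the right mate $\alpha_*$; and (ii) for every composable pair $K\ointo{j}H\ointo{i}G$ in~$\JJ$, the compositors $\fun_{\cat F_!}$ and $\fun_{\cat F_*}$, both of which are $2$-isomorphisms $(\cat Fi)_!(\cat Fj)_!\Rightarrow(\cat F(ij))_!$ built from the single structure isomorphism $\phi\colon\cat F(ij)\isoEcell\cat F(j)\,\cat F(i)$ of~$\cat F$, coincide.

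Claim~(i) is the easy half. The square of \Cref{Exa:partial-alpha}, whose structure $2$-cell is~$\alpha$ and whose two remaining legs are the identities $\Id_H$ and $\Id_G$, is a genuine Mackey square. Unwinding the mate formulas (\Cref{sec:mates}) and discarding the unitors of the identity legs, one identifies its base-change mates with $\alpha_!$ and $(\alpha\inv)_*$ respectively. Hypothesis~(c) of \Cref{Thm:UP-Spanhat} provides the strict Mackey formula~\eqref{eq:gluing-formula} for comma squares, and this relation passes to squares equivalent to comma squares (\Cref{Rem:comma_vs_Mackey}); hence $(\alpha\inv)_*=(\alpha_!)\inv$. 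Since compatibility of mates with vertical pasting gives $(\alpha\inv)_*=(\alpha_*)\inv$, comparing the two identities yields $\alpha_!=\alpha_*$.

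Claim~(ii) is where I expect the real difficulty, since the compositor belongs to the pseudo-functorial structure and is not the image of a $\GG$-$2$-cell, so the device of \Cref{Exa:partial-alpha} no longer applies. Here I would carry out, by string diagrams (\Cref{sec:string_diagrams}), the general-$2$-category analogue of \Cref{Thm:Theta-properties}\,\eqref{it:Theta-ij}. Both compositors are obtained from the single isomorphism $\phi$ by conjugating with the unit--counit zig-zags of the left adjunctions $i_!\adj i^*$, $j_!\adj j^*$ and of the right adjunctions $i^*\adj i_*$, $j^*\adj j_*$ respectively, and they share source and target because $i_!j_!=i_*j_*$ and $(\cat F(ij))_!=(\cat F(ij))_*$. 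The decisive extra input is the \emph{special Frobenius} relation $\reps\circ\leta=\id$ for each $i\in\JJ$, the abstract counterpart of \Cref{Thm:Theta-properties}\,\eqref{it:Theta-special}; inserting it together with~\eqref{eq:gluing-formula} into the two zig-zags, the string for $\fun_{\cat F_*}\circ\fun_{\cat F_!}^{-1}$ should collapse to the identity by exactly the kind of counit--unit cancellation used to verify the triangle identities in \Cref{Prop:first-adjoints}.

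The genuine obstacle lies in establishing this special Frobenius relation for~$\cat F$: it is the image under the mate calculus of the \emph{partial} Mackey square of \Cref{Exa:partial-diag}, for which the strict Mackey formula is only available in its one-sided form $(\gamma\inv)_*\circ\gamma_!=\id$. In \Cref{ch:Theta} the one-sided form was obtained from the additive decomposition of~$(i/i)$ into its diagonal and off-diagonal components, a tool unavailable for a general target $2$-category~$\cat C$. I would instead derive it diagrammatically from hypotheses~(b) and~(c) by exploiting the retraction of the diagonal $\Delta_i\colon H\into(i/i)$ furnished by \Cref{Prop:Delta}, using the full strict Mackey formula for the self-iso-comma~$(i/i)$ to control the behaviour of the mates along~$\Delta_i$. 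Once this is in place, the remaining verification in~(ii) is routine string-diagram bookkeeping.
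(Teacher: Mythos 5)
Your claim (i) is correct and is essentially the paper's own argument: the paper, too, obtains $\cat F_!\alpha=\cat F_*\alpha$ by feeding the degenerate Mackey squares attached to~$\alpha$ (identity legs, as in \Cref{Exa:partial-alpha}) into the strict Mackey formula of hypothesis~\eqref{it:UP-Spanhat-c} of \Cref{Thm:UP-Spanhat}, and then using that mates of inverses are inverses of mates. Up to this point your proposal and the paper's proof coincide, including the shared (and harmless) implicit transfer of the hypothesis from comma squares to squares merely equivalent to them.

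The gap is in claim (ii), where you part ways with the paper. The paper's proof never invokes a special Frobenius property: it observes that the structure isomorphisms $\fun$ and $\un$ of $\cat F_!$ and $\cat F_*$ are themselves (left, resp.\ right) mates of the structure isomorphisms of~$\cat F$, and settles their agreement by the same mate-calculus-plus-strict-Mackey mechanism as for the 2-cells. Your observation that $\fun_{\cat F}$ is not the image of a 2-cell of~$\GG$, so that the device of claim (i) does not apply verbatim, is a fair criticism of the paper's brevity; but your replacement is not a proof. Its first pillar --- that special Frobenius together with~\eqref{eq:gluing-formula} collapses $\fun_{\cat F_*}\circ\fun_{\cat F_!}^{-1}$ to the identity by ``routine string-diagram bookkeeping'' --- is exactly the step that cannot be waved at: in the paper's own groupoid-level analogue of this statement, namely $\Theta_{ij}=\Theta_i\Theta_j$ in \Cref{Thm:Theta-properties}\,\eqref{it:Theta-ij}, the special Frobenius property \eqref{it:Theta-special} is available, and yet the proof still requires the component trichotomy of~$(ij/ij)$ (\Cref{Prop:ij-components}) together with the off-diagonal vanishing~\eqref{it:Theta-vanishing}, i.e.\ precisely the additive decomposition tools you concede are unavailable over a general target~$\cat C$. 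So there is concrete evidence inside the paper that your bookkeeping does not close with the inputs you allow yourself; at minimum you would have to exhibit the computation rather than assert it. The second pillar --- deriving special Frobenius from hypotheses \eqref{it:UP-Spanhat-b} and \eqref{it:UP-Spanhat-c} --- you explicitly leave as an open obstacle; it can in fact be carried out abstractly (the string computation of \Cref{Exa:special_Frobenius_strings} uses only the triangle identities, the pull-over relations, and the $\GG$-level facts that $\Delta_i\in\JJ$ and that its self-comma is trivial), but your proposal stops at the sketch. As it stands, then, your argument establishes the lemma on 0-, 1- and 2-cells only; the agreement of the coherence isomorphisms, which the proof of \Cref{Thm:UP-Spanhat} genuinely uses, remains unproven.
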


\begin{proof}
By definition we have an equality $\cat F_!=\cat F_*$ on objects, and clearly on 1-cells too by our choice of a left-and-right adjoint $(\cat Fi)_!=(\cat Fi)_*$ for each~$i\in \JJ$, as in hypothesis~\eqref{it:UP-Spanhat-a}. The images of any 2-cell also coincide, and this follows from hypothesis~\eqref{it:UP-Spanhat-c}. Indeed, every 2-cell $\alpha\colon i\Rightarrow j$ in $\JJ$ gives rise to the following Mackey squares in~$\GG$:
\[
\vcenter { \hbox{
\xymatrix@C=14pt@R=14pt{
& \ar[ld]_-{\Id} \ar[dr]^-{j} & \\
 \ar[dr]_i \ar@{}[rr]|{\oEcell{\alpha}} && \ar[dl]^\Id \\
& &
}
}}
{\quad=:\;\gamma}
\quad\quad\quad
\vcenter { \hbox{
\xymatrix@C=14pt@R=14pt{
& \ar[ld]_-{i} \ar[dr]^-{\Id} & \\
 \ar[dr]_-{\Id} \ar@{}[rr]|{\oEcell{\alpha}} && \ar[dl]^j \\
& &
}
}}
\quad\quad\quad
\vcenter { \hbox{
\xymatrix@C=14pt@R=14pt{
& \ar[ld]_-{j} \ar[dr]^-{\Id} & \\
 \ar[dr]_{\Id} \ar@{}[rr]|{\oEcell{\alpha^{-1}}} && \ar[dl]^i \\
& &
}
}}
\]
By taking mates of their images under $\cat F$ (\ie apply $\cat F$ and insert the units and counits of one of the four adjunctions $(\cat Fi)_!\dashv \cat Fi \dashv (\cat Fi)_*$ and $(\cat Fj)_!\dashv \cat Fj \dashv (\cat Fj)_*$, in the only way that makes sense), we obtain respectively:
\begin{enumerate}[{$\bullet$}]
\item The image $\cat F_!(\alpha)$ under~$\cat F_!$, which is also the 2-cell $\gamma_!$ in the right-hand side of~\eqref{eq:gluing-formula};
\item The image $\cat F_*(\alpha)$ under~$\cat F_*$;
\item The map $(\gamma^{-1})_*=(\cat{F}\alpha^{-1})_*$ in the left-hand side of~\eqref{eq:gluing-formula}.
\end{enumerate}
Then we compute
\[
\cat F_* \alpha
= \cat F_*((\alpha\inv)\inv)
= (\cat F_* \alpha\inv)\inv
= ((\cat F \alpha\inv)_*)\inv
\stackrel{\textrm{\eqref{eq:gluing-formula}}}{=} (\cat F \alpha)_!
=\cat F_!\alpha
\]
as claimed. For this we also use that $\cat F_!$ and $\cat F_*$ preserve inverses of 2-cells, like any pseudo-functor (this also follows from hypothesis~\eqref{it:UP-Spanhat-b}, which however is not needed).

It remains to see that the coherent structure isomorphisms $\fun$ and $\un$ of $\cat F_!$ and $\cat F_*$ also coincide. As they arise as mates of those of $\cat F$, this follows in a similar straightforward way from the strict Mackey formula.
\end{proof}

\begin{proof}[Proof of Theorem~\ref{Thm:UP-Spanhat}]
By \Cref{Thm:UP-Span} and \Cref{Thm:UP-Span-co} we have two extensions of $\cat{F}$, one to $\Span(\GG;\JJ)$ and one to~$\Span(\GG;\JJ)^{\co}$. To distinguish them and to match the universal property of spans in 1-categories, we denote them by $\cat{F}_\star$ and $\cat{F}^\star$ respectively. These pseudo-functors make the left-hand diagram below commute:
\[
\vcenter{\xymatrix@C=2em{
& \Span(\GG;\JJ) \ar@/^3ex/[rd]^-{\cat{F}_\star}
\\
\GG^\op \ar[rr]^-{\cat{F}} \ar@/^3ex/[ru]^-{(-)^*} \ar@/_3ex/[rd]_-{(-)^{\costar}}
&& \cat{C}
\\
& \Span(\GG;\JJ)^{\co} \ar@/_3ex/[ru]_-{\cat{F}^\star}
}}
\quad
\leadsto
\qquad
\vcenter{
\xymatrix@C=3em{
& \Span(\GG;\JJ) \ar[d]^-{(-)_\star} \ar@/^3ex/[rd]^-{\cat{F}_\star}
\\
\GG^\op \ar@/^3ex/[ru]^(.4){(-)^*} \ar@/_3ex/[rd]_-{(-)^{\costar}}
 \ar@{}[r]|-{\eqref{eq:two-embeddings}}
& \Spanhat(\GG;\JJ) \ar@{-->}[r]^-{\widehat{\cat{F}}}
& \cat{C}
\\
& \Span(\GG;\JJ)^{\co} \ar[u]_-{(-)^\star} \ar@/_3ex/[ru]_-{\cat{F}^\star}
}}
\]
The theorem claims the existence of a pseudo-functor $\widehat{\cat{F}}$ as on the right-hand side above, whose composite with $\GG^{\op}\hook\Spanhat$ of~\eqref{eq:two-embeddings} is the original~$\cat{F}$. Since $\Spanhat$ is constructed by applying on each Hom category the span construction (for 1-categories) of \Cref{sec:ordinary-spans}, we need to show that the two functors given on Hom categories by $\cat{F}_\star$ and $\cat{F}^\star$ assemble to a functor on $\Spanhat$. In other words, for every pair of 0-cells $G$ and $H$, we consider the functors $\cat{F}_\star$ and $\cat{F}^\star$ as follows
\begin{equation} \label{eq:def-Fhat}
\vcenter{
\xymatrix{
\Span(\GG;\JJ) (G,H) \ar@/^3ex/[rrd]^-{\cat{F}_\star} \ar[d]_-{(-)_\star} & & \\
 \Spanhat(\GG;\JJ) (G,H) \ar@{-->}[rr]^-{\widehat{\cat{F}}} && \cat{C}(\cat{F} G, \cat{F} H) \\
 \Span(\GG;\JJ) (G,H)^{\op} \ar[u]^-{(-)^{\star}} \ar@/_3ex/[rru]_-{\cat{F}^\star} &&
}}
\end{equation}
and want to check they assemble into~$\widehat{\cat{F}}$. We need to check the hypotheses of \Cref{Prop:UP-ordinary-spans}, by recalling the definitions of $\cat{F}_\star$ and $\cat{F}^\star$ on Hom categories, which are the 1-cell and 2-cell parts of \Cref{Cons:UP-Span} and \Cref{Cons:UP-Span-co}, respectively. On objects of $\Span(\GG;\JJ)(G,H)$, \ie on 1-cells $i_!u^*=i_*u^*$ of~$\Spanhat$, both functors $\cat{F}_\star$ and $\cat{F}^\star$ agree since
\[
\cat{F}_\star(i_!u^*)=(\cat{F}i)_!\cat{F}u=(\cat{F}i)_*\cat{F}u=\cat{F}^\star(i_*u^*)\,.
\]
This gives the easy condition~\eqref{it:UP-ordinary-spans-a} of \Cref{Prop:UP-ordinary-spans}. Let us check~\eqref{it:UP-ordinary-spans-b}, which says that for any pullback square in the category $\Span(\GG;\JJ)(G,H)$
\[
\xymatrix@C=2em@R=2em{
& \bullet \ar@{=>}[ld]_-{[\tilde b]} \ar@{=>}[rd]^-{[\tilde a]}
\\
\bullet \ar@{=>}[rd]_-{[a]}
&& \bullet \ar@{=>}[ld]^-{[b]}
\\
& \bullet
}
\]
the following equality holds in the category~$\cat{C}(\cat{F}G,\cat{F}H)$:
\begin{equation}
\label{eq:aux-the-equality}%
\cat{F}^\star([b])\circ \cat{F}_\star([a])
\;\overset{\textrm{?}}{=}\;
\cat{F}_\star([\tilde{a}])\circ \cat{F}^\star([\tilde{b}])\,.
\end{equation}
Here we temporarily used the short form $[a]$ for $[a,\alpha_1,\alpha_2]$, etc, as we did in \Cref{sec:pullback_2cells}, where we explicitly constructed the above pullbacks in the category $\Span(\GG;\JJ)(G,H)$. Expanding the notation as in \Cref{Prop:pullbacks}, if we start with the cospan in the category~$\Span(\GG;\JJ)(G,H)$ as in~\eqref{eq:pullback-Bicat-data}
\[
\xymatrix@L=1ex{
i_!u^* \ar@{=>}[d]^-{[a,\alpha_1,\alpha_2]}_{[a]\,:=\,} \\
k_!w^* \\
j_!v^* \ar@{=>}[u]_-{[b,\beta_1,\beta_2]}^{[b]\,:=\,}
}
\quad\quad\quad
\xymatrix{
G \ar@{=}[d] \ar@{}[rrd]|{\SEcell\,\alpha_1} &&
 P \ar[d]^a \ar[ll]_-{u} \ar[rr]^-{i} \ar@{}[rrd]|{\NEcell\,\alpha_2} &&
 H \ar@{=}[d] \\
G \ar@{}[rrd]|{\NEcell\,\beta_1} \ar@{=}[d] &&
 R \ar[ll]_-{w} \ar[rr]^-{k} \ar@{}[rrd]|{\SEcell\,\beta_2} &&
 H \ar@{=}[d] \\
G && Q \ar[u]_b \ar[ll]^-{v} \ar[rr]_-{j} &&
 H
}
\]
the above pullback square is the following left-hand square
\[
\vcenter{
\xymatrix@L=1ex{
& (ka\tilde b)_!(wa \tilde b)^* \ar@{=>}[ld]_-{[\tilde b,\tilde \beta_1,\tilde \beta_2]\quad} \ar@{=>}[rd]^-{\quad[\tilde a,\tilde \alpha_1,\tilde \alpha_2]}
\\
i_!u^* \ar@{=>}[rd]_-{[a,\alpha_1,\alpha_2]\quad}
&& j_!v^* \ar@{=>}[ld]^-{\quad[b,\beta_1,\beta_2]}
\\
& k_!w^*
}}
\kern5em
\vcenter{\xymatrix@C=14pt@R=14pt{
& (a/b) \ar[ld]_-{\tilde b} \ar[dr]^-{\tilde a}
\\
P \ar[dr]_a \ar@{}[rr]|{\isocell{\gamma}}
&& Q \ar[dl]^b
\\
& R
}}
\]
where the 1-cells $\tilde{a}$ and $\tilde{b}$ of~$\GG$ come from the above right-hand iso-comma and the 2-cell components $\tilde{\alpha}_1,\tilde{\alpha}_2,\tilde{\beta}_1,\tilde{\beta}_2$ are given in~\eqref{eq:pullback_in_Bicat}. Armed with those explicit formulas for these 2-cells, we can compute the images of~$[a,...]$ and $[\tilde{a},...]$ under $\cat{F}_\star\colon \Span(\GG;\JJ)(G,H)\to \cat{C}(\cat{F}G,\cat{F}H)$ and those of $[b,...]$ and $[\tilde{b},...]$ under $\cat{F}^\star\colon \Span(\GG;\JJ)(G,H)\to \cat{C}(\cat{F}G,\cat{F}H)$, by following the recipes given in~\eqref{eq:pasting-in-UP-Span} and~\eqref{eq:pasting-in-UP-Span-co}, respectively.

For legibility, we will write with string diagrams (see \Cref{sec:string_diagrams} if necessary). As in \Cref{sec:UP-Span}, we depict the chosen left-and-right adjoints $(\cat Fi)_!$ in~$\cat C$ ($i\in \JJ$) with dotted lines. The required equality~\eqref{eq:aux-the-equality} now becomes as follows (ignore the shaded area on the left for now):
\[
\vcenter{\hbox{
\xymatrix@L=1ex{
{\phantom{m}}
\ar@{=>}[ddd]_{\cat F_\star([a])} \\
\\
\\
\ar@{=>}[ddd]_{\cat F^\star([b])} \\
\\
\\
{\phantom{m}}
}
}}
\quad
\vcenter {\hbox{
\psfrag{A}[Bc][Bc]{\scalebox{1}{\scriptsize{$\cat Fw$}}}
\psfrag{B}[Bc][Bc]{\scalebox{1}{\scriptsize{$(\cat Fk)_!$}}}
\psfrag{C}[Bc][Bc]{\scalebox{1}{\scriptsize{$\cat Fv$}}}
\psfrag{D}[Bc][Bc]{\scalebox{1}{\scriptsize{$(\cat Fj)_*$}}}
\psfrag{U1}[Bc][Bc]{\scalebox{1}{\scriptsize{$\cat Fa$}}}
\psfrag{U2}[Bc][Bc]{\scalebox{1}{\scriptsize{$\;\;(\cat Fa)_!$}}}
\psfrag{V1}[Bc][Bc]{\scalebox{1}{\scriptsize{$\cat F b$}}}
\psfrag{V2}[Bc][Bc]{\scalebox{1}{\scriptsize{$\;\;(\cat F b)_*$}}}
\psfrag{X}[Bc][Bc]{\scalebox{1}{\scriptsize{$\cat Fw$}}}
\psfrag{Y1}[Bc][Bc]{\scalebox{1}{\scriptsize{\;\;\;$(\cat Fk)_!$}}}
\psfrag{Y2}[Bc][Bc]{\scalebox{1}{\scriptsize{\;\;\;$(\cat Fk)_*$}}}
\psfrag{A1}[Bc][Bc]{\scalebox{1}{\scriptsize{$\cat F\alpha_1$}}}
\psfrag{A2}[Bc][Bc]{\scalebox{1}{\scriptsize{$(\cat F\alpha_2)_!$}}}
\psfrag{B1}[Bc][Bc]{\scalebox{1}{\scriptsize{$\cat F\beta_1^{-1}$}}}
\psfrag{B2}[Bc][Bc]{\scalebox{1}{\scriptsize{$(\cat F\beta_2^{-1})_*$}}}
\psfrag{F1}[Bc][Bc]{\scalebox{1}{\scriptsize{$\eta$}}}
\psfrag{F2}[Bc][Bc]{\scalebox{1}{\scriptsize{$\varepsilon$}}}
\includegraphics[scale=.4]{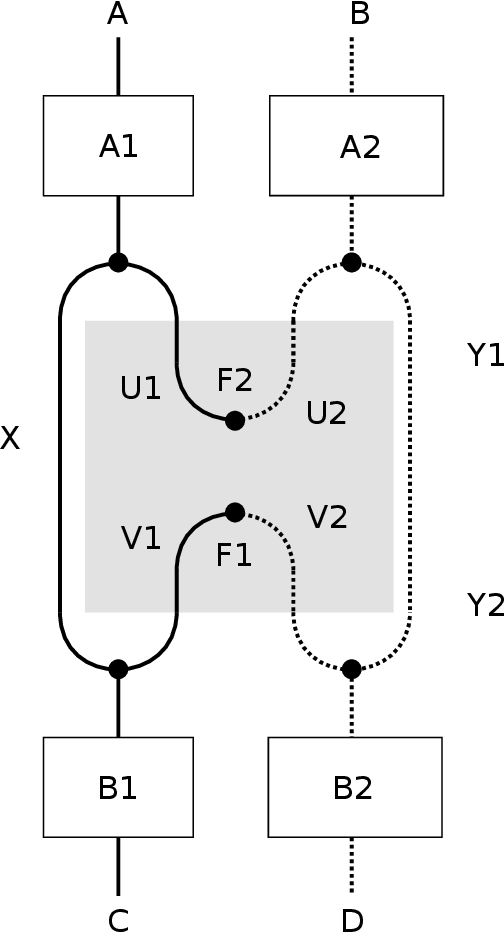}
}}
\quad\quad \stackrel{\textrm{?}}{=} \quad\quad
\vcenter {\hbox{
\psfrag{A}[Bc][Bc]{\scalebox{1}{\scriptsize{$\cat Fw$}}}
\psfrag{B}[Bc][Bc]{\scalebox{1}{\scriptsize{$(\cat Fk)_*$}}}
\psfrag{C}[Bc][Bc]{\scalebox{1}{\scriptsize{$\cat Fv$}}}
\psfrag{D}[Bc][Bc]{\scalebox{1}{\scriptsize{$(\cat Fj)_!$}}}
\psfrag{U1}[Bc][Bc]{\scalebox{1}{\scriptsize{$\cat F\tilde a$}}}
\psfrag{U2}[Bc][Bc]{\scalebox{1}{\scriptsize{$\;\;(\cat F\tilde a)_!$}}}
\psfrag{V1}[Bc][Bc]{\scalebox{1}{\scriptsize{$\cat F\tilde b$}}}
\psfrag{V2}[Bc][Bc]{\scalebox{1}{\scriptsize{$\;\;(\cat F\tilde b)_*$}}}
\psfrag{L}[Bc][Bc]{\scalebox{1}{\scriptsize{$\cat Fwa\tilde b$\;\;\;}}}
\psfrag{M}[Bc][Bc]{\scalebox{1}{\scriptsize{$(\cat Fka\tilde b)_* = (\cat Fka\tilde b)_!$\;\;\;\;\;\;\;\;}}}
\psfrag{X}[Bc][Bc]{\scalebox{1}{\scriptsize{$\cat Fw$}}}
\psfrag{Y}[Bc][Bc]{\scalebox{1}{\scriptsize{\;\;\;$(\cat Fk)_!$}}}
\psfrag{A1}[Bc][Bc]{\scalebox{1}{\scriptsize{$\cat F\alpha_1$}}}
\psfrag{A2}[Bc][Bc]{\scalebox{1}{\scriptsize{$(\cat F\alpha_2)_*$}}}
\psfrag{B1}[Bc][Bc]{\scalebox{1}{\scriptsize{$\cat F\beta_1^{-1}$}}}
\psfrag{B2}[Bc][Bc]{\scalebox{1}{\scriptsize{$(\cat F\beta_2^{-1})_!$}}}
\psfrag{F1}[Bc][Bc]{\scalebox{1}{\scriptsize{$\eta$}}}
\psfrag{F2}[Bc][Bc]{\scalebox{1}{\scriptsize{$\varepsilon$}}}
\psfrag{G1}[Bc][Bc]{\scalebox{1}{\scriptsize{$\cat F\gamma$}}}
\psfrag{G2}[Bc][Bc]{\scalebox{1}{\scriptsize{$(\cat F\gamma^{\!-1}\!)_!$}}}
\includegraphics[scale=.4]{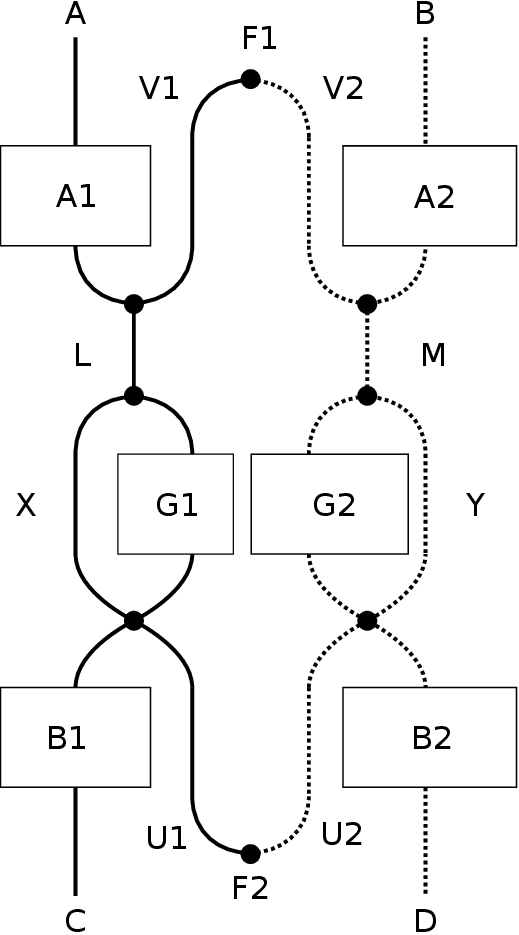}
}}
\quad
\vcenter{\hbox{
\xymatrix@L=1ex{
{\phantom{m}}
\ar@{=>}[dd]^{\cat F^\star([\tilde b])} \\
\\
\ar@{=>}[dddd]^{\cat F_\star([\tilde a])} \\
\\
\\
\\
{\phantom{m}}
}
}}
\]
Note the use, on the right-hand sides of each diagram, of the pseudo-functoriality of $\cat F_!$ and $\cat F_*$. We know from \Cref{Lem:strict-Mackey-pseudo-funs} that they actually agree.
All the unnamed 2-cells above denote some instance of the structure isomorphisms $\fun$ of $\cat F$ or $\cat F_!=\cat F_*$, or their composites and inverses (see \Cref{Exa:strings-for-fun}).

By the associativity of $\fun$ and the interchange law (\Cref{Exa:strings-for-exchange}), we can rewrite the right-hand side as follows:
\[
\vcenter {\hbox{
\psfrag{A}[Bc][Bc]{\scalebox{1}{\scriptsize{$\cat Fw$}}}
\psfrag{B}[Bc][Bc]{\scalebox{1}{\scriptsize{$(\cat Fk)_*$}}}
\psfrag{C}[Bc][Bc]{\scalebox{1}{\scriptsize{$\cat Fv$}}}
\psfrag{D}[Bc][Bc]{\scalebox{1}{\scriptsize{$(\cat Fj)_!$}}}
\psfrag{U1}[Bc][Bc]{\scalebox{1}{\scriptsize{$\cat F\tilde a$}}}
\psfrag{U2}[Bc][Bc]{\scalebox{1}{\scriptsize{$\;\;(\cat F\tilde a)_!$}}}
\psfrag{V1}[Bc][Bc]{\scalebox{1}{\scriptsize{$\cat F\tilde b$}}}
\psfrag{V2}[Bc][Bc]{\scalebox{1}{\scriptsize{$\;\;(\cat F\tilde b)_*$}}}
\psfrag{L}[Bc][Bc]{\scalebox{1}{\scriptsize{$\cat Fa$}}}
\psfrag{M}[Bc][Bc]{\scalebox{1}{\scriptsize{\;$(\!\cat Fa\!)_*$}}}
\psfrag{L1}[Bc][Bc]{\scalebox{1}{\scriptsize{$\cat Fb$}}}
\psfrag{M1}[Bc][Bc]{\scalebox{1}{\scriptsize{\;$(\!\cat Fb\!)_!$}}}
\psfrag{X}[Bc][Bc]{\scalebox{1}{\scriptsize{$\cat Fw$}}}
\psfrag{Y}[Bc][Bc]{\scalebox{1}{\scriptsize{\;\;\;$(\cat Fk)_*$}}}
\psfrag{A1}[Bc][Bc]{\scalebox{1}{\scriptsize{$\cat F\alpha_1$}}}
\psfrag{A2}[Bc][Bc]{\scalebox{1}{\scriptsize{$(\cat F\alpha_2)_*$}}}
\psfrag{B1}[Bc][Bc]{\scalebox{1}{\scriptsize{$\cat F\beta_1^{-1}$}}}
\psfrag{B2}[Bc][Bc]{\scalebox{1}{\scriptsize{$(\cat F\beta_2^{-1})_!$}}}
\psfrag{F1}[Bc][Bc]{\scalebox{1}{\scriptsize{$\eta$}}}
\psfrag{F2}[Bc][Bc]{\scalebox{1}{\scriptsize{$\varepsilon$}}}
\psfrag{G1}[Bc][Bc]{\scalebox{1}{\scriptsize{$\cat F\gamma$}}}
\psfrag{G2}[Bc][Bc]{\scalebox{1}{\scriptsize{$(\cat F\gamma^{\!-1}\!)_!$}}}
\includegraphics[scale=.4]{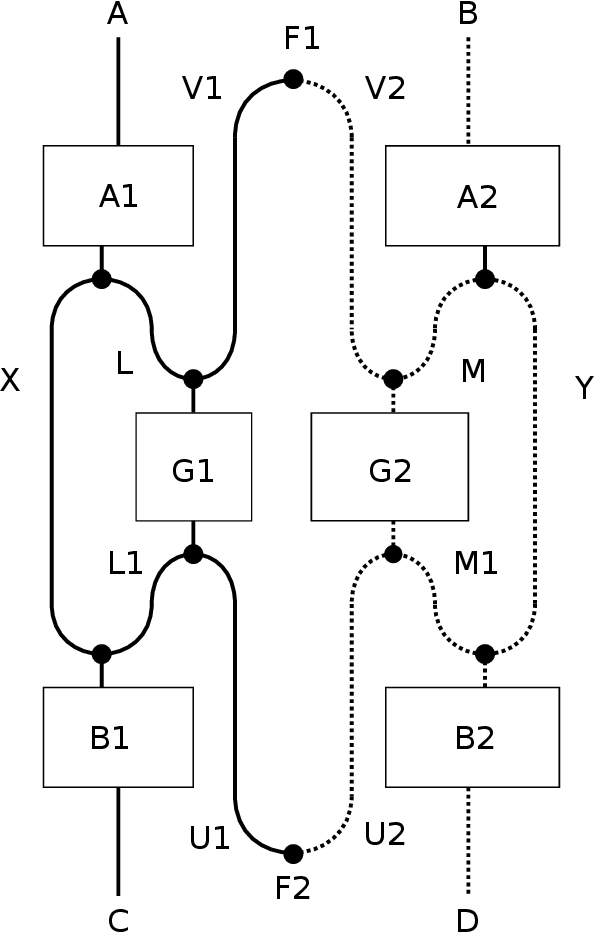}
}}
\quad =\quad
\vcenter {\hbox{
\psfrag{A}[Bc][Bc]{\scalebox{1}{\scriptsize{$\cat Fw$}}}
\psfrag{B}[Bc][Bc]{\scalebox{1}{\scriptsize{$(\cat Fk)_*$}}}
\psfrag{C}[Bc][Bc]{\scalebox{1}{\scriptsize{$\cat Fv$}}}
\psfrag{D}[Bc][Bc]{\scalebox{1}{\scriptsize{$(\cat Fj)_!$}}}
\psfrag{U1}[Bc][Bc]{\scalebox{1}{\scriptsize{$\cat F\tilde a$}}}
\psfrag{U2}[Bc][Bc]{\scalebox{1}{\scriptsize{$\;\;(\cat F\tilde a)_!$}}}
\psfrag{V1}[Bc][Bc]{\scalebox{1}{\scriptsize{$\cat F\tilde b$}}}
\psfrag{V2}[Bc][Bc]{\scalebox{1}{\scriptsize{$\;\;(\cat F\tilde b)_*$}}}
\psfrag{L}[Bc][Bc]{\scalebox{1}{\scriptsize{$\cat Fa$}}}
\psfrag{M}[Bc][Bc]{\scalebox{1}{\scriptsize{\;$(\!\cat Fa\!)_*$}}}
\psfrag{L1}[Bc][Bc]{\scalebox{1}{\scriptsize{$\cat Fb$}}}
\psfrag{M1}[Bc][Bc]{\scalebox{1}{\scriptsize{\;$(\!\cat Fb\!)_!$}}}
\psfrag{X}[Bc][Bc]{\scalebox{1}{\scriptsize{$\cat Fw$}}}
\psfrag{Y}[Bc][Bc]{\scalebox{1}{\scriptsize{\;\;\;$(\cat Fk)_*$}}}
\psfrag{A1}[Bc][Bc]{\scalebox{1}{\scriptsize{$\cat F\alpha_1$}}}
\psfrag{A2}[Bc][Bc]{\scalebox{1}{\scriptsize{$(\cat F\alpha_2)_*$}}}
\psfrag{B1}[Bc][Bc]{\scalebox{1}{\scriptsize{$\cat F\beta_1^{-1}$}}}
\psfrag{B2}[Bc][Bc]{\scalebox{1}{\scriptsize{$(\cat F\beta_2^{-1})_!$}}}
\psfrag{F1}[Bc][Bc]{\scalebox{1}{\scriptsize{$\eta$}}}
\psfrag{F2}[Bc][Bc]{\scalebox{1}{\scriptsize{$\varepsilon$}}}
\psfrag{G1}[Bc][Bc]{\scalebox{1}{\scriptsize{$\cat F\gamma$}}}
\psfrag{G2}[Bc][Bc]{\scalebox{1}{\scriptsize{$(\cat F\gamma^{\!-1}\!)_!$}}}
\includegraphics[scale=.4]{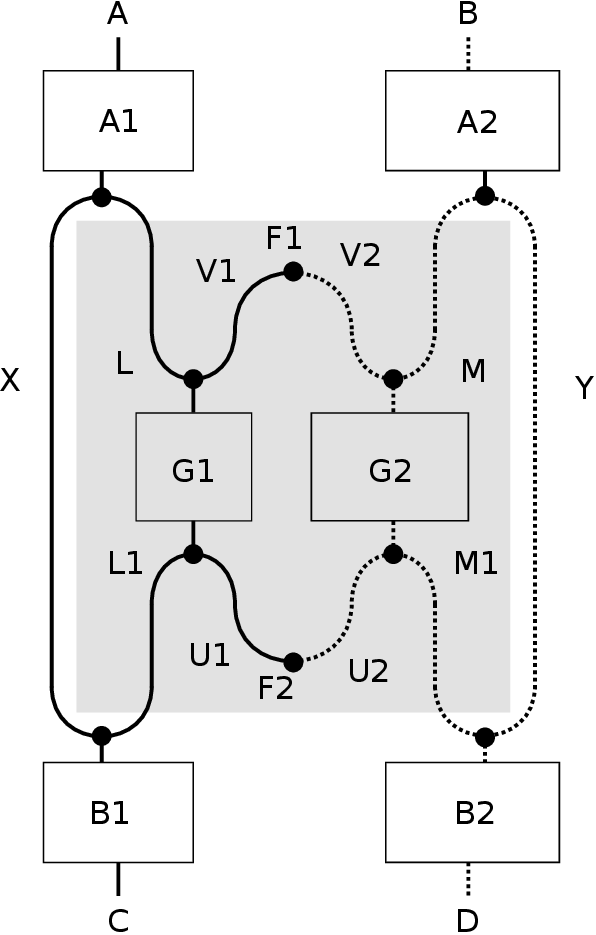}
}}
\]
Thus it suffices to show the equality of the two shaded areas above, since outside of them the two diagrams already agree.
Starting with the latter (right-hand side) shaded area, and making now all the mates explicit, we compute as follows using \Cref{Lem:strict-Mackey-pseudo-funs}, the interchange law, the triangular equalities of various adjunctions, and the strict Mackey formula:
\[
\vcenter {\hbox{
\psfrag{F}[Bc][Bc]{\scalebox{1}{\scriptsize{$\fun$}}}
\psfrag{G}[Bc][Bc]{\scalebox{1}{\scriptsize{$\fun^{\!-1}$\;\;}}}
\psfrag{R}[Bc][Bc]{\scalebox{1}{\scriptsize{$\cat Fa\tilde b$}}}
\psfrag{R1}[Bc][Bc]{\scalebox{1}{\scriptsize{$\cat Fb\tilde a$}}}
\psfrag{U1}[Bc][Bc]{\scalebox{1}{\scriptsize{$\cat F\tilde a$}}}
\psfrag{U2}[Bc][Bc]{\scalebox{1}{\scriptsize{$\;\;(\cat F\tilde a)_!$}}}
\psfrag{V1}[Bc][Bc]{\scalebox{1}{\scriptsize{$\cat F\tilde b$}}}
\psfrag{V2}[Bc][Bc]{\scalebox{1}{\scriptsize{$\;\;(\cat F\tilde b)_*$}}}
\psfrag{V3}[Bc][Bc]{\scalebox{1}{\scriptsize{$(\cat Fa\tilde b)_!$}}}
\psfrag{V4}[Bc][Bc]{\scalebox{1}{\scriptsize{$(\cat F b \tilde a)_!$}}}
\psfrag{L}[Bc][Bc]{\scalebox{1}{\scriptsize{$\cat Fa$}}}
\psfrag{M}[Bc][Bc]{\scalebox{1}{\scriptsize{\;$(\cat Fa)_*$}}}
\psfrag{L1}[Bc][Bc]{\scalebox{1}{\scriptsize{$\cat Fb$}}}
\psfrag{M1}[Bc][Bc]{\scalebox{1}{\scriptsize{\;$(\cat Fb)_!$}}}
\psfrag{F1}[Bc][Bc]{\scalebox{1}{\scriptsize{$\eta$}}}
\psfrag{F2}[Bc][Bc]{\scalebox{1}{\scriptsize{$\varepsilon$}}}
\psfrag{G1}[Bc][Bc]{\scalebox{1}{\scriptsize{$\cat F\gamma$}}}
\psfrag{G2}[Bc][Bc]{\scalebox{1}{\scriptsize{$\cat F\gamma^{-1}$}}}
\includegraphics[scale=.4]{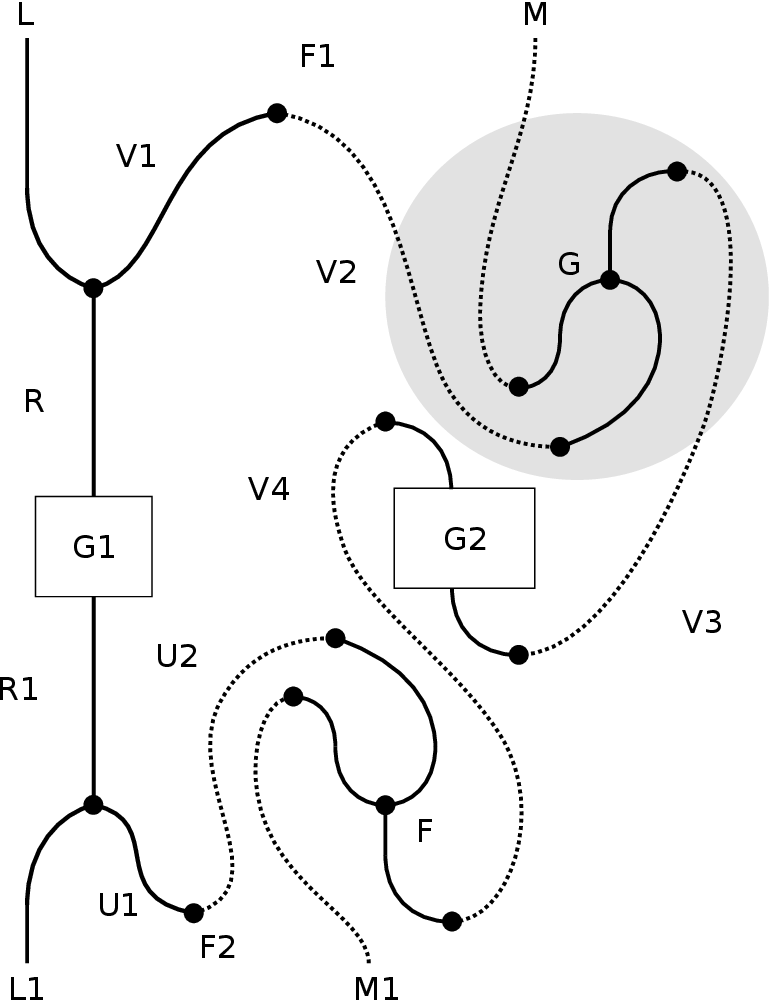}
}}
 \stackrel{\textrm{\eqref{Lem:strict-Mackey-pseudo-funs}}}{=}
\vcenter {\hbox{
\psfrag{F}[Bc][Bc]{\scalebox{1}{\scriptsize{$\fun$}}}
\psfrag{G}[Bc][Bc]{\scalebox{1}{\scriptsize{$\;\fun^{\!-1}$}}}
\psfrag{R}[Bc][Bc]{\scalebox{1}{\scriptsize{$\cat Fa\tilde b$}}}
\psfrag{R1}[Bc][Bc]{\scalebox{1}{\scriptsize{$\cat Fb\tilde a$}}}
\psfrag{U1}[Bc][Bc]{\scalebox{1}{\scriptsize{$\cat F\tilde a$}}}
\psfrag{U2}[Bc][Bc]{\scalebox{1}{\scriptsize{$\;\;(\cat F\tilde a)_!$}}}
\psfrag{V1}[Bc][Bc]{\scalebox{1}{\scriptsize{$\cat F\tilde b$}}}
\psfrag{V2}[Bc][Bc]{\scalebox{1}{\scriptsize{$(\cat F\tilde b)_*\!=\!(\cat F\tilde b)_!$}}}
\psfrag{V3}[Bc][Bc]{\scalebox{1}{\scriptsize{$(\cat Fa\tilde b)_!$}}}
\psfrag{V4}[Bc][Bc]{\scalebox{1}{\scriptsize{$(\cat F b \tilde a)_!$}}}
\psfrag{L}[Bc][Bc]{\scalebox{1}{\scriptsize{$\cat Fa$}}}
\psfrag{M}[Bc][Bc]{\scalebox{1}{\scriptsize{\;$(\cat Fa)_!$}}}
\psfrag{L1}[Bc][Bc]{\scalebox{1}{\scriptsize{$\cat Fb$}}}
\psfrag{M1}[Bc][Bc]{\scalebox{1}{\scriptsize{\;$(\cat Fb)_!$}}}
\psfrag{F1}[Bc][Bc]{\scalebox{1}{\scriptsize{$\eta$}}}
\psfrag{F2}[Bc][Bc]{\scalebox{1}{\scriptsize{$\varepsilon$}}}
\psfrag{G1}[Bc][Bc]{\scalebox{1}{\scriptsize{$\cat F\gamma$}}}
\psfrag{G2}[Bc][Bc]{\scalebox{1}{\scriptsize{$\cat F\gamma^{-1}$}}}
\includegraphics[scale=.4]{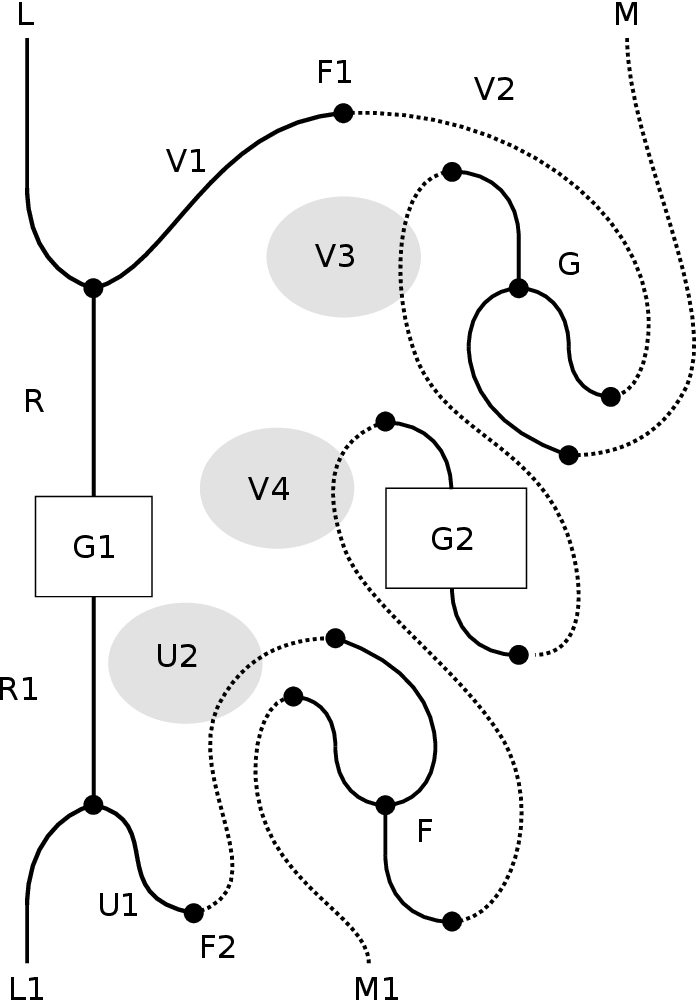}
}}
\stackrel{3 \times \textrm{\eqref{Exa:strings-for-adjoints}}}{=}
\]
\[
\vcenter {\hbox{
\psfrag{F}[Bc][Bc]{\scalebox{1}{\scriptsize{$\fun$}}}
\psfrag{G}[Bc][Bc]{\scalebox{1}{\scriptsize{$\;\fun^{\!-1}$}}}
\psfrag{R}[Bc][Bc]{\scalebox{1}{\scriptsize{$\cat Fa\tilde b$}}}
\psfrag{R1}[Bc][Bc]{\scalebox{1}{\scriptsize{$\cat Fb\tilde a$}}}
\psfrag{U1}[Bc][Bc]{\scalebox{1}{\scriptsize{$\cat F\tilde a$}}}
\psfrag{U2}[Bc][Bc]{\scalebox{1}{\scriptsize{$\;\;(\cat F\tilde a)_!$}}}
\psfrag{V1}[Bc][Bc]{\scalebox{1}{\scriptsize{$\cat F\tilde b$}}}
\psfrag{V2}[Bc][Bc]{\scalebox{1}{\scriptsize{$\;\;\;(\cat F\tilde b)_*$}}}
\psfrag{V3}[Bc][Bc]{\scalebox{1}{\scriptsize{$(\cat F\tilde b)_!$\;}}}
\psfrag{L}[Bc][Bc]{\scalebox{1}{\scriptsize{$\cat Fa$}}}
\psfrag{M}[Bc][Bc]{\scalebox{1}{\scriptsize{\;$(\cat Fa)_!$}}}
\psfrag{L1}[Bc][Bc]{\scalebox{1}{\scriptsize{$\cat Fb$}}}
\psfrag{M1}[Bc][Bc]{\scalebox{1}{\scriptsize{\;$(\cat Fb)_!$}}}
\psfrag{F1}[Bc][Bc]{\scalebox{1}{\scriptsize{$\eta$}}}
\psfrag{F2}[Bc][Bc]{\scalebox{1}{\scriptsize{$\varepsilon$}}}
\psfrag{G1}[Bc][Bc]{\scalebox{1}{\scriptsize{$\cat F\gamma$}}}
\psfrag{G2}[Bc][Bc]{\scalebox{1}{\scriptsize{$\cat F\gamma^{-1}$}}}
\includegraphics[scale=.4]{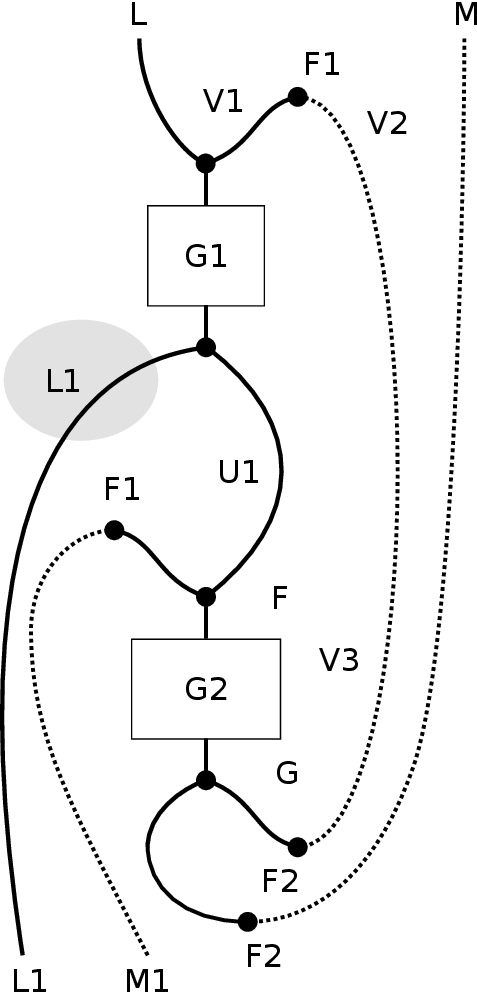}
}}
\stackrel{\textrm{\eqref{Exa:strings-for-adjoints}}}{=}
\vcenter {\hbox{
\psfrag{R}[Bc][Bc]{\scalebox{1}{\scriptsize{$\cat Fa\tilde b$}}}
\psfrag{R1}[Bc][Bc]{\scalebox{1}{\scriptsize{$\cat Fb\tilde a$}}}
\psfrag{U1}[Bc][Bc]{\scalebox{1}{\scriptsize{$\cat F\tilde a$}}}
\psfrag{U2}[Bc][Bc]{\scalebox{1}{\scriptsize{$\;\;(\cat F\tilde a)_!$}}}
\psfrag{V1}[Bc][Bc]{\scalebox{1}{\scriptsize{$\cat F\tilde b$}}}
\psfrag{V2}[Bc][Bc]{\scalebox{1}{\scriptsize{$\;\;(\cat F\tilde b)_*$}}}
\psfrag{V3}[Bc][Bc]{\scalebox{1}{\scriptsize{$(\cat F\tilde b)_!$}}}
\psfrag{L}[Bc][Bc]{\scalebox{1}{\scriptsize{$\cat Fa$}}}
\psfrag{M}[Bc][Bc]{\scalebox{1}{\scriptsize{\;$(\cat Fa)_!$}}}
\psfrag{L1}[Bc][Bc]{\scalebox{1}{\scriptsize{$\cat Fb$}}}
\psfrag{M1}[Bc][Bc]{\scalebox{1}{\scriptsize{\;$(\cat Fb)_!$}}}
\psfrag{M2}[Bc][Bc]{\scalebox{1}{\scriptsize{$(\cat Fb)_*$}}}
\psfrag{F1}[Bc][Bc]{\scalebox{1}{\scriptsize{$\eta$}}}
\psfrag{F2}[Bc][Bc]{\scalebox{1}{\scriptsize{$\varepsilon$}}}
\psfrag{G1}[Bc][Bc]{\scalebox{1}{\scriptsize{$\cat F\gamma$}}}
\psfrag{G2}[Bc][Bc]{\scalebox{1}{\scriptsize{$\cat F\gamma^{-1}$}}}
\includegraphics[scale=.4]{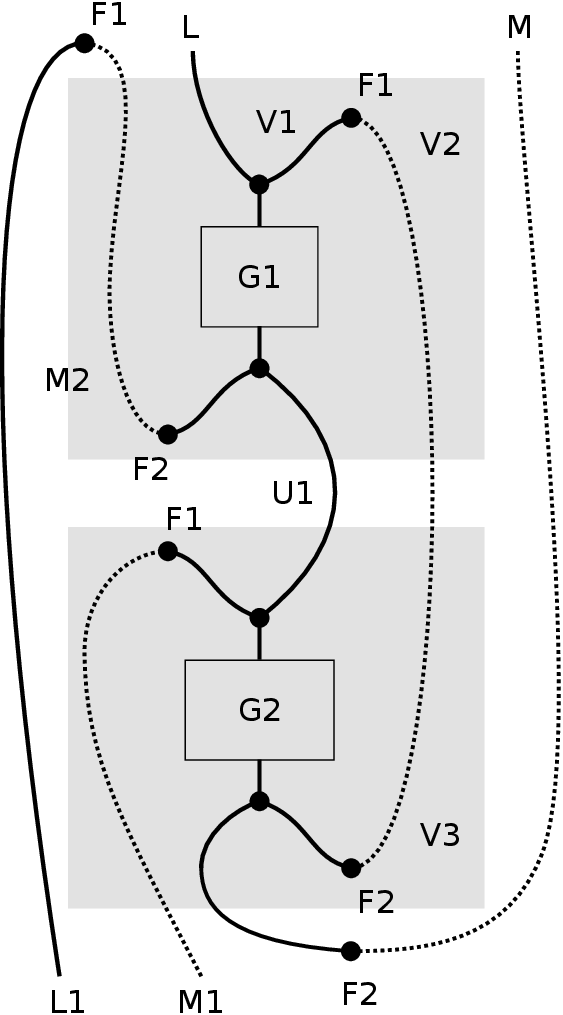}
}}
\stackrel{\textrm{\eqref{eq:gluing-formula}}}{=}
\vcenter {\hbox{
\psfrag{F}[Bc][Bc]{\scalebox{1}{\scriptsize{$\fun$}}}
\psfrag{G}[Bc][Bc]{\scalebox{1}{\scriptsize{$\;\fun^{\!-1}$}}}
\psfrag{R}[Bc][Bc]{\scalebox{1}{\scriptsize{$\cat Fa\tilde b$}}}
\psfrag{R1}[Bc][Bc]{\scalebox{1}{\scriptsize{$\cat Fb\tilde a$}}}
\psfrag{U1}[Bc][Bc]{\scalebox{1}{\scriptsize{$\cat F\tilde a$}}}
\psfrag{U2}[Bc][Bc]{\scalebox{1}{\scriptsize{$\;\;(\cat F\tilde a)_!$}}}
\psfrag{V1}[Bc][Bc]{\scalebox{1}{\scriptsize{$\cat F\tilde b$}}}
\psfrag{V2}[Bc][Bc]{\scalebox{1}{\scriptsize{$\;\;(\cat F\tilde b)_!$}}}
\psfrag{V3}[Bc][Bc]{\scalebox{1}{\scriptsize{$(\cat Fa\tilde b)_!$}}}
\psfrag{V4}[Bc][Bc]{\scalebox{1}{\scriptsize{$(\cat F b \tilde a)_!$}}}
\psfrag{L}[Bc][Bc]{\scalebox{1}{\scriptsize{$\cat Fa$}}}
\psfrag{M}[Bc][Bc]{\scalebox{1}{\scriptsize{\;$(\cat Fa)_!$}}}
\psfrag{L1}[Bc][Bc]{\scalebox{1}{\scriptsize{$\cat Fb$}}}
\psfrag{M1}[Bc][Bc]{\scalebox{1}{\scriptsize{\;$(\cat Fb)_*$}}}
\psfrag{F1}[Bc][Bc]{\scalebox{1}{\scriptsize{$\eta$}}}
\psfrag{F2}[Bc][Bc]{\scalebox{1}{\scriptsize{$\varepsilon$}}}
\psfrag{G1}[Bc][Bc]{\scalebox{1}{\scriptsize{$\cat F\gamma$}}}
\psfrag{G2}[Bc][Bc]{\scalebox{1}{\scriptsize{$\cat F\gamma^{-1}$}}}
\includegraphics[scale=.4]{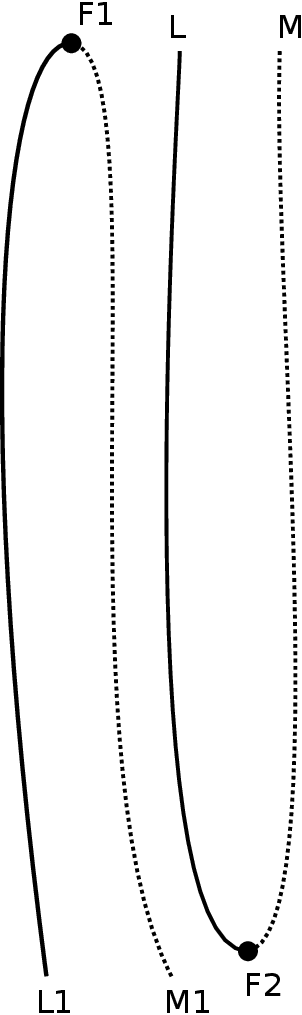}
}}
\]
For the last equality we use the identity $(\cat F\gamma^{\!-1})_!(\cat F\gamma)_*=\id_{(\cat Fa)(\cat Fb)_*}$, which is one half of the strict Mackey formula applied to the inverse Mackey square~$\gamma^{-1}$ (see \Cref{Exa:inv-comma-square}). By a routine application of the interchange law, the latter diagram is then nothing but
\[
\vcenter {\hbox{
\psfrag{U1}[Bc][Bc]{\scalebox{1}{\scriptsize{$\cat F a$}}}
\psfrag{U2}[Bc][Bc]{\scalebox{1}{\scriptsize{$(\cat Fa)_!$}}}
\psfrag{V1}[Bc][Bc]{\scalebox{1}{\scriptsize{$\cat F b$}}}
\psfrag{V2}[Bc][Bc]{\scalebox{1}{\scriptsize{$(\cat Fb)_*$}}}
\psfrag{F1}[Bc][Bc]{\scalebox{1}{\scriptsize{$\eta$}}}
\psfrag{F2}[Bc][Bc]{\scalebox{1}{\scriptsize{$\varepsilon$}}}
\includegraphics[scale=.4]{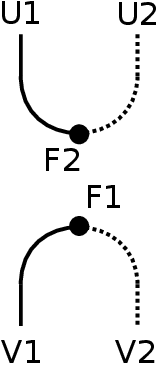}
}}
\]
as desired.

This concludes the proof of~\eqref{eq:aux-the-equality}, and thus that the functor
\[
\widehat{\cat{F}}\colon \Spanhat(\GG;\JJ)(G,H)\too \cat{C}(\cat{F}G,\cat{F}H)
\]
is well-defined. To show that these functors $\widehat{\cat{F}}$ on the Hom categories (for varying $G$ and~$H$) assemble into a pseudo-functor on $\Spanhat$ as required, we need only to find the coherent natural isomorphisms~$\un_{\widehat{\cat{F}}, G}$
\[
\xymatrix@R=10pt{
& \Spanhat(\GG;\JJ)(G,G) \ar[dd]^-{\widehat{\cat{F}}}\\
1 \ar@/^2ex/[ur] \ar@{}[r]|{\quad\NEcell} \ar@/_2ex/[dr] & \\
& \cat{C}(\cat{F} G , \cat{F} G)
}
\]
for each object $G$ and~$\fun_{\widehat{\cat{F}},G,H,K}$
\[
\xymatrix{
\Spanhat(\GG;\JJ) (H,K) \times \Spanhat(\GG;\JJ) (G,H)
 \ar[r]^-{\hat\circ}
 \ar[d]_-{\widehat{\cat{F}} \times \widehat{\cat{F}}}
 \ar@{}[dr]|{\;\;\NEcell} &
 \Spanhat(\GG;\JJ) (G,K)
 \ar[d]^-{\widehat{\cat{F}}} \\
\cat{C}(\cat{F} H, \cat{F} K) \times \cat{C}(\cat{F} G, \cat{F} H)
 \ar[r]^-{\circ} &
 \cat{C}(\cat{F} G, \cat{F} K)
}
\]
for each triple $G,H,K$. For this we take the same isomorphisms as for~$\cat{F}$, and we verify that they are natural also as transformations of functors extended to the span categories. Since they are \emph{invertible} natural transformations, this is automatic by \Cref{Lem:functoriality_ordinary-spans}\,\eqref{it:funct-ordinary-spans-b}.
\end{proof}

\bigbreak
\section{A strict presentation and a calculus of string diagrams}
\label{sec:string-presentation}%
\medskip

In view of the universal property of \Cref{Thm:UP-Spanhat}, it is now straightforward to give a presentation of 2-Mackey motives in terms of the given (2,1)-category~$\GG$, some extra 1-cells and 2-cells corresponding to the `wrong way' adjoint 1-cells associated to the distinguished 1-cells $i\in \JJ$, and the necessary relations. The result is a \emph{strict} 2-category $\Spanhat(\GG;\JJ)^\str$ biequivalent to the double-span $\Spanhat(\GG;\JJ)$. We also describe a string diagram version of this strict presentation, in \Cref{Cons:string_model} below, in order to infuse it with some helpful visual intuition.

Note that, in principle, a similar presentation could also be constructed for the intermediary bicategory $\Span(\GG;\JJ)$ since it enjoys a similar universal property. This is left to the interested reader, keeping \Cref{Rem:expl_inv_BC} in mind.

\begin{Cor}[The strict 2-category of Mackey 2-motives] \label{Cor:strict-Spanhat}
\index{strictification!-- of $\Spanhat(\GG;\JJ)$}%
The bicategory of Mack\-ey 2-motives $\Spanhat (\GG;\JJ)$ is canonically biequivalent to the strict 2-category $\Spanhat (\GG;\JJ)^\str$ generated by:
\begin{enumerate}[\rm(a)]
\item the same 0-cells as those of~$\GG$;
\item the two families of 1-cells
\[
u^*\colon G\to P
\quad \textrm{ and } \quad
i_*\colon P\to H
\]
where $(u\colon P\to G)\in \GG_1$ is an arbitrary 1-cell and $(i\colon P\to H)\in \JJ$ a distinguished faithful 1-cell;
\item the five families of 2-cells
\[
\alpha^*\colon u^*\Rightarrow v^*
\,,\quad
\leta\colon \Id \Rightarrow i^*i_*
\,,\quad
\leps\colon i_* i^* \Rightarrow \Id
\,,\quad
\reta\colon \Id \Rightarrow i_* i^*
\,,\quad
\reps\colon i^*i_* \Rightarrow \Id
\]
where $(\alpha\colon u\Rightarrow v)\in \GG_2$ is an arbitrary 2-cell and $(i\colon P\to H)\in \JJ$;
\end{enumerate}
subject to the following relations:
\begin{enumerate}[\rm(1)]
\item
the images under $(-)^*$ of all the relations in~$\GG$;
\item
the triangle equalities which turn the two families of quadruplets
\[
(i_*,i^*,\leta,\leps)
\quad \textrm{ and } \quad
(i^*,i_*,\reta,\reps)
\qquad (i\in \JJ)
\]
into adjunctions; and
\item
the strict Mackey formula, as in~\eqref{eq:gluing-formula}, namely:
\[
(\gamma_!)^{-1}
:= \big( (\leps\,u^*i_*)(j_* \gamma^* i_*)(j_*v^*\,\leta) \big)^{-1}
= (j_*v^*\,\reps)(j_*(\gamma^{-1})^* i_*)(\reta\,u^*i_*)
 =: (\gamma^{-1})_*
\]
for every Mackey square
\begin{equation}
\label{eq:Mackey-square-string-calculus}%
\vcenter{
\xymatrix@C=14pt@R=14pt{
& L \ar[ld]_-{v} \ar[dr]^-{j}
 \ar@{}[dd]|-{\isocell{\gamma}}
\\
H \ar[dr]_-{i}
&& K \ar[dl]^-{u} \\
&G &
}}
\end{equation}
in~$\GG$ with $i$, and thus~$j$, in~$\JJ$ (see \Cref{Def:Mackey-square});

\end{enumerate}
in addition, of course, to the necessary relations of a 2-category.
\end{Cor}

\begin{proof}
Note that, by the first family of relations, there is a well-defined 2-functor $(-)^*\colon \GG^\op\to \Spanhat(\GG;\JJ)^\str$ sending $u$ and $\alpha$ to the generators $u^*$ and~$\alpha^*$. By \Cref{Thm:UP-Spanhat}, the two remaining families of relations allow us to extend this 2-functor through the (homonymous) canonical pseudo-functor $(-)^*\colon \GG^\op\to \Spanhat(\GG;\JJ)$:
\[
\xymatrix{
 \GG^{\op} \ar[d]_{(-)^*} \ar[r]^-{\cat F\,:=\,(-)^*} & \Spanhat(\GG;\JJ)^{\str} \\
 \Spanhat(\GG;\JJ) \ar@{-->}[ru]_{\widehat{\cat{F}}} &
 }
\]
Indeed, we choose here the unique extension $\widehat{\cat{F}}$ which is determined by the adjunctions $(i_*,i^*,\leta,\leps)$ and $(i^*,i_*,\reta,\reps)$.
Let us define a pseudo-functor $\cat F'$ in the opposite direction: It send the generators $u^*,i_*,\alpha^*$ of $\Spanhat(\GG;\JJ)^\str$ to the homonymous 1- or 2-cells of $\Spanhat(\GG;\JJ)$, and it sends the remaining generating 2-cells $\leta,\leps,\reta,\reps$ to the corresponding canonical units and counits of adjunction as previously constructed in $\Spanhat(\GG;\JJ)$.

It is now a straightforward observation that $\cat F'$ can be endowed with the structure of a pseudo-functor, quasi-inverse to~$\widehat{\cat F}$: use \Cref{Prop:Bhat-ambidextre} and the evident candidates for $\fun_{\cat F'}$ and~$\un_{\cat F'}$. Alternatively, one can verify that the above 2-functor $(-)^*\colon \GG^\op\to \Spanhat(\GG;\JJ)^\str$ satisfies the \emph{same} universal property as $\Spanhat(\GG;\JJ)$, but limited to (strict) 2-functors to (strict) 2-categories~$\cat C$. (This `strict' verification is much easier!) As a consequence, $\widehat{\cat F}$ must be isomorphic to the strictification of $\Spanhat(\GG;\JJ)$, as in \Cref{Rem:strictification}, and in particular $\widehat{\cat F}$ is a biequivalence. We leave the remaining details to the reader.
\end{proof}

The above presentation of $\Spanhat (\GG;\JJ)^\str$ may look a little unwieldy. We will now rephrase it using string diagrams (see \Cref{sec:string_diagrams}), which will provide a visually suggestive string calculus for computing with Mackey 2-motives. This is strongly reminiscent of the Penrose and Reidemeister graphical calculus for (braided) pivotal monoidal categories; see \cite[\S\S\,2.2-2.3]{TuraevVirelizier17}. Contrary to \emph{loc.\ cit.\ }though, we will not attempt to rigorously prove a topological invariance theorem for our diagrams, because of the extra labelling and orientability issues which would make the result somewhat complicated.

\begin{Cons} \label{Cons:string_model}
 We can (re)define $\Spanhat (\GG;\JJ)^\str$ as the 2-category generated by the following basic string diagrams:
\begin{enumerate}[\rm(a)]
\item The plane regions (0-cells), which are labelled by the 0-cells of~$\GG$:
\[
\vcenter { \hbox{
\psfrag{G}[Bc][Bc]{\scalebox{1}{\scriptsize{$G$}}}
\includegraphics[scale=.4]{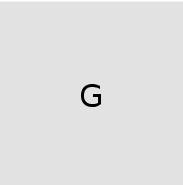}
}}
\]
\item Two families of basic \emph{oriented} strings, namely
\[
\vcenter { \hbox{
\psfrag{A}[Bc][Bc]{\scalebox{1}{\scriptsize{$u$}}}
\psfrag{G}[Bc][Bc]{\scalebox{1}{\scriptsize{$G$}}}
\psfrag{H}[Bc][Bc]{\scalebox{1}{\scriptsize{$P $}}}
\includegraphics[scale=.4]{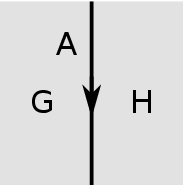}
}}
\quad \textrm{ and } \quad
\vcenter { \hbox{
\psfrag{A}[Bc][Bc]{\scalebox{1}{\scriptsize{$i$}}}
\psfrag{G}[Bc][Bc]{\scalebox{1}{\scriptsize{$P$}}}
\psfrag{H}[Bc][Bc]{\scalebox{1}{\scriptsize{$H$}}}
\includegraphics[scale=.4]{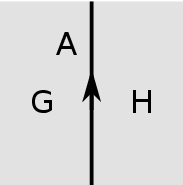}
}}
\]
for every 1-cell $u\colon P\to G$ of $\GG$ and every $(i\colon P\into H)$ in~$\JJ$. The orientation, upwards or downwards, is indicated by the arrow-head placed on the string. Similarly as before, such a string denotes a 1-cell in $\Spanhat (\GG;\JJ)^\str$ from the left-hand region to the right-hand one. The orientation allows us to have \emph{two} possible strings decorated with the label~$i$, for $i\colon P\into H$ in~$\JJ$: one string pointing downward corresponding to~$i^*$ (from $G$ to $P$) and one string pointing upward corresponding to~$i_!=i_*$ (from $P$ to~$H$).
\item A 2-cell (dot) as follows
\[
\vcenter { \hbox{
\psfrag{A}[Bc][Bc]{\scalebox{1}{\scriptsize{$u$}}}
\psfrag{B}[Bc][Bc]{\scalebox{1}{\scriptsize{$v$}}}
\psfrag{G}[Bc][Bc]{\scalebox{1}{\scriptsize{$G$}}}
\psfrag{H}[Bc][Bc]{\scalebox{1}{\scriptsize{$P$}}}
\psfrag{F}[Bc][Bc]{\scalebox{1}{\scriptsize{$\alpha$}}}
\includegraphics[scale=.4]{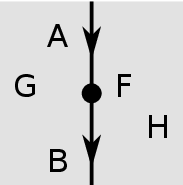}
}}
\]
for every 2-cell $\alpha\colon u\Rightarrow v\colon P\to G$ of~$\GG$, as well as four additional 2-cells
\[
\vcenter { \hbox{
\psfrag{A}[Bc][Bc]{\scalebox{1}{\scriptsize{$i$}}}
\psfrag{G}[Bc][Bc]{\scalebox{1}{\scriptsize{$P$}}}
\psfrag{H}[Bc][Bc]{\scalebox{1}{\scriptsize{$H$}}}
\includegraphics[scale=.4]{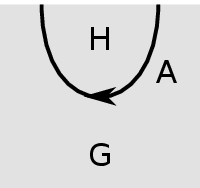}
}}
\quad\quad
\vcenter { \hbox{
\psfrag{A}[Bc][Bc]{\scalebox{1}{\scriptsize{$i$}}}
\psfrag{G}[Bc][Bc]{\scalebox{1}{\scriptsize{$H$}}}
\psfrag{H}[Bc][Bc]{\scalebox{1}{\scriptsize{$P$}}}
\includegraphics[scale=.4]{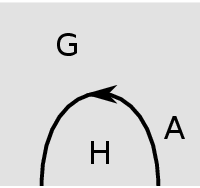}
}}
\quad\quad
\vcenter { \hbox{
\psfrag{A}[Bc][Bc]{\scalebox{1}{\scriptsize{$i$}}}
\psfrag{G}[Bc][Bc]{\scalebox{1}{\scriptsize{$H$}}}
\psfrag{H}[Bc][Bc]{\scalebox{1}{\scriptsize{$P$}}}
\includegraphics[scale=.4]{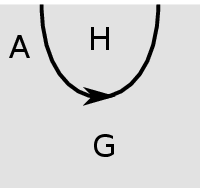}
}}
\quad\quad
\vcenter { \hbox{
\psfrag{A}[Bc][Bc]{\scalebox{1}{\scriptsize{$i$}}}
\psfrag{G}[Bc][Bc]{\scalebox{1}{\scriptsize{$P$}}}
\psfrag{H}[Bc][Bc]{\scalebox{1}{\scriptsize{$H$}}}
\includegraphics[scale=.4]{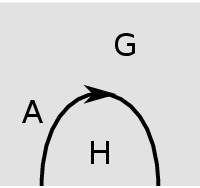}
}}
\]
for every 1-cell $i\colon P\to H$ in~$\JJ$. We leave the latter unlabeled and undotted because the orientation and shape of the string suffices for distinguishing them.
\end{enumerate}
These basic 2-cells can be combined vertically and horizontally according to the usual rules, taking care to also preserve the orientations on all strings. The resulting combined string diagrams are subject to the usual relations of a 2-category (the insertion and deletion of identities as in \Cref{Exa:identity-strings}, the interchange law as in \Cref{Exa:strings-for-exchange}, etc.), as well as the additional three families of relations:
\begin{enumerate}[\rm(1)]
\item All the relations coming from the 2-category~$\GG$.
\item The following `zig-zag' relations
\begin{equation} \label{eq:zigzag-relations}
\vcenter { \hbox{
\psfrag{A}[Bc][Bc]{\scalebox{1}{\scriptsize{$i$}}}
\psfrag{H}[Bc][Bc]{\scalebox{1}{\scriptsize{$P$}}}
\psfrag{G}[Bc][Bc]{\scalebox{1}{\scriptsize{$H$}}}
\includegraphics[scale=.4]{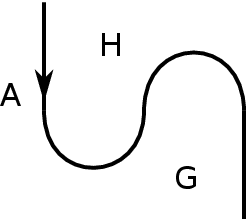}
}}
\quad =\quad
\vcenter { \hbox{
\psfrag{A}[Bc][Bc]{\scalebox{1}{\scriptsize{$i$}}}
\psfrag{H}[Bc][Bc]{\scalebox{1}{\scriptsize{$P$}}}
\psfrag{G}[Bc][Bc]{\scalebox{1}{\scriptsize{$H$}}}
\includegraphics[scale=.4]{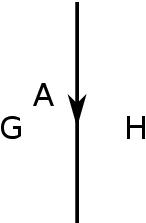}
}}
\quad = \quad
\vcenter { \hbox{
\psfrag{A}[Bc][Bc]{\scalebox{1}{\scriptsize{$i$}}}
\psfrag{H}[Bc][Bc]{\scalebox{1}{\scriptsize{$H$}}}
\psfrag{G}[Bc][Bc]{\scalebox{1}{\scriptsize{$P$}}}
\includegraphics[scale=.4]{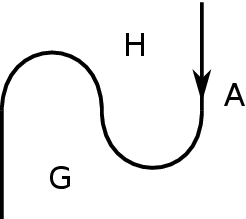}
}}
\end{equation}
and
\begin{equation} \label{eq:zigzag-relation2}
\vcenter { \hbox{
\psfrag{A}[Bc][Bc]{\scalebox{1}{\scriptsize{$i$}}}
\psfrag{H}[Bc][Bc]{\scalebox{1}{\scriptsize{$P$}}}
\psfrag{G}[Bc][Bc]{\scalebox{1}{\scriptsize{$H$}}}
\includegraphics[scale=.4]{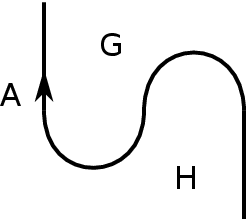}
}}
\quad =\quad
\vcenter { \hbox{
\psfrag{A}[Bc][Bc]{\scalebox{1}{\scriptsize{$i$}}}
\psfrag{H}[Bc][Bc]{\scalebox{1}{\scriptsize{$P$}}}
\psfrag{G}[Bc][Bc]{\scalebox{1}{\scriptsize{$H$}}}
\includegraphics[scale=.4]{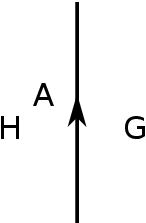}
}}
\quad = \quad
\vcenter { \hbox{
\psfrag{A}[Bc][Bc]{\scalebox{1}{\scriptsize{$i$}}}
\psfrag{H}[Bc][Bc]{\scalebox{1}{\scriptsize{$H$}}}
\psfrag{G}[Bc][Bc]{\scalebox{1}{\scriptsize{$P$}}}
\includegraphics[scale=.4]{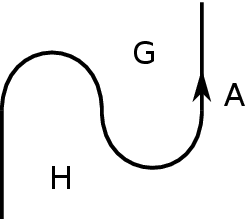}
}}
\end{equation}
for every $(i\colon P\to H)\in \JJ$.

\item And for every Mackey square \eqref{eq:Mackey-square-string-calculus}, the two `pull-over' relations
\begin{equation} \label{eq:string-gluing}
\vcenter { \hbox{
\psfrag{A}[Bc][Bc]{\scalebox{1}{\scriptsize{$i$}}}
\psfrag{B}[Bc][Bc]{\scalebox{1}{\scriptsize{$u$}}}
\psfrag{C}[Bc][Bc]{\scalebox{1}{\scriptsize{$v$}}}
\psfrag{D}[Bc][Bc]{\scalebox{1}{\scriptsize{$j$}}}
\psfrag{X}[Bc][Bc]{\scalebox{1}{\scriptsize{$H$}}}
\psfrag{Y}[Bc][Bc]{\scalebox{1}{\scriptsize{$L$}}}
\psfrag{Z}[Bc][Bc]{\scalebox{1}{\scriptsize{$K$}}}
\psfrag{W}[Bc][Bc]{\scalebox{1}{\scriptsize{$G$}}}
\includegraphics[scale=.4]{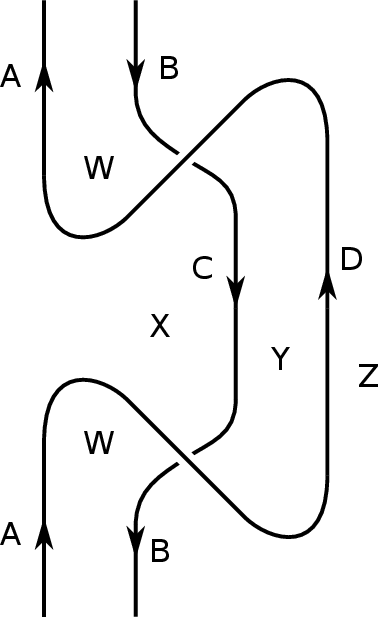}
}}
=
\vcenter { \hbox{
\psfrag{A}[Bc][Bc]{\scalebox{1}{\scriptsize{$i$}}}
\psfrag{B}[Bc][Bc]{\scalebox{1}{\scriptsize{$u$}}}
\psfrag{X}[Bc][Bc]{\scalebox{1}{\scriptsize{$H$}}}
\psfrag{Y}[Bc][Bc]{\scalebox{1}{\scriptsize{$G$}}}
\psfrag{Z}[Bc][Bc]{\scalebox{1}{\scriptsize{$K$}}}
\includegraphics[scale=.4]{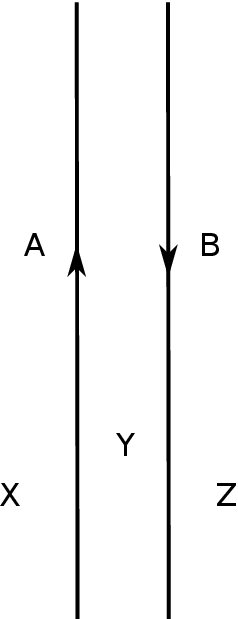}
}}
\textrm{ and } \;\;
\vcenter { \hbox{
\psfrag{A}[Bc][Bc]{\scalebox{1}{\scriptsize{$i$}}}
\psfrag{B}[Bc][Bc]{\scalebox{1}{\scriptsize{$u$}}}
\psfrag{C}[Bc][Bc]{\scalebox{1}{\scriptsize{$v$}}}
\psfrag{D}[Bc][Bc]{\scalebox{1}{\scriptsize{$j$}}}
\psfrag{X}[Bc][Bc]{\scalebox{1}{\scriptsize{$H$}}}
\psfrag{Y}[Bc][Bc]{\scalebox{1}{\scriptsize{$G$}}}
\psfrag{Z}[Bc][Bc]{\scalebox{1}{\scriptsize{$K$}}}
\psfrag{W}[Bc][Bc]{\scalebox{1}{\scriptsize{$L$}}}
\includegraphics[scale=.4]{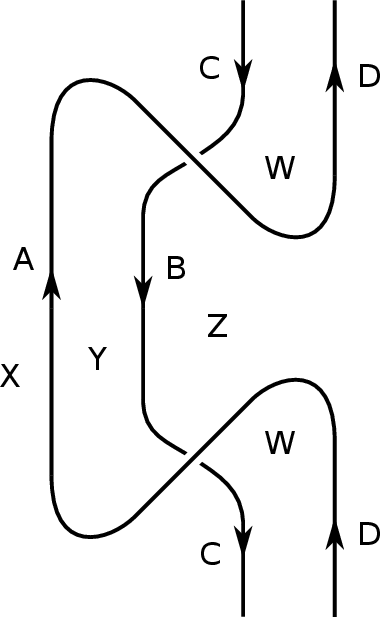}
}}
=
\vcenter { \hbox{
\psfrag{C}[Bc][Bc]{\scalebox{1}{\scriptsize{$v$}}}
\psfrag{D}[Bc][Bc]{\scalebox{1}{\scriptsize{$j$}}}
\psfrag{X}[Bc][Bc]{\scalebox{1}{\scriptsize{$H$}}}
\psfrag{W}[Bc][Bc]{\scalebox{1}{\scriptsize{$L$}}}
\psfrag{Z}[Bc][Bc]{\scalebox{1}{\scriptsize{$K$}}}
\includegraphics[scale=.4]{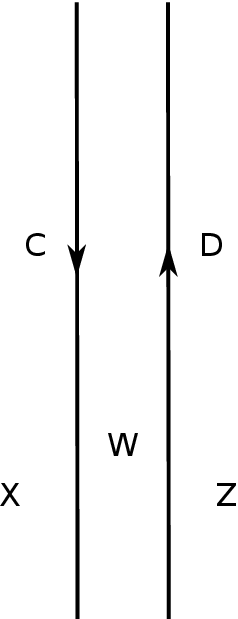}
}}
\end{equation}
where we have depicted the `exchange' 2-cell $\gamma$ of the Mackey square with a crossing~$\vcenter { \hbox{
\includegraphics[scale=.4]{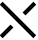}
}}$ and its inverse $\gamma^{-1}$ with the corresponding uncrossing~$\vcenter { \hbox{
\includegraphics[scale=.4]{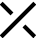}
}}$, rather than with labeled dots~$\bullet$. (Beware that each crossing still indicates a 2-cell, so that in general the string labels change when going past it---although the orientation of each strand is preserved.)
\end{enumerate}
\end{Cons}

\begin{Rem} \label{Rem:memorient}
The choice of string orientations in \Cref{Cons:string_model}\,(b) is meant to evoke the popular arrow notations $M\!\!\downarrow_H$ and $M\!\!\uparrow^G$  used in representation theory for the restriction and induction of modules ($H\leq G$).
\end{Rem}

\begin{Rem} \label{Rem:two-functorialities-agree-strings}
The argument in \Cref{Lem:strict-Mackey-pseudo-funs}, on the agreement of the pseudo-functors $\cat F_!$ and~$\cat F_*$, implies that the equation
\[
\alpha_*\; :=\quad
\vcenter { \hbox{
\psfrag{A}[Bc][Bc]{\scalebox{1}{\scriptsize{$j$}}}
\psfrag{B}[Bc][Bc]{\scalebox{1}{\scriptsize{$i$}}}
\psfrag{H}[Bc][Bc]{\scalebox{1}{\scriptsize{$P$}}}
\psfrag{G}[Bc][Bc]{\scalebox{1}{\scriptsize{$H$}}}
\psfrag{F}[Bc][Bc]{\scalebox{1}{\scriptsize{$\alpha$}}}
\includegraphics[scale=.4]{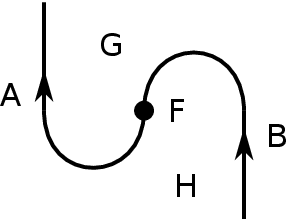}
}}
\quad =\quad
\vcenter { \hbox{
\psfrag{A}[Bc][Bc]{\scalebox{1}{\scriptsize{$j$}}}
\psfrag{B}[Bc][Bc]{\scalebox{1}{\scriptsize{$i$}}}
\psfrag{H}[Bc][Bc]{\scalebox{1}{\scriptsize{$H$}}}
\psfrag{G}[Bc][Bc]{\scalebox{1}{\scriptsize{$P$}}}
\psfrag{F}[Bc][Bc]{\scalebox{1}{\scriptsize{$\alpha$}}}
\includegraphics[scale=.4]{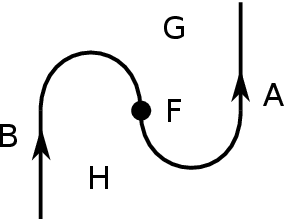}
}}
\quad =: \; \alpha_!
\]
holds in $\Spanhat(\GG;\JJ)^\str$ for every 2-cell $\alpha\colon i\Rightarrow j\colon P\into H$ between 1-cells $i,j$ in~$\JJ$. We invite the reader to (re)check this by themselves, by noticing how it immediately follows from the pull-over relations \eqref{eq:string-gluing} for either of the Mackey squares
\[
\vcenter { \hbox{
\xymatrix@C=14pt@R=14pt{
& \ar[ld]_-{\Id} \ar[dr]^-{j} & \\
 \ar[dr]_i \ar@{}[rr]|{\Ecell\, \alpha} && \ar[dl]^\Id \\
& &
}
}}
\quad\quad \textrm{ or } \quad\quad
\vcenter { \hbox{
\xymatrix@C=14pt@R=14pt{
& \ar[ld]_-{\Id} \ar[dr]^-{i} & \\
 \ar[dr]_j \ar@{}[rr]|{\Ecell\, \alpha^{-1}} && \ar[dl]^\Id \\
& &
}
}}
\]
together with the fact that, on such squares, taking mates $(-)_!$ and $(-)_*$ commute with taking inverses.
By viewing the above equation as a generalization of \eqref{eq:zigzag-relation2}, it makes sense to introduce the simple notation
\[
\vcenter { \hbox{
\psfrag{A}[Bc][Bc]{\scalebox{1}{\scriptsize{$j$}}}
\psfrag{B}[Bc][Bc]{\scalebox{1}{\scriptsize{$i$}}}
\psfrag{G}[Bc][Bc]{\scalebox{1}{\scriptsize{$P$}}}
\psfrag{H}[Bc][Bc]{\scalebox{1}{\scriptsize{$H$}}}
\psfrag{F}[Bc][Bc]{\scalebox{1}{\scriptsize{$\alpha$}}}
\includegraphics[scale=.4]{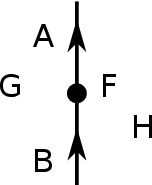}
}}
\]
for the 2-cell $\alpha_!=\alpha_*$, suggesting that we may twist an oriented string upside down (provided it lives in~$\JJ$) while preserving the labeled dots it carries.
\end{Rem}

We end this section with an example of a nontrivial computation by means of the above string calculus.

\begin{Exa}[Special Frobenius via strings]
\label{Exa:special_Frobenius_strings}
Let us provide a complete proof of the special Frobenius property of the ambidextrous adjunction $i_*\dashv i^* \dashv i_*$ in~$\Spanhat$ for any $i\colon H\into G$ in~$\JJ$, which we had only sketched in Proposition~\ref{Prop:Bhat-ambidextre}\,\eqref{it:special-Frob}.
Thus we claim that $\reps \, \leta = \id_{\Id_H}$, or equivalently, in string diagrams:
\[
\vcenter {\hbox{
\psfrag{A}[Bc][Bc]{\scalebox{1}{\scriptsize{$i$}}}
\psfrag{G}[Bc][Bc]{\scalebox{1}{\scriptsize{$G$}}}
\psfrag{H}[Bc][Bc]{\scalebox{1}{\scriptsize{$H$}}}
\includegraphics[scale=.4]{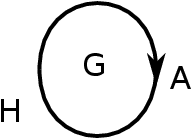}
}}
\quad = \quad
\vcenter {\hbox{
\psfrag{X}[Bc][Bc]{\scalebox{1}{\scriptsize{$H$}}}
\includegraphics[scale=.4]{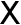}
}}
\quad .
\]
In order to prove this, we recall from \emph{loc.\,cit.\ }that in any (2,1)-category~$\GG$ the square
\begin{equation} \label{eq:Frob-isocomma}
\vcenter{\xymatrix@C=14pt@R=14pt{
& H \ar[ld]_-{\Id} \ar[rd]^-{\Id}
\\
H \ar[rd]_-{\Delta}
 \ar@{}[rr]|{\oEcell{\id_\Delta}}
&& H \ar[ld]^-{\Delta}
\\
& (i/i)
}}
\end{equation}
is a Mackey square whenever~$i$ is faithful, where $\Delta=\langle \Id,\Id,\id_{i}\rangle$ is the unique 1-cell $H\to (i/i)$ such that $\pr_1\Delta=\Id_H$, $\pr_2\Delta=\Id_H$
and $\gamma \Delta =\id_{i}$:
\begin{align} \label{eq:def-Delta}
\vcenter { \hbox{
\xymatrix@C=14pt@R=14pt{
& H \ar[d]^\Delta \ar@/_4ex/[ldd]_-{\Id} \ar@/^4ex/[rdd]^-{\Id} & \\
& (i/i) \ar[ld]_-{\pr_1} \ar[dr]^-{\pr_2} & \\
H \ar[dr]_i \ar@{}[rr]|{\oEcell{\gamma}} && H \ar[dl]^i \\
&G &
}
}}
\quad \quad = \quad \quad
\vcenter { \hbox{
\xymatrix@C=14pt@R=14pt{
& H \ar@/_3ex/[ldd]_-{\Id} \ar@/^3ex/[rdd]^-{\Id} & \\
&& \\
H \ar@{}[rru]|{\oEcell{\id_{i}}} \ar[dr]_i && H \ar[dl]^i \\
&G &
}
}}
\end{align}
The point here is not to redo the latter computation, which already happens in~$\GG$, but to show that string diagrams allow us to rigorously keep track of \emph{all} the information in the 2-cells of~$\Spanhat$, which can otherwise be quite overwhelming if written out in full using cell diagrams.
We begin by translating \eqref{eq:def-Delta} into oriented strings (later, the identity 1-cells $\Id_H$ will be mostly omitted or denoted by a dotted line):
\begin{equation} \label{eq:or-string-transl}
\vcenter{\hbox{
\psfrag{A}[Bc][Bc]{\scalebox{1}{\scriptsize{$i$}}}
\psfrag{B}[Bc][Bc]{\scalebox{1}{\scriptsize{$i$}}}
\psfrag{C}[Bc][Bc]{\scalebox{1}{\scriptsize{$\gamma$}}}
\psfrag{D}[Bc][Bc]{\scalebox{1}{\scriptsize{$\Delta$}}}
\psfrag{P}[Bc][Bc]{\scalebox{1}{\scriptsize{$\pr_1$}}}
\psfrag{Q}[Bc][Bc]{\scalebox{1}{\scriptsize{$\pr_2$}}}
\psfrag{X}[Bc][Bc]{\scalebox{1}{\scriptsize{$G$}}}
\psfrag{Z}[Bc][Bc]{\scalebox{1}{\scriptsize{$H$}}}
\psfrag{W}[Bc][Bc]{\scalebox{1}{\scriptsize{$i/i$}}}
\psfrag{id}[Bc][Bc]{\scalebox{1}{\scriptsize{$\id$}}}
\psfrag{Id}[Bc][Bc]{\scalebox{1}{\scriptsize{$\Id$}}}
\includegraphics[scale=.4]{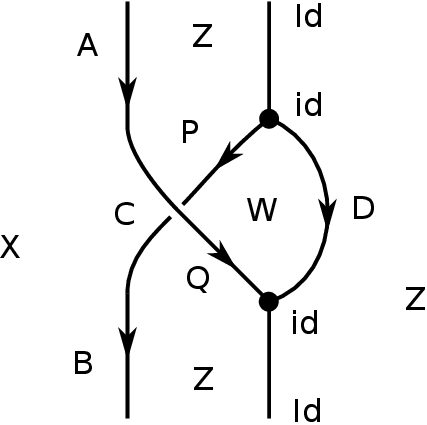}
}}
\quad\quad = \quad\quad
\vcenter{\hbox{
\psfrag{A}[Bc][Bc]{\scalebox{1}{\scriptsize{$i$}}}
\psfrag{X}[Bc][Bc]{\scalebox{1}{\scriptsize{$G$}}}
\psfrag{Z}[Bc][Bc]{\scalebox{1}{\scriptsize{$H$}}}
\includegraphics[scale=.4]{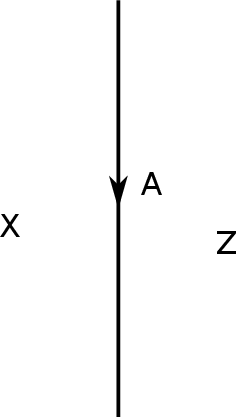}
}}
\end{equation}
Note that there is also a version of this identity with $\gamma^{-1}$ instead of~$\gamma$.
Now we compute as follows, where as usual we highlight at each step the areas where the action is about to happen:
\[
\vcenter {\hbox{
\psfrag{A}[Bc][Bc]{\scalebox{1}{\scriptsize{$i$}}}
\psfrag{G}[Bc][Bc]{\scalebox{1}{\scriptsize{$G$}}}
\psfrag{H}[Bc][Bc]{\scalebox{1}{\scriptsize{$H$}}}
\includegraphics[scale=.4]{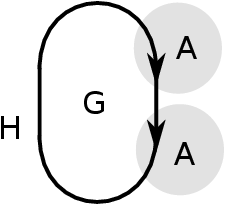}
}}
\;\stackrel{\textrm{\eqref{eq:or-string-transl}}}{=}\;
\vcenter{\hbox{
\psfrag{A}[Bc][Bc]{\scalebox{1}{\scriptsize{$i$}}}
\psfrag{B}[Bc][Bc]{\scalebox{1}{\scriptsize{$i$}}}
\psfrag{D}[Bc][Bc]{\scalebox{1}{\scriptsize{$\Delta$}}}
\psfrag{P}[Bc][Bc]{\scalebox{1}{\scriptsize{$\pr_1$}}}
\psfrag{Q}[Bc][Bc]{\scalebox{1}{\scriptsize{$\pr_2$}}}
\psfrag{X}[Bc][Bc]{\scalebox{1}{\scriptsize{$H$}}}
\psfrag{Y}[Bc][Bc]{\scalebox{1}{\scriptsize{$G$}}}
\psfrag{Z}[Bc][Bc]{\scalebox{1}{\scriptsize{$H$}}}
\psfrag{W}[Bc][Bc]{\scalebox{1}{\scriptsize{$i/i$}}}
\psfrag{id}[Bc][Bc]{\scalebox{1}{\scriptsize{$\id$}}}
\includegraphics[scale=.4]{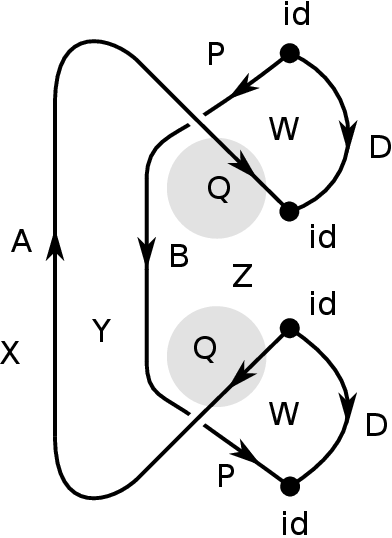}
}}
\;\stackrel{\textrm{\eqref{eq:zigzag-relations}}}{=}\;
\vcenter{\hbox{
\psfrag{A}[Bc][Bc]{\scalebox{1}{\scriptsize{$i$}}}
\psfrag{B}[Bc][Bc]{\scalebox{1}{\scriptsize{$i$}}}
\psfrag{D}[Bc][Bc]{\scalebox{1}{\scriptsize{$\Delta$}}}
\psfrag{P}[Bc][Bc]{\scalebox{1}{\scriptsize{$\pr_1$}}}
\psfrag{Q}[Bc][Bc]{\scalebox{1}{\scriptsize{$\pr_2$}}}
\psfrag{X}[Bc][Bc]{\scalebox{1}{\scriptsize{$H$}}}
\psfrag{Y}[Bc][Bc]{\scalebox{1}{\scriptsize{$G$}}}
\psfrag{Z}[Bc][Bc]{\scalebox{1}{\scriptsize{$H$}}}
\psfrag{W}[Bc][Bc]{\scalebox{1}{\scriptsize{$i/i$}}}
\psfrag{id}[Bc][Bc]{\scalebox{1}{\scriptsize{$\id$}}}
\includegraphics[scale=.4]{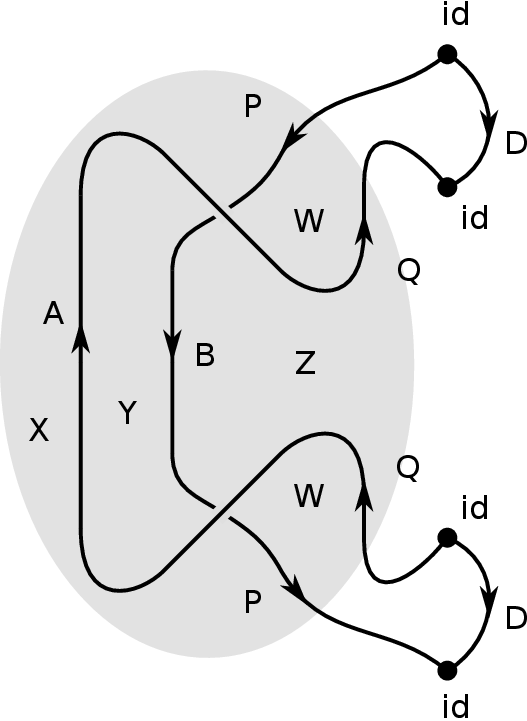}
}}
\underset{\textrm{for }\eqref{eq:def-Delta}}{\overset{\textrm{\eqref{eq:string-gluing}}}{=}}
\]
\[
\vcenter{\hbox{
\psfrag{A}[Bc][Bc]{\scalebox{1}{\scriptsize{$i$}}}
\psfrag{B}[Bc][Bc]{\scalebox{1}{\scriptsize{$i$}}}
\psfrag{D}[Bc][Bc]{\scalebox{1}{\scriptsize{$\Delta$}}}
\psfrag{P}[Bc][Bc]{\scalebox{1}{\scriptsize{$\pr_1$}}}
\psfrag{Q}[Bc][Bc]{\scalebox{1}{\scriptsize{$\pr_2$}}}
\psfrag{X}[Bc][Bc]{\scalebox{1}{\scriptsize{$H$}}}
\psfrag{Y}[Bc][Bc]{\scalebox{1}{\scriptsize{$G$}}}
\psfrag{Z}[Bc][Bc]{\scalebox{1}{\scriptsize{$H$}}}
\psfrag{W}[Bc][Bc]{\scalebox{1}{\scriptsize{$i/i$}}}
\psfrag{id}[Bc][Bc]{\scalebox{1}{\scriptsize{$\id$}}}
\includegraphics[scale=.4]{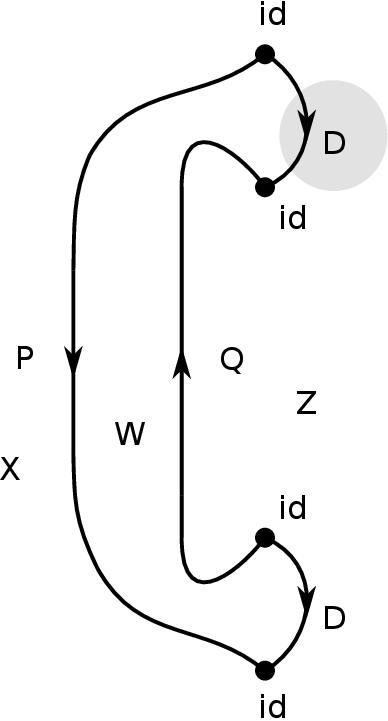}
}}
\stackrel{\textrm{\eqref{eq:zigzag-relations}}}{=}\;
\vcenter{\hbox{
\psfrag{A}[Bc][Bc]{\scalebox{1}{\scriptsize{$i$}}}
\psfrag{B}[Bc][Bc]{\scalebox{1}{\scriptsize{$i$}}}
\psfrag{D}[Bc][Bc]{\scalebox{1}{\scriptsize{$\Delta$}}}
\psfrag{P}[Bc][Bc]{\scalebox{1}{\scriptsize{$\pr_1$}}}
\psfrag{Q}[Bc][Bc]{\scalebox{1}{\scriptsize{$\pr_2$}}}
\psfrag{X}[Bc][Bc]{\scalebox{1}{\scriptsize{$H$}}}
\psfrag{Y}[Bc][Bc]{\scalebox{1}{\scriptsize{$G$}}}
\psfrag{Z}[Bc][Bc]{\scalebox{1}{\scriptsize{$H$}}}
\psfrag{W}[Bc][Bc]{\scalebox{1}{\scriptsize{$i/i$}}}
\psfrag{id}[Bc][Bc]{\scalebox{1}{\scriptsize{$\id$}}}
\psfrag{ID}[Bc][Bc]{\scalebox{1}{\scriptsize{$\Id$}}}
\includegraphics[scale=.4]{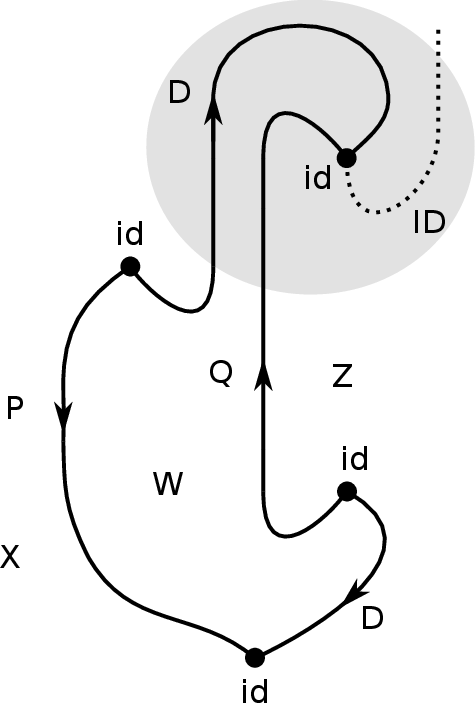}
}}
\overset{\eqref{Rem:two-functorialities-agree-strings}}{=}\;
\vcenter{\hbox{
\psfrag{A}[Bc][Bc]{\scalebox{1}{\scriptsize{$i$}}}
\psfrag{B}[Bc][Bc]{\scalebox{1}{\scriptsize{$i$}}}
\psfrag{D}[Bc][Bc]{\scalebox{1}{\scriptsize{$\Delta$}}}
\psfrag{P}[Bc][Bc]{\scalebox{1}{\scriptsize{$\pr_1$}}}
\psfrag{Q}[Bc][Bc]{\scalebox{1}{\scriptsize{$\pr_2$}}}
\psfrag{X}[Bc][Bc]{\scalebox{1}{\scriptsize{$H$}}}
\psfrag{Y}[Bc][Bc]{\scalebox{1}{\scriptsize{$G$}}}
\psfrag{Z}[Bc][Bc]{\scalebox{1}{\scriptsize{$H$}}}
\psfrag{W}[Bc][Bc]{\scalebox{1}{\scriptsize{$i/i$}}}
\psfrag{id}[Bc][Bc]{\scalebox{1}{\scriptsize{$\id$}}}
\psfrag{ID}[Bc][Bc]{\scalebox{1}{\scriptsize{$\Id$}}}
\includegraphics[scale=.4]{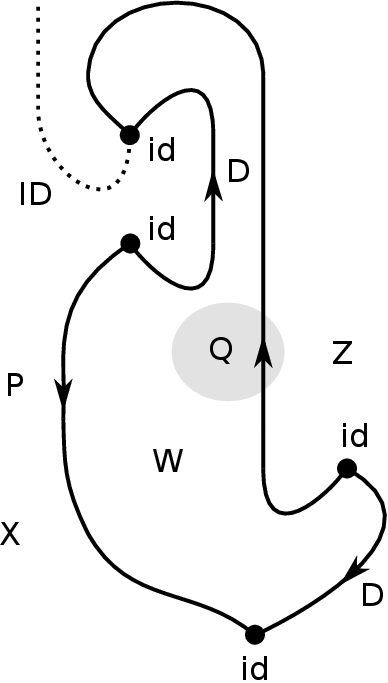}
}}
\stackrel{\textrm{\eqref{eq:zigzag-relations}}}{=}
\]
\[
\vcenter{\hbox{
\psfrag{A}[Bc][Bc]{\scalebox{1}{\scriptsize{$i$}}}
\psfrag{B}[Bc][Bc]{\scalebox{1}{\scriptsize{$i$}}}
\psfrag{D}[Bc][Bc]{\scalebox{1}{\scriptsize{$\Delta$}}}
\psfrag{P}[Bc][Bc]{\scalebox{1}{\scriptsize{$\pr_1$}}}
\psfrag{Q}[Bc][Bc]{\scalebox{1}{\scriptsize{$\pr_2$}}}
\psfrag{X}[Bc][Bc]{\scalebox{1}{\scriptsize{$H$}}}
\psfrag{Y}[Bc][Bc]{\scalebox{1}{\scriptsize{$G$}}}
\psfrag{Z}[Bc][Bc]{\scalebox{1}{\scriptsize{$H$}}}
\psfrag{W}[Bc][Bc]{\scalebox{1}{\scriptsize{$i/i$}}}
\psfrag{id}[Bc][Bc]{\scalebox{1}{\scriptsize{$\id$}}}
\psfrag{ID}[Bc][Bc]{\scalebox{1}{\scriptsize{$\Id$}}}
\includegraphics[scale=.4]{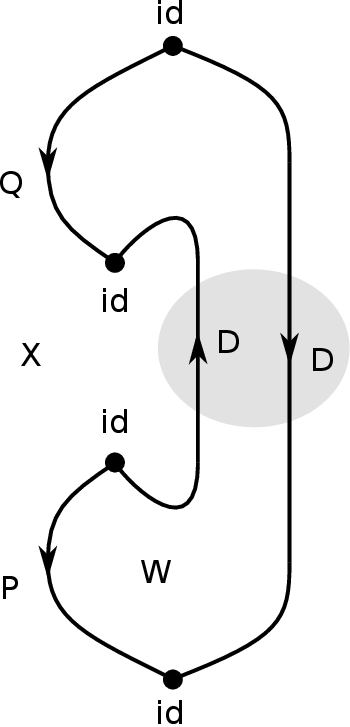}
}}
\quad \underset{\textrm{for }\eqref{eq:Frob-isocomma}}{\overset{\textrm{\eqref{eq:string-gluing}}}{=}}
\vcenter{\hbox{
\psfrag{A}[Bc][Bc]{\scalebox{1}{\scriptsize{$i$}}}
\psfrag{B}[Bc][Bc]{\scalebox{1}{\scriptsize{$i$}}}
\psfrag{D}[Bc][Bc]{\scalebox{1}{\scriptsize{$\Delta$}}}
\psfrag{P}[Bc][Bc]{\scalebox{1}{\scriptsize{$\pr_1$}}}
\psfrag{Q}[Bc][Bc]{\scalebox{1}{\scriptsize{$\pr_2$}}}
\psfrag{X}[Bc][Bc]{\scalebox{1}{\scriptsize{$H$}}}
\psfrag{Y}[Bc][Bc]{\scalebox{1}{\scriptsize{$G$}}}
\psfrag{Z}[Bc][Bc]{\scalebox{1}{\scriptsize{$H$}}}
\psfrag{W}[Bc][Bc]{\scalebox{1}{\scriptsize{$i/i$}}}
\psfrag{id}[Bc][Bc]{\scalebox{1}{\scriptsize{$\id$}}}
\psfrag{ID}[Bc][Bc]{\scalebox{1}{\scriptsize{$\Id$}}}
\includegraphics[scale=.4]{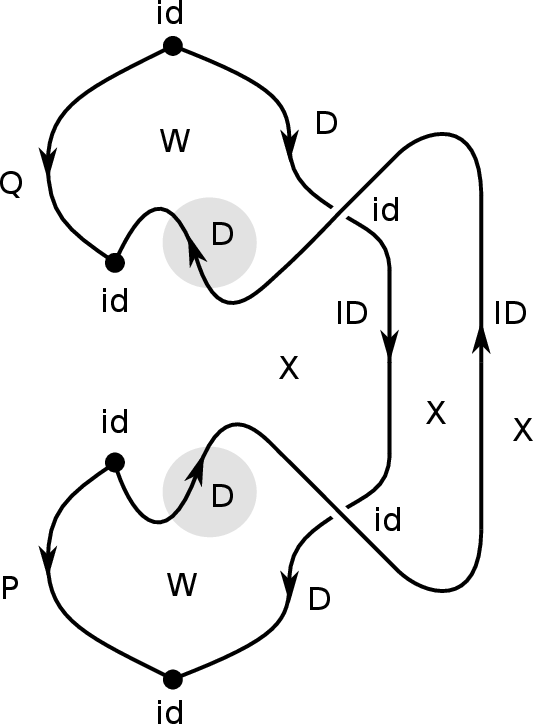}
}}
\;\;\stackrel{\textrm{\eqref{eq:zigzag-relations}}}{=}\;\;
\vcenter{\hbox{
\psfrag{A}[Bc][Bc]{\scalebox{1}{\scriptsize{$i$}}}
\psfrag{B}[Bc][Bc]{\scalebox{1}{\scriptsize{$i$}}}
\psfrag{D}[Bc][Bc]{\scalebox{1}{\scriptsize{$\Delta$}}}
\psfrag{P}[Bc][Bc]{\scalebox{1}{\scriptsize{$\pr_1$}}}
\psfrag{Q}[Bc][Bc]{\scalebox{1}{\scriptsize{$\pr_2$}}}
\psfrag{X}[Bc][Bc]{\scalebox{1}{\scriptsize{$H$}}}
\psfrag{Y}[Bc][Bc]{\scalebox{1}{\scriptsize{$G$}}}
\psfrag{Z}[Bc][Bc]{\scalebox{1}{\scriptsize{$H$}}}
\psfrag{W}[Bc][Bc]{\scalebox{1}{\scriptsize{$i/i$}}}
\psfrag{id}[Bc][Bc]{\scalebox{1}{\scriptsize{$\id$}}}
\psfrag{ID}[Bc][Bc]{\scalebox{1}{\scriptsize{$\Id$}}}
\includegraphics[scale=.4]{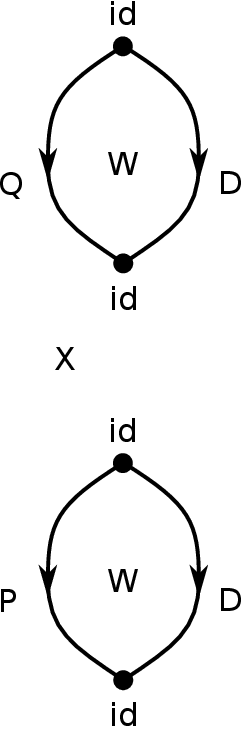}
}}
\]
Then we conclude with the relations $\pr_1\Delta = \Id_H = \pr_2\Delta$ of \eqref{eq:def-Delta}.
\end{Exa}

\bigbreak
\section{The bicategory of Mackey 2-functors}
\label{sec:bicat_2Mack}%
\medskip

Let us investigate morphisms of Mackey 2-functors, which only made a brief appearance in \Cref{sec:sub-quotients} (see \Cref{Def:morphism-of-Mackey-2-functors}). The following proposition characterizes morphisms in four equivalent ways.

\begin{Prop} \label{Prop:Spanhat_trans}
Let $t\colon \MM\Rightarrow \cat N$ be a pre-morphism of (rectified) Mackey 2-functors $\MM,\cat N\colon \GG^\op \to \ADD$ (\ie $t$ is a pseudo-natural transformation of the underlying 2-functors).
The following are equivalent:
\begin{enumerate}[\rm(i)]
\item
\label{it:trans-a}%
$t$ extends (in a unique way) to a pseudo-natural transformation
$\hat t\colon \widehat{\MM} \Rightarrow \widehat{\cat N}$
between the two extended pseudo-functors $\widehat{\MM}, \widehat{\cat N}\colon \Spanhat(\GG;\JJ)\to \ADD$ obtained as in \Cref{Thm:UP-Spanhat}.
\item
\label{it:trans-b}%
For each $(i\colon H\into G)\in \JJ$, the two mates
\[
\vcenter { \hbox{
\xymatrix{
\MM (G) \ar@{<-}[d]_-{i_!} \ar[r]^-{t_G} & \cat{N}(G) \ar@{<-}[d]^{i_!} \\
\MM (H) \ar[r]_-{t_H} \ar@{}[ur]|{\NWcell\; (t_i^{-1})_!} & \cat{N}(H)
}
}}
\quad \textrm{ and } \quad
\vcenter { \hbox{
\xymatrix{
\MM (G) \ar@{<-}[d]_-{i_*} \ar[r]^-{t_G} & \cat{N}(G) \ar@{<-}[d]^{i_*} \\
\MM (H) \ar[r]_-{t_H} \ar@{}[ur]|{\SEcell\; (t_i)_*} & \cat{N}(H)
}
}}
\]
obtained from the component $t_i \colon i^* \circ t_G \Rightarrow t_H \circ i^*$ are each other's inverses.
\item
\label{it:trans-c}%
For each $(i\colon H\into G)\in \JJ$, the left mate $(t_i\inv)_!$ of~\eqref{it:trans-b} is invertible.
\item
\label{it:trans-d}%
For each $(i\colon H\into G)\in \JJ$, the right mate $(t_i)_*$ of~\eqref{it:trans-b} is invertible.
\end{enumerate}
\end{Prop}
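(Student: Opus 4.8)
The plan is to establish the four-way equivalence by first dispatching the three ``mate'' conditions \eqref{it:trans-b}--\eqref{it:trans-d} against each other using the naturality of the comparison morphism $\Theta$, and then relating this cluster to the extension condition \eqref{it:trans-a} through the universal properties of the two span bicategories. The key preliminary observation is the following collapse of \Cref{Prop:Theta-nat}. Since $\MM$ and $\cat{N}$ are rectified, we have $i_!=i_*$ and the rectified adjunctions are chosen precisely so that the comparison $\Theta_i$ is the identity in both $\MM$ and $\cat{N}$ (see the construction in the Rectification \Cref{Thm:rectification}). Substituting this into the commutative square~\eqref{eq:Theta-nat} of \Cref{Prop:Theta-nat} collapses it to the single relation
\[
(t_i)_* \circ (t_i\inv)_! = \id
\]
between the two mates of the component $t_i$.

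This relation already yields \eqref{it:trans-c}$\Leftrightarrow$\eqref{it:trans-d}: if either mate is invertible, the displayed identity forces the other to be its two-sided inverse, and then both composites are identities, which is precisely \eqref{it:trans-b}; and \eqref{it:trans-b}$\Rightarrow$\eqref{it:trans-c},\eqref{it:trans-d} is tautological since ``being each other's inverses'' includes invertibility. Thus \eqref{it:trans-b}, \eqref{it:trans-c} and \eqref{it:trans-d} are equivalent, with common content that both mates are invertible and mutually inverse.

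Next I would prove \eqref{it:trans-a}$\Rightarrow$\eqref{it:trans-c} and \eqref{it:trans-a}$\Rightarrow$\eqref{it:trans-d} by restriction. Precomposing $\hat t$ with the embedding $(-)_\star\colon \Span(\GG;\JJ)\hook \Spanhat(\GG;\JJ)$ of \Cref{Rem:second-embedding} produces a pseudo-natural transformation $\MM_\star\Rightarrow \cat{N}_\star$ of the restricted pseudo-functors on $\Span(\GG;\JJ)$; these restrictions are the span-extensions of $\MM$ and $\cat{N}$, since both extend the original $2$-functors and are pinned down by the chosen adjoints. By the biequivalence of \Cref{Thm:UP-PsFun-Span}, such a transformation corresponds to a $\JJ_!$-strong transformation whose underlying datum on $\GG^\op$ is $t$, which by \Cref{Def:J_!-strong} is exactly \eqref{it:trans-c}. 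Precomposing instead with $(-)^\star\colon \Span(\GG;\JJ)^\co\hook\Spanhat(\GG;\JJ)$ and invoking the dual \Cref{Thm:UP-PsFun-Span-co} together with \Cref{Def:J_*-stuff} gives \eqref{it:trans-d}.

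The substantial direction is the converse \eqref{it:trans-c} (equivalently all of \eqref{it:trans-b}--\eqref{it:trans-d}) $\Rightarrow$ \eqref{it:trans-a}. Given that $t$ is both $\JJ_!$-strong and $\JJ_*$-strong, \Cref{Thm:UP-PsFun-Span} and \Cref{Thm:UP-PsFun-Span-co} produce extensions of $t$ along $\Span(\GG;\JJ)$ and along $\Span(\GG;\JJ)^\co$, both restricting to $t$ on $\GG^\op$. I would assemble these into a single $\hat t$ on $\Spanhat(\GG;\JJ)$: the $0$-cell components are the $t_G$, and the $1$-cell components are forced (this also yields the asserted uniqueness), being built from $t_u$ and the common invertible value of the two mutually inverse mates. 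The work is to check that these components satisfy naturality and coherence against \emph{all} $2$-cells of $\Spanhat(\GG;\JJ)$; here I would use the strict presentation $\Spanhat(\GG;\JJ)^\str$ of \Cref{Cor:strict-Spanhat}, reducing the verification to the generating $2$-cells $\alpha^*,\leta,\leps,\reta,\reps$. Compatibility with $\alpha^*$ is the hypothesis that $t$ is a transformation on $\GG$, compatibility with $\leta,\leps$ is the mate calculus underlying \eqref{it:trans-c}, and compatibility with $\reta,\reps$ is its dual \eqref{it:trans-d}. The main obstacle, and the place where the mutual-inverse condition \eqref{it:trans-b} is genuinely needed, is verifying that $\hat t$ respects the strict Mackey relation $(\gamma_!)\inv = (\gamma\inv)_*$ of \Cref{Prop:Bhat-ambidextre} — that is, that the forward ($\Span$) and backward ($\Span^\co$) descriptions of the $1$-cell components glue consistently across a Mackey square; I expect the string calculus of \Cref{Cons:string_model} to render this final check tractable.
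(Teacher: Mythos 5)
Your proposal is correct in substance and its first half coincides with the paper's proof: the equivalence \eqref{it:trans-b}$\Leftrightarrow$\eqref{it:trans-c}$\Leftrightarrow$\eqref{it:trans-d} is obtained exactly as you do, by specializing \Cref{Prop:Theta-nat} to the rectified setting where $\Theta=\id$, so that the square~\eqref{eq:Theta-nat} collapses to $(t_i)_*\circ(t_i\inv)_!=\id$ and invertibility of either mate promotes the other to a two-sided inverse. For \eqref{it:trans-a}$\Rightarrow$(mates) the paper is more direct than you: rather than restricting along the two embeddings and quoting the biequivalences, it applies \Cref{Lem:tr_inv_mate} twice --- once to $i_!\dashv i^*$, once to $i^*\dashv i_*$ --- to see that any extension $\hat t$ must satisfy $\hat t(i_!)=(t_i\inv)_!$ and $\hat t(i_*)=((t_i)_*)\inv$, and then the \emph{equality} of 1-cells $i_!=i_*$ in $\Spanhat$ gives \eqref{it:trans-b} at once; your detour through \eqref{it:trans-c} and \eqref{it:trans-d} plus the already-established equivalence is equally valid. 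Where you genuinely diverge is the hard direction \eqref{it:trans-b}$\Rightarrow$\eqref{it:trans-a}: the paper does not pass to the strict presentation of \Cref{Cor:strict-Spanhat}. Instead it takes the two extensions of $t$ supplied by \Cref{Thm:UP-PsFun-Span} and \Cref{Thm:UP-PsFun-Span-co}, observes that the functoriality axioms for $\hat t$ are inherited verbatim from the $\Span$-extension (since $\widehat{\MM}$ and $\widehat{\cat N}$ agree with $\MM_\star$, $\cat N_\star$ on 0-cells, 1-cells and structure isomorphisms), and checks naturality against an arbitrary 2-cell of $\Spanhat$ by writing it as a span of spans $[\beta,\alpha]$, i.e.\ a vertical composite of a ``backward'' and a ``forward'' 2-cell, and pasting the two known naturality squares. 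Your generators-and-relations plan would also work and is essentially the same computation in different packaging.

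One conceptual correction to your last paragraph, however. A pseudo-natural transformation assigns nothing to 2-cells; naturality is a \emph{property}, checked 2-cell by 2-cell, and it is stable under vertical and horizontal composition once functoriality is in place. Consequently, as soon as naturality holds for the generating 2-cells $\alpha^*,\leta,\leps,\reta,\reps$, it holds automatically for both sides of the strict Mackey relation, since these sides are composites of generators which are \emph{equal} as 2-cells of $\Spanhat(\GG;\JJ)^\str$. So the check you single out as the ``main obstacle'' is vacuous, and it is not where \eqref{it:trans-b} enters. The mutual-inverse condition is needed earlier and more locally: because $\Spanhat$ has a single 1-cell $i_!=i_*$, there is a single component $\hat t(i_!)=\hat t(i_*)$ to define; naturality against $(\leta,\leps)$ forces it to equal $(t_i\inv)_!$, while naturality against $(\reta,\reps)$ forces it to equal $((t_i)_*)\inv$, and \eqref{it:trans-b} is precisely the statement that these two prescriptions agree --- this is exactly how the paper uses the hypothesis. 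Your argument survives because you already invoke \eqref{it:trans-b} when defining the components, but the role you assign to it, and the extra verification you plan around the Mackey squares, should be corrected accordingly.
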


\begin{proof}
As was already mentioned in \Cref{sec:sub-quotients}, the equivalence between~\eqref{it:trans-b} and the weaker~\eqref{it:trans-c} or~\eqref{it:trans-d} is immediate because of the commutative square
\begin{equation*}
\vcenter{\xymatrix@C=4em@L=1ex{
t_G i_!^{\scriptscriptstyle \MM} \ar@{=>}[r]^-{\Displ t_G \Theta_i^{\MM}}
& t_G i_*^{\scriptscriptstyle \MM} \ar@{=>}[d]^-{\Displ(t_i)_*}
\\
i_!^{\scriptscriptstyle \NN} t_H \ar@{=>}[r]^-{\Displ\Theta_i^{\NN} t_H} \ar@{=>}[u]^-{\Displ (t_i\inv)_!}
& i_*^{\scriptscriptstyle \NN} t_H
}}
\end{equation*}
of \Cref{Prop:Theta-nat}, which already gives us (in the rectified setting where $\Theta=\Id$) that one composition is the identity:
\[
(t_i)_*\circ (t_i\inv)_!=\id\,.
\]

Now assume~\eqref{it:trans-a}, \ie that we are given a (strong, oplax-oriented) pseudo-natural transformation
$\hat t\colon \widehat{\MM}\Rightarrow \widehat{\cat N}$, and let $\MM:=\widehat{\MM}\circ (-)^*$, $\cat N:=\widehat{\cat N}\circ (-)^*$ and $t:=\hat t\circ (-)^*\colon \MM\to \cat N$ be the restictions to~$\GG$.
\Cref{Lem:tr_inv_mate} (applied to the adjunction $(\ell,r):=(i_!,i^*)$ in $\Spanhat$) shows that the components $\hat t(i_!)$ at each $i\in \JJ$ are uniquely determined by the components $t(i)=\hat t(i^*)$ and the adjunctions $i_!\dashv i^*$, as the mates
\begin{equation} \label{eq:left_computation_of_t}
\hat t(i_!)=(\hat t(i^*)^{-1})_! = (t(i)^{-1})_!
\end{equation}
for all $i\in \JJ$.
Similarly, we may use \Cref{Lem:tr_inv_mate} with the other adjunctions $i^*\dashv i_*$, which yields
\begin{equation} \label{eq:right_computation_of_t}
\hat t(i_*)=(\hat t(i^*)_*)^{-1} = (t(i)_*)^{-1} \,.
\end{equation}
Of course in $\Spanhat$ we have the equality $i_*=i_!$ of 1-cells, hence~\eqref{eq:left_computation_of_t} and~\eqref{eq:right_computation_of_t} together imply that
\begin{equation} \label{eq:equality_of_t's}
(t(i)^{-1})_! = (t(i)_*)^{-1}
\end{equation}
for all $i\in \JJ$. Thus~\eqref{it:trans-a} implies~\eqref{it:trans-b}.

It remains only to show that~\eqref{it:trans-b} implies~(\ref{it:trans-a}).
Thus assume that $t\colon \MM\Rightarrow \cat N$ satisfies~\eqref{it:trans-b}.
Recall that, as in the proof of \Cref{Thm:UP-PsFun-Span}, the naturality and functoriality axioms of $\hat t$ and the decomposition
\begin{align*}
\xymatrix{
& \ar[rr]^-{\hat t} \ar[d]_-{\widehat{\MM} u^*}
 \ar@/_9ex/[dd]_-{\widehat{\MM} (i_! \circ u^*)}
 \ar@{}[ddl]|{\SWcell\;\simeq} &&
 \ar[d]^-{\widehat{\cat{N}} u^*}
 \ar@/^9ex/[dd]^-{\widehat{\cat{N}}(i_!\circ u^*)}
 \ar@{}[dll]|{\SWcell\, \hat t (u^*)} &
\ar@{}[ddl]|{\SWcell\; \simeq} \\
& \ar[rr]^-{\hat t} \ar[d]_-{\widehat{\MM} i_!} &&
 \ar[d]^-{\widehat{\cat{N}} i_!}
 \ar@{}[dll]|{\SWcell\, \hat t (i_!)} & \\
& \ar[rr]_-{\hat t} && &
}
\end{align*}
show that \emph{all} components $\hat t(i_!u^*)$ of $\hat t$ are dictated by~$t$, so we must only show that these components satisfy the axioms of a pseudo-natural transformation $\widehat{\MM}\Rightarrow \widehat{\cat N}$.

Recall that $\widehat{\MM}$ is (re)constructed by gluing together two extensions of $\MM$, namely $\MM_\star$ and~$\MM^\star$, given by \Cref{Cons:UP-Span} and \Cref{Cons:UP-Span-co} respectively:
\[
\vcenter{
\xymatrix@C=3em{
& \Span(\GG;\JJ) \ar[d]^-{(-)_\star} \ar@/^3ex/[rd]^-{\MM_\star}
\\
\GG^\op \ar@/^3ex/[ru]^(.4){(-)^*} \ar@/_3ex/[rd]_-{(-)^{\costar}}
 \ar@{}[r]|-{\eqref{eq:two-embeddings}}
& \Spanhat(\GG;\JJ) \ar[r]^-{\widehat{\MM}}
& \cat{C}
\\
& \Span(\GG;\JJ)^{\co} \ar[u]_-{(-)^\star} \ar@/_3ex/[ru]_-{\MM^\star}
}}
\]
Indeed, by definition (see~\eqref{eq:def-Fhat}) $\widehat{\MM}$ is given by $\widehat{\MM}([\beta,\alpha])=\MM_\star(\alpha)\MM^\star (\beta) $ on arbitrary 2-cells $[\beta,\alpha]$ of $\Spanhat$ as in~\eqref{eq:2-cell-of-Spanhat}. The composite diagonal on the left is the canonical `motivic' pseudo-functor $(-)^*\colon \GG^\op\to \Spanhat$ of \Cref{Not:motive}. Similar remarks hold for~$\cat N$.

As we have observed above, the equality~\eqref{eq:equality_of_t's} is necessary if we want to extend~$t$ to a pseudo-natural~$\hat t\colon \widehat{\MM}\Rightarrow \widehat{\cat N}$, and the \emph{data} of the latter is then uniquely determined by setting $\hat t(i_!):= \hat t (i_*):= (t(i)^{-1})_! = (t(i)_*)^{-1}$.

Note that~\eqref{it:trans-b} implies in particular that $t\colon \MM\Rightarrow \cat N$ is a $\JJ_!$-strong transformation (\Cref{Def:J_!-strong}). Hence by \Cref{Thm:UP-PsFun-Span} we already know that $\hat t$ is a transformation $\MM_\star \Rightarrow \cat N_\star$.
Similarly,~\eqref{it:trans-b} also implies that $t$ is $\JJ_*$-strong and thus, by the dual \Cref{Thm:UP-PsFun-Span-co}, extends to a transformation $\hat t\colon \MM^\star \Rightarrow \cat N^\star$ (see also \Cref{Rem:inverse}).

Since $\widehat{\MM}$ and $\widehat{\cat N}$ coincide with $\MM_\star$ and~$\cat N_\star$ on 0- and 1-cells and have the same structural isomorphisms $\fun$ and $\un$, this already implies that $\hat t$ satisfies the functoriality axioms of a transformation $\widehat{\MM}\Rightarrow \widehat{\cat N}$, so it only remains to check that it satisfies the naturality axiom.
Consider again an arbitrary 2-cell of~$\Spanhat$
\[
[\beta,\alpha] = \left[
\xymatrix@1@L=1ex{ i_!u^* & k_!w^* \ar@{=>}[l]_-{\beta} \ar@{=>}[r]^-{\alpha} & j_!v^* }
\right]
\quad \colon \quad i_!u^* \Rightarrow j_!v^*
\]
with $\alpha=[a,\alpha_1,\alpha_2]$ and $\beta=[b,\beta_1,\beta_2]$ as in~\eqref{eq:2-cell-of-Spanhat}.
The following computation
\[
\vcenter {\hbox{
\psfrag{A}[Bc][Bc]{\scalebox{1}{\scriptsize{$t$}}}
\psfrag{B}[Bc][Bc]{\scalebox{1}{\scriptsize{$\widehat{\MM} (i_!u^*)$}}}
\psfrag{C}[Bc][Bc]{\scalebox{1}{\scriptsize{$\widehat{\cat N} (j_!v^*)$}}}
\psfrag{D}[Bc][Bc]{\scalebox{1}{\scriptsize{$t$}}}
\psfrag{E}[Bc][Bc]{\scalebox{1}{\scriptsize{$\;\;\widehat{\MM} (j_!v^*)$}}}
\psfrag{F}[Bc][Bc]{\scalebox{1}{\scriptsize{$\hat t(j_!v^*)$}}}
\psfrag{G}[Bc][Bc]{\scalebox{1}{\scriptsize{$\widehat{\MM} [\beta, \alpha]$}}}
\includegraphics[scale=.4]{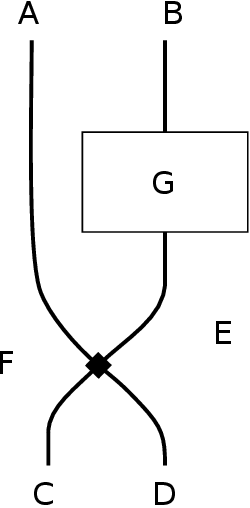}
}}
\; = \;
\vcenter {\hbox{
\psfrag{A}[Bc][Bc]{\scalebox{1}{\scriptsize{$t$}}}
\psfrag{B}[Bc][Bc]{\scalebox{1}{\scriptsize{$\widehat{\MM} (i_!u^*)$}}}
\psfrag{C}[Bc][Bc]{\scalebox{1}{\scriptsize{$\widehat{\cat N} (j_!v^*)$}}}
\psfrag{D}[Bc][Bc]{\scalebox{1}{\scriptsize{$t$}}}
\psfrag{E}[Bc][Bc]{\scalebox{1}{\scriptsize{$\;\;\widehat{\MM} (j_!v^*)$}}}
\psfrag{F}[Bc][Bc]{\scalebox{1}{\scriptsize{$\hat t(j_!v^*)$}}}
\psfrag{G}[Bc][Bc]{\scalebox{1}{\scriptsize{$\MM_\star \alpha$}}}
\psfrag{H}[Bc][Bc]{\scalebox{1}{\scriptsize{$\MM^\star\beta$}}}
\includegraphics[scale=.4]{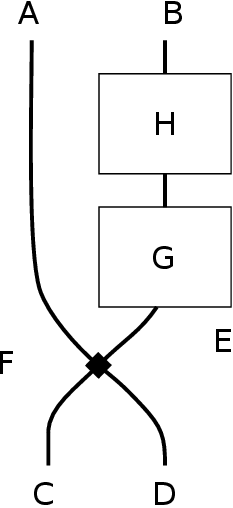}
}}
\; \overset{(\diamondsuit)}{=}
\vcenter {\hbox{
\psfrag{A}[Bc][Bc]{\scalebox{1}{\scriptsize{$t$}}}
\psfrag{B}[Bc][Bc]{\scalebox{1}{\scriptsize{$\widehat{\MM} (i_!u^*)$}}}
\psfrag{C}[Bc][Bc]{\scalebox{1}{\scriptsize{$\widehat{\cat N} (j_!v^*)$}}}
\psfrag{D}[Bc][Bc]{\scalebox{1}{\scriptsize{$t$}}}
\psfrag{E}[Bc][Bc]{\scalebox{1}{\scriptsize{$\widehat{\MM} (j_!v^*)$}}}
\psfrag{F}[Bc][Bc]{\scalebox{1}{\scriptsize{$\;\;\hat t(k_!w^*)$}}}
\psfrag{G}[Bc][Bc]{\scalebox{1}{\scriptsize{$\cat N_\star \alpha$}}}
\psfrag{H}[Bc][Bc]{\scalebox{1}{\scriptsize{$\MM^\star\beta$}}}
\includegraphics[scale=.4]{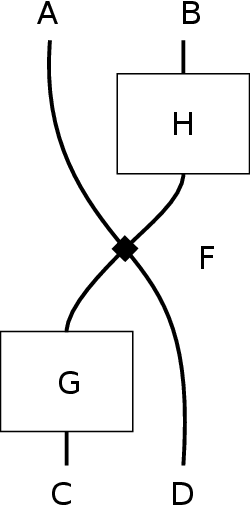}
}}
\;\; \overset{(\heartsuit)}{=} \;
\vcenter {\hbox{
\psfrag{A}[Bc][Bc]{\scalebox{1}{\scriptsize{$t$}}}
\psfrag{B}[Bc][Bc]{\scalebox{1}{\scriptsize{$\widehat{\MM} (i_!u^*)$}}}
\psfrag{C}[Bc][Bc]{\scalebox{1}{\scriptsize{$\widehat{\cat N} (j_!v^*)$}}}
\psfrag{D}[Bc][Bc]{\scalebox{1}{\scriptsize{$ t$}}}
\psfrag{E}[Bc][Bc]{\scalebox{1}{\scriptsize{$\widehat{\cat N} (i_!u^*)$}}}
\psfrag{F}[Bc][Bc]{\scalebox{1}{\scriptsize{$\hat t(i_!u^*)$}}}
\psfrag{G}[Bc][Bc]{\scalebox{1}{\scriptsize{$\cat N_\star \alpha$}}}
\psfrag{H}[Bc][Bc]{\scalebox{1}{\scriptsize{$\cat N^\star\beta$}}}
\includegraphics[scale=.4]{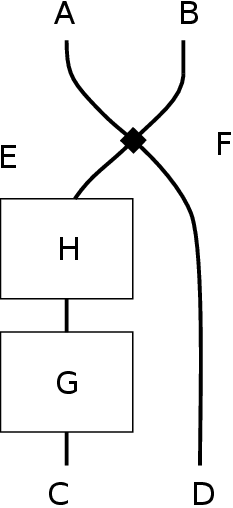}
}}
\; = \;
\vcenter {\hbox{
\psfrag{A}[Bc][Bc]{\scalebox{1}{\scriptsize{$ t$}}}
\psfrag{B}[Bc][Bc]{\scalebox{1}{\scriptsize{$\widehat{\MM} (i_!u^*)$}}}
\psfrag{C}[Bc][Bc]{\scalebox{1}{\scriptsize{$\widehat{\cat N} (j_!v^*)$}}}
\psfrag{D}[Bc][Bc]{\scalebox{1}{\scriptsize{$t$}}}
\psfrag{E}[Bc][Bc]{\scalebox{1}{\scriptsize{$\widehat{\cat N} (i_!u^*)\;$}}}
\psfrag{F}[Bc][Bc]{\scalebox{1}{\scriptsize{$\hat t(i_!u^*)$}}}
\psfrag{G}[Bc][Bc]{\scalebox{1}{\scriptsize{$\widehat{\cat N} [\beta, \alpha]$}}}
\includegraphics[scale=.4]{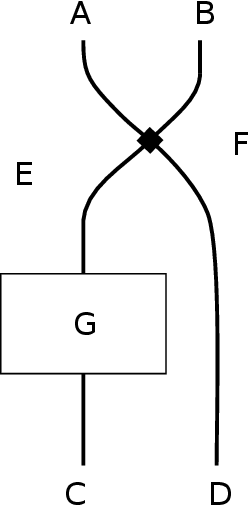}
}}
\]
shows that the naturality of $\hat t$ reduces to the naturality ($\diamondsuit$) as a transformation $\MM_\star\Rightarrow \cat N_\star$ and the naturality ($\heartsuit$) as a transformation $\MM^\star\Rightarrow \cat N^\star$, which we know hold true.
\end{proof}

\begin{Not}
\label{Def:biMack}%
\index{mack@$\biMack$} \index{$mack$@$\biMack$ \, Mackey 2-functors}%
\index{mackic@$\bicMack$} \index{$mackic$@$\bicMack$ \, idempotent-complete Mackey 2-functors}%
We denote by
\[
\biMack(\GG;\JJ)
\]
the (2-full) sub-2-category of $\PsFun(\GG^\op,\ADD)$ consisting of Mackey 2-functors, morphisms between them (\ie transformations satisfying the equivalent conditions of \Cref{Prop:Spanhat_trans}) and modifications.
Note that, in order to talk about Mackey 2-functors, we need the additivity axiom and thus we are now making use of Hypothesis~\ref{Hyp:G_and_I_for_Span}\,\eqref{Hyp:G-I-d}.
From the perspective of additive decompositions coming up in \Cref{ch:additive-motives},
it is natural to also consider the 1-full and 2-full sub-2-category
\[
\bicMack(\GG;\JJ)
\]
of~$\biMack(\GG;\JJ)$ consisting of those Mackey functors $\MM\colon \GG^\op\to \ICADD$ taking values in \emph{idempotent-complete} additive categories.
\end{Not}

\begin{Thm} \label{Thm:identif_2Macks}
By precomposing with the embedding $(-)^*\colon \GG^\op\hookrightarrow \Spanhat(\GG;\JJ)$, we obtain a biequivalence
\[
\PsFun_{\amalg} ( \Spanhat(\GG;\JJ) , \ADD) \stackrel{\sim}{\longrightarrow} \biMack(\GG;\JJ)
\]
where on the left we have the bicategory of additive pseudo-functors on $\Spanhat(\GG;\JJ)$, all strong pseudo-natural transformations between them, and modifications. Similarly, with idempotent-complete values we have a biequivalence
\[
\PsFun_{\amalg} ( \Spanhat(\GG;\JJ) , \ICADD) \stackrel{\sim}{\longrightarrow} \bicMack(\GG;\JJ)\,.
\]
\end{Thm}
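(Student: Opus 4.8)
The plan is to assemble the theorem from the universal properties already in place, treating the precomposition $2$-functor
\[
\Phi := (-)\circ (-)^* \colon \PsFun_{\amalg}(\Spanhat(\GG;\JJ),\ADD) \too \PsFun(\GG^\op,\ADD)
\]
(induced by the canonical embedding of \Cref{Not:motive}) separately on $0$-cells, on Hom categories, and finally verifying that it corestricts to $\biMack(\GG;\JJ)$ and to the idempotent-complete variant. First I would check well-definedness, \ie that $\Phi$ lands in $\biMack(\GG;\JJ)$. If $\cat G\colon \Spanhat\to \ADD$ is additive, set $\MM:=\cat G\circ (-)^*$. By \Cref{Prop:Bhat-ambidextre}, in $\Spanhat$ each $i^*$ (for $i\in \JJ$) carries a two-sided adjoint $i_!=i_*$ whose units and counits satisfy the Beck--Chevalley isomorphisms and the strict Mackey formula~\eqref{eq:gluing-formula}; since a pseudo-functor preserves adjunctions and mates, $\MM$ automatically satisfies \Mack{2}, \Mack{3} and \Mack{4}. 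Additivity \Mack{1} holds because $\Spanhat$ inherits finite coproducts from $\GG$ (Hypotheses~\ref{Hyp:G_and_I_for_Span}\,\eqref{Hyp:G-I-d}) and $\cat G$ is additive by assumption. The same computation shows that $\Phi$ sends strong transformations into pre-morphisms satisfying the extension criterion, hence into genuine morphisms of Mackey 2-functors.

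Essential surjectivity on $0$-cells is exactly the universal property \Cref{Thm:UP-Spanhat}. Given a Mackey 2-functor $\MM$, the Rectification \Cref{Thm:rectification} equips it with adjoints, units and counits satisfying the hypotheses~\eqref{it:UP-Spanhat-a}--\eqref{it:UP-Spanhat-c} of \Cref{Thm:UP-Spanhat} (without altering the underlying pseudo-functor on $\GG^\op$); hence $\MM$ extends to a pseudo-functor $\widehat\MM\colon \Spanhat\to \ADD$ with $\widehat\MM\circ(-)^*=\MM$ on the nose, and additivity of $\MM$ forces $\widehat\MM$ to be additive. Thus $\Phi$ is surjective on objects. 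For the Hom categories I would invoke \Cref{Prop:Spanhat_trans}: for additive $\cat G_1,\cat G_2$, a pre-morphism $t\colon \cat G_1(-)^*\Rightarrow \cat G_2(-)^*$ extends to a (necessarily unique) pseudo-natural transformation $\hat t\colon \cat G_1\Rightarrow \cat G_2$ precisely when $t$ satisfies the equivalent conditions of that proposition, which is exactly the defining property of the $1$-cells of $\biMack(\GG;\JJ)$. On modifications the data is literally unchanged — a modification is a family of $2$-cells indexed by $\GG_0=\Spanhat_0$ — and the modification axiom over $\Spanhat$ reduces to the one over $\GG$ via the decomposition of a general $1$-cell $i_!u^*$ as in the proof of \Cref{Thm:UP-PsFun-Span}. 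Hence $\Phi$ is bijective on $1$-cells and on $2$-cells of each Hom category; combined with surjectivity on objects this exhibits $\Phi$ as a (locally bijective) biequivalence.

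The idempotent-complete variant then follows formally. Since $\cat G$ and $\cat G\circ(-)^*$ agree on $0$-cells — the value at $G$ being literally $\cat G(G)$ — one takes values in $\ICADD$ if and only if the other does, and the extension $\widehat\MM$ of a Mackey 2-functor valued in $\ICADD$ again satisfies $\widehat\MM(G)=\MM(G)$, hence is valued in $\ICADD$. As $\bicMack(\GG;\JJ)$ is the $1$-full and $2$-full subcategory of $\biMack(\GG;\JJ)$ on the idempotent-complete objects, the restriction of $\Phi$ is again a biequivalence by the same three steps. The main obstacle I anticipate is not any single deep step but the bookkeeping in the well-definedness and morphism checks: one must confirm that \emph{both} the BC/strict-Mackey structure \emph{and} additivity are preserved in each direction, and reconcile the strict-$2$-functor phrasing of \Cref{Def:Mackey-2-functor} with the genuinely pseudo composite $\cat G\circ(-)^*$ — harmless, since $\biMack(\GG;\JJ)$ is by definition a sub-$2$-category of $\PsFun(\GG^\op,\ADD)$ and the axioms \Mack{1}--\Mack{4} are invariant under pseudo-natural equivalence, but worth stating explicitly.
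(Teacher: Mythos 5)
Your proof is correct and follows essentially the same route as the paper's: the paper likewise deduces the theorem from \Cref{Prop:Spanhat_trans} combined with the universal property of Mackey 2-motives (\Cref{Thm:UP-Spanhat}), and handles modifications by observing that the verification already done in the proof of \Cref{Thm:UP-PsFun-Span} applies verbatim to $\Spanhat$. Your extra spelling-out of well-definedness via \Cref{Prop:Bhat-ambidextre} and of the rectification step needed before invoking \Cref{Thm:UP-Spanhat} is material the paper leaves implicit (rectification is built into its definition of $\biMack$ through \Cref{Prop:Spanhat_trans}), but it does not change the approach.
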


\begin{proof}
This is now an immediate consequence of \Cref{Prop:Spanhat_trans} and the universal property of Mackey 2-motives, once we have checked that modifications also correspond.
But the latter was already proved as part of \Cref{Thm:UP-PsFun-Span}. Indeed, as was observed above, the data of a transformation $\hat t\colon \widehat{\MM}\Rightarrow \widehat{\cat N}\colon \Spanhat\to \cat C$ is the same as that of its restriction to $\Span$, and moreover is uniquely determined by its further restriction $t=\hat t\circ (-)^*$ on~$\GG^\op$ by taking mates. Similarly, the data of a modification at the levels of $\GG^\op$, $\Span$ or $\Spanhat$ are all the same, hence it is only a question of verifying the modification axiom with respect to the various classes of 1-cells. Hence the verification done in the proof of \Cref{Thm:UP-PsFun-Span} (for $\Span$) works here as well (for $\Spanhat$) and yields the required bijections.
\end{proof}

\begin{Rem}
The above is analogous to the situation with ordinary Mackey 1-functors.
Indeed, the category of Mackey functors (over~$\mathbb Z$) for a finite group $G$ can be defined as $\Mackey(G):=\Funplus(\widehat{G\sset}, \Ab)$, the category of additive functors on the ordinary span category of $G$-sets; see \Cref{sec:ordinary-spans}.
The canonical embedding $(-)^\star\colon(G\sset)^\op \hookrightarrow \widehat{G\sset}$ induces a functor $\Mackey(G) \to \Funplus (G\sset^\op , \Ab)$, that is, any natural transformation $t\colon M\Rightarrow N$ of Mackey functors also defines a natural transformation $s=t\circ (-)^\star\colon M^\star = M \circ (-)^\star \Rightarrow N \circ (-)^\star = N^\star$ (a `pre-morphism').
But conversely, not every natural transformation $s\colon M^\star \Rightarrow N^\star$ is a natural transformation $M\Rightarrow N$: We are still missing the commutativity of the squares involving the induction maps.
\end{Rem}

\begin{Exa} \label{Exa:add-der-mor}
Let $\cat D$ and $\cat E$ be additive derivators $\Cat^\op\to \ADD$ (\Cref{sec:add-der-Mackey}) and let $F\colon \cat D\to \cat E$ be a morphism of additive derivators, that is, a pseudo-natural transformation where the functors $F_J\colon \cat D(J)\to \cat E(J)$ are additive for all $J\in \Cat$.
Then the restricted 2-functors $\MM := \cat D \restr{\gpd}$ and $\cat N:= \cat E \restr{\gpd}$ are Mackey 2-functors (Theorem~\ref{Thm:ambidex-der}), and the restricted pseudo-natural transformation $t:=F\restr{\gpd} \colon \MM\to \cat N$ is obviously a pre-morphism between them.
If $F$ moreover preserves (homotopy) limits and colimits then $t$ is even a \emph{morphism} of Mackey 2-functors.
Indeed, for a morphism of derivators the preservation of limits and colimits is equivalent to the \emph{a priori} stronger preservation of all homotopy Kan extensions (see \cite[\S2.2]{Groth13}), which for $t=F \restr{\gpd}$ specializes to the preservation of $i_!$ and $i_*$ in the sense of \Cref{Prop:Spanhat_trans}\,\eqref{it:trans-c} and~\eqref{it:trans-d}.

Thus restriction along the inclusion $\gpd\hookrightarrow \Cat$ induces a pseudo-functor from the bicategory of additive derivators with continuous and co-continuous morphisms to the bicategory of Mackey 2-functors $\biMack(\gpd;\faithful)$.
\end{Exa}

\end{chapter-six}
%
\chapter{Additive Mackey 2-motives and decompositions}
\label{ch:additive-motives}%
\bigbreak
\begin{chapter-seven}

In this chapter we take a closer look at the additivity properties of Mackey 2-functors. With an eye on decomposition results, we introduce the bicategory $\mathbb Z \Spanhat$ of \emph{additive} Mackey 2-motives (\Cref{Def:additive-2motives}), where it is possible to take differences of 2-cells and to use idempotent 2-cells in order to split 1-cells and 0-cells into direct sums. We refer the reader to \Cref{sec:additive-sedative} and~\ref{sec:additive-bicats} for recollections on additivity and semi-additivity in categories and bicategories.

\bigbreak
\section{Additive Mackey 2-motives}
\label{sec:additive-motives}%
\medskip

%
\begin{Hyp}
\label{Hyp:G_and_I_for_ZSpan}%
For simplicity, we revert to the simplest set of hypotheses, as stated in \Cref{Hyp:GG}, namely~$\GG$ is a sub-2-category of the (2,1)-category~$\groupoid$ of finite groupoids, and $\JJ$ consists of all faithful 1-cells of~$\GG$. Furthermore, we assume that $\GG$ is closed under finite coproducts~$\amalg$ in~$\groupoid$.
\end{Hyp}

\begin{Rem}
The interested reader will replace \Cref{Hyp:G_and_I_for_ZSpan} by \Cref{Hyp:G_and_I_for_Span} plus a requirement that coproducts in~$\GG$ be sufficiently `disjoint'; \cf \Cref{Exa:ordinary_span_is_sad} and (the proof of)~\Cref{Lem:Span-prods}. We avoid this generalization, since it does not yet present a good ratio of `new examples per added abstract nonsense'.
\end{Rem}

Let $\Spanhat(\GG;\JJ)$ be the associated bicategory of Mackey 2-motives, as in \Cref{Def:Spanhat-bicat}.
We begin with the following observation.

\begin{Prop}
\label{Prop:semi-add-2motives}%
The finite coproducts of $\GG$ make $\Spanhat(\GG;\JJ)$ into a \emph{locally semi-additive} bicategory in the sense of \Cref{Def:Sad-enriched-etc}. In other words, each Hom category $\Spanhat(\GG;\JJ)(G,H)$ is semi-additive: It admits direct sums
\[
(i_!u^*) \oplus (j_!v^*)= (i, j)_!(u, v)^*
\]
of its objects, as well as a zero object~$0= (G \leftarrow \varnothing \to H)$, and therefore inherits a (unique) sum operation for parallel morphisms which makes each Hom-set into an abelian monoid, and composition bilinear. And the composition functors $-\circ -$ are additive in both variables.

Moreover, the finite coproducts of objects in~$\GG$ become in $\Spanhat(\GG;\JJ)$ \emph{direct sums} in the sense of \Cref{Def:sums_in_bicats}.
\end{Prop}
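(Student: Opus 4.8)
The plan is to establish the two assertions of \Cref{Prop:semi-add-2motives} separately: first the local semi-additivity of each Hom category together with the compatibility of composition, and then the fact that coproducts of $0$-cells in~$\GG$ become genuine direct sums (biproducts) in~$\Spanhat(\GG;\JJ)$.

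For the local semi-additivity, I would start by recalling from \Cref{sec:ordinary-spans} that each Hom category $\Spanhat(\GG;\JJ)(G,H)=\widehat{\Span(\GG;\JJ)(G,H)}$ is the ordinary category of spans built on $\Span(\GG;\JJ)(G,H)$, and that by \Cref{Prop:pullbacks} the latter admits all pullbacks. The standard fact (to be invoked from \Cref{sec:additive-bicats}, \cf \Cref{Exa:ordinary_span_is_sad}) is that the span category on a category with finite products and coproducts that are suitably compatible is semi-additive, with biproduct computed from the coproduct. So the first concrete step is to verify that $\Span(\GG;\JJ)(G,H)$ has finite coproducts induced by the coproduct $\sqcup$ of~$\GG$, namely $(i_!u^*)\oplus(j_!v^*)=((i\sqcup j)_!(u\sqcup v)^*)$, and a zero object $0=(G\gets\varnothing\to H)$ coming from the empty coproduct. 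This uses that $\GG$ is closed under finite coproducts (\Cref{Hyp:G_and_I_for_ZSpan}), that $\JJ$ is closed under coproducts (\Cref{Hyp:G_and_I_for_Span}\,\eqref{Hyp:G-I-d}), and that coproducts in groupoids are disjoint so that the relevant iso-commas decompose componentwise. Given coproducts \emph{and} pullbacks on $\Span(\GG;\JJ)(G,H)$, semi-additivity of $\Spanhat(\GG;\JJ)(G,H)$, the induced abelian-monoid structure on Hom-sets, and the bilinearity of vertical composition follow from the general machinery. Then, to see that horizontal composition $-\circ-$ is additive in each variable, I would invoke \Cref{Lem:pres_pullbacks} (horizontal composition in $\Span$ preserves pullbacks) together with the fact that it manifestly preserves the coproduct decompositions of $1$-cells; additivity on morphisms is then forced, since an additive functor between semi-additive categories is precisely one preserving finite biproducts.

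For the second assertion, that the coproduct $G_1\sqcup G_2$ in~$\GG$ is a direct sum of $G_1$ and~$G_2$ as $0$-cells of $\Spanhat(\GG;\JJ)$ in the sense of \Cref{Def:sums_in_bicats}, I would exhibit the four structural $1$-cells: the inclusions $\iota_c=(\incl_c)_!\colon G_c\to G_1\sqcup G_2$ and the projections $\pi_c=\incl_c^*\colon G_1\sqcup G_2\to G_c$, obtained by applying the embeddings $(-)_!$ and $(-)^*$ to the coproduct inclusions $\incl_c\colon G_c\into G_1\sqcup G_2$, which are faithful and hence lie in~$\JJ$. The required identities are $\pi_c\circ\iota_c\cong\Id_{G_c}$, $\pi_c\circ\iota_d\cong 0$ for $c\neq d$, and $\iota_1\circ\pi_1\oplus\iota_2\circ\pi_2\cong\Id_{G_1\sqcup G_2}$. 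Each of these is computed by the relevant iso-comma square in~$\GG$: for $\pi_c\circ\iota_c$ the iso-comma $(\incl_c/\incl_c)$ is equivalent to~$G_c$ (its off-diagonal components vanish because the inclusions have disjoint images), giving the identity span; for $c\neq d$ the iso-comma $(\incl_c/\incl_d)$ is empty since the images are disjoint, giving the zero $1$-cell; and the biproduct relation reflects the decomposition of the identity span on $G_1\sqcup G_2$ along its two components. These are exactly the kind of componentwise computations with disjoint coproducts of groupoids that underlie \Mack{1}.

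The main obstacle I anticipate is not any single hard argument but rather the bookkeeping required to check the biproduct coherence conditions of \Cref{Def:sums_in_bicats} at the bicategorical level: one must verify that the exhibited isomorphisms of $1$-cells are compatible with the associators and unitors and assemble into genuine direct-sum data rather than merely holding objectwise. I would handle this by reducing every identity to a statement about iso-commas in~$\GG$ and then exploiting the disjointness of coproducts of groupoids, so that the iso-commas along the coproduct inclusions split as disjoint unions whose cross-terms are empty; the coherence then reduces to the already-established functoriality of the iso-comma construction (\Cref{Rem:commas_translated}, \Cref{Rem:assoc}) and the additivity of composition proved in the first part. The string-diagram calculus of \Cref{sec:string-presentation} could also streamline these verifications, but I would keep the argument at the level of spans to stay close to the definitions.
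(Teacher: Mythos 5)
Your proof is correct. The first half (local semi-additivity of the Hom categories via \Cref{Exa:ordinary_span_is_sad}, existence of pullbacks from \Cref{Prop:pullbacks}, and additivity of horizontal composition deduced from \Cref{Lem:pres_pullbacks} together with preservation of the coproduct decompositions) is essentially the paper's own argument, which is equally terse on these points. Where you genuinely diverge is in the ``Moreover'' part. You verify the \emph{equational} characterization of direct sums from \Cref{Def:sums_in_bicats}: you compute the composites $\incl_c^*\circ(\incl_c)_!$, $\incl_c^*\circ(\incl_d)_!$ (for $c\neq d$) and $(\incl_1)_!\circ\incl_1^*\,\oplus\,(\incl_2)_!\circ\incl_2^*$ directly from iso-commas, using that the inclusions into a disjoint union are full with disjoint images, so that $(\incl_c/\incl_c)\simeq G_c$, $(\incl_c/\incl_d)=\varnothing$, and the identity span of $G_1\sqcup G_2$ splits as the sum of the two remaining composites. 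The paper instead proves a \emph{universal property}: its \Cref{Lem:Span-prods} shows that $\incl_1^*,\incl_2^*$ exhibit $G_1\sqcup G_2$ as a product in $\Span(\GG;\JJ)$, by constructing explicit mutually inverse functors $\Phi$ and $\Psi$ using the decomposition $P=P_1\sqcup P_2$ of the middle object of an arbitrary span into preimages; the dual coproduct half is left to the reader, and the direct-sum formulation then follows because biproducts and direct sums coincide in a locally semi-additive bicategory (\Cref{Lem:0-1-direct-sums}). Your route is more elementary and targets exactly the statement as phrased---three isomorphisms of 1-cells, each reduced to a single iso-comma computation---whereas the paper's route yields the stronger, reusable Hom-category decomposition $\Span(\GG;\JJ)(G,H_1\sqcup H_2)\simeq\Span(\GG;\JJ)(G,H_1)\times\Span(\GG;\JJ)(G,H_2)$, which is the form in which additivity gets used elsewhere (matrix notation, decomposition of arbitrary 1-cells and 2-cells along components). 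Both arguments ultimately rest on the same disjointness of coproducts of groupoids, and your worry about extra bicategorical coherence is unfounded: \Cref{Def:sums_in_bicats} only asks for the existence of the stated isomorphisms, so your verification is complete as it stands.
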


\begin{proof}
This is all rather straightforward from the definitions. See \Cref{Exa:ordinary_span_is_sad} for details on the sum of 1-cells and 2-cells. Let us just say a word on the direct sum of objects in $\Spanhat(\GG;\JJ)$, concretely. The empty groupoid~$\varnothing \in \GG$ clearly becomes a zero object. Given two groupoids $X_1,X_2\in \GG_0$ with coproduct
$i_1\colon X_1\to X_1\sqcup X_2 \gets X_2 :\! i_2$ in~$\GG$, the resulting direct sum in $\Span$ has structure 1-cells given by the following spans:
\[
\xymatrix{
 X_1 \ar@<2pt>[r]^-{(i_1)_!} \ar@{<-}@<-2pt>[r]_-{(i_1)^*}
& X_1\sqcup X_2 \ar@{<-}@<2pt>[r]^-{(i_2)_!} \ar@<-2pt>[r]_-{(i_2)^*}
& X_2 \,.
}
\]
These diagrams remain direct sums in the double-span bicategory $\Spanhat$. Half of this claim follows from \Cref{Lem:Span-prods} below. The other verifications are similar and left to the reader.
\end{proof}

\begin{Lem} \label{Lem:Span-prods}
If $j_1\colon H_1 \to H_1\sqcup H_2 \gets H_2:\!j_2$ is a coproduct of finite groupoids contained in $\GG\subseteq \gpd$, then $j_1^* \colon H_1\gets H_1\sqcup H_2 \to H_2 :\! j_2^*$ is a product in $\Span(\GG;\JJ)$.
\end{Lem}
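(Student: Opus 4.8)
The plan is to verify directly the universal property characterizing $j_1^*$ and $j_2^*$ as the projection $1$-cells of a product in the bicategory $\Span:=\Span(\GG;\JJ)$. Recall (see \Cref{Def:sums_in_bicats}) that an object $S\in\Span_0$ equipped with $1$-cells $p_k\colon S\to H_k$ ($k=1,2$) is a product of $H_1$ and $H_2$ precisely when, for every object $T\in\GG_0$, the functor
\[
\Phi_T\colon \Span(T,S)\too \Span(T,H_1)\times \Span(T,H_2),\qquad f\mapsto (p_1\circ f,\, p_2\circ f),
\]
is an equivalence of categories; this reduction to a statement in $\Cat$ is in the spirit of \Cref{Cor:2cat_Yoneda_equivs}. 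I take $S=H_1\sqcup H_2$ (which lies in $\GG$ by \Cref{Hyp:G_and_I_for_ZSpan}) and $p_k=j_k^*$, and must show each $\Phi_T$ is an equivalence.

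The crux is a computation of the horizontal composite $j_k^*\circ(i_!u^*)$ for a general $1$-cell $i_!u^*=[\,T\loto{u}P\oto{i}H_1\sqcup H_2\,]$ of $\Span(T,H_1\sqcup H_2)$. By \Cref{Def:Span-bicat} this composite is computed by the iso-comma $(i/j_k)$ of $i$ against the coproduct inclusion $j_k\colon H_k\into H_1\sqcup H_2$. Because $H_1\sqcup H_2$ is a disjoint union of groupoids, every connected component of $P$ is sent by $i$ entirely into $H_1$ or entirely into $H_2$; writing $P_k:=i^{-1}(H_k)$ for the (clopen) union of components landing in $H_k$, one obtains a \emph{strict} decomposition $P=P_1\sqcup P_2$ in $\GG$. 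An inspection of the objects and morphisms of $(i/j_k)$ (as described in \Cref{Rem:iso-comma-intro}) shows that the projection $(i/j_k)\to P$ is fully faithful with essential image exactly $P_k$, so $(i/j_k)\simeq P_k$ and hence
\[
j_k^*\circ(i_!u^*)\ \cong\ [\,T\loto{u|_{P_k}}P_k\oto{i|_{P_k}}H_k\,].
\]
In words, postcomposing with $j_k^*$ restricts a span to its $H_k$-part.

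With this identification, essential surjectivity of $\Phi_T$ is immediate: given spans $[\,T\loto{u_1}Q_1\oto{i_1}H_1\,]$ and $[\,T\loto{u_2}Q_2\oto{i_2}H_2\,]$, I form $Q:=Q_1\sqcup Q_2$ together with the legs $u_1\sqcup u_2$ and $(j_1 i_1)\sqcup(j_2 i_2)$; the latter lies in $\JJ$ since $\JJ$ is closed under composition and coproducts (\Cref{Hyp:G_and_I_for_Span}\,\eqref{Hyp:G-I-a} and~\eqref{Hyp:G-I-d}), and $\Phi_T$ of this span returns a pair canonically isomorphic to the given one because $i^{-1}(H_k)=Q_k$. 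For full faithfulness, I use that a $2$-cell $[a,\alpha_1,\alpha_2]$ between two such spans carries an invertible component $\alpha_2$ relating the two right legs along $a$; since the target groupoid is a coproduct, this forces $a$ to respect the clopen decompositions, i.e.\ $a(P_k)\subseteq P'_k$, whence $a=a_1\sqcup a_2$ and the entire $2$-cell splits as the pair of its restrictions. This provides the required bijection on $2$-cells.

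I expect the main obstacle to be this last, fully-faithful step: one must check that the splitting $a=a_1\sqcup a_2$ is compatible with the isomorphism-class nature of $2$-cells in $\Span$ (\Cref{Def:Span-bicat}), and that the restriction functors $j_k^*\circ-$ send $[a,\alpha_1,\alpha_2]$ precisely to $[a_k,\alpha_1|_{P_k},\alpha_2|_{P_k}]$, matching on the nose the identification of the composite worked out above. All of this is bookkeeping with the explicit iso-comma description and the disjointness of the two components, and involves no genuinely new idea. The remaining half of \Cref{Prop:semi-add-2motives} (that the $j_{k!}$ exhibit $H_1\sqcup H_2$ as a coproduct, together with the biproduct compatibilities) is entirely dual and is treated the same way.
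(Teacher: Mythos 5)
Your proof is correct and takes essentially the same route as the paper's: both rest on the clopen decomposition $P=P_1\sqcup P_2$ induced by $i$, the identification of the iso-comma $(i/j_k)$ with $P_k$ (so that composition with $j_k^*$ restricts a span to its $H_k$-part), and the disjoint-union span in the converse direction. The only difference is presentational: the paper packages your essential-surjectivity construction as an explicit quasi-inverse functor $\Psi$ and checks that $\Phi$ and $\Psi$ are mutually inverse, whereas you verify essential surjectivity and full faithfulness separately.
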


\begin{proof}
Let $G\in \GG_0$ be a finite groupoid.
We must show that the functor
\[
\Span(\GG;\JJ) (G, H_1 \sqcup H_2)
\stackrel{\Phi}{\longrightarrow}
\Span(\GG;\JJ) (G, H_1) \times \Span(\GG;\JJ) (G, H_2)
\]
induced by composition with $j_1^*$ and~$j_2^*$ is an equivalence. On objects, $\Phi$ sends a span $G \stackrel{u}{\gets} P \stackrel{i}{\to} H_1\sqcup H_2$ to the pair
\[
\vcenter{\xymatrix@C=14pt@R=14pt{
& & (i/j_1) \ar[ld]_-{} \ar[dr]^-{}
 \ar@{}[dd]|-{\Ecell} &
\\
& P \ar[dr]_(.4){i} \ar[dl]_u
&& H_1 \ar[dl]^(.4){j_1} \ar@{=}[dr] & \\
G \ar@{..>}[rr]_-{i_!u^*} && H_1\sqcup H_2 \ar@{..>}[rr]_-{j_1^*} && H_1
}}
\quad
\vcenter{\xymatrix@C=14pt@R=14pt{
& & (i/j_2) \ar[ld]_-{} \ar[dr]^-{}
 \ar@{}[dd]|-{\Ecell} &
\\
& P \ar[dr]_(.4){i} \ar[dl]_u
&& H_2 \ar[dl]^(.4){j_2} \ar@{=}[dr] & \\
G \ar@{..>}[rr]_-{i_!u^*} && H_1\sqcup H_2 \ar@{..>}[rr]_-{j_2^*} && H_2
}}
\]
of composite spans. On maps, it is similarly induced by whiskering with $j_\ell^*$ for~$\ell \in \{1,2\}$. Write $P=P_1\sqcup P_2$ for the decomposition of $P$ induced by that of its image~$i(P)\subseteq H_1\sqcup H_2$, that is $P_\ell:= i^{-1}(i(P)\cap H_\ell)$. It is easy to check that the comparison functor $P_\ell\to (i/j_\ell)$
\[
\xymatrix@R=14pt@C14pt{
& P_\ell \ar[d]^{\cong} \ar@{_{(}->}@/_2ex/[ldd] \ar@/^2ex/[ddr]^{i|_{P_\ell}} & \\
& (i/ j_\ell) \ar[dl] \ar[dr] & \\
P \ar[dr]_(.4)i \ar@{}[rr]|{\Ecell} && H_\ell \ar@{_{(}->}[dl]^(.4){j_\ell} \\
& H_1\sqcup H_2 &
}
\]
is an equivalence for $\ell\in \{1,2\}$, from which we deduce an isomorphism
\begin{equation} \label{eq:iso_PhiPsi=Id}
\Phi(G \stackrel{u}{\gets} P \stackrel{i}{\to} H_1\sqcup H_2) \cong \big( (G \stackrel{u|_{P_1}}{\longleftarrow} P_1 \stackrel{i|_{P_1}\;}{\too} H_1) , (G \stackrel{u|_{P_2}}{\longleftarrow} P_2 \stackrel{i|_{P_2}\;}{\too} H_2) \big) \,.
\end{equation}
There is also a `summation' functor
\[
\Span(\GG;\JJ) (G, H_1) \times \Span(\GG;\JJ) (G, H_2)
\stackrel{\Psi}{\longrightarrow}
\Span(\GG;\JJ) (G, H_1 \sqcup H_2)
\]
which sends a pair
$( (G \stackrel{u_1}{\gets} Q_1 \stackrel{i_1\;}{\to} H_1) , (G \stackrel{u_2}{\gets} Q_2 \stackrel{i_2\;}{\to} H_2))$ to the span
\[
\big(
\xymatrix{
G & \ar[l]_-{(u_1,u_2)} Q_1 \sqcup Q_2 \ar[r]^-{i_1 \sqcup i_2} & H_1\sqcup H_2
}
\big)
\]
and similarly on morphisms. Using~\eqref{eq:iso_PhiPsi=Id}, it is now straightforward to check that $\Psi$ and $\Phi$ are mutually inverse equivalences.
\end{proof}

\begin{Rem} \label{Rem:motivation-for-additive-motives}
The 2-cells in the bicategory $\Spanhat$ of Mackey 2-motives do not have additive inverses, though, which can be inconvenient.
For instance, if $e=e^2\colon \Id_X\Rightarrow \Id_X$ is an idempotent 2-cell, we may wish to form the complementary idempotent $1_X - e$ (here we write $1_X:=\id_{\Id_X}$ for short). Moreover, we may be tempted to use the decomposition $1_X= e+(1_X-e)$ in order to split the 1-cell $\Id_X$ or even the object $X$ itself. Unfortunately, none of this can be done in the bicategory~$\Spanhat$. Fortunately, it is not too hard to enlarge such a bicategory so as to accommodate opposites and splittings. Indeed, in \Cref{sec:additive-bicats} we construct the \emph{block-completion} $\cat B^\flat$ of any locally semi-additive bicategory~$\cat B$ with direct sums. The bicategory $\cat B^\flat$ is \emph{block-complete}, that is: (1) it is locally idempotent-complete, \ie its idempotent 2-cells split 1-cells into direct sums, and (2) its idempotent 2-cells similarly give rise to direct sum decompositions of 0-cells (see \Cref{Rem:object-decompositions} and \Cref{Def:block-complete}). Moreover, there is an embedding $\cat B\to \cat B^\flat$ which is the universal pseudo-functor to a block-complete bicategory (\Cref{Thm:UP-flat}).

By \Cref{Prop:semi-add-2motives} this construction can be applied to~$\Spanhat$, leading to:
\end{Rem}

\begin{Def}[{Additive and $\kk$-linear Mackey 2-motives}]
\label{Def:additive-2motives}%
\index{Mackey 2-motives!additive --}\index{additive Mackey 2-motives}%
\index{Mackey 2-motives!$\kk$-linear --}%
\index{Mackey 2-motives!semi-additive --}%
\index{additive Mackey 2-motives}%
\index{semi-additive Mackey 2-motives}%
\index{bicategory!-- of additive Mackey 2-motives $\mathbb Z\Spanhat(\GG;\JJ)$}%
\index{zspanhatg@$\mathbb Z\Spanhat(\GG;\JJ)$}%
\index{$zspanhat$@$\mathbb Z \Spanhat$ \, additive Mackey 2-motives}%
\index{bicategory!-- of $\kk$-linear Mackey 2-motives $\kk\Spanhat(\GG;\JJ)$}%
\index{kSpanhatG@$\kk\Spanhat(\GG;\JJ)$}%
\index{klinear Mackey@$\kk$-linear Mackey 2-motives}%
\index{$kspanhat$@$\kk\Spanhat$ \, $\kk$-linear Mackey 2-motives}%
We define the bicategory of \emph{additive Mackey 2-motives} to be
\[
\mathbb Z\Spanhat(\GG;\JJ) := \Spanhat(\GG;\JJ)^\flat\,,
\]
the block-completion (as in \Cref{Cons:block-completion} and \Cref{Rem:combined-envelopes}) of the locally semi-additive bicategory $\Spanhat(\GG;\JJ)$.
For contrast, we refer to the unadorned $\Spanhat(\GG;\JJ)$ as the bicategory of \emph{semi-additive Mackey 2-motives}.
Concretely, the bicategory $\mathbb Z\Spanhat(\GG;\JJ)$ consists of:
\begin{enumerate}[{$\bullet$}]
\item As objects, all pairs $(G,e)$ where $G\in \GG_0$ is a finite groupoid and $e=e^2\colon \Id_G\Rightarrow \Id_G$ is an idempotent element of the (group-completed) commutative ring
\[
\Spanhat(\GG;\JJ)(G,G)(\Id_G,\Id_G)_+ \,.
\]
\item For Hom categories, the subcategories
\[
\mathbb Z\Spanhat(\GG;\JJ)((G,e),(G',e')) \; \subseteq \; \big((\Spanhat(\GG;\JJ)(G,G'))_+ \big)^\natural
\]
of the idempotent-completion $(-)^\natural$ of the group completion $(-)_+$ of the semi-additive category $\Spanhat(\GG;\JJ)(G,G')$, consisting of those 1-cells $u$ and 2-cells $\alpha$ which \emph{absorb} the idempotents $e$ and~$e'$, in the sense that $e\circ u \circ e' \cong u$ and $e\circ \alpha \circ e' \cong \alpha$ (see \Cref{Lem:absorption} for more details).
\end{enumerate}
More generally, if $\kk$ is any commutative ring we define the bicategory of \emph{$\kk$-linear Mackey 2-motives}
\[
\kk \Spanhat(\GG;\JJ) := \big( \kk \otimes_\mathbb Z \Spanhat(\GG;\JJ)_+ \big)^\flat
\]
to be the block-completion of the \emph{$\kk$-linearization} of (the group completion of) semi-additive Mackey 2-motives. The latter is obtained simply by tensoring all Hom-groups of 2-cells with~$\kk$ and extending the structural functors $\kk$-linearly.
\end{Def}

\begin{Rem}
\label{Rem:can_add_+}%
Additive Mackey 2-motives inherit from their semi-additive progenitors the property of classifying all Mackey 2-functors whose values are \emph{idem\-po\-tent-com\-plete} additive categories. In other words, precomposition with the canonical pseudo-functor $\GG^\op\to \mathbb Z\Spanhat(\GG;\JJ)$ induces a biequivalence
\begin{equation}
\label{eq:UP-ZSpanhat}%
\PsFun_\amalg (\mathbb Z\Spanhat(\GG;\JJ), \ICADD)\isotoo\bicMack(\GG;\JJ)\,.
\end{equation}
In order to see this, it suffices to break the canonical embedding down to its composition factors (see \Cref{Rem:combined-envelopes}):
\[
\vcenter{ \hbox{
\xymatrix@R=14pt@C=6pt{
& \GG^\op \ar[d] \ar@/_3ex/[ddddl] \\
& \Spanhat \ar[d] \\
& \Spanhat_+ \ar[d] \\
& (\Spanhat_+)^\natural \ar[d] \\
\mathbb Z\Spanhat \ar@{=}[r]^-{\overset{\textrm{def.}}{\phantom{m}}} & {((\Spanhat_+)^\natural)^\flat }
}
}}
\;\; \overset{\underset{\phantom{M}}{\PsFun_\amalg(-,\ICADD)}}{\longmapsto}
\!\!\!\!\!\!\! \!\!\!\!\! \!\!\!\!\! \!\!\!\!\!
\vcenter{ \hbox{
\xymatrix@R=14pt{
{\PsFun_\amalg(\GG^\op,\ICADD) \supseteq \bicMack \phantom{\PsFun_\amalg(\GG^\op,\ICADD) \supseteq }} \\
\PsFun_\amalg(\Spanhat , \ICADD) \ar[u]_{\simeq}_{\qquad \textrm{by \Cref{Thm:identif_2Macks}}} \\
\PsFun_\amalg(\Spanhat_+ , \ICADD) \ar[u]_{\simeq}_{\qquad \textrm{by \Cref{Rem:add_reflections_bicats}\,\eqref{it:additive-envelope-bicat} }} \\
\PsFun_\amalg(\Spanhat_+^\natural , \ICADD) \ar[u]_{\simeq}_{\qquad \textrm{by \Cref{Rem:add_reflections_bicats}\,\eqref{it:ic-envelope-bicat} }} \\
\PsFun_\amalg(\Spanhat^\flat , \ICADD) \ar[u]_{\simeq}_{\qquad \textrm{by \Cref{Thm:UP-flat}}}
}
}}
\]
Besides the universal properties of the various constructions, this also uses the convenient fact that additive pseudo-functors are automatically \emph{locally} additive (\Cref{Prop:(locally)-additive-pseudo-functors}), as well as the fact that the target 2-category $\ICADD$ is itself block-complete (\Cref{Exa:ICAdd-is-lic}).
\end{Rem}

\begin{Rem} \label{Rem:canonical-embedding-is-inj}
We note that each of the canonical pseudo-functors on the left-hand side of the above picture (and hence their composite $\GG^\op\to \mathbb Z\Spanhat(\GG;\JJ)$) does deserve the name `embedding', because it is injective on 0-, 1- and 2-cells.
Injectivity may fail in general on 2-cells for the group completion $\cat B\to \cat B_+$, but holds in this particular case where the Hom monoids of 2-cells in $\Spanhat(\GG;\JJ)$ with $\GG\subseteq\gpd$ are all free; see \Cref{Exa:ordinary_span_is_sad} for details.
\end{Rem}

\begin{Def}
\label{Def:realization}%
If $\MM\colon \GG^\op\to \ICADD$ is any Mackey 2-functor with idempotent-complete values, its essentially unique extension to an additive functor
\[
\widehat{\MM}\colon \mathbb Z \Spanhat(\GG;\JJ)\too \ICADD
\]
will be referred to as the \emph{realization} of additive 2-motives via~$\MM$.
\end{Def}

\begin{Rem}
\label{Rem:can_add_+_strict}%
\index{$2fun$@$\twoFun_\amalg$ \, additive 2-functors}  \index{$2category$@2-category!-- of additive 2-functors}%
\index{Fun@$\twoFun_\amalg$}%
\index{strictification!-- of $\mathbb Z\Spanhat(\GG;\JJ)$}%
\index{strictification!-- of realizations}%
If so wished, there is a way to obtain realizations as \emph{strict 2-functors} rather than mere pseudo-functors, just like Mackey \emph{2-functors} are 2-functors.
Indeed, recall that we may replace $\Spanhat(\GG;\JJ)$ with a biequivalent strict version $\Spanhat(\GG;\JJ)^\str$, for instance the one explicitly presented in Corollary~\ref{Cor:strict-Spanhat}, which is an essentially small strict 2-category. Then we obtain biequivalences
\[
\twoFun_\amalg \big(\mathbb Z\Spanhat(\GG;\JJ)^\str, \ICADD\big)
\stackrel{\sim}{\to}
\PsFun_\amalg \big(\mathbb Z\Spanhat(\GG;\JJ)^\str, \ICADD\big)
\stackrel{\sim}{\to}
\bicMack(\GG;\JJ)
\]
where $\mathbb Z\Spanhat(\GG;\JJ)^\str$ is defined as in \Cref{Def:additive-2motives}, and $\twoFun_\amalg(\cat{C}, \cat{C}')$ denotes the 2-category of additive 2-functors between 2-categories $\cat{C}$ and~$\cat{C}'$, pseudo-natural transformations between them and modifications (\cf \Cref{Not:Fun}).
The first biequivalence holds by Power's strictification theorem for pseudo-functors, recalled in Remark~\ref{Rem:coh_pseudofun}, and the second one is induced by the biequivalence \eqref{eq:UP-ZSpanhat} and the strictification $\Spanhat(\GG;\JJ)\stackrel{\sim}{\to}\Spanhat(\GG;\JJ)^\str$ (see also Remark~\ref{Rem:PsFun_bieq}).
\end{Rem}

\bigbreak
\section{The Yoneda 2-embedding}
\label{sec:Yoneda-2-motives}%
\medskip

%
Our next theorem shows that Mackey 2-motives could be simply thought of as a particular type of representable Mackey 2-functors:

\begin{Thm} \label{Thm:biYoneda_for_Mackey}
There is a `Yoneda' pseudo-functor
\[
\mathbb Z\Spanhat(\GG;\JJ)^{\op} \longrightarrow \bicMack(\GG;\JJ) \,, \quad
(G,e)\mapsto \mathbb Z\Spanhat((G,e),-) |_{\GG^\op}
\]
which is a biequivalence on its 1- and 2-full image.
\end{Thm}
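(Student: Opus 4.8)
The statement asserts that the Yoneda-type pseudo-functor
$(G,e)\mapsto \mathbb{Z}\Spanhat((G,e),-)|_{\GG^\op}$ lands in Mackey 2-functors and is a biequivalence onto its 1- and 2-full image. The natural strategy is to build on two facts already in hand: first, that for every fixed object $G_0$ the represented 2-functor on a locally semi-additive bicategory with suitable adjoints is itself a Mackey 2-functor (this is the content flagged as \Cref{Thm:1-Mack-is-2-Mack} and the surrounding discussion of \Cref{sec:Yoneda-2-motives}); and second, the block-completeness and additivity of $\mathbb{Z}\Spanhat(\GG;\JJ)$ established in \Cref{Prop:semi-add-2motives} and \Cref{Def:additive-2motives}. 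So the first step is to check that, for each $(G_0,e_0)$, the assignment
\[
G\longmapsto \mathbb{Z}\Spanhat(\GG;\JJ)\big((G_0,e_0),\,G^*\big)
\]
(viewing $\GG^\op\hookrightarrow \mathbb{Z}\Spanhat$ via the canonical embedding of \Cref{Rem:canonical-embedding-is-inj}) defines a strict 2-functor $\GG^\op\to \ICADD$ satisfying \Mack{1}--\Mack{4}. Additivity \Mack{1} follows from \Cref{Lem:Span-prods} (coproducts in $\GG$ go to products in the represented functor), existence of adjoints \Mack{2} and the base-change \Mack{3} follow because $\mathbb{Z}\Spanhat$ already has the ambidextrous adjunctions $i_!=i_*$ of \Cref{Prop:Bhat-ambidextre} and representable pseudo-functors preserve adjunctions and mates. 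Ambidexterity \Mack{4} is then inherited from the ambidexterity inside $\mathbb{Z}\Spanhat$. The values land in $\ICADD$ precisely because $\mathbb{Z}\Spanhat$ is block-complete, hence locally idempotent-complete, so each Hom category is idempotent-complete.

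\textbf{Functoriality in the representing variable.} Next I would verify that the assignment is pseudo-functorial in $(G,e)$, contravariantly: a 1-cell $(G,e)\to (G',e')$ in $\mathbb{Z}\Spanhat$ induces, by precomposition, a pseudo-natural transformation between the represented 2-functors, and a 2-cell induces a modification. Here the key point is that these induced transformations are not merely pre-morphisms but genuine \emph{morphisms} of Mackey 2-functors in the sense of \Cref{Def:morphism-of-Mackey-2-functors}; by the characterization in \Cref{Prop:Spanhat_trans}, it suffices to check that the relevant mates $(t_i\inv)_!$ are invertible, and this again follows from the fact that representable functors on a bicategory with the strict Mackey formula \eqref{eq:gluing-formula} automatically satisfy condition \eqref{it:trans-b} of that proposition. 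This is where the structural results of \Cref{ch:2-motives}, especially \Cref{Prop:Bhat-ambidextre}\,\eqref{it:gluing-formula}, do the real work.

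\textbf{Full faithfulness and the biequivalence.} The heart of the matter is the 2-categorical Yoneda lemma: I would deduce that the Yoneda pseudo-functor is \emph{locally} a biequivalence, i.e.\ that for each pair $(G,e)$, $(G',e')$ it induces an equivalence of Hom categories
\[
\mathbb{Z}\Spanhat\big((G',e'),(G,e)\big)\;\xrightarrow{\ \sim\ }\;\biMack\big(\mathbb{Z}\Spanhat((G,e),-)|_{\GG^\op},\,\mathbb{Z}\Spanhat((G',e'),-)|_{\GG^\op}\big).
\]
Under the identification of Mackey 2-functors with additive pseudo-functors on $\mathbb{Z}\Spanhat$ (the biequivalence \eqref{eq:UP-ZSpanhat} of \Cref{Rem:can_add_+}), the right-hand side becomes a Hom category of additive-pseudo-functor morphisms on $\mathbb{Z}\Spanhat$, and the bicategorical Yoneda lemma identifies this with the original Hom category. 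Since a pseudo-functor that is an equivalence on every Hom category and an equivalence onto its 1- and 2-full essential image is precisely a biequivalence onto that image, the theorem follows by restricting the target to the image.

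\textbf{Main obstacle.} The hard part is the bicategorical Yoneda argument carried out in the \emph{additive, block-complete} setting rather than for plain $\Cat$-valued presheaves: one must track that the Yoneda equivalence respects the additive enrichment and the idempotent-splitting of objects $(G,e)$, so that the objects on the left (pairs with idempotents) match morphisms on the right correctly, including the absorption condition $e\circ u\circ e'\cong u$ of \Cref{Def:additive-2motives}. Establishing this compatibility cleanly — rather than by a brute-force unwinding of all the layers of group-completion, idempotent-completion and block-completion — is the step that will require the most care, and I would organize it by invoking the universal property \eqref{eq:UP-ZSpanhat} as a black box to convert the Yoneda statement into one about additive pseudo-functors, where the standard enriched Yoneda lemma applies directly.
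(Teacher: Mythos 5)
Your proposal is correct and, in its decisive step, coincides with the paper's proof: the paper simply composes the additive bicategorical Yoneda embedding $\mathbb{Z}\Spanhat(\GG;\JJ)^{\op}\to\PsFun_\amalg\big(\mathbb{Z}\Spanhat(\GG;\JJ),\ICAdd\big)$ of \Cref{Rem:add-bicat-Yoneda} (a biequivalence onto its 1- and 2-full image, since $\mathbb{Z}\Spanhat$ is block-complete) with the restriction biequivalence \eqref{eq:UP-ZSpanhat}, which is exactly your plan of invoking the universal property as a black box so that the enriched Yoneda lemma applies. Your preliminary steps (verifying \Mack{1}--\Mack{4} for each represented functor, and that precomposition gives morphisms of Mackey 2-functors via \Cref{Prop:Spanhat_trans}) are then redundant, since both facts fall out automatically once the functor is exhibited as this composite — which is why the paper's proof is only a few lines long.
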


\begin{proof}
Since $\mathbb Z\Spanhat$ is block-complete by construction, as explained in \Cref{Rem:add-bicat-Yoneda} we have a contravariant Yoneda embedding
\[
\mathbb Z\Spanhat(\GG;\JJ)^{\op} \longrightarrow \PsFun_\amalg(\mathbb Z\Spanhat(\GG;\JJ), \ICAdd)
\]
into the category of additive pseudo-functors. In full generality, this is a biequivalence onto its 1- and 2-full image. We may also replace $\ICAdd$ with $\ICADD$ without changing the conclusion. In order to prove the theorem, it now suffices to compose the above with the biequivalence of \eqref{eq:UP-ZSpanhat}.
\end{proof}

Thanks to this result, we may now add a new large family to our list of Mackey 2-functors in \Cref{Exa:Mackey-2-functors}:
\begin{Not}
\label{Not:Muniv}%
\index{muniv@$\Muniv$}%
\index{$muniv$@$\Muniv$ \, represented Mackey 2-functor}%
For every finite groupoid $G_0\in \GG$ and every idempotent 2-cell $e_0=e_0^2\colon \Id_G\Rightarrow \Id_G$ in $\mathbb Z\Spanhat(\GG;\JJ)$ (for instance $e_0=1_G$), we obtain from \Cref{Thm:biYoneda_for_Mackey} a canonical Mackey 2-functor
\[
\Muniv^{(G_0,e_0)}:= \mathbb Z\Spanhat(\GG;\JJ)((G_0, e_0),-)
\]
on~$\GG$.
\end{Not}

The simplest case of this construction, where we set $G_0=1$ the trivial group and $e_0=1_{G_0}$ the identity, is already quite interesting because it recovers ordinary spans of $G$-sets, repackaged into a single Mackey 2-functor:

\begin{Thm} \label{Thm:1-Mack-is-2-Mack}
Let $\GG = \groupoid$ and $\JJ=\faithful$. Then the Mackey 2-functor $\Muniv^1$ associated to the trivial group~$1=(1,1_1)$ is canonically given by
\[
\Muniv^1(G) \overset{\simeq}{\longleftarrow} (\widehat{G\sset})_+^\natural
\]
for every finite group~$G$, and if $u\colon H\to G$ is any group homomorphism then $u^*$ is induced by the restriction functor $u^*\colon G\sset \to H\sset$.
In other words, the (group-completed and idempotent-completed) ordinary span categories of $G$-sets extend, by varying~$G$, to a Mackey 2-functor.
\end{Thm}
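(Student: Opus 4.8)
<br>

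The plan is to identify the represented Mackey 2-functor $\Muniv^1 = \mathbb{Z}\Spanhat(\groupoid;\faithful)(1,-)$ explicitly by unwinding the definitions of the Hom categories in $\mathbb{Z}\Spanhat$ and comparing them with the classical span category of $G$-sets. First I would recall from \Cref{Thm:biYoneda_for_Mackey} that $\Muniv^1$ is a genuine Mackey 2-functor, so the content of the statement is purely the computation of its values. For a finite group $G$ (viewed as a one-object groupoid), I would compute the category $\mathbb{Z}\Spanhat((1,1_1),(G,1_G))$ by working through the three layers of \Cref{Def:additive-2motives}: block-completion $(-)^\flat$, idempotent-completion $(-)^\natural$, and group-completion $(-)_+$, applied to the semi-additive Hom category $\Spanhat(\groupoid;\faithful)(1,G)$. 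Since the source object is the trivial pair $(1,1_1)$, the idempotent on $1$ is the identity and imposes no absorption constraint on that side, so the Hom category reduces to $\big((\Spanhat(\groupoid;\faithful)(1,G))_+\big)^\natural$.

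The crucial identification is then $\Spanhat(\groupoid;\faithful)(1,G) \cong \widehat{G\sset}$, the ordinary span category of finite $G$-sets. Here I would use the fundamental equivalence between finite $G$-sets and finite groupoids faithfully mapping to $G$ (this is the content relating $\gpdG$ to $G$-sets, cited as \Cref{Cor:Gset_vs_gpdG} in the appendix material), together with the definition of $\Spanhat$ as applying the ordinary 1-categorical span construction $\widehat{(-)}$ to each Hom category of $\Span(\groupoid;\faithful)$ (\Cref{Def:Spanhat-bicat}). Concretely, a 1-cell $1 \to G$ in $\Span(\groupoid;\faithful)$ is a span $1 \loto{} P \ointo{i} G$ with $i$ faithful; such a $P$ with faithful functor to $G$ corresponds exactly to a finite $G$-set under the groupoid/$G$-set dictionary, and the 2-cells of $\Span(\groupoid;\faithful)(1,G)$ match morphisms of $G$-sets. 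Taking the 1-categorical span construction on top then yields precisely $\widehat{G\sset}$. I would verify that horizontal composition with the iso-comma squares corresponds to the pullback of $G$-sets, which is where \Cref{Prop:pullbacks} and the explicit translation of iso-commas in \Cref{Rem:commas_translated} do the work.

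Assembling these, the value is $\Muniv^1(G) = \big((\widehat{G\sset})_+\big)^\natural = (\widehat{G\sset})_+^\natural$, as claimed (the group- and idempotent-completions commute up to the canonical isomorphism, being independent constructions on the same additive structure). For the variance, given a homomorphism $u\colon H \to G$, the functor $(u^*)^*$ on represented Mackey 2-functors is precomposition with the image of $u^*$ in $\mathbb{Z}\Spanhat$; under the identification with $G$-sets, restriction along $u$ of a span of groupoids corresponds to pulling back $G$-sets along $u$, i.e.\ to the restriction functor $u^*\colon G\sset \to H\sset$. I would check naturality of these identifications in $G$ so that the equivalences $(\widehat{G\sset})_+^\natural \cong \Muniv^1(G)$ assemble into an isomorphism of Mackey 2-functors.

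The main obstacle I expect is not any single hard step but the bookkeeping of the three completions and ensuring the identifications are natural and compatible with the bicategorical structure. In particular, the delicate point is checking that horizontal composition in $\Spanhat$ (via iso-commas of groupoids) translates correctly and coherently into the composition of spans of $G$-sets (via pullbacks), including all the structural unitor/associator 2-cells; this requires care because the $G$-set/groupoid dictionary is only an equivalence, not an isomorphism, so iso-commas on the groupoid side match pullbacks on the $G$-set side only up to coherent isomorphism. Verifying that these coherences are respected by group- and idempotent-completion — so that the final statement holds on the nose as an identification of Mackey 2-functors — is where the genuine work lies, though all the needed ingredients (especially \Cref{Cor:Gset_vs_gpdG}, \Cref{Prop:pullbacks} and \Cref{Lem:pres_pullbacks}) are already in place.
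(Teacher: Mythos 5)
Your proposal is correct and takes essentially the same route as the paper's proof: with both idempotents being identities the absorption conditions are vacuous, so everything reduces to identifying $\Span(\groupoid;\faithful)(1,G)$ with groupoids faithful over~$G$ (the paper packages this as the observation that the forgetful functor $\Span(\groupoid;\faithful)(1,G)\to\pih(\gpdG)$ is an isomorphism of categories), composing with the transport-groupoid equivalence of \Cref{Cor:Gset_vs_gpdG} to reach $G\sset$, and then applying the ordinary span construction $\widehat{(-)}$ followed by group- and idempotent-completion. The compatibility of compositions (iso-commas versus pullbacks of $G$-sets, via \Cref{Prop:pullbacks} and \Cref{Lem:functoriality_ordinary-spans}) and the identification of the restrictions $u^*$ are handled in the paper exactly as you indicate.
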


\begin{proof}
Fix a finite group~$G$. As explained in \Cref{app:old-Mackey}, the category of $G$-sets can be viewed as a comma category of groupoids over~$G$. More precisely, we prove in \Cref{Cor:Gset_vs_gpdG} that the transport groupoid functor $G\ltimes (-)\colon G\sset \to \groupoid$ lifts to an equivalence
\[
G\sset\stackrel{\simeq}{\longrightarrow} \pih(\gpdG)
\]
between the category of $G$-sets and the truncated comma 2-category of faithful functors in $\GG=\gpd$ over~$G$. There is also an evident forgetful functor
\[
\Span(\GG;\JJ)(1,G) \longrightarrow \pih(\gpdG)
\]
defined as follows (on 1-cells)
\[
\vcenter{\hbox{
\xymatrix@R=10pt{
& H
 \ar[dd]_a
 \ar@/_2ex/[dl]
 \ar@/^2ex/[dr]^-{i}
 \ar@{}[ddl]|{\SEcell^{\exists!}}
 \ar@{}[ddr]|{\NEcell\alpha\;\;\;} & \\
1 && G \\
& H'
 \ar@/_2ex/[ur]_j
 \ar@/^2ex/[ul] &
}
}}
\quad\quad \mapsto \quad\quad
\vcenter{\hbox{
\xymatrix@R=10pt{
 H
 \ar[dd]_a
 \ar@/^2ex/[dr]^-{i}
 \ar@{}[ddr]|{\NEcell\alpha\;\;\;} & \\
& G.\!
\\
 H'
 \ar@/_2ex/[ur]_j &
}
}}
\]
This functor is clearly an isomorphism, since there is always exactly one way to complete any object $(H,i)$ or morphism $[a,\alpha]$ of $\pih (\gpdG)$ to an object or a morphism of $\Span(\GG;\JJ)(1,G)$. By combining the above two equivalences, taking 1-categories of spans, and group- and idempotent-completing, we get a canonical equivalence
\[
(\widehat{G\sset})_+^\natural
\stackrel{\sim}{\longrightarrow}
({\widehat{\pih(\gpdG)}})_+^\natural
\cong (\Spanhat(\GG;\JJ)(1,G))_+^\natural
\stackrel{\textrm{def.}}{=}\Muniv^1(G)
\]
as claimed. It is now straightforward to determine the restrictions~$u^*$, when $G$ varies.
\end{proof}

\bigbreak
\section{Presheaves over a Mackey 2-functor}
\label{sec:presheaves-2Mack}%
\medskip

We recall (\Cref{Rem:old-Burnside}) that there is an equivalence
\begin{equation} \label{eq:eq-ordinary-Mack-span}
\Mackey_\mathbb Z(G) \cong \Funplus((\widehat{G\sset})_+^\natural, \Ab)
\end{equation}
between the category of ordinary Mackey functors for the group $G$ and the additive functor category over spans of $G$-sets.
Thus $(\widehat{G\sset})_+^\natural$ can be identified with the category of \emph{representable} Mackey functors for~$G$.

Our next goal is to extend the result of \Cref{Thm:1-Mack-is-2-Mack} to show that we can also form a Mackey 2-functor by assembling the abelian categories of \emph{all} Mackey functors (not just the representable ones). For this we can use the following general construction:

\begin{Prop} \label{Prop:extend-2Mack-via-Yoneda}
Let $\cat S$ be a Mackey 2-functor on $(\GG;\JJ)$ such that each additive category $\cat S(G)$ is essentially small, and let $\cat A$ be a cocomplete abelian category. Then $G\mapsto \MM(G):=\Funplus(\cat S(G)^\op, \cat A)$ defines a Mackey 2-functor $\MM\colon \GG^\op\to \ICADD$ on $(\GG;\JJ)$, with 1- and 2-functoriality extended from that of $\cat S$ along the additive Yoneda embeddings $y_G\colon \cat S(G)\hookrightarrow \MM(G)$, $X\mapsto \cat S(G)(-,X)$.
\end{Prop}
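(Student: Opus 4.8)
The plan is to build $\MM$ by transporting the Mackey 2-functor structure of $\cat S$ through the Yoneda embeddings $y_G\colon \cat S(G)\hookrightarrow \MM(G)$ and then verifying the four axioms \Mack{1}--\Mack{4}. First I would note that each $\MM(G)=\Funplus(\cat S(G)^\op,\cat A)$ is indeed an idempotent-complete additive category (in fact abelian, as a category of additive functors into a cocomplete abelian $\cat A$), so the target $\ICADD$ is appropriate. For the variance, given $u\colon H\to G$ in $\GG$ I would define $u^*\colon \MM(G)\to \MM(H)$ by \emph{restriction along} the functor $\cat S(u)=u^*\colon \cat S(G)\to \cat S(H)$, that is $u^*(F):=F\circ (\cat S(u))^\op$ for $F\in\MM(G)$. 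Since $\cat S$ is a strict 2-functor this assignment is strictly functorial in $u$ and in 2-cells $\alpha\colon u\Rightarrow u'$, so $\MM$ is a genuine strict 2-functor $\GG^\op\to\ICADD$.

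Next I would establish the existence of adjoints \Mack{2}. For $i\colon H\into G$ in $\JJ$, the functor $\cat S(i)\colon \cat S(G)\to \cat S(H)$ has a two-sided adjoint $i_!=i_*$ in $\cat S$ by ambidexterity. The key observation is that precomposition (restriction) functors between presheaf categories have adjoints given by left and right Kan extension: because $\cat A$ is cocomplete (and, for the right adjoint, one uses that $\cat S(G)$ is essentially small together with the ambidextrous adjoint to avoid completeness issues), the functor $i^*=(-)\circ(\cat S(i))^\op$ admits both a left adjoint $i_!$ and a right adjoint $i_*$ on $\MM$. I would then argue that these adjoints are computed simply by precomposition with $(\cat S(i_!))^\op=(\cat S(i_*))^\op$, since the ambidextrous adjunction in $\cat S$ transports through Yoneda; concretely the unit and counit 2-cells of $i_!\adj i^*\adj i_*$ in $\MM$ are obtained by whiskering those of $\cat S$. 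This makes the adjoints on $\MM$ into honest restriction-type functors, which is what makes the remaining axioms formal.

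With the adjoints identified as restrictions along $\cat S$-adjoints, axioms \Mack{3} and \Mack{4} follow by transport. For base-change \Mack{3}: given a Mackey square in $\GG$, applying $\cat S$ and then $y$ turns the BC-isomorphisms $\gamma_!$ and $(\gamma\inv)_*$ holding in $\cat S$ into the corresponding mates in $\MM$, because restriction functors compose strictly and mates are preserved under the (strict) precomposition operation. For ambidexterity \Mack{4}: since $\cat S$ is rectified we have $\cat S(i)_!=\cat S(i)_*$ on the nose, and precomposition by a single functor $(\cat S(i_!))^\op$ serves simultaneously as left and right adjoint to $i^*$, giving $i_!\simeq i_*$ directly. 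The cleanest route is to invoke \Cref{Thm:Theta} and \Cref{Thm:Theta-properties}: $\MM$ satisfies \Mack{1}--\Mack{3} as just argued, so it has a canonical $\Theta_i\colon i_!\Rightarrow i_*$, and the identification of both adjoints with the \emph{same} precomposition functor forces $\Theta_i$ to be an isomorphism. Additivity \Mack{1} is immediate since $y$ is additive and $\Funplus(-,\cat A)$ sends the product decomposition $\cat S(\coprod G_\alpha)\simeq\prod\cat S(G_\alpha)$ to the corresponding product of presheaf categories.

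The main obstacle I anticipate is the careful construction of the \emph{right} adjoint $i_*$ on $\MM$ and the verification that it genuinely coincides with precomposition by the $\cat S$-adjoint, rather than with some \emph{a priori} different right Kan extension. The subtlety is that, for presheaf categories, restriction along a functor $f$ has a right adjoint given by right Kan extension along $f$, and one must check that when $f=\cat S(i)$ has a right adjoint $\cat S(i_*)$ inside $\cat S$, this right Kan extension is computed simply as precomposition by $(\cat S(i_*))^\op$. This is where the ambidexterity and essential smallness of $\cat S(G)$ enter crucially: the ambidextrous adjunction supplies explicit unit/counit data making precomposition by $(\cat S(i_!))^\op=(\cat S(i_*))^\op$ two-sided adjoint to $i^*$, which is exactly what is needed. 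Once this identification is secured, every other axiom is a routine transport of structure through the strictly functorial, additive Yoneda embeddings, so I would spend most of the written proof pinning down this adjunction and leave the axiom-checking as direct verifications.
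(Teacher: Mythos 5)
Your proposal founders at its very first step: the formula $u^*(F):=F\circ(\cat S(u))^\op$ does not typecheck. For $F\in\MM(G)=\Funplus(\cat S(G)^\op,\cat A)$ and $u\colon H\to G$, the functor $(\cat S(u))^\op$ goes $\cat S(G)^\op\to\cat S(H)^\op$, so it cannot be precomposed into~$F$; precomposition with $(\cat S(u))^\op$ is a functor $\MM(H)\to\MM(G)$, i.e.\ it goes in the wrong direction to serve as restriction along~$u$. This is not a repairable variance slip. For a general (non-faithful) $u$ there is no functor $\cat S(H)\to\cat S(G)$ available to precompose with, since $\cat S(u)$ has adjoints only when $u\in\JJ$; hence restriction on $\MM$ cannot be of precomposition type at all. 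It must instead be defined as the left Kan extension of $y_H\circ \cat S(u)$ along $y_G$ (the cocontinuous extension from representables), which is exactly what the clause ``extended along the Yoneda embeddings'' in the statement forces, and is what the paper does. Your subsequent arguments inherit the flaw: the claimed strict 2-functoriality of $\MM$, and your base-change argument for \Mack{3} (``mates are preserved under precomposition''), both presuppose that all four functors in a Mackey square are precompositions, which fails for the non-faithful legs.

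What you correctly sense --- and what the paper indeed exploits --- is that precomposition does enter, but for the \emph{wrong-way} functors: the right adjoint on $\MM$ is $u_*N=N\circ(\cat S(u))^\op$ (your formula, correctly housed), and for faithful $i$ one proves the key identity $i^*M\cong M\circ(\cat S(i)_*)^\op$, i.e.\ restriction along a faithful $i$ \emph{is} precomposition with the $\cat S$-level coinduction; ambidexterity $\cat S(i)_!=\cat S(i)_*$ then makes this single precomposition functor a two-sided adjoint of $i^*$, giving \Mack{2} and \Mack{4} together. But establishing that identity, and then verifying the adjunction and the BC-isomorphisms of \Mack{3}, is done in the paper by checking on representable objects (where everything restricts to $\cat S$ via Yoneda, using $i_*\circ y_H\cong y_G\circ i_*$ as in \eqref{eq:i_*-on-representables}) and then extending to arbitrary objects by writing them as colimits of representables and using that the relevant functors preserve colimits. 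That representables-plus-colimits step is the actual substance of the proof and is absent from your outline; without it, and without the Kan-extension definition of $u^*$, the ``routine transport of structure'' you invoke has nothing to transport along.
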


\begin{proof}
Let us describe the structure of $\MM$ in some details.
Given a 1-cell $u\colon H\to G$ of~$\GG$, we can extend $u^*\colon \cat S(G)\to \cat S(H)$ to the functor categories by forming the left Kan extension of $y_H\circ u^*$ along~$y_G$:
\begin{align} \label{eq:basic-square-u*}
\vcenter { \hbox{
\xymatrix{
\cat S(G) \ar[d]_{u^*} \ar[r]^-{y_G} & \MM(G) \ar@{-->}[d]^{u^*} \ar@{}[dl]|{\SWcell\; \simeq} \\
\cat S(H) \ar[r]^-{y_H} & \MM(H)\,.\!\!
}
}}
\end{align}
Since $y_G$ is fully faithful, the canonical natural transformation $u^*y_G\Rightarrow y_Hu^*$ is an isomorphism. In fact, the extension is given by the formula
\begin{align} \label{eq:first-u*}
u^*M= \colim_{(X,y_G X\to M)} y_Hu^*X
\end{align}
where $M\in \MM(G)$, $Y\in \cat S(H)$, and the colimit is taken over the comma category of $y_G$ over~$M$. (We may even define the extension $u^*$ so that the square~\eqref{eq:basic-square-u*} commutes strictly.)

The extended functor $u^*\colon \MM(G)\to \MM(H)$ has a right adjoint $u_*$ given by restriction along $u^*\colon \cat S(G)^\op\to \cat S(H)^\op$, that is:
\begin{equation} \label{eq:right-adj-Kan}
u_*N = N\circ u^* \quad \textrm{for all } N\in \MM(H)\,.
\end{equation}

It follows also from the standard properties of Kan extensions that for every 2-cell $\alpha\colon u\Rightarrow v$ of~$\GG$, the natural transformation $\alpha^*\colon u^*\Rightarrow v^*$ given by the 2-functoriality of $\cat S$ extends uniquely between the extended functors $u^*$ and $v^*$ on~$\MM(G)$. This makes $\MM(-)$ into a 2-functor $\GG^\op\to \ICADD$. Let us verify that it satisfies the axioms of a Mackey 2-functor as given in \Cref{Def:Mackey-2-functor-intro}.

Clearly $\MM$ satisfies the additivity axiom (Mack\,\ref{Mack-1-intro}) because $\cat S$ does:
\begin{align*}
\MM(G_1\sqcup G_2)
&\;=\; \Fun_+(\cat S(G_1\sqcup G_2)^\op, \cat A) \\
&\;\cong\; \Fun_+(\cat S(G_1)^\op \oplus \cat S(G_2)^\op, \cat A) \\
&\;\cong\; \Fun_+(\cat S(G_1)^\op, \cat A) \oplus \Fun_+(\cat S(G_2)^\op, \cat A) \\
&\;=\; \MM(G_1)\oplus \MM( G_2) \,.
\end{align*}

For the existence of the adjoints (Mack\,\ref{Mack-2-intro}), let $i\in \JJ$. We have already seen that a right adjoint $i_*$ exists. We claim that in this case it is also \emph{left} adjoint to restriction~$i^*$. In order to see this, we first prove the remarkably simple formula
\begin{align} \label{eq:second-i^*}
i^*M \cong M \circ i_* \qquad (M\in \MM(G))
\end{align}
for the restriction functor. Indeed, for all $Y\in \cat S(H)$ we have:
\begin{align*}
(i^*M)(Y)
&\;=\; \left( \colim_{(X,y_G X\to M)} y_Hi^*X \right) (Y) && \textrm{ by }~\eqref{eq:first-u*} \\
&\;=\; \colim_{(X,y_G X\to M)} (y_Hi^*X) (Y) &&  \\
&\;\cong\; \colim_{(X,y_G X\to M)} \cat S(H)(Y, i^*X) \\
&\;\cong\; \colim_{(X,y_G X\to M)} \cat S(G)(i_*Y, X) && \textrm{ by } i_*=i_! \dashv i^* \textrm{ for } \cat S \\
& \;=\; \colim_{(X,y_G X\to M)} (y_G X)(i_*Y) && \\
& \;=\; \left( \colim_{(X,y_G X\to M)} y_G X \right) (i_*Y) && \\
&\;\cong\; M(i_*Y) &&
\end{align*}
which proves~\eqref{eq:second-i^*}.
We will also need the isomorphism
\begin{align} \label{eq:i_*-on-representables}
i_*\circ y_H \cong y_G \circ i_*
\end{align}
which follows from the computation (for $Y\in \cat S(H)$)
\begin{align*}
i_*(y_H(Y))
&\;\cong\; y_H(Y) \circ i^* && \textrm{by }~\eqref{eq:right-adj-Kan} \\
&\;=\; \cat S(H) (i^* (-), Y) && \\
&\;\cong\; \cat S(G) (-, i_*Y) && \textrm{by } i^*\dashv i_* \textrm{ for } \cat S \\
&\;=\; y_G (i_*(Y)) &&
\end{align*}
where we have now used the other adjunction, $i^*\dashv i_*$, for the Mackey 2-functor~$\cat S$. To find the claimed natural isomorphism
\[
\MM(G)(i_*N, M)\cong \MM(H)(N,i^*M)
\]
consider first the case where $M\in \MM(G)$ is arbitrary and $N= y_HY\in \MM(H)$ is representable. We compute:
\begin{align*}
\MM(G) (i_* y_H(Y), M)
& \;\cong\; \MM(G) (y_G i_*(Y), M) && \textrm{by }~\eqref{eq:i_*-on-representables} \\
& \;\cong\; M(i_* Y) && \textrm{by Yoneda} \\
& \;\cong\; (i^*M)(Y) && \textrm{by }~\eqref{eq:second-i^*} \\
& \;\cong\; \MM(H) (y_H Y , i^*M) && \textrm{by Yoneda.}
\end{align*}
This extends to arbitrary~$N$, because we can rewrite $N$ as a colimit of representable objects and because $i_*$ commutes with colimits, the latter being an immediate consequence of~\eqref{eq:right-adj-Kan}. This concludes the proof of (Mack\,\ref{Mack-2-intro}) and, incidentally, also of the ambidexterity axiom (Mack\,\ref{Mack-4-intro}).

To verify the Base-Change formulas (Mack\,\ref{Mack-3-intro}), consider the mates
\[
q_!\circ p^* \overset{\gamma_!}{\Longrightarrow} u^*\circ i_!
\qquadtext{and}
u^*\circ i_* \overset{(\gamma\inv)_*}{\Longrightarrow} q_*\circ p^*
\]
of the natural transformation $\MM(\gamma)=\gamma^*\colon p^*i^*\Rightarrow q^*u^*$ induced by a suitable iso-comma square of~$\GG$. By~\eqref{eq:basic-square-u*} and~\eqref{eq:i_*-on-representables}, the restriction and induction functors of $\MM$ --- and therefore also the adjunctions $i_*\dashv i^* \dashv i_*$ --- restrict to those of~$\cat S$ via the Yoneda embeddings. Since $\cat S$ is a Mackey 2-functor, it follows that the components of the two mates at all representable objects are invertible. By writing an arbitrary object as a colimit of representables, and because the source and target functors commute with colimits, we deduce that the component at an arbitrary object is also invertible.
This ends the proof that $\MM$ is a Mackey 2-functor.
\end{proof}

\begin{Rem}
The above proof illustrates the advantage of defining Mackey 2-functors (\Cref{Def:Mackey-2-functor}) \emph{without} requiring the Strict Mackey Formula~\Mack{7}. The latter could be difficult to verify when the adjoints are provided up to a series of natural isomorphisms. However, the Rectification \Cref{Thm:rectification} still guarantees that one can modify units and counits, if necessary, to guarantee \Mack{7} as well.
\end{Rem}

\begin{Cor} \label{Cor:Mackey-functors-abelian-example}
The assignment $G\mapsto \Mackey_\mathbb Z(G)$ extends to a Mackey 2-functor defined on $\GG=\gpd$ and $\JJ=\faithful$. The result also holds over any base ring~$\kk$ instead of the integers.
\end{Cor}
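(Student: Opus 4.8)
The plan is to deduce \Cref{Cor:Mackey-functors-abelian-example} as a direct application of \Cref{Prop:extend-2Mack-via-Yoneda}, by feeding it the correct input data. The key observation is that the abelian category of ordinary Mackey functors is itself a category of additive presheaves, so it fits the template of that proposition perfectly once we identify the right `base' Mackey 2-functor $\cat S$ and the right target abelian category $\cat A$.

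First I would choose $\cat S$ to be the Mackey 2-functor of \Cref{Thm:1-Mack-is-2-Mack}, namely $\cat S:=\Muniv^1$ associated to the trivial group, so that $\cat S(G)\cong(\widehat{G\sset})_+^\natural$ for every finite group $G$ (and more generally $\cat S(G)=\Spanhat(\gpd;\faithful)(1,G)_+^\natural$ for a groupoid $G$). This is genuinely a Mackey 2-functor on $\GG=\gpd$ with $\JJ=\faithful$ by that theorem, and each value $\cat S(G)$ is essentially small since it is built from the essentially small span bicategory. Next I would take $\cat A:=\Ab$ (or $\cat A:=\Mod(\kk)$ in the general case), which is cocomplete and abelian. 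Then \Cref{Prop:extend-2Mack-via-Yoneda} immediately produces a Mackey 2-functor
\[
G\mapsto \MM(G):=\Funplus\big(\cat S(G)^\op,\Ab\big)=\Funplus\big(((\widehat{G\sset})_+^\natural)^\op,\Ab\big).
\]

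The remaining step is to identify this value with $\Mackey_\mathbb Z(G)$. Here I would invoke the equivalence~\eqref{eq:eq-ordinary-Mack-span}, $\Mackey_\mathbb Z(G)\cong\Funplus((\widehat{G\sset})_+^\natural,\Ab)$, recalled at the start of \Cref{sec:presheaves-2Mack}. Since the $\op$ on $\cat S(G)$ versus the stated variance is merely a matter of bookkeeping conventions (additive functors out of $\cat S(G)^\op$ versus $\cat S(G)$), and since the self-duality of the span construction under transposition (\Cref{Prop:Bhat-ambidextre}\,\eqref{it:ambidextrous}) gives $(\widehat{G\sset})_+^\natural\cong((\widehat{G\sset})_+^\natural)^\op$, the two presheaf categories agree. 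Thus $\MM(G)\cong\Mackey_\mathbb Z(G)$, and the restriction functors $u^*$ are, by construction in \Cref{Prop:extend-2Mack-via-Yoneda}, the Kan-extended versions of the restriction functors on $G$-sets, which is exactly the expected variance on ordinary Mackey functors.

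The main obstacle I anticipate is not conceptual but notational: one must carefully match the variance conventions so that the presheaf category $\Funplus(\cat S(G)^\op,\Ab)$ appearing in \Cref{Prop:extend-2Mack-via-Yoneda} lines up with the standard definition $\Mackey_\mathbb Z(G)=\Funplus((\widehat{G\sset})_+^\natural,\Ab)$ of~\eqref{eq:eq-ordinary-Mack-span}, and that the induced $u^*$ indeed recovers the usual restriction/transfer structure rather than its opposite. This is where the ambidexterity of the span construction is essential: because transposition furnishes a self-duality of $(\widehat{G\sset})_+^\natural$ fixing objects and 1-cells, the choice of $\op$ is harmless and the identification is canonical. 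The $\kk$-linear case is identical, replacing $\Ab$ by $\Mod(\kk)$ throughout and using that $\Mod(\kk)$ is again cocomplete and abelian.
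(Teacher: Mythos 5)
Your proof is correct and uses the same basic strategy as the paper: apply \Cref{Prop:extend-2Mack-via-Yoneda} with cocomplete abelian target $\Ab$ (resp.\ $\Mod(\kk)$), take for $\cat S$ the span-category Mackey 2-functor coming from \Cref{Thm:1-Mack-is-2-Mack}, and conclude via the Dress--Lindner equivalence~\eqref{eq:eq-ordinary-Mack-span}. The one point where you genuinely diverge is the handling of the variance. The paper feeds the proposition the \emph{dual} Mackey 2-functor $\cat S:=\big(G\mapsto (\widehat{G\sset})_+^\natural\big)^\op$, which is again a Mackey 2-functor by \Cref{Rem:dual-Mackey}; then the $\op$ appearing in $\Funplus(\cat S(G)^\op,\Ab)$ cancels on the nose, the values are literally $\Funplus((\widehat{G\sset})_+^\natural,\Ab)\cong \Mackey_{\mathbb Z}(G)$, and nothing further needs to be identified. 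You instead feed it $\Muniv^1$ itself and then cancel the $\op$ using the transposition self-duality of span categories (\Cref{Prop:Bhat-ambidextre}\,\eqref{it:ambidextrous}, equivalently \Cref{Rem:ordinary-spans} applied Hom-wise). Both routes are valid: for the bare statement of the corollary a pointwise equivalence of values suffices, and your transposition argument supplies that. The trade-off is that your route carries an extra (implicit) obligation if one wants the restriction functors to be the expected ones, namely that the objectwise equivalences $\big((\widehat{G\sset})_+^\natural\big)^\op\simeq (\widehat{G\sset})_+^\natural$ are compatible with the functoriality in~$G$; this does hold, precisely because the involution of \Cref{Prop:Bhat-ambidextre}\,\eqref{it:ambidextrous} is an isomorphism of bicategories fixing 0-cells and 1-cells, but it is a verification the paper's dualization trick sidesteps entirely. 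Conversely, your argument makes explicit the pleasant fact that $\Muniv^1$ is self-dual as a Mackey 2-functor, which is exactly the reason the paper may harmlessly replace it by its dual.
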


\begin{proof}
Apply \Cref{Prop:extend-2Mack-via-Yoneda} to $\cat S:= (G \mapsto (\widehat{G\sset})_+^\natural)^\op$, the dual (as in \Cref{Rem:dual-Mackey}) of the Mackey 2-functor of \Cref{Thm:1-Mack-is-2-Mack}, and combine the result with~\eqref{eq:eq-ordinary-Mack-span}. By replacing $\Ab$ with $\kk\MMod$ in the latter equation, we obtain the more general result for $\kk$-linear Mackey functors.
\end{proof}

\bigbreak
\section{Crossed Burnside rings and Mackey 2-motives}
\label{sec:B(G)}%
\medskip

Let $G$ be any finite group.
In this section we show that the endomorphism ring of $\Id_G$ in the bicategory of Mackey 2-motives can be identified with the so-called crossed Burnside ring $\xBur(G)$. This works not only integrally, but equally well over any commutative ring $\kk$ of coefficients; see \Cref{Thm:xBur-End} below.

More precisely, in the following $\kk\Spanhat$ will denote the bicategory of $\kk$-linear Mackey 2-motives $\kk \Spanhat:=\kk \Spanhat(\GG;\JJ)$ as defined in \Cref{Def:additive-2motives}, where we take $\GG:= \gpd$ to be the whole 2-category of finite groupoids, functors, and natural transformations, and $\JJ:=\faithful$ to be the class of all faithful functors.
(More generally, we could also take $(\GG,\JJ)$ to be any admissible pair as in \ref{Hyp:G_and_I_for_ZSpan}, as long as $\GG$ contains the group $G$ of interest as well as all of its subroups.)

Let us recall the other side of the equation:

\begin{Def}[{\cite{Yoshida97} \cite{Bouc03}}]
\label{Def:xBurk}
\index{$bcg$@$\xBur(G)$ \, crossed Burnside ring}%
\index{bcg@$\xBur(G)$}%
\index{crossed Burnside}%
The \emph{crossed Burnside ring} is the Grothendieck ring
\[ \xBur(G) := K_0( G\sset/G^c , \sqcup , \otimes) \]
of the comma category $G\sset/G^c$ of finite $G$-sets over~$G^c$, where the latter is the set $G$ equipped with the conjugation $G$-action. Its sum and multiplication are induced by coproducts (disjoint unions) and by the monoidal structure on $G\sset/G^c$
\[ (X\overset{a}{\to} G^c) \otimes (Y\overset{b}{\to} G^c) := ( X \times Y\xrightarrow{a\times b} G^c\times G^c \overset{\cdot}{\to} G^c)
\]
induced by the group multiplication. Note that, even for non-abelian groups, the ring multiplication is always commutative thanks to the natural isomorphism
\[
(X,a)\otimes (Y,b) \overset{\sim}{\longrightarrow} (Y,b) \otimes (X,a) ,  \quad (x,y) \mapsto (a(x)\cdot y, x)
\]
(this is in fact a braiding for the above monoidal structure, \cf \cite[(1.7)]{Yoshida97}).

The \emph{crossed Burnside $\kk$-algebra} is simply obtained by extension of scalars: $\xBurk(G):= \kk\otimes_\mathbb Z \xBur(G)$. It is a \emph{commutative} $\kk$-algebra.
\end{Def}

\begin{Rem}\label{Rem:Bur-vs-xBur}
\index{$bg$@$\Bur(G)$ \, Burnside ring}%
\index{bg@$\Bur(G)$}%
\index{Burnside ring}%
Recall that the ordinary \emph{Burnside ring}~$\Bur(G)$ of a finite group~$G$ is defined as the Grothendieck group of finite $G$-sets
\[
\Bur(G):=K_0(G\sset, \sqcup, \times) \,.
\]
(We use here the representation theorists' notation. Topologists typically denote this ring by~$A(G)$.) The \emph{Burnside $\kk$-algebra}~$\Burk(G)$ is defined as the $\kk$-linear version:
\[
\Burk(G):=\kk\otimes_\mathbb Z \Bur(G) \,.
\]
We see immediately that the forgetful additive tensor functor $G\sset/G^c\to G\sset$, $(a\colon X\to G^c)\mapsto X$, induces a surjective algebra morphism $\pi\colon \xBurk (G) \to \Bur(G)$. Indeed, there is an additive tensor functor $G\sset \to G\sset/G^c$ mapping a $G$-set $X$ to~$(1\colon X\to G^c)$ and inducing a morphism of algebras $\iota\colon \Burk (G) \to \xBurk(G)$. The latter is clearly a section of the former: $\pi \circ \iota = \Id_{\Burk(G)}$.
\end{Rem}

\begin{Rem} \label{Rem:Bur-vs-End}
The Burnside algebra is also the endomorphism ring of $G/G$ in the $\kk$-linear category of ordinary spans (see \cite{Bouc97} and \cite{Lewis80}):
\[
\Burk(G)
\cong \kk \otimes_\mathbb Z \End_{\widehat{G\sset}_+} (G/G)
= \End_{ (\kk\otimes \widehat{G\sset}_+)^\natural} (G/G) \,.
\]
Here $ (\kk\otimes \widehat{G\sset}_+)^\natural$ is the idempotent-completion (\ref{Rem:completions}\eqref{it:ic}) of the $\kk$-linearization of $\widehat{G\sset}$, the ordinary category of spans of $G$-sets; see \ref{Rem:old-Burnside}.
By tracing the $G$-set $G/G$ through the equivalence of \Cref{Thm:1-Mack-is-2-Mack}, we see easily that it maps to the span $1\gets G=G$, as an object in the Hom category~$\kk\Spanhat(\GG;\JJ)(1,G)$. Therefore we can identify $\Burk(G)$ inside of $\kk$-linear Mackey 2-motives $\kk\Spanhat$ as the endomorphism $\kk$-algebra of the 1-cell~$1\lto G=G$:
\begin{equation} \label{eq:first-B(G)}
\sigma\colon \Burk(G)\overset{\sim}{\longrightarrow} \End_{\kk \Spanhat(1,G)} (1\gets G = G)\,.
\end{equation}
\end{Rem}

Our main result here is a description of the endomorphism ring of the identity 1-cell $\Id_G\colon G\to G$ in $\kk\Spanhat(\GG;\JJ)$, for which we use the obvious abbreviated notation:
\[
\End_{\kk\Spanhat}(\Id_G):=\End_{\kk\Spanhat(G,G)}(\Id_G)\,.
\]

\begin{Thm} \label{Thm:xBur-End}
For every finite group $G$ and every commutative ring~$\kk$, there is a canonical isomorphism of $\kk$-algebras
\[
\sigma^c \colon \xBurk(G) \overset{\sim}{\longrightarrow} \End_{\kk \Spanhat} (\Id_G)
\]
between the crossed Burnside $\kk$-algebra (\Cref{Def:xBurk}) and the endomorphism algebra of the 1-cell~$\Id_G=(G=G=G)$ of $\kk \Spanhat$, the bicategory of $\kk$-linear Mackey 2-motives.  Moreover, the isomorphisms $\sigma^c$ and $\sigma$ (\Cref{Rem:Bur-vs-End}) identify the projection~$\pi$ and the inclusion~$\iota$ of \Cref{Rem:Bur-vs-xBur} with explicit maps~$\phi$ and~$\psi$, respectively, as in the following commutative diagram:
\[
\xymatrix{
\Burk \ar[rr]_-\simeq^-{\sigma} \ar[d]^{\iota} \ar@<-.5em>@/_2em/@{=}[dd]
&& \End_{\kk\Spanhat}\big(1 \!\gets\! G \!=\! G\big) \ar[d]_{\psi} \ar@<4em>@/^2em/@{=}[dd]
\\
\xBurk
 \ar[rr]^-{\sigma^c}_-\simeq \ar[d]^{\pi}
&& \End_{\kk\Spanhat}\big(G \!=\! G \!=\! G\big)
 \ar[d]_-{\phi}
\\
\Burk \ar[rr]_-\simeq^-{\sigma}
&& \End_{\kk\Spanhat}\big(1 \!\gets\! G \!=\! G\big)
}
\]
The map $\phi$ is induced by the whiskering functor
\[
(-) \circ (1\gets G=G) \colon \kk\Spanhat (G,G) \longrightarrow \kk\Spanhat (1,G)
\]
and the map $\psi$ sends
\begin{equation}
\label{eq:End(1<-G=G)-to-End(G=G=G)}%
\vcenter {
\xymatrix@C=4em{
1 \ar@{=}[d] \ar@{}[dr]|{\NEcell\,\exists!}
& G \ar[l] \ar@{=}[r] \ar@{}[dr]|{\SEcell\,\beta}
& G \ar@{=}[d]
\\
1 \ar@{}[dr]|{\SEcell\,\exists!} \ar@{=}[d]
& R \ar[u]_-{b} \ar[d]^-{a} \ar[l]_-{} \ar[r]^-{k} \ar@{}[dr]|{\NEcell\,\alpha}
& G \ar@{=}[d]
\\
1
& G \ar[l] \ar@{=}[r]
& G
}}
\qquad\overset{\underset{}{\psi}}\mapsto \qquad
\vcenter {
\xymatrix@C=4em{
G \ar@{=}[d] \ar@{}[dr]|{\NEcell\,\beta\inv}
& G \ar@{=}[l] \ar@{=}[r] \ar@{}[dr]|{\SEcell\,\beta}
& G \ar@{=}[d]
\\
G \ar@{}[dr]|{\SEcell\,\alpha\inv} \ar@{=}[d]
& R \ar[u]_-{b} \ar[d]^-{a} \ar[l]_-{k} \ar[r]^-{k} \ar@{}[dr]|{\NEcell\,\alpha}
& G \ar@{=}[d]
\\
G
& G \ar@{=}[l] \ar@{=}[r]
& G\,.\!\!
}}
\end{equation}
on representative diagrams of groupoids.
\end{Thm}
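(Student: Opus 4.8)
The goal is to produce an isomorphism of $\kk$-algebras $\sigma^c\colon \xBurk(G)\isotoo \End_{\kk\Spanhat}(\Id_G)$ together with the compatibility with $\sigma$, $\pi$, $\iota$ via the explicit maps $\phi$ and $\psi$. The plan is to proceed in four stages: first to unravel what a 2-endomorphism of $\Id_G$ concretely is, then to define a candidate additive map $\sigma^c$ on generators, then to verify that it is an algebra homomorphism, and finally to establish bijectivity. The key structural input is \Cref{Thm:1-Mack-is-2-Mack} together with the discussion in \Cref{Rem:Bur-vs-End}, which already identifies $\Burk(G)$ with $\End_{\kk\Spanhat}(1\gets G=G)$ and which supplies the isomorphism $\sigma$.

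First I would describe $\End_{\kk\Spanhat}(\Id_G)$ explicitly. By \Cref{Rem:2-cells-in-Spanhat}, a 2-endomorphism of $\Id_G=(G=G=G)$ is represented by a span of spans whose outer legs are identities; after group- and idempotent-completion and passage to $\kk$-coefficients, a $\kk$-basis of this endomorphism module is indexed by isomorphism classes of such diagrams where the middle object $R$ is connected and the maps are suitably reduced. I expect that, using \Cref{Prop:2-cells-of-Spanhat}, each basis diagram can be normalized to the shape appearing on the left of~\eqref{eq:End(1<-G=G)-to-End(G=G=G)}: a connected groupoid $R$ with a faithful $k\colon R\into G$ (so $R$ is equivalent to a subgroup $H\le G$), a further map recording the left leg, and a 2-cell $\alpha$ which, at the group level, records an element of the centralizer data. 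Tracing this through, I anticipate that the basis matches exactly the standard basis of $\xBur(G)$ given by conjugacy classes of pairs $(H,a)$ with $H\le G$ and $a\in C_G(H)$, as recalled in the Introduction. This identification, read off from the transport-groupoid description of $G$-sets in \Cref{Cor:Gset_vs_gpdG}, is what defines $\sigma^c$ on basis elements.

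Next I would check that $\sigma^c$ is multiplicative. Vertical composition of 2-cells in $\Spanhat$ is computed by pullback in the Hom category $\Span(G,G)$, i.e.\ by iso-commas in $\GG$ (\Cref{Rem:vertical-comp-Spanhat} and \Cref{Prop:pullbacks}). The hard part will be matching this iso-comma composition of spans-of-spans over $G$ with the tensor product $\otimes$ on $G\sset/G^c$ of \Cref{Def:xBurk}, which is induced by the multiplication $G^c\times G^c\to G^c$. Concretely, composing two basis diagrams for $(H,a)$ and $(K,b)$ should, via the double-coset decomposition of the relevant iso-comma $(k/k')$ (as in \Cref{Rem:old-Mackey}), reproduce the crossed-Burnside product, with the conjugation 2-cells $\alpha$ combining precisely by group multiplication to yield the centralizer element of the product. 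Getting the bookkeeping of these 2-cells right — in particular verifying that the conjugating elements multiply as dictated by the $G^c$-structure rather than merely being transported — is where I expect the genuine combinatorial work to lie; the faithfulness hypothesis on $k$ (\Cref{Hyp:G_and_I_for_Span}\eqref{Hyp:G-I-c}) and the fully-faithfulness of $\Delta_k$ (\Cref{Prop:Delta}) should control the 2-cell part and rule out spurious automorphisms.

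Finally, for bijectivity I would argue that $\sigma^c$ sends the chosen $\kk$-basis of $\xBurk(G)$ bijectively to a $\kk$-basis of $\End_{\kk\Spanhat}(\Id_G)$; since both sides are free $\kk$-modules on these matching index sets, $\sigma^c$ is an isomorphism of $\kk$-modules, and multiplicativity upgrades it to an algebra isomorphism. For the compatibility diagram, the map $\phi$ is whiskering by $(1\gets G=G)$, which collapses the top and bottom identity rows of a diagram $(G=G=G)$-endomorphism to the form of a $(1\gets G=G)$-endomorphism; I would check directly on basis diagrams that $\phi\circ\sigma^c=\sigma\circ\pi$, i.e.\ that whiskering realizes the forgetful functor $G\sset/G^c\to G\sset$ of \Cref{Rem:Bur-vs-xBur}. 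Dually, $\psi$ as defined in~\eqref{eq:End(1<-G=G)-to-End(G=G=G)} inserts the trivial centralizer data, and comparing with the section $\iota$ (the functor $X\mapsto(1\colon X\to G^c)$) gives $\psi\circ\sigma=\sigma^c\circ\iota$; the already-known relation $\pi\circ\iota=\id$ then forces $\phi\circ\psi=\id$, confirming the whole diagram commutes. The main obstacle throughout remains the precise translation of the iso-comma/pullback composition of $2$-cells into the multiplicative structure of the crossed Burnside ring, and I would lean on string diagrams (\Cref{sec:string-presentation}) to keep this manageable.
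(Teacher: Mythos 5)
Your proposal is correct and follows essentially the same route as the paper: it normalizes 2-endomorphisms of $\Id_G$ via \Cref{Prop:2-cells-of-Spanhat} to exhibit a free $\kk$-basis indexed by conjugacy classes of pairs $(H,a)$ with $a\in \mathrm{C}_G(H)$ (the paper's \Cref{Lem:freegeneration}), defines $\sigma^c$ on that basis exactly as in \Cref{Def:iso-map}, and proves multiplicativity by decomposing the iso-comma $(i/j)$ into double cosets and tracking the conjugation 2-cells with string diagrams (the paper's \Cref{Lem:compare-products}), with the $\phi$, $\psi$ compatibilities checked on representatives just as the paper leaves to the reader. Nothing essential differs, so no further comparison is needed.
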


\begin{Rem}
The map $\phi$ induced by whiskering is even easier to describe in terms of representative diagrams. It maps
\begin{equation}
\label{eq:End(G=G=G)-to-End(1<-G=G)}%
\vcenter {
\xymatrix@C=4em{
G \ar@{=}[d] \ar@{}[dr]|{\NEcell\,\beta_1}
& G \ar@{=}[l] \ar@{=}[r] \ar@{}[dr]|{\SEcell\,\beta_2}
& G \ar@{=}[d]
\\
G \ar@{}[dr]|{\SEcell\,\alpha_1} \ar@{=}[d]
& R \ar[u]_-{b} \ar[d]^-{a} \ar[l]_-{w} \ar[r]^-{k} \ar@{}[dr]|{\NEcell\,\alpha_2}
& G \ar@{=}[d]
\\
G
& G \ar@{=}[l] \ar@{=}[r]
& G
}}
\qquad\overset{\underset{}{\phi}}\mapsto \qquad
\vcenter {
\xymatrix@C=4em{
1 \ar@{=}[d] \ar@{}[dr]|{\NEcell\,\exists!}
& G \ar[l] \ar@{=}[r] \ar@{}[dr]|{\SEcell\,\beta_2}
& G \ar@{=}[d]
\\
1 \ar@{}[dr]|{\SEcell\,\exists!} \ar@{=}[d]
& R \ar[u]_-{b} \ar[d]^-{a} \ar[l]_-{} \ar[r]^-{k} \ar@{}[dr]|{\NEcell\,\alpha_2}
& G \ar@{=}[d]
\\
1
& G \ar[l] \ar@{=}[r]
& G\,.\!\!
}}
\end{equation}
that is, it forgets the left-hand side of the double span.
\end{Rem}

In order to prove the theorem, we provide sufficiently explicit descriptions of the two rings. For the crossed Burnside ring, this is a well-known alternative picture:

\begin{Lem} \label{Lem:xBur-presentation}
The crossed Burnside $\kk$-algebra~$\xBurk(G)$ has the following presentation. As a $\kk$-module, it is free and generated by the (finitely many) $G$-conjugation classes
\[
[H,a]_G \quad \quad (\textrm{for } H\leq G, a \in \mathrm C_G(H))
\]
of pairs $(H,a)$ where $H\leq G$ is a subgroup, $a$ is an element of the centralizer of~$H$ in~$G$; two such pairs $(H,a)$ and $(K,b)$ are $G$-conjugate iff there exists some $g\in G$ with $H={}^{g\!}K$ and $a = {}^g b$.
The multiplication is defined by the formula
\begin{equation} \label{eq:product-formula-xBurk}
[K,b]_G \cdot [H,a]_G  = \sum_{[g] \in K \backslash G / H}  [K \cap {}^{g\!} H, b \cdot {}^g a ]_G
\end{equation}
on basis elements, where $g$ runs through a full set of representatives for the double cosets $K \backslash G / H$ as usual. Specifically, the element of~$\xBurk(G)$ corresponding to~$[H,a]_G$ is the isomorphism class of the object $G/H\to G^c, xH\mapsto {}^xa$, in $G\sset/G^c$.
\end{Lem}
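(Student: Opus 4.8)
The plan is to establish the presentation of $\xBurk(G)$ stated in \Cref{Lem:xBur-presentation} by identifying a convenient $\kk$-basis for the comma category $G\sset/G^c$ and then computing the tensor product on basis elements. I expect this to be a standard (if slightly technical) manipulation of $G$-sets over~$G^c$, and the main work is bookkeeping with orbits and stabilizers.

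First I would recall that $\xBur(G)=K_0(G\sset/G^c,\sqcup,\otimes)$ is the Grothendieck group of a category of finite $G$-sets over~$G^c$, so as an abelian group it is free on the isomorphism classes of \emph{indecomposable} objects, \ie\ those $(a\colon X\to G^c)$ with $X$ a single $G$-orbit. Every transitive $G$-set is of the form $G/H$ for a subgroup $H\le G$ (unique up to conjugacy), and a $G$-equivariant map $a\colon G/H\to G^c$ is determined by the image $a(H)=:a_0\in G$ of the base point, subject to the equivariance constraint that $a_0$ be fixed by the stabilizer $H$ acting by conjugation, \ie\ $a_0\in \mathrm C_G(H)$. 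Two such pairs $(H,a_0)$ and $(K,b_0)$ give isomorphic objects of $G\sset/G^c$ precisely when they are simultaneously $G$-conjugate, \ie\ when there is $g\in G$ with ${}^gK=H$ and ${}^gb_0=a_0$; this is the equivalence relation $[H,a]_G$ in the statement. This yields the claimed free $\kk$-basis indexed by conjugacy classes of pairs.

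Next I would compute the product. By definition the tensor product of $(G/K\xrightarrow{b}G^c)$ and $(G/H\xrightarrow{a}G^c)$ is the diagonal $G$-set $G/K\times G/H$ mapped to $G^c$ by $(xK,yH)\mapsto {}^xb_0\cdot {}^ya_0$ using the group multiplication. The orbits of $G/K\times G/H$ are enumerated, as usual, by the double cosets $[g]\in K\backslash G/H$, the orbit of $(eK,gH)$ being transitive with stabilizer $K\cap {}^gH$; this is exactly the classical Mackey decomposition of a product of two transitive $G$-sets. I would then evaluate the structure map on the base point $(eK,gH)$ of the $[g]$-orbit: it lands on $b_0\cdot {}^g a_0$, and the stabilizer $K\cap{}^gH$ indeed centralizes this element, so the $[g]$-summand is the basis element $[K\cap{}^gH,\,b\cdot{}^ga]_G$. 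Summing over double-coset representatives gives the multiplication formula~\eqref{eq:product-formula-xBurk}. Finally I would match the normalization: the element associated to $[H,a]_G$ is the class of $G/H\to G^c$, $xH\mapsto{}^xa$, which is well-defined precisely because $a\in\mathrm C_G(H)$.

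The only mild subtlety—and the step most prone to error—is checking that the structure-map value $b_0\cdot{}^ga_0$ is genuinely fixed by the stabilizer $K\cap{}^gH$ acting by conjugation, so that $[K\cap{}^gH, b\cdot{}^ga]_G$ is a legitimate basis element rather than ill-defined, and that its $G$-conjugacy class is independent of the chosen double-coset representative~$g$ and of the representatives chosen for $[H,a]_G$ and $[K,b]_G$. This is a direct verification: if $x\in K\cap{}^gH$ then $x$ centralizes $b_0$ (as $x\in K$ and $b_0\in\mathrm C_G(K)$) and $x={}^g y$ for some $y\in H$ centralizing $a_0$, whence $x$ centralizes ${}^ga_0$, so $x$ centralizes the product; independence of representatives follows from tracking the $G$-conjugation through the double-coset bijection. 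I would spell this out carefully but expect no real obstacle. The commutativity of $\xBurk(G)$ is then immediate from the symmetry of $\otimes$, completing the proof.
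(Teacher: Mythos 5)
Your proof is correct and is exactly the standard verification the paper has in mind: the paper itself gives no argument beyond citing \cite[\S 2.2]{Bouc03} and calling the claim straightforward, and your computation (free $\kk$-basis from the transitive objects $G/H\to G^c$ determined by $a\in \mathrm C_G(H)$, isomorphism giving $G$-conjugacy of pairs, and the product via the Mackey double-coset decomposition of $G/K\times G/H$) is precisely that straightforward argument. The two subtleties you flag do hold as you indicate: the value $b\cdot{}^g a$ is centralized by $K\cap{}^gH$ (indeed this is automatic, since the stabilizer of a point fixes its image under any equivariant map to~$G^c$), and independence of all chosen representatives is built in because the summands are the intrinsically defined orbits of the product object.
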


\begin{proof}
This is straightforward and is explained in \cite[\S2.2]{Bouc03}.
\end{proof}

It is now very easy to connect our two rings.

\begin{Def} \label{Def:iso-map}
Let us denote by
\[
\sigma^c \colon \xBurk(G) \longrightarrow \End_{\kk\Spanhat} (\Id_G)
\]
(for `spanification') the map sending the basis element $[H,a]_G$ of the crossed Burnside algebra, as in \Cref{Lem:xBur-presentation}, to the 2-cell $\Id_G \Rightarrow \Id_G$ in $\kk\Spanhat$ represented by the following (vertical) double span diagram in groupoids
\begin{equation} \label{eq:loop-generator}
\sigma^c(H,a)=\qquad
\vcenter {\hbox{
\xymatrix{
G  & G \ar@{=}[l] \ar@{=}[r] & G  \\
G \ar@{=}[u] \ar@{=}[d] & H \ar[u]_i \ar[l]_-i \ar[r]^-i \ar[d]^i & G \ar@{=}[u] \ar@{=}[d] \\
G \ar@{}[ur]|{\SEcell \; \gamma_a} & G \ar@{=}[l] \ar@{=}[r] & G
}
}}
\end{equation}
where $i\colon H\to G$ is the inclusion homomorphism, $\gamma_a\colon i\Rightarrow i$ is the natural transformation with (sole) component $a \in G$, and the other three squares are commutative. Note that, for an element $a\in G$, the condition $a\in \mathrm C_G(H)$ is precisely the naturality of $\gamma_a\colon i\Rightarrow i\colon H\to G$, hence $\sigma^c$ makes sense. One easily verifies that it is well-defined: a $G$-conjugation $(H,a)={}^{g\!}(K,b)$ yields an equality of 2-cells
\[
\vcenter {\hbox{
\xymatrix{
&& K
  \ar@/_3ex/[dll]_j \ar@/^3ex/[ddl]^j
   \ar@{}[dll]|{\SEcell \gamma_g}
    \ar@{}[ddl]|(.6){\SEcell \gamma_g^{-1}} \\
G
\ar@{=}[d]  &
 H
  \ar@{}[dl]|{\SEcell \;\gamma_a }
  \ar@{<-}[ur]_-{c_g}^-{\simeq}
   \ar[l]_-i
    \ar[d]^i & \\
G & G \ar@{=}[l] &
}
}}
\quad  = \quad
\vcenter{ \hbox{
\xymatrix{
&& K \ar@/_3ex/[dll]_j \ar@/^3ex/[ddl]^j \ar@{}[ddll]|{\SEcell\,\gamma_b} \\
G \ar@{=}[d] && \\
G & G \ar@{=}[l] &
}
}}
\]
in groupoids, where $j\colon K\to G$ is the inclusion homomorphism and the conjugation 1-cell $c_g\colon K\isoto H$ and 2-cell $\gamma_g\colon j\Rightarrow i c_g$ are as in~\Cref{Not:Fun} and \Cref{Rem:not_groups}.
This data yields an isomorphism between the diagrams $\sigma^c(H,b)$ and~$\sigma^c(K,b)$; see details in~\Cref{Rem:2-cells-in-Spanhat}.
\end{Def}

The next lemma already shows that $\sigma^c$ is an isomorphism of $\kk$-modules.

\begin{Lem} \label{Lem:freegeneration}
As a $\kk$-module, $\End_{\kk\Spanhat}(\Id_G)$ is freely generated by the equivalence classes of diagrams of the form \eqref{eq:loop-generator}, indexed by the set $\{[H,a]_G \mid H\leq G , a\in \mathrm C_G(H)\}$, where $[H,a]_G$ denote the $G$-conjugation classes as in \Cref{Lem:xBur-presentation}.
\end{Lem}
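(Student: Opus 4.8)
The plan is to prove \Cref{Lem:freegeneration} by analyzing the structure of the Hom category $\Spanhat(G,G)$ restricted to endomorphisms of $\Id_G$, and then tracking what group-completion and idempotent-completion do to the resulting abelian monoid. Recall that, by \Cref{Def:additive-2motives}, a 2-cell of $\kk\Spanhat$ between $\Id_G$ and itself is (before linearization) an equivalence class of vertical spans $\Id_G \Leftarrow k_!w^* \Rightarrow \Id_G$ in the bicategory $\Span(\GG;\JJ)$, as displayed in~\eqref{eq:2-cell-of-Spanhat}. Since the source and target are both $\Id_G=(G=G=G)$, the outer legs $u,v,i,j$ in that display are all identities of $G$, so such a 2-cell is represented by a single span $(G \xleftarrow{w} R \xrightarrow{k} G)$ with $k\in\JJ$ faithful, equipped with 2-cell data $\beta_1,\beta_2,\alpha_1,\alpha_2$ relating $w,k$ to the identity of $G$.

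First I would reduce a general such representative to the normal form~\eqref{eq:loop-generator}. The idea is to exploit \Cref{Prop:2-cells-of-Spanhat}, which tells us precisely which moves preserve the class of a 2-cell in $\Spanhat$: whiskering the middle object by an equivalence (part~\eqref{it:iso-2-cells-1}), and transporting along invertible 2-cells on any of the four branches $w,k,a,b$ (part~\eqref{it:iso-2-cells-2}). Using the 2-cell components $\beta_1,\beta_2$ (isomorphisms $w \Rightarrow \Id$ and $k \Rightarrow \Id$ at the level of the top span, via the identities there) together with the faithfulness of $k$, I would first arrange that $w=k$ and that this common leg is an honest inclusion $i\colon H \into G$ of a subgroupoid; since $G$ is a group (one-object groupoid) and $k$ is faithful, the connected component of $R$ carrying the relevant data is equivalent to a one-object groupoid $H$ with $i\colon H\into G$ an injective homomorphism, i.e.\ a subgroup $H\le G$. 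The remaining 2-cell datum then collapses, via the transport moves and the naturality relations, to a single natural transformation $\gamma\colon i \Rightarrow i$, which at the group level is given by an element $a\in G$; naturality of $\gamma$ is exactly the condition $a\in\mathrm C_G(H)$, as noted in \Cref{Def:iso-map}. This produces the diagram~\eqref{eq:loop-generator} and shows every generator is of the asserted form, and that two such diagrams coincide in $\Spanhat$ exactly when the pairs $(H,a)$ are $G$-conjugate (the computation already sketched at the end of \Cref{Def:iso-map}).

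Next I would establish that these classes additively \emph{generate} and are \emph{free}. Generation is essentially the decomposition of an arbitrary faithful $k\colon R\into G$ into connected components: by \Cref{Rem:old-Mackey} and additivity (\Cref{Prop:semi-add-2motives}), $R$ is equivalent to a coproduct of one-object groupoids, the direct sum decomposition of 1-cells in $\Spanhat$ splits the span $k_!w^*$ accordingly, and hence the 2-cell splits as a sum of the basic generators~\eqref{eq:loop-generator}, one per component. For freeness over $\mathbb{Z}$ I would invoke the fact, recorded in \Cref{Exa:ordinary_span_is_sad}, that the Hom monoids of 2-cells in $\Spanhat(\GG;\JJ)$ for $\GG\subseteq\gpd$ are \emph{free} commutative monoids on the isomorphism classes of connected spans of spans; the normal-form analysis identifies these generating isomorphism classes precisely with the conjugacy classes $[H,a]_G$. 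Passing to $\kk$ is then just the base change $\kk\otimes_\mathbb{Z}(-)$ in \Cref{Def:additive-2motives}, and I must check that adjoining idempotent-splittings via $(-)^\natural$ and object-splittings via $(-)^\flat$ does not change $\End(\Id_G)$: since $\Id_G$ is a fixed 1-cell between the fixed block $(G,1_G)$ and itself, its endomorphism module in the block-completion is unchanged, because \Cref{Lem:absorption}-style absorption with the unit idempotent $1_G$ is vacuous.

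The main obstacle I anticipate is the first step: carefully verifying that an arbitrary representative~\eqref{eq:2-cell-of-Spanhat} can be brought to the rigid normal form~\eqref{eq:loop-generator} using \emph{only} the allowed isomorphisms of spans of spans, and that distinct normal forms are genuinely non-isomorphic. This is where the two nested layers of ``isomorphism class'' in the definition of a 2-cell of $\Spanhat$ (see \Cref{Rem:2-cells-in-Spanhat}) must be handled with care, and where the faithfulness hypothesis $k\in\JJ$ does the essential work of rigidifying the middle object. Everything else—additivity, freeness, and the behaviour under $(-)_+$, $(-)^\natural$, $(-)^\flat$—is then bookkeeping that follows from results already established in \Cref{sec:additive-bicats} and \Cref{Exa:ordinary_span_is_sad}.
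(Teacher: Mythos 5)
Your proposal is correct and follows essentially the same route as the paper's proof: reduce an arbitrary representative to the normal form~\eqref{eq:loop-generator} via the moves of \Cref{Prop:2-cells-of-Spanhat} (with exactly one 2-cell surviving as a ``monodromy'' $\gamma\colon k\Rightarrow k$), decompose the middle groupoid into connected components to get generation, deduce freeness of the Hom monoid from \Cref{Exa:ordinary_span_is_sad}, identify isomorphism of generators with $G$-conjugacy of pairs $(H,a)$, and conclude by $\kk$-linearization. The step you flag as the main obstacle is resolved in the paper by an explicit choice of replacements --- $a$, $b$, $w$ are replaced by $k$ using $\alpha_2$, $\beta_2$ and $\beta_2\beta_1$ respectively, leaving the monodromy $\gamma=\alpha_2\alpha_1\beta_1\inv\beta_2\inv$ --- while your handling of the completions $(-)_+$, $(-)^\natural$, $(-)^\flat$ is if anything more explicit than the paper's closing remark.
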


\begin{proof}
In any 2-cell $\Id_G\Rightarrow \Id_G$ in the semi-additive bicategory $\Spanhat(G,G)$
\begin{equation}
\label{eq:monodromy}%
\vcenter {
\xymatrix@C=3em@R=2em{
G \ar@{=}[d] \ar@{}[dr]|{\NEcell\,\beta_1}
& G \ar@{=}[l] \ar@{=}[r] \ar@{}[dr]|{\SEcell\,\beta_2}
& G \ar@{=}[d]
\\
G \ar@{}[dr]|{\SEcell\,\alpha_1} \ar@{=}[d]
& R \ar[u]_-{b} \ar[d]^-{a} \ar[l]_-{w} \ar[r]^-{k} \ar@{}[dr]|{\NEcell\,\alpha_2}
& G \ar@{=}[d]
\\
G
& G \ar@{=}[l] \ar@{=}[r]
& G
}}
\end{equation}
we can use \Cref{Prop:2-cells-of-Spanhat} to replace every 1-cell~$a,b,k,w$ by our favorite one, say~$k$. However, one needs to decide \emph{which} 2-cell to use to apply \Cref{Prop:2-cells-of-Spanhat}\,\eqref{it:iso-2-cells-2}. Each of the three replacements (of~$a,b$ and $w$) allows us to replace \emph{one} 2-cell among $\alpha_1,\alpha_2,\beta_1,\beta_2$ by an identity. However, the fourth one will survive. Specifically in the 2-cell~\eqref{eq:monodromy}, we replace the bottom 1-cell~$a$ by~$k$ by using~$\alpha_2$ and we replace the other two 1-cells $b$ and~$w$ by~$k$ as well but by using $\beta_2$ and $\beta_2\beta_1$ respectively. Then we obtain the isomorphic 2-cell
\begin{equation}
\label{eq:loop-monodromy}%
\vcenter {
\xymatrix@C=3em@R=2em{
G \ar@{=}[d] \ar@{}[dr]|{\NEcell\,\id}
& G \ar@{=}[l] \ar@{=}[r] \ar@{}[dr]|{\SEcell\,\id}
& G \ar@{=}[d]
\\
G \ar@{}[dr]|{\SEcell\,\Displ\gamma} \ar@{=}[d]
& R \ar[u]_-{k} \ar[d]^-{k} \ar[l]_-{k} \ar[r]^-{k} \ar@{}[dr]|{\NEcell\,\id}
& G \ar@{=}[d]
\\
G
& G \ar@{=}[l] \ar@{=}[r]
& G
}}
\end{equation}
where the automorphism $\gamma:=\alpha_2\alpha_1\beta_1\inv\beta_2\inv\colon k\isoEcell k$ is the `monodromy' in~\eqref{eq:monodromy}. In particular the latter is not necessarily the identity. Decomposing $R$ into connected components, it is a straightforward exercise to see that the abelian monoid $\End_{\Spanhat(G,G)}(\Id_G)$ is free over 2-cells of the form $\sigma^c(H,a)$ as in~\eqref{eq:loop-generator}; \cf \Cref{Exa:ordinary_span_is_sad}. It remains to observe that the notion of $G$-conjugation relation on pairs~$(H,a)$ as in \Cref{Lem:xBur-presentation} is exactly the same as the isomorphism relation between the corresponding 2-cells $\sigma^c(H,a)$ as in~\eqref{eq:loop-generator}. The result then follows by $\kk$-linearization.
\end{proof}

\begin{Rem}
\label{Rem:sigmac-canonical}%
We can also describe the ring homomorphism $\sigma^c\colon \xBurk(G)\to \End_{\kk\Spanhat}(\Id_G)$ from the original definition of~$\xBurk(G)$, \ie by defining it on arbitrary $G$-sets over~$G^c$, not only the ones corresponding to subgroups (orbits). Recall from Remarks~\ref{Rem:trans-gpd} and~\ref{Rem:trans-gpd-plus} the transport groupoid $G\ltimes X$ associated to a $G$-set~$X$, which comes with a 1-morphism $\pi_X\colon G\ltimes X\to G$ in~$\gpd$. To a morphism of $G$-sets~$f\colon X\to G^c$ we can associate a 2-automorphism~$\gamma_f$ of the 1-cell $\pi_X\colon G\ltimes X\to G$ by the formula $(\gamma_f)_x=f(x)$, for all~$x\in X=\Obj(G\ltimes X)$. Here, $f(x)\in G$ is viewed in~$\Aut_G(\pi_X(x))=G$ in the one-object groupoid~$G$. The assignment
\[
(X\oto{f}G^c)
\qquad\mapsto \qquad
\vcenter {
\xymatrix@C=3em@R=2em{
G \ar@{=}[d] \ar@{}[dr]|{\NEcell\,\id}
& G \ar@{=}[l] \ar@{=}[r] \ar@{}[dr]|{\SEcell\,\id}
& G \ar@{=}[d]
\\
G \ar@{}[dr]|{\SEcell\,\Displ\gamma_f} \ar@{=}[d]
& G\ltimes X \ar[u]_-{\pi_X} \ar[d]^-{\pi_X} \ar[l]_-{\pi_X} \ar[r]^-{\pi_X} \ar@{}[dr]|{\NEcell\,\id}
& G \ar@{=}[d]
\\
G
& G \ar@{=}[l] \ar@{=}[r]
& G
}}
\]
yields the choice-free description of $\sigma^c$.

It remains to check that $\sigma^c\colon\xBurk(G)\to \End_{\kk\Spanhat}(\Id_G)$ identifies the two multiplicative structures, which is precisely the content of the next lemma. Again, we could describe this property without using the particular basis but the multiplication in $\xBurk(G)$ might be more familiar to some readers in terms of the basis, so we present it that way, in a small breach of our no-double-cosets philosophy.
\end{Rem}

\begin{Lem} \label{Lem:compare-products}
The map $\sigma^c$ of \Cref{Def:iso-map} sends the product of $\xBurk(G)$ to the vertical composition of 2-morphisms in $\End_{\kk\Spanhat}(\Id_G)$.
\end{Lem}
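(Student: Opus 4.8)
The plan is to verify directly that $\sigma^c$ carries the product formula~\eqref{eq:product-formula-xBurk} of $\xBurk(G)$ to the vertical composition of the corresponding 2-cells in $\End_{\kk\Spanhat}(\Id_G)$. By \Cref{Lem:freegeneration}, both sides are free $\kk$-modules on the generators $[H,a]_G$, and $\sigma^c$ is already a $\kk$-linear isomorphism; so it suffices to check multiplicativity on basis elements. Thus I would fix two subgroups $H,K\le G$ with centralizing elements $a\in \mathrm C_G(H)$ and $b\in \mathrm C_G(K)$, and compute the vertical composite $\sigma^c(K,b)\circ \sigma^c(H,a)$ of the two standard diagrams~\eqref{eq:loop-generator}.

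First I would recall that vertical composition of 2-cells in $\Spanhat$ is computed by forming a pullback of the two representing (vertical) spans of spans, which by \Cref{Prop:pullbacks} is induced by an iso-comma square in $\gpd$. Concretely, stacking the diagram $\sigma^c(H,a)$ (with middle object $H$ and legs the inclusion $i\colon H\into G$) above $\sigma^c(K,b)$ (with middle object $K$ and legs $j\colon K\into G$) requires the iso-comma $(i/j)$ over the cospan $H\overset{i}{\into}G\overset{j}{\longleftarrow}K$. By \Cref{Rem:old-Mackey} this iso-comma decomposes, non-canonically, as $\coprod_{[g]\in K\backslash G/H} K\cap {}^{g}H$, exactly the index set of the right-hand side of~\eqref{eq:product-formula-xBurk}. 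This is the crux of the matching: the double-coset sum in the crossed Burnside product appears precisely as the connected-component decomposition of the middle groupoid of the composite span.

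Next I would track the 2-cell data through the composite. On the component indexed by $[g]$, the relevant subgroup is $K\cap {}^{g}H$, and I would verify that the monodromy automorphism of the composite diagram, computed as in~\eqref{eq:loop-monodromy} via the natural transformations $\gamma_a$ and $\gamma_b$ whiskered along the two projections and combined with the conjugation 2-cell $\gamma_g$ of the Mackey square (see \Cref{Rem:old-Mackey}), has sole component $b\cdot {}^{g}a\in G$. Here the element ${}^{g}a$ arises because the projection to $H$ composed with the inclusion is conjugated by $g$, exactly as conjugation plays its 1-cell and 2-cell roles in \Cref{Rem:old-Mackey}. Putting the components together and applying \Cref{Prop:2-cells-of-Spanhat} to normalize each summand into the standard form~\eqref{eq:loop-generator}, I would obtain
\[
\sigma^c(K,b)\circ \sigma^c(H,a) \;=\; \sum_{[g]\in K\backslash G/H} \sigma^c\big(K\cap {}^{g}H,\; b\cdot {}^{g}a\big)\,,
\]
which is precisely $\sigma^c$ applied to the right-hand side of~\eqref{eq:product-formula-xBurk}. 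Finally I would note that $\sigma^c$ sends the multiplicative unit $[G,1]_G$ (the class of $G/G\to G^c$ with image the identity) to the identity 2-cell of $\Id_G$, so that $\sigma^c$ is a unital $\kk$-algebra homomorphism and, being a bijection on the free generating sets, a $\kk$-algebra isomorphism.

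The main obstacle I anticipate is bookkeeping the monodromy 2-cell correctly through the iso-comma decomposition: one must be careful that the conjugation 2-cell $\gamma_g$ of the Mackey square, the naturality components $a$ and $b$, and the various identity 2-cells of the commuting squares combine to give exactly $b\cdot{}^{g}a$ on each component, with no spurious extra conjugation or inverse. Keeping the orientations and the 2-cell directions straight (as in the normalization of~\eqref{eq:monodromy} to~\eqref{eq:loop-monodromy}) is where the real care is needed; everything else is the formal double-coset combinatorics already packaged in \Cref{Rem:old-Mackey}. It would be cleanest to carry out this monodromy computation in string diagrams, using the pull-over relations~\eqref{eq:string-gluing}, which make the role of $\gamma_g$ visually transparent and avoid the choice-dependent cell-diagram mess.
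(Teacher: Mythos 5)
Your proposal is correct and follows essentially the same route as the paper's proof: vertical composition is computed via the iso-comma $(i/j)$ over $H\overset{i}{\into}G\overset{j}{\longleftarrow}K$, decomposed into double cosets by the equivalence $w\colon \coprod_{[g]}K\cap{}^{g}H\isoto(i/j)$, and the 2-cell data is then tracked component-wise to yield $b\cdot{}^{g}a$, exactly matching~\eqref{eq:product-formula-xBurk}. Indeed, the string-diagram computation with the pull-over relations~\eqref{eq:string-gluing} that you defer to at the end is precisely how the paper carries out the monodromy bookkeeping you identify as the delicate step.
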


\begin{proof}
The computation will be done in strings, after a little preparation involving an iso-comma square of~$\gpd$ (see \Cref{Exa:comma-in-Cat}). At the end of the day, we must vertically compose in the semi-additive $\Spanhat$ two 2-morphisms of the form \eqref{eq:loop-generator}, say for two pairs $(H,a)$ and~$(K,b)$, with $H,K\le G$ and $a\colon G/H\to G^c$ and $b\colon G/K\to G^c$ in~$G\sset$ (that is $a\in \mathrm C_G(H)$ and $b\in\mathrm C_G(K)$), and then compare the result with \eqref{eq:product-formula-xBurk}. Recall that the vertical composition of $\sigma^c(H,a)$ and~$\sigma^c(K,b)$ as in~\eqref{eq:loop-generator} involves, in the `middle' column, the following construction in~$\gpd$:
\begin{equation}
\label{eq:iso-comma-sigma^c}%
\vcenter{\xymatrix@R=10pt{
&& (i/j) \ar[dl]_-{p} \ar[dr]^-{q}
\\
& H \ar[dl]^-i \ar[dr]_-{i} \ar@{}[rr]|{\isocell{\gamma}} && K \ar[dr]_-{j} \ar[dl]^-{j}
\\
G && G && G
}}
\end{equation}
(here we write $i:=\incl_H^G \colon H\to G$ and $j:=\incl_K^G\colon K\to G$ for the two inclusion homomorphisms), with an iso-comma in the middle.
Then, precisely as in the proof of \Cref{Prop:test-on-gps} (for $f:=j$), we can easily decompose its top object $(i/j)$ by the equivalence
\begin{equation}
\label{eq:w-sigma^c}%
w\colon \coprod_{[g] \in K \backslash G/ H} K \cap {}^{g\!}H \overset{\sim}{\too} (i/j)
\end{equation}
which, on the $g$-component, sends the unique object $\bullet$ of $K \cap \,{}^{g\!}H$ to the object $(\bullet, \bullet, g)$ of $(i/j)$, and sends $k\in K \cap \,{}^{g\!}H$ to the morphism $(k^g, k)\colon (\bullet, \bullet, g) \to (\bullet, \bullet, g)$.
The functor $w$ is the unique one fitting in the following diagram on the left
\begin{equation} \label{eq:two-2-cells}
\vcenter{\xymatrix@C=14pt@R=20pt{
& { \coprod_{[g]} K\cap {}^{g\!}H }
 \ar@/_2ex/[ddl]_{\coprod_{[g]} \incl^g}
  \ar[d]^w_{\simeq}
    \ar@/^2ex/[ddr]^{\coprod_{[g]} \incl} & \\
& (i/j) \ar[dl]_-{p} \ar@{ >->}[dr]^-{q}
 \ar@{}[dd]|(.5){\isocell{\gamma}}
\\
H \ar@{ >->}[dr]_-{i}
&& K \ar[dl]^-{j}
\\
&G
}}
\quad = \quad
\vcenter{\xymatrix@C=14pt@R=20pt{
& { \coprod_{[g]} K\cap {}^{g\!}H }
 \ar@/_2ex/[ddl]_{\coprod_{[g]} \incl^g}
    \ar@/^2ex/[ddr]^{\coprod_{[g]} \incl}
     \ar@{}[ddd]|(.5){\isocell{\coprod_{[g]}\gamma_g}} & \\
&
\\
H \ar@{ >->}[dr]_-{i}
&& K \ar[dl]^-{j}
\\
&G
}}
\end{equation}
in such a way that the two triangles commute and the whiskered natural transformation $\gamma \circ w$ is equal to $\coprod_{[g]}\gamma_g$, that is, its component at $[g]$ is the 2-morphism
\[
 \gamma_g\colon (\incl^G_{K\cap {}^{g\!}H})^g = i \circ (\incl_{K\cap {}^{g\!}H}^{{}^{g\!}H})^g \Rightarrow j \circ \incl^K_{K \cap {}^{g\!}H} = \incl^G_{K\cap {}^{g\!}H}
\]
between 1-morphisms $K\cap {}^{g\!}H \to G$, associated to conjugation by~$g$ as in~\Cref{Not:Fun}.

In the string notation of~\Cref{sec:string-presentation}, the map $\sigma^c$ of \Cref{Def:iso-map} sends $[H,a]_G$ to the diagram
\[
\vcenter {\hbox{
\psfrag{I}[Bc][Bc]{\scalebox{1}{\scriptsize{$i$}}}
\psfrag{A}[Bc][Bc]{\scalebox{1}{\scriptsize{$\gamma_a$}}}
\psfrag{G}[Bc][Bc]{\scalebox{1}{\scriptsize{$G$}}}
\psfrag{H}[Bc][Bc]{\scalebox{1}{\scriptsize{$H$}}}
\includegraphics[scale=.4]{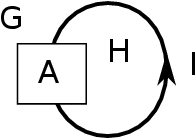}
}}
\]
\ie an anti-clockwise-oriented loop labeled with~$i$, carrying a box labeled with~$\gamma_a$, and separating two plane regions labeled with $G$ and~$H$.
The vertical composite $\sigma([K,b]_G) \circ \sigma([H,a]_G)$ can now be computed as follows (see explanations below; the shaded areas indicate where an interesting change is about to happen):
\[
\vcenter {\hbox{
\psfrag{I}[Bc][Bc]{\scalebox{1}{\scriptsize{$i$}}}
\psfrag{J}[Bc][Bc]{\scalebox{1}{\scriptsize{$j$}}}
\psfrag{A}[Bc][Bc]{\scalebox{1}{\scriptsize{$\gamma_a$}}}
\psfrag{B}[Bc][Bc]{\scalebox{1}{\scriptsize{$\gamma_b$}}}
\psfrag{G}[Bc][Bc]{\scalebox{1}{\scriptsize{$G$}}}
\psfrag{H}[Bc][Bc]{\scalebox{1}{\scriptsize{$H$}}}
\psfrag{K}[Bc][Bc]{\scalebox{1}{\scriptsize{$K$}}}
\includegraphics[scale=.4]{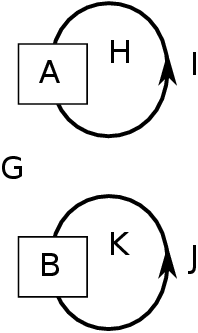}
}}
\; \overset{(1)}{=} \;\;
\vcenter {\hbox{
\psfrag{I}[Bc][Bc]{\scalebox{1}{\scriptsize{$i$}}}
\psfrag{J}[Bc][Bc]{\scalebox{1}{\scriptsize{$j$}}}
\psfrag{A}[Bc][Bc]{\scalebox{1}{\scriptsize{$\gamma_a$}}}
\psfrag{B}[Bc][Bc]{\scalebox{1}{\scriptsize{$\gamma_b$}}}
\psfrag{G}[Bc][Bc]{\scalebox{1}{\scriptsize{$G$}}}
\psfrag{H}[Bc][Bc]{\scalebox{1}{\scriptsize{$H$}}}
\psfrag{K}[Bc][Bc]{\scalebox{1}{\scriptsize{$K$}}}
\includegraphics[scale=.4]{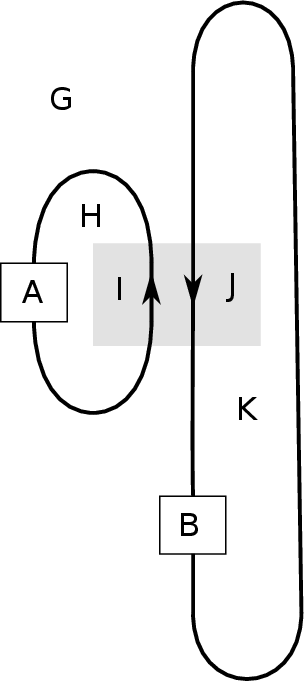}
}}
\;\; \overset{(2)}{=} \;\;
\vcenter {\hbox{
\psfrag{I}[Bc][Bc]{\scalebox{1}{\scriptsize{$i$}}}
\psfrag{J}[Bc][Bc]{\scalebox{1}{\scriptsize{$j$}}}
\psfrag{A}[Bc][Bc]{\scalebox{1}{\scriptsize{$\gamma_a$}}}
\psfrag{B}[Bc][Bc]{\scalebox{1}{\scriptsize{$\gamma_b$}}}
\psfrag{G}[Bc][Bc]{\scalebox{1}{\scriptsize{$G$}}}
\psfrag{H}[Bc][Bc]{\scalebox{1}{\scriptsize{$H$}}}
\psfrag{K}[Bc][Bc]{\scalebox{1}{\scriptsize{$K$}}}
\psfrag{P}[Bc][Bc]{\scalebox{1}{\scriptsize{$p$}}}
\psfrag{Q}[Bc][Bc]{\scalebox{1}{\scriptsize{$q$}}}
\psfrag{S}[Bc][Bc]{\scalebox{1}{\scriptsize{$(i/j)$}}}
\includegraphics[scale=.4]{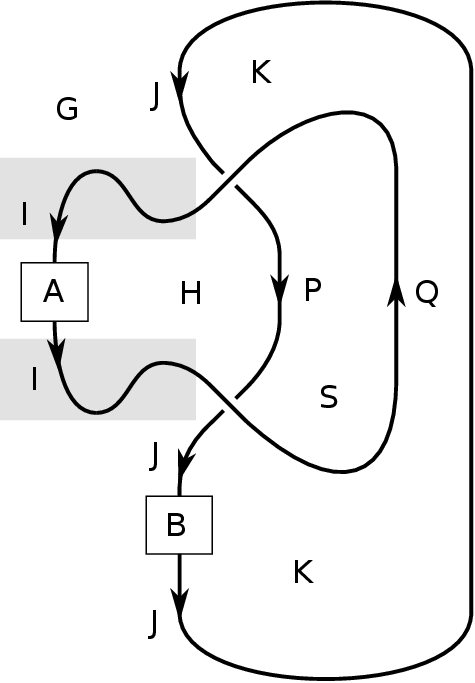}
}}
\;\; \overset{(3)}{=} \;\;
\vcenter {\hbox{
\psfrag{I}[Bc][Bc]{\scalebox{1}{\scriptsize{$i$}}}
\psfrag{J}[Bc][Bc]{\scalebox{1}{\scriptsize{$j$}}}
\psfrag{A}[Bc][Bc]{\scalebox{1}{\scriptsize{$\gamma_a$}}}
\psfrag{B}[Bc][Bc]{\scalebox{1}{\scriptsize{$\gamma_b$}}}
\psfrag{G}[Bc][Bc]{\scalebox{1}{\scriptsize{$G$}}}
\psfrag{H}[Bc][Bc]{\scalebox{1}{\scriptsize{$H$}}}
\psfrag{K}[Bc][Bc]{\scalebox{1}{\scriptsize{$K$}}}
\psfrag{P}[Bc][Bc]{\scalebox{1}{\scriptsize{$p$}}}
\psfrag{Q}[Bc][Bc]{\scalebox{1}{\scriptsize{$q$}}}
\psfrag{S}[Bc][Bc]{\scalebox{1}{\scriptsize{$(i/j)$}}}
\includegraphics[scale=.4]{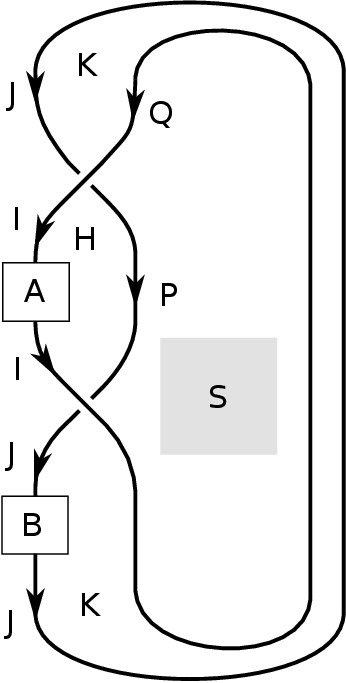}
}}
\]
\vspace{.7cm}
\[
\overset{(4)}{=}
\vcenter {\hbox{
\psfrag{I}[Bc][Bc]{\scalebox{1}{\scriptsize{$i$}}}
\psfrag{J}[Bc][Bc]{\scalebox{1}{\scriptsize{$j$}}}
\psfrag{A}[Bc][Bc]{\scalebox{1}{\scriptsize{$\gamma_a$}}}
\psfrag{B}[Bc][Bc]{\scalebox{1}{\scriptsize{$\gamma_b$}}}
\psfrag{G}[Bc][Bc]{\scalebox{1}{\scriptsize{$G$}}}
\psfrag{H}[Bc][Bc]{\scalebox{1}{\scriptsize{$H$}}}
\psfrag{K}[Bc][Bc]{\scalebox{1}{\scriptsize{$K$}}}
\psfrag{P}[Bc][Bc]{\scalebox{1}{\scriptsize{$p$}}}
\psfrag{Q}[Bc][Bc]{\scalebox{1}{\scriptsize{$q$}}}
\psfrag{W}[Bc][Bc]{\scalebox{1}{\scriptsize{$w$}}}
\psfrag{S}[Bc][Bc]{\scalebox{1}{\scriptsize{$\coprod_{[g]} \!K \! \cap \! {}^g\!H$}}}
\includegraphics[scale=.4]{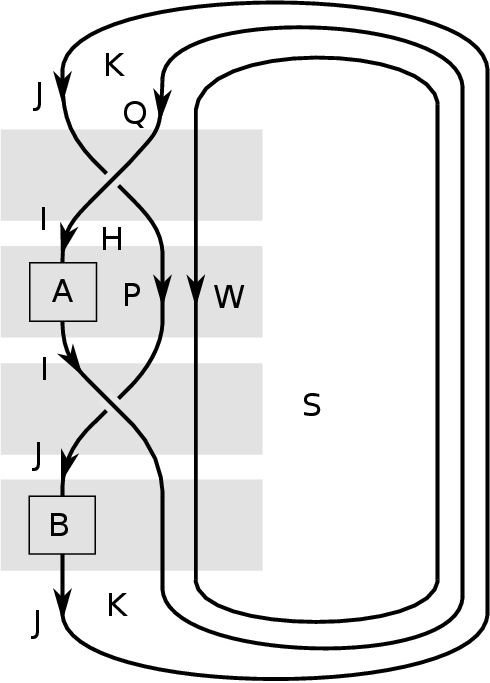}
}}
\overset{(5)}{=}  \coprod_{[g] \in K \backslash G / H}
\vcenter {\hbox{
\psfrag{I}[Bc][Bc]{\scalebox{1}{\scriptsize{$i$}}}
\psfrag{J}[Bc][Bc]{\scalebox{1}{\scriptsize{$j$}}}
\psfrag{A}[Bc][Bc]{\scalebox{1}{\scriptsize{$\gamma_a$}}}
\psfrag{B}[Bc][Bc]{\scalebox{1}{\scriptsize{$\gamma_b$}}}
\psfrag{C}[Bc][Bc]{\scalebox{1}{\scriptsize{$\gamma_{g^{-1}}$}}}
\psfrag{D}[Bc][Bc]{\scalebox{1}{\scriptsize{$\gamma_{g}$}}}
\psfrag{G}[Bc][Bc]{\scalebox{1}{\scriptsize{$G$}}}
\psfrag{H}[Bc][Bc]{\scalebox{1}{\scriptsize{$H$}}}
\psfrag{K}[Bc][Bc]{\scalebox{1}{\scriptsize{$K$}}}
\psfrag{U}[Bc][Bc]{\scalebox{1}{\scriptsize{$\;\incl$}}}
\psfrag{V}[Bc][Bc]{\scalebox{1}{\scriptsize{$\;\incl$}}}
\psfrag{W}[Bc][Bc]{\scalebox{1}{\scriptsize{$\;\;\;\incl^g$}}}
\psfrag{T}[Bc][Bc]{\scalebox{1}{\scriptsize{$K \! \cap \! {}^g\!H$}}}
\includegraphics[scale=.4]{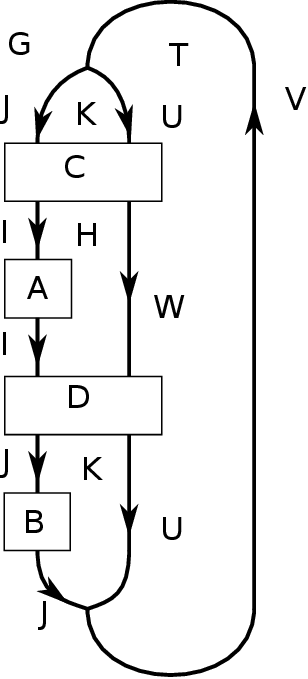}
}}
\!\!\! \overset{(6)}{=}  \coprod_{[g] \in K \backslash G / H} \!\!\!\!
\vcenter {\hbox{
\psfrag{I}[Bc][Bc]{\scalebox{1}{\scriptsize{$\incl$}}}
\psfrag{A}[Bc][Bc]{\scalebox{1}{\scriptsize{${\Displ\gamma}_{b\cdot {}^g\!a}$}}}
\psfrag{G}[Bc][Bc]{\scalebox{1}{\scriptsize{$G$}}}
\psfrag{H}[Bc][Bc]{\scalebox{1}{\scriptsize{$K \! \cap \! {}^g\!H$}}}
\includegraphics[scale=.4]{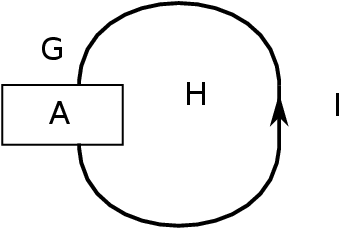}
}}
\]
\vspace{.3cm}

The moving around at~(1) is simply by the interchange law and insertion of identities. At~(2), we use one of the two `pull-over' relations for the iso-comma square~\eqref{eq:iso-comma-sigma^c}, see \eqref{eq:string-gluing}. The crossing ~$\vcenter { \hbox{
\includegraphics[scale=.4]{anc/just-crossing.eps}
}}$ and uncrossing ~$\vcenter { \hbox{
\includegraphics[scale=.4]{anc/just-uncrossing.eps}
}}$ stand for $\gamma$ and~$\gamma^{-1}$, respectively.
The straightening of the $i$-strands at~(3) is by two of the zig-zag relations for the adjunctions $i_* \dashv i^* \dashv i_*$, see~\eqref{eq:zigzag-relations}. The insertion of the $w$-loop at~(4) is the rewriting of the (invisible) identity 2-morphism of the 1-morphism $\Id_{(i/j)}$ as the composite $\varepsilon \circ \eta$, where $\varepsilon$ and $\eta$ are the counit and unit of the adjoint equivalence $w^* \dashv w_*=(w^*)^{-1}$ from \eqref{eq:w-sigma^c}.  At~(5), we use $jqw=\coprod_{[g]}\incl_{K\cap{}^{g\!}H}^G$ as well as the other identities displayed in \eqref{eq:two-2-cells}, in order to rewrite the four shaded 2-morphisms.  Finally, for (6) we simply compute the composite 2-morphisms in~$\gpd$. As $\coprod$ is the sum in $\Spanhat$, the above computation yields exactly the formula~\eqref{eq:product-formula-xBurk} as was to be shown.
\end{proof}

We leave to the reader the straightforward determination of $\varphi$ and $\psi$ given in~\eqref{eq:End(1<-G=G)-to-End(G=G=G)} and~\eqref{eq:End(G=G=G)-to-End(1<-G=G)}. This finishes the proof of \Cref{Thm:xBur-End}.
\qed

\bigbreak
\section{Motivic decompositions of Mackey 2-functors}
\label{sec:decomposition}%
\medskip

We conclude by explaining how the crossed Burnside ring~$\xBur(G)$ of a finite group~$G$ (\Cref{Def:xBurk}) acts on the additive category~$\MM(G)$, for any Mackey 2-functor~$\MM$. As a consequence, ring decompositions of~$\xBur(G)$ induce decompositions of the category~$\MM(G)$. Everything we say about $\xBur(G)$ applies similarly with the ordinary Burnside ring~$\Bur(G)$, via the inclusion~$\iota\colon \Bur(G)\hook \xBur(G)$ of \Cref{Rem:Bur-vs-xBur}, and also $\kk$-linearly over any commutative ring~$\kk$ after the evident notational changes.

\begin{Prop}
\label{Prop:B(G)-acts-on-M(G)}%
Let $\MM\colon\GG^\op\to \ICADD$ be a Mackey 2-functor and consider $\widehat{\MM}\colon \bbZ\Spanhat\to\ICADD$ the associated realization of Mackey 2-motives via~$\MM$ (\Cref{Def:realization}). Then $\widehat{\MM}$ induces ring homomorphisms for every groupoid~$G\in \GG_0$
\[
\End_{\bbZ\Spanhat(\GG;\JJ)(G,G)}(\Id_G) \xrightarrow{\widehat{\MM}} \End_{\Funplus(\MM(G),\MM(G))}(\Id_{\MM(G)})
\]
between rings of endomorphisms of the identity 1-cells. In particular, precomposition with the isomorphism $\sigma^c\colon \xBur(G)\to \End_{\mathbb Z \Spanhat(G,G)}(\Id_G)$ of \Cref{Thm:xBur-End} yields a ring homomorphism
\begin{equation}
\label{eq:B(G)-vs-End(Id)}%
\xBur(G) \xrightarrow{\widehat{\MM}\sigma^c} \End_{\Funplus(\MM(G),\MM(G))}(\Id_{\MM(G)})\,.
\end{equation}
\end{Prop}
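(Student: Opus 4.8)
The statement to prove is \Cref{Prop:B(G)-acts-on-M(G)}, which asserts that a Mackey 2-functor $\MM$ produces, via realization of additive 2-motives, ring homomorphisms out of $\End_{\bbZ\Spanhat(G,G)}(\Id_G)$ and hence (after composing with the isomorphism $\sigma^c$ of \Cref{Thm:xBur-End}) a ring homomorphism from the crossed Burnside ring $\xBur(G)$ to the endomorphism ring of $\Id_{\MM(G)}$ inside the functor category $\Funplus(\MM(G),\MM(G))$.

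Let me sketch my proof.

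The plan is to simply extract the content of the statement from the general functoriality of pseudo-functors applied to the realization $\hat \MM$. First I would recall that, by \Cref{Def:realization}, the Mackey 2-functor $\MM\colon \GG^\op\to\ICADD$ (taking values in idempotent-complete additive categories) extends essentially uniquely to an additive pseudo-functor $\hat\MM\colon \bbZ\Spanhat(\GG;\JJ)\to\ICADD$, via the biequivalence~\eqref{eq:UP-ZSpanhat}. The key general fact I would invoke is that any pseudo-functor $\cat{F}$ sends the endomorphism monoid of any 1-cell to the endomorphism monoid of its image, compatibly with vertical composition: for a 1-cell $f\colon A\to B$ in the source, vertical composition of 2-cells $f\Rightarrow f$ is carried by $\cat F$ to vertical composition of 2-cells $\cat Ff\Rightarrow \cat Ff$, and this assignment is unital and multiplicative because $\cat F$ preserves identity 2-cells and vertical composition (on the nose, since the source and target are 2-categories, or up to the coherent structure isomorphisms which cancel when source and target 1-cells are fixed). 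Applying this to $\cat F=\hat\MM$ and to the identity 1-cell $f=\Id_G$ gives a monoid homomorphism $\End_{\bbZ\Spanhat(G,G)}(\Id_G)\to \End(\hat\MM(\Id_G))$.

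Next I would identify the target ring. Since $\hat\MM$ is a pseudo-functor agreeing with $\MM$ on $\GG^\op$ through the canonical embedding $(-)^*\colon \GG^\op\hook \bbZ\Spanhat$, and since $\Id_G^*=\Id_G$ maps to (something canonically isomorphic to) the identity functor $\Id_{\MM(G)}$ on $\MM(G)$, the endomorphism ring $\End(\hat\MM(\Id_G))$ is canonically identified with $\End_{\Funplus(\MM(G),\MM(G))}(\Id_{\MM(G)})$. Here I would use that $\hat\MM(\Id_G)\cong \Id_{\MM(G)}$ as 1-cells in $\ICADD$ (indeed the unitor $\un_{\hat\MM}$ provides this isomorphism), and that conjugating by this isomorphism gives a ring isomorphism between the two endomorphism rings; the composite of our monoid homomorphism with this conjugation is the desired ring homomorphism $\hat\MM$. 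Since the ring structure on $\End_{\bbZ\Spanhat(G,G)}(\Id_G)$ is precisely vertical composition (the additive structure coming from the local additivity of $\bbZ\Spanhat$ established in \Cref{Prop:semi-add-2motives} and preserved by the additive pseudo-functor $\hat\MM$, see \Cref{Prop:(locally)-additive-pseudo-functors}), the map respects both addition and multiplication, hence is a ring homomorphism. Finally, precomposing with the ring isomorphism $\sigma^c\colon \xBur(G)\overset{\sim}{\to}\End_{\mathbb Z\Spanhat(G,G)}(\Id_G)$ of \Cref{Thm:xBur-En} yields the homomorphism~\eqref{eq:B(G)-vs-End(Id)} out of the crossed Burnside ring.

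The main obstacle, though a mild one, is bookkeeping the coherence isomorphisms: strictly speaking $\hat\MM$ is only a pseudo-functor, so I must verify that the structure 2-cells $\fun_{\hat\MM}$ and $\un_{\hat\MM}$ do not obstruct multiplicativity. The point is that when the source 1-cell is fixed (here $\Id_G$) and we compose two 2-cells $\Id_G\Rightarrow\Id_G$ vertically, the comparison isomorphisms for horizontal composition play no role, and the unitor $\un_{\hat\MM}\colon \Id_{\MM(G)}\Rightarrow\hat\MM(\Id_G)$ is used coherently on both sides so that it conjugates the whole picture consistently; this is exactly the standard verification that a pseudo-functor induces a monoid homomorphism on 2-endomorphisms of a 1-cell. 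I would spell this out with one short diagram chase (or one line of string-diagram manipulation as in \Cref{sec:string_diagrams}) and leave the remaining routine checks to the reader. The additivity claim — that the map respects sums, so that it is genuinely a \emph{ring} and not merely monoid homomorphism — follows because $\hat\MM$ is an additive pseudo-functor and is therefore locally additive by \Cref{Prop:(locally)-additive-pseudo-functors}, so it preserves the biproduct-induced addition of parallel 2-cells.
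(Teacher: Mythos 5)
Your proposal is correct and follows essentially the same route as the paper, whose entire proof is the one-line observation that this is just the (2-)functoriality of $\hat\MM$ applied to the endomorphism ring of the 1-cell $\Id_G$, with image $\hat\MM(\Id_G)=\Id_{\MM(G)}$. Your extra care with the unitor conjugation and the appeal to local additivity (\Cref{Prop:(locally)-additive-pseudo-functors}) for the additive part of the ring structure are exactly the routine details the paper leaves implicit (cf.\ also \Cref{Rem:can_add_+_strict}, which lets one take $\hat\MM$ strict and avoid the coherence bookkeeping altogether).
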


\begin{proof}
This is simply the 2-functoriality of~$\widehat{\MM}$ on the endomorphism ring of the 1-cell $\Id_G\colon G\to G$ in~$\bbZ\Spanhat$, whose image is~$\widehat{\MM}(\Id_G)=\Id_{\widehat{\MM} (G)}=\Id_{\MM (G)}$.
\end{proof}

\begin{Rem}
\label{Rem:action}%
We can think of the ring homomorphism $\rho:=\widehat{\MM} \sigma^c$ of~\eqref{eq:B(G)-vs-End(Id)} as an \emph{action} of the ring~$\xBur(G)$ on the category~$\MM(G)$, since every $a\in \xBur(G)$ yields a natural transformation $\rho(a)=\widehat{\MM}(\sigma^c(a))\colon \Id_{\MM(G)}\to \Id_{\MM(G)}$ and therefore an actual endomorphism $\rho(a)_x\colon x\to x$ for every object~$x\in \MM(G)$.
In fact, in this way $\MM(G)$ becomes a \emph{$\xBur(G)$-linear category}, a.k.a.\ a category enriched in $\xBur(G)$-modules: the Hom sets are $\xBur(G)$-modules and composition is $\xBur(G)$-bilinear.

This is a very general fact: If $A$ is any commutative ring and $\cat{A}$ is any additive category, to give an $A$-enrichment on $\cat{A}$ is the same thing as to give a ring homomorphism $\rho\colon A\to \End_{\Fun(\cat{A}, \cat{A})}(\Id_\cat{A})$. Given~$\rho$, we obtain the actions by $a\cdot f := \rho(a)_y \circ f = f \circ \rho(a)_x$ (for $a\in A$ and $f\in \cat{A}(x,y)$). Conversely, given the $A$-enrichment we recover the ring homomorphism $\rho$ by letting $\rho(a)$ ($a\in A$) be the natural transformation with components $\rho(a)_x:= a\cdot \id_x$ for all $x\in \Obj \cat{A}$.
\end{Rem}

\begin{Cor}
\label{Cor:decomposition}%
With notation as in \Cref{Prop:B(G)-acts-on-M(G)}, any ring decomposition
\[ \xBur(G)\cong B_1\times \cdots \times B_n \]
yields a corresponding decomposition of the additive category~$\MM(G)$ as
\[
\MM(G)\cong \cat{N}_1\oplus \cdots \oplus \cat{N}_n
\]
in such a way that for every $i\neq j$, the ring $B_i$ acts as zero on~$\cat{N}_{j}$ and acts on~$\cat{N}_i$ via the homomorphism $\widehat{\MM}\sigma^c$ of~\eqref{eq:B(G)-vs-End(Id)}.
\end{Cor}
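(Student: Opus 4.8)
The plan is to derive \Cref{Cor:decomposition} directly from \Cref{Prop:B(G)-acts-on-M(G)} together with the standard theory of idempotent decompositions in additive categories. The key observation is that a ring decomposition $\xBur(G)\cong B_1\times\cdots\times B_n$ is equivalent to a complete system of orthogonal central idempotents $e_1,\ldots,e_n\in\xBur(G)$ (where $e_i$ corresponds to the identity of the factor $B_i$), satisfying $e_i e_j=0$ for $i\neq j$, $e_i^2=e_i$, and $e_1+\cdots+e_n=1$. Since $\xBur(G)$ is commutative (\Cref{Def:xBurk}), every idempotent is automatically central, so no extra hypothesis is needed.

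First I would apply the ring homomorphism $\rho:=\hat\MM\sigma^c\colon\xBur(G)\to\End_{\Funplus(\MM(G),\MM(G))}(\Id_{\MM(G)})$ of~\eqref{eq:B(G)-vs-End(Id)} to these idempotents. Because $\rho$ is a ring homomorphism, the images $\rho(e_1),\ldots,\rho(e_n)$ form a complete system of orthogonal idempotent natural transformations of $\Id_{\MM(G)}$, that is, idempotent elements of the (commutative) endomorphism ring $\End(\Id_{\MM(G)})$ summing to the identity. Via the enrichment picture of \Cref{Rem:action}, each $\rho(e_i)$ acts on every object $x\in\MM(G)$ as an idempotent endomorphism $\rho(e_i)_x\colon x\to x$, and these are natural in $x$ and satisfy $\rho(e_i)_x\rho(e_j)_x=0$ for $i\neq j$ and $\sum_i\rho(e_i)_x=\id_x$.

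Next I would invoke that $\MM(G)$ takes values in \emph{idempotent-complete} additive categories ($\ICADD$), which is exactly why we pass to the realization $\hat\MM$ on $\bbZ\Spanhat=\Spanhat^\flat$ rather than on the semi-additive motives. Idempotent-completeness lets me split each object $x$ according to the orthogonal idempotents $\rho(e_i)_x$, obtaining a natural direct sum decomposition $x\cong\bigoplus_i x_i$ where $x_i=\img(\rho(e_i)_x)$. Defining $\cat N_i$ to be the full subcategory of objects on which $\rho(e_i)$ acts as the identity (equivalently, the essential image of the idempotent functor $x\mapsto x_i$), naturality of the $\rho(e_i)$ guarantees that a morphism $f\colon x\to y$ decomposes compatibly, so that $\Hom_{\MM(G)}(x_i,y_j)=0$ for $i\neq j$ and hence $\MM(G)\cong\cat N_1\oplus\cdots\oplus\cat N_n$ as an additive category. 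This is the standard fact that a complete orthogonal system of central idempotents in $\End(\Id)$ splits an idempotent-complete additive category into a product of subcategories.

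Finally I would check the action statement: for $a\in B_i$, viewed inside $\xBur(G)$ via $a=e_i a$, the element $\rho(a)$ acts on $\cat N_j$ through $\rho(e_i)$, which is zero on $\cat N_j$ for $j\neq i$ (since objects of $\cat N_j$ are killed by $\rho(e_i)$) and is the given action on $\cat N_i$; this is immediate from orthogonality $e_ie_j=0$ and the definition of $\cat N_j$ as the image of $\rho(e_j)$. I do not expect any serious obstacle here: the entire content is already packaged in \Cref{Thm:xBur-End} and \Cref{Prop:B(G)-acts-on-M(G)}, and the only thing to be careful about is tracking that the idempotents genuinely land in $\End(\Id_{\MM(G)})$ and that idempotent-completeness is used to split objects (so the statement would fail without the $\ICADD$ hypothesis, for which one must first apply the idempotent-completion built into $\bbZ\Spanhat$). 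The mildly delicate point, more bookkeeping than difficulty, is verifying naturality of the splitting so that the decomposition is genuinely one of categories and not merely of object sets, but this follows formally from $\rho(e_i)$ being a natural transformation of the identity functor.
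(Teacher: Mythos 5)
Your proposal is correct and follows essentially the same route as the paper's own proof: transport the orthogonal idempotents $e_1,\ldots,e_n$ along $\hat\MM\sigma^c$ to an idempotent decomposition of $\id_{\Id_{\MM(G)}}$, split every object using idempotent-completeness, and let naturality and orthogonality produce the block decomposition $\MM(G)\cong\cat N_1\oplus\cdots\oplus\cat N_n$ together with the stated action of each $B_i$ (the paper packages the category-splitting step as a citation of its block-decomposition example rather than re-deriving it, and also notes a second, more abstract variant via additivity of $\hat\MM$ applied to the motivic decomposition $G\simeq(G,e_1)\oplus\cdots\oplus(G,e_n)$, but the substance matches yours).
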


\begin{proof}
The decomposition $1=e_1+\ldots + e_n$ in idempotents associated to the isomorphism $\xBur(G)\cong B_1\times \ldots \times B_n$ yields under~$\widehat{\MM}\sigma^c$ a similar decomposition of $\id_{\Id_{\MM(G)}}=f_1+\ldots +f_n$ by \Cref{Prop:B(G)-acts-on-M(G)}. Since $\MM(G)$ is idempotent-complete, the idempotents $(f_1)_x, \ldots , (f_n)_x$ yield decompositions of every object~$x\in\MM(G)$, and consequently a decomposition of the category~$\MM(G)$ as announced. Compare \Cref{Exa:ICADD} for $\cat{A}=\cat{B}=\MM(G)$ and $F=\Id_{\MM(G)}$.

Equivalently (and more abstractly), use that $\widehat{\MM}$ is additive, \ie it sends biproducts of objects in the block-complete bicategory $\mathbb Z\Spanhat$ to biproducts in $\ICADD$, so in particular it sends the motivic decomposition $G\simeq (G,e_1) \oplus \cdots \oplus (G,e_n)$ to an equivalence of additive categories $\MM(G)\simeq \widehat{\MM}(G,e_1)\oplus\ldots \oplus \widehat{\MM}(G,e_n)$.
\end{proof}

\begin{Rem} \label{Rem:the-other-comp}
Let $\cat M$ be any (rectified) Mackey 2-functor, and restrict it to groups as in \Cref{sec:more-examples}. In particular, we have adjunctions $(\Ind^G_H \dashv \Res^G_H, \leta, \leps)$ and $(\Res^G_H \dashv \Ind^G_H, \reta, \reps)$ for all subgroups $H\leq G$ and we have natural isomorphisms $\gamma^*_a\colon \Res^G_H \Rightarrow \Res^G_H$ for all $a\in \mathrm C_G(H)$. The ring map $\widehat{\MM} \sigma^c\colon \xBurk(G)\to \End(\Id_{\cat M(G)})$ of \eqref{eq:B(G)-vs-End(Id)} is then given quite explicitly in terms of this structure by sending the basis element $[H,a]_G\in \xBurk(G)$ to the natural transformation
\[
\xymatrix{
\Id_{\cat M(G)}  \ar@{=>}[r]^-{\reta} & \Ind_H^G \Res^G_H \ar@{=>}[rr]^-{\Ind^G_H \gamma_a^*} && \Ind_H^G \Res^G_H \ar@{=>}[r]^-{\leps}  & \Id_{\cat M(G)}
}
\]
(see \Cref{Def:iso-map}). For example the element $[H,1]_G$, which already comes from the element $[G/H]\in \Burk(G)$ of the ordinary Burnside algebra, is mapped to the (typically non-trivial) composite
\[
\xymatrix{
\Id_{\cat M(G)}  \ar@{=>}[r]^-{\reta} & \Ind_H^G \Res^G_H \ar@{=>}[r]^-{\leps}  & \Id_{\cat M(G)} \;.
}
\]
Recall for contrast that the `other' unit-counit composite
\[
\xymatrix{
\Id_{\cat M(H)}  \ar@{=>}[r]^-{\leta} & \Res^G_H \Ind_H^G  \ar@{=>}[r]^-{\reps}  & \Id_{\cat M(H)}
}
\]
is always just the identity, by the special Frobenius property \Mack{9}.
\end{Rem}

\begin{Exa}
Decompositions of (crossed) Burnside rings~$\Burk(G)$ and~$\xBurk(G)$ have been variously described in the literature, so it is possible to apply \Cref{Cor:decomposition} concretely. The simpler case of the classical Burnside ring is due to Dress \cite{Dress69} and says that the primitive idempotents of the integral Burnside ring $\Bur(G)$ are in bijection with the conjugacy classes of perfect subgroups of~$G$. See also \cite{Yoshida83} for a different approach. Bouc has analyzed decompositions of~$\xBurk(G)$ for various rings~$\kk$, obtaining in particular a complete answer in the rational case in~\cite{Bouc03} (see also \cite{OdaYoshida01}). He also describes the decomposition of the principal idempotent of~$\Bur(G)$ inside the larger ring~$\xBur(G)$.

These decompositions can be applied, via \Cref{Cor:decomposition}, to any example of Mackey 2-functor from \Cref{ch:Examples} which takes values in idempotent complete $\kk$-linear categories, for the ring~$\kk$.
\end{Exa}

\end{chapter-seven}
\begin{appendix}
\begin{chapter-appendix}
\chapter{Categorical reminders}
\label{app:categorical-reminders}%
\bigbreak
\bigbreak
\section{Bicategories and 2-categories}
\label{sec:2-recollections}
\medskip

As some excellent sources on bicategories and 2-categories are readily available, we do not provide a full discussion but limit ourselves to fixing ideas and notations. The original reference is B\'enabou~\cite{Benabou67} and popular sources include Kelly-Street~\cite{KellyStreet74}. In more recent literature, the reader can find a very short treatment in Leinster~\cite{Leinster98pp} or in Street~\cite{Street00}, a textbook chapter in \cite[\S7.7]{Borceux94a}, and a longer discussion in Lack~\cite{Lack10}.

\begin{Rem} \label{Rem:set-theory}
Inevitably, some of the definitions and examples considered here raise set-theoretical issues. Apart from occasionally making some smallness hypothesis (\eg in \Cref{Hyp:G_and_I_for_Span}), we mostly ignore such difficulties in this book as they really are orthogonal to its concerns. We simply trust that any reader knowledgeable enough to spot such pitfalls will also be able to resolve them to their satisfaction, for instance by introducing Grothendieck universes.
\end{Rem}

\begin{Ter}
\label{Ter:bicats}%
\index{bicategory}%
We use the following standard terminology:
\begin{enumerate}[(1)]
\smallbreak
\item
\index{$b0$@$\cat{B}_0$ \, class of objects} \index{B@$\cat{B}_0$}%
\index{$ass$@$\ass$ \, associator} \index{ass@$\ass$}%
\index{$lun$@$\lun$ \, left unitor} \index{lun@$\lun$}%
\index{$run$@$\run$ \, right unitor} \index{run@$\run$}%
A \emph{bicategory} $\cat{B}$ is a weak 2-category, \ie a category `weakly enriched in categories' in the precise sense of B\'enabou~\cite{Benabou67}. It consists of a class of object~$\cat{B}_0$, categories $\cat{B}(X,Y)$ for all pairs $X,Y\in \cat{B}_0$, composition and unit functors (below $1$ denotes the category with one object and one arrow)
\[
\circ = \circ_{XYZ} \colon \cat{B}(Y,Z) \times \cat{B}(X,Y)\to \cat{B}(X,Z)
\quad\quad\quad
1_X \textrm{ or } \Id_X \colon 1\to \cat{B}(X,X)
\]
as well as natural isomorphisms (called \emph{associators} and \emph{left and right unitors})
\[
\xymatrix{
\cat{B}(Z,W) \times \cat{B}(Y,Z) \times \cat{B}(X,Y)
 \ar[rr]^-{\Id \times \circ}
 \ar[d]_-{\circ \times \Id} &&
 \cat{B}(Z,W) \times \cat{B}(X,Z)
 \ar[d]^-{\circ} \\
 \cat{B}(Y,W) \times \cat{B}(X,Y)
 \ar[rr]_-{\circ}
 \ar@{}[urr]|{\NEcell \;\; \ass_{XYZW}} &&
 \cat{B}(X,W)
}
\]
\[
\vcenter { \hbox{
\xymatrix{
1 \times \cat{B}(X,Y) \ar@/^4ex/[dr]^-\cong
 \ar[d]_-{\Id_Y \times \Id} & \\
\cat{B}(Y,Y) \times \cat{B}(X,Y)
 \ar[r]_-{\circ}
 \ar@{}[ur]|{\NEcell \;\; \lun_{XY}} &
 \cat{B}(X,Y)
}
}}
\quad \quad
\vcenter { \hbox{
\xymatrix{
\cat{B}(X,Y) \times 1 \ar@/^4ex/[dr]^-\cong
 \ar[d]_-{\Id \times \Id_X} & \\
\cat{B}(X,Y) \times \cat{B}(X,X)
 \ar[r]_-{\circ}
 \ar@{}[ur]|{\NEcell \;\; \run_{XY}} &
 \cat{B}(X,Y)
}
}}
\]
expressing the up-to-isomorphism associativity and unitality of composition, which are required to satisfy two commutativity conditions (see Remark~\ref{Rem:strictification}):
\begin{equation}
\label{eq:bicategory}%
\vcenter{\xymatrix@C=.5em@L=1ex{
& \kern-1em ((fg)h)k \ar@{=>}[rr]^-{\ass\,\circ\, k} \ar@{=>}[ld]_-{\ass}
&& (f(gh))k \kern-1em \ar@{=>}[rd]^-{\ass}
\\
(fg)(hk) \ar@{=>}[rrd]_-{\ass} \kern-1em
&&&& \kern-1em f((gh)k) \ar@{=>}[lld]^-{f\,\circ\,\ass}
\\
&& \kern-1em f(g(hk)) \kern-1em
}}
\qquad
\vcenter{\xymatrix@C=.5em@L=1ex{
(f\Id)g \ar@{=>}[rr]^-{\ass\;\;} \ar@{=>}[rd]_-{\run\,\circ\, g}
&& f(\Id g) \ar@{=>}[ld]^-{f\,\circ\,\lun}
\\
& \kern-1em fg\kern-1em
}}
\end{equation}
\smallbreak
\item
\index{category@2-category} \index{$2category$@2-category}%
A \emph{2-category} $\cat{B}$ is a bicategory whose associators and unitors are all identities. It is the same as a category enriched over categories in the strict sense of~\cite{Kelly05}.
\smallbreak
\item
\index{horizontal composition}%
\index{vertical composition}%
\index{$b1$@$\cat{B}_1$ \, class of 1-cells} \index{b1@$\cat{B}_1$}%
The objects of a bicategory $\cat{B}$ are also called \emph{0-cells}, the objects of each $\cat{B}(X,Y)$ are called \emph{1-cells} and the arrows of each $\cat{B}(X,Y)$ are called \emph{2-cells}.
By analogy with~$\cat{B}_0$, we sometimes denote by $\cat{B}_1$ the collection of all 1-cells of~$\cat{B}$. The composition of 2-cells within each Hom category is \emph{vertical composition} and the effect of the composition functors $\circ_{X,Y,Z}$ on 1- or 2-cells is \emph{horizontal composition},
as suggested by the following picture where a $k$-cell appears with dimension~$k\in\{0,1,2\}$:
\[
\xymatrix{
& \ar@{}[d]|{\Scell \; \alpha_1} & & \ar@{}[d]|{\Scell \; \beta_1} & \\
X \ar[rr]|-{f_2} \ar@/^7ex/[rr]^-{f_1} \ar@/_7ex/[rr]_-{f_3} &&
 Y \ar[rr]|-{g_2} \ar@/^7ex/[rr]^-{g_1} \ar@/_7ex/[rr]_-{g_3} && Z\,. \\
& \ar@{}[u]|{\Scell \; \alpha_2} & & \ar@{}[u]|{\Scell \; \beta_2} &
}
\]
\index{exchange law}%
Both compositions are often denoted simply by juxtaposition. For example the \emph{exchange law}
\begin{equation} \label{eq:exchange_law}
(\beta_2 \alpha_2)(\beta_1 \alpha_1) =(\beta_2 \beta_1)(\alpha_2 \alpha_1)
\end{equation}
equates the 2-cell $g_1f_1 \Rightarrow g_3f_3$ obtained by composing the four ones in the above diagram first horizontally and then vertically, with the one obtained by composing first vertically then horizontally.
\smallbreak
\item
\index{whiskering}%
With notation as above, the composition functors $\circ$ restrict \eg to functors
\[
g_1\circ - \colon \cat{B}(X,Y) \to \cat{B}(X,Z)
\quad\quad
-\circ f_1\colon \cat{B}(Y,Z) \to \cat{B}(X,Z)
\]
sending $\alpha_1\mapsto g_1\alpha_1$ and $\beta_1\mapsto \beta_1 f_1$. Such operations on 2-cells are referred to as \emph{whiskering} (and were already used in~\eqref{eq:bicategory} above).
\end{enumerate}
\end{Ter}

\begin{Not} \label{Not:co-op}
\index{$bco$@$\cat{B}^\co$ \, revert 2-cells}%
\index{$bop$@$\cat{B}^\op$ \, revert 1-cells}%
\index{bopandbco@$\cat{B}^{\op}$ and $\cat{B}^\co$}%
\index{$fop$@$f^\op$}%
\index{$alphaco$@$\alpha^\co$}%
Given a bicategory~$\cat{B}$, the bicategories $\cat{B}^{\op}$ and $\cat{B}^\co$ are obtained by reverting 1-cells and 2-cells respectively. Reverting both gives~$\cat{B}^{\op,\co}$.
We write $f^\op\colon Y\to X$ for the 1-cell of $\cat B^{\op}$ corresponding to the 1-cell $f\colon X\to Y$ of~$\cat B$, and $\alpha^\co\colon v^{(\op)}\Rightarrow u^{(\op)}$ for the 2-cell of $\cat B^{(\op,)\co}$ corresponding to $\alpha\colon u\Rightarrow v$ in~$\cat B$.
\end{Not}

\begin{Exa}
\index{$cat$@$\CAT$ \, 2-category of categories}\index{cat@$\CAT$}
\index{$cat$@$\Cat$ \, 2-category of small categories} \index{Cat@$\Cat$}%
We denote by $\CAT$ the 2-category of categories, whose objects are typically denoted $\mathcal{A},\mathcal{B},\mathcal{C}$, etc. We denote by $\Cat$ the 2-subcategory of (essentially) small categories, whose objects are typically denoted $I,J$, etc.
\end{Exa}

\begin{Ter} \label{Ter:internal}
Recall that many categorical notions, familiar from the 2-category $\CAT$ of categories, can be internalized into any bicategory~$\cat{B}$.
\begin{enumerate}[(1)]
\smallbreak
\item
\index{equivalence in bicategory}%
An \emph{equivalence} $X\simeq Y$ between two objects of~$\cat{B}$ is a 1-cell $f\in\cat{B}(X,Y)$ such that there exists a 1-cell $g\in\cat{B}(Y,X)$ together with isomorphisms (invertible 2-cells)
$\id_{X}\cong g\circ f$ in $\cat{B}(X,X)$ and $\id_{Y}\cong f \circ g$
in $\cat{B}(Y,Y)$.
\smallbreak
\item
\label{it:adjunction}%
\index{adjunction in bicategory}%
\index{triangle equalities}%
\index{unit}%
\index{counit}%
\index{$\dashv$ \, adjunction}%
Similarly, an \emph{adjunction} in $\cat{B}$, often written $f\dashv g$, consists of 1-cells $f\in\cat{B}(X,Y)$ and $g\in\cat{B}(Y,X)$ together with 2-cells (the \emph{unit} and \emph{counit} of adjunction) $\eta\colon \Id_{X}\Rightarrow g\circ f$ and $\eps\colon f \circ g \Rightarrow \Id_{Y}$ satisfying the usual \emph{triangle equalities}
$(\varepsilon f)(f \eta)=\id_f$ and $(g \varepsilon)(\eta g)=\id_g$.
\smallbreak
\item
\index{adjoint equivalence}%
An \emph{adjoint equivalence} is an adjunction where the unit and counit are invertible.
\smallbreak
\item
\label{it:faithful}%
\index{faithful 1-cell in a bicategory}%
A 1-cell $i\colon X\to Y$ is \emph{faithful} if whiskering with~$i$ on the left is injective, \ie for every $Z\in \cat{B}_0$ the induced functor $i_*\colon \cat{B}(Z,X)\to \cat{B}(Z,Y)$ is faithful in~$\Cat$.
\end{enumerate}
\end{Ter}

\begin{Def}
\label{Def:2-1-category}%
\index{category@(2,1)-category} \index{$21category$@(2,1)-category}%
A 2-category in which every 2-cell is invertible, \ie whose Hom categories $\cat{B}(X,Y)$ are all groupoids, is called a \emph{(2,1)-category}.
\end{Def}

For instance, $\groupoid$ is a (2,1)-category, see \Cref{Not:groupoid}.

\begin{Rem}
\label{Rem:locally}%
\index{locally}%
When speaking of bicategories, we use the adverb `locally' to mean `Hom-wise', in an enriched sense. So, we say that a bicategory~$\cat{B}$ is `locally~P', where P is some property or type of category, if every Hom-category $\cat{B}(X,Y)$ is P and if the composition functors are P-compatible.
\end{Rem}

\begin{Exa} \label{Exa:trivial_bicats}
\index{locally discrete 2-category} \index{$2category$@2-category!locally discrete --}%
\index{category@1-category} \index{$1category$@1-category}%
Every ordinary category, or \emph{1-category}, can be considered the same as a \emph{locally discrete 2-category}, \ie one whose only 2-cells are identities. Locally discrete 2-categories are in particular (2,1)-categories.
\end{Exa}

\begin{Exa}
Some bicategories playing a central role in this work are \emph{locally additive}: their Hom categories are additive and the composition functors are additive in both variables.
See \Cref{Def:Sad-enriched-etc} for this and closely related notions.
\end{Exa}

\begin{Ter} \label{Ter:pseudofun}
In order to compare bicategories, we use:
\begin{enumerate}[(1)]
\smallbreak
\item
\index{pseudo-functor}%
\index{$fun$@$\fun$ \, pseudo-functor structure map}%
\index{$un$@$\un$ \, pseudo-functor structure map}%
A \emph{pseudo-functor} (in B\'enabou's terminology, \emph{homomorphism}) $\cat{F} \colon \cat{B}\to \cat{B}'$ between two bicategories $\cat{B}$ and $\cat{B'}$ consists of a map on objects $\cat{F}=\cat{F}_0\colon \cat{B}_0\to \cat{B}'_0$ together with functors $\cat{F}=\cat{F}_{XY}\colon \cat{B}(X,Y)\to \cat{B}'(\cat{F}X,\cat{F}Y)$ on the Hom categories and natural isomorphisms
\[
\vcenter { \hbox{
\xymatrix{
\cat{B}(Y,Z) \times \cat{B}(X,Y)
 \ar[r]^-\circ
 \ar[d]_-{\cat{F} \times \cat{F}} &
\cat{B}(X,Z)
 \ar[d]^-{\cat{F}} \\
\cat{B}'(\cat{F} Y,\cat{F} Z) \times \cat{B}'(\cat{F} X,\cat{F} Y)
 \ar[r]_-\circ
 \ar@{}[ur]|{\NEcell \; \fun_{XYZ}} &
 \cat{B}(\cat{F} X,\cat{F} Z)
}
}}
\quad\quad
\vcenter { \hbox{
\xymatrix@R=5pt{
& \cat{B}(X,X) \ar[dd]^-{\cat{F}} \\
1 \ar@/^2ex/[ur]^<<<<{1_X}
 \ar@/_2ex/[dr]_<<<<{1_{\cat{F} X \!\!}} & \\
\ar@{}[uur]|{\NEcell\; \un_X} &
 \cat{B}'(\cat{F}X , \cat{F}X)
}
}}
\]
expressing the up-to-isomorphism, or pseudo-, functoriality of $\cat{F}$; the latter are required to make the following diagrams commute (see Remark~\ref{Rem:coh_pseudofun} below):
\[
\vcenter { \hbox{
\xymatrix@C=16pt@L=1ex{
{(\cat F h \circ \cat F g) \circ \cat F f}
 \ar@{=>}[rr]^{\fun \,\circ\, \cat F f}
 \ar@{=>}[d]_{\ass} &&
 {\cat F (h \circ g) \circ \cat Ff} \ar@{=>}[rr]^{\fun} &&
 {\cat F((h\circ g)\circ f)}
 \ar@{=>}[d]^{\cat F \ass} \\
{\cat F h \circ (\cat F g \circ \cat F f)}
 \ar@{=>}[rr]^{\cat F h \,\circ\, \fun} &&
 {\cat F h \circ \cat F (g\circ f)}
 \ar@{=>}[rr]^{\fun} &&
 {\cat F (h \circ (g \circ f))}
}
}}
\]
\[
\vcenter { \hbox{
\xymatrix@C=14pt@L=1ex{
{\Id_{\cat FY} \circ \cat F f}
 \ar@{=>}[d]_\lun
 \ar@{=>}[rr]^-{\un \, \circ \, \cat Ff} &&
 {\cat F(\Id_Y) \circ \cat Ff}
 \ar@{=>}[d]^{\fun}
 \\
{\cat F f} &&
 {\cat F(\Id_Y \circ f)}
 \ar@{=>}[ll]^-{\cat F \,\lun}
}
}}
\quad\quad
\vcenter { \hbox{
\xymatrix@C=14pt@L=1ex{
{\cat F f \circ \Id_{\cat FX}}
 \ar@{=>}[d]_\run
 \ar@{=>}[rr]^-{\cat Ff \, \circ \, \un} &&
 {\cat Ff \circ \cat F(\Id_X)}
 \ar@{=>}[d]^{\fun}
 \\
{\cat F f} &&
 {\cat F(f \circ \Id_X)}
 \ar@{=>}[ll]^-{\cat F \,\run}
}
}}
\]
for all composable 1-cells $X \stackrel{f}{\to} Y \stackrel{g}{\to} Z \stackrel{h}{\to} W$.
\smallbreak
\item
\label{it:strict-2-functor}%
\index{$2functor$@2-functor} \index{functor@2-functor}%
\index{strict pseudo-functor} \index{pseudo-functor!strict --}%
A \emph{(strict) 2-functor} always refers to a \emph{strict} pseudo-functor, \ie one where $\fun_{XYZ}$ and $\un_{X}$ are all identities.
\end{enumerate}
\end{Ter}

\begin{Exa} \label{Exa:trivial_pseudofun}
\index{functor from 2-category to ordinary category}%
A pseudo-functor whose target is locally discrete is necessarily strict; we simply call it a \emph{functor}, the prime examples being (ordinary) functors between (ordinary) categories.
Note however that a pseudo-functor $\cat{F}\colon \cat{B}\to \cat{B}'$ whose source $\cat{B}$ is locally discrete need not be strict if $\cat{B}'$ is not locally discrete, even, say, when $\cat{B}$ is a group.
\end{Exa}

\begin{Not} \label{Not:htpy_cat}
\index{truncation~$\pih$}%
\index{$t1$@$\pih$ \, 1-truncation}%
Every bicategory $\cat{B}$ has an associated
\emph{1-truncation}~$\pih{\cat{B}}$, defined to be the 1-category obtained from $\cat{B}$ by identifying any two isomorphic 1-cells and then discarding the 2-cells.
If $\cat{B}$ is a (2,1)-category,  there is an obvious functor $\cat{B}\to \pih{\cat{B}}$, which is initial among functors from $\cat{B}$ to 1-categories.
\end{Not}

\begin{Ter} \label{Ter:Hom_bicats}
Pseudo-functors between bicategories can be naturally assembled into bicategories, as follows. Fix two bicategories $\cat B$ and~$\cat B'$.
\begin{enumerate}[(1)]
\smallbreak
\item
\label{it:transformation}%
\index{transformation!pseudo-natural --}%
If $\cat{F}_1,\cat{F}_2 \colon \cat B\to \cat B'$ are parallel pseudo-functors, a \emph{(pseudo-natural) transformation} $t\colon \cat{F}_1\Rightarrow \cat{F}_2$ between them consists of a 1-cell
$t_X \colon \cat{F}_1 X \to \cat{F}_2X$ for every object $X$ of $\cat B$ and of an invertible 2-cell
\[
\xymatrix{\cat{F}_1 X
 \ar[r]^-{t_X}
 \ar[d]_-{\cat{F}_1 u} &
\cat{F}_2 X
 \ar[d]^-{\cat{F}_2 u} \\
\cat{F}_1 Y \ar[r]_-{t_Y}
 \ar@{}[ur]|{\SWcell\; t_u} &
 \cat{F}_2Y
}
\]
for every 1-cell $u\colon X\to Y$ of $\cat B$, subject to reasonable compatibility conditions with the vertical and horizontal composition of~$\cat B$ which are detailed in~\cite{Leinster98pp}. In pasting diagrams (thus omitting associators), the latter require
\[
\vcenter{\hbox{
\xymatrix{
& \cat{F}_1 X
\ar@/_6ex/[d]_{\Id}
\ar@{}[dl]|{\;\;\;\;\;\;\stackrel{\scriptstyle \un^{\!-1}}{\simeq}}
 \ar[r]^-{t_X}
 \ar[d]^-{\cat{F}_1 \Id} &
\cat{F}_2 X
 \ar[d]^-{\cat{F}_2 \Id} \\
& \cat{F}_1 X \ar[r]_-{t_X}
 \ar@{}[ur]|{\;\;\;\;\;\;\;\;\SWcell t_{\Id}} &
 \cat{F}_2X
}
}}
\quad = \quad
\vcenter{\hbox{
\xymatrix{
\cat{F}_1 X
 \ar[r]^-{t_X}
 \ar[d]_-{\Id} &
\cat{F}_2 X
 \ar[d]_-{\Id}
 \ar@/^6ex/[d]^{\cat{F}_2 \Id}
 \ar@{}[dr]|{\stackrel{\scriptstyle \un^{\!-1}}{\simeq}\;\;\;\;\;\;}
 & \\
\cat{F}_1 X \ar[r]_-{t_X}
 &
 \cat{F}_2X &
}
}}
\]
and
\[
\vcenter {\hbox{
\xymatrix{
& \cat{F}_1 X
\ar@/_8ex/[dd]_{\cat F_1 vu}
 \ar[r]^-{t_X}
 \ar[d]_-{\cat{F}_1 u} &
\cat{F}_2 X
 \ar[d]^-{\cat{F}_2 u} \ar@/^8ex/[dd]^{F_2vu} & \\
& \cat{F}_1 Y
\ar[d]_{\cat F_1 v}
\ar[r]^-{t_Y}
 \ar@{}[ur]|{\SWcell\; t_u} &
 \cat{F}_2Y
 \ar[d]^{\cat F_2 v} & \\
 \ar@{}[uur]|{\stackrel{\scriptstyle \fun}{\simeq}} &
 \cat F_1 Z
 \ar@{}[ur]|{\SWcell\;t_v}
 \ar[r]_-{t_Z} &
 \cat F_2 Z
 \ar@{}[uur]|{\stackrel{\scriptstyle \;\fun^{\!\!-1}}{\simeq}} &
}
}}
\quad = \quad
\vcenter{\hbox{
\xymatrix{\cat{F}_1 X
 \ar[r]^-{t_X}
 \ar[dd]_-{\cat{F}_1 vu} &
\cat{F}_2 X
 \ar[dd]^-{\cat{F}_2 vu} \\
 & \\
\cat{F}_1 Z \ar[r]_-{t_Z}
 \ar@{}[uur]|{\SWcell\; t_{vu}} &
 \cat{F}_2Z
}
}}
\]
for all composable pair of 1-cells $X\stackrel{u}{\to} Y \stackrel{v}{\to} Z$ (\emph{functoriality}) as well as
\[
\vcenter{\hbox{
\xymatrix@R=16pt{
\cat{F}_1 X
 \ar[r]^-{t_X}
 \ar[dd]_-{\cat{F}_1 u} &
\cat{F}_2 X
 \ar[dd]_-{\cat{F}_2 u}
 \ar@/^8ex/[dd]^{\cat F_2 u'} & \\
 & & \\
\cat{F}_1 Y \ar[r]_-{t_Y}
 \ar@{}[uur]|{\SWcell t_{u} \;\;\;\;} &
 \cat{F}_2Y \ar@{}[uur]|{\stackrel{\cat F_2 \alpha\;}{\Wcell}} &
}
}}
\quad=\quad
\vcenter{\hbox{
\xymatrix@R=16pt{
& {\cat F_1 X}
 \ar@/_8ex/[dd]_{\cat F_1 u}
 \ar[r]^{t_X}
 \ar[dd]^{\cat F_1 u'} &
 {\cat F_2 X} \ar[dd]^{\cat F_2 u'} \\
&& \\
\ar@{}[uur]|{\stackrel{\cat F_1 \alpha\;}{\Wcell}} &
 {\cat F_1 Y}
 \ar@{}[uur]|{\;\;\;\;\;\;\;\SWcell t_{u'}}
 \ar[r]_-{t_Y} & {\cat F_2 Y}
}
}}
\]
for every 2-cell $\alpha:u'\Rightarrow u$ (\emph{naturality}).
Note that the orientation of the 2-cells $t_u$ is a matter of convention, since they are invertible.
\smallbreak
\item
\label{it:lax-transformation}%
\index{lax transformation} \index{transformation!lax --}%
\index{oplax transformation} \index{transformation!oplax --}%
If the 2-cells $t_u$ in the definition of a pseudo-natural transformation are not required to be invertible, one speaks of an \emph{oplax transformation}. If they are oriented in the opposite direction (\ie $t_u\colon t_Y(\cat{F}_1u)\Rightarrow (\cat{F}_2u)t_X$), one obtains the notion of a \emph{lax transformation}.\,(\footnote{\,Some authors prefer to swap the use of `lax' and `oplax'.})
For contrast, a pseudo-natural transformation as in~\eqref{it:transformation} above is also referred to as a \emph{strong} transformation. It is a \emph{strict} transformation if the 2-cells $t_u$ are actually all identities.
\smallbreak
\item
\index{modification}%
If $t, s\colon \cat{F}_1 \Rightarrow \cat{F}_2$ are two parallel (oplax or strong) transformations of pseudo-functors, a \emph{modification} $M\colon t \Rrightarrow s$ between them consists of a 2-cell $M_X\colon t_X\Rightarrow s_X$ of $\cat B'$ for every object $X$ of~$\cat B$, such that the square
\[
\xymatrix@L=1ex{
(\cat{F}_2 u) \, t_X \ar@{=>}[r]^-{1\; M_X \;\;}
 \ar@{=>}[d]_-{t_u} &
 (\cat{F}_2 u) \, s_X \ar@{=>}[d]^-{s_u} \\
t_Y \, (\cat{F}_1 u) \ar@{=>}[r]^-{M_Y \; 1\;\;} &
 s_Y\, (\cat{F}_1 u)
}
\]
is commutative for every 1-cell $u\colon X\to Y$ in $\cat B$.
\smallbreak
\item We denote by $\PsFun(\cat B, \cat B')$ the bicategory with pseudo-functors $\cat B\to \cat B'$ as 0-cells, pseudo-natural transformations between them as 1-cells and modifications as 2-cells. The horizontal composition of transformations and the vertical composition of modifications is performed in the evident way by composing 1- and 2-cells of~$\cat B'$. In particular, if $\cat B'$ is a 2-category then so is $\PsFun(\cat B, \cat B')$.
\smallbreak
\item
\index{biequivalence}%
A \emph{biequivalence} between bicategories is a pseudo-functor $\cat{F}\colon \cat{B}\to \cat{B}'$ such that there exists a pseudo-functor $\cat{G}\colon \cat{B}'\to \cat{B}$ and equivalences $1_\cat B \simeq \cat{G}\cat{F}$ in $\PsFun(\cat B,\cat B)$ and $\cat{F}\cat{G} \simeq 1_{\cat B'}$ in $\PsFun(\cat B', \cat B')$.
Similarly to ordinary equivalences of categories, a pseudo-functor $\cat{F}\colon \cat{B}\to \cat{B}'$ is a biequivalence if and only if each functor $\cat{F}\colon \cat{B}(X,Y)\to \cat{B}'(\cat{F}X,\cat{F}Y)$ is an equivalence of categories (expressing `essential full-faithfulness') and such that $\cat{F}$ is `essentially surjective' meaning that for every $Y\in\cat{B}'$ there exists $X\in\cat{B}$ and an equivalence $\cat{F}X\simeq Y$ in~$\cat{B}'$. This uses the axiom of choice for proper classes and the possibility of correcting any equivalence to an adjoint equivalence (see~\cite[Prop.\,1.5.13]{Leinster04}).
\end{enumerate}
\end{Ter}

\begin{Rem} \label{Rem:PsFun_bieq}
The previous terminology makes clear what it means for two pseudo-functors to be equivalent or isomorphic. Pseudo-functors can be composed in the evident way (\cite[Thm.\,4.3.1]{Benabou67}), and indeed by allowing their source and target to vary we get a \emph{tricategory} of bicategories, pseudo-functors, transformations and modifications -- but we do not need to go into that. Note nonetheless that $\PsFun(-,-)$ preserves biequivalences in each variable.
\end{Rem}

\begin{Rem} \label{Rem:strictification}
\index{strictification!-- for bicategories}%
\index{pasting diagram} \index{diagram!pasting --}%
The coherence axioms~\eqref{eq:bicategory} satisfied by the associators and unitors of a bicategory~$\cat B$ generalize those for a monoidal category. They guarantee that the analogue of Mac~Lane's coherence theorem holds: There exists a (canonical) biequivalence $\cat{F}\colon \cat B\stackrel{\sim}{\to} \cat B'$ where $\cat B'$ is a 2-category (see \eg~\cite[Thm.\,1.5.15]{Leinster04}). An elegant way to view this result is as a bicategorical Yoneda lemma: By the coherence axioms, the assignment $X\mapsto \cat B(-,X)$ induces a well-defined pseudo-functor $\cat B\to \PsFun(\cat B^{\op},\Cat)$ which restricts to a biequivalence $\cat B\stackrel{\sim}{\to} \cat B'$ onto its 1- and 2-full image~$\cat B'$ which, like its ambient bicategory $\PsFun(\cat B^{\op},\Cat)$, is strict since $\Cat$ is.
An important consequence of this is that 2-cells of $\cat B$ can be unambiguously presented by a \emph{pasting diagram} (\cite{KellyStreet74}), that is, by a 2-dimensional display of composable 2-cells, without having to worry about choosing a sequence of evaluation moves of the diagram by vertical and horizontal composition, since all choices will yield the same 2-cell as end result.
\end{Rem}

\begin{Rem} \label{Rem:coh_pseudofun}
\index{strictification!-- for pseudo-functors}%
The coherence axioms satisfied by the structural isomorphisms $\fun$ and $\un$ of a pseudo-functor $\cat{F}\colon \cat{B}\to \cat{B}'$ (detailed in~\cite[\S4]{Benabou67}~\cite{Leinster98pp}) are again reminiscent of those of a (strong) monoidal functor between monoidal categories. These axioms guarantee that the internal notions of \Cref{Ter:internal}, among many others, are preserved by pseudo-functors, hence in particular that they are invariant under biequivalence.
Similarly to bicategories, pseudo-functors can often be strictified (though not always, \cf~\cite[Lemma~2]{Lack07}). \Eg if $\cat{C}$ is any small 2-category, every pseudo-functor $\cat{C}\to \Cat$ is equivalent in $\PsFun(\cat{C},\Cat)$ to some 2-functor; see~\cite[\S4.2]{Power89}.
\end{Rem}

\begin{Cor} \label{Cor:2cat_Yoneda_equivs}
A 1-cell $f\colon X\to Y$ in a bicategory $\cat B$ is an equivalence if and only if $\cat B(T,f)\colon \cat B(T,X)\to \cat B(T,Y)$ is an equivalence of categories for every object $T\in \cat B_0$.
\end{Cor}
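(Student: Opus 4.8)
The statement is the bicategorical Yoneda criterion for equivalences (\Cref{Cor:2cat_Yoneda_equivs}), and the plan is to deduce it from the ordinary Yoneda philosophy encoded in the representable pseudo-functor $\cat B\to\PsFun(\cat B^\op,\Cat)$ discussed in \Cref{Rem:strictification}. The forward direction is immediate: if $f\colon X\to Y$ is an equivalence with quasi-inverse $g\colon Y\to X$ and invertible 2-cells $\Id_X\cong g\circ f$, $\Id_Y\cong f\circ g$, then whiskering/composition functoriality of $\cat B(T,-)$ sends this data to an equivalence $\cat B(T,f)\dashv\!\!\!\vdash\cat B(T,g)$ in $\Cat$; concretely $\cat B(T,g)$ is a quasi-inverse, the required natural isomorphisms being obtained by post-composing with the structural isomorphisms of $\cat B(T,-)$ applied to the invertible 2-cells $\Id\cong g\circ f$ and $\Id\cong f\circ g$. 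This uses only that pseudo-functors preserve equivalences (\Cref{Rem:coh_pseudofun}), applied to the representable pseudo-functor $\cat B(T,-)$.

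For the converse, suppose $\cat B(T,f)$ is an equivalence of categories for every $T\in\cat B_0$. First I would invoke the bicategorical Yoneda embedding of \Cref{Rem:strictification}: the assignment $Y\mapsto\cat B(-,Y)$ gives a biequivalence $\cat B\stackrel{\sim}{\to}\cat B'$ onto its $1$- and $2$-full image inside $\PsFun(\cat B^\op,\Cat)$, and under this biequivalence $f$ is sent to the transformation $\cat B(-,f)\colon\cat B(-,X)\Rightarrow\cat B(-,Y)$. Since a biequivalence reflects equivalences (it is essentially fully faithful, so it reflects any property of a $1$-cell that is invariant under the local equivalences, in particular the property of being an equivalence), it suffices to show that $\cat B(-,f)$ is an equivalence in the strict $2$-category $\PsFun(\cat B^\op,\Cat)$.

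The key step is thus the standard fact that a pseudo-natural transformation between $\Cat$-valued pseudo-functors is an equivalence precisely when it is a pointwise equivalence, i.e.\ when each component $\cat B(T,f)\colon\cat B(T,X)\to\cat B(T,Y)$ is an equivalence of categories. One direction is trivial; for the other, the plan is to construct a quasi-inverse transformation componentwise. For each $T$ choose (using the axiom of choice for the proper class $\cat B_0$, as in the essential-surjectivity argument of \Cref{Ter:Hom_bicats}) a quasi-inverse functor $g_T\colon\cat B(T,Y)\to\cat B(T,X)$ together with adjoint-equivalence data; then the naturality $2$-cells for $g=\{g_T\}$ are forced as the mates of those of $\cat B(-,f)$ under these chosen equivalences, and one checks that the coherence (functoriality and naturality) axioms of a transformation hold because they hold for $\cat B(-,f)$ and mate-formation is compatible with pasting. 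The invertible modifications witnessing $g\circ\cat B(-,f)\cong\Id$ and $\cat B(-,f)\circ g\cong\Id$ are assembled from the chosen componentwise units and counits.

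I expect the main obstacle to be precisely this last verification: turning the \emph{pointwise} choice of quasi-inverses $g_T$ into a \emph{coherent} transformation of pseudo-functors, i.e.\ producing the naturality squares for $g$ and checking the two modification axioms. This is the familiar but slightly delicate passage from ``componentwise equivalence'' to ``equivalence in the functor bicategory'', and it is where the compatibility of mate-calculus with the structural isomorphisms $\fun,\un$ of the pseudo-functors (\Cref{Ter:pseudofun}) must be used carefully. None of this is conceptually hard, but it is the only genuinely non-formal ingredient; everything else reduces to transporting the hypothesis along the Yoneda biequivalence and to the fact (\Cref{Rem:coh_pseudofun}) that biequivalences preserve and reflect equivalences of $1$-cells.
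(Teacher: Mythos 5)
Your proposal is correct and follows essentially the same route as the paper's proof: reduce along the bicategorical Yoneda embedding of \Cref{Rem:strictification} (using that biequivalences preserve, hence reflect, internal equivalences as in \Cref{Rem:PsFun_bieq}), then prove that a pointwise equivalence of $\Cat$-valued pseudo-natural transformations is an equivalence by choosing adjoint-equivalence data componentwise and defining the naturality $2$-cells of the quasi-inverse as the corresponding (inverted) mates, with the coherence axioms checked via the triangle equalities. The step you flag as the ``only genuinely non-formal ingredient'' is exactly the verification the paper carries out explicitly.
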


\begin{proof}
Since biequivalences preserve internal equivalences (\Cref{Rem:coh_pseudofun}), we may replace $\cat B$ with its image in $\PsFun(\cat B^{\op}, \Cat)$ under the bicategorical Yoneda embedding $X \mapsto \cat B(-,X)$ (Remark~\ref{Rem:strictification}) and the 1-cell $f$ with the pseudo-natural transformation $\cat B(-,f)\colon \cat B(-,X)\to \cat B(-,Y)$  (\Cref{Ter:Hom_bicats}\,\eqref{it:transformation}). Hence it suffices to prove the following: A pseudo-natural transformation $t\colon \cat F\to \cat G$ between pseudo-functors $\cat F, \cat G\in \PsFun(\cat B^\op, \Cat)_0$ is an equivalence if and only if for each $T\in \cat B_0$ the component $t_T\colon \cat FT\to \cat GT$ is an equivalence of categories.
One implication is immediate; for the other, assume that each $t_T$ is an equivalence. Choose an adjoint equivalence $s_T\dashv t_T$ for each~$T$, with unit $\eta_T\colon \Id_{\cat G T}\stackrel{\sim}{\Rightarrow} t_Ts_T$ and counit $\varepsilon_T\colon s_Tt_T\stackrel{\sim}{\Rightarrow} \Id_{\cat FT}$, and use them to define (invertible!) natural transformations~$s_u$
\[
\vcenter{\hbox{
\xymatrix{
\cat GX \ar[r]^-{s_X} \ar[d]_-{\cat Gu} & \cat FX \ar[d]^-{\cat Fu} \\
\cat GY \ar[r]_-{s_Y} \ar@{}[ur]|{\SWcell\; s_u} & \cat FY
}
}}
\quad\quad := \quad\quad
\vcenter{\hbox{
\xymatrix{
\ar@{}[dr]|{\NWcell\;\eta_X\inv} & \cat GX \ar[d]^-{s_X} \\
\cat GX \ar@{}[dr]|{\NWcell\; t_u\inv} \ar[d]_-{\cat Gu} \ar@/^4ex/@{=}[ur] & \cat FX \ar[d]^-{\cat Fu} \ar[l]_-{t_X} \\
\cat GY \ar[d]_-{s_Y} \ar@{}[dr]|{\NWcell\;\varepsilon_Y\inv} & \cat FY \ar[l]_-{t_Y} \\
\cat FY \ar@/_4ex/@{=}[ur] &
}
}}
\]
for all 1-cells $u\colon Y\to X$ in~$\cat B$. It follows from the triangle identities of the adjunctions that $s:=\{s_T,s_u\}_{T,u}$ is a pseudo-natural transformation $\cat G\to\cat F$ and that the units and counits form invertible modifications $\Id_\cat G\cong ts$ and $st \cong \Id_\cat F$. This shows that $t$ is an equivalence in the bicategory $\PsFun(\cat B^{\op},\Cat)$.
\end{proof}

\begin{Rem} \label{Rem:2cat_Yoneda}
Specifically for 2-categories there is another form of the Yoneda lemma, which comes from viewing 2-categories as categories (strictly) enriched over the cartesian closed category $\Cat$ and by specializing the results of~\cite[\S2]{Kelly05}. This result however is not so useful for us, because for any given 2-categories $\cat C$ and $\cat D$ it only concerns the 2-category $\Cat\textrm{-}\!\Fun(\cat C,\cat D)$ of 2-functors $\cat{F}\colon \cat{C}\to \cat D$, \emph{strict} natural transformations, and modifications (\ie the `functor category' of~\cite{Kelly05}); hence it says nothing about non-strict pseudo-natural transformations.
\end{Rem}

We encounter the following 2-categorical variant of usual comma (or `slice') categories:
\begin{Def}
\label{Def:2-comma}%
\index{comma 2-category}%
\index{$bc$@$\cat{B}_{\smallslash C}$ \, comma 2-category}%
Let $\cat{B}$ be a 2-category and $B\in \cat{B}_0$ be a 0-cell. We denote by
\[
\cat{B}_{\smallslash B}
\]
the following \emph{comma 2-category} over~$B$. By definition, its objects are pairs $(X,i_X)$ where $X\in \cat{B}_0$ is a 0-cell and $i_X\colon X\to B$ is a 1-cell of~$\cat{B}$. A 1-cell $(X,i_X)\to (X',i_{X'})$ consists of a pair $(f,\theta_f)$ where $f\colon X\to X'$ is a 1-cell and $\theta_f\colon i_{X'} f \Rightarrow i_X$ a 2-cell (\footnote{\,One can also consider a version in which $\theta_f$ is requested to be invertible. When we apply this construction to a (2,1)-category, like $\cat{B}=\groupoid$, this choice is irrelevant.})
in~$\cat{B}$. A 2-cell $(f,\theta_f)\Rightarrow (g,\theta_g)$ in~$\cat{B}_{\smallslash B}$ is a 2-cell $\alpha\colon f\Rightarrow g$ of $\cat{B}$ such that $\theta_g\,(i_{X'}\alpha) = \theta_f$:
\[
\vcenter{\vbox{
\xymatrix@R=10pt{
X
 \ar@/^2ex/[rrd]^-{i_X}
 \ar@/_3ex/[dd]_f
 \ar@/^3ex/[dd]^g
 \ar@{}[dd]|{\overset{\scriptstyle\alpha}\Ecell}
 \ar@{}[ddrr]|{\qquad\NEcell\;\theta_g} && \\
&& B \\
X' \ar@/_2ex/[rru]_-{i_{X'}} &&
}
}}
\quad = \quad
\vcenter{\vbox{
\xymatrix@R=10pt{
X
 \ar@/^2ex/[rrd]^-{i_X}
 \ar@/_2ex/[dd]_f
 \ar@{}[ddr]|{\NEcell\;\theta_f} && \\
&& B\,. \\
X' \ar@/_2ex/[rru]_-{i_{X'}} &&
}
}}
\]
The vertical and horizontal compositions of~$\cat{B}_{\smallslash B}$ are induced by those of $\cat{B}$ in the evident way.
There is an obvious forgetful 2-functor $\cat{B}_{\smallslash B}\to \cat{B}$ which sends $(X,i_X)$ to $X$ and $(f,\theta_f)$ to~$f$.
\end{Def}

\bigbreak
\section{Mates}
\label{sec:mates}%
\medskip

\index{mate}%
Most familiar results on adjunctions generalize to general bicategories~$\cat{B}$. Let us in particular recall some basic facts about \emph{mates}, that is, about the 2-cell correspondences which are induced (in various ways) by adjunctions.
Consider an adjunction $\ell \adj r$ in $\cat{B}$, for instance in~$\cat{B}=\CAT$, with unit $\eta\colon \Id\Rightarrow r \ell$ and counit $\eps\colon \ell r\Rightarrow \Id$. Then, for any two 1-cells $f_1$ and $f_2$ (with suitable source and target), there are natural bijections between classes of 2-cells (here all simply denoted $[-,-]$)
\[
[f_1, r f_2] \isoto [\ell f_1,f_2]
\qquadtext{and}
[f_1 r , f_2] \isoto [f_1,f_2 \ell]
\]
given by
\[
\vcenter{\xymatrix@C=1em@R=1em{
\ar@/_1em/[dd]_-{f_1} \ar@/^0.5em/[rd]^-{f_2} \ar@{}[dd]|-{\;\;\oEcell{\alpha}}
&&&&& \ar@/_0.5em/[ldd]_-{f_1} \ar@/^0.5em/[rd]^-{f_2} \ar@{}[d]|(.8){\SEcell\alpha}
\\
& \ar@/^0.5em/[ld]^-{r}
& \ar@{|->}[r]
&&&& \ar[lld]|-{\ r\ } \ar@/^0.5em/@{=}[ldd]
\\
&&&& \ar@/_0.5em/[rd]_-{\ell}
& \ar@{}[d]|(.3){\SEcell\eps\;\;}
\\
&&&&& }}
\qquadtext{and}
\vcenter{\xymatrix@C=1em@R=1em{
& \ar@/^1em/[dd]^-{f_2} \ar@/_0.5em/[ld]_-{r} \ar@{}[dd]|-{\oEcell{\beta}\;\;}
&&&&& \ar@/_0.5em/@{=}[ldd] \ar@/^0.5em/[rd]^-{\ell} \ar@{}[d]|(.6){\;\;\SEcell\eta}
\\
\ar@/_0.5em/[rd]_-{f_1}
&&& \ar@{|->}[r]
&&&& \ar[lld]|-{\ r\ } \ar@/^0.5em/[ldd]^-{f_2}
\\
&&&&& \ar@/_0.5em/[rd]_-{f_1}
&\ar@{}[d]|(.3){\SEcell\beta}
\\
&&&&&& }}
\]
respectively.

\begin{Rem}
\label{Rem:_!_*-for-id}%
Given a morphism $\alpha\colon k\Rightarrow \ell$ between 1-cells with left adjoints $k_!\adj k$ and $\ell_!\adj \ell$, we obtain (\eg by Yoneda) a canonical morphism $\alpha_!\colon \ell_!\Rightarrow k_!$. This is nothing but the mate of
\[
\xymatrix@C=14pt@R=14pt{
& \ar@{=}[rd] \ar[ld]_-{k} \ar@{}[dd]|-{\oEcell{\alpha}}
\\
\ar@{=}[dr] && \ar[dl]^{\ell}
\\
& }
\]
which we also denote $\alpha_!$ anyway. Compatibility of mates with pasting becomes the 2-contravariant functoriality of~$(-)_!$, namely $(\beta\alpha)_!=\alpha_!\beta_!$ and $\id_!=\id$.
\end{Rem}

\begin{Rem}
There are situations were the notation $(-)_!$ can be slightly ambiguous, particularly if the `ambient square' is not made explicit.
For instance, if we are given a 2-cell $\gamma\colon k \ell' \Rightarrow \ell k'$ in a 2-category it might happen that both $\ell$ and $\ell'$ have left adjoints $\ell_!$ and $\ell'_!$, in which case the mate of $\gamma$ would be $\gamma_!\colon \ell_! k\Rightarrow k'\ell'_!$. But it can simultaneously happen that $(k\ell')$ and $(\ell k')$ have left adjoints, in which case the mate of~$\gamma$ can be understood as the 2-cell also denoted $\gamma_!\colon (k'\ell)_!\Rightarrow(k\ell')_!$, as in \Cref{Rem:_!_*-for-id}. This issue is purely notational and context should usually make clear what is meant. Note that providing the source and target 1-cells avoids any confusion and we always try to do so.
\end{Rem}

\begin{Rem}
\label{Rem:mates-and-2-functors}%
Throughout the work, we use mates for functors of the form $u^*, v^*, i^*, j^*$ in $\CAT$ appearing through various 2-functors $\MM$ from (2-subcategories of) $\Cat^\op$ with values in~$\CAT$, that is, with $u^*=\MM(u)$, etc. Let us phrase the results we need about mates in this setting.
\end{Rem}

Suppose given four 1-cells and their image under such a 2-functor~$\MM$
\[
\vcenter{\xymatrix@C=2em@R=2em{
& \ar[ld]_-{v} \ar[rd]^-{j}
\\
\ar[rd]_-{i} && \ar[ld]^-{u}
\\
&
}}
\qquad \overset{\MM}{\longmapsto} \qquad
\vcenter{\xymatrix@C=2em@R=2em{
& \ar@{<-}[ld]_-{v^*} \ar@{<-}[rd]^-{j^*}
\\
\ar@{<-}[rd]_-{i^*} && \ar@{<-}[ld]^-{u^*}
\\
&
}}
\]
(the squares are not assumed to commute). Playing the mate construction on both sides in the presence of adjunctions $i_!\adj i^* \adj i_*$ and $j_!\adj j^* \adj j_*$ yields bijections
\[
[v^* i^* \,,\, j^* u^*]  \isotoo  [j_! v^* \,,\, u^* i_!]
\qquad \textrm{and} \qquad
 [j^* u^*\,,\,v^* i^*]  \isotoo [u^* i_* \,,\, j_* v^* ]\,.
\]
On 2-cells $\alpha^*$ and $\beta^*$ coming via~$\MM$, we use the standard notation
\[
\xymatrix@C=2em@R=0em{
[iv,uj] \ar[r]^-{\MM}
& [v^* i^* , j^* u^*] \ar[r]^-{\sim}
& [j_! v^* , u^* i_!]
\\
\alpha \ar@{|->}[r]
& \alpha^* \ar@{|->}[r]
& \alpha_!
}
\]
and
\begin{equation}
\label{eq:mate-isos}%
\vcenter{\xymatrix@C=2em@R=0em{
[uj,iv] \ar[r]^-{\MM}
& [j^* u^*,v^* i^*] \ar[r]^-{\sim}
& [u^* i_* , j_* v^* ]
\\
\beta \ar@{|->}[r]
& \beta^* \ar@{|->}[r]
& \beta_*
}}
\end{equation}
which matches the usual one for derivators. Explicitly, these mates of $\alpha^*$ are:
\begin{align}
\label{eq:alpha_!-def}%
\alpha_! & =
\vcenter{\xymatrix@C=1.5em@R=1em@L=1ex{
&&&&& \ar@{=}@/_.5em/[ldd] \ar@/^.2em/[rd]^-{i_!} \ar@{}[dd]|(.4){\oEcell{\eta}}
\\
&&&&&& \ar[lld]|-{\ i^*\ } \ar[rd]^-{u^*} \ar@{}[ldd]|-{\oEcell{\alpha^*}}
\\
{\bigg(} j_! v^* \ar@{=>}[r]^-{\eta}_-{i_! \adj i^*}
& j_! v^*i^*i_! \ar@{=>}[r]^-{\alpha^*}
& j_! j^*u^*i_! \ar@{=>}[r]^-{\eps}_-{j_! \adj j^*}
& u^*i_! {\bigg)} =
& \ar[rd]_-{v^*}
&&& \ar[lld]|-{\ j^*\ } \ar@{=}@/^.5em/[ldd]
\\
&&&&& \ar@/_.2em/[rd]_-{j_!}
\\
&&&&&& \ar@{}[uu]|(.4){\oEcell{\eps}}
}}
\\
\label{eq:beta_*-def}%
\beta_* & =
\vcenter{\xymatrix@C=1.5em@R=1em@L=1ex{
&&&&& \ar@{=}@/_.5em/[ldd] \ar@/^.2em/[rd]^-{i_*} \ar@{}[dd]|(.4){\overset{\scriptstyle\eps}\Wcell}
\\
&&&&&& \ar[lld]|-{\ i^*\ } \ar[rd]^-{u^*} \ar@{}[ldd]|-{\overset{\scriptstyle\beta^*}\Wcell}
\\
{\bigg(} u^* i_* \ar@{=>}[r]^-{\eta}_-{j^* \adj j_*}
& j_* j^* u^* i_* \ar@{=>}[r]^-{\beta^*}
& j_* v^* i^* i_* \ar@{=>}[r]^-{\eps}_-{i^* \adj i_*}
& j_* v^* {\bigg)}=
& \ar[rd]_-{v^*}
&&& \ar[lld]|-{\ j^*\ } \ar@{=}@/^.5em/[ldd]
\\
&&&&& \ar@/_.2em/[rd]_-{j_*}
\\
&&&&&& \ar@{}[uu]|(.4){\overset{\scriptstyle\eta}\Wcell}
}}
\end{align}
For the details of the so-called \emph{calculus of mates}, the reader is invited to consult~\cite[\S\,1.2]{Groth13} or~\cite{KellyStreet74}. We invoke in some places the \emph{compatibility of mates with pasting}, which can be found in~\cite[Lem.\,1.14]{Groth13}.
\index{mate!compatibility of mate with pasting}

\begin{Exa}
\label{Exa:units-as-mates}%
For any 1-cell $i$, consider the commutative squares
\[
\vcenter{\xymatrix@C=2em@R=2em{
& \ar@{=}[ld]_-{} \ar@{=}[rd]^-{} \ar@{}[dd]|-{\oEcell{\id_{i}}}
\\
\ar[rd]_-{i} && \ar[ld]^-{i}
\\
&
}}
\qquadtext{and}
\vcenter{\xymatrix@C=2em@R=2em{
& \ar[ld]_-{i} \ar[rd]^-{i} \ar@{}[dd]|-{\oEcell{\id_{i}}}
\\
\ar@{=}[rd]_-{} && \ar@{=}[ld]^-{}
\\
&
}}
\]
Using the left-hand and right-hand diagrams, respectively, to form the left mate of~$\id_i$, we obtain the unit $(\id_i)_! = \eta \colon \Id \Rightarrow i^*i_!$ and counit $(\id_i)_! = \eps \colon i_!i^* \Rightarrow \Id$ of the adjunction $i_! \adj i^*$.
Similarly, taking right mates respectively yields the counit $(\id_i)_* = \eps \colon i^*i_* \Rightarrow \Id$ and unit $(\id_i)_* = \eta \colon \Id \Rightarrow i_*i^*$ of $i^* \adj i_*$.\end{Exa}

Compatibility of mates with pasting gives in particular:
\begin{Prop}
\label{Prop:mates-under-top-functor}%
Consider the left-hand diagram and its whiskered 2-cell
\[
\vcenter{\xymatrix{
& \ar[d]^-{d}
\\
& \ar[ld]_-{v} \ar[rd]^-{j} \ar@{}[dd]|-{\oEcell{\gamma}}
\\
\ar[rd]_-{i} && \ar[ld]^-{u}
\\
&
}}
\qquadtext{and}
\vcenter{\xymatrix{
& \ar[ld]_-{v d} \ar[rd]^-{j d} \ar@{}[dd]|-{\oEcell{\gamma d}}
\\
\ar[rd]_-{i} && \ar[ld]^-{u}
\\
&
}}
\]
Suppose that $d^*$ has a left adjoint~$d_!$. Then $(\gamma d)_!=\gamma_!\,\eps$, or more precisely the following diagram commutes
\[
\xymatrix@L=1ex{
j_! d_! d^* v^* \ar@{=>}[d]_-{j_! {\eps} v^*} \ar@{=}[r]^-{\sim}
& (jd)_! (vd)^* \ar@{=>}[d]^-{(\gamma d)_!}
\\
j_! v^* \ar@{=>}[r]_-{\gamma_!}
& u^* i_!
}
\]
where $\eps\colon d_! d^*\Rightarrow\id$ is the counit of $d_!\adj d^*$. Dually, for right mates.
\end{Prop}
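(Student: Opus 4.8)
The statement to prove is \Cref{Prop:mates-under-top-functor}, which computes the left mate of a whiskered 2-cell $\gamma d$ in terms of the mate of~$\gamma$ together with the counit of $d_!\adj d^*$. The assertion is precisely the kind of identity that the calculus of mates is designed to deliver, so the plan is to \emph{not} recompute anything from scratch but rather to reduce everything to \emph{compatibility of mates with pasting}, cited from \cite[Lem.\,1.14]{Groth13}. The central observation is that whiskering the cospan-square by $d$ on top is itself a pasting operation of 2-cells, and under the contravariant $(-)^*$ this pasting turns into a pasting in the target 2-category whose mate is computed one factor at a time.

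\textbf{First steps.} First I would write down explicitly the two squares in play, together with the adjunctions $i_!\adj i^*$, $j_!\adj j^*$, $d_!\adj d^*$, and record the formula~\eqref{eq:alpha_!-def} for~$\gamma_!$ as the composite $(\eps_j)(j_!\,\gamma^*\,i_!)(j_!\,\eta_i)$. I would then express the top-whiskered diagram $\gamma d$ as the pasting in $\GG$ of the 2-cell $\gamma\colon iv\Rightarrow uj$ with the (identity) whiskering along $d$, i.e. the square over the cospan $\oto{i}\loto{u}$ with legs $vd$ and $jd$ arises by precomposing the legs $v,j$ of $\gamma$ with $d$. Applying the (contravariant-on-1-cells, covariant-on-2-cells) functoriality, the image $(\gamma d)^*=d^*\gamma^*$ decomposes as $\gamma^*$ whiskered by $d^*$, since $(vd)^*=d^*v^*$ and $(jd)^*=d^*j^*$ strictly (these are $2$-functors, so composition is on the nose; this is where I use the strictness hypotheses of \Cref{Rem:pre-2-Mackey}). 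Thus $(\gamma d)^*$ and $\gamma^*$ differ by a whiskering, and the three adjoints at play on the left are $(jd)_!\cong j_!d_!$, $(vd)^*=d^*v^*$, and $i_!$.

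\textbf{The core computation.} The heart is then to run the mate-formula~\eqref{eq:alpha_!-def} for the 1-cell $(jd)_!\cong j_!d_!$ and to untangle it using the canonical isomorphism $(jd)_!\cong j_!d_!$ between compositions of adjoints, whose units and counits factor as expected. Concretely, the unit $\eta$ for $(jd)_!\adj(jd)^*=d^*j^*$ is built from the units $\eta_j$ and $\eta_d$, and the counit $\eps$ for $(jd)_!$ likewise. Inserting these into the mate formula and invoking compatibility with pasting, I expect the $d$-part of the adjunction data to collapse against the counit $\eps\colon d_!d^*\Rightarrow\id$ (via one triangle identity for $d_!\adj d^*$), leaving exactly the mate-formula for $\gamma$ itself whiskered appropriately. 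Diagrammatically this is the commutativity of the displayed square, which I would verify by pasting: the top horizontal is the structural isomorphism $j_!d_!d^*v^*\cong (jd)_!(vd)^*$, the left vertical is $j_!\,\eps\,v^*$, and the two routes agree because $\eps$ commutes past the $j_!(\cdots)i_!$-sandwich by naturality and because the remaining zig-zag is a triangle identity.

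\textbf{The main obstacle.} The one genuinely delicate point is bookkeeping the identification $(jd)_!\cong j_!d_!$ and checking that the units/counits of the composite adjunction really factor as the ones I insert; this is standard adjunction theory (and is exactly the content invoked in \Cref{Thm:Theta-properties}\,\eqref{it:Theta-ij}), but it must be handled carefully so that the single counit $\eps$ of $d_!\adj d^*$ appears in the right place and no spurious structure isomorphisms are left over. Once that factorization is pinned down, the rest is a mechanical application of \cite[Lem.\,1.14]{Groth13} plus one triangle identity, and the dual statement for right mates follows by the symmetric argument (or by passing to $\MM^{\op}$ as in \Cref{Rem:dual-Mackey}), which I would simply indicate rather than rewrite.
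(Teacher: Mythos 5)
Your proposal is correct and takes essentially the same route as the paper: the paper's (one-line) proof observes that the whiskered square is the pasting of $\gamma$ with the degenerate square of \Cref{Exa:units-as-mates}, whose left mate is exactly the counit $\eps\colon d_!d^*\Rightarrow\id$, so compatibility of mates with pasting gives the commutative square at once. Your additional explicit expansion of the composite adjunction $(jd)_!\cong j_!d_!$ is sound but redundant once that pasting decomposition is in place---and, as a minor point, the collapse of the $d$-part there is effected by the interchange law together with the factorization of the composite counit, not by a triangle identity.
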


\begin{proof}
The 2-cell on the right is the result of the pasting of the 2-cell on the left with the 2-cells of \Cref{Exa:units-as-mates}.
\end{proof}

\begin{Rem}
\label{Rem:mate-natural}%
The above proposition applies in particular when $d$ is an equivalence, allowing us to replace the top object of a square up to equivalence. The reader can furthermore verify that the mating isomorphisms $[v^*i^*,j^*u^*]\cong [j_!v^*,u^*i_!]$ and $[j^*u^*,i^*v^*]\cong [u^*i_*, j_*v^*]$ of~\eqref{eq:mate-isos} are natural in $i,u,j,k$, with respect to 2-cells $i\Rightarrow i'$, $u\Rightarrow u'$, $j\Rightarrow j'$ and $v\Rightarrow v'$.
\end{Rem}

\begin{Rem}
\label{Rem:pseudo-func-of-adjoints}%
Suppose we have a pseudo-functor $\cat{F}\colon \cat B\to \cat B'$ with the property that every 1-cell $\cat{F}u$ admits in~$\cat B'$ a left adjoint $(\cat{F}u)_!$. Then a choice of adjunctions $\cat (Fu)_! \dashv \cat{F}u$ for every $u$ defines a pseudo-functor $\cat{F}_!\colon \cat B^{\op,\co} \to \cat B'$ which agrees with $\cat{F}$ on objects, sends a 1-cell $u$ to $(\cat{F}u)_!$ and a 2-cell $\alpha$ to the mate $(\cat{F} \alpha)_! $ defined as in \Cref{Rem:_!_*-for-id}.
The coherent structure isomorphisms $\fun$ and $\un$ of $\cat{F}_!$ are provided by the mates (for the adjunctions $(\cat Fu)_!\dashv \cat Fu$) of the images in $\cat B'$ of those of $\cat{F}$, together with the unique invertible 2-cells induced by the uniqueness property of adjunctions.
The latter also implies that different choices of adjunctions would yield canonically isomorphic pseudo-functors.
Note that, even if we start out with a strict 2-functor $\cat{F}$, there is no reason in general for $\cat{F}_!$ to be strict.

Similarly, a choice of right adjoints $\cat{F}u\adj (\cat{F}u)_*$ for all 1-cells $u$ of $\cat B$ defines a pseudo-functor $\cat{F}_*\colon \cat B^{\op,\co} \to \cat B'$.

Finally, if the 2-functor $\cat{F}\colon \cat{B}\to \cat{B}'$ is such that every 1-cell $\cat{F}u$ admits an ambidextrous adjoint $(\cat{F}u)_!=(\cat{F}u)_*$ we obtain from the above discussion \emph{two} pseudo-functors $\cat{F}_!$ and~$\cat{F}_*\colon \cat B^{\op,\co} \to \cat B'$ which agree on 0-cells and 1-cells but \apriori\ are different on 2-cells. What happens on 2-cells depends on the choices of the units and counits for the left adjunctions $(\cat{F}u)_!\adj \cat{F}u$ versus the choices of the units and counits for the right adjunctions $\cat{F}u\adj (\cat{F}u)_*$. This is the situation we encounter with our Mackey 2-functors $\MM\colon \groupoid^\op \to \CAT$. The requirement that those units and counits can be chosen so that $\cat{F}_!=\cat{F}_*$ on 2-cells is property~\Mack{8}, which itself rests on the Strict Mackey Formula~\Mack{7}, in the Rectification \Cref{Thm:rectification-intro}.
\end{Rem}

The next facts can be established `one object at a time' or in the following more functorial form.
\begin{Lem}
\label{Lem:detect-iso}%
Let $\ell\adj r$ be an adjunction (in a bicategory, see \ref{Ter:internal}).
\begin{enumerate}[\rm(a)]
\item
Horizontal composition $\ell\circ-$ induces an injection on 2-cells $[s,r]\hook [\ell s,\ell r]$.
\smallbreak
\item
If $\theta\colon r\Rightarrow r$ is such that $\ell\theta\colon \ell r\Rightarrow \ell r$ is the identity then $\theta=\id_{r}$ as well.
\smallbreak
\item
If $\theta\colon r\Rightarrow r$ is such that $\ell\theta\colon \ell r\Rightarrow \ell r$ admits a left inverse then so does $\theta$.
\smallbreak
\item
If $\theta\colon r\Rightarrow r$ is such that $\ell\theta\colon \ell r\Rightarrow \ell r$ is an isomorphism then so is $\theta$.
\end{enumerate}
\end{Lem}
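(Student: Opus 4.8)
The plan is to first reduce to the strict case: by \Cref{Rem:strictification} the bicategory $\cat{B}$ is biequivalent to a $2$-category, and since adjunctions, isomorphisms, one-sided inverses and whiskering are all preserved and reflected under a biequivalence (\Cref{Ter:internal}, \Cref{Rem:coh_pseudofun}), I may assume $\cat{B}$ is a strict $2$-category and suppress all associators and unitors. Write $\eta\colon \Id\Rightarrow r\ell$ and $\eps\colon \ell r\Rightarrow \Id$ for the unit and counit of $\ell\adj r$, with triangle identities $(\eps\ell)(\ell\eta)=\id_\ell$ and $(r\eps)(\eta r)=\id_r$ (\Cref{Ter:internal}). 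The whole lemma is then an exercise in the calculus of mates of \Cref{sec:mates}, organized as a cascade $(a)\Rightarrow(b)$, $(c)$, then $(b)+(c)\Rightarrow(d)$.

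For part (a) I would exhibit an explicit retraction of the whiskering map $\ell\circ-\colon [s,r]\to[\ell s,\ell r]$. Given $\phi\colon s\Rightarrow r$, the interchange law yields $(r\ell\phi)(\eta s)=(\eta r)\phi$, and post-composing with $r\eps$ and invoking the triangle identity $(r\eps)(\eta r)=\id_r$ produces the recovery formula $\phi=(r\eps)(r\ell\phi)(\eta s)$. Since the right-hand side depends on $\phi$ only through $\ell\phi$, the map $\ell\circ-$ is injective, which is (a). Part (b) is then immediate as the special case $s=r$: from $\ell\theta=\id_{\ell r}=\ell(\id_r)$ and injectivity we conclude $\theta=\id_r$.

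For part (c), given a left inverse $\sigma$ of $\ell\theta$ (so $\sigma(\ell\theta)=\id_{\ell r}$), I would take the candidate $\tau:=(r\eps)(r\sigma)(\eta r)\colon r\Rightarrow r$, namely the image of $\sigma$ under the same retraction used in (a). A direct computation then verifies that $\tau$ is a left inverse of $\theta$: using the interchange identity $(\eta r)\theta=(r\ell\theta)(\eta r)$ and the functoriality of the whiskering functor $r\circ-$, one obtains $\tau\theta=(r\eps)(r\sigma)(r\ell\theta)(\eta r)=(r\eps)\,r(\sigma\,\ell\theta)\,(\eta r)=(r\eps)(\eta r)=\id_r$, the last equality being the triangle identity.

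Finally, for part (d) I would bootstrap from (b) and (c). If $\ell\theta$ is invertible then it has a left inverse, so (c) produces $\tau$ with $\tau\theta=\id_r$. Applying $\ell\circ-$ to this equation and using its functoriality gives $(\ell\tau)(\ell\theta)=\id_{\ell r}$, so $\ell\tau$ is a left inverse of the invertible $\ell\theta$ and hence coincides with its two-sided inverse $(\ell\theta)^{-1}$. Therefore $\ell(\theta\tau)=(\ell\theta)(\ell\tau)=(\ell\theta)(\ell\theta)^{-1}=\id_{\ell r}=\ell(\id_r)$, and (b) forces $\theta\tau=\id_r$; together with $\tau\theta=\id_r$ this shows $\theta$ is invertible. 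The step I expect to require the most care is the whiskering-and-interchange bookkeeping in the computation for (c); it is worth noting that a naive symmetric attempt to build a \emph{right} inverse of $\theta$ directly fails, because the adjunction is handed and $\tau$ only combines with $\ell\theta$ on the $r$-side — this asymmetry is precisely why (d) must be deduced from (b) and (c) rather than by a self-dual argument.
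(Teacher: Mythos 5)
Your proposal is correct and follows essentially the same route as the paper: part (a) via the retraction extracted from the mating isomorphism $[s,r]\cong[\ell s,\id]$, part (b) as the special case $s=r$, part (c) by taking the mate $(r\eps)(r\sigma)(\eta r)$ of the given left inverse, and part (d) by the identical bootstrap from (b) and (c). The only (inessential) difference is in (c), where you verify $\tau\theta=\id_r$ directly by interchange and the triangle identity, whereas the paper computes $\ell(\phi\theta)=\id_{\ell r}$ and then invokes (b); both verifications use the same candidate 2-cell and amount to the same calculation.
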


\begin{proof}
The mating isomorphism $[s,r]\isoto [\ell s,\Id]$ decomposes as
\[
\xymatrix{[s,r] \ar[r]^-{\ell}
& [\ell s,\ell r] \ar[r]^-{\eps\circ-}
& [\ell s,\Id]
}
\]
hence the first map is a split monomorphism. This proves~(a), from which (b) immediately follows by taking $s=r$ and considering $\id_r,\theta\in [r,r]$. Let us prove~(c). Let $\varphi\colon \ell r\Rightarrow \ell r$ be a left inverse of~$\ell \theta$, meaning $\varphi\circ (\ell\theta)=\id_{\ell r}$. Define $\phi\colon r\Rightarrow r$ as the following mate of~$\varphi$
\[
\phi\colon
\xymatrix@C=15pt@L=1ex{ r \ar@{=>}[r]^-{\eta r} & r\ell r \ar@{=>}[r]^-{r\varphi} & r\ell r \ar@{=>}[r]^-{r\eps} & r} \,.
\]
Then consider $\phi\theta\colon r\Rightarrow r$ and compute $\ell(\phi\theta)$
\[
\xymatrix@C=5em@L=1ex{
\ell r \ar@{=>}[r]^-{\ell\theta} \ar@{=>}[d]_-{\ell \eta r}
&
\ell r \ar@{=>}[r]^-{\ell\phi} \ar@{=>}[d]_-{\ell \eta r}
&
\ell r
\\
\ell r\ell r \ar@{=>}[r]_-{\ell r \ell\theta}
& \ell r\ell r \ar@{=>}[r]_-{\ell r\varphi}
& \ell r\ell r \ar@{=>}[u]_-{\ell r\eps}
}
\]
by unpacking the definition of~$\phi$ (right-hand square) and using naturality for the left-hand square. The bottom composes to the identity by choice of~$\varphi$ and therefore the top composite is the identity as well by the unit-counit relation $(r\eps)\,(\eta r)=\id_r$. Hence $\ell(\phi\theta)=\id_{\ell r}$ and we obtain~(c) thanks to~(b). Now for~(d), we can apply~(c) to find $\phi\colon r\Rightarrow r$ such that $\phi\theta=\id_r$. Hence $\ell\phi=(\ell\theta)\inv$ and therefore $\ell(\theta\phi)=\id_{\ell r}$ and (b) gives us again $\theta\phi=\id_r$. So $\phi$ is the inverse of~$\theta$.
\end{proof}

\begin{Cor}
\label{Cor:detect-iso}%
Let $r$ and $r'$ be two right adjoints (in a bicategory) of the same~$\ell$ and let $\theta\colon r\Rightarrow r'$ be a 2-cell such that $\ell\theta$ is an isomorphism~$\ell r\stackrel{\sim}{\Rightarrow} \ell r'$. Then $\theta\colon r\stackrel{\sim}{\Rightarrow} r'$ is an isomorphism.
\end{Cor}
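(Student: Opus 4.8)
The plan is to reduce the statement to a situation where \Cref{Lem:detect-iso} applies directly. The difficulty is that \Cref{Lem:detect-iso} concerns two parallel $2$-cells between \emph{the same} right adjoint $r$ (an endomorphism $\theta\colon r\Rightarrow r$), whereas here I am given a $2$-cell $\theta\colon r\Rightarrow r'$ between two \emph{a priori distinct} right adjoints of the same $\ell$. So the first step is to exploit the essential uniqueness of adjoints.

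First I would invoke the standard fact that any two right adjoints of a fixed $1$-cell $\ell$ are canonically isomorphic: there is a unique invertible $2$-cell $\psi\colon r\isoEcell r'$ compatible with the two adjunction structures (identifying units and counits). This $\psi$ is constructed by the usual mate recipe from the units and counits of $\ell\adj r$ and $\ell\adj r'$, and it is automatically an isomorphism. Crucially, because $\psi$ matches the counits $\eps\colon \ell r\Rightarrow \Id$ and $\eps'\colon \ell r'\Rightarrow \Id$, the whiskered isomorphism $\ell\psi\colon \ell r\isoEcell \ell r'$ satisfies $\eps'\circ(\ell\psi)=\eps$, i.e.\ $\ell\psi$ is \emph{the} canonical comparison isomorphism on the composites $\ell r$ and $\ell r'$.

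Next I would form the endomorphism $\phi:=\psi^{-1}\circ\theta\colon r\Rightarrow r$. Whiskering with $\ell$ gives $\ell\phi=(\ell\psi)^{-1}\circ(\ell\theta)\colon \ell r\Rightarrow \ell r$. By hypothesis $\ell\theta$ is an isomorphism, and $\ell\psi$ is an isomorphism by the previous step, so $\ell\phi$ is an isomorphism $\ell r\isoEcell \ell r$. Now $\phi\colon r\Rightarrow r$ is a genuine endomorphism of a single right adjoint $r$, so \Cref{Lem:detect-iso}\,(d) applies and yields that $\phi$ itself is an isomorphism. Finally, since $\theta=\psi\circ\phi$ is a composite of two isomorphisms, $\theta$ is an isomorphism, as claimed.

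The only real obstacle is the first step, namely pinning down the canonical comparison $\psi$ and verifying its compatibility with the counits so that $\ell\psi$ is recognized as an isomorphism of the form handled by \Cref{Lem:detect-iso}. This is entirely routine adjunction bookkeeping (the standard proof that adjoints are unique up to unique isomorphism), and once $\psi$ is in hand the rest is a two-line reduction. I would keep that verification brief, citing the uniqueness of adjoints, rather than re-deriving the mate formula for $\psi$ in full.
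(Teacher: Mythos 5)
Your proposal is correct and follows essentially the same route as the paper: compose $\theta$ with an isomorphism $r\stackrel{\sim}{\Rightarrow}r'$ to reduce to an endomorphism of $r$, then apply \Cref{Lem:detect-iso}\,(d). The only difference is that the paper takes an \emph{arbitrary} isomorphism $\chi\colon r\stackrel{\sim}{\Rightarrow}r'$ and applies the lemma to $\chi\inv\theta$ --- no compatibility of $\chi$ with units or counits is ever needed, since whiskering any isomorphism with $\ell$ is again an isomorphism --- so the step you single out as ``the only real obstacle'' can be skipped entirely.
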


\begin{proof}
Let $\chi\colon r\stackrel{\sim}{\Rightarrow} r'$ be any isomorphism and apply \Cref{Lem:detect-iso}\,(d) to the 2-cell $\chi\inv\theta\colon r\Rightarrow r$.
\end{proof}

\bigbreak
\section{String diagrams}
\label{sec:string_diagrams}%
\medskip

Instead of the usual cellular or globular pasting diagrams (Remark~\ref{Rem:strictification}), where a $k$-cell is depicted by an oriented `arrow' of dimension~$k$ ($k=0,1,2$), one can compute in a bicategory $\cat B$ by using their planar duals, \emph{string diagrams}.
References for string diagrams include~\cite{Street96}, \cite{JoyalStreet91},~\cite{TuraevVirelizier17}.

They are dual diagrams, in that they represent 0-cells as regions of the plane, 1-cells as lines separating regions, and 2-cells as dots (or boxes) separating lines. So for instance the pasting diagram on the left
\[
\vcenter{\hbox{
\xymatrix@C=3pt@R=14pt{
&& & \ar@{}[d]|{\Scell\;\alpha} & && \\
X \ar@/^6ex/[rrrrrr]^-f
 \ar@/_1ex/[rrd]_k \ar[rrr]^-g && & Y \ar[rrr]^-h \ar@{}[d]|{\Scell\;\beta} & && Z\\
&& U \ar@/_1ex/[rr]_-\ell && V \ar@/_1ex/[rru]_-{m} &&
}
}}
\quad\quad\quad \leftrightsquigarrow \quad\quad\quad
\vcenter {\hbox{
\psfrag{A}[Bc][Bc]{\scalebox{1}{\scriptsize{$\alpha$}}}
\psfrag{B}[Bc][Bc]{\scalebox{1}{\scriptsize{$\beta$}}}
\psfrag{X}[Bc][Bc]{\scalebox{1}{\scriptsize{$X$}}}
\psfrag{Y}[Bc][Bc]{\scalebox{1}{\scriptsize{$Y$}}}
\psfrag{Z}[Bc][Bc]{\scalebox{1}{\scriptsize{$Z$}}}
\psfrag{U}[Bc][Bc]{\scalebox{1}{\scriptsize{$U$}}}
\psfrag{V}[Bc][Bc]{\scalebox{1}{\scriptsize{$V$}}}
\psfrag{F}[Bc][Bc]{\scalebox{1}{\scriptsize{$f$}}}
\psfrag{G}[Bc][Bc]{\scalebox{1}{\scriptsize{$g$}}}
\psfrag{H}[Bc][Bc]{\scalebox{1}{\scriptsize{$h$}}}
\psfrag{K}[Bc][Bc]{\scalebox{1}{\scriptsize{$k$}}}
\psfrag{L}[Bc][Bc]{\scalebox{1}{\scriptsize{$\ell$}}}
\psfrag{M}[Bc][Bc]{\scalebox{1}{\scriptsize{$m$}}}
\includegraphics[scale=.4]{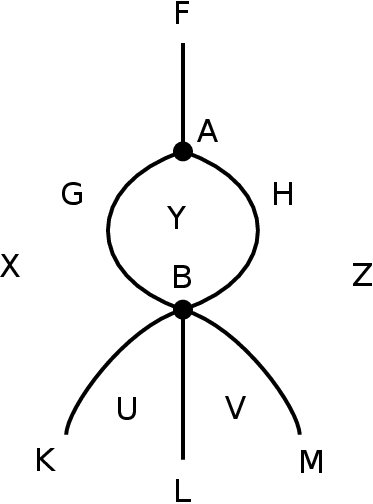}
}}
\]
can be replaced by the string diagram on the right to represent the same composite 2-cell of~$\cat B$. Instead of indicating sources and targets by orienting cells ($\to$, $\Rightarrow$), string diagrams typically rely on the left-right and top-down directions of the page. As above, we choose to orient 1-cells left-to-right and 2-cells top-to-bottom.
Just as for pasting diagrams, the consistency in general bicategories of the calculus of string diagrams --- in which horizontal composition necessarily appears to be strictly associative and unital --- relies on the coherence theorem (\Cref{Rem:strictification}).

An advantage of computing with string diagrams is that identity 1-cells can be safely omitted most of the time.
Moreover, with strings, many compatibility and coherence axioms simply say that certain dots may slide along past certain others.
Thus relations between string diagrams often take an intuitive geometric form.

\begin{Exa} \label{Exa:identity-strings}
An identity 2-cell $\id_f$ (for $f\colon X\to Y$) and an identity 1-cell $\Id_X$ may take any of the following successively more inconspicuous string forms:
\[
\vcenter { \hbox{
\psfrag{A}[Bc][Bc]{\scalebox{1}{\scriptsize{$f$}}}
\psfrag{B}[Bc][Bc]{\scalebox{1}{\scriptsize{$f$}}}
\psfrag{X}[Bc][Bc]{\scalebox{1}{\scriptsize{$X$}}}
\psfrag{Y}[Bc][Bc]{\scalebox{1}{\scriptsize{$Y$}}}
\psfrag{F}[Bc][Bc]{\scalebox{1}{\scriptsize{$\id_f$}}}
\includegraphics[scale=.4]{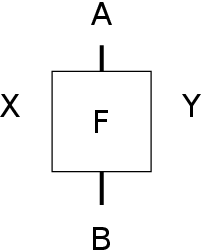}
}}
\quad = \quad
\vcenter { \hbox{
\psfrag{A}[Bc][Bc]{\scalebox{1}{\scriptsize{$f$}}}
\psfrag{B}[Bc][Bc]{\scalebox{1}{\scriptsize{$f$}}}
\psfrag{F}[Bc][Bc]{\scalebox{1}{\scriptsize{\;\;\;$\id_f$}}}
\includegraphics[scale=.4]{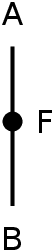}
}}
\quad = \quad
\vcenter { \hbox{
\psfrag{A}[Bc][Bc]{\scalebox{1}{\scriptsize{$f$}}}
\psfrag{B}[Bc][Bc]{\scalebox{1}{\scriptsize{$f$}}}
\psfrag{F}[Bc][Bc]{\scalebox{1}{\scriptsize{\;\;$\id_f$}}}
\includegraphics[scale=.4]{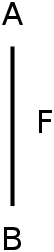}
}}
\quad\quad\quad\quad\quad\quad
\vcenter { \hbox{
\psfrag{A}[Bc][Bc]{\scalebox{1}{\scriptsize{$\Id_X$}}}
\psfrag{B}[Bc][Bc]{\scalebox{1}{\scriptsize{$\Id_X$}}}
\psfrag{X}[Bc][Bc]{\scalebox{1}{\scriptsize{$X$}}}
\psfrag{Y}[Bc][Bc]{\scalebox{1}{\scriptsize{$X$}}}
\includegraphics[scale=.4]{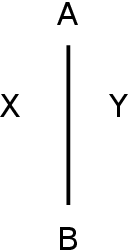}
}}
\quad = \quad
\vcenter { \hbox{
\psfrag{A}[Bc][Bc]{\scalebox{1}{\scriptsize{$\Id_X$}}}
\psfrag{B}[Bc][Bc]{\scalebox{1}{\scriptsize{$\Id_X$}}}
\psfrag{X}[Bc][Bc]{\scalebox{1}{\scriptsize{$X$}}}
\psfrag{Y}[Bc][Bc]{\scalebox{1}{\scriptsize{$X$}}}
\includegraphics[scale=.4]{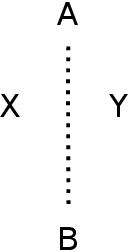}
}}
\quad =
\quad\quad\quad
\]
\end{Exa}

\begin{Exa}[Exchange law] \label{Exa:strings-for-exchange}
A special case of~\eqref{eq:exchange_law} yields the relation
\[
\vcenter { \hbox{
\psfrag{A1}[Bc][Bc]{\scalebox{1}{\scriptsize{$f_1$}}}
\psfrag{B1}[Bc][Bc]{\scalebox{1}{\scriptsize{$g_1$}}}
\psfrag{A2}[Bc][Bc]{\scalebox{1}{\scriptsize{$f_2$}}}
\psfrag{B2}[Bc][Bc]{\scalebox{1}{\scriptsize{$g_2$}}}
\psfrag{F}[Bc][Bc]{\scalebox{1}{\scriptsize{$\alpha$}}}
\psfrag{G}[Bc][Bc]{\scalebox{1}{\scriptsize{$\beta$}}}
\includegraphics[scale=.4]{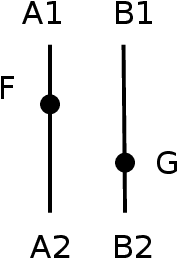}
}}
\quad = \quad
\vcenter { \hbox{
\psfrag{A1}[Bc][Bc]{\scalebox{1}{\scriptsize{$f_1$}}}
\psfrag{B1}[Bc][Bc]{\scalebox{1}{\scriptsize{$g_1$}}}
\psfrag{A2}[Bc][Bc]{\scalebox{1}{\scriptsize{$f_2$}}}
\psfrag{B2}[Bc][Bc]{\scalebox{1}{\scriptsize{$g_2$}}}
\psfrag{F}[Bc][Bc]{\scalebox{1}{\scriptsize{$\alpha$}}}
\psfrag{G}[Bc][Bc]{\scalebox{1}{\scriptsize{$\beta$}}}
\includegraphics[scale=.4]{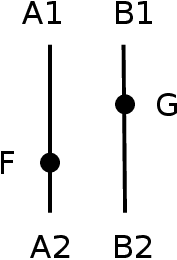}
}}
\]
which suggests that parallel blocks may slide past each other.
\end{Exa}

\begin{Exa}[Adjoint functors] \label{Exa:strings-for-adjoints}
For instance, the unit $\eta\colon \Id_X\Rightarrow r\ell$ and counit $\varepsilon \colon \ell r \Rightarrow \Id_Y$ of an adjunction $\ell : X \leftrightarrows Y: r$ may be depicted by either of these successively simpler diagrams:
\[
\vcenter {\hbox{
\psfrag{F}[Bc][Bc]{\scalebox{1}{\scriptsize{$\eta$}}}
\psfrag{Id}[Bc][Bc]{\scalebox{1}{\scriptsize{$\Id_X$}}}
\psfrag{X}[Bc][Bc]{\scalebox{1}{\scriptsize{$X$}}}
\psfrag{Y}[Bc][Bc]{\scalebox{1}{\scriptsize{$Y$}}}
\psfrag{L}[Bc][Bc]{\scalebox{1}{\scriptsize{$\ell$}}}
\psfrag{R}[Bc][Bc]{\scalebox{1}{\scriptsize{$r$}}}
\includegraphics[scale=.4]{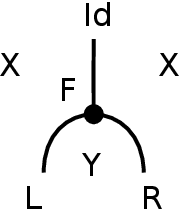}
}}
\;=\;
\vcenter {\hbox{
\psfrag{F}[Bc][Bc]{\scalebox{1}{\scriptsize{$\eta$}}}
\psfrag{Id}[Bc][Bc]{\scalebox{1}{\scriptsize{$\Id_X$}}}
\psfrag{X}[Bc][Bc]{\scalebox{1}{\scriptsize{$X$}}}
\psfrag{Y}[Bc][Bc]{\scalebox{1}{\scriptsize{$Y$}}}
\psfrag{L}[Bc][Bc]{\scalebox{1}{\scriptsize{$\ell$}}}
\psfrag{R}[Bc][Bc]{\scalebox{1}{\scriptsize{$r$}}}
\includegraphics[scale=.4]{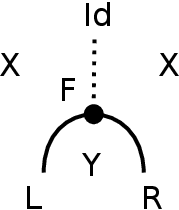}
}}
\;=\;
\vcenter {\hbox{
\psfrag{A}[Bc][Bc]{\scalebox{1}{\scriptsize{$\ell$}}}
\psfrag{B}[Bc][Bc]{\scalebox{1}{\scriptsize{$r$}}}
\psfrag{F}[Bc][Bc]{\scalebox{1}{\scriptsize{$\eta$}}}
\includegraphics[scale=.4]{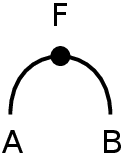}
}}
\quad\quad\quad
\vcenter {\hbox{
\psfrag{F}[Bc][Bc]{\scalebox{1}{\scriptsize{$\varepsilon$}}}
\psfrag{Id}[Bc][Bc]{\scalebox{1}{\scriptsize{$\Id_Y$}}}
\psfrag{X}[Bc][Bc]{\scalebox{1}{\scriptsize{$X$}}}
\psfrag{Y}[Bc][Bc]{\scalebox{1}{\scriptsize{$Y$}}}
\psfrag{L}[Bc][Bc]{\scalebox{1}{\scriptsize{$\ell$}}}
\psfrag{R}[Bc][Bc]{\scalebox{1}{\scriptsize{$r$}}}
\includegraphics[scale=.4]{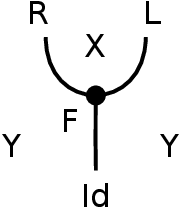}
}}
\;=\;
\vcenter {\hbox{
\psfrag{F}[Bc][Bc]{\scalebox{1}{\scriptsize{$\varepsilon$}}}
\psfrag{Id}[Bc][Bc]{\scalebox{1}{\scriptsize{$\Id_Y$}}}
\psfrag{X}[Bc][Bc]{\scalebox{1}{\scriptsize{$X$}}}
\psfrag{Y}[Bc][Bc]{\scalebox{1}{\scriptsize{$Y$}}}
\psfrag{L}[Bc][Bc]{\scalebox{1}{\scriptsize{$\ell$}}}
\psfrag{R}[Bc][Bc]{\scalebox{1}{\scriptsize{$r$}}}
\includegraphics[scale=.4]{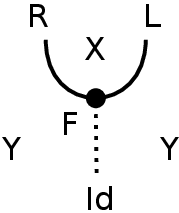}
}}
\;=\;
\vcenter {\hbox{
\psfrag{A}[Bc][Bc]{\scalebox{1}{\scriptsize{$r$}}}
\psfrag{B}[Bc][Bc]{\scalebox{1}{\scriptsize{$\ell$}}}
\psfrag{F}[cc][Bc]{\scalebox{1}{\scriptsize{$\varepsilon$}}}
\includegraphics[scale=.4]{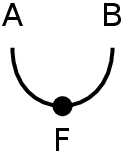}
}}
\]
The two triangular equations for this adjunction become
\[
\vcenter { \hbox{
\psfrag{A}[Bc][Bc]{\scalebox{1}{\scriptsize{$r$}}}
\psfrag{B}[Bc][Bc]{\scalebox{1}{\scriptsize{$r$}}}
\psfrag{C}[Bc][Bc]{\scalebox{1}{\scriptsize{$\ell$}}}
\psfrag{F}[Bc][Bc]{\scalebox{1}{\scriptsize{$\varepsilon$}}}
\psfrag{G}[Bc][Bc]{\scalebox{1}{\scriptsize{$\eta$}}}
\includegraphics[scale=.4]{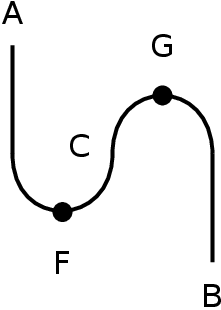}
}}
\quad =\quad
\vcenter { \hbox{
\psfrag{A}[Bc][Bc]{\scalebox{1}{\scriptsize{$r$}}}
\includegraphics[scale=.4]{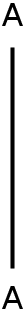}
}}
\quad\quad
\quad \quad
\quad\quad
\vcenter { \hbox{
\psfrag{A}[Bc][Bc]{\scalebox{1}{\scriptsize{$\ell$}}}
\psfrag{B}[Bc][Bc]{\scalebox{1}{\scriptsize{$\ell$}}}
\psfrag{C}[Bc][Bc]{\scalebox{1}{\scriptsize{$r$}}}
\psfrag{F}[Bc][Bc]{\scalebox{1}{\scriptsize{$\varepsilon$}}}
\psfrag{G}[Bc][Bc]{\scalebox{1}{\scriptsize{$\eta$}}}
\includegraphics[scale=.4]{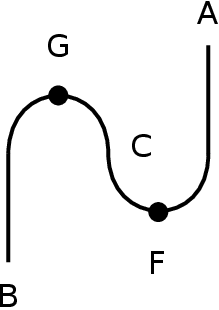}
}}
\quad = \quad
\vcenter { \hbox{
\psfrag{A}[Bc][Bc]{\scalebox{1}{\scriptsize{$\ell$}}}
\includegraphics[scale=.4]{anc/identity-snakelength.eps}
}}
\]
which suggest that unit-counit pairs may be straightened by pulling the string.
\end{Exa}

\begin{Exa}[Mates]
The calculus of mates recalled in \Cref{sec:mates} has a nice formulation in terms of string diagrams. For a 2-cell
\[
\vcenter { \hbox{
\xymatrix@C=14pt@R=14pt{
& & \\
 \ar@{}[rr]|{\oEcell{\alpha}}
 \ar[ur]^{v^*} && \ar[ul]_{j^*} \\
&\ar[ur]_{u^*} \ar[ul]^{i^*} &
}
}}
\quad\quad \rightsquigarrow \quad\quad
\vcenter { \hbox{
\psfrag{A}[Bc][Bc]{\scalebox{1}{\scriptsize{$i^*$}}}
\psfrag{B}[Bc][Bc]{\scalebox{1}{\scriptsize{$v^*$}}}
\psfrag{C}[Bc][Bc]{\scalebox{1}{\scriptsize{$u^*$}}}
\psfrag{D}[Bc][Bc]{\scalebox{1}{\scriptsize{$j^*$}}}
\psfrag{F}[Bc][Bc]{\scalebox{1}{\scriptsize{\;\;\;$\alpha$}}}
\includegraphics[scale=.4]{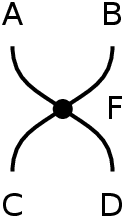}
}}
\]
and adjunctions $i_!\dashv i^* \dashv i_*$ and $j_!\dashv j^* \dashv j_*$, as typically appear in this work, the mates $\alpha_!$ and $\alpha_*$ are depicted as
\[
\alpha_!
\quad=\quad
\vcenter { \hbox{
\psfrag{A}[Bc][Bc]{\scalebox{1}{\scriptsize{$v^*$}}}
\psfrag{B}[Bc][Bc]{\scalebox{1}{\scriptsize{$j_!$}}}
\psfrag{C}[Bc][Bc]{\scalebox{1}{\scriptsize{$i_!$}}}
\psfrag{D}[Bc][Bc]{\scalebox{1}{\scriptsize{$u^*$}}}
\psfrag{F}[Bc][Bc]{\scalebox{1}{\scriptsize{$\varepsilon$}}}
\psfrag{G}[Bc][Bc]{\scalebox{1}{\scriptsize{$\eta$}}}
\psfrag{H}[Bc][Bc]{\scalebox{1}{\scriptsize{$\alpha$}}}
\includegraphics[scale=.4]{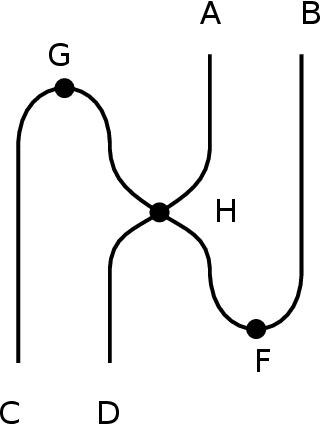}
}}
\quad\quad\quad
\textrm{and}
\quad\quad\quad
(\alpha\inv)_*
\quad=\quad
\vcenter { \hbox{
\psfrag{A}[Bc][Bc]{\scalebox{1}{\scriptsize{$i_*$}}}
\psfrag{B}[Bc][Bc]{\scalebox{1}{\scriptsize{$u^*$}}}
\psfrag{C}[Bc][Bc]{\scalebox{1}{\scriptsize{$v^*$}}}
\psfrag{D}[Bc][Bc]{\scalebox{1}{\scriptsize{$j_*$}}}
\psfrag{F}[Bc][Bc]{\scalebox{1}{\scriptsize{$\varepsilon$}}}
\psfrag{G}[Bc][Bc]{\scalebox{1}{\scriptsize{$\eta$}}}
\psfrag{H}[Bc][Bc]{\scalebox{1}{\scriptsize{$\;\;\;\alpha^{-1}$}}}
\includegraphics[scale=.4]{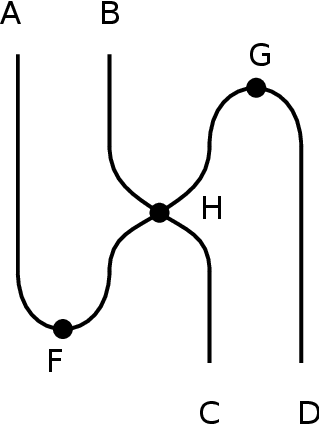}
}}
\]
In particular, the special case with $u=\Id$ and $v=\Id$ (which gives rise for instance to the pseudo-functoriality of $(-)_!$ and $(-)_*$ as in \Cref{Rem:pseudo-func-of-adjoints}) becomes simply:
\[
\vcenter { \hbox{
\psfrag{B}[Bc][Bc]{\scalebox{1}{\scriptsize{$j_!$}}}
\psfrag{C}[Bc][Bc]{\scalebox{1}{\scriptsize{$i_!$}}}
\psfrag{F}[Bc][Bc]{\scalebox{1}{\scriptsize{$\varepsilon$}}}
\psfrag{G}[Bc][Bc]{\scalebox{1}{\scriptsize{$\eta$}}}
\psfrag{H}[Bc][Bc]{\scalebox{1}{\scriptsize{$\alpha$}}}
\includegraphics[scale=.4]{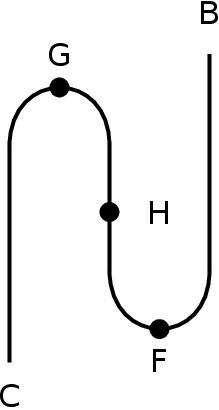}
}}
\quad\quad\quad
\textrm{and}
\quad\quad\quad
\vcenter { \hbox{
\psfrag{A}[Bc][Bc]{\scalebox{1}{\scriptsize{$i_*$}}}
\psfrag{D}[Bc][Bc]{\scalebox{1}{\scriptsize{$j_*$}}}
\psfrag{F}[Bc][Bc]{\scalebox{1}{\scriptsize{$\varepsilon$}}}
\psfrag{G}[Bc][Bc]{\scalebox{1}{\scriptsize{$\eta$}}}
\psfrag{H}[Bc][Bc]{\scalebox{1}{\scriptsize{$\;\alpha^{-1}$}}}
\includegraphics[scale=.4]{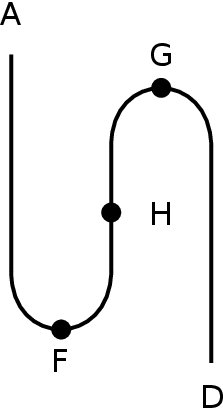}
}}
\]
It is an instructive exercise to verify all the claims in \Cref{sec:mates} using string notation.
\end{Exa}

\begin{Exa}[Pseudo-functors] \label{Exa:strings-for-fun}
The strings for the two structural isomorphisms of a pseudo-functor $\cat F\colon \cat B\to \cat B'$ (see \Cref{Ter:pseudofun}) take the following form:
\[
\vcenter {\hbox{
\psfrag{F}[Bc][Bc]{\scalebox{1}{\scriptsize{$\un \;\;$}}}
\psfrag{A}[Bc][Bc]{\scalebox{1}{\scriptsize{$\Id_{\cat F X}$}}}
\psfrag{B}[Bc][Bc]{\scalebox{1}{\scriptsize{$\cat F (\Id_X)$}}}
\includegraphics[scale=.4]{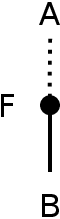}
}}
\quad\quad\quad\quad\quad\quad\quad
\vcenter {\hbox{
\psfrag{F}[Bc][Bc]{\scalebox{1}{\scriptsize{$\fun \;\;\;$}}}
\psfrag{C}[Bc][Bc]{\scalebox{1}{\scriptsize{$\cat F(g f)$}}}
\psfrag{B}[Bc][Bc]{\scalebox{1}{\scriptsize{$\cat F g$}}}
\psfrag{A}[Bc][Bc]{\scalebox{1}{\scriptsize{$\cat F f$}}}
\includegraphics[scale=.4]{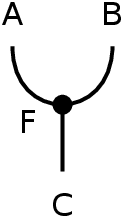}
}}
\]
Of the coherence axioms, the unital relations for a 1-cell $f\colon X\to Y$ of~$\cat B$ become
\[
\vcenter {\hbox{
\psfrag{F}[Bc][Bc]{\scalebox{1}{\scriptsize{$\un$}}}
\psfrag{G}[Bc][Bc]{\scalebox{1}{\scriptsize{$\;\;\;\;\fun$}}}
\psfrag{H}[Bc][Bc]{\scalebox{1}{\scriptsize{$\;\;\;\;\;\;\cat F(\run)$}}}
\psfrag{A}[Bc][Bc]{\scalebox{1}{\scriptsize{$\cat F f$}}}
\psfrag{B}[Bc][Bc]{\scalebox{1}{\scriptsize{$\Id_{\cat FX}$}}}
\psfrag{C}[Bc][Bc]{\scalebox{1}{\scriptsize{$\cat F (\Id_X)\;\;\;\;\;\;\;$}}}
\psfrag{D}[Bc][Bc]{\scalebox{1}{\scriptsize{$\cat F (f \Id_X)\;\;\;\;\;\;\;\;\;\;$}}}
\psfrag{E}[Bc][Bc]{\scalebox{1}{\scriptsize{$\cat F f$}}}
\includegraphics[scale=.4]{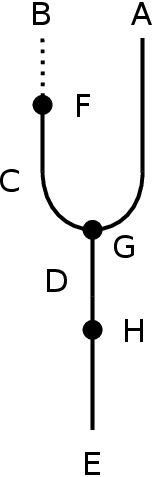}
}}
\quad \;=\;
\vcenter {\hbox{
\psfrag{F}[Bc][Bc]{\scalebox{1}{\scriptsize{$\;\;\run$}}}
\psfrag{A}[Bc][Bc]{\scalebox{1}{\scriptsize{$\cat F f$}}}
\psfrag{B}[Bc][Bc]{\scalebox{1}{\scriptsize{$\Id_{\cat FX}$}}}
\psfrag{C}[Bc][Bc]{\scalebox{1}{\scriptsize{$\cat F f$}}}
\includegraphics[scale=.4]{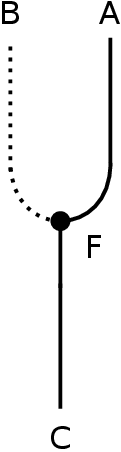}
}}
\quad\quad\quad\quad\quad\quad\quad
\vcenter {\hbox{
\psfrag{F}[Bc][Bc]{\scalebox{1}{\scriptsize{$\lun\;\;$}}}
\psfrag{A}[Bc][Bc]{\scalebox{1}{\scriptsize{$\cat F f$}}}
\psfrag{B}[Bc][Bc]{\scalebox{1}{\scriptsize{$\Id_{\cat FY}$}}}
\psfrag{C}[Bc][Bc]{\scalebox{1}{\scriptsize{$\cat F f$}}}
\includegraphics[scale=.4]{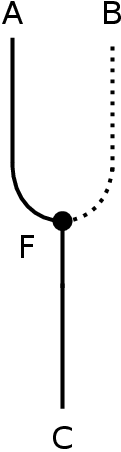}
}}
\;=\; \quad
\vcenter {\hbox{
\psfrag{F}[Bc][Bc]{\scalebox{1}{\scriptsize{$\un$}}}
\psfrag{G}[Bc][Bc]{\scalebox{1}{\scriptsize{$\fun\;\;\;$}}}
\psfrag{H}[Bc][Bc]{\scalebox{1}{\scriptsize{$\cat F(\lun)\;\;\;\;\;\;$}}}
\psfrag{A}[Bc][Bc]{\scalebox{1}{\scriptsize{$\cat F f$}}}
\psfrag{B}[Bc][Bc]{\scalebox{1}{\scriptsize{$\Id_{\cat FY}$}}}
\psfrag{C}[Bc][Bc]{\scalebox{1}{\scriptsize{$\;\;\;\;\;\;\;\cat F (\Id_Y)$}}}
\psfrag{D}[Bc][Bc]{\scalebox{1}{\scriptsize{$\;\;\;\;\;\;\;\;\;\;\cat F (\Id_Y f)$}}}
\psfrag{E}[Bc][Bc]{\scalebox{1}{\scriptsize{$\cat F f$}}}
\includegraphics[scale=.4]{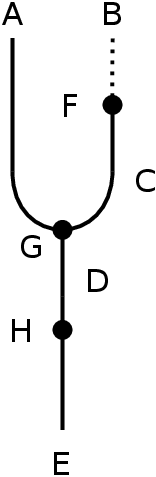}
}}
\]
and the associativity relation for three composable 1-cells $\stackrel{f}{\to} \stackrel{g}{\to} \stackrel{h}{\to}$ is:
\[
\vcenter {\hbox{
\psfrag{F}[Bc][Bc]{\scalebox{1}{\scriptsize{$\fun \;\;\;$}}}
\psfrag{G}[Bc][Bc]{\scalebox{1}{\scriptsize{$\;\;\fun$}}}
\psfrag{A}[Bc][Bc]{\scalebox{1}{\scriptsize{$\cat F f$}}}
\psfrag{B}[Bc][Bc]{\scalebox{1}{\scriptsize{$\cat F g$}}}
\psfrag{C}[Bc][Bc]{\scalebox{1}{\scriptsize{$\cat F h$}}}
\psfrag{D}[Bc][Bc]{\scalebox{1}{\scriptsize{$\cat F (gf)\;\;\;\;\;$}}}
\psfrag{E}[Bc][Bc]{\scalebox{1}{\scriptsize{$\cat F (h(gf))$}}}
\includegraphics[scale=.4]{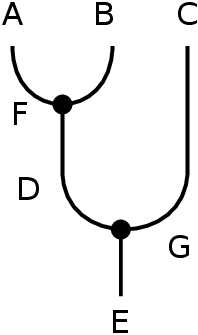}
}}
\quad\;=\;\quad
\vcenter {\hbox{
\psfrag{F}[Bc][Bc]{\scalebox{1}{\scriptsize{$\;\;\;\;\fun$}}}
\psfrag{G}[Bc][Bc]{\scalebox{1}{\scriptsize{$\fun\;\;$}}}
\psfrag{H}[Bc][Bc]{\scalebox{1}{\scriptsize{$\cat F(\ass)\;\;\;\;\;\;\;\;$}}}
\psfrag{A}[Bc][Bc]{\scalebox{1}{\scriptsize{$\cat F f$}}}
\psfrag{B}[Bc][Bc]{\scalebox{1}{\scriptsize{$\cat F g$}}}
\psfrag{C}[Bc][Bc]{\scalebox{1}{\scriptsize{$\cat F h$}}}
\psfrag{D}[Bc][Bc]{\scalebox{1}{\scriptsize{$\;\;\;\;\;\cat F (hg)$}}}
\psfrag{E}[Bc][Bc]{\scalebox{1}{\scriptsize{$\cat F (h(gf))$}}}
\psfrag{E'}[Bc][Bc]{\scalebox{1}{\scriptsize{$\;\;\;\;\;\;\;\;\;\;\cat F ((hg)f)$}}}
\includegraphics[scale=.4]{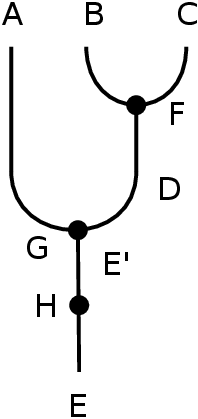}
}}
\]
Here we have taken the trouble to explicitly write the images $\cat F(\run)$, $\cat F(\lun)$ and $\cat F(\ass)$ of the unitors and associators of~$\cat B$ (as well as the unitors of~$\cat B'$). But in string notation it is safe to omit them, just as one would typically omit identity 1-cells, unitors and associators of the ambient bicategory~$\cat B'$: it is always straightforward to reintroduce them explicitly when necessary. Then the relations simplify to
\begin{equation} \label{eq:fun-unit-simpler}
\vcenter {\hbox{
\psfrag{F}[Bc][Bc]{\scalebox{1}{\scriptsize{$\un$}}}
\psfrag{G}[Bc][Bc]{\scalebox{1}{\scriptsize{$\;\;\;\;\fun$}}}
\psfrag{A}[Bc][Bc]{\scalebox{1}{\scriptsize{$\cat F f$}}}
\psfrag{C}[Bc][Bc]{\scalebox{1}{\scriptsize{$\cat F (\Id_X)\;\;\;\;\;\;\;$}}}
\psfrag{D}[Bc][Bc]{\scalebox{1}{\scriptsize{$\cat F f$}}}
\includegraphics[scale=.4]{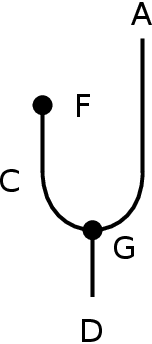}
}}
\quad\;=\;\quad
\vcenter {\hbox{
\psfrag{A}[Bc][Bc]{\scalebox{1}{\scriptsize{$\cat F f$}}}
\includegraphics[scale=.4]{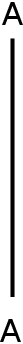}
}}
\quad\;=\;\quad
\vcenter {\hbox{
\psfrag{F}[Bc][Bc]{\scalebox{1}{\scriptsize{$\un$}}}
\psfrag{G}[Bc][Bc]{\scalebox{1}{\scriptsize{$\fun\;\;\;$}}}
\psfrag{A}[Bc][Bc]{\scalebox{1}{\scriptsize{$\cat F f$}}}
\psfrag{C}[Bc][Bc]{\scalebox{1}{\scriptsize{$\;\;\;\;\;\;\;\cat F (\Id_Y)$}}}
\psfrag{D}[Bc][Bc]{\scalebox{1}{\scriptsize{$\cat F f$}}}
\includegraphics[scale=.4]{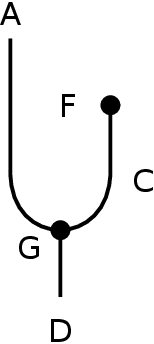}
}}
\end{equation}
and
\begin{equation} \label{eq:fun-ass-simpler}
\vcenter {\hbox{
\psfrag{F}[Bc][Bc]{\scalebox{1}{\scriptsize{$\fun \;\;\;$}}}
\psfrag{G}[Bc][Bc]{\scalebox{1}{\scriptsize{$\;\;\fun$}}}
\psfrag{A}[Bc][Bc]{\scalebox{1}{\scriptsize{$\cat F f$}}}
\psfrag{B}[Bc][Bc]{\scalebox{1}{\scriptsize{$\cat F g$}}}
\psfrag{C}[Bc][Bc]{\scalebox{1}{\scriptsize{$\cat F h$}}}
\psfrag{D}[Bc][Bc]{\scalebox{1}{\scriptsize{$\cat F (gf)\;\;\;\;\;$}}}
\psfrag{E}[Bc][Bc]{\scalebox{1}{\scriptsize{$\cat F (hgf)$}}}
\includegraphics[scale=.4]{anc/mult-ass-dot-right.eps}
}}
\quad\;=\;\quad
\vcenter {\hbox{
\psfrag{F}[Bc][Bc]{\scalebox{1}{\scriptsize{$\;\;\;\;\fun$}}}
\psfrag{G}[Bc][Bc]{\scalebox{1}{\scriptsize{$\fun\;\;$}}}
\psfrag{A}[Bc][Bc]{\scalebox{1}{\scriptsize{$\cat F f$}}}
\psfrag{B}[Bc][Bc]{\scalebox{1}{\scriptsize{$\cat F g$}}}
\psfrag{C}[Bc][Bc]{\scalebox{1}{\scriptsize{$\cat F h$}}}
\psfrag{D}[Bc][Bc]{\scalebox{1}{\scriptsize{$\;\;\;\;\;\cat F (hg)$}}}
\psfrag{E}[Bc][Bc]{\scalebox{1}{\scriptsize{$\cat F (hgf)$}}}
\includegraphics[scale=.4]{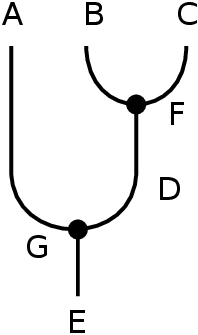}
}}
\end{equation}
(Of course, whenever $\cat B$ and $\cat B'$ are 2-categories the axioms take the latter form on the nose.) Because of these axioms, for every equality $f_1\circ \ldots\circ f_n = g_1 \circ \ldots\circ g_m$ (for some bracketings) between composite 1-cells of $\cat B$ there is only one way to go from $\cat F(f_1) \circ \ldots \circ \cat F(f_n)$ to $\cat F(g_1) \circ \ldots \circ \cat F(g_m)$ by combining instances of $\fun$ and $\un$, so we may as well represent this canonical 2-cell by a single dot:
\begin{equation} \label{eq:general-ass}
\vcenter {\hbox{
\psfrag{A}[Bc][Bc]{\scalebox{1}{\scriptsize{$\cat F f_1$}}}
\psfrag{B}[Bc][Bc]{\scalebox{1}{\scriptsize{$\cdots$}}}
\psfrag{C}[Bc][Bc]{\scalebox{1}{\scriptsize{$\cat F f_n$}}}
\psfrag{A'}[Bc][Bc]{\scalebox{1}{\scriptsize{$\cat F g_1$}}}
\psfrag{C'}[Bc][Bc]{\scalebox{1}{\scriptsize{$\cat F g_m$}}}
\includegraphics[scale=.4]{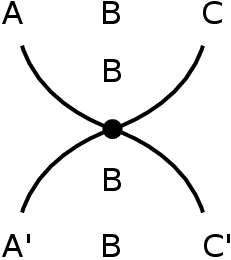}
}}
\end{equation}
\end{Exa}

\begin{Exa} \label{Exa:strings-for-natural-fun}
Consider again a pseudo-functor $\cat F = (\cat F, \fun, \un)\colon \cat B \to \cat B'$. The fact that $\fun$ is a natural transformation translates into the string equation
\[
\vcenter {\hbox{
\psfrag{F}[Bc][Bc]{\scalebox{1}{\scriptsize{$\fun\;\;$}}}
\psfrag{G}[Bc][Bc]{\scalebox{1}{\scriptsize{$\cat F(\psi \, \varphi)$}}}
\psfrag{A}[Bc][Bc]{\scalebox{1}{\scriptsize{$\cat F f$}}}
\psfrag{B}[Bc][Bc]{\scalebox{1}{\scriptsize{$\cat F g$}}}
\psfrag{C}[Bc][Bc]{\scalebox{1}{\scriptsize{$\;\;\;\;\;\;\;\cat F (g f)$}}}
\psfrag{D}[Bc][Bc]{\scalebox{1}{\scriptsize{$\cat F (g' f')$}}}
\includegraphics[scale=.4]{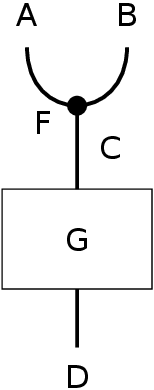}
}}
\quad=\quad
\vcenter {\hbox{
\psfrag{F}[Bc][Bc]{\scalebox{1}{\scriptsize{$\cat F \varphi$}}}
\psfrag{G}[Bc][Bc]{\scalebox{1}{\scriptsize{$\cat F \psi$}}}
\psfrag{H}[Bc][Bc]{\scalebox{1}{\scriptsize{$\fun\;\;$}}}
\psfrag{A}[Bc][Bc]{\scalebox{1}{\scriptsize{$\cat F f$}}}
\psfrag{B}[Bc][Bc]{\scalebox{1}{\scriptsize{$\cat F g$}}}
\psfrag{C}[Bc][Bc]{\scalebox{1}{\scriptsize{$\cat F f'$\;\;\;}}}
\psfrag{D}[Bc][Bc]{\scalebox{1}{\scriptsize{$\;\;\;\cat F g'$}}}
\psfrag{E}[Bc][Bc]{\scalebox{1}{\scriptsize{$\cat F (g' f')$}}}
\includegraphics[scale=.4]{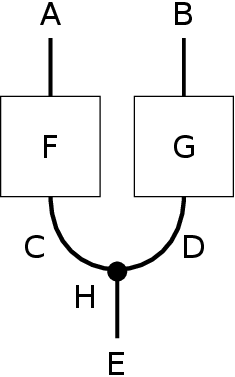}
}}
\]
for any pair of 2-cells $\varphi\colon f \Rightarrow f'$ and $\psi\colon g \Rightarrow g'$ in~$\cat B$.
\end{Exa}

\begin{Exa}[Transformations]
\label{Exa:trans}%
Consider an oplax transformation $t\colon \cat F\Rightarrow \cat G$ between pseudo-functors, as in \Cref{Ter:Hom_bicats}. Its components look as
\[
\vcenter { \hbox{
\psfrag{A}[Bc][Bc]{\scalebox{1}{\scriptsize{$t_X$}}}
\psfrag{B}[Bc][Bc]{\scalebox{1}{}}
\psfrag{X}[Bc][Bc]{\scalebox{1}{\scriptsize{$\cat F X$}}}
\psfrag{Y}[Bc][Bc]{\scalebox{1}{\scriptsize{$\cat G X$}}}
\includegraphics[scale=.4]{anc/line-identity-objects.eps}
}}
\quad\qquadtext{and}\qquad
\vcenter {\hbox{
\psfrag{F}[Bc][Bc]{\scalebox{1}{\scriptsize{$t(u)$}}}
\psfrag{A}[Bc][Bc]{\scalebox{1}{\scriptsize{$t_X$}}}
\psfrag{B}[Bc][Bc]{\scalebox{1}{\scriptsize{$\cat G u$}}}
\psfrag{C}[Bc][Bc]{\scalebox{1}{\scriptsize{$\cat F u$}}}
\psfrag{D}[Bc][Bc]{\scalebox{1}{\scriptsize{$t_Y$}}}
\includegraphics[scale=.4]{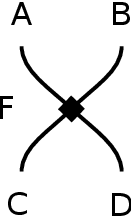}
}}
\]
for each 0-cell~$X$ and for each 1-cell $u\colon X\to Y$.
The functoriality axioms say that we may slide $\un$ and $\fun$ past~$t$
\[
\vcenter {\hbox{
\psfrag{F}[Bc][Bc]{\scalebox{1}{\scriptsize{$t(\Id)$}}}
\psfrag{G}[Bc][Bc]{\scalebox{1}{\scriptsize{$\un^{\!-1}$}}}
\psfrag{A}[Bc][Bc]{\scalebox{1}{\scriptsize{$t_X$}}}
\psfrag{B}[Bc][Bc]{\scalebox{1}{\scriptsize{$\cat G \Id$}}}
\psfrag{C}[Bc][Bc]{\scalebox{1}{\scriptsize{$\Id$}}}
\psfrag{D}[Bc][Bc]{\scalebox{1}{\scriptsize{$t_X$}}}
\psfrag{E}[Bc][Bc]{\scalebox{1}{\scriptsize{$\cat F \Id$}}}
\includegraphics[scale=.4]{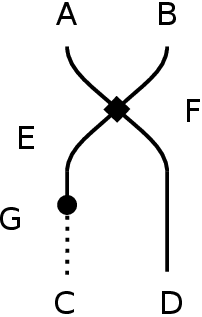}
}}
\quad = \quad
\vcenter {\hbox{
\psfrag{F}[Bc][Bc]{\scalebox{1}{\scriptsize{$t(\Id)$}}}
\psfrag{G}[Bc][Bc]{\scalebox{1}{\scriptsize{$\un^{\!-1}$}}}
\psfrag{A}[Bc][Bc]{\scalebox{1}{\scriptsize{$t_X$}}}
\psfrag{B}[Bc][Bc]{\scalebox{1}{\scriptsize{$\cat G \Id$}}}
\psfrag{C}[Bc][Bc]{\scalebox{1}{\scriptsize{$\Id$}}}
\psfrag{D}[Bc][Bc]{\scalebox{1}{\scriptsize{$t_X$}}}
\includegraphics[scale=.4]{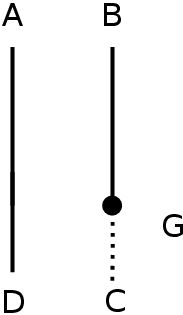}
}}
\quad \quad \textrm{and} \quad\quad
\vcenter {\hbox{
\psfrag{F}[Bc][Bc]{\scalebox{1}{\scriptsize{$t(u)$}}}
\psfrag{H}[Bc][Bc]{\scalebox{1}{\scriptsize{$t(v)$}}}
\psfrag{G}[Bc][Bc]{\scalebox{1}{\scriptsize{$\fun^{\!-1}$}}}
\psfrag{A}[Bc][Bc]{\scalebox{1}{\scriptsize{$t_X$}}}
\psfrag{B}[Bc][Bc]{\scalebox{1}{\scriptsize{$\cat G vu$}}}
\psfrag{B1}[Bc][Bc]{\scalebox{1}{\scriptsize{$\cat G u$}}}
\psfrag{B2}[Bc][Bc]{\scalebox{1}{\scriptsize{$\cat G v$}}}
\psfrag{C}[Bc][Bc]{\scalebox{1}{\scriptsize{$\cat Fu$}}}
\psfrag{D}[Bc][Bc]{\scalebox{1}{\scriptsize{$\cat Fv$}}}
\psfrag{E}[Bc][Bc]{\scalebox{1}{\scriptsize{$t_Z$}}}
\psfrag{I}[Bc][Bc]{\scalebox{1}{\scriptsize{$t_Y$}}}
\includegraphics[scale=.4]{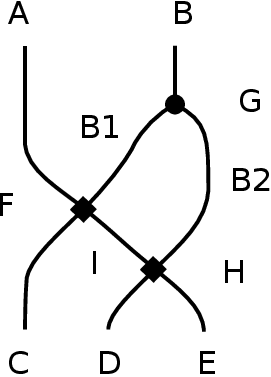}
}}
\quad\quad = \quad\quad
\vcenter {\hbox{
\psfrag{F}[Bc][Bc]{\scalebox{1}{\scriptsize{$t(vu)$}}}
\psfrag{G}[Bc][Bc]{\scalebox{1}{\scriptsize{$\fun^{\!-1}$}}}
\psfrag{A}[Bc][Bc]{\scalebox{1}{\scriptsize{$t_X$}}}
\psfrag{B}[Bc][Bc]{\scalebox{1}{\scriptsize{$\cat G vu$}}}
\psfrag{C}[Bc][Bc]{\scalebox{1}{\scriptsize{$\cat Fu$}}}
\psfrag{D}[Bc][Bc]{\scalebox{1}{\scriptsize{$\cat Fv$}}}
\psfrag{E}[Bc][Bc]{\scalebox{1}{\scriptsize{$t_Z$}}}
\psfrag{H}[Bc][Bc]{\scalebox{1}{\scriptsize{$\cat F vu$\;\;\;}}}
\includegraphics[scale=.4]{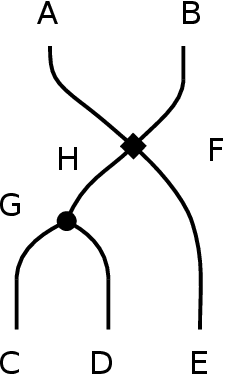}
}}
\]
for all composable $X \stackrel{u}{\to} Y \stackrel{v}{\to} Z$, and the naturality axiom says that we may do the same with the image of every 2-cell $\alpha\colon u'\Rightarrow u$:
\begin{equation*}
\vcenter {\hbox{
\psfrag{F}[Bc][Bc]{\scalebox{1}{\scriptsize{$t(u)$}}}
\psfrag{G}[Bc][Bc]{\scalebox{1}{\scriptsize{$\cat G \alpha$}}}
\psfrag{A}[Bc][Bc]{\scalebox{1}{\scriptsize{$t_X$}}}
\psfrag{B}[Bc][Bc]{\scalebox{1}{\scriptsize{$\cat G u'$}}}
\psfrag{C}[Bc][Bc]{\scalebox{1}{\scriptsize{$\cat Fu$}}}
\psfrag{D}[Bc][Bc]{\scalebox{1}{\scriptsize{$t_Y$}}}
\psfrag{E}[Bc][Bc]{\scalebox{1}{\scriptsize{$t_Z$}}}
\psfrag{H}[Bc][Bc]{\scalebox{1}{\scriptsize{$\cat G u$}}}
\includegraphics[scale=.4]{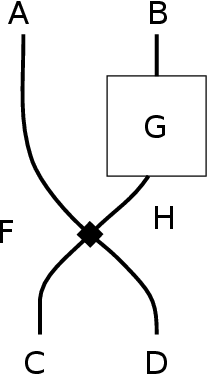}
}}
\quad\quad = \quad\quad
\vcenter {\hbox{
\psfrag{F}[Bc][Bc]{\scalebox{1}{\scriptsize{$t(u')$}}}
\psfrag{G}[Bc][Bc]{\scalebox{1}{\scriptsize{$\cat F \alpha$}}}
\psfrag{A}[Bc][Bc]{\scalebox{1}{\scriptsize{$t_X$}}}
\psfrag{B}[Bc][Bc]{\scalebox{1}{\scriptsize{$\cat G u'$}}}
\psfrag{C}[Bc][Bc]{\scalebox{1}{\scriptsize{$\cat Fu$}}}
\psfrag{D}[Bc][Bc]{\scalebox{1}{\scriptsize{$t_Y$}}}
\psfrag{E}[Bc][Bc]{\scalebox{1}{\scriptsize{$t_Z$}}}
\psfrag{H}[Bc][Bc]{\scalebox{1}{\scriptsize{$\cat F u'$}}}
\includegraphics[scale=.4]{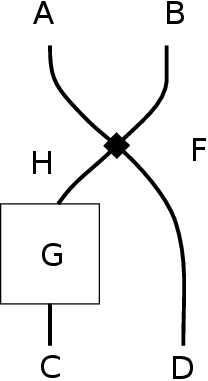}
}}
\end{equation*}
\end{Exa}

\begin{Lem}
\label{Lem:tr_inv_mate}%
Consider an oplax transformation $t\colon \cat{F}\to \cat{G}$ between pseudo-functors $\cat{F}, \cat{G}\colon \cat B\to \cat B'$, and let $\ell\dashv r$ be an adjunction in~$\cat B$. Then the component $t_r$ is invertible, with inverse the right mate of $t_\ell$ with respect to (the images under $\cat F$ and~$\cat G$ of) the given adjunction.
Equivalently, $t_\ell$ is the left mate of $t_r\inv$.
\end{Lem}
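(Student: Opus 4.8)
The plan is to exploit the string-diagram calculus developed in Appendix~\ref{sec:string_diagrams}, since the statement is precisely the kind of claim that becomes transparent in that language. Let me set up notation: write $\eta\colon \Id\Rightarrow r\ell$ and $\eps\colon \ell r\Rightarrow \Id$ for the unit and counit of the adjunction $\ell\dashv r$ in~$\cat B$. Applying the pseudo-functors $\cat F$ and $\cat G$ yields adjunctions $\cat F\ell\dashv \cat F r$ and $\cat G\ell \dashv \cat G r$ in~$\cat B'$ (with units and counits obtained from $\cat F\eta,\cat F\eps$ and $\cat G\eta,\cat G\eps$ by inserting the structural isomorphisms $\fun$ and $\un$, exactly as in \Cref{Rem:pseudo-func-of-adjoints}). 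These are the adjunctions with respect to which the mate of $t_\ell$ is to be formed. The right mate of $t_\ell\colon (\cat G\ell)\,t_X\Rightarrow t_Y\,(\cat F\ell)$ is then the 2-cell
\[
(t_\ell)_*\colon t_X\,(\cat F r)\Longrightarrow (\cat G r)\,t_Y
\]
built by whiskering $t_\ell$ with $\cat F r$ and $\cat G r$ and pasting in the appropriate unit and counit.

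**First** I would render all the relevant data as string diagrams. The component $t_r$ and the proposed inverse $(t_\ell)_*$ each become a single labelled crossing (as in \Cref{Exa:trans}), the latter carrying two adjunction cups/caps. The claim to prove is that the two vertical composites $(t_\ell)_*\circ t_r$ and $t_r\circ (t_\ell)_*$ both equal the relevant identities. **Next**, I would compute $(t_\ell)_*\circ t_r$: unpacking the definition of the mate, this is a string picture in which $t_r$ is stacked on top of the mate diagram for $t_\ell$. The key move is to slide the transformation component $t$ past the structural data. Using the functoriality axioms for~$t$ displayed in \Cref{Exa:trans} — which say precisely that $t$ may be slid past $\un$ and $\fun$, hence past any canonical coherence 2-cell of the form~\eqref{eq:general-ass} — I can transport the images $\cat F\eta$, $\cat F\eps$ (or $\cat G\eta$, $\cat G\eps$) through the $t$-strand. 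After this transport, the two instances of $t_\ell$ and its inverse partner collapse, and one is left with an image under $\cat F$ (respectively $\cat G$) of a triangle-identity zig-zag for the original adjunction $\ell\dashv r$ in~$\cat B$. Since $\cat F$ and $\cat G$ are pseudo-functors, they send the triangle identities $(\ell\eps)(\eta\ell)=\id$ and $(r\eps)(\eta r)=\id$ of~$\cat B$ to the corresponding identities in~$\cat B'$ (up to the coherent $\fun$/$\un$ isomorphisms, which is exactly what the string calculus absorbs). The zig-zag straightens, and what survives is $\id_{t_r}$.

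**The main obstacle** I expect is bookkeeping rather than conceptual: one must verify that the structural isomorphisms $\fun_{\cat F},\fun_{\cat G},\un_{\cat F},\un_{\cat G}$ inserted when forming the images of the adjunctions under $\cat F$ and $\cat G$ genuinely cancel, and that the naturality square for $t$ applied to the 2-cells $\eta$ and $\eps$ (as in the last display of \Cref{Exa:trans}) is invoked with the correct orientations. This is where the oplax-versus-strong distinction could in principle bite — but it does not, because the only components of $t$ we slide past the coherence cells are $t_\ell$, $t_r$, and the structural data, and the functoriality/naturality axioms for an \emph{oplax} transformation suffice for every move. Having shown $(t_\ell)_*\circ t_r=\id_{t_r}$, the reverse composite $t_r\circ (t_\ell)_*=\id$ follows by the symmetric computation (straightening the zig-zag from the other side). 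Thus $t_r$ is invertible with $t_r^{-1}=(t_\ell)_*$; the final equivalent reformulation, that $t_\ell$ is the left mate of $t_r^{-1}$, is then immediate from the bijective nature of the mate correspondence recalled in \Cref{sec:mates}. I would remark that this lemma is exactly the fact repeatedly used (via \eqref{eq:left_computation_of_t} and \eqref{eq:right_computation_of_t}) to show that transformations extend uniquely across the span constructions, so keeping the proof string-theoretic keeps it uniform with the rest of the development.
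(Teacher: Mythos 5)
Your proposal is correct and follows essentially the same route as the paper's own proof: the paper likewise unpacks the image adjunctions $\cat F\ell\dashv\cat Fr$ and $\cat G\ell\dashv\cat Gr$ (units and counits built from $\cat F\eta$, $\cat F\eps$, $\cat G\eta$, $\cat G\eps$ together with the coherence cells $\fun$ and $\un$), then verifies that one composite of $t_r$ with the mate $(t_\ell)_*$ is the identity by a string computation using exchange plus the functoriality and naturality axioms of the oplax transformation~$t$ (which, as you note, never require inverting any component of~$t$), leaving the other composite to symmetry and deducing the reformulation from the bijectivity of the mate correspondence. The only element you omit is the paper's pointer to the literature (Dawson--Par\'e--Pronk, via a $(-)^\co$-dualization), which the paper itself bypasses by giving exactly this self-contained string proof.
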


\begin{proof}
This is~\cite[Lemma~1.9]{DawsonParePronk04}.
(To be precise, our statement follows by applying \emph{loc.\ cit.\ }to the 2-dual adjunction $f:=r^\co\dashv \ell^\co=:u$ and the \emph{lax} transformation $t^\co\colon \cat{F}^{\co}, \cat{G}^{\co} \colon \cat B^{\co} \to \cat{C}^{\co}$. Beware that in \emph{loc.\,cit.\ }the use of `lax' and `oplax' is inverted with respect to ours). We provide here a transparent proof with strings.

The equivalence of the two conclusions is the mate correspondence and is immediately verified.
Writing $\eta$ and $\varepsilon$ for the unit and counit of the adjunction $\ell\dashv r$, recall that the unit and counit of the adjunction $\cat F \ell \dashv \cat Fr$ are given by
\[
\vcenter {\hbox{
\psfrag{Id}[Bc][Bc]{\scalebox{1}{\scriptsize{$\Id$}}}
\psfrag{X}[Bc][Bc]{\scalebox{1}{\scriptsize{$\un$}}}
\psfrag{Y}[Bc][Bc]{\scalebox{1}{\scriptsize{$\fun^{-1}$}}}
\psfrag{L}[Bc][Bc]{\scalebox{1}{\scriptsize{$\cat F\ell$}}}
\psfrag{R}[Bc][Bc]{\scalebox{1}{\scriptsize{$\cat Fr$}}}
\psfrag{F}[Bc][Bc]{\scalebox{1}{\scriptsize{$\cat F\eta$}}}
\includegraphics[scale=.4]{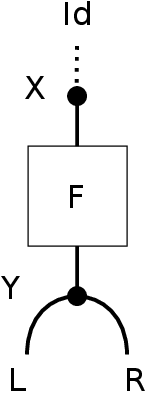}
}}
\quad\quad \textrm{and} \quad\quad
\vcenter {\hbox{
\psfrag{Id}[Bc][Bc]{\scalebox{1}{\scriptsize{$\Id$}}}
\psfrag{X}[Bc][Bc]{\scalebox{1}{\scriptsize{$\un^{-1}$\;\;\;}}}
\psfrag{Y}[Bc][Bc]{\scalebox{1}{\scriptsize{$\fun$}}}
\psfrag{L}[Bc][Bc]{\scalebox{1}{\scriptsize{$\cat F\ell$}}}
\psfrag{R}[Bc][Bc]{\scalebox{1}{\scriptsize{$\cat Fr$}}}
\psfrag{F}[Bc][Bc]{\scalebox{1}{\scriptsize{$\cat F\varepsilon$}}}
\includegraphics[scale=.4]{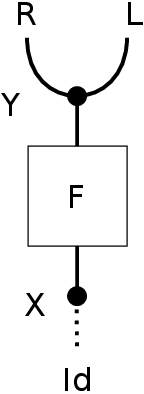}
}}
\]
respectively, and similarly for~$\cat G$. Now compute using exchange (\Cref{Exa:strings-for-exchange}) followed by the functoriality and naturality of~$t$ (\Cref{Exa:trans}):
\[
t(r)\circ t(\ell)_*
\quad = \quad
\vcenter {\hbox{
\psfrag{A}[Bc][Bc]{\scalebox{1}{\scriptsize{$\cat Fr$}}}
\psfrag{B}[Bc][Bc]{\scalebox{1}{\scriptsize{$t$}}}
\psfrag{C}[Bc][Bc]{\scalebox{1}{\scriptsize{$\cat Fr$}}}
\psfrag{D}[Bc][Bc]{\scalebox{1}{\scriptsize{$t$}}}
\psfrag{X1}[Bc][Bc]{\scalebox{1}{\scriptsize{$\eta$}}}
\psfrag{X2}[Bc][Bc]{\scalebox{1}{\scriptsize{$\varepsilon$}}}
\psfrag{F}[Bc][Bc]{\scalebox{1}{\scriptsize{$t(\ell)$}}}
\psfrag{G}[Bc][Bc]{\scalebox{1}{\scriptsize{$t(r)$}}}
\psfrag{T}[Bc][Bc]{\scalebox{1}{\scriptsize{$t$}}}
\psfrag{R}[Bc][Bc]{\scalebox{1}{\scriptsize{$\cat Gr$}}}
\psfrag{R1}[Bc][Bc]{\scalebox{1}{\scriptsize{$\cat G\ell$\;}}}
\psfrag{R2}[Bc][Bc]{\scalebox{1}{\scriptsize{$\cat F\ell$}}}
\includegraphics[scale=.4]{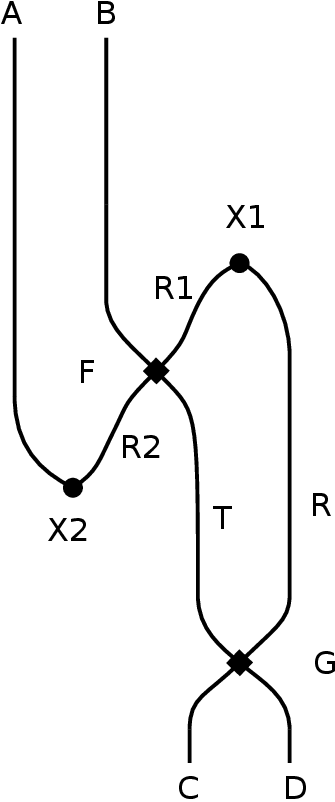}
}}
\quad = \quad
\vcenter {\hbox{
\psfrag{A}[Bc][Bc]{\scalebox{1}{\scriptsize{$\cat Fr$}}}
\psfrag{B}[Bc][Bc]{\scalebox{1}{\scriptsize{$t$}}}
\psfrag{C}[Bc][Bc]{\scalebox{1}{\scriptsize{$\cat Fr$}}}
\psfrag{D}[Bc][Bc]{\scalebox{1}{\scriptsize{$t$}}}
\psfrag{F1}[Bc][Bc]{\scalebox{1}{\scriptsize{$\cat G\eta$}}}
\psfrag{F2}[Bc][Bc]{\scalebox{1}{\scriptsize{$\cat F\varepsilon$}}}
\psfrag{F}[Bc][Bc]{\scalebox{1}{\scriptsize{$t(\ell)$}}}
\psfrag{G}[Bc][Bc]{\scalebox{1}{\scriptsize{$t(r)$}}}
\psfrag{X1}[Bc][Bc]{\scalebox{1}{\scriptsize{$\un$}}}
\psfrag{X2}[Bc][Bc]{\scalebox{1}{\scriptsize{$\un^{-1}$}}}
\psfrag{Y1}[Bc][Bc]{\scalebox{1}{\scriptsize{$\fun^{-1}$}}}
\psfrag{Y2}[Bc][Bc]{\scalebox{1}{\scriptsize{$\fun$}}}
\includegraphics[scale=.4]{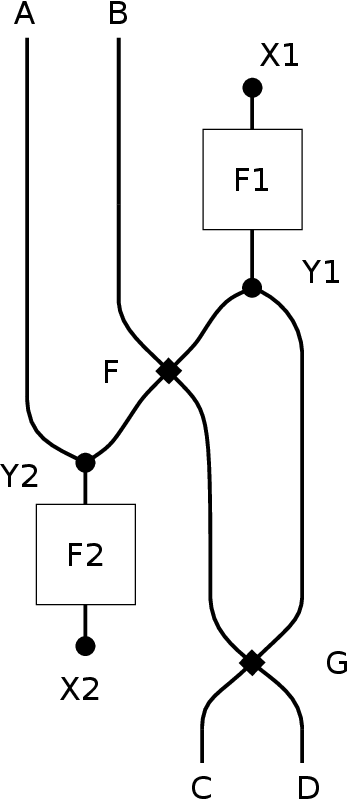}
}}
\quad = \quad
\vcenter {\hbox{
\psfrag{A}[Bc][Bc]{\scalebox{1}{\scriptsize{$\cat Fr$}}}
\psfrag{B}[Bc][Bc]{\scalebox{1}{\scriptsize{$t$}}}
\psfrag{C}[Bc][Bc]{\scalebox{1}{\scriptsize{$\cat Fr$}}}
\psfrag{D}[Bc][Bc]{\scalebox{1}{\scriptsize{$t$}}}
\psfrag{F1}[Bc][Bc]{\scalebox{1}{\scriptsize{$\cat G\eta$}}}
\psfrag{F2}[Bc][Bc]{\scalebox{1}{\scriptsize{$\cat F\varepsilon$}}}
\psfrag{F}[Bc][Bc]{\scalebox{1}{\scriptsize{$t(\ell)$}}}
\psfrag{G}[Bc][Bc]{\scalebox{1}{\scriptsize{$t(r)$}}}
\psfrag{X1}[Bc][Bc]{\scalebox{1}{\scriptsize{$\un$}}}
\psfrag{X2}[Bc][Bc]{\scalebox{1}{\scriptsize{$\un^{-1}$}}}
\psfrag{Y1}[Bc][Bc]{\scalebox{1}{\scriptsize{$\fun^{-1}$}}}
\psfrag{Y2}[Bc][Bc]{\scalebox{1}{\scriptsize{$\fun$}}}
\includegraphics[scale=.4]{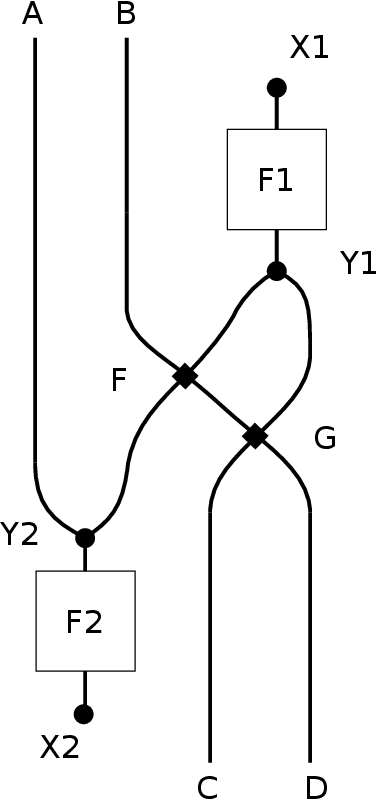}
}}
\]
\[
= \quad
\vcenter {\hbox{
\psfrag{A}[Bc][Bc]{\scalebox{1}{\scriptsize{$\cat Fr$}}}
\psfrag{B}[Bc][Bc]{\scalebox{1}{\scriptsize{$t$}}}
\psfrag{C}[Bc][Bc]{\scalebox{1}{\scriptsize{$\cat Fr$}}}
\psfrag{D}[Bc][Bc]{\scalebox{1}{\scriptsize{$t$}}}
\psfrag{F1}[Bc][Bc]{\scalebox{1}{\scriptsize{$\cat G\eta$}}}
\psfrag{F2}[Bc][Bc]{\scalebox{1}{\scriptsize{$\cat F\varepsilon$}}}
\psfrag{F}[Bc][Bc]{\scalebox{1}{\scriptsize{$t(r\ell)$\;\;}}}
\psfrag{G}[Bc][Bc]{\scalebox{1}{\scriptsize{$\cat Gr\ell$}}}
\psfrag{X1}[Bc][Bc]{\scalebox{1}{\scriptsize{$\un$}}}
\psfrag{X2}[Bc][Bc]{\scalebox{1}{\scriptsize{$\un^{-1}$}}}
\psfrag{Y1}[Bc][Bc]{\scalebox{1}{\scriptsize{$\;\fun^{-1}$}}}
\psfrag{Y2}[Bc][Bc]{\scalebox{1}{\scriptsize{$\fun$}}}
\includegraphics[scale=.4]{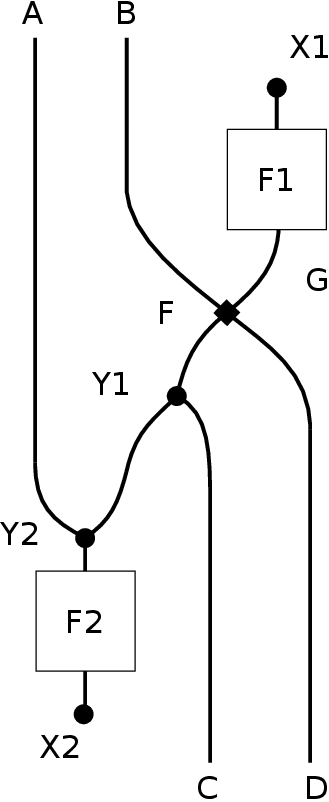}
}}
\quad = \quad
\vcenter {\hbox{
\psfrag{A}[Bc][Bc]{\scalebox{1}{\scriptsize{$\cat Fr$}}}
\psfrag{B}[Bc][Bc]{\scalebox{1}{\scriptsize{$t$}}}
\psfrag{C}[Bc][Bc]{\scalebox{1}{\scriptsize{$\cat Fr$}}}
\psfrag{D}[Bc][Bc]{\scalebox{1}{\scriptsize{$t$}}}
\psfrag{F1}[Bc][Bc]{\scalebox{1}{\scriptsize{$\cat F\eta$}}}
\psfrag{F2}[Bc][Bc]{\scalebox{1}{\scriptsize{$\cat F\varepsilon$}}}
\psfrag{F}[Bc][Bc]{\scalebox{1}{\scriptsize{$t(\Id)$\;\;}}}
\psfrag{G}[Bc][Bc]{\scalebox{1}{\scriptsize{$\;\;\;\cat G\Id$}}}
\psfrag{X1}[Bc][Bc]{\scalebox{1}{\scriptsize{$\un$}}}
\psfrag{X2}[Bc][Bc]{\scalebox{1}{\scriptsize{$\un^{-1}$}}}
\psfrag{Y1}[Bc][Bc]{\scalebox{1}{\scriptsize{$\;\fun^{-1}$}}}
\psfrag{Y2}[Bc][Bc]{\scalebox{1}{\scriptsize{$\fun$}}}
\includegraphics[scale=.4]{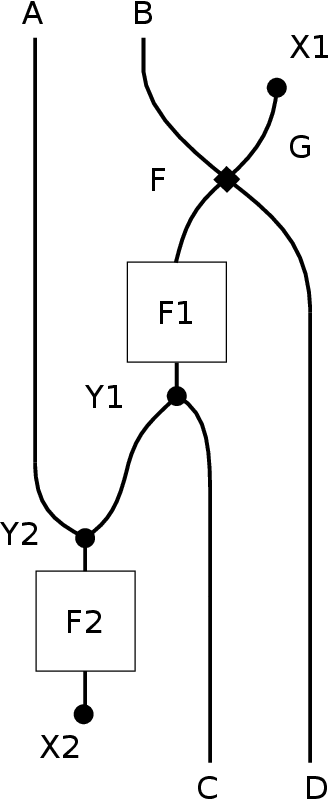}
}}
\quad = \quad
\vcenter {\hbox{
\psfrag{A}[Bc][Bc]{\scalebox{1}{\scriptsize{$\cat Fr$}}}
\psfrag{B}[Bc][Bc]{\scalebox{1}{\scriptsize{$t$}}}
\psfrag{C}[Bc][Bc]{\scalebox{1}{\scriptsize{$\cat Fr$}}}
\psfrag{D}[Bc][Bc]{\scalebox{1}{\scriptsize{$t$}}}
\psfrag{F1}[Bc][Bc]{\scalebox{1}{\scriptsize{$\cat F\eta$}}}
\psfrag{F2}[Bc][Bc]{\scalebox{1}{\scriptsize{$\cat F\varepsilon$}}}
\psfrag{X1}[Bc][Bc]{\scalebox{1}{\scriptsize{$\un$}}}
\psfrag{X2}[Bc][Bc]{\scalebox{1}{\scriptsize{$\un^{-1}$}}}
\psfrag{Y1}[Bc][Bc]{\scalebox{1}{\scriptsize{$\fun^{-1}$}}}
\psfrag{Y2}[Bc][Bc]{\scalebox{1}{\scriptsize{$\fun$}}}
\includegraphics[scale=.4]{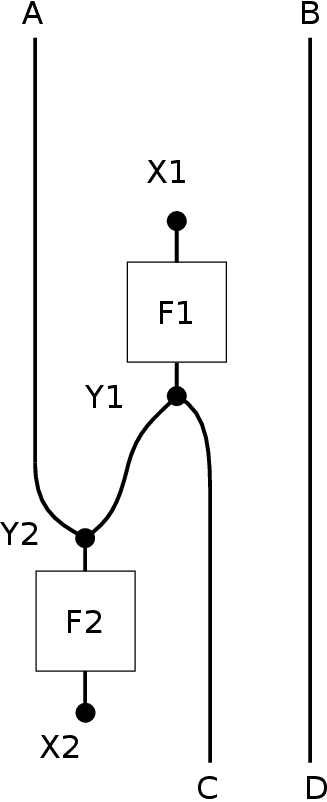}
}}
 = \quad
\id_{t \circ \cat F(r)}
\]
The verification of $(t_\ell)_*\circ t_r =\id$ is symmetrical and is left to the reader.
\end{proof}

\begin{Rem}
\label{Rem:tr_inv_mate}%
The statement of \Cref{Lem:tr_inv_mate} is asymmetric: $t$ is \emph{op}lax and the conclusion does not guarantee invertibility of~$t_\ell$. The dual statement says that if $t$ is a \emph{lax} transformation then $t_\ell$ is indeed invertible, with inverse $(t_r)_!$.
\end{Rem}

We are now able to provide a full proof of \Cref{Lem:comm-square-comp} and thereby repay a debt from that early section. Let us recall the statement:

\begin{Lem} \label{Lem:full-details-comp}
The following square
\begin{equation}
\label{eq:delta_i-M-N-appendix}%
\vcenter{\xymatrix@C=6em@L=1ex{
\cat N(p_1) t_H \ar@{=>}[r]^-{\Displ \delta_i^{\scriptscriptstyle \NN}\,t_H} \ar@{=>}[d]^-{\simeq}_-{\Displ t_{p_1}}
& \cat N(p_2) t_H \ar@{=>}[d]_-{\simeq}^-{\Displ t_{p_2}}
\\
t_{(i/i)} \cat M(p_1) \ar@{=>}[r]^-{\Displ t_{(i/i)}\delta_i^{\scriptscriptstyle \MM}}
& t_{(i/i)} \cat M( p_2 )
}}
\end{equation}
of natural transformations between functors $\MM(H)\to \NN(i/i)$ is commutative.
Recall that
\[
\vcenter{
\xymatrix@C=14pt@R=14pt{
& (i/i) \ar[dl]_-{p_1} \ar[dr]^-{p_2}
\\
H \ar[dr]_-{i} \ar@{}[rr]|-{\isocell{\lambda}}
&& H \ar[dl]^-{i}
\\
&G
}}
\]
is the self-iso-comma construction on a faithful functor $i\colon H\to G$ of finite groupoids; $t\colon \cat M \to \cat N$ is a pseudo-natural transformation between strict 2-functors $\MM,\NN\colon \GG^\op\to \ADD$ taking values in additive categories (for $\GG \subseteq \gpd$ a sub-2-category of groupoids); finally, the natural transformations $\delta_i^\MM\colon \cat M(p_1)\Rightarrow \cat M(p_2)$ and $\delta_i^\cat N\colon \cat N(p_1)\Rightarrow \cat N(p_2)$ are defined in \Cref{Prop:delta} (this is recalled below).
\end{Lem}

 \begin{proof}
 Let $C$ be either the essential full image $\Delta_i(H)$ of $\Delta_i$ in~$\cat (i/i)$, or its complement $C= (i/i)\smallsetminus \Delta_i(H)$, and write $j\colon C\hookrightarrow (i/i)$ for the inclusion functor. In order to show the commutativity of \eqref{eq:delta_i-M-N-appendix}, it suffices to show that it commutes after applying the functor $\cat N(j)$ to it for both choices of~$C$.
 Recall that \Cref{Prop:delta} characterizes $\delta_i^\MM$ and $\delta_i^\cat N$ by the property that if we apply $\cat M(j)\colon \MM(i/i)\to \MM(C)$ (respectively $\cat N(j)\colon \cat N(i/i)\to \cat N(C)$) to it, we obtain the identity or zero natural transformation, according as to whether $C=\Delta_i(H)$ or $C= (i/i)\smallsetminus \Delta_i(H)$.

Let us first consider $C:= \Delta_i(H)$. After applying $\cat N(j)$ to the bottom arrow $t_{(i/i)} \delta_i^\cat M$ in \eqref{eq:delta_i-M-N-appendix}, we can rewrite the result as the following string diagram:
 \[
 \vcenter {\hbox{
\psfrag{A}[Bc][Bc]{\scalebox{1}{\scriptsize{$\cat M p_1$}}}
\psfrag{B}[Bc][Bc]{\scalebox{1}{\scriptsize{$t_{(i/i)}$}}}
\psfrag{C}[Bc][Bc]{\scalebox{1}{\scriptsize{$\cat N j$}}}
\psfrag{E}[Bc][Bc]{\scalebox{1}{\scriptsize{$\cat M p_2$}}}
\psfrag{D}[Bc][Bc]{\scalebox{1}{\scriptsize{$\delta_i^\cat M$\;\;}}}
\psfrag{P}[Bc][Bc]{\scalebox{1}{\scriptsize{$t_C$}}}
\includegraphics[scale=.4]{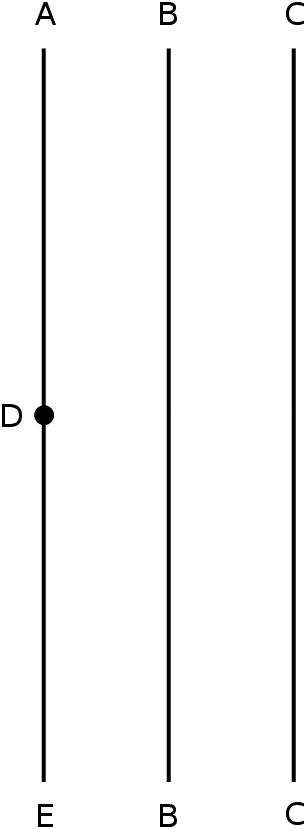}
}}
 \quad = \quad\quad
\vcenter {\hbox{
\psfrag{A}[Bc][Bc]{\scalebox{1}{\scriptsize{$\cat M p_1$}}}
\psfrag{B}[Bc][Bc]{\scalebox{1}{\scriptsize{$t_{(i/i)}$}}}
\psfrag{C}[Bc][Bc]{\scalebox{1}{\scriptsize{$\cat N j$}}}
\psfrag{D}[Bc][Bc]{\scalebox{1}{\scriptsize{$\delta_i^\cat M$\;\;}}}
\psfrag{E}[Bc][Bc]{\scalebox{1}{\scriptsize{$\cat M p_2$}}}
\psfrag{H}[Bc][Bc]{\scalebox{1}{\scriptsize{$t_j$}}}
\psfrag{L}[Bc][Bc]{\scalebox{1}{\scriptsize{$t_j^{-1}$}}}
\psfrag{O}[Bc][Bc]{\scalebox{1}{\scriptsize{$\cat Mj$\;\;}}}
\psfrag{P}[Bc][Bc]{\scalebox{1}{\scriptsize{\;\;$t_C$}}}
\includegraphics[scale=.4]{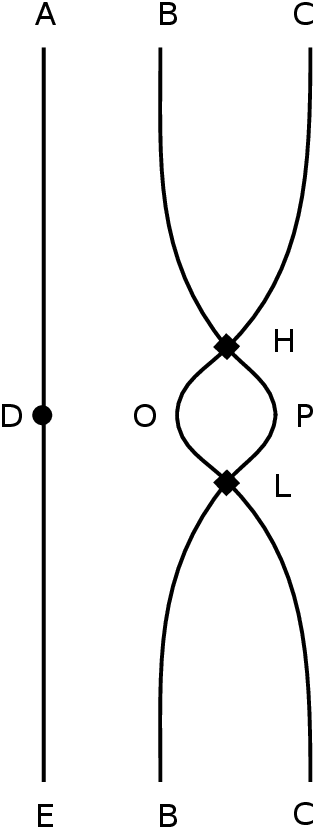}
}}
\quad = \quad
\vcenter {\hbox{
\psfrag{A}[Bc][Bc]{\scalebox{1}{\scriptsize{$\cat M p_1$}}}
\psfrag{B}[Bc][Bc]{\scalebox{1}{\scriptsize{$t_{(i/i)}$}}}
\psfrag{C}[Bc][Bc]{\scalebox{1}{\scriptsize{$\cat N j$}}}
\psfrag{D}[Bc][Bc]{\scalebox{1}{\scriptsize{$\delta_i^\cat M$}}}
\psfrag{E}[Bc][Bc]{\scalebox{1}{\scriptsize{$\cat M p_2$}}}
\psfrag{F}[Bc][Bc]{\scalebox{1}{\scriptsize{$t_{p_1}^{-1}$}}}
\psfrag{G}[Bc][Bc]{\scalebox{1}{\scriptsize{$t_{p_1}$}}}
\psfrag{H}[Bc][Bc]{\scalebox{1}{\scriptsize{$t_j$}}}
\psfrag{L}[Bc][Bc]{\scalebox{1}{\scriptsize{$t_j^{-1}$}}}
\psfrag{M}[Bc][Bc]{\scalebox{1}{\scriptsize{$t_{p_2}^{-1}$\;\;}}}
\psfrag{N}[Bc][Bc]{\scalebox{1}{\scriptsize{$t_{p_2}\;\;$}}}
\psfrag{O}[Bc][Bc]{\scalebox{1}{\scriptsize{$\cat Mj$\;\;}}}
\includegraphics[scale=.4]{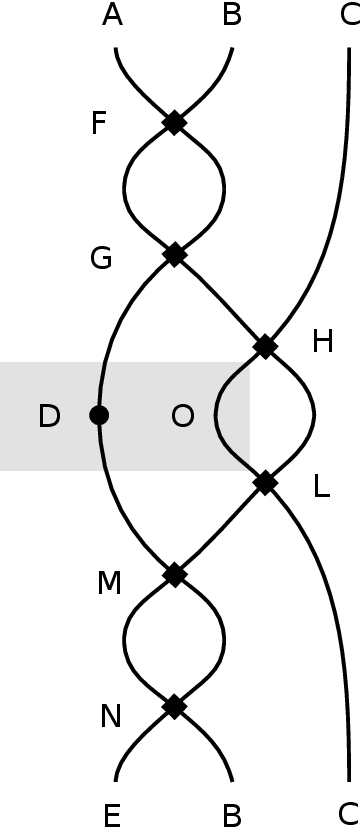}
}}
 \]
 In this case, the shaded box is equal to the identity natural transformation by \Cref{Prop:delta}\,(1). Using the functoriality of $t$ (applied in this case to $\fun=\id$; see \Cref{Exa:trans}) and \Cref{Prop:delta}\,(1) once more, we proceed as follows:
 \[
\overset{\textrm{\ref{Prop:delta}\,(1)}}{=} \quad
\vcenter {\hbox{
\psfrag{A}[Bc][Bc]{\scalebox{1}{\scriptsize{$\cat M p_1$}}}
\psfrag{B}[Bc][Bc]{\scalebox{1}{\scriptsize{$t_{(i/i)}$}}}
\psfrag{C}[Bc][Bc]{\scalebox{1}{\scriptsize{$\cat N j$}}}
\psfrag{E}[Bc][Bc]{\scalebox{1}{\scriptsize{$\cat M p_2$}}}
\psfrag{F}[Bc][Bc]{\scalebox{1}{\scriptsize{$t_{p_1}^{-1}$}}}
\psfrag{G}[Bc][Bc]{\scalebox{1}{\scriptsize{$t_{p_1}$}}}
\psfrag{H}[Bc][Bc]{\scalebox{1}{\scriptsize{$t_j$}}}
\psfrag{L}[Bc][Bc]{\scalebox{1}{\scriptsize{$t_j^{-1}$}}}
\psfrag{M}[Bc][Bc]{\scalebox{1}{\scriptsize{$t_{p_2}^{-1}$\;\;}}}
\psfrag{N}[Bc][Bc]{\scalebox{1}{\scriptsize{$t_{p_2}\;\;$}}}
\psfrag{O}[Bc][Bc]{\scalebox{1}{\scriptsize{$t_C$}}}
\psfrag{id}[Bc][Bc]{\scalebox{1}{\scriptsize{$\id$}}}
\psfrag{Id}[Bc][Bc]{\scalebox{1}{\scriptsize{$\Id$}}}
\includegraphics[scale=.4]{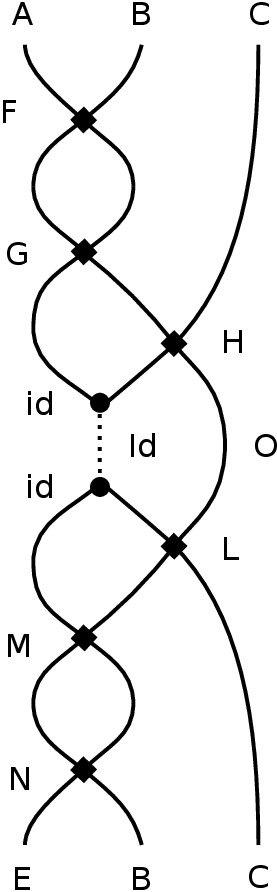}
}}
\quad \overset{2 \times \textrm{\ref{Exa:trans}} }{=} \quad
\vcenter {\hbox{
\psfrag{A}[Bc][Bc]{\scalebox{1}{\scriptsize{$\cat M p_1$}}}
\psfrag{B}[Bc][Bc]{\scalebox{1}{\scriptsize{$t_{(i/i)}$}}}
\psfrag{C}[Bc][Bc]{\scalebox{1}{\scriptsize{$\cat N j$}}}
\psfrag{E}[Bc][Bc]{\scalebox{1}{\scriptsize{$\cat M p_2$}}}
\psfrag{F}[Bc][Bc]{\scalebox{1}{\scriptsize{$t_{p_1}^{-1}$}}}
\psfrag{N}[Bc][Bc]{\scalebox{1}{\scriptsize{$t_{p_2}\;\;$}}}
\psfrag{O}[Bc][Bc]{\scalebox{1}{\scriptsize{$t_H$}}}
\psfrag{id}[Bc][Bc]{\scalebox{1}{\scriptsize{$\id$}}}
\psfrag{Id}[Bc][Bc]{\scalebox{1}{\scriptsize{$\Id$}}}
\includegraphics[scale=.4]{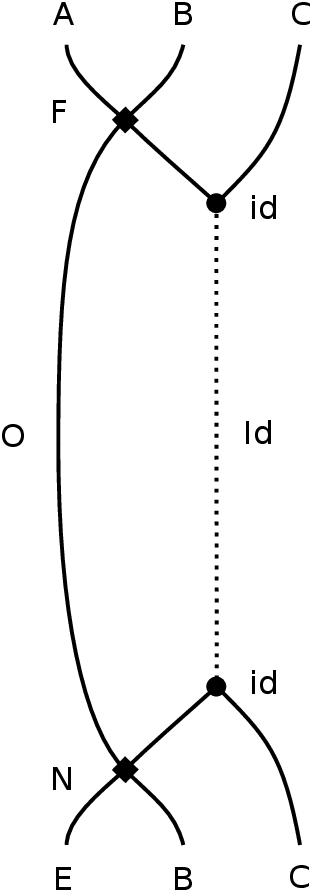}
}}
\quad \overset{\textrm{\ref{Prop:delta}\,(1)}}{=} \quad
\vcenter {\hbox{
\psfrag{A}[Bc][Bc]{\scalebox{1}{\scriptsize{$\cat M p_1$}}}
\psfrag{B}[Bc][Bc]{\scalebox{1}{\scriptsize{$t_{(i/i)}$}}}
\psfrag{C}[Bc][Bc]{\scalebox{1}{\scriptsize{$\cat N j$}}}
\psfrag{D}[Bc][Bc]{\scalebox{1}{\scriptsize{$\delta_i^\cat N$}}}
\psfrag{E}[Bc][Bc]{\scalebox{1}{\scriptsize{$\cat M p_2$}}}
\psfrag{F}[Bc][Bc]{\scalebox{1}{\scriptsize{$t_{p_1}^{-1}$}}}
\psfrag{N}[Bc][Bc]{\scalebox{1}{\scriptsize{$t_{p_2}\;\;$}}}
\psfrag{O}[Bc][Bc]{\scalebox{1}{\scriptsize{$t_H$}}}
\psfrag{U}[Bc][Bc]{\scalebox{1}{\scriptsize{\;\;$\cat Np_1$}}}
\psfrag{V}[Bc][Bc]{\scalebox{1}{\scriptsize{\;$\cat Np_2$}}}
\includegraphics[scale=.4]{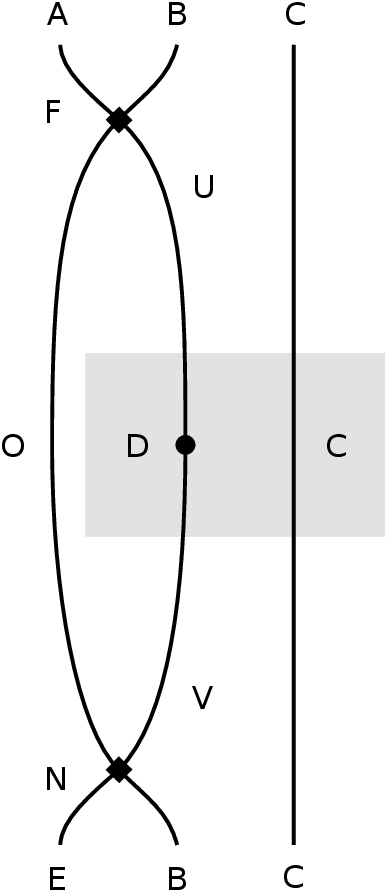}
}}
 \]
 The latter natural transformation is precisely the result of applying $\cat N(j)$ to the up-then-right-then-down composite $(t_{p_2})(\delta^\cat N_i t_H)(t_{p_1}^{-1})$ in \eqref{eq:delta_i-M-N-appendix}, proving commutativity.

With $C:= (i/i)\smallsetminus \Delta_i(H)$ the proof is even simpler, as we can replace the two shaded boxes with zero by \Cref{Prop:delta}\,(2), from which we see that both sides of the above computation are zero by the bilinearity of the vertical and horizontal composition of natural transformations in~$\ADD$.
 \end{proof}


\bigbreak
\section{How to read string diagrams in this book}
\label{sec:strings-here}%
\medskip

In the previous section we have recalled the general yoga of string diagrams and have provided a healthy dose of examples. Now let us say a few more words on how they are used in this book.

A prominent feature of string diagrams over cellular pasting diagrams is that it is actually possible to prove that their `evaluation' into 2-cells is invariant under deformation of diagrams, in a suitable sense (ambient isotopy, sequences of moves, etc.; see \cite[\S4]{Street96} \cite[\S2]{BarrettMeusburgerSchaumann13pp} \cite[\S2]{TuraevVirelizier17}). This incorporates the strictification theorem but is stronger, and it allows to give presentations of certain (structured) monoidal categories or bicategories in terms of equivalence classes of string diagrams modulo deformation. This use of strings is taken up in \Cref{sec:string-presentation}, where we present a strictification of the bicategory of Mackey 2-motives.
Nonetheless, in this work we mostly use strings more informally, starting in \Cref{ch:bicat-spans}, as a conveniently compact notation for complicated 2-cells which, in usual cellular notation, would take up too much space on the page and would be hard to read.

In practice, and again in order to save space, we slightly cheat and present our diagrams in a compressed form by suppressing some of the white space. The remaining of this section explains how to translate such diagrams, with the help of an example taken from the proof of \Cref{Lem:Gwelldef}.

\begin{Exa} \label{Rem:how-to-read}
Say we encounter a string diagram like the one on the left:
\[
\vcenter { \hbox{
\psfrag{A}[Bc][Bc]{\scalebox{1}{\scriptsize{$\cat F u$}}}
\psfrag{B}[Bc][Bc]{\scalebox{1}{\scriptsize{$(\cat F i)_!$}}}
\psfrag{C}[Bc][Bc]{\scalebox{1}{\scriptsize{\;\;\;$\cat F ja$}}}
\psfrag{C'}[Bc][Bc]{\scalebox{1}{\scriptsize{$(\cat F ja)_!$}}}
\psfrag{D}[Bc][Bc]{\scalebox{1}{\scriptsize{$\cat F j$}}}
\psfrag{E}[Bc][Bc]{\scalebox{1}{\scriptsize{$\cat F a$}}}
\psfrag{R}[Bc][Bc]{\scalebox{1}{\scriptsize{$\cat F va$\;\;}}}
\psfrag{S}[Bc][Bc]{\scalebox{1}{\scriptsize{$\cat F a$\;}}}
\psfrag{S'}[Bc][Bc]{\scalebox{1}{\scriptsize{$(\cat F a)_!$\;}}}
\psfrag{T}[Bc][Bc]{\scalebox{1}{\scriptsize{$\cat F v$}}}
\psfrag{U}[Bc][Bc]{\scalebox{1}{\scriptsize{\;\;\;\;$(\cat F j)_!$}}}
\psfrag{F}[Bc][Bc]{\scalebox{1}{\scriptsize{$\cat F \alpha_1$}}}
\psfrag{G}[Bc][Bc]{\scalebox{1}{\scriptsize{$\cat F \alpha_2$}}}
\includegraphics[scale=.4]{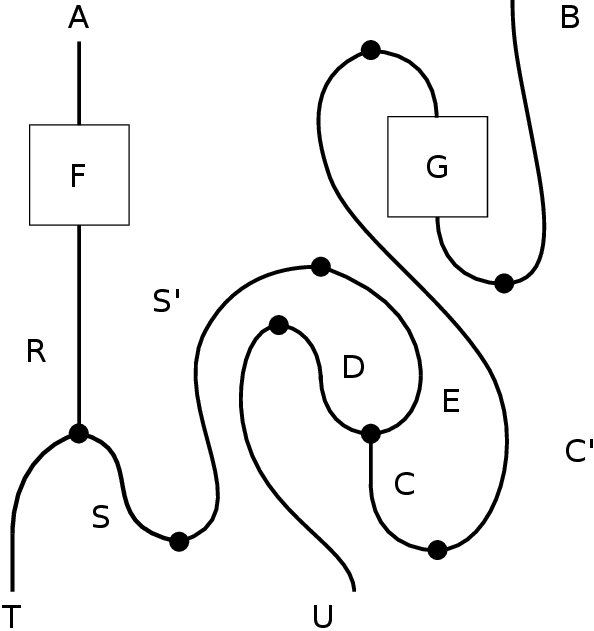}
}}
\quad\quad\quad \rightsquigarrow \quad\quad\quad
\vcenter { \hbox{
\psfrag{A}[Bc][Bc]{\scalebox{1}{\scriptsize{$\cat F u$}}}
\psfrag{B}[Bc][Bc]{\scalebox{1}{\scriptsize{$(\cat F i)_!$}}}
\psfrag{B'}[Bc][Bc]{\scalebox{1}{\scriptsize{$\cat Fi$}}}
\psfrag{C}[Bc][Bc]{\scalebox{1}{\scriptsize{\;\;\;$\cat F ja$}}}
\psfrag{C'}[Bc][Bc]{\scalebox{1}{\scriptsize{$(\cat F ja)_!$}}}
\psfrag{D}[Bc][Bc]{\scalebox{1}{\scriptsize{$\cat F j$}}}
\psfrag{E}[Bc][Bc]{\scalebox{1}{\scriptsize{$\cat F a$}}}
\psfrag{R}[Bc][Bc]{\scalebox{1}{\scriptsize{$\cat F va$\;\;}}}
\psfrag{S}[Bc][Bc]{\scalebox{1}{\scriptsize{$\cat F a$\;}}}
\psfrag{S'}[Bc][Bc]{\scalebox{1}{\scriptsize{$(\cat F a)_!$\;}}}
\psfrag{T}[Bc][Bc]{\scalebox{1}{\scriptsize{$\cat F v$}}}
\psfrag{U}[Bc][Bc]{\scalebox{1}{\scriptsize{\;\;\;\;$(\cat F j)_!$}}}
\psfrag{F}[Bc][Bc]{\scalebox{1}{\scriptsize{$\cat F \alpha_1$}}}
\psfrag{G}[Bc][Bc]{\scalebox{1}{\scriptsize{$\cat F \alpha_2$}}}
\psfrag{N'}[Bc][Bc]{\scalebox{1}{\scriptsize{$\eta$}}}
\psfrag{E'}[Bc][Bc]{\scalebox{1}{\scriptsize{$\varepsilon$}}}
\psfrag{F'}[Bc][Bc]{\scalebox{1}{\scriptsize{$\fun$}}}
\psfrag{F''}[Bc][Bc]{\scalebox{1}{\scriptsize{$\fun^{-1}$\;\;\;\;\;}}}
\includegraphics[scale=.4]{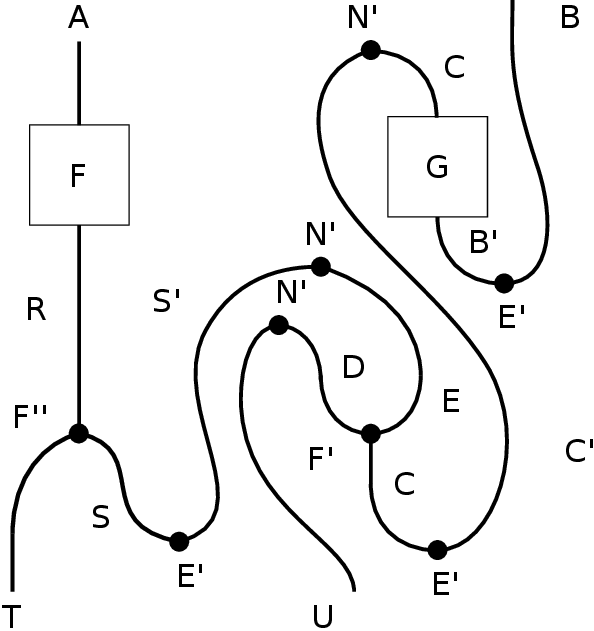}
}}
\]
It stands for a 2-cell $\alpha$ of some bicategory~$\cat B$. Concretely, how do we determine this 2-cell? For a start, context should allow us to add labels to all the dots and boxes (representing specified 2-cells) and all strings (1-cells) that make it up, as in the right-hand side above. (We omit the labels for the planar regions -- \ie objects -- but it could also be done.) Then we add some white space in the vertical and horizontal directions (by an `ambient isotopy') until we are able to cut up the diagram into horizontal stripes, each of which consists of a horizontal juxtaposition (\ie a horizontal composition in~$\cat B$) of recognizable dots/boxes/identities. While doing this, we take care not to change the up-down orientation of each piece of string. We obtain something like this:
 \[
 \vcenter { \hbox{
\psfrag{A}[Bc][Bc]{\scalebox{1}{\scriptsize{$\cat F u$}}}
\psfrag{B}[Bc][Bc]{\scalebox{1}{\scriptsize{$(\cat F i)_!$}}}
\psfrag{B'}[Bc][Bc]{\scalebox{1}{\scriptsize{$\cat Fi$}}}
\psfrag{C}[Bc][Bc]{\scalebox{1}{\scriptsize{\;\;\;$\cat F ja$}}}
\psfrag{C'}[Bc][Bc]{\scalebox{1}{\scriptsize{$(\cat F ja)_!$}}}
\psfrag{D}[Bc][Bc]{\scalebox{1}{\scriptsize{$\cat F j$}}}
\psfrag{E}[Bc][Bc]{\scalebox{1}{\scriptsize{$\cat F a$\;}}}
\psfrag{R}[Bc][Bc]{\scalebox{1}{\scriptsize{$\cat F va$\;\;}}}
\psfrag{S}[Bc][Bc]{\scalebox{1}{\scriptsize{$\cat F a$\;}}}
\psfrag{S'}[Bc][Bc]{\scalebox{1}{\scriptsize{$(\cat F a)_!$\;}}}
\psfrag{T}[Bc][Bc]{\scalebox{1}{\scriptsize{$\cat F v$}}}
\psfrag{U}[Bc][Bc]{\scalebox{1}{\scriptsize{\;\;\;\;$(\cat F j)_!$}}}
\psfrag{F}[Bc][Bc]{\scalebox{1}{\scriptsize{$\cat F \alpha_1$}}}
\psfrag{G}[Bc][Bc]{\scalebox{1}{\scriptsize{$\cat F \alpha_2$}}}
\psfrag{N'}[Bc][Bc]{\scalebox{1}{\scriptsize{$\eta$}}}
\psfrag{E'}[Bc][Bc]{\scalebox{1}{\scriptsize{$\varepsilon$}}}
\psfrag{F'}[Bc][Bc]{\scalebox{1}{\scriptsize{$\fun$}}}
\psfrag{F''}[Bc][Bc]{\scalebox{1}{\scriptsize{$\fun^{-1}$\;\;\;\;\;}}}
\psfrag{S1}[Bl][Bl]{\scalebox{1}{$\sigma_1=\id_{(\cat F i)_!} \circ \eta \circ \id_{\cat F u}$}}
\psfrag{S2}[Bl][Bl]{\scalebox{1}{$\sigma_2=\id_{(\cat Fi)_!} \circ \cat F \alpha_2 \circ \id_{(\cat Fja)_!} \circ \cat F\alpha_1 $}}
\psfrag{S3}[Bl][Bl]{\scalebox{1}{$\sigma_3=\varepsilon \circ \id_{(\cat Fja)_!} \circ \eta \circ \id_{\cat Fva} $}}
\psfrag{S4}[Bl][Bl]{\scalebox{1}{$\sigma_4=\id_{(\cat Fja)_!} \circ \id_{\cat Fa} \circ \eta \circ \id_{(\cat Fa)_!} \circ \fun^{-1} $}}
\psfrag{S5}[Bl][Bl]{\scalebox{1}{$\sigma_5= \id_{(\cat Fja)_!} \circ \fun \circ (\cat Fj)_! \circ \varepsilon \circ \id_{\cat Fv} $}}
\psfrag{S6}[Bl][Bl]{\scalebox{1}{$\sigma_6= \varepsilon \circ \id_{(\cat Fj)_!} \circ \id_{\cat Fv}$}}
\includegraphics[scale=.4]{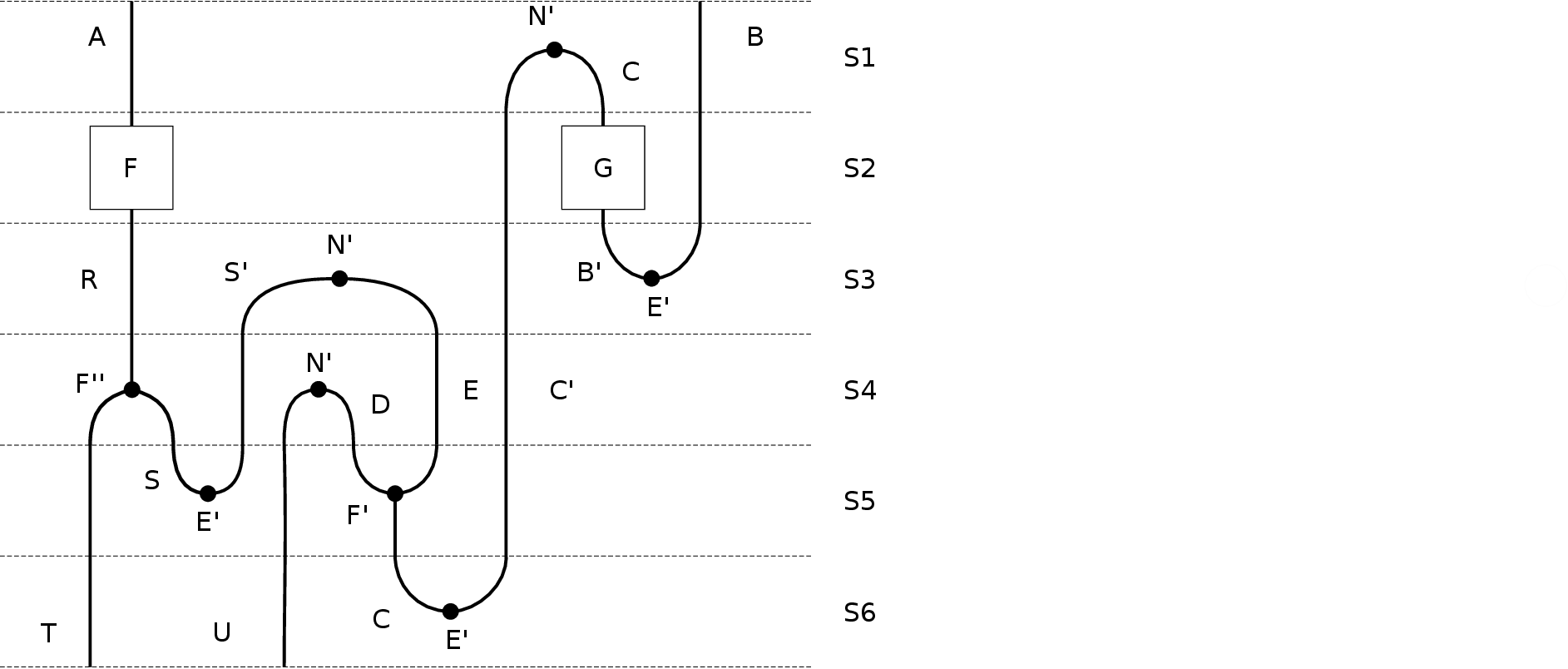}
}}
\]
If $\cat B$ happens to be a strict 2-category, we can already evaluate each stripe separately into a 2-cell~$\sigma_i$, by horizontally composing the 2-cells we encounter when scanning the stripe from left to right. The results are depicted on the right-hand side. Note that, by scanning each line from left to right, we can read off the source and target 1-cells of each~$\sigma_i$. They are composable by construction, and their (vertical) composite in $\cat B$ is the final result: $\alpha=\sigma_6\sigma_5\sigma_4\sigma_3\sigma_2\sigma_1$.

If $\cat B$ is not strict, then we must first insert some identity 1-cells so that every dot/box has an explicit domain and codomain in the diagram:
 \begin{equation} \label{eq:app-example}
 \vcenter { \hbox{
\psfrag{A}[Bc][Bc]{\scalebox{1}{\scriptsize{$\cat F u$}}}
\psfrag{B}[Bc][Bc]{\scalebox{1}{\scriptsize{$(\cat F i)_!$}}}
\psfrag{B'}[Bc][Bc]{\scalebox{1}{\scriptsize{$\cat Fi$}}}
\psfrag{C}[Bc][Bc]{\scalebox{1}{\scriptsize{\;\;\;$\cat F ja$}}}
\psfrag{C'}[Bc][Bc]{\scalebox{1}{\scriptsize{$(\cat F ja)_!$}}}
\psfrag{D}[Bc][Bc]{\scalebox{1}{\scriptsize{$\cat F j$}}}
\psfrag{E}[Bc][Bc]{\scalebox{1}{\scriptsize{$\cat F a$\;}}}
\psfrag{R}[Bc][Bc]{\scalebox{1}{\scriptsize{$\cat F va$\;\;}}}
\psfrag{S}[Bc][Bc]{\scalebox{1}{\scriptsize{$\cat F a$\;}}}
\psfrag{S'}[Bc][Bc]{\scalebox{1}{\scriptsize{$(\cat F a)_!$\;}}}
\psfrag{T}[Bc][Bc]{\scalebox{1}{\scriptsize{$\cat F v$}}}
\psfrag{U}[Bc][Bc]{\scalebox{1}{\scriptsize{\;\;\;\;$(\cat F j)_!$}}}
\psfrag{F}[Bc][Bc]{\scalebox{1}{\scriptsize{$\cat F \alpha_1$}}}
\psfrag{G}[Bc][Bc]{\scalebox{1}{\scriptsize{$\cat F \alpha_2$}}}
\psfrag{N'}[Bc][Bc]{\scalebox{1}{\scriptsize{$\eta$}}}
\psfrag{E'}[Bc][Bc]{\scalebox{1}{\scriptsize{$\varepsilon$}}}
\psfrag{F'}[Bc][Bc]{\scalebox{1}{\scriptsize{\;\;$\fun$}}}
\psfrag{F''}[Bc][Bc]{\scalebox{1}{\scriptsize{$\fun^{-1}$\;\;\;\;\;}}}
\includegraphics[scale=.4]{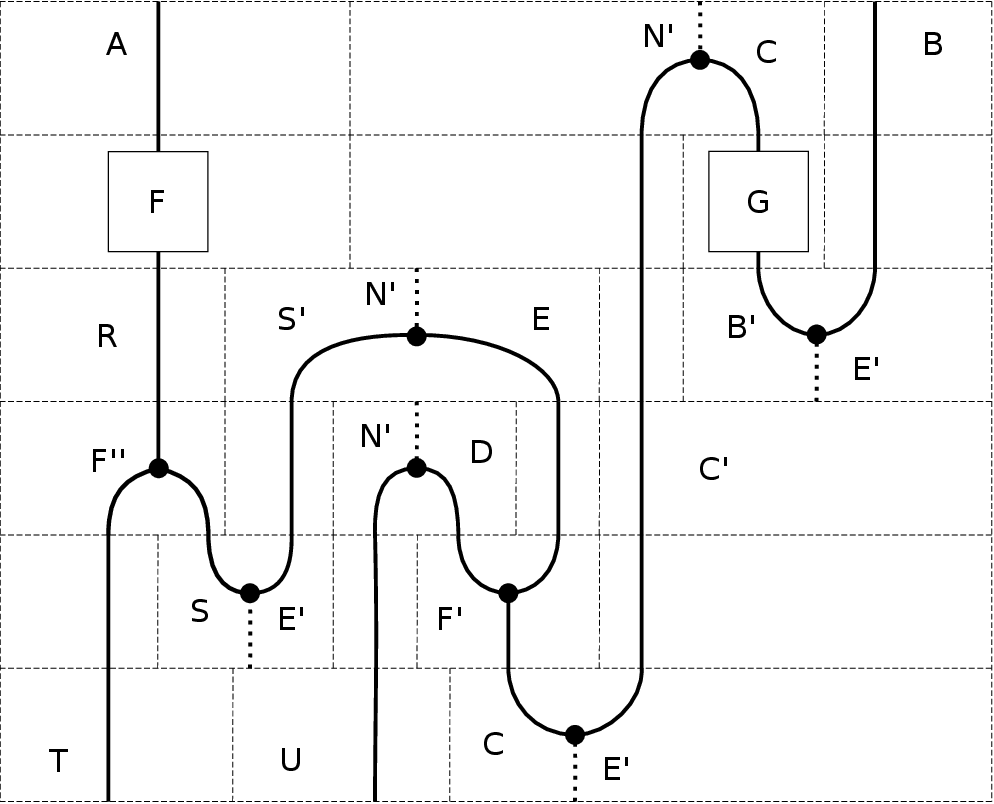}
}}
\end{equation}
Here we have also drawn, within each stripe, vertical boundaries separating the dot/boxes.
If the diagram happens to contain dot/boxes such as~\eqref{eq:general-ass} having more then two strings in either its domain or codomain, then they must all be broken down into more elementary components (this can create new stripes).

Then we choose, for each stripe, a bracketing of the dotted boxes within the stripe, and use it to horizontally compose the 2-cells corresponding to the dot/boxes in the stripe so as to form a 2-cell~$\sigma_i$.
For instance, for $i=3$ we may choose
$\sigma_i= (((\varepsilon \circ \id_{(\cat Fja)_!}) \circ \eta) \circ \id_{\cat Fva})$. Indeed, we may choose the canonical left bracketing for all~$i$.
Note that each boundary line between stripes inherits now two bracketed lists of 1-cells, corresponding to the codomain of the stripe above it and the domain of the one below it. These two bracketed lists may differ in general, by some missing or extra identity 1-cells and in the bracketing itself.
Fortunately we can connect the two associated 1-cells by inserting instances of the associativity and unitality isomorphisms of~$\cat B$, in order to vertically compose the $\sigma_i$'s. By the strictification theorem for bicategories, this is always possible and the resulting composite $\alpha$ does not depend on the choice of bracketing or of connecting isomorphisms. More generally, as mentioned above, the axioms of bicategories guarantee that this procedure (suitably formalized) evaluates to a unique $\alpha$ even under a wide class of deformations of the diagram.

If we want to translate the original string diagram into a cellular pasting diagram, this can be done in a straightforward way by forming the planar dual diagram of~\eqref{eq:app-example} with respect to the dotted boxes:
\[
\xymatrix{
\ar@{=}[d] \ar[rr]^-{\cat Fu} &&
 \ar@{=}[d] \ar@{=}[rrrr] &&&&
 \ar@{=}[d] \ar@{=}[rr] \ar@{}[drr]|{\Scell \,\eta\;\;} &&
 \ar@{=}[d] \ar[r]^-{(\cat Fi)_!} &
 \ar@{=}[d] \\
\ar@{=}[d] \ar[rr]^-{\cat Fu} \ar@{}[drr]|{\Scell\,\cat F\alpha_1} &&
 \ar@{=}[d] \ar@{=}[rrrr] &&&&
 \ar@{=}[d] \ar[r]_-{(\cat Fja)_!} &
 \ar@{=}[d] \ar[r]^-{\cat Fja} \ar@{}[dr]|{\Scell\,\cat F\alpha_2} &
 \ar@{=}[d] \ar[r]^-{(\cat Fi)_!} &
 \ar@{=}[d] \\
\ar@{=}[d] \ar[rr]_{\cat Fva} &&
 \ar@{=}[d] \ar@{=}[rrrr] \ar@{}[rrrrd]|{\Scell\,\eta} &&&&
 \ar@{=}[d] \ar[r]_-{(\cat Fja)_!} &
 \ar@{=}[d] \ar[r]_-{\cat Fi} \ar@{}[drr]|{\Scell\,\varepsilon} &
 \ar[r]_-{(\cat Fi)_!} &
 \ar@{=}[d] \\
\ar@{=}[d] \ar[rr]^-{\cat Fva} \ar@{}[drr]|{\Scell\,\fun^{-1}} &&
 \ar@{=}[d] \ar[r]^-{(\cat Fa)_!} &
 \ar@{=}[d] \ar@{=}[rr] \ar@{}[drr]|{\Scell\,\eta} &&
 \ar@{=}[d] \ar[r]^-{\cat Fa} &
 \ar@{=}[d] \ar[r]_-{(\cat Fja)_!} &
 \ar@{=}[d] \ar@{=}[rr] &&
 \ar@{=}[d] \\
\ar@{=}[d] \ar[r]_-{\cat Fv} &
 \ar@{=}[d] \ar[r]^-{\cat Fa} \ar@{}[drr]|{\Scell\,\varepsilon} &
 \ar[r]^-{(\cat Fa)_!} &
 \ar@{=}[d] \ar[r]_-{(\cat Fj)_!} &
 \ar@{=}[d] \ar[r]^-{\cat Fj} \ar@{}[drr]|{\Scell\, \fun} &
 \ar[r]^-{\cat Fa} &
 \ar@{=}[d] \ar[r]_-{(\cat Fja)_!} &
 \ar@{=}[rr] &&
 \ar@{=}[d] \\
\ar@{=}[d] \ar[r]_-{\cat Fv} &
 \ar@{=}[d] \ar@{=}[rr] &&
 \ar@{=}[d] \ar[r]_-{(\cat Fj)_!} &
 \ar@{=}[d] \ar[rr]_-{\cat Fja} \ar@{}[rrrrrd]|{\Scell\,\varepsilon} &&
 \ar[rrr]_-{(\cat Fja)_!} &&&
 \ar@{=}[d] \\
\ar[r]_-{\cat Fv} &
 \ar@{=}[rr] &&
 \ar[r]_-{(\cat Fj)_!} &
 \ar@{=}[rrrrr] &&&&&
}
\]
It is also straightforward to produce a string diagram by dualizing such a grid-like cellular pasting.

Note that in order to evaluate a cellular diagram as above in a non-strict bicategory we are equally obliged to insert coherence isomorphisms and make choices.
\end{Exa}

\bigbreak
\section{The ordinary category of spans}
\label{sec:ordinary-spans}%
\medskip
%
\begin{Def}
\label{Def:ordinary-spans}%
\index{category of spans}\index{span}%
\index{$(^$@$\widehat{(\ldots)}$ \, (ordinary) category of spans}
Let $\cat{C}$ be an essentially small category with pullbacks. We write
\[
\widehat{\cat{C}}
\]
for the \emph{ordinary category of spans}, where the objects are the same as those of $\cat{C}$, a morphism $X\to Y$ is an isomorphism class of spans $X \leftarrow S \rightarrow Y$ and composition is induced by taking pullbacks in the standard way of correspondences (or calculus of fractions). See \Cref{Rem:hat=truncate(Span)}. If a morphism $\varphi\colon X\to Y$ of $\widehat{\cat{C}}$ is represented by
\[
\xymatrix@C=12pt@R=12pt{
& S \ar[dl]_f \ar[dr]^g \\
X & & Y
}
\]
we also write $\varphi=[f,g]$. We use the notation
\[
(-)_\star \colon \cat{C} \longrightarrow \widehat{\cat{C}}
\quad \quad \textrm{and} \quad \quad
(-)^\star \colon \cat{C}^{\op} \longrightarrow \widehat{\cat{C}}
\]
for the two canonical functors given on morphisms by
\[
f \mapsto f_\star := [\id, f]
\quad \quad \textrm{and} \quad \quad
f \mapsto f^\star := [f, \id]\,.
\]
\end{Def}

We record a few basic properties of the construction $\widehat{\cat{C}}$.

\begin{Rem} \label{Rem:ordinary-spans}
\index{transposition!-- of ordinary spans}%
The operation $\varphi=[f,g]\mapsto \varphi^t:= [g,f]$ of switching the two legs of a span provides an involutive isomorphism of categories
\[
(-)^t\colon \widehat{\cat{C}}{\,}^{\op}\stackrel{\cong}{\longrightarrow} \widehat{\cat{C}}
\]
which is the identity on objects. We call it \emph{transposition}.
\end{Rem}

\begin{Prop}
\label{Prop:UP-ordinary-spans}%
The embeddings $(-)_\star$ and $(-)^\star$ are fully faithful and jointly satisfy the following property: To give a functor $F\colon \widehat{\cat{C}}\to \cat D$ to any other category~$\cat D$ is the same as to give a pair of functors $F_\star\colon \cat{C}\to \cat D$ and $F^\star\colon \cat{C}^{\op}\to \cat D$ such that
\begin{enumerate}[\rm(a)]
\item
\label{it:UP-ordinary-spans-a}%
$F_\star$ and $F^\star$ take the same values on objects, and
\item
\label{it:UP-ordinary-spans-b}%
for any pullback square in $\cat{C}$
\begin{align*}
\xymatrix@C=12pt@R=12pt{
& \ar[ld]_-{a} \ar[dr]^-{b} & \\
 \ar[dr]_c && \ar[dl]^d \\
& &
}
\end{align*}
the equation $F_\star(b)F^\star(a)= F^\star(d) F_\star(c)$ holds in~$\cat D$.
\end{enumerate}
The functor $F$ is the unique one such that $F\circ (-)_\star=F_\star$ and $F\circ (-)^\star=F^\star$:
\[
\xymatrix{
\cat{C} \ar[d]_-{(-)_{\star}} \ar@/^2ex/[rrd]^-{F_\star} && \\
{\widehat{\cat{C}}} \ar[rr]^-{F} && \cat D \\
\cat{C}^{\op} \ar[u]^-{(-)^{\star}} \ar@/_2ex/[urr]_-{F^\star} &&
}
\]
\end{Prop}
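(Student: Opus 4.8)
The plan is to treat the two claims separately, dispatching the embedding claim quickly and concentrating the effort on the universal property, which is the real content. That the canonical functors $(-)_\star$ and $(-)^\star$ are injective on objects is built into the definition, and faithfulness is immediate: if $f_\star = g_\star$ then the spans $[\id,f]$ and $[\id,g]$ are isomorphic via some $\phi\colon X \isoto X$ with $\id\circ\phi = \id$ and $g\phi = f$, forcing $\phi = \id$ and hence $f = g$; the argument for $(-)^\star$ is symmetric. The structural observation that will drive everything else is that every morphism of $\widehat{\cat{C}}$ factors through its apex, namely $[f,g] = g_\star \circ f^\star$ for any span $X \xleftarrow{f} S \xrightarrow{g} Y$, as one sees by composing $[f,\id_S]$ and $[\id_S, g]$ (the relevant pullback being trivial).

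For the universal property, the direction producing a compatible pair from a given $F\colon \widehat{\cat{C}} \to \cat{D}$ is a direct verification: setting $F_\star := F\circ(-)_\star$ and $F^\star := F\circ(-)^\star$, condition (a) holds because both restrictions agree with $F$ on objects, and condition (b) follows by applying $F$ to the identity of spans $b_\star\circ a^\star = [a,b] = d^\star\circ c_\star$, valid for any pullback square as in the statement (both sides compute the apex span $[a,b]$).

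The substantive direction is the construction of $F$ from a compatible pair $(F_\star, F^\star)$. On objects $F$ must agree with $F_\star = F^\star$ by (a), and on morphisms the factorization $[f,g] = g_\star \circ f^\star$ forces the definition $F[f,g] := F_\star(g)\circ F^\star(f)$; this simultaneously yields uniqueness, since any extension $F'$ of the pair satisfies $F'[f,g] = F'(g_\star)F'(f^\star) = F_\star(g)F^\star(f)$. Preservation of identities is immediate from $\id_X = [\id_X,\id_X]$, and compatibility with the embeddings follows by plugging in $f = \id$ or $g = \id$. It then remains to check that this prescription is well-defined on isomorphism classes of spans and respects composition.

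The hard part will be exactly these two remaining points, and the crux is that both reduce to condition (b) applied to a suitable pullback square. For well-definedness, given an isomorphism of spans $\phi\colon S \isoto S'$ with $f'\phi = f$ and $g'\phi = g$, I would rewrite $F_\star(g)F^\star(f) = F_\star(g')\,\bigl[F_\star(\phi)\circ F^\star(\phi)\bigr]\,F^\star(f')$ and observe that $(S,\phi,\phi)$ is a pullback of $(\id_{S'}, \id_{S'})$ precisely because $\phi$ is invertible, so that (b) gives $F_\star(\phi)\circ F^\star(\phi) = \id$, collapsing the expression to $F_\star(g')F^\star(f')$. For functoriality, given composable spans whose composite is formed from the pullback $P$ of $S \xrightarrow{g} Y \xleftarrow{h} T$ with projections $p,q$, the required equality $F_\star(kq)F^\star(fp) = \bigl(F_\star(k)F^\star(h)\bigr)\bigl(F_\star(g)F^\star(f)\bigr)$ reduces, after using the functoriality of $F_\star$ and $F^\star$, to the single identity $F_\star(q)F^\star(p) = F^\star(h)F_\star(g)$, which is exactly condition (b) for that defining pullback square. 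Thus the whole argument amounts to recognizing that the two potential failures of functoriality are governed by Beck--Chevalley squares, with (b) being precisely what neutralizes them.
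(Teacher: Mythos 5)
Your proof is correct and complete. Note, however, that the paper itself writes out no argument here: its proof of this proposition is a one-line deferral, observing that the statement is a special case of the bicategorical universal property of $\Span(\GG;\JJ)$ (\Cref{Thm:UP-Span}, \Cref{Thm:UP-PsFun-Span}) applied to a locally discrete $(2,1)$-category, ``which is also quite easy to verify directly.'' What you have produced is precisely that direct verification, carried out in full, and your key moves are the $1$-categorical shadows of the mechanisms in the bicategorical proof: the factorization $[f,g]=g_\star\circ f^\star$, which forces the definition $F[f,g]:=F_\star(g)\circ F^\star(f)$ and yields uniqueness, corresponds to the isomorphism $i_!u^*\cong i_!\circ u^*$ of \Cref{Rem:notation_assoc}; and your reduction of both well-definedness and functoriality to condition (b) applied to suitable pullback squares --- the square $(S,\phi,\phi)$ over $(\id_{S'},\id_{S'})$ for an isomorphism of spans, and the defining pullback of a composite for functoriality --- is exactly the role the Beck--Chevalley condition plays in \Cref{Cons:UP-Span} and \Cref{Lem:Gwelldef}. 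The elementary route buys self-containedness and makes visible the only two places where hypothesis (b) is actually used; the paper's route buys economy, since the hard work was already done (in much greater generality) at the bicategorical level, but at the cost of invoking a far heavier theorem than the statement requires.

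One small caveat, which is an imprecision in the statement rather than a gap in your argument: the claim that $(-)_\star$ and $(-)^\star$ are ``fully faithful'' can only be read as ``faithful and injective on objects,'' which is exactly what you prove. Genuine fullness fails in general, since a span whose left leg is not invertible is not isomorphic to any $f_\star$; you were right not to attempt it.
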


\begin{proof}
This is ultimately a (well-known) special case of the universal property in \Cref{Thm:UP-PsFun-Span}, which is also quite easy to verify directly.
\end{proof}

\begin{Lem}
\label{Lem:functoriality_ordinary-spans}%
The construction $\cat{C}\mapsto \widehat{\cat{C}}$ satisfies the following properties:
\begin{enumerate}[\rm(a)]
\item %
Every pullback-preserving functor $F\colon \cat{C}_1\to \cat{C}_2$ extends uniquely to a functor $\widehat{F}\colon \widehat{\cat{C}_1} \to \widehat{\cat{C}_2}$ commuting with the canonical embeddings $(-)_\star$ and $(-)^\star$, and given by the formula $\widehat{F}([f,g]) = [Ff, Fg]$.

\item
\label{it:funct-ordinary-spans-b}%
Each isomorphism $\alpha\colon F\overset{\sim}\Rightarrow G$ of pullback-preserving functors $F,G\colon \cat{C}_1\to \cat{C}_2$ yields an isomorphism $\hat\alpha\colon \widehat F\overset{\sim}\Rightarrow \widehat G$ defined by $\hat\alpha_X=(\alpha_X)_\star$.

\item %
Taking $\widehat{(-)}$ commutes with products of categories:
$\widehat{\cat{C}_1\!\times\! \cat{C}_2} = \widehat{\cat{C}_1} \times \widehat{\cat{C}_2}$.
\end{enumerate}
\end{Lem}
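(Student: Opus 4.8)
The plan is to deduce all three parts from the universal property of the span construction established in \Cref{Prop:UP-ordinary-spans}, together with the elementary factorization $[f,g]=g_\star\circ f^\star$ of an arbitrary morphism of $\widehat{\cat{C}}$ (here $f^\star$ and $g_\star$ are the images of the legs under the two embeddings of \Cref{Def:ordinary-spans}; their composite is computed by the trivial pullback of two identities). First I would handle part~(a) by invoking \Cref{Prop:UP-ordinary-spans} with target $\cat{D}:=\widehat{\cat{C}_2}$. I set $F_\star:=(-)_\star\circ F\colon \cat{C}_1\to \widehat{\cat{C}_2}$ and $F^\star:=(-)^\star\circ F^{\op}\colon \cat{C}_1^{\op}\to \widehat{\cat{C}_2}$. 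Condition~(a) of that proposition is immediate, as both send an object $X$ to $FX$. For condition~(b), given a pullback square in $\cat{C}_1$, its image under $F$ is again a pullback square since $F$ preserves pullbacks; the required identity $F_\star(b)F^\star(a)=F^\star(d)F_\star(c)$ is then exactly the compatibility between $(-)_\star$ and $(-)^\star$ on pullback squares in $\cat{C}_2$, which holds because composition in $\widehat{\cat{C}_2}$ is defined by pullback. The proposition thus yields a unique functor $\widehat{F}\colon \widehat{\cat{C}_1}\to\widehat{\cat{C}_2}$ with $\widehat{F}\circ(-)_\star=(-)_\star F$ and $\widehat{F}\circ(-)^\star=(-)^\star F^{\op}$.

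To obtain the explicit formula and the uniqueness clause, I would then apply $\widehat{F}$ to the factorization: $\widehat{F}([f,g])=\widehat{F}(g_\star)\circ\widehat{F}(f^\star)=(Fg)_\star\circ(Ff)^\star=[Ff,Fg]$. Uniqueness of an extension commuting with both embeddings follows from the uniqueness in \Cref{Prop:UP-ordinary-spans}, together with the fact that $(-)_\star$ and $(-)^\star$ are jointly surjective on morphisms up to composition (again by the factorization).

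For part~(b) I would define $\hat\alpha_X:=(\alpha_X)_\star$ and note it is invertible in $\widehat{\cat{C}_2}$ with inverse $(\alpha_X^{-1})_\star$, since $(-)_\star$ is a functor preserving isomorphisms. Naturality of $\hat\alpha$ with respect to an arbitrary span $[f,g]$ reduces, using $[f,g]=g_\star\circ f^\star$ and the fact that the class of morphisms on which a prospective natural transformation is natural is closed under composition, to checking it on the generators $g_\star$ and $f^\star$ separately. On $g_\star$ it is simply the image under the functor $(-)_\star$ of the naturality square $\alpha_Y\,Fg=Gg\,\alpha_S$ of $\alpha$. On $f^\star$ it amounts to the equality of spans $[Ff,\alpha_S]=[\alpha_X^{-1}Gf,\id]$, which I would verify by exhibiting the isomorphism $\alpha_S$ of middle objects, the two triangle conditions being precisely naturality of $\alpha$ once more.

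For part~(c), the identity-on-objects comparison functor $\widehat{\cat{C}_1\times\cat{C}_2}\to\widehat{\cat{C}_1}\times\widehat{\cat{C}_2}$ is an isomorphism because pullbacks, and hence composition of spans, are computed componentwise in a product category, and because a span, and an isomorphism of spans, in $\cat{C}_1\times\cat{C}_2$ is exactly the same data as a pair of such in $\cat{C}_1$ and in $\cat{C}_2$. I do not anticipate any genuine obstacle, since everything is a routine consequence of the universal property and the generator factorization; the only step needing minor care is the naturality check on the contravariant generators $f^\star$ in part~(b), where the composite span is formed by a pullback along the isomorphism $\alpha_X$. The point there is just that pulling back along an isomorphism is harmless, and the resulting span is identified with $[Ff,\alpha_S]$ via $\alpha_S$ itself.
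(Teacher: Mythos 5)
Your proposal is correct and takes essentially the same route as the paper: part~(a) is deduced from the universal property of \Cref{Prop:UP-ordinary-spans}, and parts~(b) and~(c) are handled by direct verification, which the paper dismisses as ``equally straightforward.'' Your careful check on the contravariant generators $f^\star$ — where the invertibility of $\alpha$ is genuinely needed to compute the pullback along $\alpha_X$ — is precisely the point the paper's proof singles out in its closing remark about (b) failing for non-invertible~$\alpha$.
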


\begin{proof}
Part (a) follows from the universal property of Proposition~\ref{Prop:UP-ordinary-spans}, and the rest is equally straightforward. Note that there is no reason for (b) to be true in general if $\alpha$ is not invertible, because $(\alpha_X)_\star$ in~$\what{C}_2$ may not be natural in~$X$.
\end{proof}

\begin{Lem}\label{Lem:isos-in-spans}
Let $\cat{C}$ be a category with pullbacks. The only isomorphisms $\varphi$ in~$\widehat{\cat{C}}$ are those coming from~$\cat{C}$, that is, those of the form~$\varphi=f_\star$ for an isomorphism $f$ in~$\cat{C}$. It follows that every isomorphism $\varphi$ in~$\widehat{\cat{C}}$ satisfies $\varphi\inv=\varphi^t$ in the notation of Remark~\ref{Rem:ordinary-spans}.
\end{Lem}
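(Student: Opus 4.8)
The plan is to prove both assertions by reducing everything to the claim that \emph{both legs of an isomorphism in $\what{C}$ are themselves isomorphisms in $\cat{C}$}. One direction of the main statement is immediate and I would dispatch it in a line: if $h\colon X\to Y$ is an isomorphism of $\cat{C}$, then $h_\star=[\id_X,h]$ is invertible in $\what{C}$, with inverse $(h^{-1})_\star=[\id_Y,h^{-1}]$ (equivalently $h^\star$, via the equivalence of spans given by $h^{-1}$). Conversely, once we know that an invertible $\varphi=[f,g]\colon X\to Y$ has both legs $f,g$ invertible in $\cat{C}$, the equivalence of spans along the isomorphism $f^{-1}$ shows $[f,g]=(gf^{-1})_\star$, which is the required form. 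So the whole lemma hinges on the legs.

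The key step, which I expect to be the main obstacle, is: \textbf{the left leg of an invertible span is an isomorphism}. Let $\psi=[f',g']$ be the inverse of $\varphi=[f,g]$, with middle objects $S$ and $T$. I would exploit \emph{both} composition identities. From $\psi\circ\varphi=\id_X$: composing spans means forming the pullback $P=S\times_Y T$ of $g\colon S\to Y$ against $f'\colon T\to Y$ with projections $p_S,p_T$, and the resulting span $[fp_S,g'p_T]$ being isomorphic to $[\id_X,\id_X]$ gives an isomorphism $\theta\colon X\xrightarrow{\sim}P$ with $fp_S\theta=\id_X$ and $g'p_T\theta=\id_X$. Writing $s:=p_S\theta$ and $t:=p_T\theta$, I get $fs=\id_X$, and since $\theta$ is invertible, $X$ equipped with $(s,t)$ is itself a pullback cone for $g,f'$ (note $gs=f't$). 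From the other identity $\varphi\circ\psi=\id_Y$ I extract, in the same way, a map $t'\colon Y\to T$ with $f't'=\id_Y$. The trick is now to feed the universal property of the pullback $X\cong P$ with the data $a=\id_S\colon S\to S$ and $b=t'g\colon S\to T$, which form a valid cone because $g\cdot\id_S=g=(f't')g=f'\cdot(t'g)$; this produces a (unique) map $c\colon S\to X$ with $sc=\id_S$. Thus $s$ has left inverse $f$ ($fs=\id_X$) and right inverse $c$ ($sc=\id_S$), so $c=f(sc)=(fs)c=f$ and hence $sf=sc=\id_S$ together with $fs=\id_X$: the leg $f$ is an isomorphism with inverse $s$.

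To obtain that the right leg $g$ is likewise an isomorphism, I would invoke the transposition isomorphism of categories $(-)^t\colon \what{C}^{\op}\to\what{C}$ from \Cref{Rem:ordinary-spans}: it carries isomorphisms to isomorphisms, so $\varphi^t=[g,f]$ is invertible, and applying the step just proved to $\varphi^t$ (whose left leg is $g$) shows $g$ is an isomorphism. This completes the first assertion. For the final statement, with $f$ and $g$ now both invertible, I would compute $\varphi^t\circ\varphi$ directly: it is the composite span through the pullback $S\times_Y S$ of $g$ against $g$, and since $g$ is an isomorphism (in particular a monomorphism) this pullback collapses to $S$ along the diagonal, giving $[f,f]$, which equals $\id_X$ via the equivalence of spans along $f^{-1}$. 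Symmetrically $\varphi\circ\varphi^t=\id_Y$, so $\varphi^t$ is a two-sided inverse of $\varphi$, whence $\varphi^{-1}=\varphi^t$ by uniqueness of inverses. The genuinely delicate part is the pullback juggling in the middle paragraph; the transposition symmetry and the concluding identity $\varphi^{-1}=\varphi^t$ are then routine.
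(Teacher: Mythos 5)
Your proof is correct, but it reaches the crucial conclusion by a different mechanism than the paper. Both arguments exploit the two identities $\psi\varphi=\id$ and $\varphi\psi=\id$, and both hinge on pullback properties, but the paper's middle step is a stability argument: each composition identity exhibits the legs of the two spans as split epimorphisms and the pullback projections as split monomorphisms; since split epimorphisms are stable under pullback, each projection is simultaneously a split mono and a split epi, hence invertible, and then all four legs---being one-sided inverses of isomorphisms---are invertible at once. You instead use the universal property of the pullback directly: you recognize $X$ itself (via $\theta$) as the pullback of $g$ along $f'$, extract a section $t'$ of $f'$ from the other identity, and feed the cone $(\id_S,\,t'g)$ into the universal property to manufacture a right inverse of $s$, whence $f$ is invertible; the right leg then follows by the transposition symmetry $(-)^t$ of Remark~\ref{Rem:ordinary-spans} rather than by a symmetric treatment of both spans. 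What the paper's route buys is symmetry and brevity: all maps are handled uniformly, and the final claim $\varphi\inv=\varphi^t$ drops out formally from $(f_\star)\inv=f^\star=(f_\star)^t$ once $\varphi=f_\star$ is known. What your route buys is self-containedness: it never invokes pullback-stability of split epimorphisms, only the defining universal property, at the cost of a somewhat longer chase and a separate direct computation for the last assertion ($\varphi^t\varphi\cong[f,f]=\id$, using that the pullback of a monomorphism along itself collapses along the diagonal).
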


\begin{proof}
Let $\varphi=[a,b]$ and $\psi=[c,d]$ be mutually inverse spans. Then $\psi\varphi=\id$ implies that $a$ and $d$ are split epis and the pullback maps $\tilde c$ and $\tilde b$ are split monos.
\[
\xymatrix@L=1pt@C=12pt@R=12pt{
&& \ar[ld]_-{\tilde c} \ar[dr]^-{\tilde b} && \ar[ld]_-{\tilde a} \ar[dr]^-{\tilde d} && \\
& \ar[dl]_a \ar[dr]^b && \ar[dl]_c \ar[dr]^d && \ar[dl]_a \ar[dr]^b & \\
\ar@{..>}[rr]_-{\varphi} && \ar@{..>}[rr]_-{\psi} && \ar@{..>}[rr]_-{\varphi} &&
}
\]
Similarly, $\varphi\psi=\id$ implies that $c$ and $b$ are split epis and the pullback maps $\tilde a$ and $\tilde d$ are split monos. Since pullbacks preserve split epis, we see that each of the maps $\tilde a$, $\tilde b$, $\tilde c$ and $\tilde d$ is both a split mono and a split epi, hence is invertible. Then the maps $a,b,c$ and~$d$, being their right or left inverses, are also isomorphisms. It follows that $\varphi=[a,b]=[\id,ba\inv]=f_\star$ for $f:=ba\inv$.

The second part follows immediately since $(f_\star)\inv=f^\star=(f_\star)^t$.
\end{proof}

We conclude with a few words relating the 1-category of spans~$\widehat{\cat{C}}$ to bicategories.

\begin{Rem}
\label{Rem:hat=truncate(Span)}%
We expand details about spans and their composition in \Cref{ch:2-motives}, even at the bicategorical level. In particular, the above $\widehat{\cat{C}}$ is nothing but the 1-truncation~$\pih$ (\Cref{Not:htpy_cat}) of the bicategory $\Span(\cat{C},\Mor\cat{C})$:
\[
\widehat{\cat{C}} = \pih{\Span(\cat{C}, \Mor \cat{C})}\,.
\]
Indeed, every ordinary category $\cat{C}$ can be viewed as a `locally discrete' $2$-category, \ie one whose only 2-cells are the identities. In such a (2,1)-category, iso-comma squares and weak pullbacks coincide with ordinary pullbacks. Moreover, every 1-cell (morphism of~$\cat{C}$) is trivially faithful. Thus if $\cat{C}$ is an essentially small category admitting arbitrary pullbacks, viewed as a (2,1)-category $\GG:=\cat{C}$ equipped with $\JJ = \Mor \cat{C}$ the class of all morphisms, we may construct the bicategory of spans $\Span(\cat{C}, \Mor \cat{C})=\Span(\GG;\JJ)$ as in \Cref{Def:Span-bicat}. This is the special case originally considered by B\'enabou~\cite{Benabou67} (note that B\'enabou credits the first bicategory of spans to Yoneda~\cite{Yoneda60}).
\end{Rem}

\bigbreak
\section{Additivity for categories}
\label{sec:additive-sedative}%
\medskip

The purpose of this section is to fix our terminology in relation to additivity, semi-additivity and idempotent-completion. All the results mentioned are standard.

\begin{Ter}
\label{Ter:additive_cat_etc}%
We consider the following additive notions for categories:
\begin{enumerate}[(1)]
\smallbreak
\item
\label{it:semi-add}
A pointed category~$\cat{A}$, with zero object $0$ (both initial and final), is called \emph{semi-additive} if for any two objects~$X,Y\in\cat{A}$, the coproduct $X\sqcup Y$ and the product $X\times Y$ exist and coincide, \ie are isomorphic via the canonical map $X\sqcup Y\to X\times Y$ with components $(\id_X, 0_{X,Y}, 0_{Y,X}, \id_Y)$, where $0_{U,V}\colon U\to 0\to V$ denotes the unique map factoring through~$0$. We denote by~$X\oplus Y$ this \emph{biproduct}. In that case every set $\cat{A}(X,Y)$ canonically becomes an abelian monoid with neutral element $0:=0_{X,Y}$ and with addition defined for all $f,g\in\cat{A}(X,Y)$ by
\begin{equation}
\label{eq:f+g}%
f+g=\big(X\otoo{\smat{1\\1}} X\times X=X\oplus X \otoo{f\oplus g\;\,} Y\oplus Y =Y\sqcup Y \otoo{(1\ 1)} Y\big)\,.
\end{equation}
Consequently $\cat A$ is a category \emph{enriched over abelian monoids}, in the strict sense of~\cite{Kelly05}, meaning that composition is bilinear: $h(f+g)=hf + hg$, $(f+g)h=fh + gh$, $0f=0$ and $g\,0=0$. Compare~\cite[VIII.2]{MacLane98}.
\smallbreak
\item
\label{it:direct-sum}%
\index{direct sum!-- in a category}%
In a category enriched over abelian monoids, a \emph{direct sum} of two objects~$X_1$ and~$X_2$ is a diagram
\begin{equation} \label{eq:direct-sum-def}
\xymatrix{
X_1 \ar@<2pt>[r]^-{i_1} \ar@{<-}@<-2pt>[r]_-{p_1} &
 X_1\oplus X_2 \ar@{<-}@<2pt>[r]^-{i_2} \ar@<-2pt>[r]_-{p_2} &
 X_2
}
\end{equation}
such that $p_\alpha i_\alpha=\id_{X_\alpha}$, $p_\alpha i_\beta = 0 $ (for $\{\alpha,\beta\}=\{1,2\}$) and $i_1p_1 + i_2p_2 = \id_{X_1\oplus X_2}$. Then $(X_1\oplus X_2,p_1,p_2)$ is the product $X_1\times X_2$ and $(X_1\oplus X_2,i_1,i_2)$ is the coproduct~$X_1\sqcup X_2$. Thus direct sums are the same as biproducts.
\smallbreak
\item
An \emph{additive} category~$\cat{A}$ is a semi-additive category such that every morphism $f\colon X\to Y$ has an opposite $-f\colon X\to Y$, \ie the abelian monoids $\cat{A}(X,Y)$ are abelian groups.
\end{enumerate}
\end{Ter}

\begin{Rem}
One can equivalently define a semi-additive (resp.\ additive) category as a category enriched over abelian monoids (resp.\ abelian groups) that admits all finite direct sums, including the empty sum which is~$0$.
\end{Rem}
\begin{Rem}
\label{Rem:matrix-notation}%
\index{matrix notation!-- for categories}%
In a semi-additive category~$\cat{A}$, if $X=X_1\oplus\ldots\oplus X_m$ and $Y=Y_1\oplus \ldots \oplus Y_n$ are two direct sums, we obtain an isomorphism of abelian monoids
\[
\cat{A}(X,Y) \cong \bigoplus_{\alpha,\beta} \cat{A}(X_\beta, Y_\alpha)
\qquad
f \mapsto (p_\alpha f i_\beta )_{\alpha,\beta}
\]
with inverse $(g_{\alpha,\beta})_{\alpha,\beta}\mapsto \sum_{\alpha,\beta} i_\alpha g_{\alpha,\beta} p_\beta$, which turns composition of maps into matrix multiplication. In particular,~\eqref{eq:f+g} reads $f+g = (1\;\;1)\big({}^f_0\;\; {}^0_g\big)\big({}^1_1\big)$.
\end{Rem}

There is only one relevant notion of additivity for functors:
\begin{Def}
\label{Def:additive_fun}%
\index{additive functor} \index{functor!additive --}%
Let $\cat{A}$ and $\cat{B}$ be semi-additive (\eg additive) categories. A functor $F\colon \cat{A}\to \cat{B}$ is called \emph{additive} if it preserves sums of morphisms, $F(f+g)=F(f)+F(g)$, and zero maps, $F(0_{X,Y})=0_{FX,FY}$. That is, $F$ is a functor of categories enriched over abelian monoids, in the sense of~\cite{Kelly05}.
\end{Def}

\begin{Rem} \label{Rem:autom-add}
An additive functor automatically preserves direct sums, up to a unique canonical isomorphism, by \Cref{Ter:additive_cat_etc}\,\eqref{it:direct-sum}. Conversely, if $F\colon \cat{A}\to \cat{B}$ sends the directs sums of $\cat{A}$ to direct sums of~$\cat{B}$ (\eg if $F$ is an equivalence) then $F$ must be additive because of~\eqref{eq:f+g}.
\end{Rem}

\begin{Not}
\label{Not:ADD}%
\index{$add$@$\ADD$ \, 2-category of additive categories} \index{ADD@$\ADD$}%
\index{$add$@$\Add$ \, 2-category of small additive categories} \index{Add@$\Add$}%
\index{$sad$@$\SAD$ \, 2-category of semi-additive categories} \index{SAD@$\SAD$}%
\index{$sad$@$\Sad$ \, 2-category of small semi-additive categories} \index{Sad@$\Sad$}%
\index{$fun+$@$\Funplus$ \, category of additive functors} \index{Funplus@$\Funplus$}%
Together with all natural transformations between them, the additive functors $\cat{A}\to \cat{B}$ form a category
\[
\Funplus(\cat{A}, \cat{B}) \,.
\]
Let $\Sad$ be the 2-category of all small categories which are semi-additive and whose Hom categories are the above $\Funplus(\cat{A},\cat{B})$. Let $\Add$ be the full sub-2-category of~$\Sad$ consisting of additive categories. Compositions are the obvious restrictions of those of~$\Cat$ in both cases.

It is also useful to allow all large semi-additive and all large additive categories, which gives rise to the (`very large') 2-categories $\SAD$ and $\ADD$, respectively, with all additive functors as 1-cells and all their natural transformations as 2-cells.
\end{Not}

\begin{Def}
\label{Def:idempotent-complete}%
\index{idempotent-complete additive category} \index{category!idempotent-complete additive --}%
One says that an additive category~$\cat{A}$ is \emph{idempotent-complete} if every idempotent endomorphism $e=e^2\colon X\to X$ in~$\cat{A}$ splits, \ie if it yields a decomposition $X\simeq X_1\oplus X_2$ under which $e$ becomes the projection~$\smat{1&0\\0&0}$ on~$X_1$. This decomposition yields $X=\img(e)\oplus \img(1-e)$, that is, both $e$ and $1-e$ have an image even if~$\cat{A}$ is not abelian. In particular, the direct summand~$\img(e)$ is unique up to unique isomorphism, and is functorial in~$e$.
We denote by~$\ICAdd$ the full sub-2-category of~$\Add$ consisting of idempotent-complete additive categories, and similarly for $\ICADD\subset\ADD$.
\end{Def}

\begin{Rem}
\label{Rem:completions}%
The fully faithful 2-functors $\ICAdd\hook \Add \hook \Sad$ are reflexive inclusions in the 2-categorical sense:
\[
\xymatrix@R=1.5em{
\Sad \ar@/_2em/@{..>}[d]_-{(-)_+}
\\
\Add \vcorrect{1} \ar@{_(->}[u] \ar@/_2em/@{..>}[d]_-{(-)^\natural}
\\
\ICAdd\!\!\vcorrect{1} \ar@{_(->}[u]
}
\]
In other words, for every semi-additive category~$\cat{A}$, there exists an additive functor $\cat{A}\to \cat{A}_+$, with $\cat{A}_+$ additive, that induces by pre-composition an equivalence
\begin{equation}
\label{eq:envelope_equiv_+}%
\Funplus(\cat{A}_+,\cat{B})\overset{\sim}{\too} \Funplus(\cat{A},\cat{B})
\end{equation}
for every additive category~$\cat{B}$. This construction $\cat{A}\mapsto \cat{A}_+$ uniquely extends to a 2-functor~$(-)_+\colon\Sad\to \Add$. Similarly, for every additive category~$\cat{C}$, there exists an additive functor $\cat{C}\to \cat{C}^\natural$, with $\cat{C}^\natural$ idempotent-complete, that induces by pre-composition an equivalence
\begin{equation}
\label{eq:envelope_equiv_ic}%
\Funplus(\cat{C}^\natural,\cat{D})\overset{\sim}{\too} \Funplus(\cat{C},\cat{D})
\end{equation}
for every idempotent-complete additive category~$\cat{D}$. This construction $\cat{C}\mapsto \cat{C}^\natural$ uniquely extends to a 2-functor~$(-)^\natural\colon\Add\to \ICAdd$.

\smallbreak

We recall the details for the reader's convenience:
\begin{enumerate}[(1)]
\item
\label{it:_+}%
\index{$(+$@$(\ldots)_+$ \, group-completion}%
\index{group-completion $(\ldots)_+$!-- for categories}%
\emph{Group-completion}~$\cat{A}_+$\,: This is a purely `enriched-category' construction. Recall that every abelian monoid~$M$ maps to the associated (Grothendieck) group $M_+$ obtained by formally adding opposites $-m$ for every $m\in M$ and declaring $m-n=m'-n'$ in~$M_+$ if $m+n'+\ell=m'+n+\ell$ in~$M$ for some~$\ell\in M$. This functor $M\mapsto M_+$ provides a left adjoint to the inclusion of abelian groups into abelian monoids, which is moreover monoidal (\ie $(M\times M')_+\cong M_+\times M'_+$). We can then group-complete any category~$\cat{A}$ enriched over abelian monoids by declaring $\cat{A}_+(X,Y):=\big(\cat{A}(X,Y)\big)_+$. By the universal property, there is an additive functor $\cat{A}\to \cat{A}_+$ inducing the required equivalence~\eqref{eq:envelope_equiv_+}, which in this case is in fact an \emph{isomorphism} of categories.
\smallbreak
\item
\label{it:ic}%
\index{$(n$@$(\ldots)^\natural$ \, idempotent-completion}%
\index{idempotent-completion $(\ldots)^\natural$!-- for categories}%
\emph{Idempotent-completion}~$\cat{C}^\natural$\,: Let~$\cat{C}$ be an additive category. Its \emph{idempotent-completion} (\aka \emph{Karoubi envelope})~$\cat{C}^\natural$ has objects given by pairs~$(X,e)$, where $X$ is an object of~$\cat{C}$ and $e=e^2\colon X\to X$ is an idempotent, and morphisms $f\colon (X,e)\to (X',e')$ given by morphisms $f\colon X\to X'$ in~$\cat{C}$ such that $f=e'fe$. The embedding $\cat{C}\to \cat{C}^\natural$ maps an object~$X$ to~$(X,\id)$ and a morphism~$f$ to~$f$. For every idempotent $e=e^2\colon X\to X$, we have $(X,e)\oplus (X,1-e)\cong(X,\id)$ in~$\cat{C}^\natural$. To prove the equivalence~\eqref{eq:envelope_equiv_ic}, note that every additive $F\colon \cat{C}\to \cat{D}$ extends uniquely to~$\cat{C}^\natural$ by mapping~$(X,e)$ to the summand $\img(F(e))$ of~$F(X)$.
\end{enumerate}
\end{Rem}

\begin{Exa}[Span categories] \label{Exa:ordinary_span_is_sad}
Let $\cat C$ be a category with pull-backs and consider the category $\widehat{\cat C}$ of spans in $\cat C$ (see \Cref{Def:ordinary-spans}). Assume moreover that $\cat C$ is \emph{extensive}, \ie it admits all finite coproducts and the functor
\[
\cat C_{/X} \times \cat C_{/Y} \stackrel{\sim}{\too} \cat C_{/X\sqcup Y}
\quad \quad
(f\colon S \to X, g\colon T \to Y) \mapsto (f\sqcup g\colon S\sqcup T \to X\sqcup Y)
\]
comparing comma categories is an equivalence for all objects $X,Y$. Extensive categories include finite sets, finite $G$-sets for a group~$G$, the (1-)category of finite groupoids, or indeed any (elementary) topos. Then its category of spans~$\widehat{\cat C}$ is semi-additive, as explained in \cite[\S3]{PanchadcharamStreet07}. Concretely, the empty coproduct (initial object) $\varnothing$ of $\cat C$ is a zero object in $\widehat{\cat C}$ and a coproduct $i_1:X_1\to X_1\sqcup X_2 \gets X_2:i_2$ provides the following direct sum:
\[
\xymatrix{
 X_1 \ar@<2pt>[r]^-{(i_1)_\star} \ar@{<-}@<-2pt>[r]_-{(i_1)^\star}
& X_1\sqcup X_2 \ar@{<-}@<2pt>[r]^-{(i_2)_\star} \ar@<-2pt>[r]_-{(i_2)^\star}
& X_2\,.
}
\]
The resulting addition~\eqref{eq:f+g} of two spans $[X\gets S_1 \to Y]$ and $[X\gets S_2 \to Y]$ is simply given by $[X \gets S_1 \sqcup S_2 \to Y]$, and the zero map $0_{X,Y}$ is $[X \gets \varnothing \to Y]$. If $F\colon \cat C\to \cat C'$ is a pullback- and coproduct-preserving functor, the induced functor $\widehat{F}\colon \widehat{\cat C}\to \widehat{\cat C'}$ (\Cref{Lem:functoriality_ordinary-spans}) is clearly additive. It is also common to consider the \emph{additive} category of spans $\widehat{\cat C}_+$, obtained by group-completion as in \Cref{Rem:completions}\,\eqref{it:_+}. Note that the canonical functor $\widehat{\cat C} \to \widehat{\cat C}_+$ is faithful, at least if $\cat C$ has the property that each object decomposes into a (up to isomorphism) unique finite coproduct of $\amalg$-indecomposable objects (as happens with finite $G$-sets, groupoids etc.). Indeed, the latter property is inherited by the comma categories $\cat C/U$ and implies that each Hom abelian monoid $\widehat{\cat C}(X,Y)=\Obj(\cat C/(X\times Y))/_{\cong}$ is free and therefore cancellable, so that the canonical homomorphism $\widehat{\cat C}(X,Y)\to \widehat{\cat C}(X,Y)_+$ is injective.
\end{Exa}

\begin{Exa}
The most common example of idempotent-completion~$\cat{C}^\natural$ might be the category of projective $R$-modules $R\textsf{-}\mathrm{Proj}\cong(R\textsf{-}\mathrm{Free})^\natural$ which is the idempotent-completion of the additive category of free $R$-modules, for every ring~$R$.
\end{Exa}

\bigbreak
\section{Additivity for bicategories}
\label{sec:additive-bicats}%
\medskip

We now extend the 1-categorical ideas of \Cref{sec:additive-sedative} to the realm of bicategories, insofar as needed in this work. Most notably, we introduce the notion of `block-completion' (\Cref{Def:block-complete} and \Cref{Cons:block-completion}) which reflects the possibility of decomposing 0-cells and 1-cells by way of idempotents. But first we apply the definitions and constructions of \Cref{sec:additive-sedative} locally, \ie `Hom-wise'.

\begin{Def}
\label{Def:Sad-enriched-etc}%
\index{bicategory!locally additive --}%
\index{bicategory!locally semi-additive --}%
\index{bicategory!locally idempotent-complete --}%
\index{pseudo-functor!locally additive --}
We say that a bicategory $\cat B$ is \emph{locally semi-additive} (resp.\ \emph{locally additive}, resp.\ \emph{locally idempotent-complete}), if all its Hom categories $\cat B(X,Y)$ are semi-additive (resp.\ additive, resp.\ idempotent-complete) and all its horizontal composition functors are additive functors of both variables. A pseudo-functor $\cat F\colon \cat B\to \cat B'$ between locally semi-additive bicategories is \emph{locally additive} if each component $\cat F\colon \cat B(X,Y)\to \cat B'(\cat FX,\cat FY)$ is an additive functor (\Cref{Def:additive_fun}).
\end{Def}

\begin{Rem} \label{Rem:coherent-additivity}
The coherent structure maps of any locally semi-additive bicategory are compatible with direct sums of 1-cells, that is, we have $\run_{f_1\oplus f_2} = \run_{f_1}\oplus \run_{f_2}$ for the right unitors, and similarly for left unitors and associators. This follows from naturality, and is a completely general fact: The components of any natural transformation $\alpha\colon F\Rightarrow G\colon \cat A\to \cat A'$ of additive functors between semi-additive categories decompose diagonally on direct sums: $\alpha_{x_1\oplus x_2}= \alpha_{x_1}\oplus \alpha_{x_2}$.
\end{Rem}

\begin{Rem}
\label{Rem:End-is-comm}%
If $\cat B$ is a locally additive bicategory, then for each object $X$ we have a 2-cell endomorphism \emph{ring} $\End_{\cat B(X,X)}(\Id_X)$. For short, we simply denote by $1_X$ its multiplicative unit $\id_{\Id_X}$. This ring is commutative by the standard commutative diagram:
\[
\xymatrix@=10pt{
\Id_X \ar[rrrr]^-{\alpha} \ar[dddd]_{\beta} &&&& \Id_X \ar[dddd]^{\beta} \\
& \Id_X \circ \Id_X \ar [ul]_{\simeq} \ar[rr]^-{\alpha \circ \id} \ar[dd]_{\id \circ \beta} \ar[ddrr]|{\alpha \circ \beta} && \Id_X \circ \Id_X \ar[ur]^{\simeq} \ar[dd]^{\id \circ \beta} & \\
&&&& \\
& \Id_X \circ \Id_X \ar[dl]^{\simeq} \ar[rr]^-{\alpha \circ \id} && \Id_X \circ \Id_X \ar[dr]_{\simeq} & \\
\Id_X \ar[rrrr]^-{\alpha} &&&& \Id_X\,.\!\!
}
\]
This is a form of Eckmann-Hilton argument: The two operations given by horizontal and vertical composition `mutually distribute' hence must coincide on~$\End(\Id_X)$.
\end{Rem}

\begin{Rem}
\label{Rem:add_reflections_bicats}%
By locally applying the constructions of \Cref{Rem:completions}, we obtain canonical forgetful and completion pseudo-functors comparing the three kinds of enriched bicategories of \Cref{Def:Sad-enriched-etc}. Let us be specific.
\begin{enumerate}[(1)]
\smallbreak
\item \label{it:additive-envelope-bicat}
\index{$(+$@$(\ldots)_+$ \, group-completion}%
\index{group-completion $(\ldots)_+$!-- for bicategories}%
If the bicategory~$\cat B$ is locally semi-additive there is a canonical pseudo-functor $\cat B\to \cat B_+$, where $\cat B_+$ is locally additive and through which every other pseudo-functor $\cat B \to \cat B'$ to some locally additive bicategory $\cat B'$ must factor (essentially) uniquely. The 0-cells of~$\cat{B}$ and~$\cat{B}_+$ are the same and the Hom categories of $\cat B_+$ are simply the group completions $\cat B_+(X,Y):=\cat B(X,Y)_+$. The pseudo-functor $\cat B\to \cat B_+$ has components given by the canonical functors $\cat B(X,Y)\to \cat B(X,Y)_+$; see \Cref{Rem:completions}\,\eqref{it:_+}. Note that $\cat{B}_+$ has the same 0-cells and 1-cells as~$\cat{B}$ and that 2-cells of~$\cat{B}_+$ are (formal) differences of 2-cells of~$\cat{B}$. It is immediate to see that the canonical embedding $\cat B\to \cat B_+$ induces, for any locally additive category~$\cat C$, a bi-equivalence (actually an isomorphism)
\[
\PsFun_+ (\cat B_+, \cat C) \stackrel{\simeq}{\too} \PsFun_+(\cat B,\cat C)
\]
of bicategories of \emph{locally additive} pseudo-functors, pseudo-natural transformations and modifications.

\smallbreak
\item \label{it:ic-envelope-bicat}
\index{$(n$@$(\ldots)^\natural$ \, idempotent-completion}%
\index{idempotent-completion $(\ldots)^\natural$!-- for bicategories}%
Similarly, if $\cat{C}$ is a locally additive bicategory there is a canonical pseudo-functor $\cat{C}\to \cat{C}^\natural$, where $\cat{C}^\natural$ is locally idempotent-complete and through which every other pseudo-functor $\cat{C} \to \cat{D}$ to some locally idempotent-complete bicategory~$\cat{D}$ must factor (essentially) uniquely. The 0-cells of~$\cat{C}$ and~$\cat{C}^\natural$ are the same and the Hom categories of $\cat{C}^\natural$ are simply the idempotent-completions $\cat{C}^\natural(X,Y):=\cat{C}(X,Y)^\natural$. The pseudo-functor $\cat{C}\to \cat{C}^\natural$ has components given by the canonical functors $\cat{C}(X,Y)\to \cat{C}(X,Y)^\natural$; see \Cref{Rem:completions}\,\eqref{it:ic}. Note that the 0-cells of~$\cat{C}^\natural$ are the same as those of~$\cat{C}$, that 1-cells of~$\cat{C}^\natural$ are direct summands of 1-cells of~$\cat{C}$ (\ie 1-cells together with an idempotent 2-cell), and that 2-cells of~$\cat{C}^\natural$ are 2-cells of~$\cat{C}$ which are compatible with the relevant idempotents. For every locally idempotent-complete bicategory~$\cat D$, the embedding $\cat C\to \cat C^\natural$ induces a biequivalence
\index{$psfun+$@$\PsFun_+$ \, locally additive pseudo-functors} \index{PsFun@$\PsFun_+$}%
\[
\PsFun_+ (\cat C^\natural, \cat D) \stackrel{\simeq}{\too} \PsFun_+(\cat C,\cat D)
\]
of bicategories of locally additive pseudo-functors. (We leave this as an easy exercise. The existence and uniqueness of the extension of pseudo-natural transformations~$t$ uses that their 2-cell components $t_f$ decompose diagonally for direct sums of 1-cells: $t_f=t_{f_1}\oplus t_{f_2}$ if $f=f_1\oplus f_2$. This is a consequence of the naturality axiom similarly to \Cref{Rem:coherent-additivity}.)
\end{enumerate}
\end{Rem}

\begin{Exa}
\label{Exa:ICADD}%
The 2-category~$\ICADD$ of idempotent-complete additive categories is itself locally idempotent-complete as a bicategory. In other words, we have $\ICADD\stackrel{\sim}{\to}(\ICADD)^\natural$.
Indeed, for additive categories~$\cat{A}$ and~$\cat{B}$ with ($\cat{A}$ and) $\cat{B}$ idempotent-complete, the category $\Funplus(\cat{A},\cat{B})$ of additive functors from~$\cat{A}$ to~$\cat{B}$ is idempotent-complete. To see this, observe that if $e=e^2\colon F\Rightarrow F$ is an idempotent natural transformation of~$F\colon \cat{A}\to\cat{B}$ then for every $x\in \Obj\cat{A}$ the idempotent $e_x\colon F(x)\to F(x)$ yields a decomposition $F(x)\cong \img(e_x)\oplus \img(1-e_x)$ in~$\cat{B}$; we can then decompose $F\cong F_1\oplus F_2$ where one defines $F_1\colon \cat{A}\to \cat{B}$ by mapping an object~$x$ to~$\img(e_x)$ and a morphism $f\colon x\to x'$ to $F(f)e_x=e_{x'}F(f)$, which restricts to~$\img(e_x)\to \img(e_{x'})$, and similarly for~$F_2$ with~$1-e_x$ instead of~$e_x$.
\end{Exa}

After the above discussion of Hom categories in bicategories~$\cat{B}$, we now turn to constructions involving 0-cells:
\begin{Def}
\label{Def:sums_in_bicats}%
Let $\cat B$ be a bicategory.
\begin{enumerate}[(1)]
\smallbreak
\item
A \emph{final object} of~$\cat{B}$ is an object~$\final$ with the property that $\cat B(X,\final)\stackrel{\sim}{\to}1$ is an equivalence for all $X\in \cat B_0$, where as before~$1$ denotes the final category, which has one object and one morphism. Dually, an initial object of~$\cat{B}$ is a $\varnothing\in \cat B_0$ such that $\cat B(\varnothing , Y)\stackrel{\sim}{\to}1$ is an equivalence, and a \emph{zero} object~$0$ is one which is both initial and final.
\smallbreak
\item
A \emph{product} of 0-cells~$X_1$ and~$X_2$ is a pair of 1-cells $p_1\colon X_1\leftarrow X_1\times X_2 \to X_2:\!p_2$ inducing an equivalence
\[
({p_1}_*, {p_2}_*)\colon \cat B(Y,X_1\times X_2) \stackrel{\sim}{\too} \cat B(Y,X_1)\times \cat B(Y,X_2)
\]
of Hom categories for all~$Y$. Dually, a diagram $X_1\to X_1\sqcup X_2\lto X_2$ is a \emph{coproduct} if it induces equivalences $\cat B(X_1\sqcup X_2, Y)\stackrel{\sim}{\to} \cat B(X_1,Y)\times \cat B(X_2,Y)$.
\smallbreak
\item
Assume that the Hom categories of $\cat B$ admit zero objects $0=0_{X,Y}\in \cat B(X,Y)$ and that they are preserved by horizontal composition. Then if $\cat B$ has a product $X_1\times X_2$ and a coproduct $X_1\sqcup X_2$ we may define a (unique up to isomorphism) comparison 1-cell
$X_1\sqcup X_2 \rightarrow X_1 \times X_2$ determined by the four components $(\Id_{X_2}, 0_{X_1,X_2},0_{X_2,X_1}, \Id_{X_2})$. If the latter is an equivalence, we may equip $X_1\sqcup X_2$ (or equivalently $X_1\times X_2$) with the structure both of a product and of a coproduct and call it a \emph{biproduct} of $X_1$ and~$X_2$.
\smallbreak
\item
Assume that $\cat B$ is locally semi-additive (\Cref{Def:Sad-enriched-etc}).
A \emph{direct sum} in~$\cat B$ is a diagram of 1-cells as in~\eqref{eq:direct-sum-def} for which there exist isomorphisms
\begin{equation*}
p_1\circ i_1 \simeq \Id_{X_1}, \quad
p_2\circ i_2 \simeq \Id_{X_2}, \quad
p_2\circ i_1 \simeq 0_{X_2,X_1}, \quad
p_1\circ i_2 \simeq 0_{X_1,X_2},
\end{equation*}
\begin{equation*}
i_1p_1 \oplus i_2p_2 \simeq \Id_{X_1\oplus X_2}
\end{equation*}
where the latter uses the direct sum in the category $\cat B(X_1\oplus X_2,X_1\oplus X_2)$.
\end{enumerate}
Each of the above notions is called \emph{strict} if the equivalences are actually isomorphisms (in the latter, if the isomorphisms are equalities).
We extend the definitions as usual to finite products, coproducts, biproducts and direct sums, the empty case being defined to be an initial, final, and both initial and final (\ie zero) object.
\end{Def}

The next lemma provides a strong link between (co)products of 0-cells and (co)products of 1-cells in each Hom category (\cf \Cref{Ter:additive_cat_etc}\,\eqref{it:direct-sum}).

\begin{Lem}
\label{Lem:0-1-direct-sums}%
In any locally semi-additive bicategory~$\cat B$, biproducts and direct sums are equivalent notions.
\end{Lem}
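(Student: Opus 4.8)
The plan is to prove both implications directly, upgrading the classical equivalence between biproducts and direct sums in semi-additive categories (\Cref{Ter:additive_cat_etc}) to the bicategorical level by replacing equalities of morphisms with coherent natural isomorphisms of functors. Throughout, the crucial structural input is that in a locally semi-additive bicategory the horizontal composition functors are additive in each variable, so that both post-composition $f\circ -$ and pre-composition $-\circ g$ preserve the zero 1-cell and direct sums of 1-cells up to canonical isomorphism; in particular one has distributivity isomorphisms such as $f\circ(g\oplus g')\simeq (f\circ g)\oplus(f\circ g')$. It suffices to treat the binary case, the empty case being the immediate identification of an empty biproduct with a zero object, and the general finite case following by induction.

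First I would show that a direct sum $X_1\oplus X_2$, with injections $i_\alpha$ and projections $p_\alpha$ as in \eqref{eq:direct-sum-def}, is a product. For fixed $Y$, I would exhibit a quasi-inverse to $({p_1}_*,{p_2}_*)\colon \cat{B}(Y,X_1\oplus X_2)\to \cat{B}(Y,X_1)\times \cat{B}(Y,X_2)$ by the formula $S(g_1,g_2):=(i_1\circ g_1)\oplus(i_2\circ g_2)$, the direct sum being taken in the semi-additive Hom category $\cat{B}(Y,X_1\oplus X_2)$. Using the additivity of $p_\beta\circ -$, the isomorphisms $p_\beta i_\alpha\simeq \delta_{\alpha\beta}$ (i.e.\ $\Id_{X_\alpha}$ or $0$) from the direct-sum data, and the fact that $g_\alpha\oplus 0\simeq g_\alpha$, one computes ${p_\beta}_* S(g_1,g_2)\simeq g_\beta$; and using additivity of $-\circ g$ together with $i_1p_1\oplus i_2p_2\simeq \Id$ one computes $S({p_1}_* g,{p_2}_* g)\simeq g$. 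These isomorphisms are natural because they are assembled from the fixed structure isomorphisms of $\cat{B}$, so $({p_1}_*,{p_2}_*)$ is an equivalence and $X_1\oplus X_2$ is a product. The dual argument, using pre-composition and the coproduct injections in place of the product sections, shows it is a coproduct. Finally, since the coproduct $X_1\sqcup X_2$ and the product $X_1\times X_2$ are here the \emph{same} object and $\Id$ has components $p_\beta\circ \Id\circ i_\alpha\simeq \delta_{\alpha\beta}$, the canonical comparison 1-cell is isomorphic to $\Id$, hence an equivalence; thus the diagram is a biproduct.

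Conversely, I would start from a biproduct, so that $X_1\sqcup X_2\simeq X_1\times X_2$ and we may take them to be the same object $X_1\oplus X_2$, the comparison 1-cell being isomorphic to the identity. From the defining components of the comparison map one reads off $p_\beta i_\alpha\simeq\delta_{\alpha\beta}$, giving four of the five direct-sum isomorphisms. For the last, $i_1p_1\oplus i_2p_2\simeq \Id_{X_1\oplus X_2}$, I would invoke the product universal property: since $({p_1}_*,{p_2}_*)$ is an equivalence it reflects isomorphisms, so it is enough to check $p_\beta\circ(i_1p_1\oplus i_2p_2)\simeq p_\beta$ for $\beta=1,2$, which follows from the additivity of $p_\beta\circ -$ and the already-established isomorphisms $p_\beta i_\alpha\simeq\delta_{\alpha\beta}$. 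This furnishes the complete direct-sum datum.

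I expect the main obstacle to be the coherence bookkeeping rather than any conceptual difficulty: one must check that each computation above is a genuine natural isomorphism of functors, which requires carefully pasting the unitors and associators of $\cat{B}$ together with the additivity (distributivity) isomorphisms of the composition functors, and verifying their compatibility (in the spirit of \Cref{Rem:coherent-additivity}, where the structure maps are seen to respect direct sums). None of this is deep, but it is the part where care is needed; the string-diagram notation of \Cref{sec:string_diagrams} would make these verifications transparent.
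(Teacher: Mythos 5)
Your proof is correct, and it is exactly the argument the paper has in mind: the paper leaves this lemma as an exercise whose stated crux is that horizontal composition preserves direct sums of 1-cells in both variables, which is precisely the additivity/distributivity fact your quasi-inverse $S(g_1,g_2)=(i_1\circ g_1)\oplus(i_2\circ g_2)$ and the final reflection argument rely on. No gaps; the coherence bookkeeping you flag is indeed the only remaining routine work.
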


\begin{proof}
We leave this as a straightforward exercise for the reader, which makes crucial use of the fact that the horizontal composition functors of $\cat B$ preserve direct sums (\ie biproducts) of 1-cells in both variables.
\end{proof}

\begin{Exa}
The 2-category $\groupoid$ of all finite groupoids admits all finite (strict) coproducts, provided by the usual disjoint unions of categories, and $\groupoid$ also admits (strict) finite products provided by the usual product of categories.
\end{Exa}

\begin{Exa}
\label{Exa:ADD-dir-sums}%
The bicategory $\SAD$ of semi-additive categories admit all finite direct sums. Indeed, the usual (strict) product $\cat{A}_1\oplus \cat{A}_2:=\cat{A}_1\times \cat{A}_2$ of two semi-additive categories is again so, with objectwise direct sums $(x_1,x_2)\oplus (y_1,y_2)=(x_1\oplus y_1, x_2\oplus y_2)$ and zero object $0=(0,0)$. For such categories the product is also a (non~strict) coproduct, with structural 1-cells given by the embedding functors $\cat{A}_1\hook\cat{A}_1\oplus \cat{A}_2$, $x_1\mapsto (x_1,0)$ and $\cat{A}_2\hook\cat{A}_1\oplus \cat{A}_2$, $x_2\mapsto (0,x_2)$. If $\cat{A}_1$ and $\cat{A}_2$ happen to be additive or idempotent-complete then so is $\cat{A}_1\oplus \cat{A}_2$, hence $\ADD$ and $\ICADD$ also admit direct sums. The zero category $0:=1$ is a zero object in each.
\end{Exa}

\begin{Rem}
\label{Rem:add-adjoint}%
\index{matrix notation!-- for bicategories}%
As in ordinary categories, we can use the standard matrix notation (\Cref{Rem:matrix-notation}) for 1-cells into, or out of, a direct sum in a bicategory, as well as 2-cells between such 1-cells. For instance in $\ADD$, if the components $F_1$ and $F_2$ of a functor $F=\smat{F_1\\ F_2}\colon \cat{A}\too \cat{B}_1\oplus \cat{B}_2$ admit left adjoints ${F_1}_!\adj F_1$ and ${F_2}_!\adj F_2$ then these are the components of the left adjoint of~$F$, that is, $F_!=\smat{{F_1}_! & {F_2}_!} \adj F$ with compatible units and counits:
\begin{align*}
&\xymatrix@C=4em{
\eta\colon \;\; \Id_{\cat{B}_1\oplus \cat{B}_2}=\smat{\Id_{\cat{B}_1}& 0\\0&\Id_{\cat{B}_2}}
 \ar[r]^-{\smat{\eta_1&0\\0&\eta_2}}
& \ \smat{F_1{F_1}_!&F_1{F_2}_!\\F_2{F_1}_!&F_2{F_2}_!} =\smat{F_1\\F_2}\smat{{F_1}_!&{F_2}_!}=FF_!
}
\\
&\xymatrix@C=4em{
\eps\colon \;\; F_!F=\smat{{F_1}_!&{F_2}_!}\smat{F_1\\F_2}={F_1}_!F_1\oplus {F_2}_!F_2 \
 \ar[r]^-{\smat{\eps_1& \eps_2}}
& \ \Id_{\cat{A}}\,.
}
\end{align*}
\end{Rem}

\begin{Def}
\label{Def:additive-pseudofunctor}%
\index{$psfun$@$\PsFun_\amalg$ \, additive pseudo-functors}%
\index{PsFun@$\PsFun_\amalg$}%
In this work, we say that a pseudo-functor $\cat F\colon \cat B\to \cat B'$ between bicategories with products is \emph{additive} if it preserves products, \ie if the canonical comparison 1-cells
\[
\cat F(\final) \stackrel{\sim}{\to} \final
\quad \textrm{ and } \quad
\cat F(X_1 \times X_2) \stackrel{\sim}{\to} \cat F(X_1) \times \cat F(X_2)
\]
are equivalences for all objects\,(\footnote{\,One can check that the latter implies the former provided there exists a 1-cell $\final\to \cat F(\final)$, which must be the case for instance when the target bicategory is pointed (\eg $\cat B'=\ADD$).}). Typically, we consider functors $\MM\colon \GG^\op\to \ADD$ where $\cat B=\GG^\op$ for $\GG$ a bicategory of finite groupoids closed under finite coproducts in~$\groupoid$, which then become products in the opposite bicategory~$\cat{B}$. Thus additivity for $\MM$ translates into the by~now familiar axiom~\Mack{1} for Mackey 2-functors or (Der\,\ref{Der-1}) for derivators. Because of this context, we use the notation
\[
\PsFun_\amalg(\cat B, \cat B') \subseteq \PsFun(\cat B,\cat B')
\]
for the 1- and 2-full sub-bicategory of additive pseudo-functors, decorated with `$\amalg$' rather than the more logical~`$\Pi$'.
\end{Def}

\begin{Lem}
\label{Lem:diagonal-dir-sum}%
Let $\cat B$ be a locally semi-additive bicategory and assume that the direct sums $X\oplus X$ and $Y\oplus Y$ exist for some $X,Y\in \cat B_0$. Then the direct sums in the Hom category $\cat B(X,Y)$ are given by the composite functor
\begin{equation} \label{eq:dir-sums-rewritten}
\xymatrix@C=4em{
\cat B(X,Y) \times \cat B(X,Y) \ar[r]^-{-\oplus -} & \cat B(X\oplus X, Y \oplus Y) \ar[r]^-{(\Delta^*,\nabla_*)} & \cat B(X,Y) \,,
}
\end{equation}
where $\Delta=\smat{1 \\ 1} \colon X\to X\oplus X$ and $\nabla=\smat{1\;\; 1} \colon X\oplus X\to X$ are the diagonal and co-diagonal 1-cells, and where $-\oplus -$ is induced by the direct sums of 0-cells.
\end{Lem}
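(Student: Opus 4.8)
The plan is to show that the composite functor appearing in~\eqref{eq:dir-sums-rewritten}, which sends a pair $(f,g)$ of parallel 1-cells to the 1-cell $\nabla\circ(f\oplus g)\circ\Delta\colon X\to Y$, is naturally isomorphic to the biproduct functor $\oplus\colon\cat B(X,Y)\times\cat B(X,Y)\to\cat B(X,Y)$ provided by the semi-additive structure of the Hom category. The conceptual point is that formula~\eqref{eq:f+g}, which at the level of 2-cells computes the \emph{sum} $f+g$, computes instead the \emph{biproduct} $f\oplus g$ when its inputs are 1-cells; this is precisely the expected categorification of addition. Throughout I write $S$ for this composite functor.

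First I would fix the structure 1-cells and 2-cells of the biproducts $X\oplus X$ and $Y\oplus Y$ as in \Cref{Def:sums_in_bicats}, writing $i_1^X,i_2^X,p_1^X,p_2^X$ (resp.\ with superscript $Y$) for the inclusions and projections, and record the defining isomorphisms of $\Delta=\smat{1\\1}$ and $\nabla=\smat{1 & 1}$, namely $p_\alpha^X\Delta\cong\Id_X$ and $\nabla\,i_\alpha^Y\cong\Id_Y$. The key preliminary observation is that, \emph{as objects} of the semi-additive Hom categories, one has $\Delta\cong i_1^X\oplus i_2^X$ in $\cat B(X,X\oplus X)$ and $\nabla\cong p_1^Y\oplus p_2^Y$ in $\cat B(Y\oplus Y,Y)$. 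Both follow by composing the biproduct relation $\Id_{X\oplus X}\cong i_1^Xp_1^X\oplus i_2^Xp_2^X$ (resp.\ for $Y\oplus Y$) with $\Delta$ (resp.\ with $\nabla$), using that horizontal composition is additive in each variable (\Cref{Def:Sad-enriched-etc}) together with the defining isomorphisms above. This is the categorified form of the familiar identities $\Delta=i_1+i_2$ and $\nabla=p_1+p_2$.

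With these in hand, the main computation is a distribution of $\oplus$ across composition. Since precomposition by $\Delta$ and postcomposition by $\nabla$ are additive functors, I would compute
\begin{align*}
S(f,g)=\nabla\circ(f\oplus g)\circ\Delta
&\cong (p_1^Y\oplus p_2^Y)\circ(f\oplus g)\circ(i_1^X\oplus i_2^X)\\
&\cong \bigoplus_{\alpha,\beta}\; p_\alpha^Y\circ(f\oplus g)\circ i_\beta^X,
\end{align*}
and then identify the four summands as the matrix components (\Cref{Rem:add-adjoint}) of the diagonal 1-cell $f\oplus g\colon X\oplus X\to Y\oplus Y$ induced by the direct sums of 0-cells: the diagonal components are $p_1^Y(f\oplus g)i_1^X\cong f$ and $p_2^Y(f\oplus g)i_2^X\cong g$, while the off-diagonal ones are isomorphic to the zero 1-cell. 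Hence $S(f,g)\cong f\oplus 0\oplus 0\oplus g\cong f\oplus g$ in $\cat B(X,Y)$, as claimed.

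Finally I would check that this isomorphism is natural in $(f,g)$, so that it identifies the two functors and not merely their values. All the isomorphisms above are assembled from the structural 2-cells of the biproducts $X\oplus X$ and $Y\oplus Y$ and from the defining data of $\Delta$, $\nabla$ and of the functor $-\oplus-$, none of which depends on $f$ or $g$; naturality then reduces to the fact that 2-cells between 1-cells of the form $f\oplus g$ decompose diagonally (\Cref{Rem:coherent-additivity}), which also shows that $S$ transports the biproduct structure maps to those of $f\oplus g$. The main obstacle is purely bookkeeping: one must keep careful track of the categorical level, making sure that each symbol $\oplus$ and each structural relation is interpreted in the correct Hom category, so that the ``sum'' of morphisms genuinely upgrades to a ``biproduct'' of objects. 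Once the identifications $\Delta\cong i_1^X\oplus i_2^X$ and $\nabla\cong p_1^Y\oplus p_2^Y$ are established at the level of 1-cells, the additivity of horizontal composition makes the rest routine.
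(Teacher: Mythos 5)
Your proof is correct, and it rests on the same engine as the paper's proof --- the additivity of the horizontal composition functors, which lets one distribute direct sums across the composite $\nabla\circ(f\oplus g)\circ\Delta$ --- but it implements this with a different decomposition. The paper splits the \emph{middle} 1-cell: it first shows, by comparing 2-cell Hom groups (a Yoneda-style argument), that the matrix 1-cell $\smat{f \;\; 0\\ 0 \;\; g}$ is the direct sum of its two corner matrices $\smat{f \;\; 0\\ 0 \;\; 0}\oplus\smat{0 \;\; 0\\ 0 \;\; g}$ in $\cat B(X\oplus X,Y\oplus Y)$, and then pushes this two-term splitting through the additive whiskering functors $\Delta^*$ and $\nabla_*$. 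You instead split the \emph{outer} 1-cells, deducing $\Delta\cong i_1\oplus i_2$ and $\nabla\cong p_1\oplus p_2$ from the defining relation $\Id_{X\oplus X}\cong i_1p_1\oplus i_2p_2$ of the 0-cell direct sum, and then distribute composition over these sums to obtain $\bigoplus_{\alpha,\beta}p_\alpha\circ(f\oplus g)\circ i_\beta$, whose four entries are identified with $f$, $g$, $0$, $0$ straight from the definition of the functor $-\oplus-$. What each route buys: yours replaces the paper's Hom-group computation by the (equally elementary) biproduct identity and makes the identification of all components completely explicit, at the cost of carrying two zero summands; the paper's splitting is shorter but leaves the final identification $\nabla\circ\smat{f \;\; 0\\ 0 \;\; 0}\circ\Delta\cong f$ implicit. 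You also record an explicit naturality check (via \Cref{Rem:coherent-additivity}), which the paper omits but which is what justifies reading the statement as an identification of functors rather than merely of their values; that is a welcome addition.
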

\begin{proof}
The definition of direct sums of 0-cells gives us an equivalence
\[
\cat B(X\oplus X,Y\oplus Y) \stackrel{\sim}{\too} \cat B(X,Y) \oplus \cat B(X,Y) \oplus \cat B(X,Y) \oplus \cat B(X,Y) \,.
\]
In particular, any pair of 1-cells $f_1\colon X\to Y$ and $f_2\colon X\to Y$ gives rise to the 1-cell $\smat{f_1 \;\; 0\\ 0 \;\;\; f_2} \colon X\oplus X\to Y\oplus Y$ corresponding to the four components $(f_1,0_{X,Y},0_{X,Y},f_2)$, and similarly for 2-cells. This is what the first functor $-\oplus -$ in \eqref{eq:dir-sums-rewritten} does. Note that $\smat{f_1 \;\; 0\\ 0 \;\;\; f_2}$ coincides with the 1-cell direct sum $f_1\oplus f_2 = \smat{f_1 \;\; 0\\ 0 \;\;\;\; 0} \oplus \smat{0 \;\; 0\\ \; 0 \;\; f_2}$ in $\cat B(X\oplus X,Y\oplus Y)$, since the two have the same 2-cell Hom groups
\[
\Hom(f_1,g)\oplus \Hom(f_2,g) \quad \textrm{and} \quad \Hom(g,f_1)\oplus \Hom(g,f_2)
\]
to and from any other 1-cell $g\colon X\oplus X\to Y\oplus Y$ (as they are calculated component\-wise).
Since the horizontal pre- and post-composition whiskering functors $-\circ \Delta$ and $\nabla \circ -$ are additive, they preserve direct sums of 1-cells, hence the second functor in \eqref{eq:dir-sums-rewritten} must send $\smat{f_1 \;\; 0\\ 0 \;\;\; f_2}$ to the direct sum $f_1\oplus f_2$ in $\cat B(X,Y)$, as claimed.
\end{proof}

The next result is as amusing as it is useful:

\begin{Prop} \label{Prop:(locally)-additive-pseudo-functors}
Let $\cat F$ be any pseudo-functor between locally additive bicategories with finite direct sums.
Then $\cat F$ is additive (\Cref{Def:additive-pseudofunctor}) if and only if it is locally additive (\Cref{Def:Sad-enriched-etc}). In other words: $\cat F$ preserves direct sums of 0-cells iff it preserves direct sums of 1-cells iff it preserves sums of 2-cells.
\end{Prop}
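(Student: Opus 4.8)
The plan is to reduce everything to three ingredients already available: the componentwise fact that, for a functor between semi-additive categories, preserving direct sums of objects coincides with preserving sums of morphisms (\Cref{Rem:autom-add}), together with the two structural lemmas \Cref{Lem:0-1-direct-sums} and \Cref{Lem:diagonal-dir-sum} relating biproducts of $0$-cells to direct sums of $1$-cells. First I would dispose of the equivalence ``preserves direct sums of $1$-cells $\iff$ preserves sums of $2$-cells'': applied to each component functor $\cat F\colon \cat B(X,Y)\to \cat B'(\cat FX,\cat FY)$, this is precisely \Cref{Rem:autom-add}. Hence the term ``locally additive'' is unambiguous, and the whole statement reduces to proving that $\cat F$ preserves direct sums of $0$-cells if and only if it is locally additive.

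For the implication ``locally additive $\Rightarrow$ additive'', I would observe that the defining data of a direct sum of $0$-cells in \Cref{Def:sums_in_bicats}\,(4) is expressed entirely through $1$-cells, identity $1$-cells, horizontal composition, zero $1$-cells $0_{X,Y}$, and a single direct sum of $1$-cells (the condition $i_1p_1\oplus i_2p_2\simeq \Id$). A pseudo-functor preserves identities and composition up to its coherent isomorphisms $\un$ and $\fun$; if it is moreover locally additive it preserves zero $1$-cells and direct sums of $1$-cells. Thus applying $\cat F$ to a direct-sum diagram $X_1 \rightleftarrows X_1\oplus X_2 \rightleftarrows X_2$ and inserting these isomorphisms yields a direct-sum diagram exhibiting $\cat F(X_1\oplus X_2)$ as a direct sum of $\cat FX_1$ and $\cat FX_2$. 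By \Cref{Lem:0-1-direct-sums} this is a biproduct, in particular a product, so $\cat F$ is additive. The empty (nullary) case is handled separately: a zero object $0$ is characterized in a locally additive bicategory by $\Id_0\cong 0_{0,0}$ in $\cat B(0,0)$, and local additivity together with $\cat F(\Id_0)\cong \Id_{\cat F0}$ forces $\Id_{\cat F0}\cong 0_{\cat F0,\cat F0}$, whence $\cat F0$ is again a zero object.

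For the converse ``additive $\Rightarrow$ locally additive'', I would invoke \Cref{Lem:diagonal-dir-sum}, which writes the direct sum of two parallel $1$-cells $f_1,f_2\in \cat B(X,Y)$ as $\nabla_*\Delta^*(f_1\oplus f_2)$, the composite of the biproduct-assembly functor $\cat B(X,Y)^{\times 2}\to \cat B(X\oplus X,Y\oplus Y)$ with whiskering by the (co)diagonals $\Delta\colon X\to X\oplus X$ and $\nabla\colon Y\oplus Y\to Y$. Since $\cat F$ preserves products, hence (again by \Cref{Lem:0-1-direct-sums}) the biproducts $X\oplus X$ and $Y\oplus Y$, the comparison equivalences $\cat F(X\oplus X)\simeq\cat FX\oplus\cat FX$ and $\cat F(Y\oplus Y)\simeq\cat FY\oplus\cat FY$ identify $\cat F(\Delta)$ and $\cat F(\nabla)$ with the diagonal and codiagonal of $\cat FX\oplus\cat FX$ and $\cat FY\oplus\cat FY$. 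Preservation of whiskering then transports the formula \eqref{eq:dir-sums-rewritten} through $\cat F$, giving $\cat F(f_1\oplus f_2)\cong \cat Ff_1\oplus \cat Ff_2$; preservation of the zero $0$-cell similarly gives $\cat F(0_{X,Y})\cong 0_{\cat FX,\cat FY}$ for the nullary direct sum. Thus each component of $\cat F$ preserves finite direct sums of $1$-cells, and by the first step $\cat F$ is locally additive.

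The main obstacle will be the coherence bookkeeping in this last step: one must verify that, under the comparison equivalence arising from additivity, the image $\cat F(\Delta)$ genuinely corresponds to the diagonal of the biproduct $\cat FX\oplus\cat FX$ (and dually for $\nabla$), and that the structure isomorphisms $\fun$ and $\un$ of $\cat F$ make the two instances of \eqref{eq:dir-sums-rewritten} commute up to canonical isomorphism. This is a routine but slightly tedious diagram chase with the pseudo-functor coherence data, which I would carry out on a chosen biproduct presentation and then promote to the general case by the uniqueness of direct sums up to unique isomorphism.
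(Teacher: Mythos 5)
Your proof is correct and takes essentially the same route as the paper's: the 2-cell/1-cell equivalence by applying \Cref{Rem:autom-add} to each component functor, the implication ``locally additive $\Rightarrow$ additive'' by pushing a direct-sum diagram of 0-cells through $\cat F$ and invoking \Cref{Lem:0-1-direct-sums}, and the converse via the rewriting of Hom-category direct sums in \Cref{Lem:diagonal-dir-sum}. Your explicit treatment of the nullary case and the coherence bookkeeping merely fills in details that the paper's terse proof leaves implicit.
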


\begin{proof}
The last claimed equivalence follows by applying \Cref{Rem:autom-add} to all the functors $\cat F_{X,Y}\colon \cat B(X,Y)\to \cat B'(\cat FX,\cat FY)$.
If $\cat F$ is locally additive then it preserves all direct sum diagrams, which are expressed in terms of isomorphisms between composites and direct sums of 1-cells. Since direct sums are (bi)products by \Cref{Lem:0-1-direct-sums}, the pseudo-functor $\cat F$ is additive.
Conversely, an additive pseudo-functor must preserve the direct sums in each Hom category by \Cref{Lem:diagonal-dir-sum}.
\end{proof}

Next, we consider the link between direct sum decompositions and idempotents. The local idempotent-completion $\cat C\mapsto \cat C^\natural$ of \Cref{Rem:add_reflections_bicats} is somewhat unsatisfactory, because it does not account for decompositions of 0-cells:

\begin{Rem} \label{Rem:object-decompositions}
Let $\cat B$ be a locally additive bicategory, and suppose we have an equivalence $X\simeq X_1\oplus X_2$ decomposing an object $X$ into a direct sum of two objects. We obtain an induced equivalence of additive categories
\[
\cat B(X,X) \simeq \cat B(X_1,X_1) \oplus \cat B(X_1,X_2) \oplus \cat B(X_2,X_1) \oplus \cat B(X_2,X_2)
\]
which allows us to write 1-cells $X\to X$ and their morphisms in matrix notation (\cf \Cref{Rem:add-adjoint}). In particular, the identity 1-cell $\Id_X$ has components $(\Id_{X_1}, 0, 0, \Id_{X_2})$ and its (by \Cref{Rem:End-is-comm}, commutative) endomorphism ring decomposes into a product as follows:
\begin{align*}
\End_{\cat B(X,X)}(\Id_X)
& \;\simeq\; \End \left( \begin{array}{cc} \Id_{X_1} & 0 \\ 0 & \Id_{X_2} \end{array} \right) \\
& \;\simeq\; \End_{\cat B(X_1,X_1)}(\Id_{X_1}) \times \End_{\cat B(X_2,X_2)}(\Id_{X_2}) \,.
\end{align*}
Moreover, such a ring decomposition corresponds to a decomposition of its unit $1_X:=\id_{\Id_X}$ as a sum of two orthogonal idempotents:
\begin{equation} \label{eq:orthog-decomp}
1_{X}=e_1 + e_2 \quad\textrm{ with } \quad e_1^2= e_1, \quad e_2^2 = e_2 \quad \textrm{ and } \quad e_1e_2=0 \,.
\end{equation}
In the opposite direction, however, if we are given a sum decomposition $1_{X}=e_1 + e_2$ in orthogonal idempotents, nothing guarantees the existence of a direct sum decomposition $X\simeq X_1\oplus X_2$ in~$\cat B$ giving rise to it. This motivates the following.
\end{Rem}

\begin{Def} \label{Def:block-complete}
\index{block-complete} \index{bicategory!block-complete --}%
A locally idempotent-complete bicategory $\cat B$ (\Cref{Def:Sad-enriched-etc}) is \emph{block-complete} if
\begin{enumerate}[(1)]
\item it admits all finite direct sums (\Cref{Def:sums_in_bicats}), and
\item
\index{block-decomposition}%
it admits \emph{block-decompositions}. By the latter we simply mean that, whenever the identity 2-cell $1_X=\id_{\Id_X}$ of an object $X$ decomposes as a sum of two orthogonal idempotents as in \eqref{eq:orthog-decomp}, then there exist objects $X_1,X_2$ and an equivalence $X\simeq X_1\oplus X_2$ identifying the idempotents $e_1$ and $e_2$ with the 2-cells $\big({}^1_0\;\; {}^0_0\big)$ and $\big({}^0_0\;\; {}^0_1\big)$, respectively.
\end{enumerate}
The second condition is of course equivalent to its analogue with $n$ rather than two summands, for $n\geq2$ arbitrary.
\end{Def}

\begin{Exa} \label{Exa:ICAdd-is-lic}
The 2-category $\ICAdd$ of idempotent-complete additive categories, and its very large version $\ICADD$, are block-complete. Indeed, they are locally idempotent-complete by \Cref{Exa:ICADD} and admit finite direct sums by \Cref{Exa:ADD-dir-sums}. As for block-decompositions, consider a sum decomposition $1_{\cat{A}}= e_1 + e_2$ in orthogonal idempotent natural transformations of the identity $1_\cat{A}=\id_{\Id_\cat{A}}$ of some idempotent-complete category~$\cat{A}$.
 For each $i=1,2$ and each object $x \in \cat{A}$, the component $e_{i,x}\colon x\to x$ is an idempotent in~$\cat{A}$. We therefore obtain a splitting $x\cong \img(e_{1,x})\oplus \img(e_{2,x})$ in~$\cat{A}$ identifying $e_{1,x}$ and $e_{2,x}$ with $\big({}^1_0\;\; {}^0_0\big)$ and $\big({}^0_0\;\; {}^0_1\big)$, respectively (\cf \Cref{Def:idempotent-complete}).
By the functoriality of images, this defines two endofunctors (for $i=1,2$)
\[
E_i\colon \cat{A} \to \cat{A}, \quad x \mapsto E_i(x):= \img(e_{i,x})
\]
with the property that the identity functor $\Id_\cat{A}$ is isomorphic to the direct sum $E_1\oplus E_2$ in $\ADD(\cat{A},\cat{A})$. We now define (for $i=1,2$)
\[
\cat{A}_i := \mathrm{Im}(E_i) \subseteq \cat{A}
\]
to be the full replete image of $E_i$ in $\cat{A}$. Using the orthogonality of $\cat{A}_1$ and $\cat{A}_2$ within~$\cat{A}$, which follows from that of~$e_1$ and~$e_2$, it is now straightforward to verify that the two functors
\[
\cat{A} \to \cat{A}_1 \oplus \cat{A}_2 , \quad x \mapsto (E_1x, E_2 x)
\]
and
\[
\cat{A}_1 \oplus \cat{A}_2 \to \cat{A} , \quad (x_1,x_2) \mapsto x_1\oplus x_2
\]
are mutually inverse equivalences. This shows that $1_\cat{A}= e_1+e_2$ is realized by the block-decomposition $\cat{A}\simeq \cat{A}_1\oplus \cat{A}_2$, as wished.
\end{Exa}

Now we take a closer look at the image 1-cells $E$ of idempotent 2-cells~$e$.

\begin{Not} \label{Not:idemp-E}
Let $e=e^2\colon \Id_X \Rightarrow \Id_X$ be an idempotent 2-cell in a locally additive bicategory~$\cat B$ such that $\cat B(X,X)=\cat B(X,X)^\natural$ is idempotent-complete. Then $e$ splits: There exists a 1-cell $E=\img(e)\colon X\to X$ together with 2-cells $r_e\colon \Id_X\Rightarrow E$ and $i_e\colon E\Rightarrow \Id_X$ such that
$i_er_e=e$ and $r_ei_e=\id_E$, this data being unique up to a unique isomorphism.
For any 1-cell $F\colon X\to Y$, we can define two 2-cells $\rho_{e,F}\colon F\circ E\Rightarrow F$ and $\overline{\rho}_{e,F}\colon F\Rightarrow F\circ E$ by the following two pastings:
\[
\rho_{e,F}:=\quad
\vcenter { \hbox{
\xymatrix{
& X
 \ar@/^3ex/[dr]^-F
 \ar@{}[dd]|{\;\;\quad \simeq\Scell\; \run_F} & \\
X
 \ar@/^3ex/[ur]^-{E}
 \ar@/_3ex/[ur]_>>>>{\Id_X}
 \ar@/_5ex/[rr]_-{F}
 \ar@{}[ur]|{i_e\;\Scell} &&
 Y \\
&&
}
}}
\quad \quad \quad \quad
\overline{\rho}_{e,F} :=\quad
\vcenter { \hbox{
\xymatrix{
&& \\
X
 \ar@/^5ex/[rr]^-{F}
 \ar@/_3ex/[dr]_-{E}
 \ar@/^3ex/[dr]^>>>{\Id_X}
 \ar@{}[dr]|{r_e\;\Scell}
 \ar@{}[rr]|{\quad\; \simeq \Scell \; \run_F^{-1}} && Y \\
& X
 \ar@/_3ex/[ur]_-{F} &
}
}}
\]
Similarly, for any 1-cell $G\colon Z\to X$ we define $\lambda_{e,G}\colon E\circ G\Rightarrow G$ and $\overline{\lambda}_{e,G}\colon G\Rightarrow E\circ G$ by the following two pastings:
\[
\lambda_{e,G}:=\quad
\vcenter{ \hbox{
\xymatrix{
& X
 \ar@/^3ex/[dr]^-{E}
 \ar@/_3ex/[dr]_<<<{\Id_X}
 \ar@{}[dr]|{\Scell\; i_e}
 & \\
Z
 \ar@/_5ex/[rr]_-{G}
 \ar@/^3ex/[ur]^-{G}
 \ar@{}[rr]|{\lun_G\;\Scell\simeq \quad} && X \\
&&
}
}}
\quad \quad \quad \quad
\overline{\lambda}_{e,G} :=\quad
\vcenter{ \hbox{
\xymatrix{
&& \\
Z
\ar@/^5ex/[rr]^-{G}
 \ar@{}[rr]|{\lun_G^{-1} \Scell\simeq \;\quad }
 \ar@/_3ex/[dr]_-{G} && X \\
& X
 \ar@/^3ex/[ur]^<<<{\Id_X}
 \ar@/_3ex/[ur]_-{E}
 \ar@{}[ur]|{\Scell\; r_e} &
}
}}
\]
\end{Not}

\begin{Lem} \label{Lem:absorption}
Retaining \Cref{Not:idemp-E}, we have:
\begin{enumerate}[\rm(a)]
\item For every $F\colon X\to Y$, the 2-cell $\rho_{e,F}\colon F\circ \img(e)\Rightarrow F$ is invertible if and only if \emph{$F$ absorbs~$e$}, in the sense that $F e=\id_F$, \ie modulo right unitors we can identify the 2-cells
\[
\id_F \cong
\left(
\xymatrix@1{ X \ar@/^3ex/[r]^-{\Id_X} \ar@/_3ex/[r]_-{\Id_X} \ar@{}[r]|{\Scell\; e} & X \ar[r]^-F & Y }
\right) \,.
\]
The latter is further equivalent to the condition $F\circ (1_X-e)\cong 0$.
\item
\index{absorbs}%
For every $G\colon Z\to X$, the 2-cell $\lambda_{e,G}\colon \img(e)\circ G\Rightarrow G$ is invertible if and only if \emph{$G$ absorbs~$e$}, in the sense that $e G=\id_G$, \ie modulo left unitors we can identify the 2-cells
\[
\id_G \cong
\left(
\xymatrix@1{ Z \ar[r]^-G & X \ar@/^3ex/[r]^-{\Id_X} \ar@/_3ex/[r]_-{\Id_X} \ar@{}[r]|{\Scell\; e} & X }
\right) \,.
\]
The latter is further equivalent to the condition $(1_X-e)\circ G\cong 0$.
\end{enumerate}
\end{Lem}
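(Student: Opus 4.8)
\emph{Approach.} The plan is to reduce both parts to the elementary fact that whiskering functors are (additive) functors and therefore transport the splitting of an idempotent, so that the comparison 2-cells $\rho_{e,F}$ and $\lambda_{e,G}$ are literally the whiskered image-inclusions up to a unitor. Throughout I write $F i_e$, $F r_e$ etc.\ for the horizontal whiskerings and ``$\cdot$'' for vertical composition.

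\emph{Part (a).} First I would record the structural observation. The post-composition functor $F\circ -\colon \cat B(X,X)\to\cat B(X,Y)$ is a functor (and, by local additivity of~$\cat B$, an additive one); applied to the data $i_e r_e=e$ and $r_e i_e=\id_E$ it yields $(F r_e)(F i_e)=\id_{F\circ E}$ and $(F i_e)(F r_e)=F e$, so that $F\circ E$, with the whiskered 2-cells $F i_e\colon F\circ E\Rightarrow F\circ\Id_X$ and $F r_e\colon F\circ\Id_X\Rightarrow F\circ E$, is the splitting of the idempotent $F e$ on $F\circ\Id_X$. Unwinding the definition of~$\rho_{e,F}$, it equals $\run_F\cdot (F i_e)$ modulo the invertible right unitor $\run_F\colon F\circ\Id_X\cong F$; hence $\rho_{e,F}$ is invertible if and only if $F i_e$ is. Now $F i_e$ is a split monomorphism with retraction $F r_e$, and a split mono is invertible exactly when its two composites are both identities, i.e.\ when $(F i_e)(F r_e)=F e=\id_{F\circ\Id_X}$. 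Transporting this equality across~$\run_F$ gives precisely the absorption condition $F e=\id_F$, which settles the first equivalence.

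\emph{The reformulation and the zero-object point.} For the clause $F\circ(1_X-e)\cong 0$ I would use the orthogonal decomposition $1_X=e+(1_X-e)$ together with the additivity of $F\circ-$ (this is the one place additivity, not just functoriality, is needed): it gives $\id_{F\circ\Id_X}=(F e)+(F(1_X-e))$ as a sum of orthogonal idempotent 2-cells, so $F e=\id_{F\circ\Id_X}$ if and only if $F(1_X-e)=0$. Finally one checks that an idempotent 2-cell $p$ is zero iff its image 1-cell is a zero object: if $p=0$ then $\id_{\img p}=r_p i_p=r_p(i_p r_p)i_p=0$, whence $\img p\cong 0$, and conversely $\img p\cong 0$ forces $p=i_p r_p=0$. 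Applying this to $p=F(1_X-e)$ (and again transporting along~$\run_F$) yields the full chain of equivalences, completing~(a).

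\emph{Part (b) and the main obstacle.} Part~(b) is formally dual: it is exactly part~(a) applied inside $\cat B^{\op}$, where the pre-composition whiskering $-\circ G$ of~$\cat B$ becomes the post-composition whiskering by $G^{\op}\colon X\to Z$, the Hom category $\cat B^{\op}(X,X)=\cat B(X,X)$ is unchanged (hence still idempotent-complete and locally additive), the idempotent $e$ is unchanged, and the left unitor $\lun_G$ becomes the right unitor $\run_{G^{\op}}$; alternatively one repeats the three steps above verbatim with the additive functor $-\circ G\colon\cat B(X,X)\to\cat B(Z,X)$ in place of $F\circ-$. There is no deep step here: the only real care needed is the bookkeeping of matching the pasting-diagram definitions of $\rho_{e,F}$ and $\lambda_{e,G}$ (the choice of $i_e$ versus $r_e$ and the orientation of the unitors) to the whiskered inclusion of the image, and the clean handling of the ``idempotent 2-cell is zero iff its image is a zero object'' lemma.
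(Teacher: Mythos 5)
Your proof is correct and follows essentially the same route as the paper's: factoring $\rho_{e,F}$ as a unitor composed with the whiskering $F i_e$ and noting that this split mono (with retraction $F r_e$) is invertible exactly when $(F i_e)(F r_e)=Fe$ is the identity is precisely the paper's observation that $\overline{\rho}_{e,F}\rho_{e,F}=\id_{F\circ E}$ always holds while $\rho_{e,F}\overline{\rho}_{e,F}=\id_F$ holds iff $F$ absorbs~$e$, and the vanishing clause is settled by the same trivial additivity argument from $1_X=e+(1_X-e)$. Your closing auxiliary step about images of zero idempotents is superfluous (and would require idempotent-completeness of $\cat B(X,Y)$, which is not assumed in \Cref{Not:idemp-E}), since the condition $F\circ(1_X-e)\cong 0$ refers to the whiskered 2-cell vanishing modulo unitors, which your additivity step already handles.
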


\begin{proof}
By the equation $r_ei_e=\id_E$, it is always the case that
$\overline{\rho}_{e,F}\rho_{e,F}=\id_{F\circ E}$ and
$\overline{\lambda}_{e,G}\lambda_{e,G}=\id_{E\circ G}$.
Moreover, we see by the equation $i_er_e=e$ that $\rho_{e,F} \overline{\rho}_{e,F} =\id_{F}$ precisely when $F$ absorbs~$e$; and similarly for~$G$. The equivalence with the vanishing statements is trivial since $1_X=e+(1_X-e)$.
\end{proof}

\begin{Rem} \label{Rem:E-absorbs-e}
Note that $E=\img(e)\colon X\to X$ itself absorbs~$e$ on both sides.
This follows from the vanishing conditions for absorption in \Cref{Lem:absorption}, the vanishing of the vertical composite $(1_X-e)e$, and the fact that the unitors identify the vertical and horizontal compositions of 2-cells $\Id_X\Rightarrow \Id_X$ (\cf \Cref{Rem:End-is-comm}).
\end{Rem}

We are now ready for the main construction of this section.

\begin{Cons}[Block completion] \label{Cons:block-completion}
\index{$(f$@$(\ldots)^\flat$ \, block completion}%
\index{block completion $(\ldots)^\flat$}%
Let $\cat{B}$ be a locally idempotent-complete additive bicategory with all finite direct sums of 0-cells (\Cref{Def:sums_in_bicats}). Let us construct a new bicategory~$\cat{B}^\flat$ called the \emph{block-completion} of~$\cat{B}$, as follows.
\begin{enumerate}[{$\bullet$}]
\item
The 0-cells of~$\cat{B}^\flat$ consist of pairs~$(X,e)$ where $X$ is a 0-cell of~$\cat{B}$ and $e=e^2\colon \Id_{X}\to \Id_{X}$ is an idempotent 2-cell of the identity 1-cell of~$X$ in the category~$\cat{B}(X,X)$. (Since we assume $\cat{B}(X,X)$ idempotent-complete, this idempotent~$e$ corresponds to a decomposition $\Id_{X} \cong E_1\oplus E_2$ where $E_1=\img(e)$ and~$E_2=\img(1-e)$.)
\smallbreak
\item
The Hom-category $\cat B^\flat((X,e),(X',e'))$ is the full subcategory of~$\cat{B}(X,X')$ of those 1-cells~$F\colon X\to X'$ which absorb the idempotents $e$ and $e'$ in the equivalent senses of \Cref{Lem:absorption} (for instance: $e'\circ F\cong \id_{F} \cong F\circ e$).
\smallbreak
\item
The composition functors of~$\cat{B}^\flat$ are simply restricted from those of~$\cat{B}$. The identity 1-cell~$\Id_{(X,e)}$ is given by the direct summand~$E=\img(e)\leq \Id_{X}$, which exists because $\cat{B}(X,X)$ is idempotent-complete and which belongs to $\cat B^\flat$ by \Cref{Rem:E-absorbs-e}.
\smallbreak
\item
The associators $\ass_{X,Y,Z}$ of $\cat B^\flat$ are those of~$\cat B$.
For any 1-cell $F\colon (X,e)\to (X',e')$, we define the right and left unitors $\run_F\colon F\circ \Id_{(X,e)} \Rightarrow F$ and $\lun_F\colon \Id_{(X',e')}\circ F \Rightarrow F$ to be the 2-cells $\rho_{e,F}$ and $\lambda_{e',F}$ of \Cref{Not:idemp-E}. They are invertible by \Cref{Lem:absorption} since $F$ absorbs~$e$ and~$e'$.
\end{enumerate}
It is straightforward to verify that the above data defines a bicategory. The only possible issue concerns the unitor coherence axioms, which follow from the idempotency of $e$ and an identification of vertical and horizontal compositions as in \Cref{Rem:E-absorbs-e}.

We have a (rather strict) pseudo-functor $\cat{B}\to \cat{B}^\flat$ mapping 0-cells $X$ to~$(X,1)$ and which is the identity on 1-cells and 2-cells. (Here we use the canonical identification $\img(1_X)=\Id_X$ in~$\cat{B}(X,X)$.)
\end{Cons}

\begin{Thm}[Universal property of block-completion]
\label{Thm:UP-flat}%
Let $\cat{B}$ be a locally \break
idempotent-complete additive bicategory, with all finite direct sums of 0-cells (\Cref{Def:sums_in_bicats}). The bicategory~$\cat{B}^\flat$ of \Cref{Cons:block-completion} is block-complete and the 2-functor $\cat{B}\to \cat{B}^\flat$ induces by pre-composition a biequivalence
\[
\PsFun_\amalg(\cat{B}^\flat,\cat{C})\overset{\sim}{\too}\PsFun_\amalg(\cat{B},\cat{C})
\]
of bicategories of additive pseudo-functors, for every block-complete bicategory~$\cat{C}$.\end{Thm}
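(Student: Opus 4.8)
The plan is to treat the two assertions separately: first that $\cat B^\flat$ is itself block-complete, and then that precomposition with the embedding $\cat B\to \cat B^\flat$ of \Cref{Cons:block-completion} is a biequivalence onto additive pseudo-functors into an arbitrary block-complete target.

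First I would check block-completeness of $\cat B^\flat$, verifying the three conditions of \Cref{Def:block-complete}. For local idempotent-completeness, each Hom category $\cat B^\flat((X,e),(X',e'))$ is a full subcategory of the idempotent-complete $\cat B(X,X')$, and I would show it is closed under splitting idempotents by observing that the image of a $1$-cell absorbing $e$ and $e'$ again absorbs them (\Cref{Lem:absorption} together with \Cref{Rem:E-absorbs-e}). For direct sums I would set $(X,e)\oplus(X',e'):=(X\sqcup X',\,e\oplus e')$, where $X\sqcup X'$ is a direct sum in $\cat B$ (which is a biproduct by \Cref{Lem:0-1-direct-sums}) and $e\oplus e'$ is the evident diagonal idempotent on $\Id_{X\sqcup X'}$, with $(\varnothing,\,\cdot)$ as zero object. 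The conceptual heart is block-decomposition: a splitting $1_{(X,e)}=\id_{\img(e)}=f_1+f_2$ into orthogonal idempotents transports along the data $r_e,i_e$ of \Cref{Not:idemp-E} to orthogonal idempotents $e_1,e_2\le e$ on $\Id_X$ with $e_1+e_2=e$, whence the structural $1$-cells exhibit $(X,e)\simeq(X,e_1)\oplus(X,e_2)$ realizing $f_1,f_2$. In short, block-completion converts idempotent $2$-cells of identities into genuine summand $0$-cells.

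Next I would address the biequivalence. By \Cref{Prop:(locally)-additive-pseudo-functors} an additive pseudo-functor is the same as a locally additive one, so precomposition $P\colon \PsFun_\amalg(\cat B^\flat,\cat C)\to \PsFun_\amalg(\cat B,\cat C)$ is well defined, and it suffices to show $P$ is essentially surjective on $0$-cells and induces equivalences on all Hom categories (transformations and modifications), which gives a biequivalence by the criterion recalled in \Cref{Ter:Hom_bicats}. The harder direction, and the step I expect to be the main obstacle, is essential surjectivity. Given additive $\cat F\colon \cat B\to\cat C$ with $\cat C$ block-complete, I would fix once and for all, for every idempotent $2$-cell $\phi=\phi^2$ of an identity $\Id_Z$ in $\cat C$, a chosen block-decomposition $Z\simeq Z_\phi\oplus Z'_\phi$ realizing $\phi$ (possible since $\cat C$ is block-complete), with structural $1$-cells $I_\phi,P_\phi$. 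Then I set $\widehat{\cat F}(X,e):=Z_{\phi}$, where $\phi$ is the idempotent on $\Id_{\cat F X}$ induced by $\cat F(e)$ via the coherence isomorphism $\un_{\cat F}$, define $\widehat{\cat F}(F):=P_{e'}\,\cat F(F)\,I_{e}$ on a $1$-cell $F$ absorbing $e,e'$, and act on $2$-cells by the same conjugation. The structure isomorphisms $\fun_{\widehat{\cat F}},\un_{\widehat{\cat F}}$ are induced from those of $\cat F$ together with the canonical comparisons between chosen summands; the verification of the pseudo-functor coherence axioms is the bulk of the work, but it is forced once one uses the functoriality up to unique isomorphism of images of idempotents. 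The normalizations $P_1=I_1=\id$ for $\phi=1$ then yield $\widehat{\cat F}\circ(\cat B\to\cat B^\flat)\cong\cat F$.

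Finally I would prove local equivalence. For fixed additive $\cat G_1,\cat G_2$ on $\cat B^\flat$, restriction along $\cat B\to\cat B^\flat$ is an equivalence on transformations and modifications because each $0$-cell $(X,e)$ is a retract (direct summand) of $(X,1_X)$ in $\cat B^\flat$ via the $1$-cell $\img(e)$ (\Cref{Rem:E-absorbs-e}). Since transformations and modifications decompose diagonally over direct sums of $0$-cells, exactly as in the arguments of \Cref{Rem:add_reflections_bicats}, their components at $(X,e)$ are rigidly determined by their components at $X$ through restriction and corestriction along $\img(e)$; this gives faithfulness and fullness, and provides the extension of any transformation or modification defined on $\cat B$, the transformation/modification axioms for the extension being again forced by the retract structure. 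This parallels the treatment of $(-)_+$ and $(-)^\natural$ in \Cref{Rem:add_reflections_bicats} and of $\ICADD$ in \Cref{Exa:ICAdd-is-lic}, and completes the proof.
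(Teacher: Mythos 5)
Your proposal is correct and follows essentially the same route as the paper's proof: the same construction of direct sums and block-decompositions in $\cat{B}^\flat$ (transporting idempotents on $\Id_{(X,e)}=\img(e)$ back to idempotents $\le e$ on $\Id_X$), the same essential-surjectivity argument (realizing the idempotent $\cat F(e)$ by a chosen block-decomposition in the target $\cat C$ and defining the extension on 1-cells by conjugation with the structural 1-cells), and the same treatment of transformations and modifications, whose components at $(X,e)$ are determined by—and constructed from—those at $X$ via conjugation by $\img(e)$, using diagonal decomposition over direct sums.
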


\begin{proof}
The pattern of proof is similar to the universal property of the idem\-potent-comp\-le\-tion of additive 1-categories. All unproved claims below are straightforward verifications, most easily done after strictifying~$\cat B$ and~$\cat C$.

Given two idempotents $(X,e\colon \Id_{X}\to \Id_{X})$ and $(X',e'\colon \Id_{X'}\to \Id_{X'})$, one needs to verify that the subcategory $\cat{B}^\flat((X,e),(X',e'))\subseteq \cat{B}(X,X')$ is idempotent complete, for which it suffices to see that it is closed under taking direct summands. This follows easily from the compatibility of the unitors with direct sums (\Cref{Rem:coherent-additivity}) and guarantees that $\cat{B}^\flat$ remains locally idempotent-complete.

The direct sums of objects in~$\cat{B}^\flat$ are directly inherited from those of~$\cat{B}$ as expected: $(X,e)\oplus (X',e')=(X\oplus X',\smat{e&0\\0&e'})$ and $\cat{B}^\flat$ admits block-decompositions because we have equivalences
\[
\xymatrix{
{(X,e)\oplus (X,1-e) \phantom{mmmmm}} \ar@/^3ex/[r]^-{\smat{\img(e)\; & \; \img(1-e)}} \ar@{}[r]|{\simeq} & \ar@/^3ex/[l]^-{\smat{\img(e)\\ \img(1-e)}} (X,1)
}
\]
(use \Cref{Lem:absorption}).
More generally, for an object~$(X,e)$ of~$\cat{B}^\flat$ and an idempotent $f=f^2\colon \Id_{(X,e)}\to \Id_{(X,e)}$ of its identity 1-cell, which itself is the summand~$\img(e)\le\Id_X$, we have $(X,e)\cong (X,f)\oplus (X,e-f)$ in~$\cat{B}^\flat$, with the equivalence similarly given by the 1-cells of~$\cat{B}$ consisting of~$\img(e)$, $\img(f)$ and~$\img(e-f)$. Thus $\cat B^\flat$ is block-complete, as claimed.

Now let $\cat C$ be another block-complete bicategory.
Given a pseudo-functor $\cat{F}\colon \cat{B}\to \cat{C}$ and an idempotent $e=e^2\colon \Id_{X}\to \Id_{X}$ in~$\cat{B}$, we obtain under the isomorphism~$\cat{F}(\Id_X)\cong \Id_{\cat{F}(X)}$ an idempotent~$\cat{F}(e)\colon \Id_{\cat{F}(X)}\to \Id_{\cat{F}(X)}$. Since~$\cat{C}$ is block-complete, we get a decomposition $\cat{F}(X)=Y_1\oplus Y_2$ such that the constructed idempotent is the projection onto~$Y_1$ (on the identity) and one simply sends~$(X,e)$ to that 0-cell~$Y_1$. This construction extends to a pseudo-functor $\hat{\cat{F}}\colon \cat{B}^\flat\to \cat{C}$ which agrees with~$\cat{F}$ on~$\cat{B}$.
It is easy to verify that, up to a unique isomorphism, this is the only way to extend~$\cat{F}\colon \cat{B}\to\cat{C}$ into an additive functor~$\hat{\cat{F}}\colon \cat{B}^\flat\to\cat{C}$.

Consider now two such extensions $\hat{\cat{F}},\hat{\cat{G}}\colon \cat{B}^\flat\to \cat{C}$ of pseudo-functors $\cat F,\cat G\colon \cat B\to \cat C$, and let $t\colon \cat F\Rightarrow \cat G$ be a pseudo-natural transformation. We claim that $t$ extends in a unique way to a pseudo-natural transformation $\hat t\colon \hat{\cat{F}}\Rightarrow\hat{\cat{G}}$.

To see the uniqueness of such a~$\hat t$, we can reason similarly to \Cref{Rem:coherent-additivity}. Let $X= X_1 \oplus X_2$ be a 0-cell direct sum in~$\cat B^\flat$. Then the structure 1-cells $i_k$ and $p_\ell$ (for $k,\ell \in \{1,2\} $) of the direct sum $X_1\oplus X_2$ give rise to diagrams
\[
\vcenter {\hbox{
\xymatrix{
& \cat{F} X_k
\ar@/_12ex/[dd]
 \ar[r]^-{\hat{t}_{X_k}}
 \ar[d]_{\cat{F}i_k} &
\cat{G} X_k
 \ar[d]^{\cat{G}i_k}
 \ar@/^12ex/[dd] & \\
& \cat{F} (X_1\oplus X_2)
\ar[d]_{\cat{F}p_\ell}
\ar[r]^-{\hat{t}_{X_1\oplus X_2}}
 \ar@{}[ur]|{\simeq \; \SWcell\; \hat{t}_{i_k}} &
 \cat{G}(X_1\oplus X_2)
 \ar[d]^{\cat{G}p_\ell} & \\
 \ar@{}[uur]^{\simeq} &
 \cat F X_\ell
 \ar@{}[ur]|{\simeq \; \SWcell\;\hat{t}_{p_\ell}}
 \ar[r]_-{\hat{t}_{X_\ell}} &
 \cat G X_\ell
 \ar@{}[uur]_{\simeq} &
}
}}
\]
where the vertical composites are either (isomorphic to) identity or zero 1-cells. Together with the naturality and functoriality of~$\hat t$, these diagrams show that $\hat{t}_{X_1\oplus X_2}$ has components $\smat{\hat{t}_{X_1} & 0 \\ 0 & \hat{t}_{X_2}}$.
A similar reasoning shows that the 2-cell components $\hat{t}_f$ for $f\colon X_1\oplus X_2\to Y_1\oplus Y_2$ decompose according to the matrix coordinates of~$f$.
In particular, since all objects and 1-cells of $\cat B^\flat$ arise as direct summands of objects and 1-cells of~$\cat B$, it follows that the 1- and 2-cell components of~$\hat t$ are determined by those of~$t$ (together with the chosen direct sum decompositions of the source and target objects $\cat FX$ and~$\cat GX$ for every idempotent $e$ on~$X$). Thus indeed $t$ determines~$\hat{t}$.

For the construction of the extension $\hat t$ from~$t$, we just reason backwards. Namely, for every object $(X,e)$ of $\cat B^\flat$, we set $\hat{t}_{(X,e)}$ to be the 1-cell
\[
\hat{t}_{(X,e)} \;:=\; \cat G(\img(e))\circ t_X\circ \cat F(\img(e))
\]
of~$\cat C$ (\ie the $\cat F(X,e)$-$\cat G(X,e)$-component of $t_X$ with respect to the direct sum decompositions of source and target).
For every 1-cell $f\colon (X,e)\to (X',e')$ of~$\cat B^\flat$ we define $\hat{t}_f$ by the pasting
\[
\xymatrix{
\hat{\cat{F}}(X,e)
 \ar[rrr]^{\hat{t}_{(X,e)}}
 \ar[dr]|{\cat F\img(e)}
 \ar[ddd]_{\cat Ff}
 \ar@{}[drrr]|{\textrm{def.}} &&&
\hat{\cat{G}}(X,e)
 \ar[ddd]^{\cat Gf} \\
& \cat{F}X
 \ar[r]^-{t_X}
 \ar[d]_{\cat Ff}
 \ar@{}[dr]|{\SWcell\; t_f} &
 \cat{G}X
 \ar[ur]|{\cat G \img(e)}
 \ar[d]^{\cat Gf} & \\
& \cat FX'
 \ar[r]_-{t_{X'}}
 \ar[dl]|{\cat F\img(e')} &
\cat{G}X'
 \ar[dr]|{\cat G \img(e')} & \\
\hat{\cat{F}}(X',e')
 \ar[rrr]_-{\hat{t}_{(X',e')}}
 \ar@{}[uuur]|{\cong}
 \ar@{}[urrr]|{\textrm{def.}} &&&
\hat{\cat{G}}(X',e')
 \ar@{}[uuul]|{\cong}
}
\]
where the left and right outmost invertible 2-cells are given by the absorbency of~$f$.

Finally, consider a modification $M=(M_X)_{X\in \cat B_0}\colon t \Rrightarrow s$, where $t,s\colon \cat F\Rightarrow \cat G$ are any two parallel transformations.
For every idempotent $e=e^2\colon \Id_X\to \Id_X$ on~$X$, we define a 2-cell $\hat M_{(X,e)}$ by the following pasting:
\[
\xymatrix{
& \hat{\cat{F}}(X,e)
 \ar[d]|{\cat{F}\img(e)}
 \ar@/_12ex/[ddd]_{\hat{t}_{(X,e)}}
 \ar@/^12ex/[ddd]^{\hat{s}_{(X,e)}} & \\
& \cat{F}X
 \ar@/_5ex/[d]_{t_X}
 \ar@/^5ex/[d]^{s_X}
 \ar@{}[d]|{\overset{\underset{}{M_X}}{\Ecell}}
 & \\
& \cat{G}X
 \ar[d]|{\cat G\img(e)} & \\
& \hat{\cat{G}}(X,e) &
}
\]
The resulting collection $\hat M:= (\hat{M}_{(X,e)}\colon \hat{t}_{(X,e)}\Rightarrow \hat{s}_{(X,e)})_{(X,e)\in \cat B^\flat_0}$ defines a modification $\hat M\colon \hat{t}\Rrightarrow \hat{s}$ extending~$M$ in the unique possible way.
\end{proof}

\begin{Rem} \label{Rem:combined-envelopes}
One can of course combine the Hom-wise group completion and idempotent-completion of \Cref{Rem:add_reflections_bicats} and the block-completion of \Cref{Cons:block-completion}: If $\cat{B}$ is a locally semi-additive bicategory with all finite sums, it can first be made into a locally additive bicategory~$\cat B_+$, which can be made into a locally idempotent-complete one $(\cat{B}_+)^\natural$, which can then be block-completed into $((\cat{B}_+)^\natural)^\flat$. We still get a universal pseudo-functor
\[
\cat B\to \cat{B}_+\hook (\cat{B}_+)^\natural \hook ((\cat{B}_+)^\natural)^\flat=:\cat{B}^\flat
\]
into a block-complete bicategory, and we can still denote the latter by~$\cat{B}^\flat$ and call it the \emph{block-completion} of~$\cat{B}$.
\end{Rem}

\tristars

We end by recording an additive version of the bicategorical Yoneda lemma:

\begin{Rem}
\label{Rem:add-bicat-Yoneda}%
Let $\cat B$ be any locally additive bicategory with all finite direct sums of objects. Then there are covariant and contravariant Yoneda pseudo-functors
\[
\cat B\too \PsFun_\amalg (\cat B^\op, \Add), \quad X \mapsto \cat B(-,X)
\]
and
\[
\cat B^\op \too \PsFun_\amalg (\cat B, \Add), \quad X \mapsto \cat B(X,-)
\]
which are biequivalences on their 1- and 2-full images. If $\cat B$ is locally idempotent-complete, we may of course replace $\Add$ with~$\ICAdd$.
The non-additive version is well-known and is essentially equivalent to the strictification theorem (see \Cref{Rem:strictification}). The additive version is then an easy consequence: Each pseudo-functor $\cat B(-,X)\colon \cat B^\op\to \Cat$ takes values in $\Add$ because $\cat B$ is locally additive, and preserves direct sums simply because they are, in particular, coproducts; and similarly for the dual embedding.

Since in \Cref{sec:additive-motives} we use the latter version, let us be more explicit on the construction of the contravariant Yoneda pseudo-functor. It sends an object $X\in \cat B_0$ to the pseudo-functor $\cat B(X,-)\colon \cat B\to \Add$; a 1-cell $u\colon X\to Y$ to the pseudo-natural transformation $u^*= \cat B(u,-)\colon \cat B(Y,-)\to \cat B(X,-)$ with components $u^*_T=\cat B(u,T)\colon \cat B(Y,T)\to \cat B(X,T)$, $v\mapsto vu$ (for $T\in \cat B_0$) and $u^*_w=\ass^{-1}_{XYTS} \colon w(vu) \Rightarrow (wv)u$ (for $w\colon T\to S$); and a 2-cell $\alpha\colon u\Rightarrow u'$ to the modification $u^* \Rightarrow u'^*$ with components $\cat B(\alpha,T)=(-)\circ \alpha \colon \cat B(u,T)\Rightarrow \cat B(u',T)$ (for $T\in \cat B_0$). Note that this is still \emph{co}variant on 2-cells~$\alpha$!
\end{Rem}

\bigbreak
\chapter{Ordinary Mackey functors on a given group}
\label{app:old-Mackey}
\medskip

In this section, we fix an `ambient group'~$G$. We recall the classical definitions of Mackey functors on~$G$. The first one is the original definition, due to Green~\cite{Green71}.

\begin{Def}
\label{Def:Mackey-functor-on-G}%
\index{Mackey functor!-- on a finite group}%
A \emph{Mackey functor~$M$ on the finite group~$G$} consists of the data of an abelian group $M(H)$ for each subgroup $H\le G$, together with restriction homomorphisms $R^H_K\colon M(H)\to M(K)$ and induction (or transfer) homomorphisms $I_K^H\colon M(K)\to M(H)$ for all~$K\le H$, and conjugation homomorphisms $c_g\colon M(H)\to M({{}^{g\!}}H)$ for all~$g\in G$; this data must satisfy a series of rather obvious compatibilities (with obvious quantifiers):
\begin{enumerate}[(a)]
\item $R^H_H=\id$, $I_H^H=\id$ and $c_h=\id_{M(H)}$ when $h\in H$; (\footnote{\,The latter condition $c_h=\id_{M(H)}$ is not really `obvious'; it is a defining feature related to the fact that $M(H)$ should really only depend on the $G$-set $G/H$. See \Cref{Rem:old-Burnside}.})
\item $c_{gh}=c_gc_h$ and whenever $J\le K\le H$ then $R^H_J=R^K_J\,R^H_K$ and $I_J^H=I_K^H\,I_J^K$;
\item $c_g \, R^H_K=R^{\,{{}^{g\!}}H}_{{}^{g\!}K}\, c_g$ and $c_g \, I^H_K=I^{\,{{}^{g\!}}H}_{{}^{g\!}K}\, c_g$;
\end{enumerate}
as well as the following non-trivial \emph{Mackey (double-coset) formula} for all $K,J\le H$
\begin{equation}
\label{eq:old-Mackey}%
\index{double-coset formula}%
R^H_J\circ I_K^H = \sum_{[x]\in J\bs H/K}I_{J\cap \,{}^{x\!}K}^J \circ c_x \circ R^K_{J^x\cap K}\,.
\end{equation}
See details in Lewis~\cite{Lewis80}, Th\'evenaz-Webb~\cite{ThevenazWebb95}, Bouc~\cite{Bouc97}, or Webb's survey~\cite{Webb00} whose notation we adopted above. A \emph{morphism} of Mackey functors $F\colon M\to M'$ consists of homomorphisms $F(H)\colon M(H)\to M'(H)$ for all~$H\le G$, commuting with restriction, induction and conjugation maps.
\end{Def}

\begin{Rem}\label{Rem:old-Burnside}
There is a well-known `motivic' approach, due to Dress~\cite{Dress73} and Lindner~\cite{Lindner76}, to the above ordinary Mackey functors through the \emph{Burnside category}~$\BurnG$. The objects of the additive category~$\BurnG$ are finite $G$-sets~$X$ and the morphism group $\Hom_{\BurnG}(X,Y)$ is the group-completion of the abelian monoid of isomorphism classes of spans \mbox{$X\lto Z \to Y$} of $G$-maps. Every subgroup $H\le G$ defines a $G$-set $G/H$ and this assignment $H\mapsto G/H$ satisfies the same two variances and compatibilities as a Mackey functor (it defines a Mackey functor with values in~$\BurnG$). The Burnside category~$\BurnG$ is `motivic' in that every Mackey functor factors uniquely through an additive functor from $\BurnG$ to~$\Ab$.

Note that the above assignment $H\mapsto G/H$, from subgroups of~$G$ to objects in~$\BurnG$, factors via the category $G\sset$. Each conjugation $c_g\colon H\to {{}^{g\!}}H$ yields a map of $G$-sets $G/H\to G/\,{{}^{g\!}}H$ given by $[x]_H\mapsto [x g\inv]_{{{}^{g\!}}H}$. In particular $c_h\colon H\to H$ becomes the identity $G/H\to G/H$. This explains $c_h=\id_{M(H)}$ in \Cref{Def:Mackey-functor-on-G}.

In the language of \Cref{sec:ordinary-spans}, we can consider $\widehat{G\sset}$, the span category for the 1-category $G\sset$ of finite $G$-sets. The above Burnside category $\BurnG$ is obtained from $\widehat{G\sset}$ by group-completing the (abelian monoids of) morphisms. Hence there is no difference between additive functors $\BurnG\to \Ab$ and additive functors $\widehat{G\sset}\to \Ab$ where additivity of $F\colon \widehat{G\sset}\to \Ab$ simply means that $F(X\sqcup Y)\cong F(X)\oplus F(Y)$ via the natural map (which implies $F(\varnothing)=0$ as usual).

Consequently, we have an equivalence of categories
\begin{equation}
\label{eq:G-Mackey-span}%
\left\{{\textrm{Mackey functors on~}G
\atop
\textrm{(\Cref{Def:Mackey-functor-on-G})}}\right\}
\overset{\sim}{\longleftarrow}
\left\{{\textrm{additive functors }
\atop
\widehat{G\sset}\to \Ab}\right\}\,.
\end{equation}
\index{Mackey functor!-- in the sense of Dress-Lindner}%
Since $\widehat{G\sset}$ is a semi-additive category (\Cref{Exa:ordinary_span_is_sad}) and $\Ab$ is idempotent complete, in the above equivalence we may also replace the former with its idempotent complete additive envelope $(\widehat{G\sset})_+^\natural$ (\Cref{Rem:completions}).
When we wish to emphasize the above description as additive functors on $\widehat{G\sset}$, or $(\widehat{G\sset})_+^\natural$, we refer to the latter as Mackey functors for~$G$ \emph{in the sense of Dress-Lindner}.
\end{Rem}

We now want to rephrase the above with \emph{groupoids} instead of $G$-sets. This involves a well-known construction:

\begin{Rem} \label{Rem:trans-gpd}
\index{transport groupoid $G\ltimes X$} \index{groupoid!transport --}%
\index{$gx@$G\ltimes X$ \, transport groupoid}%
Recall that the \emph{transport groupoid}, or \emph{translation groupoid}, of a $G$-set $X$ is the groupoid $G\ltimes X$ having $X$ as object-set and with an arrow $(g,x)\colon x\to x'$ (also written $g\colon x\to gx$ for simplicity) for every $g\in G$ such that $gx=x'$. If \mbox{$f\colon X\to Y$} is a $G$-equivariant map, there is an evident (faithful!) functor \mbox{$G\ltimes f\colon $}\mbox{$G\ltimes X\to G\ltimes Y$} defined as $x\mapsto f(x)$ on objects and $(g,x)\mapsto (g,f(x))$ on maps. This defines a faithful strict 2-functor
\[
G\ltimes - \colon G\sset \longrightarrow \groupoidf
\]
from finite $G$-sets to the 2-category of finite groupoids and faithful functors.
An explicit computation (see \eg Ganter~\cite[Prop.\,2.9]{Ganter13pp}) shows that $G\ltimes -$ preserves weak pullbacks, \ie sends pullbacks of $G$-sets to what we call Mackey squares in \Cref{sec:comma} (\ie squares equivalent to iso-comma squares of groupoids).
\end{Rem}

Recall also that, if we let the group~$G$ vary, we can obtain every (finite) groupoid as a (finite) coproduct of such transporter groupoids.
But now we rather want to fix~$G$ and refine the target of the above construction $G\ltimes-$:

\begin{Def}
\label{Def:gpdG}%
\index{groupoid@$\gpdG$}%
\index{$gpdf$@$\gpdG$}%
\index{comma 2-category over a groupoid~$G$}%
For our fixed finite group~$G$, we denote by
\[
\gpdG
\]
the \emph{comma 2-category} of finite groupoids and faithful functors, $\groupoid^\smallfaithful$, over~$G$. By \Cref{Def:2-comma}, its objects are pairs $(H,i_H)$ where $H \in \groupoid_0$ is a finite groupoid and $i_H\colon H\into G$ is a faithful functor in~$\groupoid$. A 1-cell $(H,i_H)\to (H',i_{H'})$ consists of a pair $(u,\theta_u)$ where $u\colon H\into H'$ is a (necessarily faithful) functor and $\theta_u\colon i_{H'}\,u\Rightarrow i_H$ a natural isomorphism.
A 2-cell $(u,\theta_u)\Rightarrow (v,\theta_v)$ is a 2-cell $\alpha\colon u\Rightarrow v$ of $\groupoid$ compatible with $\theta_u$ and $\theta_v$ in that $\theta_v\,(i_{H'}\alpha) = \theta_u$:
\[
\vcenter{\vbox{
\xymatrix@R=10pt{
H
 \ar@/^2ex/[rrd]^-{i_H}
 \ar@/_3ex/[dd]_u
 \ar@/^3ex/[dd]^v
 \ar@{}[dd]|{\overset{\scriptstyle\alpha}\Ecell}
 \ar@{}[ddrr]|{\;\;\;\NEcell\;\theta_v} && \\
&& G \\
H' \ar@/_2ex/[rru]_-{i_{H'}} &&
}
}}
\quad = \quad
\vcenter{\vbox{
\xymatrix@R=10pt{
H
 \ar@/^2ex/[rrd]^-{i_H}
 \ar@/_2ex/[dd]_u
 \ar@{}[ddr]|{\NEcell\;\theta_u} && \\
&& G\,. \\
H' \ar@/_2ex/[rru]_-{i_{H'}} &&
}
}}
\]
The vertical and horizontal compositions of~$\gpdG$ are induced by those of $\groupoid$ in the evident way. There is an obvious forgetful 2-functor $\gpdG\to \groupoidf \subset \groupoid$ which sends $(H,i_H)$ to $H$ and $(u,\theta_u)$ to~$u$. We call $\gpdG$ the \emph{2-category of finite groupoids embedded into~$G$}.
\end{Def}

\begin{Rem}
\label{Rem:trans-gpd-plus}%
The transport groupoid $X\mapsto G\ltimes X$ canonically lifts to~$\gpdG$ if we endow the groupoid $G\ltimes X$ of a $G$-set $X$ with the structure functor
\[
\pi_X\colon G\ltimes X \to G, \quad x \mapsto \bullet, \quad (g,x) \mapsto g
\]
and if we send a $G$-map $f\colon X\to Y$ to the pair $(G\ltimes f, \id)$; this is well-defined because $\pi_X$ is faithful and because $\pi_{Y}\circ (G\ltimes f)$ and $\pi_X$ are equal functors.
\end{Rem}

\begin{Prop} \label{Prop:Gset_vs_gpdG}
The canonical lift $X\mapsto (G\ltimes X,\pi_X)$ of the transport groupoid (as in \Cref{Rem:trans-gpd-plus}) is a biequivalence $G\sset\isoto \gpdG$
\[
\xymatrix{
&& \gpdG \ar[d]^-{\mathrm{forget}} \\
{G\sset} \ar[rru]^-{\simeq} \ar[rr]_-{G\ltimes -} && \groupoid
}
\]
where the 1-category $G\sset$ is viewed as a discrete 2-category. In particular, the Hom categories of $\gpdG$ are all equivalent to discrete categories (sets).
\end{Prop}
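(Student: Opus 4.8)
The plan is to verify the two defining conditions of a biequivalence recalled in \Cref{Ter:Hom_bicats}: that the lift $\Phi:=(G\ltimes-)\colon G\sset\to\gpdG$ of \Cref{Rem:trans-gpd-plus} is essentially surjective on objects and induces an equivalence on every Hom category. Since $G\sset$ is discrete and $\gpdG$ is a $(2,1)$-category (its 2-cells being invertible 2-cells of $\groupoid$), establishing the local equivalences will \emph{simultaneously} prove the final assertion that the Hom categories of $\gpdG$ are equivalent to discrete ones.

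First I would treat local full faithfulness by a direct cocycle analysis. Fix $G$-sets $X,Y$ and unravel a 1-cell $(u,\alpha_u)\colon\Phi X\to\Phi Y$ in $\gpdG$: the functor $u$ assigns to each $x\in X$ an element $u(x)\in Y$ and to each arrow $(g,x)$ of $G\ltimes X$ an element $\gamma(g,x)\in G$ with $\gamma(g,x)\cdot u(x)=u(gx)$, while the structure isomorphism $\alpha_u\colon\pi_Yu\isoEcell\pi_X$ is a family $(a_x)_{x\in X}$ of elements of $G$. The naturality of $\alpha_u$ reads exactly $\gamma(g,x)=a_{gx}^{-1}\,g\,a_x$ (up to the convention for composition in $G$), and this is precisely the identity needed to make $f(x):=a_x\cdot u(x)$ into a $G$-equivariant map $X\to Y$. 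I would then exhibit the family $(a_x)$ as an invertible 2-cell $(u,\alpha_u)\isoEcell\Phi(f)$ in the Hom groupoid, the compatibility condition of \Cref{Def:gpdG} holding by construction. This shows every object of $\gpdG(\Phi X,\Phi Y)$ is isomorphic to some $\Phi(f)$.

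Next I would pin down morphisms. A 2-cell $\beta\colon\Phi(f)\Rightarrow\Phi(f')$ in $\gpdG$ is a family $(b_x)$ in $G$, and the compatibility with the trivial structure isomorphisms $\alpha_f=\alpha_{f'}=\id$ forces $\pi_Y(\beta)=\id$, that is, each $b_x=e$; hence $f(x)=f'(x)$. Taking $f=f'$ shows the objects $\Phi(f)$ carry no nontrivial automorphisms, and taking $f\neq f'$ shows $\Phi$ is injective on $G$-maps. Combined with the essential surjectivity onto the Hom groupoid from the previous paragraph, this yields that $\gpdG(\Phi X,\Phi Y)$ is equivalent to the discrete set of $G$-maps $X\to Y$, realized by $\Phi$; in particular all Hom categories of $\gpdG$ are equivalent to discrete categories.

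Finally, for essential surjectivity on objects I would argue by decomposition. Given $(H,i_H)$, write $H$ as the coproduct of its connected components; each component is equivalent to a one-object groupoid, i.e.\ to a group, and the faithful $i_H$ identifies that group with a subgroup $K\le G$. For a transitive $G$-set $G/K$ the transport groupoid $G\ltimes(G/K)$ is connected with automorphism group $\Aut$ at the base coset equal to $K$, and $\pi_{G/K}$ restricts to the inclusion $K\hookrightarrow G$; thus taking $X$ to be the coproduct of such orbits, one component at a time, produces an equivalence of underlying groupoids $G\ltimes X\simeq H$ compatible with the maps to $G$. The one point requiring care --- and the main obstacle --- is to promote such an underlying equivalence of groupoids into an honest equivalence \emph{inside} the comma $2$-category $\gpdG$: I would verify that a $1$-cell $(f,\alpha_f)$ of $\gpdG$ is an equivalence as soon as $f$ is an equivalence of groupoids, producing a quasi-inverse $(g,\alpha_g)$ whose structure isomorphism $\alpha_g$ is forced by $\alpha_f$ together with the natural isomorphisms witnessing $g$ as a quasi-inverse of $f$, using that every $2$-cell of $\groupoid$ is invertible. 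This detection of equivalences (a mild variant of \Cref{Cor:2cat_Yoneda_equivs} applied to the forgetful $2$-functor $\gpdG\to\groupoid$) closes the argument.
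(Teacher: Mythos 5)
Your proposal is correct and follows essentially the same route as the paper's proof: the same cocycle computation defining $f(x):=a_x\cdot u(x)$ from the structure isomorphism, showing each Hom category $\gpdG(G\ltimes X,G\ltimes Y)$ is equivalent to the discrete set of $G$-maps, and the same coproduct/orbit reduction to subgroup inclusions $K\le G$ with $G\ltimes(G/K)$ for essential surjectivity on objects. The only difference is presentational: you state explicitly the lemma that a 1-cell of $\gpdG$ whose underlying functor is an equivalence of groupoids is an equivalence in $\gpdG$, a fact the paper uses tacitly when it asserts that $(H,i_H)$ may be replaced, up to equivalence in $\gpdG$, by a subgroup inclusion and then by the transport groupoid.
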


\begin{proof}
We must verify that the 2-functor $G\ltimes - \colon G\sset \to \gpdG$ is surjective on objects up to equivalence, and that for every pair of $G$-sets $X,Y$ the functor
$
G\sset (X,Y) \to \gpdG (G\ltimes X , G\ltimes Y)
$
is an equivalence of 1-categories.

For the first point, let $i_H\colon H\into G$ be a faithful functor in~$\groupoid$. Note that $G\ltimes -$ commutes with coproducts, so that we may easily reduce to the case where $H$ is a group, and up to equivalence in~$\gpdG$, we can even assume that $i_H\colon H\into G$ corresponds to the inclusion of a subgroup. We then obtain a commutative triangle
\[
\xymatrix@R=5pt{
H \ar@/^2ex/[rrd]^-{i_H}
 \ar[dd]_u^\simeq && \\
&& G \\
{G \ltimes (G/H)} \ar@/_2ex/[rru]_-{\;\;\;\; \pi_{G/H}} &&
}
\]
where $u$ is the equivalence of groupoids which sends $\bullet\mapsto eH$ on objects and $h\mapsto (h,e H)$ on maps. Therefore $H$ is equivalent in $\gpdG$ to a transport groupoid, as claimed.

For the second point, fix two $G$-sets $X$ and~$Y$. Since the category $G\sset (X,Y)$ is discrete, what we must show is that for every 1-cell $(u,\theta_u)\colon (G\ltimes X,\pi_X)\to (G\ltimes Y, \pi_Y)$ in~$\gpdG$ there exist exactly one $G$-map $f\colon X\to Y$ and one (invertible) 2-cell $\alpha\colon u\Rightarrow G\ltimes f$. To see why this is the case, unfold the definitions: A 1-cell $(u,\theta_u)$ consists precisely of a functor $u\colon G\ltimes X\to G\ltimes Y$, whose components on objects $x\in X$ and maps $(g,x)$ we denote respectively by
\[
x \mapsto u(x)
\quad\quad \textrm{ and } \quad\quad
(g,x)\colon x\to gx \quad \mapsto \quad u(g,x) \colon u(x) \to u(gx) = u(g,x)u(x)
\]
and a natural transformation $\theta_u\colon \pi_Y u \Rightarrow \pi_X$, which is precisely the same as a collection $(\theta_{u,x})_{x\in X}$ of elements $\theta_{u,x}\in G$ such that
\begin{equation} \label{eq:naturality-of-alpha}
 g \cdot \theta_{u,x} = \theta_{u,gx}\cdot u(g,x)
\end{equation}
in $G$ for all $g\in G$ and $x\in X$.
Now define a map $f\colon X\to Y$ by setting $f(x):= \theta_{u,x} \cdot u(x)$. It is $G$-equivariant by~\eqref{eq:naturality-of-alpha}:
\[
f(gx)
= \theta_{u,gx} \cdot u(gx)
= \theta_{u,gx} \cdot u(g,x) \cdot u(x)
= g \cdot \theta_{u,x} \cdot u(x)
= g\cdot f(x)\,.
\]
Moreover the collection $\alpha:= (\theta_{u,x})_{x\in X}$ forms a natural transformation $\alpha \colon u \Rightarrow G\ltimes f$ (again by~\eqref{eq:naturality-of-alpha}) such that $\pi_Y\alpha = \theta_u$ (so it defines a 2-cell in $\gpdG$), and it is immediate to see that these are the unique possible choices for such $f$ and~$\alpha$.
\end{proof}

Recall 1-truncation $\pih{(-)}$ from \Cref{Not:htpy_cat}, which produces a 1-category out of a 2-category by identifying isomorphic 1-cells (and then dropping 2-cells).

\begin{Cor}
\label{Cor:Gset_vs_gpdG}%
The functor $X\mapsto (G\ltimes X,\pi_X)$ of \Cref{Def:gpdG} induces an equivalence of categories
\[
G\sset\isoto \pih{(\gpdG)}\,.
\]
In particular, the 1-truncated category $\pih{(\gpdG)}$ admits pullbacks.
\end{Cor}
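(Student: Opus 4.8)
The plan is to deduce this corollary directly from \Cref{Prop:Gset_vs_gpdG}, which already provides the \emph{bi}equivalence $G\sset \isoto \gpdG$ (with $G\sset$ viewed as a locally discrete 2-category), by applying the 1-truncation functor $\pih$ of \Cref{Not:htpy_cat}. The only conceptual input needed is that $\pih$ sends biequivalences of bicategories to equivalences of 1-categories.

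First I would recall that $\pih$ is functorial on pseudo-functors: given $\cat F\colon \cat B\to \cat B'$, composing with the canonical $\cat B'\to \pih{\cat B'}$ gives a pseudo-functor into a 1-category, which factors uniquely through $\cat B\to\pih{\cat B}$ by the universal property in \Cref{Not:htpy_cat}, yielding $\pih{\cat F}\colon \pih{\cat B}\to \pih{\cat B'}$. Next I would check that if $\cat F$ is a biequivalence then $\pih{\cat F}$ is an equivalence. For full faithfulness, note that $\pih{\cat B}(X,Y)$ is the set of isomorphism classes of objects of $\cat B(X,Y)$; since $\cat F$ restricts to an equivalence of Hom categories $\cat B(X,Y)\isoto \cat B'(\cat FX,\cat FY)$ (\Cref{Ter:Hom_bicats}), it induces a bijection on isomorphism classes, which is exactly the map $\pih{\cat B}(X,Y)\to \pih{\cat B'}(\pih{\cat F}X,\pih{\cat F}Y)$. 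For essential surjectivity, essential surjectivity of $\cat F$ on 0-cells turns equivalences of 1-cells in $\cat B'$ into isomorphisms in $\pih{\cat B'}$. Applying this to the biequivalence of \Cref{Prop:Gset_vs_gpdG}, and using that $G\sset$ is locally discrete (so that distinct 1-cells are never isomorphic and there are no nonidentity 2-cells, whence $\pih{(G\sset)}=G\sset$ canonically), I obtain the asserted equivalence $G\sset = \pih{(G\sset)}\isoto \pih{(\gpdG)}$, induced by the functor $X\mapsto (G\ltimes X,\pi_X)$.

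For the final sentence, I would argue that $G\sset$, the category of finite $G$-sets, admits all pullbacks (computed on underlying sets with the diagonal $G$-action), and that the existence of pullbacks is invariant under equivalence of categories; hence $\pih{(\gpdG)}$ admits pullbacks as well. The argument has no real obstacle; the single point deserving care is the claim that $\pih$ takes biequivalences to equivalences, which is where the Hom-wise equivalence and the essential surjectivity packaged in the definition of biequivalence are both used. I would state this as a short general lemma (or inline remark) before invoking \Cref{Prop:Gset_vs_gpdG}, and then the corollary follows immediately.
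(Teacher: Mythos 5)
Your proposal is correct and follows exactly the paper's own route: the paper likewise deduces the corollary by applying the 1-truncation $\pih{(-)}$ to the biequivalence of \Cref{Prop:Gset_vs_gpdG} and noting that $\pih{(G\sset)}=G\sset$ since $G\sset$ is locally discrete. The only difference is that you spell out the auxiliary fact that $\pih$ turns biequivalences into equivalences (and that pullbacks transfer along equivalences), which the paper leaves implicit.
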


\begin{proof}
Apply 1-truncation $\pih{(-)}$ to the biequivalence $G\sset\isoto \gpdG$ of \Cref{Prop:Gset_vs_gpdG}. Here $G\sset$ is a discrete bicategory so $\pih{(G\sset)}=G\sset$.
\end{proof}

\begin{Def}
\label{Def:span-gpdG}%
\index{gpdz@$\spanG$}%
Since the category $\cat{C}:=\pih{(\gpdG)}$ admits pullbacks, let
\[
\spanG:=\widehat{\cat{C}}=\widehat{(\pih{(\gpdG)})}
\]
be the corresponding category of spans in the sense of \Cref{sec:ordinary-spans}. We call $\spanG$ the \emph{category of spans of groupoids faithful over~$G$}.
\end{Def}

Explicitly, an object of $\spanG$ consists of a finite groupoid $H$ with a chosen faithful functor $i_H\colon H\into G$ to~$G$ and morphisms are equivalence classes of `spans'
\[
\xymatrix{H \ar@{ >->}[rd]_-{i_H}
& S _\vcorrect{.1} \ar@{ >->}[d]|-{i_{S_{\vphantom{j}}}} \ar[l]_-{u} \ar[r]^-{v}
 \ar@{}[ld]|(.3){\simeq} \ar@{}[rd]|(.3){\simeq}
& K \ar@{ >->}[ld]^-{i_K}
\\
& G &}
\]
(in which the triangles commute up to isomorphisms), where two such spans are equivalent if there exists an equivalence between the middle objects making everything commute up to isomorphism.

This 1-category of spans $\spanG$ is also the 1-truncation $\pih{\Span(\gpdG,\all)}$ of the bicategory of spans (\Cref{Def:Span-bicat}) for the 2-category~$\GG=\gpdG$ with respect to~$\JJ=\all$ (which are all faithful anyway). We return to this observation in \Cref{Rem:alt_descr_Span}, where we explain it in a more general setting.

\begin{Thm} \label{Thm:Dress_Mackey_via_gpd}
The category of ordinary Mackey functors on~$G$ in the sense of \Cref{Def:Mackey-functor-on-G} is equivalent to the category of additive functors $\spanG\to \Ab$ on the above category $\spanG$ of spans of groupoids faithful over~$G$.
\end{Thm}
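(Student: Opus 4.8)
The plan is to reduce \Cref{Thm:Dress_Mackey_via_gpd} to the classical Dress--Lindner description recalled in \Cref{Rem:old-Burnside}, by exhibiting a chain of equivalences of categories that matches up the two notions of ``source category of spans''. Concretely, I would show that the category $\spanG$ of \Cref{Def:span-gpdG} is equivalent (as a semi-additive, or at least as an ordinary, category) to $\widehat{G\sset}$, and then invoke the equivalence~\eqref{eq:G-Mackey-span} between additive functors on $\widehat{G\sset}$ and ordinary Mackey functors on~$G$. Since additive functors into $\Ab$ are invariant under equivalence of the source (\Cref{Lem:functoriality_ordinary-spans}), producing an equivalence $\spanG\simeq \widehat{G\sset}$ will immediately give
\[
\Funplus(\spanG,\Ab)\;\simeq\;\Funplus(\widehat{G\sset},\Ab)\;\simeq\;\{\textrm{Mackey functors on }G\}\,,
\]
which is exactly the asserted statement.

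The key input for the middle equivalence is already in place: by \Cref{Cor:Gset_vs_gpdG}, the transport groupoid construction $X\mapsto (G\ltimes X,\pi_X)$ induces an equivalence of \emph{1-categories} $G\sset\isoto \pih(\gpdG)$. The first step of the proof is therefore to apply the span construction $\widehat{(-)}$ to this equivalence. Since $\spanG$ is \emph{by definition} $\widehat{(\pih(\gpdG))}$, and since $\widehat{(-)}$ is functorial and sends equivalences of pullback-admitting categories to equivalences (by the functoriality in \Cref{Lem:functoriality_ordinary-spans}, together with the fact that $X\mapsto (G\ltimes X,\pi_X)$ preserves pullbacks, which holds because the transport groupoid functor preserves weak pullbacks as noted in \Cref{Rem:trans-gpd} and because $\pih$ of a Mackey square is a genuine pullback in $\pih(\gpdG)$), I obtain a canonical equivalence
\[
\spanG\;=\;\widehat{(\pih(\gpdG))}\;\simeq\;\widehat{G\sset}\,.
\]
Here I must be a little careful that pullbacks in $\pih(\gpdG)$ are computed by iso-comma squares in $\gpdG$ (\Cref{Prop:Gset_vs_gpdG} and the surrounding discussion) and that these correspond under the transport-groupoid equivalence to ordinary pullbacks of $G$-sets; this compatibility of pullbacks is what guarantees that the induced functor $\widehat{G\sset}\to\widehat{(\pih(\gpdG))}$ is well-defined and an equivalence, rather than merely a bijection on objects.

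The second step is to combine this with \Cref{Rem:old-Burnside}. There the equivalence~\eqref{eq:G-Mackey-span} identifies additive functors $\widehat{G\sset}\to\Ab$ with Mackey functors on~$G$ in the sense of \Cref{Def:Mackey-functor-on-G}, after group-completing the Hom monoids (which is harmless for $\Ab$-valued additive functors, since $\Ab$ is already additive and idempotent-complete, by~\eqref{eq:envelope_equiv_+} and~\eqref{eq:envelope_equiv_ic}). Precomposition with the equivalence $\spanG\simeq\widehat{G\sset}$ then transports this to the desired identification $\Funplus(\spanG,\Ab)\simeq\{\textrm{Mackey functors on }G\}$.

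The main obstacle, and the step deserving the most care, is the verification that the transport-groupoid equivalence $G\sset\simeq\pih(\gpdG)$ is genuinely compatible with the pullback structures used to \emph{compose} spans, so that $\widehat{(-)}$ can be applied to it. An equivalence of categories automatically preserves any limits that exist, so preservation of pullbacks is formal once one knows both categories admit them (which they do, by \Cref{Cor:Gset_vs_gpdG}); the subtlety is purely that in $\spanG$ the composition of spans is computed via iso-comma squares in the \emph{2-category} $\gpdG$, not via strict pullbacks in the 1-category $\pih(\gpdG)$, so one should confirm that the class of squares in $\pih(\gpdG)$ arising from iso-commas in $\gpdG$ coincides with the honest pullbacks of $\pih(\gpdG)$ --- this is precisely the content flagged in the final sentence of \Cref{Cor:Gset_vs_gpdG}, namely that $\pih(\gpdG)$ admits pullbacks and they are the expected ones. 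Once this bookkeeping is settled, the rest is a routine concatenation of the equivalences above.
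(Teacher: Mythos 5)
Your proposal is correct and follows essentially the same route as the paper: the paper's (two-sentence) proof likewise applies the span construction to the equivalence $G\sset\isoto\pih(\gpdG)$ of \Cref{Cor:Gset_vs_gpdG} to get $\widehat{G\sset}\simeq\spanG$, and then concludes by the Dress--Lindner equivalence~\eqref{eq:G-Mackey-span}. Your extra care about pullback compatibility is sound diligence, but since $\spanG$ is \emph{defined} as $\widehat{(\pih(\gpdG))}$ and $\pih(\gpdG)$ has pullbacks by that corollary, it amounts to the same bookkeeping the paper leaves implicit.
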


\begin{proof}
By \Cref{Cor:Gset_vs_gpdG}, we have an equivalence of categories of spans $\widehat{G\sset}\isoto\spanG$. The result now follows from the Dress-Lindner description~\eqref{eq:G-Mackey-span} of Mackey functors.
\end{proof}

\end{chapter-appendix}
\end{appendix}


\printindex
\end{document}